\newcommand{\KK}{\mathbb{K}}
\newcommand{\CC}{\mathbb{C}}
\newcommand{\ZZ}{\mathbb{Z}}
\newcommand{\PP}{\mathbb{P}}
\newcommand{\QQ}{\mathbb{Q}}
\newcommand{\FF}{\mathbb{F}}
\newcommand{\RR}{\mathbb{R}}
\newcommand{\EEE}{\mathscr{E}}
\newcommand{\MMM}{{\mathscr{M}}}
\newcommand{\FFF}{{\mathscr{F}}}
\newcommand{\OOO}{{\mathscr{O}}}
\newcommand{\HHH}{{\mathscr{H}}} 
\newcommand{\LLL}{{\mathscr{L}}} 
\newcommand{\NNN}{{\mathscr{N}}}
\newcommand{\rk}{\operatorname{rk}}
\newcommand{\corank}{\operatorname{corank}}
\newcommand{\Sing}{\operatorname{Sing}}
\newcommand{\Cl}{\operatorname{Cl}}
\newcommand{\id}{\operatorname{id}}
\newcommand{\etr}{\operatorname{et}}
\newcommand{\Nm}{\operatorname{Nm}}
\newcommand{\hh}{\mathrm{h}}
\newcommand{\tors}{\mathrm{tors}}
\newcommand{\Pic }{\operatorname{Pic}}
\newcommand{\Bs}{\operatorname{Bs}}
\newcommand{\pt}{\operatorname{pt}}
\newcommand{\wt}{\operatorname{wt}}
\newcommand{\qq}{\mathbin{\sim_{\scriptscriptstyle{\QQ}}}}
\newcommand{\Br}{\operatorname{Br}}
\newcommand{\Hom}{\operatorname{Hom}}
\newcommand{\HHom}{\mathscr{H}{om}}
\newcommand{\red}{\operatorname{red}}
\newcommand{\Prym}{\operatorname{Pr}}
\newcommand{\Spec}{\operatorname{Spec}}
\newcommand{\J}{{\mathrm{J}}}
\newcommand{\JG}{{\mathrm{J}_{\mathrm{G}}}}
\newcommand{\g}{{\operatorname{g}}}
\newcommand{\mult}{{\operatorname{mult}}}
\newcommand{\bb}{{\operatorname{b}}}
\newcommand{\Supp}{{\operatorname{Supp}}}
\newcommand{\image}{{\operatorname{image}}}
\newcommand{\dd}{{\operatorname{d}}}
\newcommand{\codim}{{\operatorname{codim}}}
\newcommand{\p}{{\operatorname{p}_{\operatorname{a}}}}
\newcommand{\pg}{{\operatorname{p}_{\operatorname{g}}}}
\newcommand{\mumu}{{\boldsymbol{\mu}}}
\newcommand{\type}[1]{$\mathrm{#1}$}
\newcommand{\typem}[1]{$\mathbf{#1}$}
\newcommand{\muu}{{\boldsymbol{\mu}}}
\newcommand{\ii}{\operatorname{i}}
\newcommand{\ct}{\operatorname{c}}
\newcommand{\e}{\operatorname{e}}
\newcommand{\et}{\mathrm{\acute et}}
\newcommand{\NE}{\overline{\operatorname{NE}}}
\renewcommand{\emptyset}{\varnothing}
\DeclareSymbolFont{cyrletters}{OT2}{wncyr}{m}{n}
\DeclareMathSymbol{\Sha}{\mathalpha}{cyrletters}{"58}
\newcommand{\dif}{\operatorname{d}_{\Sha}}
\newcommand{\comp}\circ
\newcommand{\xref}[1]{\textup{\ref{#1}}}
\renewcommand\labelenumi{\rm (\roman{enumi})}
\renewcommand\theenumi{\rm (\roman{enumi})}
\theoremstyle{plain}
\newtheorem{theorem}[subsection]{Theorem}
\newtheorem{lemma}[subsection]{Lemma}
\newtheorem{proposition}[subsection]{Proposition}
\newtheorem{stheorem}[equation]{Theorem}
\newtheorem{sobservation}[equation]{Observation}
\newtheorem{corollary}[subsection]{Corollary}
\newtheorem{scorollary}[equation]{Corollary}
\newtheorem*{claim*}{Claim}
\newtheorem{slemma}[equation]{Lemma}
\newtheorem{sproposition}[equation]{Proposition}
\newtheorem{conjecture}[subsection]{Conjecture}
\newtheorem{sconjecture}[equation]{Conjecture}
\newtheorem{emptytheorem}[equation]{}
\theoremstyle{definition}
\newtheorem{definition}[subsection]{Definition}
\newtheorem{squestion}[equation]{Question}
\newtheorem*{definition*}{Definition}
\newtheorem{sdefinition}[equation]{Definition}
\newtheorem{example-remark}[subsection]{Remark-Example}
\newtheorem{subexample-remark}[equation]{Remark-Example}
\newtheorem{scase}[equation]{}
\newtheorem*{notation*}{Notation}
\newtheorem{example}[subsection]{Example}
\newtheorem{subexample}[equation]{Example}
\newtheorem{remark}[subsection]{Remark}
\newtheorem{sremark}[equation]{Remark}
\newtheorem{construction}[subsection]{Construction}
\newcounter{NN}
\newcounter{NO}
\begin{document}
\title{The rationality problem \\ for conic bundles
}
\author{Yuri Prokhorov}
\thanks{
This work  was partially supported 
by the Russian Academic Excellence Project ``5-100''.
The paper was written while the author was visiting the Max Planck Institute for Mathematics (Bonn). 
He would like to thank the institute for the invitation and excellent working condition. 
}
\thanks{Submitted to Uspeshi Mat. Nauk = Russian Math. Surveys}
 \address{
Steklov Mathematical Institute of Russian Academy of Sciences, Moscow, Russia
 \newline\indent
 National Research University Higher School of Economics, Moscow, Russia
}
\email{prokhoro@mi.ras.ru}

\begin{abstract}
This expository paper is concerned with the rationality problems for
three-dimensional algebraic varieties 
with a conic bundle structure. We discuss the main 
methods of this theory. 
We sketch the proofs of certain 
principal results, and present some recent achievements. Many open problems 
are also stated. 
\end{abstract} 
\maketitle

\tableofcontents

\section{Introduction}
\label{section:intro}
We basically work over the field $\CC$ of complex numbers.
In this paper we deal with algebraic varieties having a 
structure of conic bundle over surfaces.
A motivation for the study above is that conic bundles occur in the birational classification
of threefolds of negative Kodaira dimension. 

According to the minimal model program \cite{Mori-1988}, \cite{BCHM}
every uniruled algebraic projective variety $Y$ 
is birationally equivalent to a 
projective variety $X$ with at most $\QQ$-factorial terminal singularities 
that admits a contraction $\pi: X\to S$ 
to a lower-dimensional normal projective variety $S$ such that the anticanonical divisor 
$-K_X$ is $\pi$-ample and 
\[
\Pic(X)=\pi^*\Pic(S)\oplus \ZZ.
\]
\subsection{}
\label{3-dim-MMP-cases}
In dimension $3$ there are the following three possibilities: 
\begin{itemize}
\item 
$S$ is a point and then $X$ is called a \textit{$\QQ$-Fano threefold}; 
\item 
$S$ 
is a smooth projective curve and then $\pi: X\to S$ is called a \textit{$\QQ$-del Pezzo fibration};
\item 
$S$ 
is a normal surface and then $\pi: X\to S$ is called a \textit{$\QQ$-conic bundle}.
\end{itemize}

$\QQ$-Fano threefolds are bounded \cite{Kawamata-1992bF} (this is true even in arbitrary dimension \cite{Birkar2016}).
However, an explicit classification is known only in three-dimensional case 
for smooth ones \cite{IP99}.
There are a lot of partial results related to singular $\QQ$-Fano threefolds,
see e.g. \cite{Alexeev-1994ge}, \cite{GRD}, \cite{Prokhorov-Reid}, \cite{Prokhorov-planes}, \cite{Prokhorov-e-QFano7}, 
\cite{Prokhorov-factorial-Fano-e}, and references therein.

For $\QQ$-del Pezzo fibrations there are partial results on construction of 
\textit{standard models} \cite{Corti1996}, \cite{Kollar1997a} and 
\textit{birational rigidity}, see e.g. \cite{Pukhlikov1998a}, \cite[Ch.~4-5]{Pukhlikovbook2013}, \cite{Cheltsov2005c},
\cite{Sobolev2002}, \cite{Grinenko2000}, \cite{Shokurov-Choi-2011}. 

In this paper we concentrate on the last case of~\xref{3-dim-MMP-cases}. 
We are mainly interested in rationality questions. 
It is known that any $\QQ$-conic bundle has a 
\textit{standard model}, that is, a $\QQ$-conic bundle $\pi^\bullet: X^\bullet\to S^\bullet$ such that 
the total space $X^\bullet$ and the surface $S^\bullet$ are smooth and there are 
birational maps $X^\bullet\dashrightarrow X$ and $S^\bullet\dashrightarrow S$ making the corresponding diagram commutative
(see Theorem~\xref{theorem-standard-models}). The following conjecture is motivated by 
rationality criterion for two-dimensional conic bundles (see Sect.~\xref{section:surfaces}).

\begin{conjecture}[\cite{Shokurov1983}]
\label{conjecture-Shokurov}
Let $\pi: X\to S$ be a standard conic bundle over a rational 
surface $S$ with discriminant curve $\Delta\subset S$. In this case, $X$ is rational if and
only if the following holds:
\begin{enumerate}
\renewcommand\labelenumi{$(\star)$}
\renewcommand\theenumi{$(\star)$}
\item \label{conjecture-Shokurov:2KS+D}
$|2K_S+\Delta|=\emptyset$ 
and, in the case $\p(\Delta)=6$, the Griffiths component $\JG(X)$
of the intermediate Jacobian $\J(X)$ is trivial. 
\end{enumerate}
\end{conjecture}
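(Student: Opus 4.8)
The statement is a longstanding open conjecture, so the realistic plan is to attack the two implications by the methods that have settled them in special cases and to isolate where a genuinely new idea is needed. I would split the problem into \emph{necessity} (rationality $\Rightarrow(\star)$) and \emph{sufficiency} ($(\star)\Rightarrow$ rationality), treat them by completely different tools, and reduce at the outset to the case where $\pi\colon X\to S$ is a standard model with $X$, $S$ smooth, using Theorem~\xref{theorem-standard-models} together with the birational invariance of rationality; one also checks that $|2K_S+\Delta|$ and the Griffiths component are birational invariants of the conic-bundle structure, so that the two sides of the equivalence are well posed.

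For necessity I would invoke the Clemens--Griffiths method. If $X$ is rational then, as a principally polarized abelian variety, the intermediate Jacobian $\J(X)$ must decompose as a product of Jacobians of smooth curves, with no indecomposable non-Jacobian factor. The conic-bundle structure identifies $\J(X)$, up to such a Jacobian summand, with the Prym variety $\Prym(\widetilde{\Delta}/\Delta)$ of the admissible double cover $\widetilde{\Delta}\to\Delta$ recording the two rulings of the degenerate conics; the summand that is \emph{not} of Prym type is precisely the Griffiths component $\JG(X)$. One then uses the theory of Prym varieties (Beauville, Mumford, Shokurov): a Prym variety of dimension at least $6$ that is a product of Jacobians forces the pair $(\Delta,\eta)$, with $\eta$ the $2$-torsion class defining the cover, to degenerate, and translating this degeneration through adjunction on $S$ --- where $H^0(S,2K_S)=0$ forces any section of $2K_S+\Delta$ to be detected by its restriction to $\Delta$, a restriction governed by the cover data --- shows that $|2K_S+\Delta|\neq\emptyset$ is incompatible with $\J(X)$ being a product of Jacobians. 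The dimension-$5$ case, $\p(\Delta)=6$, is exactly the exceptional fibre of the Prym map: the intermediate Jacobians of the smooth cubic threefold and of the quartic double solid are $5$-dimensional Pryms that are \emph{not} products of Jacobians yet can appear as $\J(X)$, and it is here that the finer invariant $\JG(X)$ must be forced to vanish, giving the second clause of $(\star)$.

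For sufficiency I would first exploit that $|2K_S+\Delta|=\emptyset$ is an extremely restrictive numerical constraint: via adjunction it bounds the degree and arithmetic genus of $\Delta$ and thereby cuts the possible pairs $(\Delta,\eta)$ down to a finite list of configurations on each minimal rational $S$. For every configuration on the list I would then produce an explicit birational model exhibiting rationality --- typically by performing elementary transformations along the fibres and Sarkisov links to reach a conic bundle admitting a rational multisection of low degree, or by recognizing $X$ as birational to a del Pezzo fibration or to a Fano threefold of already-known rational type, or by a direct projection. In the boundary case $\p(\Delta)=6$ I would use $\JG(X)=0$ together with the Prym--Torelli theorem to conclude that $\Prym(\widetilde{\Delta}/\Delta)$ is actually a Jacobian, thereby excluding the cubic-threefold and quartic-double-solid loci, and then run the corresponding geometric rationality construction.

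The main obstacle is the sufficiency direction: there is no uniform construction, and each configuration requires its own geometric argument, while the borderline genus-$6$ case depends on delicate facts about the Prym map and about the intermediate Jacobians of the cubic threefold and the quartic double solid --- precisely the non-rational examples that sit on the boundary of the criterion. Controlling \emph{all} configurations at once, rather than case by case, is exactly what keeps the conjecture open.
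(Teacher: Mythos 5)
This statement is a conjecture, and the paper does not prove it; what the paper actually establishes is (a) the sufficiency of $(\star)$ in full generality, (b) both directions when $S$ is minimal (Theorems~\xref{theorem-criterion-Shokurov:P2} and~\xref{theorem-criterion-Shokurov}), and (c) weak forms of necessity such as Theorem~\xref{Iskovskikh-theorem}. Measured against that, your proposal has the difficulty exactly backwards. You write that ``the main obstacle is the sufficiency direction: there is no uniform construction, and each configuration requires its own geometric argument.'' In fact sufficiency is the \emph{known} half and is proved uniformly, with no finite list of configurations: $|2K_S+\Delta|=\emptyset$ forces, via Lemma~\xref{lemma-rational-surface=contractions} and Proposition~\xref{proposition-contraction-(-1)-curve}, a contraction to $\FF_n$ or $\PP^2$ with $\Delta^\sharp\cdot\Lambda^\sharp\le 3$ or $\deg\Delta^\sharp\le 5$; the pencil case is then rational by Tsen's theorem applied to the generic fibre surface (Proposition~\xref{proposition-rationality-conic-bundles} via Corollary~\xref{corollary-surfaces-rationality}), and the residual quintic case with even theta-characteristic is Panin's construction (Proposition~\xref{proposition-Panin}). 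The genuinely open part is necessity for non-minimal $S$: showing that rationality forces $|2K_S+\Delta|=\emptyset$.

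Your necessity sketch is essentially Shokurov's argument, but you do not identify why it only works over minimal surfaces, which is the actual gap. The chain rational $\Rightarrow\JG(X)=0\Rightarrow\J(X)\simeq\Pr(\tilde\Delta,\Delta)$ is a product of Jacobians $\Rightarrow$ $\Delta$ is hyperelliptic, (quasi-)trigonal, or a plane quintic (Theorems~\xref{theorem-Prym-Jacobian} and~\xref{theorem-Mumford-Beauville-Shokurov}) is fine, but the final step --- converting that curve-theoretic degeneration into $\Delta\cdot\Lambda\le 3$, hence $|2K_S+\Delta|=\emptyset$ --- uses that $|K_S+\Delta|$ restricts to the canonical system of $\Delta$ and that the $3$-secant lines of the canonical image sweep out $\phi(S)$, forcing them to be images of the fibres of $S=\FF_n$. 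This geometric identification of the trigonal pencil with the ruling collapses when $S$ is not minimal, and no substitute is known; your phrase ``translating this degeneration through adjunction on $S$'' papers over precisely this point. Two smaller inaccuracies: Theorem~\xref{theorem-Prym-Jacobian} gives $\J(X)\simeq\Pr(\tilde\Delta,\Delta)$ exactly, not merely up to a Jacobian summand; and the quartic double solid has discriminant of degree $6$, so it satisfies $|2K_S+\Delta|\neq\emptyset$ and does not sit in the borderline stratum $\p(\Delta)=6$, $|2K_S+\Delta|=\emptyset$ --- the relevant non-rational example there is the cubic threefold, corresponding to an odd theta-characteristic on a plane quintic.
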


Note that in the case $|2K_S+\Delta|=\emptyset$ and $\p(\Delta)=6$ the variety $X$ can be non-rational
(i.e. the condition $\JG(X)=0$ here cannot be removed).
In fact, in this case $\pi: X\to S$ is fiberwise birational 
to a standard conic bundle $\pi^\sharp: X^\sharp\to \PP^2$
with discriminant curve $\Delta^\sharp$ of degree $5$ so that the corresponding double cover $\tilde \Delta^\sharp \to \Delta^\sharp$
is defined by an odd theta characteristic (see Corollary
\xref{corollary-transformations:2K+S=0}).
Moreover, $X$ is birational to a three-dimensional cubic hypersurface (see Proposition~\xref{proposition-tregub}).

\begin{conjecture}[\cite{Iskovskikh-1987}]
\label{conjecture-Iskovskikh}
Let $\pi: X\to S$ be a standard conic bundle over a rational surface
$S$ with discriminant curve $\Delta\subset S$. In this case, $X$ is rational if and
only if one of the following conditions is satisfied.
\begin{enumerate} 
\item 
\label{conjecture-Iskovskikh-1}
There exists a commutative diagram
\begin{equation}\label{diagram-Conjecture-Iskovskikh-1}
\vcenter{
\xymatrix{
X'\ar[d]^{\pi'}\ar@{-->}[r]^{\Phi}&X\ar[d]^{\pi}
\\
S'\ar[r]^{\alpha}&S
}}
\end{equation}
where $\pi': X'\to S'$ is a standard conic bundle with a discriminant curve $\Delta'\subset S'$,
$\alpha$ is a birational morphism, and $\Phi$ is a birational map,
and a base point free pencil of
rational curves $\LLL'$ on $S'$ such that
$\LLL'\cdot\Delta'\le 3$.

\item 
\label{conjecture-Iskovskikh-2}
There exists a commutative diagram
\begin{equation}\label{diagram-Conjecture-Iskovskikh-2}
\vcenter{
\xymatrix{
X\ar[d]^{\pi}\ar@{-->}[r]^{\Phi}&X'\ar[d]^{\pi'}
\\
S\ar[r]^{\alpha}&\PP^2
}}
\end{equation}
where $\pi': X'\to S'=\PP^2$ is a standard conic bundle with  discriminant curve $\Delta'\subset\PP^2$
of degree $5$,
$\alpha$ is a birational morphism, $\Phi$ is a birational map, and
$X'$ is a blow-up of $\PP^3$ along a non-singular 
curve $\Gamma\subset \PP^3$ of genus $5$
and degree $7$ \textup(see Example~\xref{example-Panin}\textup).
\end{enumerate}
\end{conjecture}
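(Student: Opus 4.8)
The plan is to prove the two implications of the stated equivalence separately, the forward (sufficiency) direction being constructive and the reverse (necessity) direction resting on the intermediate Jacobian obstruction. For sufficiency, assume first that the diagram of~\xref{conjecture-Iskovskikh-1} exists together with a base point free pencil $\LLL'$ satisfying $\LLL'\cdot\Delta'\le 3$. Being base point free, $\LLL'$ defines a morphism $S'\to\PP^1$, and composing with $\pi'$ exhibits $X'$ as fibered over $\PP^1$ whose general fiber is a conic bundle over a general member $C\cong\PP^1$ of $\LLL'$ carrying exactly $\LLL'\cdot\Delta'\le 3$ degenerate fibers. Over the function field $\CC(t)$ of the base this is a standard conic bundle surface of discriminant degree $\le 3$, hence $\CC(t)$-rational by the two-dimensional rationality criterion (Sect.~\xref{section:surfaces}); therefore $X'$, and through $\Phi$ also $X$, is rational. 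In case~\xref{conjecture-Iskovskikh-2} the variety $X'$ is a blow-up of $\PP^3$ along a smooth curve and is thus manifestly rational, so $X$ is again rational.

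The substantial direction is necessity. Here I would invoke the Clemens--Griffiths criterion: if the smooth projective threefold $X$ is rational, then its intermediate Jacobian $\J(X)$ splits, as a principally polarized abelian variety, into a direct sum of Jacobians of smooth curves, so that in particular the Griffiths component $\JG(X)$ vanishes. For a standard conic bundle over a rational surface there is, following Beauville, a canonical isomorphism of principally polarized abelian varieties $\J(X)\cong\Prym(\tilde\Delta/\Delta)$, where $\tilde\Delta\to\Delta$ is the discriminant double cover. Rationality thus forces $\Prym(\tilde\Delta/\Delta)$ to be a sum of Jacobians. The core of the proof is to run Shokurov's analysis of when a Prym variety of this shape decomposes in this way; this is exactly what should yield the numerical condition $|2K_S+\Delta|=\emptyset$ of Conjecture~\xref{conjecture-Shokurov}, together with the extra vanishing of $\JG(X)$ in the boundary case $\p(\Delta)=6$.

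It then remains to convert the cohomological condition $|2K_S+\Delta|=\emptyset$ into the two explicit geometric alternatives. I would proceed by a fiberwise birational reduction to a standard model in which the pair $(S',\Delta')$ is as simple as possible. Away from the boundary case, emptiness of $|2K_S+\Delta|$ should, after such a reduction, produce a base point free pencil meeting $\Delta'$ in at most three points, which is alternative~\xref{conjecture-Iskovskikh-1}. The remaining case $\p(\Delta)=6$ with $\JG(X)=0$ is governed by the fiberwise transformation to a conic bundle over $\PP^2$ with quintic discriminant carrying an odd theta characteristic (Corollary~\xref{corollary-transformations:2K+S=0}), which by Proposition~\xref{proposition-tregub} and Example~\xref{example-Panin} is birational to the blow-up of $\PP^3$ along a curve of genus $5$ and degree $7$, that is, alternative~\xref{conjecture-Iskovskikh-2}.

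The main obstacle is the necessity direction, and within it the passage from the abelian-variety statement that $\Prym(\tilde\Delta/\Delta)$ is a sum of Jacobians to a geometric conclusion about $(\Delta,\tilde\Delta)$ and the conic bundle. The polarized isomorphism type is a coarse invariant, and recovering a pencil or an explicit projective model from it demands a form of Prym--Torelli together with control of the degenerations of the Prym map. The genus-$6$ boundary case is the most delicate: there the obstruction is not detected by the isomorphism type of $\J(X)$ alone but only through the Griffiths component, which is precisely the mechanism behind the non-rationality of the smooth cubic threefold to which such $X$ is birational.
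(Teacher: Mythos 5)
The statement you are proving is a \emph{conjecture}, and the paper does not prove it: it proves only the sufficiency of \xref{conjecture-Iskovskikh-1} and \xref{conjecture-Iskovskikh-2} (Propositions~\xref{proposition-rationality-conic-bundles} and~\xref{proposition-Panin}), the equivalence with Conjecture~\xref{conjecture-Shokurov} (Corollary~\xref{corollary-equivalence-conjectures}), and the necessity only when the base is a \emph{minimal} rational surface ($\PP^2$ or $\FF_n$, Theorems~\xref{theorem-criterion-Shokurov:P2} and~\xref{theorem-criterion-Shokurov}) or under extra hypotheses such as $\mu=1$ (Proposition~\xref{proposition-mu=1}). Your sufficiency argument is correct and is exactly the paper's (restrict to the generic member of the pencil, apply the surface rationality criterion over $\CC(t)$; quote rationality of $\PP^3$ in the second case). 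The problem is the necessity direction, where your outline papers over the precise point at which the argument is open.

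Concretely, the gap is the step ``rationality $\Rightarrow \J(X)\simeq\Prym(\tilde\Delta,\Delta)$ is a sum of Jacobians $\Rightarrow |2K_S+\Delta|=\emptyset$ and then a pencil with $\LLL'\cdot\Delta'\le 3$.'' Theorem~\xref{theorem-Mumford-Beauville-Shokurov} tells you that the Prym is a (product of) Jacobian(s) only if $\Delta$ is hyperelliptic, trigonal, quasi-trigonal, or a plane quintic with even theta-characteristic, and even this requires the stability condition \eqref{equation-condition-S-star}, which can fail for discriminant curves on a general rational surface (only the weaker \eqref{equation-condition-S-star-star} is automatic). More importantly, knowing that the abstract curve $\Delta$ is trigonal does not produce a base point free pencil $\LLL'$ on $S'$ with $\LLL'\cdot\Delta'\le 3$: one must realize the $\mathfrak g^1_3$ by rational curves on the surface. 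Shokurov does this for $S=\FF_n$ by showing that $\phi_{|K_S+\Delta|}(S)$ is an intersection of quadrics and therefore contains all $3$-secant lines of the canonically embedded $\Delta$, which forces those $3$-secants to be images of the fibers of the ruling; this identification has no known analogue for an arbitrary rational base, and that is exactly why the conjecture is open. The best unconditional necessity statements available are far weaker: Sarkisov's Theorem~\xref{Sarkisov-theorem} gives only $|4K_S+\Delta|=\emptyset$, and Theorem~\xref{Iskovskikh-theorem} gives alternatives including the bound $\p(\Delta)\le 45$. So your plan, as written, proves the easy direction and restates the hard direction rather than proving it.
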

It is not so difficult to show that Conjectures~\xref{conjecture-Iskovskikh}
and~\xref{conjecture-Shokurov} are equivalent, see Corollary~\xref{corollary-equivalence-conjectures}.
The sufficiency of~\xref{conjecture-Iskovskikh-1} and~\xref{conjecture-Iskovskikh-2} in 
Conjecture~\xref{conjecture-Iskovskikh} and~\xref{conjecture-Shokurov:2KS+D} in
Conjecture~\xref{conjecture-Shokurov} is also known (see Propositions~\xref{proposition-Panin} and 
\xref{proposition-rationality-conic-bundles} and Corollary~\xref{corollary-transformations:2K+S=0}).
The hardest part of these conjectures is the necessity. 
The necessity of~\xref{conjecture-Iskovskikh}\xref{conjecture-Iskovskikh-1}--\xref{conjecture-Iskovskikh-2} 
and~\xref{conjecture-Shokurov}\xref{conjecture-Shokurov:2KS+D} was proved by 
Shokurov in the case of conic bundles over minimal rational surfaces (see Sect.~\xref{section:min-surface}).

\begin{remark}
Note that the condition~\xref{conjecture-Iskovskikh-1} of 
Conjecture~\xref{conjecture-Iskovskikh} can be replaced with the following
(see Proposition~\xref{proposition-transformations:2K+S=0}):
\begin{enumerate}
\renewcommand\labelenumi{\rm (\roman{enumi}${}'$)}
\renewcommand\theenumi{\rm (\roman{enumi}${}'$)}
 \item 
$\pi$ is fiberwise birationally equivalent to a standard conic bundle 
$\pi^\sharp: X^\sharp \to S^\sharp$, where $S^\sharp=\FF_n$
is a rational geometrically ruled surface and the discriminant curve $\Delta^\sharp$
meets a general fiber in at most three points.
\end{enumerate}
\end{remark}

The paper is organized as follows. Sections~\xref{section:Preliminaries} 
and~\xref{section:cb} are preliminary. There we fix the notation, give basic 
definitions, and collect general facts about varieties with conic bundle 
structures. In Section~\xref{section:Sarkisov} we introduce the Sarkisov program 
of decomposition of birational maps between Mori fiber spaces. The proof of the 
main theorem in the three-dimensional case is outlined. Section 
\xref{section:surfaces} is about surface conic bundles over algebraically 
non-closed fields. There we recall the classification of two-dimensional 
Sarkisov links and formulate rationality criterion. This theory is a motivation 
for the main conjectures~\xref{conjecture-Shokurov} and 
~\xref{conjecture-Iskovskikh}, as well as, one of the main tools in the proofs. 

There are two important birational invariants of algebraic varieties that use 
transcendental and topological methods: torsions in the middle cohomology groups 
and the intermediate Jacobian. These notions are discussed in Sections 
\xref{section:AM} and~\xref{section:J}, respectively. In Section 
\xref{section:bir} we collect examples of some special Sarkisov links in the category of 
conic bundles. They are used very frequently below. As the first application of 
the developed theory, following Shokurov \cite{Shokurov1983}, in Section 
\xref{section:min-surface} we reproduce the proof of 
Conjecture~\xref{conjecture-Shokurov} for conic bundles over \textit{minimal} 
rational surfaces.

Although Conjectures~\xref{conjecture-Shokurov} and 
~\xref{conjecture-Iskovskikh} are formulated in terms of non-singular $X$ and 
$S$, in order to analyze the corresponding birational maps we use the techniques 
on the factorization of birational maps between Mori fiber spaces, which, in 
general, admit terminal singularities. These techniques and results are 
discussed in Sections~\xref{section:Q-conic-bundles}--\xref{section:II}. Thus, 
Section~\xref{section:Q-conic-bundles} covers known results on local 
classification of $\QQ$-conic bundles and Section~\xref{section:ex} contains 
numerous examples of Sarkisov links between them. In Sections~\xref{section:I} 
and~\xref{section:II} we apply developed theory to rationality problems of 
conic bundles. In particular, prove Sarkisov Theorem~\xref{Sarkisov-theorem} and 
Theorem~\xref{Iskovskikh-theorem} which is a very weak version of 
Conjecture~\xref{conjecture-Iskovskikh}. We also prove the equivalence of 
Conjecture~\xref{conjecture-Iskovskikh} and some classical conjecture of 
projective geometry (see Proposition~\xref{proposition-equi-Kantor}). In the last 
section~\xref{section:problems} we collect open problems and some results related 
to birational geometry of conic bundles.

\textbf{Acknowledgements.}
The author is grateful to Artem Avilov, Arnaud Beauville, Ivan Cheltsov, Alexander Kuznetsov, Boris Kunyavskii,
Constantin Shramov, and Vyacheslav Shokurov for useful advises and discussions, and also 
to the referees for careful reading the manuscript and
numerous corrections of inaccuracies.

\section{Preliminaries}
\label{section:Preliminaries}
Everywhere in this paper, if we do not specify anything else, we work 
over an algebraically closed field $\Bbbk$ of characteristic $0$. 
In certain situations we prefer to work over $\CC$. 
We use the standard terminology and notation of the (Log) Minimal Model Program 
\cite{Kollar-Mori-1988}. For an introduction to rationality problems 
we refer to \cite{Kollar2004}.
\subsection{Notation.}
Throughout this paper
\begin{itemize}
\item[] 
$\equiv$ denotes the numerical equivalence of cycles, 
\item[] 
$K_X$ is the canonical (Weil)
divisor of a normal variety $X$, 
\item[] 
$\FF_n=\PP_{\PP^1}\left(\OOO_{\PP^1}\oplus\OOO_{\PP^1}(n)\right)$ is the rational ruled (Hirzebruch) surface, 
\item[] 
$\uprho(X)=\rk \Pic(X)$ is the Picard number of $X$,
\item[] 
$\uprho(X/S)$ is the relative Picard number.
\end{itemize}

A \textit{contraction} is a surjective projective morphism $\pi: X\to S$ 
of normal varieties such that $\pi_*\OOO_X=\OOO_S$.
A contraction $\pi: X\to S$ is \textit{extremal} if $\uprho(X/S)=1$.

A contraction is 
said to be a \textit{Mori extremal contraction} if $X$ is a $\QQ$-factorial variety with at most terminal
singularities, the $\QQ$-Cartier divisor $- K_X $ is relatively ample, and the
relative Picard number is $\uprho(X/S)=1$. 
If additionally $\dim(S)<\dim(X)$, then $\pi$ is called a \textit{Mori
fiber space}. In dimension $3$ for a Mori
fiber space there are only three possibilities listed in~\xref{3-dim-MMP-cases}.

Let $\pi: X\to S$ and $\pi^{\sharp}: X^{\sharp}\to S^{\sharp}$ be Mori
fiber spaces. A birational map $\Phi: X \dashrightarrow X^{\sharp}$ is said to be \textit{fiberwise} if 
there exists a birational map $\alpha: S \dashrightarrow S^{\sharp}$ making the following
diagram 
\begin{equation*}
\xymatrix{
X\ar[d]^{\pi}\ar@{-->}[r]^{\Phi} & X^{\sharp}\ar[d]^{\pi^{\sharp}}
\\
S\ar@{-->}[r]^{\alpha} & S^{\sharp}
} 
\end{equation*}
commutative.

For example, $X=\PP^n\times \PP^n$ 
has two Mori fiber space structures $\pi_i: X\to \PP^n$, the projections to factors.
The identity map is not fiberwise birational equivalence but the 
involution interchanging the factors is. 

A Mori fiber space $\pi: X\to S$ is said to be \textit{birationally rigid} if 
given any birational map $\Phi:
X\dashrightarrow X^{\sharp}$ to another Mori fiber space $\pi^{\sharp}:X^{\sharp}\to S^{\sharp}$, there exists a
birational selfmap $\psi: X\dashrightarrow X$ such that the composition
$\Phi\circ\psi: X\dashrightarrow X^{\sharp}$ is fiberwise:
\begin{equation*}
\xymatrix{
X\ar@/^17pt/@{-->}[rr]^{\Phi}\ar@{-->}[r]_{\psi}\ar[d]^{\pi}&X\ar[d]^{\pi}\ar@{-->}[r] & X^{\sharp}\ar[d]^{\pi^{\sharp}}
\\
S&S\ar@{-->}[r]^{\alpha} & S^{\sharp}
} 
\end{equation*}
 and induces an isomorphism $X_{\eta} \to X^{\sharp}_{\eta}$
of generic fibers.
Examples will be given in Corollary~\xref{corollary-surface-rigid} and Theorem~\xref{Sarkisov-theorem}.
It is important to note that the rigidity implies non-rationality 
but the rigidity is much stronger.

\section{Conic bundles}
\label{section:cb}
\begin{definition}\label{definition-conic-bundle}
A \textit{conic bundle} is a proper flat morphism $\pi: X\to S$ of smooth varieties 
such that it is of relative dimension one and the anti-canonical divisor $-K_X$ is relatively ample.
A conic bundle $\pi: X\to S$ is said to be \textit{standard} if one of the following 
equivalent conditions holds:
\begin{enumerate}
\renewcommand\labelenumi{\rm (\alph{enumi})}
\renewcommand\theenumi{\rm (\alph{enumi})}
\item
$\Pic(X)=\pi^*\Pic(S)\oplus\ZZ$,
\item
$\uprho(X/S)=1$, i.e. $\pi$ is a Mori extremal contraction,
\item
for any prime divisor $D\subset S$ its preimage $\pi^*(D)$ is irreducible.
\end{enumerate}
\end{definition}
If $\pi: X\to S$ is any Mori extremal contraction from a smooth threefold to a surface, then
$\pi$ is a standard conic bundle \cite{Mori-1982}. There is a
similar, but weaker, result in arbitrary dimension:
if $X$ is a smooth variety and $\pi: X\to S$ is a Mori extremal contraction such that the dimension of \emph{any} fiber 
equals $1$, then $\pi$ is a standard conic bundle \cite{Ando1985}.

The following facts are well-known, see \cite[Prop.~1.2]{Beauville1977}, \cite[\S~1]{Sarkisov-1982-e}.
\begin{theorem}
Let $\pi: X\to S$ be a conic bundle. Then we have:
\begin{enumerate}
\item 
The anti-canonical line bundle $\upomega_X^{-1}$ is relatively very ample 
and defines an embedding $X\hookrightarrow\PP(\EEE)$, where 
$\EEE :=\pi_*\upomega_X^{-1}$ is a locally free sheaf of rank $3$.
\item 
In the above embedding $X$ is the zero locus of a section 
\begin{equation*}
\sigma\in H^0 \bigl(\PP(\EEE), \LLL^{\otimes 2}\otimes p^*(\det \EEE^\vee\otimes \upomega^{-1}_S)\bigr),
\end{equation*}
where $\LLL=\OOO_{\PP(\EEE)}(1)$ is the tautological line bundle on $\PP(\EEE)$ 
and $p: \PP(\EEE)\to S$ is the projection.

\item 
For any point $s\in S$ the scheme-theoretic fiber $X_s=\pi^{-1}(s)$ is isomorphic
over the residue field $\Bbbk(s)$ to a conic \textup(possibly reducible or non-reduced\textup) on the plane $\PP(\EEE)_s=\PP^2_{\Bbbk(s)}$.
\end{enumerate}
\end{theorem}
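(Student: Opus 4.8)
The plan is to establish the three assertions in the order (iii) $\Rightarrow$ (i) $\Rightarrow$ (ii), the geometric heart being the identification of every scheme-theoretic fibre $X_s$ with a plane conic.

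\textbf{Fibres are conics.} Since $X$ and $S$ are smooth, $\pi$ is a local complete intersection morphism, so the relative dualizing sheaf $\upomega_{X/S}=\upomega_X\otimes\pi^*\upomega_S^{-1}$ is a line bundle and each fibre $X_s$ is a Gorenstein curve with $\upomega_{X_s}=\upomega_{X/S}|_{X_s}$. First I would use flatness: the Euler characteristic $\chi(\OOO_{X_s})=1-\p(X_s)$ is locally constant on $S$. Over a general point $s\in S$ generic smoothness (we are in characteristic $0$) makes $X_s$ a smooth curve, and relative ampleness of $-K_X$ forces $\upomega_{X_s}^{-1}$ to be ample, so $X_s\cong\PP^1$ and $\p(X_s)=0$. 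Hence $\p(X_s)=0$ for every $s$, i.e. $\deg\upomega_{X_s}=-2$, and $\upomega_{X_s}^{-1}$ is an ample line bundle of degree $2$ on a connected Gorenstein curve of arithmetic genus $0$. Riemann--Roch gives $\chi(\upomega_{X_s}^{-1})=3$, while $H^1(X_s,\upomega_{X_s}^{-1})=H^0(X_s,\upomega_{X_s}^{\otimes2})^\vee=0$ because $\deg\upomega_{X_s}^{\otimes 2}=-4<0$; thus $h^0(\upomega_{X_s}^{-1})=3$ for all $s$. The complete linear system $|\upomega_{X_s}^{-1}|$ then embeds $X_s$ into $\PP^2=\PP(H^0(\upomega_{X_s}^{-1})^\vee)$ as a curve of degree $2$, that is, a conic (smooth, a pair of lines, or a double line).

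\textbf{Local freeness and the embedding.} Because $h^0(\upomega_{X_s}^{-1})=3$ is constant and the first cohomology vanishes fibrewise, cohomology and base change shows that $\EEE:=\pi_*\upomega_X^{-1}$ is locally free of rank $3$, that its formation commutes with base change (so $\EEE\otimes\Bbbk(s)=H^0(X_s,\upomega_{X_s}^{-1})$), and that $R^1\pi_*\upomega_X^{-1}=0$. Relative very ampleness, verified fibrewise above, makes the evaluation morphism $\pi^*\EEE\to\upomega_X^{-1}$ surjective; by the universal property of $\PP(\EEE)$ this defines an $S$-morphism $X\to\PP(\EEE)$ with $\LLL|_X=\upomega_X^{-1}$, which restricts on each fibre to the conic embedding just described and is therefore a closed immersion. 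This proves (i), and assertion (iii) is exactly the fibrewise statement.

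\textbf{The defining equation.} Writing $P=\PP(\EEE)$ and $p\colon P\to S$, we have $\Pic(P)=p^*\Pic(S)\oplus\ZZ\LLL$. Since $X\subset P$ is a divisor meeting each fibre $\PP^2$ in a conic, $\OOO_P(X)\otimes\LLL^{-2}$ is trivial on every fibre, hence $\OOO_P(X)=\LLL^{\otimes2}\otimes p^*\MMM$ for a unique line bundle $\MMM$ on $S$. To identify $\MMM$ I would run adjunction. The relative Euler sequence gives $\upomega_{P/S}=\LLL^{-3}\otimes p^*\det\EEE$, so $\upomega_P=\LLL^{-3}\otimes p^*(\det\EEE\otimes\upomega_S)$, and therefore
\[
\upomega_X=\bigl(\upomega_P\otimes\OOO_P(X)\bigr)\big|_X=\LLL^{-1}\big|_X\otimes\pi^*\bigl(\det\EEE\otimes\upomega_S\otimes\MMM\bigr).
\]
On the other hand $\LLL|_X=\upomega_X^{-1}$, i.e. $\upomega_X=\LLL^{-1}|_X$; comparing and using that $\pi^*$ is injective on Picard groups (as $\pi_*\OOO_X=\OOO_S$) yields $\det\EEE\otimes\upomega_S\otimes\MMM=\OOO_S$, that is $\MMM=\det\EEE^\vee\otimes\upomega_S^{-1}$. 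Hence $X$ is the zero locus of a section of $\LLL^{\otimes2}\otimes p^*(\det\EEE^\vee\otimes\upomega_S^{-1})$, which is (ii).

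\textbf{Where the difficulty lies.} The routine parts are the cohomology-and-base-change bookkeeping and the adjunction computation. The real work is the first step, and specifically its uniformity over degenerate fibres: one must know that the constancy $\p(X_s)=0$ together with the Gorenstein property genuinely forces each special fibre --- including reducible and non-reduced ones --- to be a connected, arithmetic-genus-$0$ curve carrying a very ample degree-$2$ anticanonical class, so that $|\upomega_{X_s}^{-1}|$ embeds it as an honest plane conic with $h^0=3$ and no jumping of cohomology. Controlling connectedness of the fibres (equivalently $\pi_*\OOO_X=\OOO_S$, which I would deduce from the normality of $S$ and the connectedness of the general fibre) and ruling out higher-genus or disconnected degenerations is the crux on which the clean rank-$3$ conclusion rests.
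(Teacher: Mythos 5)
The paper itself offers no proof of this theorem: it is stated as well known, with references to Beauville (Prop.~1.2) and Sarkisov (\S 1), so there is nothing internal to compare against; your argument is essentially a reconstruction of the standard proof from those sources, and its skeleton is correct. Flatness plus generic smoothness give $\p(X_s)=0$ for every fibre, Serre duality on the Gorenstein (Cohen--Macaulay) fibres kills $H^1(X_s,\upomega_{X_s}^{-1})$, cohomology and base change produces the rank-$3$ bundle $\EEE$, and the relative Euler sequence plus adjunction pins down the divisor class of $X$ in $\PP(\EEE)$. Two caveats. First, connectedness of the fibres (equivalently $\pi_*\OOO_X=\OOO_S$) is indispensable for $\rk\EEE=3$ and is not literally contained in the paper's definition of a conic bundle; it must be taken as part of the definition, as in the cited sources, since it cannot be deduced from relative ampleness of $-K_X$ alone. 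Second, the step you yourself single out as the crux --- that $\upomega_{X_s}^{-1}$ is very ample on every degenerate fibre and realizes it as an honest plane conic --- is asserted rather than proved, and for non-reduced Gorenstein fibres this requires a genuine very-ampleness criterion (separation of points and tangent vectors via $h^0(\upomega_{X_s}^{-1}\otimes\mathfrak m_x\mathfrak m_y)=1$, or a Catanese--Franciosi-type degree bound on subcurves). You can lighten this load by reversing the order at the end: once the evaluation map $X\to\PP(\EEE)$ is known to be a closed immersion, $X$ is automatically an effective divisor there (integral of codimension one in a smooth variety), your adjunction computation gives (ii), and then (iii) is immediate because the scheme-theoretic fibre $X_s=X\cap\PP(\EEE)_s$ is the zero locus of the nonzero quadratic form $\sigma|_{\PP(\EEE)_s}$; the delicate fibrewise analysis is then needed only to establish the immersion, not to classify the fibres afterwards.
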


\subsection{}\label{discriminant}
If $\pi: X\to S$ is a conic bundle, then 
a general fiber $X_s=\pi^{-1}(s)$ is a non-degenerate conic, i.e. an irreducible smooth rational curve.
Put 
\begin{eqnarray*}
\Delta&:=&\{s\in S\mid \text{$X_s$ is a degenerate conic}\}, 
\\
\Delta_{\mathrm{s}}&:=&\{s\in S\mid \text{$X_s$ is a double line}\}.
\end{eqnarray*}
Then $\Delta$ is a divisor on $S$. It is called the \emph{discriminant} (or \emph{degeneration}) {divisor}.
The set $\Delta_{\mathrm{s}}$ coincides with the singular locus of $\Delta$ (see \eqref{equation-cb} below). 

In a small affine neighborhood $U\subset S$ of $s\in S$,
one can write the equation of $X\subset \PP^2_{x_0, x_1,x_2}\times U$
in the form 
\begin{equation}
\label{equation-conic-bundle}
\sum_{0\le i,j\le 2} a_{i,j} x_ix_j=0
\end{equation} 
where $a_{i,j}\in \CC[U]$. Then $\Delta$ is given by the determinant equation 
\begin{equation*}
\det \|a_{i,j}(s)\|=0.
\end{equation*} 

\begin{scase}
\label{equation-cb}
Moreover, $\rk \|a_{i,j}(s)\|\neq 0$ and using $\CC[U]$-linear coordinate change 
one can put the equation \eqref{equation-conic-bundle} to one of the following forms
\begin{equation*}
\begin{array}{rclll}
b_0x_0^2+b_1x_1^2+b_2x_2^2=0 &\Leftrightarrow& \rk \|a_{i,j}(s)\|=3&\Leftrightarrow& s\notin \Delta
\\[5pt]
b_0x_0^2+b_1x_1^2+c_2x_2^2=0 &\Leftrightarrow& \rk \|a_{i,j}(s)\|=2&\Leftrightarrow& s\in \Delta\setminus \Delta_{\mathrm{s}}
\\[5pt]
b_0x_0^2+c_1x_1^2+c_2x_2^2+ c_3x_1x_2 =0 &\Leftrightarrow& \rk \|a_{i,j}(s)\|=1&\Leftrightarrow& s\in \Delta_{\mathrm{s}}
\end{array}
\end{equation*} 
where $b_i(s)\neq 0$, $c_i(s)=0$, and $\mult_s (c_1)=\mult_s (c_2)=1$.
\end{scase}

\begin{scorollary}[{\cite[Proof of Proposition 1.8.5]{Sarkisov-1982-e}}]
\label{normal-crossing}
The discriminant divisor of a conic bundle $\pi: X\to S$
has only normal crossings in codimension two in $S$.
\end{scorollary}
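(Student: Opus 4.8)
The plan is to pass to the analytic-local picture at the generic point of an arbitrary codimension-two subvariety of $S$. Since having normal crossings in codimension two is a condition on the generic points of codimension-two subvarieties, and since it may be checked after restricting the conic bundle to a general surface germ transverse to such a point (the restriction is again a conic bundle with smooth total space by Bertini), I reduce to the case $\dim S=2$ and verify that $\Delta$ is either smooth or has an ordinary node at a closed point $s$. I will read the local equation of $\Delta$ off the normal forms of~\xref{equation-cb}, and the essential input will be the smoothness of the total space $X$.

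First I dispose of the points $s\in\Delta\setminus\Delta_{\mathrm s}$. In the normal form $b_0x_0^2+b_1x_1^2+c_2x_2^2$ the determinant equals $b_0b_1c_2$ with $b_0,b_1$ units, so locally $\Delta=\{c_2=0\}$, and the fiber over $s$ is a pair of distinct lines meeting at the node $[0:0:1]$. Evaluating the partials of the defining equation at $([0:0:1],s)$, every $x$-derivative vanishes and the base derivatives reduce to the coefficients of $\mathrm dc_2(s)$; hence smoothness of $X$ at this point forces $\mathrm dc_2(s)\neq0$, so $\Delta$ is smooth there. In particular $\Sing\Delta\subset\Delta_{\mathrm s}$, and it remains to treat $s\in\Delta_{\mathrm s}$.

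For $s\in\Delta_{\mathrm s}$ the normal form is $b_0x_0^2+c_1x_1^2+c_2x_2^2+c_3x_1x_2$ with $b_0$ a unit and $c_1(s)=c_2(s)=c_3(s)=0$, so up to a unit $\Delta=\{c_1c_2-\tfrac14 c_3^2=0\}$. Writing $\ell_1,\ell_2,m$ for the linear parts of $c_1,c_2,c_3$ at $s$ in local coordinates $t=(t_1,t_2)$, the tangent cone of $\Delta$ is the binary quadratic $\det B_1$, where $B_1=t_1M_1+t_2M_2$ is the linear part of the symmetric matrix $\left(\begin{smallmatrix} c_1 & c_3/2\\ c_3/2 & c_2\end{smallmatrix}\right)$ and $M_1,M_2$ are constant symmetric matrices. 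My claim is that this quadratic is nondegenerate; granting that, the holomorphic Morse lemma (equivalently: a multiplicity-two plane curve whose tangent cone is a pair of distinct lines) makes $\Delta$ analytically equivalent to $\{t_1t_2=0\}$, i.e. an ordinary node, which is the desired normal crossing.

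The key step, which I expect to be the main obstacle, is to turn the smoothness of $X$ along the entire double line $\{x_0=0\}$ into nondegeneracy of this tangent cone. A direct computation shows that at a point $[0:\alpha:\beta]$ of that line all $x$-derivatives of the defining equation vanish, while the base derivatives assemble into the covector $\alpha^2\ell_1+\alpha\beta\, m+\beta^2\ell_2 = v^{\mathrm t}B_1v = t_1Q_1(v)+t_2Q_2(v)$, where $v=(\alpha,\beta)$ and $Q_i(v):=v^{\mathrm t}M_iv$. Thus $X$ is singular at $([0:v],s)$ exactly when $v$ is a common zero of the binary quadratics $Q_1,Q_2$, so smoothness of $X$ means $Q_1,Q_2$ have no common zero. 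Finally the classical identity $\operatorname{discr}\bigl(\det(t_1M_1+t_2M_2)\bigr)=\operatorname{Res}(Q_1,Q_2)$ (up to a nonzero scalar) shows that $\det B_1$ is nondegenerate precisely when $\operatorname{Res}(Q_1,Q_2)\neq0$, i.e. precisely when $Q_1,Q_2$ share no root. Since $X$ is smooth this resultant is nonzero, the tangent cone splits into two distinct lines, and $\Delta$ has a node at $s$. Everything outside this resultant identity and the smoothness translation is bookkeeping with~\xref{equation-cb}.
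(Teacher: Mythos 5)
Your argument is correct and is essentially the computation that the paper delegates to Sarkisov's Proposition 1.8.5: read the local equation of $\Delta$ off the normal forms of~\xref{equation-cb} and convert smoothness of $X$ along the degenerate fiber into nondegeneracy of the tangent cone of $\det\|a_{i,j}\|$ at $s$. Your key identity $\operatorname{disc}\bigl(\det(t_1M_1+t_2M_2)\bigr)=\operatorname{Res}(Q_1,Q_2)$ (up to a nonzero scalar) is correct and cleanly packages the last step, and the transverse-slice reduction to $\dim S=2$ is legitimate.
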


\begin{sremark}
Let $\pi: X\to S$ be a contraction such that there exists a closed subset 
$Z\subset S$ of codimension $\ge 2$ such that the restriction
$\pi^o: X^o\to S^o$ is a conic bundle, where $S^o:=S\setminus Z$
and $X^o:=\pi^{-1}(S^o)$.
Then we can define the discriminant divisor $\Delta\subset S$ of $\pi$ as the closure
of the discriminant curve $\Delta^o$ of the conic bundle $\pi^o$.
In general $\Delta$ is a (reduced) effective Weil divisor.
Note however that in this case $\Delta$ is just \textit{divisorial} part of the maximal subset 
$R\subset S$ over which $\pi$ is not smooth. 
We do not assert that $\pi$ is smooth or flat over $S\setminus \Delta$
(see e.g.~\xref{item=main--th-pr-toric}).

The above definition can be applied to arbitrary Mori extremal contractions $\pi: X\to S$ 
such that $\dim X=\dim S+1$ and $X$ has at worst terminal singularities.
Indeed, let $S_{\ge 2}\subset S$ be the set of points $s\in S$ such that 
$\dim \pi^{-1}(s)\ge 2$ and let 
\begin{equation*}
Z:=S_{\ge 2}\cup \pi(\Sing(X))
\end{equation*}
Since $\pi$ is extremal, $\pi^{-1}(S_{\ge 2})$ has no divisorial components.
Since $X$ has at worst terminal singularities, $\codim_X \Sing(X)\ge 3$.
Therefore, $\codim_S Z\ge 2$.
Then $S^o:= S\setminus Z$ is smooth and $\pi^o$ is a conic bundle over $S^o$
(see \cite{Mori-1982}, \cite{Ando1985}). 
\end{sremark}

\begin{slemma}\label{lemma-bir-disc}
Let $\pi: X\to S$ 
and $\pi': X'\to S$ be a standard conic bundles over a \textup(not necessarily proper\textup) variety $S$.
Suppose that there is a fiberwise birational equivalence
\[
\xymatrix{
X\ar[d]^{\pi}\ar@{-->}[r]& X'\ar[d]^{\pi'}
\\
S\ar@{=}[r] & S
}
\]
Then discriminant divisors of $\pi$ and $\pi'$ coincide.
\end{slemma}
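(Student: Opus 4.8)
The plan is to show that the two discriminant divisors agree by comparing them on the common locus where both conic bundles are smooth, and then observing that the complement has codimension at least two, so the closures must coincide as Weil divisors.

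First I would reduce to a pointwise comparison of fibers. Let $\Delta$ and $\Delta'$ denote the discriminant divisors of $\pi$ and $\pi'$ respectively. A fiberwise birational equivalence $\Phi\colon X\dashrightarrow X'$ over $S$ restricts, over the generic point of any prime divisor $D\subset S$, to a birational map of the two-dimensional schemes $X_D\dashrightarrow X'_D$ (the fibers over the generic point $\eta_D$ of $D$). Both $X_{\eta_D}$ and $X'_{\eta_D}$ are conics over the residue field $\Bbbk(\eta_D)$, i.e. plane conics over a one-dimensional function field. The key algebraic fact is that two smooth conics over a field are birationally equivalent if and only if they are isomorphic, and more relevantly, a smooth conic is never birationally equivalent to a degenerate (singular or non-reduced) conic: a smooth conic over $K$ is a form of $\PP^1$ with no $K$-rational singular point structure, whereas a degenerate conic is either a pair of lines or a double line, so the function field of a smooth conic is not isomorphic to that of a degenerate one. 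Hence, for each prime divisor $D\subset S$, the fiber $X_{\eta_D}$ is smooth if and only if $X'_{\eta_D}$ is smooth.

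This pointwise statement at all codimension-one points of $S$ is exactly what I need. By definition, $D\subset\Delta$ precisely when the generic fiber $X_{\eta_D}$ is degenerate, and likewise $D\subset\Delta'$ precisely when $X'_{\eta_D}$ is degenerate. Since $\Delta$ and $\Delta'$ are reduced effective divisors determined by their generic points along each prime component, the equivalence established above gives that $D$ is a component of $\Delta$ if and only if it is a component of $\Delta'$. As both discriminant divisors are reduced by the structure described in~\xref{equation-cb}, they have the same components with the same (trivial) multiplicities, so $\Delta=\Delta'$.

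The main obstacle, and the point that deserves care, is the claim that a fiberwise birational map really does induce a \emph{birational} map on the generic fibers $X_{\eta_D}\dashrightarrow X'_{\eta_D}$ that can distinguish smooth from degenerate conics. One must check that $\Phi$ is defined and dominant at the generic point of $X_{\eta_D}$; this follows because $\Phi$ is a birational map between irreducible threefolds, so it is an isomorphism over a dense open set whose complement has codimension at least one in $X$, hence the image of this complement in $S$ is a proper closed subset and avoids the generic point of $D$ for all but finitely many $D$ — but to handle \emph{every} prime divisor I instead argue at the level of function fields: fiberwise birationality over $S$ means $\Phi$ induces a $\Bbbk(S)$-isomorphism $\Bbbk(X)\cong\Bbbk(X')$, and localizing at the discrete valuation attached to $D$ identifies the generic fibers birationally over $\Bbbk(\eta_D)$. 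The geometric input that a smooth conic and a degenerate conic over a field have non-isomorphic function fields is then the crux.
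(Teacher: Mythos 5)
There is a genuine gap at the central step. You assert that the fiberwise birational map $\Phi$ restricts, over the generic point of any prime divisor $D\subset S$, to a birational map of the fibers $X_{\eta_D}\dashrightarrow X'_{\eta_D}$, and your justification is that the $\Bbbk(S)$-isomorphism $\Bbbk(X)\cong\Bbbk(X')$, ``localized at the discrete valuation attached to $D$'', identifies these fibers birationally over $\Bbbk(\eta_D)$. This does not follow: the fiber over $\eta_D$ is a feature of the chosen integral model over the discrete valuation ring $R=\OOO_{S,\eta_D}$, not of $\Bbbk(X)$ as a $\Bbbk(S)$-algebra. Two regular models of the same conic over $R$ can have special fibers of different degeneracy type: for instance $\{xy=tz^2\}\subset\PP^2_R$ (with $t$ a uniformizer) and $\PP^1_R$ are both regular, proper and flat over $R$ with conic fibers and isomorphic generic fibers, yet the special fiber of the first is a pair of lines while that of the second is smooth. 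What rules this out is precisely the standardness hypothesis (in the example the first model has reducible special fiber, hence is not standard), but your argument never invokes standardness at this point --- you only use reducedness of $\Delta$ and $\Delta'$ at the very end, which is a different matter. The implication ``isomorphic generic conics over $\Bbbk(S)$ implies special fibers of the standard models over $R$ of the same degeneracy type'' is essentially the local form of the lemma itself, so the proposal assumes what it must prove exactly where the work lies.

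The paper closes this gap by a computation rather than by comparing function fields of the special fibers: after reducing to the case $\Delta=\emptyset$ over a small affine $S=\Spec A$, both threefolds are embedded anticanonically as conics $\sum q_{i,j}x_ix_j=0$ and $\sum q'_{i,j}x_ix_j=0$ in $\PP^2_A$; the isomorphism of generic fibers is realized by a projective transformation $T$ over $\mathrm{Frac}(A)$, giving $\det\|q'_{i,j}\|=(\det T)^2\det\|q_{i,j}\|$; since $\det\|q_{i,j}\|$ is a unit and the square factor $(\det T)^2$ would force $\Delta'$ to be non-reduced if $\det T$ were a non-unit, reducedness of the discriminant of a standard conic bundle shows $\det T$ is a unit, whence $\Delta'=\emptyset$. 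Your observation that a smooth conic over $\Bbbk(\eta_D)$ cannot be birational to a degenerate one is correct, but it only becomes usable after one has proved that $\Phi$ genuinely induces a dominant map between the special fibers of the two standard models, and that is the step that is missing.
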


\begin{proof}
Let $\Delta$ and $\Delta'$ be discriminant divisors of $\pi$ and $\pi'$, respectively.
We may assume that $S=\Spec A$ is a small affine neighborhood of a fixed point.
Then it is sufficient to consider the case where $\Delta=\emptyset$.
Moreover, by shrinking $S$ we may assume that the vector bundles $\pi_*\OOO_X(-K_X)$
and $\pi_*\OOO_{X'}(-K_{X'})$ are trivial.
Then the anti-canonical divisors define embeddings $X,\, X' \hookrightarrow \PP^2\times S=\PP^2_A$.
Thus $X$ and $X'$ are given in $\PP^2_A$ by equations 
$\sum q_{i,j}x_i x_j=0$ and $\sum q_{i,j}'x_i x_j=0$, respectively, where $q_{i,j},\, q_{i,j}'\in A$.
By our assumption the matrix $\|q_{i,j}\|$ is non-degenerate and $\det \|q'_{i,j}\|=0$ 
is the equation of $\Delta'$. Since $X$ and $X'$ are birationally equivalent over $S$, the generic fibers 
$X_\eta$ and $X'_\eta$ are isomorphic over the field $\KK:=\mathrm{Frac}(A)$.
Hence there exists a projective transformation $\PP^2_{\KK}\to \PP^2_{\KK}$
that maps $X_\eta$ to $X'_\eta$. Let $T=\|t_{i,j}\|$, $t_{i,j}\in \KK$ be the corresponding matrix.
Then the equation of $\Delta'$ is written in the form 
\begin{equation*}
\det \|q'_{i,j}\|=(\det\|t_{i,j}\|)^2(\det \|q_{i,j}\|)=0, 
\end{equation*}
where $\det \|q_{i,j}\|$ is an invertible element 
of $A$. Since the discriminant curve of a standard conic bundle is reduced, 
the element $\det\|t_{i,j}\|$ must be invertible as well. This means that $\Delta'=\emptyset=\Delta$
on $S$.
\end{proof}

\begin{scorollary}
\label{cor:disc}
Suppose we have a fiberwise birational equivalence of $\QQ$-conic bundles
\begin{equation*}
\xymatrix{
X\ar[d]^{\pi}\ar@{-->}[r]^{\Phi} & X'\ar[d]^{\pi'}
\\
S\ar[r]^{\alpha} & S'
} 
\end{equation*}
where $\alpha$ is a birational morphism. Then for the corresponding
discriminant curves $\Delta\subset S$ and $\Delta'\subset S'$ one has $\Delta'=\alpha(\Delta)$.
\end{scorollary}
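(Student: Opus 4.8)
The plan is to localize over the open part of $S'$ where $\alpha$ is an isomorphism and then reduce the assertion to Lemma~\xref{lemma-bir-disc}. First I would invoke the definition of the discriminant divisor of a $\QQ$-conic bundle recalled above: there are open subsets $S^o\subseteq S$ and ${S'}^{o}\subseteq S'$ with $\codim_S(S\setminus S^o)\ge 2$ and $\codim_{S'}(S'\setminus{S'}^{o})\ge 2$ over which $\pi^o\colon X^o\to S^o$ and ${\pi'}^{o}\colon {X'}^{o}\to {S'}^{o}$ are honest conic bundles, and $\Delta$, $\Delta'$ are by definition the closures of their discriminant curves. Since $\alpha$ is a proper birational morphism of normal surfaces, it is an isomorphism over the complement of a finite set of points of $S'$ (its non-finite locus has codimension $\ge 2$, and finiteness together with birationality and normality forces an isomorphism). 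Let $U'\subseteq {S'}^{o}$ be the dense open locus where $\alpha$ is an isomorphism, put $U:=\alpha^{-1}(U')$ and shrink so that $U\subseteq S^o$. Then $\alpha\colon U\xrightarrow{\sim}U'$, while $S'\setminus U'$ is finite and $S\setminus U$ is at most a curve.

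Next I would transport ${\pi'}^{o}|_{U'}$ to $U$ through the isomorphism $\alpha|_U$, obtaining two conic bundles $\pi^o|_U$ and $(\alpha|_U)^*({\pi'}^{o})$ over the \emph{same} base $U$. Both are standard: the relative Picard number $1$ of a $\QQ$-conic bundle is inherited by the restriction to the preimage of an open subset of the base, since the restriction of divisor classes $N^1(X/S)\to N^1(X^o/S^o)$ is surjective; hence each satisfies condition (b) of Definition~\xref{definition-conic-bundle}. Moreover $\Phi$ is fiberwise over $\alpha$, so it induces an isomorphism of generic fibres $X_\eta\xrightarrow{\sim}X'_{\eta'}$ compatible with the identification of function fields $\Bbbk(S)=\Bbbk(S')$ furnished by $\alpha$; restricted over $U$ this is precisely a fiberwise birational equivalence of two standard conic bundles over $U$ with the identity on the base. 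Lemma~\xref{lemma-bir-disc} then gives that their discriminant divisors coincide, that is, $\alpha(\Delta\cap U)=\Delta'\cap U'$ inside $U'$.

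Finally I would pass to closures in $S'$. Because $S'\setminus U'$ is finite, no component of $\Delta'$ lies in it, so $\Delta'=\overline{\Delta'\cap U'}$; combined with the previous step and the fact that the proper map $\alpha$ commutes with taking closures, this yields $\Delta'=\overline{\alpha(\Delta\cap U)}=\alpha(\overline{\Delta\cap U})$. Every component of $\Delta$ not contracted by $\alpha$ meets the isomorphism locus $U$ and is thereby accounted for, whereas any component of $\Delta$ contained in the exceptional locus of $\alpha$ is contracted to a point and contributes nothing one-dimensional; hence, as curves, $\alpha(\Delta)=\Delta'$, which is the claim. The step requiring the most care is the verification of the hypotheses of Lemma~\xref{lemma-bir-disc}: one must check that the restrictions of the two $\QQ$-conic bundles to the common open set $U$ are genuine \emph{standard} conic bundles and that the fiberwise map $\Phi$ really descends to an isomorphism of generic fibres over the function field identified by $\alpha$. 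Once this is secured, the reduction to Lemma~\xref{lemma-bir-disc} and the subsequent closure argument are routine.
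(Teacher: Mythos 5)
Your argument is correct and is exactly the reduction the paper intends: the corollary is stated as an immediate consequence of Lemma~\xref{lemma-bir-disc}, obtained by restricting to the open locus where $\alpha$ is an isomorphism and both $\QQ$-conic bundles are genuine standard conic bundles, applying the lemma there, and then taking closures (the discriminant of a $\QQ$-conic bundle being defined as such a closure by construction). Your care about standardness of the restrictions and about components of $\Delta$ contracted by $\alpha$ fills in precisely the details the paper leaves implicit.
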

The corollary shows that the discriminant curves of fiberwise birational class of $X/S$ define a \textit{b-divisor} on $S$,
see \cite{Shokurov2003-en}, \cite{Iskovskikh-2003-b-div}. Note however that this is 
not a b-Cartier b-divisor (if it is not trivial).

\begin{lemma}
\label{lemma-properties-Conic-bundles-2}
Let $\pi: X\to S$ be a standard conic bundle. 
Then there is an isomorphism
\begin{equation*}
\Pic(X)\simeq
\begin{cases} 
\pi^*\Pic(S)\oplus\ZZ\cdot K_X &\text{
if $\pi$ has no rational sections,}
\\
\pi^*\Pic(S)\oplus\ZZ\cdot D &\text{
if $\pi$ has a rational section $D$.}
\end{cases}
\end{equation*}
\end{lemma}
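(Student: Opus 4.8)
The plan is to reduce the computation of $\Pic(X)$ to that of the Picard group of the generic fiber $X_\eta=X\times_S\Spec\KK$, where $\KK=\Bbbk(S)$, and then to exhibit an explicit generator of the resulting cyclic quotient. Since $\pi$ is standard, the definition already provides a splitting $\Pic(X)=\pi^*\Pic(S)\oplus\ZZ$; the real content of the lemma is that the class of $K_X$ (respectively of a rational section $D$) may be chosen as the generator of the free summand. I would therefore phrase the goal as follows: the restriction homomorphism $r\colon\Pic(X)\to\Pic(X_\eta)$ induces an isomorphism $\Pic(X)/\pi^*\Pic(S)\xrightarrow{\ \sim\ }\Pic(X_\eta)$, after which it suffices to check that $r(K_X)$, respectively $r(D)$, generates $\Pic(X_\eta)$.

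First I would establish the isomorphism $\Pic(X)/\pi^*\Pic(S)\cong\Pic(X_\eta)$. As $X$ and $X_\eta$ are smooth, $\Pic=\Cl$, and I would invoke the excision exact sequence for Weil divisor class groups
\begin{equation*}
\bigoplus_{D\subset S\ \mathrm{prime}}\ZZ\cdot[\pi^*D]\longrightarrow\Cl(X)\longrightarrow\Cl(X_\eta)\longrightarrow 0,
\end{equation*}
whose right-hand term reflects that $X_\eta$ is the complement in $X$ of all vertical prime divisors. Here the standardness condition (c) is essential: it guarantees that every vertical prime divisor has the form $\pi^*D$ for a prime divisor $D\subset S$, so that the image of the left arrow is exactly $\pi^*\Cl(S)=\pi^*\Pic(S)$. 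Combined with the injectivity of $\pi^*$ (which follows from the projection formula and $\pi_*\OOO_X=\OOO_S$), this yields a short exact sequence $0\to\pi^*\Pic(S)\to\Pic(X)\xrightarrow{r}\Pic(X_\eta)\to0$, hence the desired quotient description.

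It remains to analyze $\Pic(X_\eta)$. The generic fiber is a smooth conic over $\KK$, hence a smooth projective curve of genus $0$, so $\Pic^0(X_\eta)=0$ and the degree map $\deg\colon\Pic(X_\eta)\hookrightarrow\ZZ$ is injective with image $\iota\ZZ$, where $\iota\in\{1,2\}$ is the index of $X_\eta$. Since $-K_{X_\eta}=\upomega_{X_\eta}^{-1}$ always has degree $2$, the case distinction is governed precisely by the existence of a $\KK$-rational point, that is, of a rational section of $\pi$. If $\pi$ has no rational section, then $X_\eta$ has no $\KK$-point, $\iota=2$, and $\Pic(X_\eta)$ is generated by $K_{X_\eta}$; since $\pi^*K_S$ restricts trivially to $X_\eta$ one has $r(K_X)=K_{X_\eta}$, so $K_X$ furnishes the generator. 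If $\pi$ has a rational section $D$, then $X_\eta\cong\PP^1_\KK$, $\iota=1$, $\Pic(X_\eta)$ is generated by a degree-$1$ point, and $r(D)$ is exactly the class of that point; hence $D$ furnishes the generator. In either case the short exact sequence splits off the chosen class, giving the stated decomposition.

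The step I expect to be the main obstacle is the identification of $\Pic(X_\eta)$ together with the correct generator: one must translate ``rational section of $\pi$'' into ``$\KK$-rational point of $X_\eta$'', invoke the index-$2$ phenomenon for conics without a rational point (so that $K_{X_\eta}$, and not a putative degree-$1$ class, is the generator), and verify the compatibilities $r(K_X)=K_{X_\eta}$ and $r(D)=[\mathrm{pt}]$. The excision step, although it genuinely uses the standardness hypothesis, is otherwise routine.
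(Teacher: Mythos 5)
The paper states this lemma without proof (it is presented as a well-known fact, in the spirit of \cite[\S~1]{Beauville1977} and \cite{Sarkisov-1982-e}), so there is no in-paper argument to compare against. Your proof is correct and is the standard one: excision identifies $\Pic(X)/\pi^*\Pic(S)$ with $\Pic(X_\eta)$ using condition (c) of standardness, and the generator is then read off from the index of the conic $X_\eta$ over $\Bbbk(S)$ (degree map injective since $\Pic^0(X_\eta)=0$, index $1$ or $2$ according to the existence of a rational point, with a degree-$1$ class on a genus-$0$ curve automatically effective by Riemann--Roch). No gaps.
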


From now on, we mainly concentrate on the study of
conic bundles over rational surfaces. 
Let us consider several well-known examples.

\begin{subexample}
Let $X\subset \PP^2\times \PP^2$ be a smooth divisor of bidegree $(2,d)$, $d>0$ 
given by the equation $f(x_0,x_1,x_2,y_0,y_1,y_2)=0$. This equation is quadratic in
$x_0,x_1,x_2$, so it can be viewed as 
a symmetric $3\times 3$-matrix $Q$ whose entires are homogeneous polynomials in $y_0,y_1,y_2$
of degree $d$. 
The projection $X\to \PP^2$ to the second factor is a standard conic bundle whose 
discriminant curve is given by the 
equation $\det(Q)=0$ of degree $3d$.
\end{subexample}

Starting with some well-known rationally connected variety, for example, Fano threefold, in many cases
one can construct a conic bundle by using some special birational transformations,
so-called Sarkisov links, see Sect.~\xref{section:Sarkisov}.
We give several such constructions below.

\begin{subexample}\label{example-cubic-conic-bundle}
Let $Y=Y_3\subset \PP^4$ be a smooth cubic hypersurface.
It is well known (see e.g. \cite{Altman1977}) that 
$Y$ contains a two-dimensional family of lines $\Sigma(Y)$.
Let $l\subset Y$ be a line and let $\sigma: X\to Y$ be the blowup of $l$.
Let $E$ be the exceptional divisor and let $H^*=\sigma^*H$ be the pull-back of a hyperplane section.
Then the two-dimensional linear system $|H^*-E|$ is base point free and defines 
a conic bundle structure $\pi: X\to \PP^2$.
The discriminant curve $\Delta\subset \PP^2$ is of degree $5$.
Indeed, a general member $F\in |H^*-E|$ is a cubic surface and the restriction
$\pi|_{F}$ is a conic bundle over a line $l\subset \PP^2$.
Degenerate fibers of $\pi|_{F}$ correspond to points $\Delta\cap l$.
By the Noether formula there are exactly $5$ such fibers.

For a general choice of $l$ in the corresponding Hilbert scheme,
the curve $\Delta$ is smooth. 
However, for some special choice of $l$,
$\Delta$ can be singular. To illustrate this, we recall that 
the normal bundle $\NNN_{\Gamma/Y}$ of any line $\Gamma\subset Y$ has the form $\OOO_{\PP^1}(a)\oplus \OOO_{\PP^1}(-a)$, where
$a=0$ or $1$ \cite[Proof of Proposition 2.2.8]{Kuznetsov-Prokhorov-Shramov}. 
The lines with $a=1$ are called \emph{special}.
They are characterized by the property that there exists a plane $\PP^2\subset \PP^4$
such that $Y\cap \PP^2=2\Gamma+\Gamma'$, where $\Gamma'$ is also a line which is called 
\emph{complementary} to $\Gamma$. The set of complementary lines is a closed one-dimensional subset in  $\Sigma(Y)$. 
Now, if we take in the above construction the line  $l=\Gamma'$ to be complementary, then 
the proper transform of $\Gamma$ on  $X$ will be a non-reduced fiber of the conic bundle  $\pi$ and so
the discriminant curve  $\Delta$ will be singular at the corresponding point.
If $l$ is not complementary, then the  discriminant curve $\Delta$ is smooth.
Moreover, the discriminant curve $\Delta$ can be reducible:
suppose that $Y$ contains a cubic cone $Z$. Take a line $l$ so that $l\subset Z$
and let $Z_X\subset X$ be the proper transform of $Z$. Then $\Delta_1:=\pi(Z_X)$
is an irreducible component of $\Delta\subset \PP^2$ of degree $1$.
\end{subexample}

There is another type of conic bundles over $\PP^2$ with discriminant curve of degree $5$:

\begin{subexample}[cf. \cite{Panin1980}, \cite{Blanc-Lamy-2012}]
\label{example-Panin}
Let $\Gamma\subset \PP^3$ be a smooth curve of degree $7$ and genus $5$.
By the Riemann-Roch theorem, 
the embedding $\Gamma\subset \PP^3$ is given by the complete linear system
$|K_{\Gamma}-P|$, where $P\in \Gamma$ is a point.
Let $\sigma: X\to \PP^3$ be the blowup of $\Gamma$
and let $E$ be the exceptional divisor. 
Let $H^*=\sigma^*H$ be the pull-back of a hyperplane in $\PP^3$.
Then the linear system $|3H^*-E|$ is base point free and defines 
a conic bundle structure $\pi: X\to \PP^2$.
The fibers of $\pi$ are proper transforms of conics in $\PP^3$ meeting 
$\Gamma$ at six points and a general member of $|3H^*-E|$ is a cubic surface.
As above, the discriminant curve $\Delta\subset \PP^2$ is of degree $5$. 

Since $\uprho(X)=2$ and both contractions $\pi$ and $\sigma$ are $K_X$-negative, $X$ is a Fano threefold
with $-K_X^3=16$ (see \cite[No. 9]{Mori-Mukai-1981-82}).
The morphism $\sigma\times \pi: X\to \PP^3\times \PP^2$ is an embedding and its image 
is an intersection of two divisors of bidegrees $(1,1)$ and $(2,1)$.
\end{subexample}

Since any smooth three-dimensional cubic hypersurface is not rational \cite{Clemens-Griffiths}, 
these two types of conic bundle (with discriminant curve
of degree $5$) cannot be birationally equivalent. They differ by the type of the corresponding double cover 
$\tilde\pi:\tilde\Delta\to\Delta$ (see~\xref{def-double-cover} below); for a conic bundle originating from a
cubic hypersurface, the cover is defined by an odd theta-characteristic, whereas the
theta-characteristic for the conic bundle constructed in Example~\xref{example-Panin}
is even (see~\xref{theta-characteristic} below).

\begin{subexample}[{\cite[Example~1.4.4]{Beauville1977}}, {\cite[Th.~4.3.3]{IP99}}]
\label{example-V8-conic-bundle}
Let $Y=Y_{2\cdot 2\cdot 2}\subset \PP^6$ be a smooth complete intersection of 
three quadrics.
It is known that $Y$ contains a one-dimensional family of lines
(see e.g. \cite[Lemma 5.2]{Tjurin1975}).
Let $l\subset Y$ be a line and let $\sigma: \tilde Y\to Y$ be the blowup of $l$.
Let $E$ be the exceptional divisor and let $H^*=\sigma^*H$ be the pull-back of a hyperplane section.
The linear system $|-K_{\tilde Y}|$ is base point free 
and defines a generically finite (but not finite) morphism. 
For a general choice of line $l$ this morphism is small,
i.e. it does not contract divisors. According to \cite{Kollar1989a}, there exists a flop 
$\tilde Y\dashrightarrow X$ and by the Cone Theorem on $X$ there exists a Mori extremal contraction. 
It is not hard to show that the only possibility for this contraction
is conic bundle $\pi: X\to \PP^2$ with discriminant curve $\Delta\subset \PP^2$ is of degree $7$.
Then the map $\tilde Y \dashrightarrow \PP^2$ is given by the linear system $|2H^*-3E|$,
see \cite[Theorem~4.3.3(ii)]{IP99} for details.
\end{subexample}

\begin{subexample}
Let $Y=Y_{2\cdot 3}\subset \PP^5$ be an intersection of a quadric and a cubic.
Suppose that $Y$ contains a plane $\Pi=\PP^2$. The projection from $\Pi$ 
induces a rational curve fibration on $Y$. 
If $Y$ is sufficiently general, then by blowing up $\Pi$ we obtain a standard conic bundle 
over $\PP^2$ with discriminant curve of degree $7$, see e.g. \cite[Example~1.4.6]{Beauville1977}.
In this case $Y$ is the midpoint of a Sarkisov link \cite[Proposition~7.11]{Jahnke-Peternell-Radloff-II}.
\end{subexample}

More examples of conic bundles over $\PP^2$ with discriminant curve of degree $7$ can be found in
\cite{BrownCortiZucconi-2004}.

\begin{subexample}[{\cite[Example~1.4.3]{Beauville1977}}, \cite{Conte1977a,Conte1977b}]
Let $Y\subset \PP^4$ be a quartic hypersurface which is singular along a line $l$.
Suppose that $Y$ is general. Then by blowing up $l$ we obtain a 
standard conic bundle over $\PP^2$ with discriminant curve of degree $8$.
\end{subexample}

\begin{subexample}[{\cite[Example~1.4.5]{Beauville1977}}, \cite{Debarre1990},
\cite{Przhiyalkovskij-Cheltsov-Shramov-2015}, \cite{Prokhorov-factorial-Fano-e}]
\label{example-4-double-solid}
Let $\varphi:Y\to \PP^3$ be a double cover branched over a
quartic surface $B\subset \PP^3$. Assume that $B$ is singular and its singular 
locus consists of a unique node $Q$. Then $P:=\varphi^{-1}(Q)$ is a unique 
singular point of $Y$. In this case, $Y$ is $\QQ$-factorial (see e.g. 
\cite{Cheltsov2009a}). Let $\sigma: X\to Y$ and $\lambda: \tilde \PP^3\to \PP^3$ 
be blowups of $P$ and $Q$, respectively. Then $\varphi$ induces a 
double cover $\tilde \varphi: X\to \tilde \PP^3$ and the projection from 
$Q$ induces a $\PP^1$-bundle $\psi:\tilde \PP^3\to \PP^2$. The 
composition $\pi: X\to \tilde \PP^3\to \PP^2$ is a standard conic bundle whose 
fibers are double covers of the fibers of $\psi$. The discriminant curve is of 
degree $6$.

Now if the point $P$ is not a unique singular point of $X$, then 
in the above construction $X$ is singular and $\pi: X\to \tilde \PP^3\to \PP^2$ is non-standard 
conic bundle. The standard forms for $X/\PP^2$ were described in \cite{Przhiyalkovskij-Cheltsov-Shramov-2015}.
\end{subexample}

\subsection{}
Let $\pi: X\to S$ be a conic bundle over a surface $S$ and let $\Delta\subset S$ be its 
discriminant curve. According to discussions in~\xref{discriminant}
the curve $\Delta$ is a reduced normal crossing divisor (possibly $\Delta=\emptyset$).
Moreover, a fiber $X_s$, $s\in S$ is smooth (resp. a pair of meeting lines, double line)
if $s\in S\setminus \Delta$ (resp. $s\in \Delta\setminus\Sing(\Delta)$, $s\in \Sing(\Delta)$).

The following facts are easy and well-known.
\begin{lemma}
\label{lemma-7-conic-bundle}
Let $\pi: X\to S$ be a conic bundle over a \textup(projective\textup) surface
and let $\Delta$ be the discriminant curve. Then
\begin{eqnarray}
\label{lemma-7-conic-bundle-1}
\chi_{\operatorname{top}}(X)&=&2\chi_{\operatorname{top}}(S)-2\p(\Delta)+2,
\\
\label{lemma-7-conic-bundle-2}
\bb_1(X)&=&\bb_1(S), 
\\ 
\label{lemma-7-conic-bundle-3}
\bb_3(X)&=&2\bb_1(S)+2\bb_2(X)-2\bb_2(S)+2\p(\Delta)-4,
\end{eqnarray}
\textup(for the case $\Delta=\emptyset$ we put $\p(\emptyset)=1$\textup).
\end{lemma}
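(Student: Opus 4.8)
The plan is to compute the three topological invariants by exploiting the fibration structure and treating the three strata of $S$ separately, according to the fiber type classification recalled in~\xref{equation-cb}. First I would establish~\eqref{lemma-7-conic-bundle-1} via the additivity of the topological Euler characteristic over a stratification. Stratify $S$ as $S = (S\setminus\Delta) \sqcup (\Delta\setminus\Sing\Delta) \sqcup \Sing\Delta$, and stratify $X$ by preimages. Over $S\setminus\Delta$ the map $\pi$ is a $\PP^1$-bundle, so the fiber has $\chi_{\operatorname{top}}=2$; over $\Delta\setminus\Sing\Delta$ each fiber is a pair of distinct meeting lines (a nodal conic) with $\chi_{\operatorname{top}}=3$; over $\Sing\Delta$ each fiber is a double line, so set-theoretically a $\PP^1$ with $\chi_{\operatorname{top}}=2$. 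Using multiplicativity of $\chi_{\operatorname{top}}$ for locally trivial (in the analytic topology) fibrations over each stratum, I obtain
\begin{equation*}
\chi_{\operatorname{top}}(X) = 2\chi_{\operatorname{top}}(S\setminus\Delta) + 3\chi_{\operatorname{top}}(\Delta\setminus\Sing\Delta) + 2\chi_{\operatorname{top}}(\Sing\Delta).
\end{equation*}
Writing $\chi_{\operatorname{top}}(S\setminus\Delta)=\chi_{\operatorname{top}}(S)-\chi_{\operatorname{top}}(\Delta)$ and $\chi_{\operatorname{top}}(\Delta\setminus\Sing\Delta)=\chi_{\operatorname{top}}(\Delta)-\chi_{\operatorname{top}}(\Sing\Delta)$, this collapses to $\chi_{\operatorname{top}}(X)=2\chi_{\operatorname{top}}(S)+\chi_{\operatorname{top}}(\Delta)$. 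It then remains to identify $\chi_{\operatorname{top}}(\Delta)$ with $-2\p(\Delta)+2$; for a reduced nodal (normal crossing) curve $\Delta$ this is exactly the arithmetic-genus formula $\chi_{\operatorname{top}}(\Delta)=2-2\p(\Delta)$, which holds because the nodal normalization $\tilde\Delta\to\Delta$ identifies pairs of points at each node and $\p(\Delta)$ counts $\hh^1$ accordingly, consistent with the convention $\p(\emptyset)=1$ giving $\chi_{\operatorname{top}}(\emptyset)=0$.

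Next I would prove~\eqref{lemma-7-conic-bundle-2}, that $\bb_1(X)=\bb_1(S)$. The natural tool is the Leray spectral sequence for $\pi$ together with the fact that a conic bundle is a proper morphism whose fibers are connected rational curves, so $R^0\pi_*\QQ=\QQ_S$ and the fibers carry no odd cohomology. More directly, since $X$ is smooth projective and $\pi$ has rationally connected fibers, $\pi$ induces an isomorphism on $\hh^1$: pulling back gives an injection $\hh^1(S,\QQ)\hookrightarrow \hh^1(X,\QQ)$, and surjectivity follows because any $1$-form (or degree-one class) on $X$ must be pulled back from the base, as the generic fiber $\PP^1$ has no nonzero holomorphic $1$-forms and $X$ is rationally connected along fibers. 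Equivalently, $\hh^1(X,\OOO_X)=\hh^1(S,\OOO_S)$ and $X$, $S$ have no torsion obstructing the comparison of Betti and Hodge numbers in degree one, so $\bb_1(X)=\bb_1(S)$.

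Finally,~\eqref{lemma-7-conic-bundle-3} I would derive purely formally from the first two, using Poincaré duality on the smooth projective threefold $X$ to relate $\bb_3$ to the other Betti numbers. From $\chi_{\operatorname{top}}(X)=\sum_{i=0}^{6}(-1)^i\bb_i(X)$ and Poincaré duality ($\bb_0=\bb_6=1$, $\bb_1=\bb_5$, $\bb_2=\bb_4$, $\bb_3=\bb_3$), one gets $\chi_{\operatorname{top}}(X)=2-2\bb_1(X)+2\bb_2(X)-\bb_3(X)$. Solving for $\bb_3(X)$ and substituting~\eqref{lemma-7-conic-bundle-1} and~\eqref{lemma-7-conic-bundle-2} yields
\begin{equation*}
\bb_3(X)=2-2\bb_1(S)+2\bb_2(X)-\bigl(2\chi_{\operatorname{top}}(S)-2\p(\Delta)+2\bigr),
\end{equation*}
and writing $\chi_{\operatorname{top}}(S)=2-2\bb_1(S)+\bb_2(S)$ (Poincaré duality on the surface $S$) simplifies this to $\bb_3(X)=2\bb_1(S)+2\bb_2(X)-2\bb_2(S)+2\p(\Delta)-4$, as claimed. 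The main obstacle is the careful bookkeeping in the first step — ensuring the fiberwise Euler characteristics are assigned correctly over the double-line locus $\Sing\Delta$ (where the scheme-theoretic fiber is non-reduced but contributes its reduced Euler characteristic $2$), and confirming multiplicativity of $\chi_{\operatorname{top}}$ on each stratum holds in the constructible/analytic sense even though $\pi$ is not a topological fiber bundle globally; the remaining two formulas are then formal consequences of duality.
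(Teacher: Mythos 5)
The paper states this lemma without proof (``easy and well-known''), so there is no argument to compare against; judged on its own, your strategy is the standard one, and parts \eqref{lemma-7-conic-bundle-2} and \eqref{lemma-7-conic-bundle-3} are correct: the Leray spectral sequence argument for $\bb_1$ (using $R^0\pi_*\QQ=\QQ_S$ and $R^1\pi_*\QQ=0$, since every fiber is a point, a $\PP^1$, or two $\PP^1$'s meeting at a point) is sound, and the derivation of \eqref{lemma-7-conic-bundle-3} from \eqref{lemma-7-conic-bundle-1}, \eqref{lemma-7-conic-bundle-2} and Poincar\'e duality is purely formal and checks out.

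The Euler characteristic computation, however, contains two bookkeeping errors of the same size that happen to cancel. Writing $\delta:=\#\Sing(\Delta)$, your stratification sum does not collapse to $2\chi_{\operatorname{top}}(S)+\chi_{\operatorname{top}}(\Delta)$; it gives
\begin{equation*}
2\bigl(\chi_{\operatorname{top}}(S)-\chi_{\operatorname{top}}(\Delta)\bigr)+3\bigl(\chi_{\operatorname{top}}(\Delta)-\delta\bigr)+2\delta
=2\chi_{\operatorname{top}}(S)+\chi_{\operatorname{top}}(\Delta)-\delta.
\end{equation*}
Second, for a reduced \emph{nodal} curve the identity $\chi_{\operatorname{top}}(\Delta)=2-2\p(\Delta)$ is false: each node raises $\p$ by one relative to the normalization while lowering $\chi_{\operatorname{top}}$ by only one, so the correct relation is $\chi_{\operatorname{top}}(\Delta)=2-2\p(\Delta)+\delta$ (for two lines in $\PP^2$ one has $\p=0$ and $\chi_{\operatorname{top}}=3$, not $2$). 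Combining the corrected identities, the two $\delta$'s cancel and one recovers $\chi_{\operatorname{top}}(X)=2\chi_{\operatorname{top}}(S)-2\p(\Delta)+2$, so your final formula is right, but both intermediate claims as you stated them are wrong and should be repaired as above.
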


\begin{lemma}\label{lemma-properties-Conic-bundles}
Let $\pi: X\to S$ be a conic bundle over a surface and let $\Delta\subset S$ be its 
discriminant curve.
\begin{enumerate}
\item \label{lemma-properties-Conic-bundles-1}
If $X$ is rationally connected, then 
the surface $S$ is rational. Conversely, if $\pi: X\to S$ be a conic bundle over a rational
surface, then $X$ is rationally connected \cite{Kollar-Miyaoka-Mori-1992a}, \cite{Graber-Harris-Starr-2003}.
\item 
If $\pi$ has a rational section, then $X$ is birationally equivalent to $S\times \PP^1$.

\item 
Assume that the conic bundle is standard and the Brauer group of $S$ is trivial 
\textup(for example this holds if $S$ is rational\textup). 
Then $\pi$ has a rational section if and only if $\Delta=\emptyset$.
\end{enumerate}
\end{lemma}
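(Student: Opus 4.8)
The plan is to treat the three assertions separately, in each case passing to the generic fibre $X_\eta$ over the function field $K:=\Bbbk(S)$, which is a smooth conic. For assertion~\ref{lemma-properties-Conic-bundles-1} I would first observe that $\pi$ is dominant, so if $X$ is rationally connected then so is its image $S$: two general points of $S$ lift to points of $X$, which are joined by a rational curve, and the image of that curve joins the chosen points. A smooth projective rationally connected surface satisfies $q(S)=0$ and $P_2(S)=0$ (all pluri-differentials vanish on a rationally connected variety), so Castelnuovo's rationality criterion shows that $S$ is rational. For the converse I would invoke the theorem on rational connectedness of fibrations: the general fibre of $\pi$ is $\PP^1$ and the base $S$ is rational, so both base and general fibre are rationally connected, whence so is the total space $X$ by \cite{Graber-Harris-Starr-2003} (together with \cite{Kollar-Miyaoka-Mori-1992a}).

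For the second assertion, a rational section $D$ meets the general fibre in a single point and hence determines a $K$-rational point of $X_\eta$. Since $\Delta$ is a proper closed subset of $S$, the generic fibre $X_\eta$ is a \emph{smooth} conic over $K$, and a smooth conic possessing a rational point is isomorphic to $\PP^1$ over its base field. Thus $X_\eta\cong\PP^1_K$, which is also the generic fibre of the projection $S\times\PP^1\to S$. An isomorphism of generic fibres over $\Spec K$ spreads out to a birational map $X\dashrightarrow S\times\PP^1$ over $S$; in particular $X$ is birationally equivalent to $S\times\PP^1$.

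For the third assertion I would split into the two implications. The direction ``$\pi$ has a rational section $\Rightarrow\Delta=\emptyset$'' follows formally from the second assertion together with Lemma~\ref{lemma-bir-disc}: the birational equivalence just constructed is fiberwise over $S$ with $\alpha=\id_S$, while $S\times\PP^1\to S$ is a standard conic bundle with empty discriminant, so Lemma~\ref{lemma-bir-disc} forces $\Delta=\emptyset$ (this implication uses neither the Brauer-group hypothesis nor any cohomology). For the converse, if $\Delta=\emptyset$ then $\pi$ is a smooth conic bundle, i.e.\ a form of $\PP^1$ that is trivial in the \'etale topology; as such it is classified by a class in $H^1_{\et}(S,\mathrm{PGL}_2)$ whose image under the coboundary map of $1\to\mathbb{G}_m\to\mathrm{GL}_2\to\mathrm{PGL}_2\to1$ is a two-torsion class $\alpha\in\Br(S)$ that restricts, at the generic point, to the Brauer class of the conic $X_\eta$. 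If $\Br(S)=0$ then $\alpha=0$, so $X_\eta$ has trivial Brauer class and is therefore $\cong\PP^1_K$; any of its $K$-points is then a rational section of $\pi$. Finally $\Br(S)=0$ whenever $S$ is rational, the Brauer group being a birational invariant of smooth projective surfaces with $\Br(\PP^2)=0$.

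The step I expect to require the most care is the converse in the third assertion: one must make sure that a smooth conic bundle defines a genuine class in $\Br(S)$, and not merely in $\Br(K)$, and that the global vanishing of this class actually returns a rational section. I would make this precise through the identification of smooth conic bundles over $S$ with $\mathrm{PGL}_2$-torsors and the obstruction-theoretic exact sequence above. By contrast, the remaining implications are comparatively soft, reducing either to the vanishing of pluri-differentials on rationally connected varieties or, via Lemma~\ref{lemma-bir-disc}, to the geometry of the generic conic.
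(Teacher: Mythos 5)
Your proof is correct. The paper states this lemma without proof (as ``easy and well-known''), and your arguments --- rational connectedness of the image of $\pi$ plus Castelnuovo's criterion, Graber--Harris--Starr for the converse, splitting of the generic conic by a $K$-point for (ii), Lemma~\ref{lemma-bir-disc} for the forward direction of (iii), and the $\mathrm{PGL}_2$-torsor/Brauer-class obstruction for the converse --- are exactly the standard ones, with the one delicate point (that the smooth conic bundle defines a class in $\Br(S)$ itself, not merely in $\Br(\Bbbk(S))$, whose vanishing yields a section) handled correctly.
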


\subsection{Admissible double covers.}\label{double-cover}
Let $\tilde\Delta$ be a reduced connected curve with at worst nodal singularities
and let $\tau: \tilde\Delta\to \tilde\Delta$ be an involution 
(an automorphism of order $2$). Let $\Delta:= \tilde\Delta/\tau$ be the quotient
and let $\tilde\pi:\tilde\Delta\to\Delta$ be the natural projection.
It is easy to show that the singularities of $\Delta$ are also at worst nodes.
We also assume that the following condition is satisfied 
(the so-called \emph{Beauville condition}):
\begin{equation}\label{Beauville-condition}
\pi(\Sing (\tilde\Delta))=\Sing (\Delta),\quad 
\Sing (\tilde\Delta)=\{ x\in\tilde\Delta\mid
\tau(x)=x\}.
\end{equation}
Then the restriction of $\tilde\pi$ to each irreducible component 
of $\tilde\Delta$ does not split and, at a singular point of $\tilde \Delta$, 
two branches are not switched by $\tau$. 
From \eqref{Beauville-condition} one can easily deduce the following condition:
\begin{equation}
\label{equation-condition-S-star-star}
\begin{tabularx}{0.8\textwidth}{X}
for every decomposition $\Delta=\Delta_1+\Delta_2$ with $\Delta_i\ge 0$
\\
we have
$\#(\Delta_1\cap \Delta_2)\equiv 0 \mod 2$.
\end{tabularx}
\end{equation}
Note however, that in our situation $\tilde\pi$ is not necessarily flat.

\subsection{}
\label{def-double-cover}
Now, let $\pi: X\to S$ be a standard conic bundle over a projective surface $S$ and let $\Delta\subset S$ be its 
discriminant curve.
Let $\tilde\Delta$ be the curve parameterizing components of fibers in the ruled surface
$X_{\Delta}:=\pi^{-1}(\Delta)$. The induced projection $\tilde\pi:\tilde\Delta\to\Delta$ 
is finite of degree $2$ and satisfies the conditions of~\xref{double-cover}. 

\begin{scorollary}\label{discriminant-divisor-pa=0}
Any connected component of the discriminant divisor of a standard conic bundle
over a surface has arithmetic genus at least $1$.
\end{scorollary}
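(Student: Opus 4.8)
The plan is to argue by contradiction. Suppose a connected component $\Delta_0\subseteq\Delta$ has $\p(\Delta_0)=0$; I will contradict the properties of the admissible double cover $\tilde\pi\colon\tilde\Delta\to\Delta$ attached to $\pi$ in~\xref{def-double-cover}, which by construction satisfies the conditions of~\xref{double-cover}. First I pin down the shape of $\Delta_0$. By~\xref{discriminant} the divisor $\Delta$ is reduced with at worst nodal singularities, so $\Delta_0$ is a connected nodal curve; writing it as $\Delta_0=\sum_{i=1}^{r}C_i$ with irreducible components $C_i$ of geometric genus $g(C_i)$ and letting $\delta$ be the total number of nodes, the arithmetic genus formula reads $\p(\Delta_0)=\sum_{i}g(C_i)+\delta-r+1$. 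Connectedness forces $\delta\ge r-1$, so $\p(\Delta_0)=0$ can hold only if every $C_i$ is a smooth rational curve, $\delta=r-1$, and the dual graph of $\Delta_0$ is a tree. Thus $\Delta_0$ is a tree of $\PP^1$'s.

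Next I dispose of the reducible case $r\ge2$ by means of the parity condition~\eqref{equation-condition-S-star-star}. Choose any node of $\Delta_0$ and cut the tree there: this separates $\Delta_0$ into two nonempty subtrees $\Delta'$ and $\Delta''$ that meet only at the chosen node, because in a tree any two complementary subtrees are joined by a single edge. Setting $\Delta_1:=\Delta'$ and $\Delta_2:=\Delta-\Delta'$ and using that $\Delta_0$ is a connected component of $\Delta$ (so $\Delta'$ meets nothing outside $\Delta_0$), we obtain $\#(\Delta_1\cap\Delta_2)=\#(\Delta'\cap\Delta'')=1$. This odd intersection number contradicts~\eqref{equation-condition-S-star-star}.

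It remains to treat the irreducible case $r=1$, where $\Delta_0\cong\PP^1$ is smooth and hence contains no singular point of $\Delta$. By the Beauville condition~\eqref{Beauville-condition} the ramification locus of $\tilde\pi$ lies over $\Sing(\Delta)$, so $\tilde\pi$ is unramified over $\Delta_0$ and $\tilde\pi^{-1}(\Delta_0)\to\Delta_0$ is an \'etale double cover of $\PP^1$. Since $\PP^1$ is simply connected, every such cover splits, contradicting the fact that the restriction of $\tilde\pi$ to each irreducible component of $\Delta$ does not split (see~\xref{double-cover}). In both cases we reach a contradiction, so $\p(\Delta_0)\ge1$.

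I expect the case $r=1$ to be the main obstacle: unlike the reducible case, it cannot be settled by the combinatorial condition~\eqref{equation-condition-S-star-star}, since an irreducible $\Delta_0$ admits no decomposition into two nonzero effective pieces. Instead it requires the topological input that $\PP^1$ carries no nontrivial connected \'etale double cover, together with the observation---read off from~\eqref{Beauville-condition}---that $\tilde\pi$ is ramified only over the nodes of $\Delta$. Everything else reduces to the genus formula and elementary properties of the dual graph of $\Delta_0$.
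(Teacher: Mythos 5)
Your proof is correct and is precisely the argument the paper leaves implicit: the corollary is stated without proof as a consequence of \xref{double-cover}--\xref{def-double-cover}, namely the parity condition \eqref{equation-condition-S-star-star} to exclude a tree of at least two rational curves, and the non-splitting of $\tilde\pi$ over each irreducible component (equivalently, standardness) to exclude a smooth $\PP^1$ component, over which the cover would be \'etale and hence trivial. Nothing is missing.
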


\begin{scase}\label{theta-characteristic}
Recall that a \textit{theta-characteristic} of a smooth curve $\Delta$ is the
linear equivalence class of a divisor $D$ such that $2D\sim K_\Delta$.

A theta-characteristic can be even or odd depending on the parity of the dimension of 
$H^0(\Delta, \OOO_{\Delta}(D))$
(see e.g. \cite{Mumford1971}, \cite[Ch.~5]{Dolgachev-ClassicalAlgGeom}).
This definition can be naturally generalized to the case of reduced Gorenstein curves.
A \textit{theta-characteristic} of such a curve is a rank-$1$ torsion-free sheaf $\FFF$
such that $\HHom_{\OOO}(\FFF,\upomega_{\Delta})\simeq \FFF$ \cite{Beauville1977a}, \cite{Piontkowski2007}.
\end{scase}

\begin{subexample}
Suppose that $S=\PP^2$, $\Delta$ is smooth, and the degree of $\Delta$ is odd and $\ge 5$.
Write $\deg \Delta=2m+3$.
There is one-to-one correspondence between \'etale
double covers $\tilde \pi: \tilde \Delta\to \Delta$ and elements $\sigma$ of order $2$ in the Jacobian $\J(\Delta)$.
The linear system $|mh|$, where $h$ is the class of hyperplane section of $\Delta$, 
is a half-canonical linear system, i.e. it is a theta-characteristic.
Then $mh+\sigma$ is another theta-characteristic.
In other words, the group of $2$-torsion points $\J_2(\Delta)\subset \J(\Delta)$ acts on the set of all theta-characteristics $\operatorname{Th}(\Delta)$
making it a principal homogeneous space.
The choice of a distinguished point $|mh|\in \operatorname{Th}(\Delta)$ establishes an identification $\operatorname{Th}(\Delta)\simeq \J_2(\Delta)$.
We say that $mh+\sigma$ is the theta-characteristic corresponding to the cover $\tilde \Delta\to \Delta$.
Thus there is one-to-one correspondence between \'etale
double covers $\tilde \pi: \tilde \Delta\to \Delta$ and theta-characteristics of $\Delta$.
\end{subexample}

It turns out that the cover $\tilde \Delta\to \Delta$ is the most important invariant. 
In particular, this cover ``almost determine'' the conic bundle:

\begin{proposition}\label{proposition-AM}
Let $S$ be a rational surface, $\Delta\subset S$ a reduced normal crossing
curve, and $\tilde\pi:\tilde\Delta\to\Delta$ a double cover 
satisfying conditions \eqref{Beauville-condition}. Then there exists a standard conic
bundle $\pi: X\to S$ with the given $\tilde\pi:\tilde\Delta\to\Delta$, and all such standard conic bundles are
birationally equivalent over $S$.
\end{proposition}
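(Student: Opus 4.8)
The plan is to treat existence and uniqueness separately, using throughout that a rational surface $S$ satisfies $\Br(S)=0$ (the Brauer group is a birational invariant of smooth projective surfaces and $\Br(\PP^2)=0$) together with purity, which makes the Brauer group of the function field $\KK:=\Bbbk(S)$ accessible through its ramification. This is essentially Beauville's correspondence.

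For uniqueness I would argue on the generic fibre. If $\pi\colon X\to S$ and $\pi'\colon X'\to S$ are two standard conic bundles realizing the same $\tilde\pi\colon\tilde\Delta\to\Delta$, their generic fibres $X_\eta$ and $X'_\eta$ are smooth conics over $\KK$, hence are classified up to $\KK$-isomorphism by the Brauer classes $[X_\eta],[X'_\eta]\in\Br(\KK)[2]$ of the associated quaternion algebras. The key point is that the residue $\partial_{\Delta_i}[X_\eta]$ along each component $\Delta_i\subset\Delta$ is precisely the class in $H^1(\kappa(\Delta_i),\ZZ/2)$ of the double cover $\tilde\Delta_i\to\Delta_i$ induced by $\tilde\pi$ (over the generic point of $\Delta_i$ the two points of a fibre of $\tilde\pi$ are the two components of the corresponding degenerate conic, see~\xref{equation-cb}), while both classes are unramified away from $\Delta$. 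Since $X$ and $X'$ induce the same $\tilde\pi$, the difference $[X_\eta]-[X'_\eta]$ has vanishing residue along every prime divisor of $S$, so by purity it lies in the image of $\Br(S)\to\Br(\KK)$, which is $0$. Hence $[X_\eta]=[X'_\eta]$, the generic fibres are $\KK$-isomorphic, and this isomorphism is the required fiberwise birational equivalence over $S$.

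For existence I would reverse this reasoning. The covers $\tilde\Delta_i\to\Delta_i$ give a collection of classes in $\bigoplus_i H^1(\kappa(\Delta_i),\ZZ/2)$, and the Beauville condition~\eqref{Beauville-condition}, in the equivalent form of the parity condition~\eqref{equation-condition-S-star-star}, is exactly the reciprocity relation at the codimension-two points of $S$ (the nodes of $\Delta$ and the intersection points of its components) that this profile must satisfy to be the ramification of a single class in $\Br(\KK)[2]$; since $\Br(S)=0$ and $S$ is rational there is no further global obstruction, so one obtains a quaternion algebra, i.e. a conic over $\KK$ with the prescribed ramification. To produce a genuine global model rather than only a generic fibre, I would instead realize it by a symmetric determinantal representation: the same datum equips $\Delta$ with a theta-characteristic in the sense of~\xref{theta-characteristic} (a rank-one torsion-free sheaf $\FFF$ with $\HHom_{\OOO}(\FFF,\upomega_\Delta)\simeq\FFF$), and the vanishing $H^i(S,\OOO_S)=0$ for $i>0$ lets one resolve $\iota_*\FFF$ by a symmetric map $A$ of vector bundles on $S$ whose degeneracy locus is $\Delta$ with the local ranks dictated by~\xref{equation-cb}. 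The zero locus of $A$ in the corresponding $\PP^2$-bundle is then a conic bundle $X\to S$; one checks it is standard (condition (c) of Definition~\xref{definition-conic-bundle}) and that its associated double cover is the given $\tilde\pi$, again by inspecting the local normal forms.

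The hard part is the existence statement at the singular points of $\Delta$. Over the smooth locus $\Delta\setminus\Sing(\Delta)$ both the Brauer-theoretic and the determinantal constructions are routine; the content of the Beauville condition is precisely what is needed to glue these local models compatibly across the nodes of $\Delta$, where the two branches must not be interchanged by the covering involution and where the theta-characteristic sheaf ceases to be locally free. Verifying that the resulting $X$ is smooth and that $\pi$ degenerates exactly as prescribed, and not more, along $\Sing(\Delta)$ — equivalently that the symmetric resolution has the correct local rank $1$ there — is the step that genuinely uses~\eqref{Beauville-condition} and~\xref{equation-cb}. Finally, that any two models agree birationally over $S$ is guaranteed by the uniqueness part together with Lemma~\xref{lemma-bir-disc} and Corollary~\xref{cor:disc}, which confirm that the discriminant data are preserved within a fiberwise birational class.
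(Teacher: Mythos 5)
Your uniqueness argument is essentially the paper's: both reduce to the generic fibre, observe that the Brauer class of the conic is pinned down by its residues along the components of $\Delta$ because $\Br(S)=0$, and conclude that the generic fibres are isomorphic conics over $\Bbbk(S)$. Your phrasing via vanishing residues of the difference plus purity is a clean way to say what the paper extracts from exactness of the Artin--Mumford sequence at the $\bigoplus H^1$ term.

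For existence there are two points to flag. First, a genuine gap: from ``$a$ lifts to a $2$-torsion class $A\in\Br(\Bbbk(S))$'' you jump directly to ``one obtains a quaternion algebra.'' That is not automatic for an arbitrary field: a $2$-torsion Brauer class need not have index $2$. The paper closes this by Merkur'ev's theorem ($A$ is a product of quaternion classes) together with Albert's theorem (over a $\mathrm{c}_2$-field such as $\Bbbk(S)$, a product of quaternion classes is again a quaternion class). Without some such input (or de Jong's period--index theorem) your existence argument does not yet produce a conic over $\Bbbk(S)$. Second, your globalization step is genuinely different from the paper's: you propose a symmetric determinantal resolution of a theta-characteristic sheaf on $\Delta$, whereas the paper (following Artin--Mumford and Sarkisov) takes a maximal order of the quaternion algebra $\mathscr A$ over $S$ and uses the correspondence between maximal orders and standard conic bundles with the given local invariants. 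The determinantal route is attractive because it is explicit and ties the construction to the theta-characteristic picture used elsewhere in the paper, but as you acknowledge it is only established in the literature for $S=\PP^2$ with controlled singularities of $\Delta$; for a general rational surface the existence of a length-one symmetric resolution with the correct local ranks at the nodes, and the identification of the resulting double cover with the prescribed $\tilde\pi$, are precisely the points you would have to prove and currently only assert. The maximal-order argument avoids these difficulties at the cost of being less constructive. As written, your proposal is a correct skeleton with the uniqueness half complete, but the existence half needs the Merkur'ev--Albert step supplied and the determinantal construction either carried out at the nodes or replaced by the maximal-order argument.
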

\begin{proof}[Sketch of the proof.]
Consider so-called \textit{Artin-Mumford exact sequence}
\begin{equation*}
0\to\Br S\to\Br\Bbbk(S)\overset{\alpha}\longrightarrow 
\bigoplus_{\substack{\text{curves}\\
C\subset S}}H^1_{\mathrm{et}}(C,\QQ/\ZZ) 
\overset{\beta}\longrightarrow\bigoplus_{\substack{\text{points}\\ P\in C}}\mumu^{-1} 
\overset{\gamma}\to\mumu^{-1}\to 0
\end{equation*}
(see~\cite{Artin-Mumford-1972}), where
\begin{equation*}
\mumu^{-1}=\cup_n\Hom(\mumu_n,\QQ/\ZZ), 
\end{equation*}
and $\mumu_n$ denotes the group of $n$-th roots
of unity. Then the double cover $\tilde\pi: \tilde \Delta\to \Delta$
determines an element 
\begin{equation}
\label{eq:local-inv}
a\in \bigoplus _{C\subset \Delta} H^1_{\et}(C,\QQ/\ZZ)
\end{equation}
of order $2$ (a collection of \textit{local invariants}). 

From \eqref{Beauville-condition} it follows immediately that $\beta(a)=0$,
and hence there is an element $A\in\Br(\Bbbk(S))$ such that $\alpha(A)=a$. 
As $S$ is a
rational surface, we have $\Br(S)=0$. Hence, $A\in \Br(\Bbbk(S))$ has order $2$. By virtue of the
well-known Merkur'ev theorem~\cite{Merkurev1981} $A$ is a product of classes of
quaternion algebras over the function field $\Bbbk(S)$. 
Furthermore, since $\Bbbk(S)$ is a
$\mathrm{c}_2$-field, by Albert's theorem \cite[Theorem 2.10.9]{Jacobson1996} 
the product of classes of quaternion algebras is
also represented by a quaternion algebra. Thus there exists a quaternion algebra $\mathscr A$
over $\Bbbk(S)$ whose class in $\Br(\Bbbk(S))$ is equal to $A$. According to the classical
theorem on central simple algebras $\mathscr A$ is uniquely
determined up to
isomorphism.
As in ~\cite{Artin-Mumford-1972} (see also~\cite[Theorem~5.3]{Sarkisov-1982-e}), one proves that the maximal orders of
$\mathscr A$ over $S$ are in $1${-}$1$ correspondence with standard conic bundles $\pi: X\to S$
with given local invariants $\tilde\pi:\tilde\Delta\to\Delta$ \eqref{eq:local-inv}. All such
conic bundles are
birationally equivalent over $S$ since their generic fibers are isomorphic conics over
$\Bbbk(S)$ associated to the same quaternion algebra $\mathscr A$.
\end{proof}

\begin{subexample}
Let $X\subset \PP^2_{u_0,u_1,u_1}\times \PP^2_{x_0,x_1,x_1}$ be given by the equation 
\begin{equation*}
f_0(u_0,u_1,u_1)x_0^2 +f_1(u_0,u_1,u_1)x_1^2 +f_2(u_0,u_1,u_1)x_2^2 =0,
\end{equation*}
where $f_i$ are general homogeneous polynomials of degree $d\ge 1$. The projection 
$\pi: X\to \PP^2$
to the first factor 
is a standard conic bundle whose discriminant curve is reducible curve of degree $3d$
given by $f_0f_1f_2=0$. 
The corresponding quaternion algebra $A$ over $\Bbbk(\PP^2)$
is generated by four vectors $\mathbf 1$, $\mathbf i$, $\mathbf j$, $\mathbf k$
with standard relations
\begin{equation*}
\mathbf i \cdot \mathbf j=\mathbf k=- \mathbf j \cdot \mathbf i,
\quad \mathbf i^2=- f_0/f_2,
\quad \mathbf j^2=- f_1/f_2,
\quad \mathbf k^2=- f_0f_1/f_2^2.
\end{equation*}
\end{subexample}

Note however that the recipe given in Proposition~\xref{proposition-AM}
allows to recover a conic bundle $\pi: X\to S$ only up to \textit{birational equivalence} over the base $S$.
There is no canonical way of reconstruction of $X/ S$ by the double cover $\tilde \Delta\to \Delta$.

\begin{lemma}[cf. {\cite{Sarkisov-1980-1981-e}}]
\label{lemma-canonical-bundle-formula}
Let $\pi: X\to S$ be a contraction of projective varieties
such that any fiber is one-dimensional, 
$X$ has at worst terminal singularities, and 
$-K_X$ is $\pi$-ample. 
Let $\Delta\subset S$ be the discriminant divisor.
Then the following numerical equivalence of cycles on $S$ holds, where $K_X^2$ is regarded as a 
rational equivalence class of codimension two cycles 
\begin{equation}
\label{equation-canonical-bundle-formula}
-\pi_* K_X^2\equiv 4K_S+\Delta. 
\end{equation}
\end{lemma}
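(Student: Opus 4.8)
The plan is to reduce the asserted numerical equivalence of divisors on $S$ to a single intersection-number computation after restricting to a general curve. Since a divisor class on the projective surface $S$ is numerically trivial as soon as it pairs to zero with a family of curve classes spanning $N^1(S)_{\QQ}$, and since general members of $|mH|$ realise the spanning family of ample classes $mH$, it suffices to establish
\[
-\pi_* K_X^2\cdot C=(4K_S+\Delta)\cdot C
\]
for $C$ a general member of $|mH|$, with $H$ ample and $m\gg 0$. Recall that there is a closed subset $Z\subset S$ with $\codim_S Z\ge 2$ over which $\pi$ restricts to an honest conic bundle with smooth total space; as $\dim S=2$, this $Z$ together with $\Sing(S)$ and $\pi(\Sing(X))$ is a finite set, which a general $C$ avoids. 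By Bertini $C$ is then a smooth curve in the smooth locus of $S$ meeting $\Delta$ transversally in $\delta:=\Delta\cdot C$ distinct points of $\Delta\setminus\Sing(\Delta)$ (using Corollary~\xref{normal-crossing}), so that $T:=\pi^{-1}(C)$ is a smooth projective surface and $\rho:=\pi|_T\colon T\to C$ is a standard conic bundle over the smooth curve $C$ with exactly $\delta$ degenerate fibres, each a pair of $(-1)$-curves meeting at a point.

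Next I would compute $(K_X|_T)^2$. By the projection formula, $\pi_* K_X^2\cdot C=K_X^2\cdot\pi^*C=K_X^2\cdot[T]=(K_X|_T)^2$, the last being an intersection number on the surface $T$; here $K_X$ is Cartier in a neighbourhood of $T$ because $C$ avoids $\pi(\Sing(X))$. Since $T\in|\pi^*C|$, adjunction gives $K_T=(K_X+T)|_T$, whence $K_X|_T=K_T-T|_T$. The restriction $T|_T=(\pi^*C)|_T$ equals $\rho^*(\NNN_{C/S})$, a divisor class numerically equivalent to $(C^2)f$, where $f$ is the class of a fibre of $\rho$. Using $f^2=0$ and $K_T\cdot f=-2$ (adjunction on a smooth fibre $\cong\PP^1$) I obtain
\[
(K_X|_T)^2=K_T^2-2(C^2)(K_T\cdot f)+(C^2)^2f^2=K_T^2+4C^2.
\]

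It then remains to evaluate $K_T^2$. Contracting one $(-1)$-curve in each of the $\delta$ degenerate fibres turns $\rho$ into a $\PP^1$-bundle $\bar T\to C$, so that $T$ is the blow-up of the ruled surface $\bar T$ at $\delta$ points and $K_T^2=K_{\bar T}^2-\delta$. For a ruled surface over a curve of genus $g=g(C)$ one has $K_{\bar T}^2=8(1-g)$, while adjunction on $S$ gives $2g-2=(K_S+C)\cdot C$, so $8(1-g)=-4(K_S\cdot C+C^2)$. Combining,
\[
K_T^2=-4K_S\cdot C-4C^2-\delta,\qquad (K_X|_T)^2=-4K_S\cdot C-\delta,
\]
whence $-\pi_* K_X^2\cdot C=4K_S\cdot C+\delta=(4K_S+\Delta)\cdot C$. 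Letting $H$ range over ample classes yields the desired numerical equivalence.

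The hard part is not the algebra but organising the reduction to the smooth situation: one must check that a general $C$ genuinely avoids $\Sing(S)$, $\pi(\Sing(X))$ and the set $S_{\ge 2}$ of points with two-dimensional fibres (so that $K_X|_T$ is Cartier, $T$ is smooth, and $\rho$ is a genuine conic bundle), and that it meets $\Delta$ transversally away from its nodes. Once this general-position input is secured, the remaining ingredients are elementary: the projection formula, adjunction, the identity $K_{\bar T}^2=8(1-g)$ for ruled surfaces, and the blow-up description of conic bundles over a smooth curve.
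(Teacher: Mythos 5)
Your argument is correct for a surface base, and it rests on the same underlying identity as the paper's proof — the Noether formula $K^2=8(1-g)-\delta$ for a conic bundle over a smooth curve — but it reaches that identity by a single general hyperplane-section reduction rather than by the paper's induction on dimension. The paper cuts $S$ by a general very ample divisor $Z$, sets $Y=\pi^{-1}(Z)$, and uses adjunction plus the projection formula to show $(\pi_*K_X^2+4K_S+\Delta)\cdot Z\equiv \pi_*K_Y^2+4K_Z+\Delta_Z$, concluding by induction with the two-dimensional case supplied directly by Noether's formula. Your computation $(K_X|_T)^2=K_T^2+4C^2$ is exactly the adjunction step of that induction, and your blow-down derivation of $K_T^2=8(1-g)-\delta$ makes explicit the base case the paper merely cites; your general-position checks (avoiding $\Sing(S)$, $\pi(\Sing X)$, $S_{\ge 2}$, and the nodes of $\Delta$, so that $T$ is a smooth surface and $K_X$ is Cartier near it) are the right ones and do go through for $m\gg 0$. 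The one caveat is that the lemma as stated imposes no restriction on $\dim S$, while your proof as written assumes $S$ is a surface (so the bad locus is finite and one general curve suffices); to recover the full statement you would either iterate your reduction — which reproduces the paper's induction — or cut by $\dim S-1$ general hyperplanes at once and verify the same transversality statements for the resulting complete-intersection curve. For the applications made of this lemma in the paper, only the surface case is actually needed.
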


\begin{proof}
We prove \eqref{equation-canonical-bundle-formula} by induction on the 
dimension.
If $\dim X=2$, then $X$ is smooth and \eqref{equation-canonical-bundle-formula}
is an immediate consequence of the Noether formula.
For $\dim X\ge 3$, let $Z\subset S$ be an effective very ample divisor and let $Y:=\pi^{-1}(Z)$.
Take $Z$ to be general in the corresponding linear equivalence class.
Then $\pi_Y: Y\to Z$ will satisfy the same conditions as $\pi: X\to S$.
Let $\Delta_Z=\Delta\cap Z$ be the corresponding discriminant locus.
It is sufficient to show that 
\begin{equation*}
(\pi_* K_X^2+4K_S+\Delta)\cdot Z\equiv 0.
\end{equation*}
Using the projection and adjunction formulas we can write
\begin{multline*}
(\pi_* K_X^2+4K_S+\Delta)\cdot Z
\equiv
\pi_*(K_X^2\cdot Y)+4(K_Z-Z^2)+\Delta_Z\equiv
\\
\equiv\pi_* K_Y^2-2\pi_* (K_Y \cdot Y^2)+4K_Z-4Z^2+\Delta_Z\equiv
\pi_* K_Y^2+4K_Z+\Delta_Z.
\end{multline*}
By the inductive hypothesis this proves \eqref{equation-canonical-bundle-formula}.
\end{proof}

\begin{theorem}
\label{theorem-standard-models}
Given any $\QQ$-conic bundle $\pi: X\to S$, there is a birational contraction 
$\alpha: S^{\bullet}\to S$ and a standard conic bundle $\pi^{\bullet}: X\to S^{\bullet}$ that is fiberwise birationally
equivalent to $\pi: X\to S$.
More precisely, there exists a commutative diagram
\begin{equation}
\label{equation-(1)}
\vcenter{
\xymatrix{
X^{\bullet}\ar@{-->}[r]^{\psi}\ar[d]^{\pi^{\bullet}}&X\ar[d]^{\pi}
\\
S^{\bullet}\ar[r]^{\alpha} & S
}}
\end{equation}
where $\psi$ is a birational map and $\alpha$ is a birational \emph{morphism}.
\end{theorem}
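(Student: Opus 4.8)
The plan is to first make the base smooth, then construct a smooth three-dimensional model over it, and finally run a relative minimal model program to bring the relative Picard number down to one, the remaining difficulty being to keep the total space nonsingular. I would begin by resolving the base: choose a resolution $\alpha_0: \tilde S\to S$ with $\tilde S$ smooth. Since $\Bbbk(\tilde S)=\Bbbk(S)$, the generic fibre $X_\eta$ is unchanged. It therefore suffices to produce a \emph{smooth} standard conic bundle, over a suitable smooth blow-up of $\tilde S$, whose generic fibre is isomorphic to $X_\eta$ over $\KK:=\Bbbk(S)$: an isomorphism of generic fibres over $\KK$ spreads out to a fibrewise birational map $\psi$, giving the commutative diagram~\eqref{equation-(1)} (the discriminants then automatically match, by Lemma~\ref{lemma-bir-disc}).

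Next I would build a smooth model over $\tilde S$. Over a dense open $U\subset\tilde S$ the map $\pi$ spreads out to a conic bundle, which embeds in $\PP(\EEE|_U)$ for a rank-three bundle $\EEE$ on $\tilde S$; taking the closure of this conic inside $\PP(\EEE)$ and resolving singularities produces a smooth projective threefold $W$ with a morphism $f: W\to\tilde S$ that is generically a conic bundle and is fibrewise birational to $\pi$. Because the generic fibre of $f$ is a smooth rational curve, $K_W$ is not $f$-pseudo-effective, so the relative $K_W$-minimal model program over $\tilde S$ (which exists and terminates in dimension three, \cite{BCHM}) ends with a Mori fibre space $g: X'\to T$ together with a morphism $T\to\tilde S$. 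A comparison of relative dimensions rules out $\dim T<2$ (otherwise $X'$ could not dominate the surface $\tilde S$), so $T\to\tilde S$ is generically finite; as $X_\eta$ is geometrically irreducible it is in fact birational. Thus $g: X'\to T$ is a $\QQ$-conic bundle with $\uprho(X'/T)=1$, still fibrewise birational to $\pi$, and $T\to\tilde S\to S$ is a birational morphism.

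The hard part will be to upgrade the $\QQ$-factorial terminal model $g: X'\to T$ to a genuinely \emph{smooth} standard conic bundle. By Mori's theorem (quoted just after Definition~\ref{definition-conic-bundle}) it is enough to make $X'$ smooth, for then $g$ is automatically a standard conic bundle and $T$ is automatically smooth. The obstacle is exactly the terminal singularities created by the flips and divisorial contractions of the program; they lie over finitely many points of $T$. To remove them I would pass to the local picture, analysing each three-dimensional terminal $\QQ$-conic bundle germ over a surface point and applying an explicit birational modification --- a further blow-up $T^\bullet\to T$ of the base combined with elementary transformations of the conic bundle in the sense of Sarkisov --- that replaces the germ by a smooth conic bundle of relative Picard number one. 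Setting $S^\bullet:=T^\bullet$ and composing $T^\bullet\to T\to\tilde S\to S$ produces the required birational morphism $\alpha$, and the resulting smooth extremal contraction $\pi^\bullet$ is standard by Mori's theorem. I expect this final step --- the explicit resolution of the terminal degenerations while preserving the fibration --- to be the genuine technical core; it is in essence Sarkisov's standard-model theorem and rests on the local classification of $\QQ$-conic bundles developed later in the paper.
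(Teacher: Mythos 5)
The paper does not actually prove Theorem~\xref{theorem-standard-models}: immediately after the statement it is attributed to Sarkisov, with the proof deferred to \cite[Theorem~1.13]{Sarkisov-1982-e} (and to \cite{Avilov2014} for an equivariant variant). So there is no in-paper proof to compare against, and your proposal has to be judged on its own.

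Your preliminary reductions are fine: resolving the base, spreading out the generic conic, compactifying and resolving to get a smooth $W\to\tilde S$, running the relative MMP to land on a Mori fibre space $g\colon X'\to T$ with $T\to\tilde S$ birational (the degree-one argument via geometric irreducibility of the generic conic is correct), and the observation that by \cite{Mori-1982} it suffices to make the total space smooth. But the proposal stops exactly where the theorem begins. The assertion that each terminal $\QQ$-conic-bundle germ of $X'/T$ can be traded, by a blow-up of the base plus elementary transformations, for a smooth conic bundle of relative Picard number one is the \emph{entire content} of the result, and you give no mechanism for it: no construction of the local modification, no argument that the modifications glue or preserve $\uprho=1$ globally, and no termination argument. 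This is not a small omission. The machinery the paper assembles for precisely this purpose is substantial --- the local classification of germs (Theorems~\xref{theorem-index=2-conic-bundles} and~\xref{theorem-conic-bundles-singular-base}, resting on the general elephant Theorem~\xref{theorem-ge}), explicit links such as Examples~\xref{example-simplest-link} and~\xref{example-Sl-Gorenstein}, Lemma~\xref{lemma-avilov}, and termination via finiteness of crepant divisors and Shokurov's difficulty --- and the paper explicitly notes that Avilov's proof of standard models depends on all of this. A proof sketch that defers this step is a statement of strategy, not a proof.

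It is also worth knowing that Sarkisov's actual argument takes a different and in some ways more economical route, visible in Proposition~\xref{proposition-AM}: one works with the quaternion algebra over $\Bbbk(S)$ attached to the generic conic, blows up a smooth base until the local invariants form a normal-crossing configuration satisfying the Beauville conditions, and then produces the standard conic bundle directly as (the conic bundle associated with) a maximal order. This bypasses the MMP and the need to resolve terminal threefold germs fibrewise, which is why the classical proof predates the classification of $\QQ$-conic bundle germs by decades. If you want a self-contained argument, that is the path of least resistance; if you want to pursue your MMP route, you must supply the local resolution step in full.
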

This theorem is a particular case of more general fact proved by Sarkisov:
for any dominant rational map 
$g: Y\dashrightarrow T$ of relative dimension $1$ whose generic
fiber is an irreducible rational curve there exists a standard conic bundle 
$\pi^{\bullet}: X\to S^{\bullet}$ that is fiberwise birationally
equivalent to $g$. 
The proof can be found in~\cite[Theorem~1.13]{Sarkisov-1982-e}.
Three-dimensional case was outlined earlier in \cite{Zagorskiui1977}
and \cite{Miyanishi1983}.
See also \cite{Avilov2014} for the three-dimensional equivariant version.

\section{Sarkisov category}
\label{section:Sarkisov}
In this section we describe the general structure of Sarkisov program.
For details we refer to \cite{Corti1995a}, see also \cite[Ch.~13]{Matsuki2002}, \cite{Iskovskikh2001}, 
\cite{IskovskikhShokurov2005}, \cite{Shokurov-Choi-2011}, 
\cite{HaconMcKernan2013}.
\subsection{}
\label{definition-Sarkisov-links}
First, we recall the definition of a
Sarkisov link~\cite[Definition~3.4]{Corti1995a}. Suppose that $\pi :X\to S$ and 
$\pi_1: X_1\to S_1$ are Mori fiber spaces. 
A Sarkisov link $\Phi: X_1 \dashrightarrow X$ between them is a transformation of one of four types
\begin{equation*}
\begin{array}{cccc}
\mathbf{I}&\mathbf{II}&\mathbf{III}&\mathbf{IV}
\\
\xymatrix{
Z\ar[d]_p\ar@{-->}[r]^{\chi}&X_1\ar[d]^{\pi_{1}}
\\
X\ar[d]_{\pi}&S_{1}\ar[ld]^{\alpha}
\\
S&
}
&
\xymatrix@C=7pt{
Z\ar[d]_p\ar@{-->}[rr]^{\chi}&&Z_1\ar[d]^q
\\
X\ar[dr]_{\pi}&&X_{1}\ar[dl]^{\pi_{1}}
\\
&S&
}&
\xymatrix{
X\ar[d]^{\pi}\ar@{-->}[r]^{\chi}&Z_1\ar[d]^q
\\
S\ar[rd]_{\alpha}&X_{1}\ar[d]^{\pi_{1}}
\\
&S_{1}
}&
\xymatrix@C=7pt{
X\ar[d]^{\pi}\ar@{-->}[rr]^{\chi}&&X_1\ar[d]^{\pi_1}
\\
S\ar[dr]_{\alpha}&&S_1\ar[dl]^{\alpha_1}
\\
&T&
}
\end{array}
\end{equation*}
where $p: Z\to X$ and $q: Z_1\to X_1$ are divisorial Mori extremal contractions, $\chi$ is a finite
sequence of log-flips, in particular, $\chi$ is an isomorphism in codimension one, and
$\alpha$ and $\alpha_1$ are extremal contractions in the log terminal category. 

A link of type~\typem{III} is a birational transformation that is inverse
to a transformation corresponding to a type~\typem{I} link.
For standard conic bundles, links of types \typem{I} and \typem{III} are exactly
elementary transformations described in Proposition~\xref{proposition-Sarkisov-elementary-transformations}.
In dimension $\le 3$, for a type~\typem{IV} link the contractions $S\to T$ and $S_1\to T$ must be of fiber type
(so the link switches the Mori fiber space structures).
In higher dimensions the contractions $S\to T$ and $S_1\to T$ can be small.

\begin{theorem}\label{theorem-Sarkisov-program}
Let $\pi: X\to S$ and 
$\pi^\sharp :X^\sharp \to S^\sharp$ be Mori fiber spaces 
and let
\begin{equation*}
\Phi: X \dashrightarrow X^\sharp.
\end{equation*} 
be
a \textup(not necessarily fiberwise\textup) birational map.
Then $\Phi$ is a 
composition of Sarkisov links. 
\end{theorem}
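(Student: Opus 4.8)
The plan is to follow the standard \emph{degree-lowering} strategy of the Sarkisov program (as in \cite{Corti1995a}, \cite{HaconMcKernan2013}): attach to the birational map $\Phi$ a numerical invariant taking values in a well-ordered set, and show that whenever $\Phi$ is not already a fiberwise isomorphism one can split off a single Sarkisov link that strictly decreases this invariant. Termination of the resulting decreasing sequence then exhibits the desired factorization.

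First I would fix a very ample linear system $\HHH^\sharp$ on $X^\sharp$ and let $\HHH:=\Phi^{-1}_*\HHH^\sharp$ be its strict transform on $X$. Since $\uprho(X/S)=1$, on the generic fibre $\HHH$ is proportional to $-K_X$, so I may write $\HHH\equiv -\mu K_X+\pi^*(B)$ for a uniquely determined positive rational number $\mu$ (the \emph{Sarkisov degree}) and a $\QQ$-divisor $B$ on $S$. If $\Phi$ already induces an isomorphism of generic fibres and is fiberwise, then up to links of type \typem{IV} (which merely switch the two fibration structures over a common base $T$) the map is an isomorphism over the base and there is nothing further to factor. Otherwise I invoke the Noether--Fano--Iskovskikh criterion: this is the engine of the argument, asserting that if $\Phi$ is not an isomorphism in the category of Mori fiber spaces then the pair $\left(X,\tfrac{1}{\mu}\HHH\right)$ fails to be canonical, equivalently there is a divisorial valuation over $X$ (a \emph{maximal singularity}) whose center yields a Mori extremal extraction $p:Z\to X$. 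I record the Sarkisov degree as a triple $\left(\mu,\ \lambda,\ e\right)$ ordered lexicographically, where $\lambda$ is the canonical threshold $c\!\left(X,\tfrac{1}{\mu}\HHH\right)$ and $e$ counts the crepant exceptional divisors realizing the maximal center; ACC for (log) canonical thresholds guarantees that the set of such triples is well-ordered.

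Next I would carry out the untwisting step. Starting from the extraction $p:Z\to X$ supplied by the maximal center (the opening arrow of a link of type \typem{I} or \typem{II}), I run the relative two-ray game on $Z$: a finite chain $\chi$ of log-flips followed by a terminal extremal contraction which, by the Cone Theorem, is either divisorial (closing a type \typem{I} or \typem{II} link) or of fiber type (closing a type \typem{III} or \typem{IV} link). This produces a new Mori fiber space $\pi_1:X_1\to S_1$ together with a Sarkisov link $X\dashrightarrow X_1$; comparing discrepancies along $\chi$ and $p$ shows that the strict transform of $\HHH$ on $X_1$ has strictly smaller degree in the lexicographic order. Replacing $X/S$ by $X_1/S_1$ and iterating, I generate a sequence of Sarkisov links whose composite agrees with $\Phi$ away from the links already performed.

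The main obstacle is \textbf{termination}: one must rule out an infinite strictly decreasing sequence of Sarkisov degrees. In dimension three this is Corti's theorem, whereas the general case is the theorem of Hacon--McKernan and rests on the full minimal model program --- existence of flips and of minimal models \cite{BCHM} --- together with the finiteness of ample models as the boundary varies inside a Shokurov-type polytope and the ACC for log canonical thresholds, which together force the invariant to stabilize. Granting termination, the process halts precisely when $\Phi$ has been reduced to a fiberwise isomorphism, and reading off the links performed along the way exhibits $\Phi$ as a composition of Sarkisov links, as claimed.
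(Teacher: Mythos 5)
Your proposal follows essentially the same route as the paper, which itself only outlines the argument and defers the complete proof (in particular termination) to Corti: fix the Sarkisov degree via the Noether--Fano inequalities, untwist maximal singularities by an extremal extraction followed by a two-ray game to split off a link of type \typem{I}--\typem{IV}, and terminate by monotonicity of the degree. One factual slip worth correcting: you assert that ACC for (log) canonical thresholds makes the set of Sarkisov degrees well-ordered, but as the paper explicitly notes, ACC for \emph{canonical} thresholds is not known even in dimension three; Corti's termination argument instead proceeds by reduction to the log canonical case, and Hacon--McKernan avoid the degree-lowering algorithm altogether by using finiteness of ample models, so your invariant should not be presented as well-ordered on the strength of an ACC statement for canonical thresholds.
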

The three-dimensional version of this program was outlined in preprints 
\cite{Sarkisov1987}, \cite{Sarkisov1989},
\cite{Reid-1991-sarkisov} and a complete proof (with termination) was given in 
\cite{Corti1995a} (see also \cite{IskovskikhShokurov2005}, \cite{Shokurov-Choi-2011}).
In higher dimensions the program is established in a weaker form
\cite{HaconMcKernan2013}. This approach uses a little different decomposition algorithm. 
We follow the ``standard'' variant \cite{Corti1995a} which is more suitable for 
our applications.
The process of decomposition which existence is claimed in Theorem 
\xref{theorem-Sarkisov-program} is called the \textit{Sarkisov program}.
Below we outline the general structure of this program.
\subsection{}
Let $\HHH^{\sharp}$
be a very ample linear system on $X^{\sharp}$ and let $\HHH$ be its proper
transform on $X$. Clearly, we can write
\begin{eqnarray}
\label{definition-mu-star}
\HHH^{\sharp}&\equiv&-\mu^{\sharp}K_X+\pi^{\sharp *}A^{\sharp},
\\
\label{definition-mu}
\HHH&\equiv&-\mu K_X+\pi^*A,
\end{eqnarray}
where $\mu^{\sharp}$ and $\mu$ are positive rational numbers and 
$A^\sharp$ (resp. $A$) is a $\QQ$-divisor on $S^\sharp$ (resp. on $S$).
We say that the pair $(X,\HHH)$ or the linear system $\HHH$ \textit{has maximal singularity} if the pair 
$(X,\frac 1{\mu}\HHH)$ is not canonical.

The basic tool of the method of maximal singularities is the Noether-Fano inequality.
Below is its version which is adapted for Sarkisov program.

\begin{theorem}[Noether-Fano inequalities {\cite[Theorem~4.2]{Corti1995a}}, \cite{Iskovskikh-Noether-Fano-2004}, {\cite[Ch.~1, Def.~1.4]{Pukhlikovbook2013}}]
\label{Noether-Fano}
In the above notation the following hold.
\begin{enumerate}
\item \label{Noether-Fano-1}
$\mu \ge \mu^{\sharp}$ and equality implies that $\Phi$ is fiberwise
 and induces an isomorphism $\Phi_{\eta}: X_{\eta} \to X^{\sharp}_{\eta}$
of generic fibers.
\item \label{Noether-Fano-2}
If $(X,\frac1{\mu}\HHH)$ is canonical and $K_X+\frac1{\mu}\HHH$ is nef, then $\Phi$ is an isomorphism,
and it also induces an isomorphism $S\simeq S^{\sharp}$. In particular, $\mu=\mu^{\sharp}$.
\end{enumerate}
\end{theorem}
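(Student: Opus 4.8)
The plan is to resolve $\Phi$ on a common model and to translate the two Mori fibre structures into intersection numbers against covering families of fibres.

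First I would take a smooth projective variety $W$ together with birational morphisms $p\colon W\to X$ and $q\colon W\to X^\sharp$ resolving $\Phi$, which we may assume to be a log resolution of $(X,\HHH)$. Since $X$ and $X^\sharp$ are terminal, we write $K_W=p^*K_X+A_p=q^*K_{X^\sharp}+A_q$, where $A_p\ge 0$ is $p$-exceptional and $A_q\ge 0$ is $q$-exceptional. As $\HHH^\sharp$ is very ample, hence base point free, its pullback has no fixed part, so the proper transform $\HHH_W$ of $\HHH$ satisfies $\HHH_W=q^*\HHH^\sharp$, while $p^*\HHH=\HHH_W+E$ with $E\ge 0$ $p$-exceptional (the mobile system $\HHH$ has no fixed component). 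Two facts I would record at once: the target pair $(X^\sharp,\tfrac1{\mu^\sharp}\HHH^\sharp)$ is canonical, and $K_{X^\sharp}+\tfrac1{\mu^\sharp}\HHH^\sharp\equiv\tfrac1{\mu^\sharp}\pi^{\sharp*}A^\sharp$ is numerically a pullback from $S^\sharp$, hence $\pi^\sharp$-trivial.

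For \xref{Noether-Fano-1} I would test against a covering family. Let $C$ be a general fibre of $\pi$, let $\gamma\subset W$ be its proper transform under $p$, and put $C':=q_*\gamma$. A general such $C$ avoids the codimension $\ge 2$ locus $p(\operatorname{Exc}(p))$, so $\gamma$ is disjoint from $\operatorname{Exc}(p)$ and hence $E\cdot\gamma=A_p\cdot\gamma=0$; moreover $\gamma$ moves in a covering family, so $A_q\cdot\gamma\ge 0$. Thus $\HHH\cdot C=\HHH_W\cdot\gamma=\HHH^\sharp\cdot C'$ and $-K_{X^\sharp}\cdot C'=-K_X\cdot C+A_q\cdot\gamma\ge -K_X\cdot C$. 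Since $C$ is a fibre, $\pi^*A\cdot C=0$ and $\mu=(\HHH\cdot C)/(-K_X\cdot C)$; combining these gives
\[
\mu=\frac{\HHH^\sharp\cdot C'}{-K_X\cdot C}\ \ge\ \frac{\HHH^\sharp\cdot C'}{-K_{X^\sharp}\cdot C'}=\mu^\sharp+\frac{A^\sharp\cdot\pi^\sharp_*C'}{-K_{X^\sharp}\cdot C'}.
\]
The last term is non-negative because $A^\sharp$ is nef, and this yields $\mu\ge\mu^\sharp$. When $\mu=\mu^\sharp$ all the inequalities are equalities, which forces $A_q\cdot\gamma=0$ and $(K_{X^\sharp}+\tfrac1{\mu^\sharp}\HHH^\sharp)\cdot C'=0$; tracing this back shows that $\Phi$ carries the general fibre of $\pi$ into a fibre of $\pi^\sharp$, so $\Phi$ is fibrewise and induces $X_\eta\xrightarrow{\sim}X^\sharp_\eta$.

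For \xref{Noether-Fano-2} I would argue by contradiction, and here the two hypotheses enter cleanly. Write $N:=K_X+\tfrac1\mu\HHH\equiv\tfrac1\mu\pi^*A$ and $L:=K_W+\tfrac1\mu\HHH_W$. Via $p$ we have $L=p^*N+G$ with $G:=A_p-\tfrac1\mu E\ge 0$ (canonicity) and $p^*N$ nef (nefness of $N$); via $q$ we have $L=q^*M^\sharp+A_q$ with $M^\sharp:=K_{X^\sharp}+\tfrac1\mu\HHH^\sharp\equiv(1-\tfrac{\mu^\sharp}\mu)K_{X^\sharp}+\tfrac1\mu\pi^{\sharp*}A^\sharp$. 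If $\mu>\mu^\sharp$, choose a general fibre $C^\sharp$ of $\pi^\sharp$ with proper transform $\gamma$ under $q$; then $A_q\cdot\gamma=0$ and $L\cdot\gamma=M^\sharp\cdot C^\sharp=(1-\tfrac{\mu^\sharp}\mu)(K_{X^\sharp}\cdot C^\sharp)<0$, whereas via $p$ one has $L\cdot\gamma=N\cdot p_*\gamma+G\cdot\gamma\ge 0$, a contradiction. Hence $\mu=\mu^\sharp$, and by \xref{Noether-Fano-1} the map $\Phi$ is fibrewise and an isomorphism on generic fibres. Finally, with $\mu=\mu^\sharp$ the class $M^\sharp=\tfrac1{\mu^\sharp}\pi^{\sharp*}A^\sharp$ is $\pi^\sharp$-trivial, and a negativity-lemma argument applied to $p^*N+G=q^*M^\sharp+A_q$, together with $\uprho(X/S)=\uprho(X^\sharp/S^\sharp)=1$, forces $G=A_q=0$; thus no divisor is contracted, $p$ and $q$ are isomorphisms in codimension one, and $\Phi$ as well as the induced birational map $S\dashrightarrow S^\sharp$ are isomorphisms.

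The hardest point is the positivity input in \xref{Noether-Fano-1}: controlling the base term $A^\sharp\cdot\pi^\sharp_*C'$, i.e. the nefness of $A^\sharp$ on $S^\sharp$. This is exactly where the very ampleness of $\HHH^\sharp$ must be exploited (one reduces to a polarisation for which $A^\sharp$ is nef), and it is the step I expect to require the most care; by contrast the nef-plus-canonical hypotheses of \xref{Noether-Fano-2} make the corresponding comparison there completely clean.
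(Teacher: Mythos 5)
The paper does not actually prove this theorem; it quotes it from Corti, Iskovskikh and Pukhlikov, so there is no in-text proof to compare against. Your skeleton (common resolution $p,q\colon W\to X, X^\sharp$, the identification $\HHH_W=q^*\HHH^\sharp$, testing the two crepant decompositions of $K_W+\tfrac1\mu\HHH_W$ against curves sweeping out the fibres) is indeed the standard route taken in those references, and your argument for the contradiction $\mu>\mu^\sharp$ in part \xref{Noether-Fano-2} is clean and correct. But there is one genuine gap, and it sits exactly where you placed your bet: the nefness of $A^\sharp$. This does \emph{not} follow from the very ampleness of $\HHH^\sharp$, and without it statement \xref{Noether-Fano-1} is literally false. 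Take $X=X^\sharp=\PP^2\times\PP^1$ with $\pi$ the projection to $\PP^1$, $\pi^\sharp$ the projection to $\PP^2$, $\Phi=\mathrm{id}$, and $\HHH^\sharp=|\OOO(1,1)|$, which is very ample. Writing $-K=3H_1+2H_2$, one finds $\mu^\sharp=\tfrac12$, $A^\sharp=-\tfrac12(\text{line})$ on the $\PP^2$ side and $\mu=\tfrac13$ on the $\PP^1$ side, so $\mu<\mu^\sharp$. The correct reading of the theorem (and the actual setup in Corti's Theorem~4.2) is that $\HHH^\sharp$ is \emph{chosen} of the form $|-\mu^\sharp K_{X^\sharp}+\pi^{\sharp*}A^\sharp|$ with $A^\sharp$ ample on $S^\sharp$; this is a hypothesis built into the polarisation, not a consequence. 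Your proposed rescue --- ``one reduces to a polarisation for which $A^\sharp$ is nef'' --- does not work: replacing $\HHH^\sharp$ by $|\HHH^\sharp+\pi^{\sharp*}B|$ leaves $\mu^\sharp$ unchanged but increases $\mu$ by a positive amount depending on $B$, so the inequality proved for the new polarisation does not descend to the old one.

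Two secondary points. First, you test against ``a general fibre of $\pi$'' as if it were a curve; this is fine for conic bundles but the theorem is invoked for all three types of Mori fibre space in dimension $3$ ($S$ a point or a curve), where one must instead use a general member of a covering family of curves contracted by $\pi$ (e.g.\ complete intersections of members of $\HHH$ inside a fibre) --- a routine but necessary adjustment. Second, the endgame of \xref{Noether-Fano-2} is under-argued: from $G=A_p-\tfrac1\mu E=0$ you cannot conclude that $p$ contracts no divisor (it only says the pair $(X,\tfrac1\mu\HHH)$ is crepant along the $p$-exceptional divisors, which is consistent with canonical), and the negativity-lemma comparison of $p^*N+G$ with $q^*M^\sharp+A_q$ needs $M^\sharp=\tfrac1{\mu}\pi^{\sharp*}A^\sharp$ to be nef --- the same missing positivity of $A^\sharp$ again --- before it yields $A_q\le G$ and the reverse inequality. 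The passage from ``$q$ contracts no divisors'' to ``$\Phi$ and $S\dashrightarrow S^\sharp$ are isomorphisms'' also deserves the explicit use of $\uprho(X/S)=\uprho(X^\sharp/S^\sharp)=1$ and the ampleness of $A$, $A^\sharp$ to match up the two extremal contractions.
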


\subsection{Case where $\HHH$ has maximal singularity}
\label{Sarkisov-program-untwisting}
This step is often called ``\textit{untwisting maximal singularities}''.
Since $X$ is terminal, the pair $(X,c\HHH)$ is canonical but not terminal for some 
$0<c<\frac 1{\mu}$. 
This number $c$ is called the \textit{canonical threshold} of $(X,\HHH)$
and denoted by $\ct(X,\HHH)$.
There exists a log-crepant extremal blowup 
$p: (Z,c\HHH_Z)\to (X,c\HHH)$. Thus we have $\uprho (Z/S)=2$ and
\begin{equation*}
K_Z+c\HHH_Z\equiv p^*(K_X+c\HHH).
\end{equation*}
Moreover, $p$ is a divisorial contraction, $Z$ is $\QQ$-factorial and has terminal
singularities. 
Then we run the $(K_Z+c\HHH_Z)$-MMP over $S$. Since $\uprho (Z/S)=2$ we 
obtain one of the links \typem{I} or \typem{II}. Then we replace $X$ with $X_1$ and 
$\HHH$ with its proper transform, and continue the process.

\begin{scase}\label{Sarkisov-program-termination}
Each step as above either decreases the number $\e( X,c\HHH)$ of crepant divisors 
or (if $\e( X,c\HHH)=1$) increases the canonical threshold $\ct(X,\HHH)$.
The number $\e( X,c\HHH)$ is a positive integer, so it cannot decrease infinitely many times.
Termination of an increasing sequence of thresholds $\ct(X,\HHH)$ is more delicate problem.
At the moment it is not known that the set of all canonical thresholds
satisfy ascending chain condition (ACC) even in dimension $3$. Particular cases of this problem were
studied in \cite{Prokhorov-2008ct}, \cite{Stepanov-threshplds}, and \cite{Shramov-acc}.
Termination of the Sarkisov program in \cite[Theorem~6.1]{Corti1995a} was proved by reduction to the \emph{log canonical} case
\cite{Alexeev-1994}, \cite{Hacon-McKernan-Xu-2014}.
The approach of \cite{HaconMcKernan2013} is different (see also \cite{IskovskikhShokurov2005}, \cite{Shokurov-Choi-2011}).
\end{scase}

\subsection{Case where $\HHH$ has no maximal singularities}
\label{Sarkisov-program--no-max-sing}
By \eqref{definition-mu}
\begin{equation*}
\HHH+\mu K_X\equiv \pi^*A,
\end{equation*}
where the divisor $A$ is not nef by Theorem~\xref{Noether-Fano}\xref{Noether-Fano-2}.
If $\uprho(S)\le 1$, then $-(\HHH+\mu K_X)$ is ample
and we just run the $(K_X+\frac1{\mu}\HHH)$-MMP and get a link of type~\typem{IV}.

Assume that $\uprho(S)> 1$. In particular, this implies that $S$ is a surface.
There is a natural identification of Mori cones $\NE(S)=\pi_* \NE(X)$.
By the cone theorem on $X$, the cone $\NE(S)\cap \{z \mid A\cdot z<0\}$
is locally polyhedral. 
Hence there exist extremal rays $R_S\subset \NE(S)$ 
and $R_X\subset \NE(X)$ 
such that $(\HHH+\mu K_X)\cdot R_X<0$, \ $A\cdot R_S<0$, and $\pi_*R_X=R_S$.
Since $\uprho(S)>1$, we have $R_S^2\le 0$ and 
one can show that $R_S$ is contractible. Thus there exists its contraction $S\to T$
with $\uprho(S/T)=1$ and $\dim T=2$ or $1$. Then we run the $(K_X+\frac1{\mu}\HHH)$-MMP over $T$.
This produces a link of type~\typem{III} with $S_1=T$ or 
a link of type~\typem{IV}. 

In all these cases, for links of type~\typem{III} or \typem{IV},
we get a new Mori fiber space $\pi_1: X_1\to S_1$ such that 
\begin{equation*}
\label{equation-mu-reduced}
\mu_1\le \mu,
\end{equation*}
where $\mu_1$ is defined for the 
proper transform $\HHH_{X_1}$ of $\HHH$ similar to \eqref{definition-mu-star}
and \eqref{definition-mu}. Since the numbers $\mu$ have bounded denominators
(see \cite{Kawamata-1992bF}),
this shows that the sequence of these links also terminates.

\section{Surfaces over non-closed fields}
\label{section:surfaces}
In this section all the varieties (surfaces and curves) are supposed to be defined over a 
\textit{perfect} field $\Bbbk$ of arbitrary characteristic. If $\pi: X\to B$ is a two-dimensional conic bundle over a curve $B$,
then, as in~\xref{definition-conic-bundle}, we say that it is standard if $\uprho(X/B)=1$.
The discriminant locus in this case is a (reduced) zero-dimensional subscheme $\Delta\subset B$ (or empty).
Every degenerate geomtric fiber $X_{\bar b}$, $\bar b\in B\otimes \bar \Bbbk$
is a union of two $(-1)$-curves meeting transveresely at one point (see Fig. \ref{riss}).
\begin{figure}[b] 
\begin{tikzpicture}
\node () at (-3.7,2.1) {$X$};
\node () at (-3.7,-1.5) {$B$};
 \foreach \position in {  (-3,1), (-1,0.8),(0,0.7),  (2,0.9),  (4,1.1),  (5,1.15)} 
    \draw[thick,black] \position  ellipse (0.25 and 1) ;
\begin{scope}[xshift=-2cm,yshift=0.9cm]
\draw[thick,black](1.3-1.15,-1)-- (1-1.15,1) ;
\draw[thick,black] (1-1.15,-1)-- (1.3-1.15,1);
\end{scope}
\begin{scope}[xshift=1cm,yshift=0.8cm]
\draw[thick,black](1.3-1.15,-1)-- (1-1.15,1) ;
\draw[thick,black] (1-1.15,-1)-- (1.3-1.15,1);
\end{scope}
\begin{scope}[xshift=3cm,yshift=1cm]
\draw[thick,black](1.3-1.15,-1)-- (1-1.15,1) ;
\draw[thick,black] (1-1.15,-1)-- (1.3-1.15,1);
\end{scope}
\draw [thick,black,->] plot [smooth,tension=0.9] coordinates{ (-4,1) (-4.05,0) (-4,-1)} node[left,yshift=24pt]{$ f$};
\draw[thick,black] plot [smooth,tension=0.9] coordinates{(-3.5,-1.1)(-1.5,-1.4) (2,-0.95) (5.4,-1)};
\end{tikzpicture}
 \caption{}\label{riss}
\end{figure}
If the surface $X$ is geometrically rational, then by the Noether formula one has 
\begin{equation*}
K_X^2+\deg \Delta=8. 
\end{equation*}
In particular, $K_X^2\le 8$ and the equality $K_X^2=8$ implies that $\pi$ is a smooth morphism.
It is easy to show that for a standard conic bundle $K_X^2\neq 7$ (see e.g. \cite{Iskovskikh-1979s-e}).

The Sarkisov program works in the category of surfaces over $\Bbbk$.
Moreover, in the two-dimensional case all the dashed arrows (log flips) must be isomorphisms.
So, the Sarkisov links have the following simple form:
\begin{equation*}
\begin{array}{cccc}
\mathbf{I}&\mathbf{II}&\mathbf{III}&\mathbf{IV}
\\
\xymatrix{
&X_1\ar[d]^{\pi_{1}}\ar[dl]_p
\\
X\ar[d]_{\pi}&B_{1}\ar[ld]
\\
\{\pt\}&
}
&
\xymatrix@C=7pt{
&Z\ar[dl]_p\ar[dr]^q&
\\
X\ar[dr]_{\pi}&&X_{1}\ar[dl]^{\pi_{1}}
\\
&B&
}&
\xymatrix{
X\ar[d]^{\pi}\ar[dr]^q
\\
B\ar[rd]&X_{1}\ar[d]^{\pi_1}
\\
&\{\pt\}
}&
\xymatrix@C=7pt{
&X\ar[dl]_{\pi}\ar[dr]^{\pi_1}&
\\
B\ar[dr]&&B_1\ar[dl]
\\
&\{\pt\}&
}
\end{array}
\end{equation*}
In the case \typem{II} we distinguish two subcases:
\begin{itemize}
\item[\typem{II_0}] $B$ is a point,
\item[\typem{II_1}]$B$ is a curve.
\end{itemize}

Here is a short description of the links. 
In all cases morphisms $p$ and $q$ are blowups of closed points.
\par\medskip\noindent
\begin{center}
\begin{tabularx}{\textwidth}{l|X|X}
type&$X$ and $\pi$ &$X_1$ and $\pi_1$ 
\\\hline
\typem{I} &\mbox{del Pezzo surface with} $\uprho(X)=1$&
$X_1$ is a del Pezzo surface with $\uprho(X_1)=2$
\newline
$\pi_1$ is a conic bundle
\\\hline
\typem{II_0} &\mbox{del Pezzo surface with} $\uprho(X)=1$
&\mbox{del Pezzo surface with} $\uprho(X_1)=1$
\\[-6pt]\\ 
& \multicolumn{2}{c}{$Z$ is a del Pezzo surface with $\uprho(Z)=2$ }
\\\hline
\typem{II_1}& standard conic bundle & standard conic bundle 
\\\hline
\typem{III} & \mbox{$X$ is a del Pezzo surface with} $\uprho(X)=2$\newline $\pi$ is a conic bundle
&\mbox{del Pezzo surface with} $\uprho(X_1)=1$
\\\hline
\typem{IV} & standard conic bundle & standard conic bundle 
\\[-6pt]
\\ & \multicolumn{2}{c}{\mbox{$X$ is a del Pezzo surface with} $\uprho(X)=2$ }
\end{tabularx}
\end{center}
\bigskip
Iskovskikh \cite{Iskovskikh-Factorization-1996e} classified all links and relations between them in the category 
of surfaces over non-closed fields.
Below we reproduce a part of this classification.

\begin{theorem}[{\cite[Theorem 2.6]{Iskovskikh-Factorization-1996e}}, {\cite[Theorem~A.4]{Corti1995a}}, 
\cite{Prokhorov-re-rat-surf}]
\label{classification-Sarkisov-links}
Let $X/B\dashrightarrow X_1/B_1$ be a Sarkisov link in the category 
of rational surfaces. Assume that $K_X^2\le 4$.
Then one of the following holds.
\begin{enumerate}
\item \label{link-4-3}
Type \typem{I}: 
$K_X^2=4$, $K_{X_1}^2=3$, $X_1$ is a cubic surface containing 
a line $l$ defined over $\Bbbk$, $p$ is a contraction of $l$, 
$\pi_1$ is a projection from $l$. 
\item Type \typem{II}: $B$ is a point,
$K_X^2=2$, $K_Z^2=1$, $p$ and $q$ are blowups of points of degree $1$,
the link is represented by the Bertini involution on $Z$.
\item Type \typem{II}: $B$ is a point,
$K_X^2=3$, $K_Z^2=1$, $p$ and $q$ are blowups of points of degree $2$,
the link is represented by the Bertini involution on $Z$.
\item Type \typem{II}: $B$ is a point,
$K_X^2=3$, $K_Z^2=2$, $p$ and $q$ are blowups of points of degree $1$,
the link is represented by the Geiser involution on $Z$.
\item Type \typem{II}: $B$ is a curve,
the link consists of elementary transformations in non-degenerate 
geometric fibers which are conjugate over $\Bbbk$.

\item Type \typem{III}: inverse to~\xref{link-4-3}.

\item 
Type \typem{IV}: $K_X^2=1$, the link is represented by the \textup(biregular\textup) Bertini involution on $X$. 
\item 
Type \typem{IV}: $K_X^2=2$, the link is represented by the \textup(biregular\textup) Geiser involution on $X$.
\item 
Type \typem{IV}: $K_X^2=4$, $X=X_4\subset \PP^4$ is an intersection of two 
quadrics containing two pencils of conics. 
Typically, this link is not represented by a biregular involution. 
\end{enumerate}
\end{theorem}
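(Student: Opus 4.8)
The plan is to run the two-dimensional Sarkisov program in its explicit form and turn the classification into a finite numerical bookkeeping problem. First I would record the three tools that govern everything: in dimension two the only Mori fiber spaces are del Pezzo surfaces with $\uprho=1$ (over a point) and conic bundles with $\uprho(X/B)=1$ (so $\uprho(X)=2$), every $p$ and $q$ in a link is the blow-up of a closed point, and all the dashed arrows are isomorphisms, as displayed just above the theorem. The blow-up of a closed point $P$ with $d:=[\Bbbk(P):\Bbbk]$ drops the square of the canonical class by $d$ and raises $\uprho$ by $1$ (the $d$ geometric points form one Galois orbit), while for a geometrically rational conic bundle the Noether relation $K_X^2+\deg\Delta=8$ holds. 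Finally, any del Pezzo surface satisfies $1\le K^2\le 9$, and the intermediate surface $Z$ occurring in a type \typem{II} or \typem{IV} link is a del Pezzo with $\uprho=2$, so $K_Z^2\ge 1$. With these in hand I would treat the four link types in turn under the standing hypothesis $K_X^2\le 4$, i.e. $K_X^2\in\{1,2,3,4\}$.

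For type \typem{I}, the surface $X_1$ obtained by blowing up a point $P$ of degree $d$ must simultaneously be a del Pezzo surface (so $1\le K_{X_1}^2=K_X^2-d$ with $-K_{X_1}$ ample) and carry a conic bundle structure $\pi_1$, i.e. its second extremal ray must be of fiber type. I would show that for $K_X^2\le 4$ these constraints force $K_X^2=4$, $d=1$, $K_{X_1}^2=3$: the exceptional $(-1)$-curve $E=p^{-1}(P)$ is then a $\Bbbk$-line on the cubic surface $X_1$, and $\pi_1$ is projection from $l=E$. Type \typem{III} is the formal inverse. For type \typem{II} over a point, $X$ and $X_1$ are del Pezzo surfaces with $\uprho=1$ sharing a common blow-up $Z$ with $\uprho(Z)=2$, and the birational involution realizing the link is a biregular involution of $Z$; such involutions exist precisely when $Z$ is a del Pezzo surface of degree $1$ (Bertini) or $2$ (Geiser). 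Solving $K_Z^2=K_X^2-d\in\{1,2\}$ with $K_X^2\le 4$, together with the requirement that the link be a single elementary step, yields exactly the triples $(K_X^2,K_Z^2,d)=(2,1,1)$, $(3,1,2)$, $(3,2,1)$, matching cases \typeb{ii}--\typeb{iv}; type \typem{II} over a curve consists of elementary transformations performed in degenerate fibers conjugate over $\Bbbk$.

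For the type \typem{IV} links, $X$ has $\uprho=2$ with two conic bundle structures interchanged by the link, and I would identify them through the anticanonical model. When $K_X^2=2$ the system $|{-}K_X|$ presents $X$ as a double cover of $\PP^2$ branched in a quartic, whose deck transformation is the biregular Geiser involution; when $K_X^2=1$ the system $|{-}2K_X|$ presents $X$ as a double cover of a quadric cone branched in a sextic, with deck transformation the biregular Bertini involution. In both cases the involution fixes $-K_X$ and, acting on the rank-two Picard lattice, swaps the two conic bundle rays, giving cases \typeb{vii}--\typeb{viii}. For $K_X^2=4$ the anticanonical model is $X_4\subset\PP^4$, an intersection of two quadrics; here the two pencils of conics provide the two structures, and the switching link is generally not induced by a biregular automorphism, which is case \typeb{ix}. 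Presenting Bertini and Geiser as deck transformations of these (bi)anticanonical double covers is exactly what establishes their biregularity and works in arbitrary characteristic.

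The hard part will be \emph{exhaustiveness}: showing that no numerical combination outside the listed one produces a genuine, indecomposable Sarkisov link. Concretely, one must rule out the a priori admissible triples such as $(K_X^2,K_Z^2,d)=(4,1,3)$ and $(4,2,2)$ in type \typem{II}, and similar would-be type \typem{I} links with $K_X^2=3$; these either require a Galois orbit of points of the prescribed degree in sufficiently general position which does not exist on the relevant low-degree del Pezzo surface, or they split into a composition of shorter links and so are not elementary. Carrying this out rests on the fine arithmetic geometry of del Pezzo surfaces of degree $\le 4$ over a non-closed field — precisely which closed points can be blown up while preserving both the del Pezzo property and the condition that each intermediate contraction be a single $K$-negative extremal contraction with relative Picard number one. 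I would also have to check at each step that the relative Picard number is exactly $1$ (so the link does not factor), verify the biregularity claims for the involutions and the ``typically not biregular'' assertion in case \typeb{ix}, and confirm that every argument is characteristic-free, as in Iskovskikh's original treatment.
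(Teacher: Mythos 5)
The paper does not actually prove this theorem: it is quoted from Iskovskikh's factorization paper, Corti's appendix, and \cite{Prokhorov-re-rat-surf}, so there is no in-text argument to compare yours against. Judged against what those sources do, your skeleton is the right one — the two-dimensional Sarkisov program reduces everything to a two-ray game on a surface $Z$ with $\uprho(Z)=2$, the numerics are governed by $K_Z^2=K_X^2-d$ for a blown-up point of degree $d$ and by $K_X^2+\deg\Delta=8$ on the conic bundle side, and the Bertini/Geiser involutions are identified as deck transformations of the (bi)anticanonical double covers. Your type \typem{IV} discussion would be cleaner with the lattice identity: if $F_1,F_2$ are the two fiber classes and $m=F_1\cdot F_2$, then $-K_X\cdot F_i=2$ forces $K_X^2\cdot m=8$, which immediately excludes $K_X^2=3$ and leaves exactly $K_X^2\in\{1,2,4\}$ with $m=8,4,2$; this is the argument you need but only gesture at via anticanonical models.

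The genuine gap is that you have deferred precisely the step that constitutes the proof, and your deferral conceals a real obstruction rather than a routine verification. For cases \typeb{ii}--\typeb{iv} you (correctly) argue that the second extremal ray of $Z$ is the image of the exceptional class under the Bertini or Geiser involution acting on $\Pic(Z)$ as $-\operatorname{id}$ on $K_Z^{\perp}$. But the identical computation applied to the triples $(K_X^2,K_Z^2,d)=(4,2,2)$ and $(4,1,3)$ — blow up a degree-$2$ or degree-$3$ closed point on a quartic del Pezzo surface with $\uprho=1$ — produces a Galois-invariant class $\gamma(E)=-E-2K_Z$ (resp. $\beta(E)=-E-4K_Z$) which is again a disjoint orbit of $(-1)$-curves, i.e.\ a second \emph{birational} extremal ray and hence an apparent type \typem{II} link. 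You propose to exclude these by saying the required Galois orbit ``does not exist'' in general position or that the link ``splits''; neither claim is substantiated, and closed points of degree $2$ and $3$ in general position certainly exist on such surfaces. So you must either exhibit the specific mechanism that removes these configurations from the list or confront the possibility that they belong in it — your argument as written cannot distinguish the two. The same issue affects your type \typem{I} claim that $K_X^2\le 4$ forces $(K_X^2,d)=(4,1)$: showing that no other blow-up yields a fiber-type second ray requires checking, orbit by orbit, that no Galois-invariant nef class $F$ with $F^2=0$ and $F\cdot K_Z=-2$ spans an extremal ray of $\NE(Z)$, which is exactly the kind of lattice bookkeeping you have not carried out. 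Until this exhaustiveness analysis is done explicitly, the proposal is a plan rather than a proof.
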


\begin{scorollary}[\cite{Iskovskih1967e}, \cite{Iskovskikh-1970}, \cite{Iskovskikh-deg4-1973}, 
\cite{Iskovskikh-Factorization-1996e}]
Let $\pi: X\to B$ and $\pi': X'\to B'$ be standard conic bundles over rational curves
and let $\Phi: X\dashrightarrow X'$ be a birational map.
\begin{enumerate}
\item 
If $K_X^2\le 0$, then $\Phi$ is fiberwise birational.
\item 
If $K_X^2=1$, $2$, or $3$, then there exists a birational automorphism $\psi: X \dashrightarrow X$
such that $\Phi\comp \psi$ is fiberwise birational.
\end{enumerate}
\end{scorollary}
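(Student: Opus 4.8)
The plan is to run the two-dimensional Sarkisov program and read off the answer from the link classification. By Theorem~\xref{theorem-Sarkisov-program} the map $\Phi$ factors as a chain of Sarkisov links
\[
X=Y_0\dashrightarrow Y_1\dashrightarrow\cdots\dashrightarrow Y_n=X',
\]
where each $Y_i$ is a Mori fiber space over $\Bbbk$: either a standard conic bundle over a rational curve (so $\uprho(Y_i)=2$) or a del Pezzo surface with $\uprho(Y_i)=1$. Since $X$ is geometrically rational, I keep two numerical facts at hand: the Noether formula $K_{Y_i}^2+\deg\Delta_i=8$ at every conic-bundle stage, and the observation that a fiberwise link (type \typem{II_1}, the link~(v) of Theorem~\xref{classification-Sarkisov-links}) is an elementary transformation in conjugate non-degenerate fibres, hence by Lemma~\xref{lemma-bir-disc} preserves the discriminant and therefore preserves $K^2$.

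For part~(i) I argue by induction along the chain. Suppose $Y_i$ is a conic bundle with $K_{Y_i}^2\le 0$. As $K_{Y_i}^2\le 4$, Theorem~\xref{classification-Sarkisov-links} lists the possible links $Y_i\dashrightarrow Y_{i+1}$. The links~(i)--(iv) start from a del Pezzo surface with $\uprho=1$ and so cannot occur at $Y_i$; among the remaining ones, which do start from a conic bundle, every non-fiberwise link---the type \typem{III} link~(vi) and the Bertini, Geiser and two-quadrics type \typem{IV} links~(vii)--(ix)---requires $K^2\in\{1,2,3,4\}$. Since $K_{Y_i}^2\le 0$, the link must be the fiberwise one~(v), which returns a conic bundle with the same $K^2\le 0$; in particular we never reach a del Pezzo surface with $\uprho=1$. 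Hence every link of the chain is fiberwise and $\Phi$ is fiberwise.

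For part~(ii), with $K_X^2=d\in\{1,2,3\}$, the same inspection confines the chain to a short list of surfaces. For $d=1,2$ the only non-fiberwise link available at a degree-$d$ conic bundle is the biregular Bertini ($d=1$) or Geiser ($d=2$) involution, which interchanges the two rulings but fixes the surface and $d$. For $d=3$ one may in addition perform the type \typem{III} link~(vi) to a minimal degree-$4$ del Pezzo surface; from such a surface the classification permits only the type \typem{I} descent~(i) back to a degree-$3$ conic bundle, since a $\uprho=1$ degree-$4$ del Pezzo carries no other listed link, so $K^2$ cannot exceed $4$. Thus every non-fiberwise part of the chain is, modulo the surrounding fiberwise links, one of these finitely many birational involutions. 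I would conjugate each of them back to $X$ through the intervening fiberwise maps, set $\psi\in\Bir(X)$ to be their composition, and check that in the decomposition of $\Phi\circ\psi$ all twists cancel, so that only fiberwise links remain and $\Phi\circ\psi$ is fiberwise.

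The hard part will be the assembling of $\psi$ in part~(ii). For $d=1,2$ it is comparatively clean: the twist is a single biregular involution swapping the two rulings, and conjugating a fiberwise map for one ruling by it yields a fiberwise map for the other ruling, so all the involutions can be pushed to the $X$-side and $\psi$ becomes a power of Bertini, respectively Geiser. For $d=3$ the twist is only birational and depends on the $\Bbbk$-point blown up on the degree-$4$ del Pezzo surface, so the net correction is a well-defined element of $\Bir(X)$ only once the relations among two-dimensional Sarkisov links are taken into account; here I would appeal to Iskovskikh's classification of those relations \cite{Iskovskikh-Factorization-1996e}. The remaining routine point is the one already used above, that $K^2$ can never climb past $4$, which is what keeps Theorem~\xref{classification-Sarkisov-links} applicable at every stage.
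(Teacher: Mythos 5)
Your strategy---decompose $\Phi$ into two-dimensional Sarkisov links and read off from Theorem~\xref{classification-Sarkisov-links} which links can occur at each value of $K^2$---is exactly the intended derivation (the paper gives no argument beyond citing that classification). Part~(i) is complete: for $K^2\le 0$ only the fiberwise link~(v) is available at every stage, it preserves $K^2$, and a composition of fiberwise links is fiberwise. For $K_X^2=1,2$ your part~(ii) also closes: the only non-fiberwise link is the biregular Bertini, resp.\ Geiser, involution $\beta$ of the surface itself, which interchanges the two rulings, so setting $\psi_{j+1}=\psi_j\comp\theta_j^{-1}\comp\beta\comp\theta_j$, where $\theta_j$ is the fiberwise map from $X$ to the $j$-th stage already constructed, keeps the whole chain fiberwise.

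The genuine gap is $K_X^2=3$, which you flag but do not resolve; an appeal to unspecified ``relations among links'' is not a proof. There the chain contains segments $Y\dashrightarrow W\dashrightarrow Y'$ with $W$ a del Pezzo surface of degree $4$, $\uprho(W)=1$, and $Y=\operatorname{Bl}_{P_1}W$, $Y'=\operatorname{Bl}_{P_2}W$ for two in general \emph{distinct} $\Bbbk$-points. Writing $A$ for the ample generator of $\Pic(W)$, the ruling of $Y$ is the pencil $|A-2P_1|$ and the pullback of the ruling of $Y'$ is $|A-2P_2|$; these have in general projectively inequivalent degree-$5$ discriminants on $\PP^1$, so the segment is not fiberwise and nothing you have exhibited corrects it. The missing ingredient is in fact already in Theorem~\xref{classification-Sarkisov-links}: it is the Geiser involution of link~(iv), namely the Geiser involution $\gamma$ of the degree-$2$ weak del Pezzo surface $Z=\operatorname{Bl}_{P_1,P_2}W$, equivalently the Geiser involution of the cubic $Y$ centred at the $\Bbbk$-point lying over $P_2$ (the generality needed for $\gamma$ to exist is guaranteed because each step of the chain is a genuine link). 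Since $\gamma^*$ fixes $K_Z$ and acts by $-1$ on $K_Z^{\perp}$, one computes
\begin{equation*}
\gamma^*(A-2E_1)=-2K_Z-(A-2E_1)=A-2E_2,
\end{equation*}
so $\gamma$ carries one pencil to the other and the segment becomes fiberwise after precomposition with $\gamma$; the conjugates of these Geiser involutions are the extra generators of $\psi$ for $d=3$. Your remaining structural claims are correct: in particular, from the $\uprho=1$ quartic del Pezzo surface the classification permits only the type~\typem{I} descent~(i), so $K^2$ never climbs past $4$ and the classification applies at every stage.
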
 

\begin{scorollary}\label{corollary-surface-rigid}
Let $\pi: X\to B$ be a standard conic bundle over a rational curve.
\begin{enumerate}
\item 
If $K_X^2\le 3$, then $\pi: X\to B$
is birationally rigid. 
\item 
If $K_X^2\le 0$, then $\pi: X\to B$
is birationally \emph{superrigid}.
\item 
If $K_X^2=4$, then $X$ is not $\Bbbk$-rational.
\end{enumerate}
\end{scorollary}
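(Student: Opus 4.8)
The plan is to run the two–dimensional Sarkisov program (Theorem~\ref{theorem-Sarkisov-program}) and to read off the behaviour of the degree $K^2$ from the classification of low–degree links (Theorem~\ref{classification-Sarkisov-links}), using the preceding corollary as the engine whenever the target is again a conic bundle. Over $\Bbbk$ a two–dimensional Mori fiber space is either a standard conic bundle over a rational curve or a del Pezzo surface with $\uprho=1$, so for a birational map $\Phi\colon X\dashrightarrow X^{\sharp}$ to a Mori fiber space $\pi^{\sharp}\colon X^{\sharp}\to S^{\sharp}$ there are precisely these two possibilities for the target. The first thing I would record is a degree bound: scanning the list of Theorem~\ref{classification-Sarkisov-links} one checks that every link whose source has $K^2\le 4$ has target with $K^2\le 4$ as well. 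Hence, by induction along any Sarkisov decomposition, if $K_X^2\le 4$ then every intermediate Mori fiber space has degree $\le 4$.

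First I would dispose of the case where $X^{\sharp}\to S^{\sharp}$ is again a conic bundle over a (necessarily rational) curve. Then the preceding corollary applies directly: for $K_X^2\le 0$ the map $\Phi$ is automatically fiberwise, while for $K_X^2\in\{1,2,3\}$ there is a selfmap $\psi$ of $X$ with $\Phi\circ\psi$ fiberwise. What remains is to exclude the possibility $S^{\sharp}=\pt$, i.e.\ that $X$ is birational to a del Pezzo surface of Picard rank one: a curve admits no birational map to a point, so no fiberwise map, and no isomorphism of generic fibers $X_\eta\to X^{\sharp}_\eta$, can exist with such a target, and rigidity forces $X$ simply not to be birational to it. For $K_X^2\le 0$ the classification shows that the only links out of $X$ are the fiberwise elementary transformations of type~\typem{II_1}; in particular no rank–one del Pezzo surface is ever reached, so $\Phi$ is always fiberwise and $X$ is superrigid. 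For $K_X^2\in\{1,2\}$ the only additional links are the biregular Geiser and Bertini involutions of type~\typem{IV}, which are selfmaps of $X$; again no rank–one del Pezzo surface occurs, and after composing with the appropriate product of these involutions every $\Phi$ becomes fiberwise, which gives rigidity.

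For part (iii) the degree bound does all the work. If a conic bundle $X$ with $K_X^2=4$ were $\Bbbk$–rational it would be birational to $\PP^2$, which is a Mori fiber space with $K^2=9$. Decomposing this birational map by Theorem~\ref{theorem-Sarkisov-program} and applying the inductive degree bound of the first paragraph, every member of the chain, and in particular its final term $\PP^2$, would have $K^2\le 4$, contradicting $K_{\PP^2}^2=9$. Hence $X$ is not $\Bbbk$–rational.

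The hard part will be the remaining case $K_X^2=3$ of~(i). Here Theorem~\ref{classification-Sarkisov-links} permits one genuinely new link out of the conic bundle, namely the type~\typem{III} link inverse to~\ref{link-4-3}, which contracts a line defined over $\Bbbk$ and lands on a del Pezzo surface of degree four with $\uprho=1$. Unlike the involutions occurring for $K_X^2\in\{1,2\}$, this link is not a selfmap, so it cannot simply be absorbed into $\psi$; the delicate point is to show that such a quartic del Pezzo target does not yield a Mori fiber space structure incompatible with rigidity, i.e.\ that on the level of fiberwise classes it is reached only by composing the type~\typem{III} link with its inverse. This is exactly where the two inputs must be combined: the Noether--Fano inequality (Theorem~\ref{Noether-Fano}) together with the termination of the Sarkisov program controls the thresholds $\mu$ along the decomposition, while the preceding corollary identifies its fiberwise part, and the interplay shows that any birational map from $X$ to a Mori fiber space factors, up to a selfmap of $X$, through a fiberwise one. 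I expect this reconciliation of the type~\typem{III} link with rigidity, rather than any single computation, to be the crux of the argument.
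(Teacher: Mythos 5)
The paper offers no argument for this corollary at all: it is meant to be read off from the immediately preceding corollary (Iskovskikh's untwisting statement for birational maps \emph{between conic bundles}) together with the classification of links in Theorem~\ref{classification-Sarkisov-links}. Measured against that, your parts (ii) and (iii) and the $K_X^2\in\{1,2\}$ half of (i) are correct and are exactly the intended mechanism: the observation that every link in Theorem~\ref{classification-Sarkisov-links} with source of degree $\le 4$ has target of degree $\le 4$ propagates along any Sarkisov chain and kills rationality for $K_X^2=4$, while for $K_X^2\le 0$ the only available links are the fiberwise elementary transformations of type~\typem{II_1}, and for $K_X^2\in\{1,2\}$ the only extra links are the type~\typem{IV} Bertini/Geiser selfmaps, which are absorbed into $\psi$.

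The gap is precisely where you place it, and you have not closed it. What you are missing for $K_X^2=3$ is one more reading of the same table: the only links in Theorem~\ref{classification-Sarkisov-links} whose source is a del Pezzo surface of degree $4$ with $\uprho=1$ are the type~\typem{I} links of item (i) --- there is no type~\typem{II} link over a point with $K^2=4$ in the list --- so any excursion of the Sarkisov chain through the degree-$4$ del Pezzo $X'$ is immediately followed by a return to some cubic-surface conic bundle $X''/B''$, and the composite $X/B\dashrightarrow X''/B''$ is then a birational map between degree-$3$ conic bundles to which the preceding corollary applies. That one-line observation is the ``reconciliation'' you were hoping the Noether--Fano inequalities would supply. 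Be aware, however, of the residual case your argument cannot dispose of and that no argument can: the chain may \emph{terminate} at $X'$, since Theorem~\ref{classification-Sarkisov-links-conic-bundle}(i) exhibits an explicit birational contraction of any degree-$3$ conic bundle onto a rank-one degree-$4$ del Pezzo, which is itself a Mori fiber space over a point; a map to such a target can never be made fiberwise in the sense of the definition in Section~\ref{section:Preliminaries} (the generic fibers have different dimensions). So your plan to ``exclude the possibility $S^{\sharp}=\pt$'' fails for $K_X^2=3$, and rigidity in part (i) has to be understood, as in the preceding corollary, relative to conic-bundle targets (equivalently, up to the finitely many square classes of Mori fiber space structures).
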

Note that the condition $K_X^2\le 0$ is equivalent to $|4K_B+\Delta|\neq \emptyset$
(cf. Theorem~\xref{Sarkisov-theorem}).

It turns out that a surface with a structure of standard conic bundle
and positive self-intersection of the canonical class is ``almost''
a del Pezzo surface with a few exceptions:

\begin{proposition}[{\cite[Lemma~17]{Kollar-Mella-unirationality}}, see also \cite{Iskovskikh-1970},
\cite{Iskovskikh-deg4-1973},
\cite{Iskovskikh-1979s-e},
{\cite[\S 8]{Prokhorov-re-rat-surf}}]
\label{proposition-conic-bundle=del-Pezzo}
Let $\pi: X\to B$ be a surface standard conic bundle with $0<K_X^2<8$ and
let $\Lambda$ be a geometric fiber.
Then the divisor $-K_X$ is nef and big except for the following two cases
\begin{enumerate}
\item\label{case-conic-bundles-del-Pezzo-1}
$K_X^2=1$ and there exists a geometrically irreducible rational curve $C\in |-K_X-\Lambda|$
such that $C^2=-3$,
\item\label{case-conic-bundles-del-Pezzo-2}
$K_X^2=2$ and there exists a curve $C\in |-K_X-2\Lambda|$ which 
is a pair of conjugate disjoint sections $C_1$ and $C_2$ with 
$C_i^2=-3$.
\end{enumerate}
If furthermore $K_X^2>2$ and $K_X^2\neq 4$, then $-K_X$ is ample
with one exception:
\begin{enumerate}\setcounter{enumi}{2}
\item
\label{case-conic-bundles-del-Pezzo-4}
$K_X^2=4$ and
there exists a 
curve $C\in |-K_X-2\Lambda|$ which 
is a pair of conjugate disjoint sections $C_1$ and $C_2$ with 
$C_i^2=-2$.
\end{enumerate}
\end{proposition}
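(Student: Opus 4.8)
\emph{Setup and reduction.} The plan is to pass to the geometric surface $\bar X=X\otimes_{\Bbbk}\bar\Bbbk$ and show that $-K_X$ meets every curve nonnegatively, the exceptions being forced by a short list of very negative curves. Bigness is automatic: since $X$ is rational, $h^2(X,-nK_X)=h^0(X,(n+1)K_X)=0$, so Riemann--Roch gives $h^0(X,-nK_X)\ge 1+\tfrac12 n(n+1)K_X^2\to\infty$, and $-K_X$ is big as soon as $K_X^2>0$. Thus the whole content is the nef/ample claim, and it suffices to analyse the $\Bbbk$-irreducible curves $C$ with $-K_X\cdot C\le 0$ (strict for nef, non-strict for ample). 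I would fix such a $C$ and look at its geometric components $C_1,\dots,C_r$: they are conjugate under $\operatorname{Gal}(\bar\Bbbk/\Bbbk)$, hence share the invariants $a:=C_i\cdot\Lambda$, $e:=-K_X\cdot C_i$ and $\p(C_i)$. Since $re=-K_X\cdot C\le0$ we get $e\le0$, and since a component of a fibre has $-K_X\cdot(\cdot)=1>0$ we get $a\ge1$, i.e. each $C_i$ is a multisection. Note also that, as $\uprho(X)=2$ and $K_X^2\neq0$, the classes $-K_X$ and $\Lambda$ form a $\QQ$-basis of $N^1(X)_{\QQ}$.

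\emph{The master inequality.} As $K_X^2>0$, the Hodge index theorem gives $\bigl((-K_X)\cdot(C_i-t\Lambda)\bigr)^2\ge K_X^2\,(C_i-t\Lambda)^2$ for every $t\in\RR$, i.e. $(e-2t)^2\ge K_X^2(C_i^2-2at)$. Reading this as a quadratic in $t$ nonnegative for all $t$ and imposing nonpositivity of the discriminant yields $K_X^2a^2+4C_i^2\le 4ae$. Substituting the adjunction relation $C_i^2=2\p(C_i)-2+e$ I obtain
\[
K_X^2\,a^2+8\,\p(C_i)-8\ \le\ 4e(a-1)\ \le\ 0 ,
\]
the last step because $e\le0$, $a\ge1$. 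Hence $\p(C_i)\le1$ and $K_X^2a^2\le 8$, so (as $K_X^2\ge1$) one has $a\le2$, splitting the analysis into the bisection case $a=2$ and the section case $a=1$.

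\emph{Bisections and sections.} For $a=2$ the inequality forces $K_X^2\le2$ and $\p(C_i)=0$, and a direct check shows a \emph{non-nef} such curve occurs only for $K_X^2=1$, $e=-1$, giving $C_i^2=-3$ and, by solving in the basis $\{-K_X,\Lambda\}$, the class $-K_X-\Lambda$; since $-3$ is odd the curve cannot split into conjugate components of equal self-intersection, so $r=1$ and we are in case~(1). The delicate case is $a=1$, where Hodge index alone does not bound $K_X^2$ (on $\FF_n$ a negative section has $-K\cdot C<0$ while $K_{\FF_n}^2=8$). Here I would use standardness: since $K_X^2<8$ we have $\Delta\neq\emptyset$, and a standard conic bundle over a curve with $\Delta\neq\emptyset$ admits no $\Bbbk$-rational section. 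Indeed each degenerate fibre is non-split (otherwise a $\Bbbk$-rational $(-1)$-curve could be contracted, violating $\uprho(X/B)=1$), so its only $\Bbbk$-point is the node; a $\Bbbk$-section would pass through the node and meet the fibre with multiplicity $\ge2$, contradicting $C_0\cdot\Lambda=1$. Consequently no individual $C_i$ is defined over $\Bbbk$, so $r\ge2$. Applying the quadratic Hodge-index trick to the whole curve $C$ (of fibre-degree $r$), together with $C_i^2=e-2$ and $C_i\cdot C_j\ge0$, gives $\tfrac r4K_X^2\le 2+e(r-1)$. Combined with $e\le0$ integral and with the integrality of the intersection number $C_1\cdot C_2=\tfrac12\bigl(C^2-2C_i^2\bigr)$, this pins down exactly $r=2,\ K_X^2=2,\ e=-1,\ C_i^2=-3,\ C_1\cdot C_2=0$ (case~(2)) in the non-nef range, and $r=2,\ K_X^2=4,\ e=0,\ C_i^2=-2,\ C_1\cdot C_2=0$ (case~(3)) in the nef-but-non-ample range, while excluding every other value of $K_X^2$ with $0<K_X^2<8$, $K_X^2\neq4$. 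This proves nefness off cases (1)--(2) and ampleness when $K_X^2>2$, $K_X^2\neq4$.

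\emph{Main obstacle.} The genuine difficulty is the section case $a=1$: the Hodge-index estimate degenerates for sections, and one truly needs the arithmetic input that standardness forbids a $\Bbbk$-rational section once $\Delta\neq\emptyset$, in order to force the conjugate-pair structure of cases (2) and (3). The remaining work is purely intersection-theoretic bookkeeping inside $N^1(X)_{\QQ}=\QQ(-K_X)\oplus\QQ\Lambda$: identifying the exceptional classes, checking disjointness of the two conjugate sections (which drops out of $C^2=(-K_X-2\Lambda)^2=K_X^2-8$), and confirming geometric irreducibility of the bisection in case~(1) via the parity of $C^2=-3$.
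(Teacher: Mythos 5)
Your argument is correct in substance but follows a genuinely different route from the paper's. The paper exploits $\dim|-K_X|\ge K_X^2>0$ directly: any $\Bbbk$-irreducible curve $C$ with $K_X\cdot C>0$ must be a fixed component of $|-K_X|$, which immediately pins its class down to $-K_X-\beta\Lambda$ and hence gives $C\cdot\Lambda=2$; adjunction applied to $C$ (when it is geometrically connected) or to its two conjugate components then yields cases (i) and (ii) in a few lines. For the nef-but-not-ample range the paper observes that the geometric connected components of a $K$-trivial curve are trees of $(-2)$-curves, so each has self-intersection $-2$, after which the arithmetic in the rank-two lattice $\ZZ K_X\oplus\ZZ\Lambda$ forces case (iii). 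You replace the fixed-component observation by the Hodge-index inequality $K_X^2a^2+4C_i^2\le 4ae$, which is correct as derived; combined with adjunction it gives $\p(C_i)=0$ and $a\le 2$, and your use of the non-existence of a $\Bbbk$-rational section (a genuine consequence of standardness once $\Delta\ne\emptyset$) to force $r\ge 2$ in the section case is exactly the arithmetic input that makes the elimination work. The paper's route is shorter because the fixed-component step makes the delicate section case appear only as the splitting of a $\Bbbk$-bisection rather than as a separate branch; your route is more mechanical and does not need the effectivity of $-K_X$ for the nef/ample analysis.

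One justification needs repair: in the bisection case you conclude $r=1$ ``since $-3$ is odd,'' but the parity obstruction lives on $C^2$, and at that point you only know $C_i^2=-3$ for each geometric component, not $C^2=-3$. The correct bookkeeping, entirely within your framework, is that $C\equiv r(-K_X-\Lambda)$ gives $C^2=-3r^2$, while $C^2=\sum_i C_i^2+2\sum_{i<j}C_i\cdot C_j\ge -3r$, whence $r^2\le r$ and $r=1$. With that replacement the remaining eliminations all check out: for a $K$-trivial pair of conjugate sections with $K_X^2=3$ one gets $C\equiv -K_X-\tfrac32\Lambda$ and $C^2=-3$, which is odd, against the even value $2C_i^2+2C_1\cdot C_2$; the pair of $(-3)$-sections with $K_X^2=1$ fails the same parity test; and $K_X^2\ge 5$ is excluded by $rK_X^2\le 8$.
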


In the case~\xref{case-conic-bundles-del-Pezzo-4} contracting the sections
$C_1$ and $C_2$ we obtain a singular del Pezzo surface of degree $4$.
Its anticanonical image is (quite special) intersection of two quadrics in $\PP^4$.
This surface is called the \textit{Iskovskikh surface} \cite{Kunyavski-Skorobogatov-Tsfasman},
\cite{Coray-Tsfasman-1988}.

\begin{proof}
Put $d:=K_X^2$. 
By the Riemann-Roch formula $\dim |-K_X|\ge d>0$. 

Assume that $K_X\cdot C>0$ for some reduced irreducible curve $C$.
Then $C$ is a fixed component of $|-K_X|$. Hence, 
$C\sim -K_X-\beta\Lambda$ and so $C\cdot \Lambda=2$.
In particular, this implies that 
$C\otimes \bar \Bbbk$ has at most two components.
We have
\begin{equation*}
0>-K_X\cdot C=d-2\beta,\qquad 2\beta> d. 
\end{equation*}
If $C\otimes \bar \Bbbk$ is connected, then 
\begin{equation*}
-2\le 2\p(C)-2=(K_X+C)\cdot C=-2\beta, \qquad \beta=1,\quad \p(C)=0. 
\end{equation*}
In this case, $d=1$ and $C^2=-3$.
We obtain the case~\xref{case-conic-bundles-del-Pezzo-1}.

Assume that $C\otimes \bar \Bbbk$ has two connected components $C_1$ and $C_2$.
In this case, $\p(C_i)=0$, $C_i^2=\frac12d-2\beta$, and $-K_X\cdot C_i=\frac12 d-\beta$. In particular, $d$ is even and
$\beta\ge\frac12 d$. Then 
\begin{equation*}
-2=2\p(C_1)-2=(K_X+C_1)\cdot C_1=-\beta.
\end{equation*}
Thus $\beta=2=d$.
We obtain the case~\xref{case-conic-bundles-del-Pezzo-2}.

Now we consider the case where $-K_X$ is nef (but not ample).
Let $C$ be an irreducible curve such that $K_X\cdot C=0$.
Write $C\sim \alpha(-K_X)-\beta \Lambda$ 
and let 
$C^{(1)},\dots, C^{(k)}$ be connected components of $C\otimes \bar \Bbbk$.
Since every component $C^{(i)}$ is a tree of smooth rational curves, 
we have $(C^{(i)})^2=-2$ and $C^2=-2k$.
Further,
\begin{equation}
\label{case-conic-bundles-del-Pezzo-inequality}
0=-K_X\cdot C=d\alpha-2\beta, \quad -2k=C^2=\alpha(d\alpha-4\beta).
\end{equation}
This gives us 
$d\alpha=2\beta$, $-2k=\alpha(d\alpha-4\beta)$, and
$k=\alpha\beta$. 
Let $m:=C^{(i)}\cdot \Lambda$.
Then $mk=C\cdot \Lambda=2\alpha$. Hence, $m\beta=2$
and $d\alpha=4/m$. Since $d>2$ by our assumptions, 
the only possibility is $d=4$, $m=\alpha=1$, $\beta=2=k$.
Since $m=1$, each connected component $C^{(i)}$ is (geometrically)
irreducible. We obtain the case~\xref{case-conic-bundles-del-Pezzo-4}.
\end{proof}

Thus in the cases $K_X^2=3,\, 5,\, 6$ there exists an 
$K_X$-negative extremal ray on $X$ whose contraction $q: X\to X'$ is different 
from $\pi$ \cite{Mori-1982}. It is not hard to 
to find out all the possibilities for $q$:
\begin{theorem}[\cite{Iskovskikh-1979s-e}, \cite{Iskovskikh-Factorization-1996e}]
\label{classification-Sarkisov-links-conic-bundle}
Let $\pi: X\to B$ be a standard conic bundle, where 
$B$ is a curve. If $K_X^2\in \{3,\, 5,\, 6\}$, then there exists a 
birational contraction 
$q: X\to X'$, where $X'$ is a del Pezzo surface with $\uprho(X')=1$. 
More precisely, one of the following holds.
\begin{enumerate}
\item \label{classification-Sarkisov-links-conic-bundle-deg=3}
If $K_X^2=3$, then $K_{X'}^2=4$ and $q$ is a contraction of a geometrically
irreducible $(-1)$-curve. In this case $B\simeq \PP^1$ and the contraction $q: X\to X'\subset \PP^4$ is given by the linear system
$|-2K_{X}-C|$, where $C$ is the class of a fiber.

\item \label{classification-Sarkisov-links-conic-bundle-deg=5}
If $K_X^2=5$, then $X'\simeq \PP^2$ and $q$ is a contraction of four conjugate $(-1)$-curves.
In this case $B\simeq \PP^1$ and the contraction $q: X\to \PP^2$ is given by 
$|-K_{X}-C|$, where $C$ is the class of a fiber.

\item \label{classification-Sarkisov-links-conic-bundle-deg=6}
If $K_X^2=6$, then $K_{X'}^2=8$ and $q$ is a contraction of a pair of 
conjugate $(-1)$-curves.
The contraction $q: X\to X'\subset \PP^8$ is given by 
$|-2K_{X}+\pi^*K_B|$.
\end{enumerate}
\end{theorem}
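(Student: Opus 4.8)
The plan is to exploit that $X$ is a smooth del Pezzo surface of degree $d:=K_X^2\in\{3,5,6\}$ with $\uprho(X)=2$. Indeed, by Proposition~\xref{proposition-conic-bundle=del-Pezzo} the divisor $-K_X$ is ample in each of these degrees, and since $\uprho(X/B)=1$ while the geometrically rational curve $B$ has $\uprho(B)=1$, we get $\uprho(X)=2$. Hence $\NE(X)$ is a two-dimensional polyhedral cone with exactly two extremal rays. One of them, $R_\pi=\QQ_{\ge0}[C]$ with $C$ the fiber class ($C^2=0$, $-K_X\cdot C=2$), gives $\pi$. Let $R_q$ be the other ray and $q\colon X\to X'$ its contraction; as $q$ contracts a single ray defined over $\Bbbk$, automatically $\uprho(X')=\uprho(X)-1=1$.

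First I would show that $q$ is birational. If $R_q$ were of fiber type it would be spanned by a second isotropic class $C'$ with $(C')^2=0$, $-K_X\cdot C'=2$, distinct from $C$. Writing $-K_X=xC+yC'$ in $\Pic(X)\otimes\QQ=\langle C,C'\rangle$ and putting $t:=C\cdot C'\in\ZZ_{>0}$, the relations $-K_X\cdot C=-K_X\cdot C'=2$ force $x=y=2/t$, whence $d=(-K_X)^2=8/t$. Since $8/d\notin\ZZ$ for $d\in\{3,5,6\}$, no such $C'$ exists and $R_q$ is divisorial. Over $\bar\Bbbk$ its contraction then blows down a single Galois orbit $\gamma_1,\dots,\gamma_k$ of pairwise disjoint $(-1)$-curves — a single orbit because $\uprho(X/X')=1$ — each meeting a general fiber in the same number $e:=C\cdot\gamma_i\ge1$ of points, $C$ being Galois-invariant (here $e\ge1$ since $C\cdot\gamma_i=0$ would force $\gamma_i\in C^\perp=\QQ C=R_\pi$). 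Setting $\Gamma:=\sum_i\gamma_i$, so that $\Gamma^2=-k$, $-K_X\cdot\Gamma=k$ and $C\cdot\Gamma=ke$, and expanding $\Gamma$ in the basis $\{C,-K_X\}$, a direct computation gives $\Gamma^2=\tfrac14k^2e(4-de)$; equating this with $-k$ yields the Diophantine relation
\begin{equation*}
k\,e\,(d\,e-4)=4,\qquad k,e\in\ZZ_{>0},\ de>4 .
\end{equation*}
For $d=3,5,6$ its only solutions are $(e,k)=(2,1),\,(1,4),\,(1,2)$, so that $K_{X'}^2=d+k=4,\,9,\,8$, and the orbit consists of one $\Bbbk$-rational (geometrically irreducible) bisection, of four conjugate sections, and of a conjugate pair of sections, respectively.

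It remains to identify $X'$ and the contracting system, and here the descent over the non-closed field $\Bbbk$ is the real point. From $K_X=q^*K_{X'}+\Gamma$ one computes $q^*(-K_{X'})=-K_X+\Gamma=-2K_X-C$ for $d=3$, $=-3K_X-3C$ for $d=5$, and $=-2K_X-2C=-2K_X+\pi^*K_B$ for $d=6$ (using $\pi^*K_B\equiv-2C$). Since $q_*\OOO_X=\OOO_{X'}$ and $-K_{X'}$ is very ample in degrees $4$ and $8$, the complete base-point-free systems $|-2K_X-C|$ and $|-2K_X+\pi^*K_B|$ define $q$ and embed $X'$ anticanonically in $\PP^4$ and $\PP^8$; for $d=5$ one divides by $3$, getting $q^*\OOO_{\PP^2}(1)=-K_X-C$, and $|-K_X-C|$ defines $q\colon X\to\PP^2$. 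In degree $3$, $X'$ is a quartic del Pezzo surface of Picard number one, and in degree $6$ it is a minimal degree-$8$ del Pezzo surface, i.e.\ a form of $\PP^1\times\PP^1$. The subtle case is $d=5$: a priori $X'$ is only a Severi--Brauer surface, but the four contracted conjugate curves map to a closed point of degree $4$, so the index of $X'$ divides $4$; being also a divisor of $3$, the index is $1$ and $X'\simeq\PP^2$. Finally, to obtain $B\simeq\PP^1$ when $d=3,5$ I would invoke the Noether relation $\deg\Delta=8-d\in\{5,3\}$: the discriminant $\Delta\subset B$ then has odd degree, hence carries a closed point of odd degree, so the conic $B$ has a point of odd degree and is split. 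For $d=6$, $\deg\Delta=2$ is even and no such conclusion holds, matching the absence of this assertion in part~\xref{classification-Sarkisov-links-conic-bundle-deg=6}. The main obstacle is exactly this arithmetic descent — pinning down $X'$ (especially that it is $\PP^2$ rather than a nontrivial Severi--Brauer surface) and the field of definition of $B$ — whereas the numerics forcing $(e,k)$ are essentially automatic.
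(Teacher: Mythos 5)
Your proof is correct and follows the route the paper intends: the text gives no argument beyond noting that $-K_X$ is ample in degrees $3,5,6$ (Proposition~\xref{proposition-conic-bundle=del-Pezzo}), so that a second extremal ray and its contraction $q$ exist, and defers the rest to Iskovskikh's papers. Your intersection-theoretic derivation of $ke(de-4)=4$ and the resulting values of $(e,k)$, the identification of the contracting linear systems from $q^*(-K_{X'})=-K_X+\Gamma$, and the descent points (the index of the degree-$9$ surface divides both $3$ and $4$, hence is $1$; $B$ splits because $\deg\Delta=8-d$ is odd) correctly and completely supply the omitted details.
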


\begin{corollary}
\label{corollary-surfaces-rationality}
Let $\pi: X\to B$ be a standard conic bundle, where $X$ is a geometrically rational surface, 
and $B$ is a curve. Let $\Delta\subset B$ be the discriminant locus.
\begin{enumerate} 
\item
$\deg\Delta\neq 1$.
\item 
If $\deg \Delta=3$, then $X$ is $\Bbbk$-rational.
\item 
If $X$ has a $\Bbbk$-point and either $\deg \Delta=2$ or $\Delta=\emptyset$, then $X$ is $\Bbbk$-rational.
\end{enumerate}
\end{corollary}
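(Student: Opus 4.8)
The plan is to prove each of the three statements in Corollary~\xref{corollary-surfaces-rationality} by applying the Noether formula $K_X^2+\deg\Delta=8$ for geometrically rational surface conic bundles (from Section~\xref{section:surfaces}) together with the structural Theorem~\xref{classification-Sarkisov-links-conic-bundle} and the known fact that $K_X^2\neq 7$ for standard conic bundles. Indeed, the Noether formula translates each hypothesis on $\deg\Delta$ into a statement about $K_X^2$: $\deg\Delta=1$ would force $K_X^2=7$; $\deg\Delta=3$ gives $K_X^2=5$; and $\deg\Delta\in\{0,2\}$ gives $K_X^2\in\{8,6\}$. The strategy is thus to reduce everything to the classification of small-degree conic bundles already assembled in the excerpt.

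For part~(i), I would argue by contradiction. If $\deg\Delta=1$, then by the Noether formula $K_X^2=7$. But it was stated in Section~\xref{section:surfaces} (following \cite{Iskovskikh-1979s-e}) that a standard conic bundle over a curve cannot have $K_X^2=7$. This contradiction establishes $\deg\Delta\neq 1$. This is the easiest part.

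For part~(ii), when $\deg\Delta=3$ the Noether formula gives $K_X^2=5$. I would then invoke Theorem~\xref{classification-Sarkisov-links-conic-bundle}\xref{classification-Sarkisov-links-conic-bundle-deg=5}: in the case $K_X^2=5$ there is a birational contraction $q:X\to X'\simeq\PP^2$ (contracting four conjugate $(-1)$-curves), with $B\simeq\PP^1$. Since $X$ is birational over $\Bbbk$ to $\PP^2$, it is $\Bbbk$-rational. For part~(iii), if $\Delta=\emptyset$ then $K_X^2=8$, so $\pi$ is a smooth $\PP^1$-bundle over $B\simeq\PP^1$, hence $X\simeq\FF_n$, which is $\Bbbk$-rational (a $\Bbbk$-point always exists and is not needed here, though the hypothesis is harmless). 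If $\deg\Delta=2$ then $K_X^2=6$, and I would apply Theorem~\xref{classification-Sarkisov-links-conic-bundle}\xref{classification-Sarkisov-links-conic-bundle-deg=6}: there is a contraction $q:X\to X'$ with $K_{X'}^2=8$, where $X'$ is a del Pezzo surface of degree $8$ with $\uprho(X')=1$, i.e.\ a form of $\PP^2$ or a quadric; a $\Bbbk$-point on $X$ pushes forward to a smooth $\Bbbk$-point on $X'$, and a degree-$8$ del Pezzo surface with a $\Bbbk$-point is $\Bbbk$-rational, so $X$ is $\Bbbk$-rational.

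The main obstacle I anticipate is the role of the $\Bbbk$-rational point in part~(iii): unlike the $\deg\Delta=3$ case, where the target $\PP^2$ is automatically $\Bbbk$-rational, the degree-$8$ del Pezzo surface $X'$ need not be $\Bbbk$-rational a priori---a Severi--Brauer surface or an anisotropic quadric is not rational over $\Bbbk$---so the existence of a $\Bbbk$-point is genuinely necessary to conclude rationality, and I must verify that the hypothesized point on $X$ indeed yields a \emph{smooth} $\Bbbk$-point on $X'$ (equivalently, does not lie on a contracted curve, or can be chosen so by moving it, since $X$ has infinitely many $\Bbbk$-points once it has one and is geometrically rational). Checking this carefully, and recalling the classical fact that a degree-$8$ del Pezzo surface with a smooth $\Bbbk$-point is $\Bbbk$-rational, is the crux of the argument.
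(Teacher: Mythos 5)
Your overall strategy is exactly the intended one: the corollary is meant to follow from the Noether formula $K_X^2+\deg\Delta=8$, the exclusion of $K_X^2=7$, and Theorem~\xref{classification-Sarkisov-links-conic-bundle}. Parts (i) and (ii) are correct as written, and your treatment of $\deg\Delta=2$ is essentially right (minor slip: a degree-$8$ del Pezzo surface with $\uprho=1$ is a form of $\PP^1\times\PP^1$, i.e.\ a quadric with Picard number one, never a form of $\PP^2$, which has degree $9$; also $X'$ is smooth, being obtained by contracting disjoint conjugate $(-1)$-curves, so there is no issue about the image point being smooth).

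There is, however, a genuine error in your handling of the sub-case $\Delta=\emptyset$. You assert that $B\simeq\PP^1$, that $X\simeq\FF_n$, and that ``a $\Bbbk$-point always exists and is not needed here.'' All three claims are false over a non-closed field. The base $B$ is only a form of $\PP^1$ (a conic), and a smooth conic bundle over $B$ need not be the projectivization of a rank-$2$ bundle: for $C$ a conic without $\Bbbk$-points, $X=C\times\PP^1\to\PP^1$ is a standard conic bundle with $\Delta=\emptyset$, $K_X^2=8$, no $\Bbbk$-points, and $X$ is not $\Bbbk$-rational. This is precisely why the statement carries the $\Bbbk$-point hypothesis in part (iii), and your proof must actually use it: a $\Bbbk$-point $P\in X$ maps to a $\Bbbk$-point of $B$, so $B\simeq\PP^1$; the fiber through $P$ is a conic with a $\Bbbk$-point, hence $\simeq\PP^1$, and since the Brauer class of the generic fiber is unramified over all of $\PP^1$ it is constant, so its vanishing at one closed fiber forces the generic fiber to be $\PP^1_{\Bbbk(t)}$ and $X$ to be birational to $\PP^1\times\PP^1$. (Alternatively, invoke Lemma~\xref{lemma-properties-Conic-bundles}-type reasoning: triviality of the discriminant plus a section gives birational triviality.) With this repair the argument is complete.
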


Putting the above results together one obtains the following criterion of rationality.
\begin{theorem}[see {\cite[Theorem~2.6]{Iskovskikh-Factorization-1996e}}]
\label{surfaces-rationality-criterion}
Let $X$ be a minimal rational surface over $\Bbbk$.
Then $X$ is $\Bbbk$-rational if and only if $K_X^2\ge 5$ and $X$ has a $\Bbbk$-point.
\end{theorem}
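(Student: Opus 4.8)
The plan is to prove the rationality criterion for minimal geometrically rational surfaces $X/\Bbbk$ by combining the classification of minimal surfaces over $\Bbbk$ with the preceding results on conic bundles and del Pezzo surfaces. Recall that a minimal rational surface $X$ is, by the theory of Mori fiber spaces in dimension two, either a del Pezzo surface with $\uprho(X)=1$, or a standard (relatively minimal) conic bundle $\pi\colon X\to B$ with $\uprho(X/B)=1$ over a rational curve $B\simeq\PP^1$. I would organize the proof around this dichotomy, establishing each direction of the equivalence in each case.

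First I would treat the \emph{necessity} of the conditions $K_X^2\ge 5$ and the existence of a $\Bbbk$-point. The $\Bbbk$-point is of course necessary since a $\Bbbk$-rational surface contains $\Bbbk$-points (birational maps are defined on dense opens). For the bound $K_X^2\ge 5$, the conic bundle case follows immediately from Corollary~\xref{corollary-surface-rigid}: if $K_X^2\le 3$ then $\pi$ is birationally rigid, hence non-rational, and if $K_X^2=4$ then $X$ is not $\Bbbk$-rational by part~(iii) of that corollary. For del Pezzo surfaces with $\uprho(X)=1$ and $K_X^2\le 4$, the relevant links in Theorem~\xref{classification-Sarkisov-links} (the Bertini and Geiser involutions of types \typem{II} and \typem{IV}, and the link to a cubic surface) show that such surfaces are birationally rigid as Mori fiber spaces and therefore non-rational; in particular the degree cannot drop, so minimal del Pezzos of degree $\le 4$ are not $\Bbbk$-rational.

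Next I would establish \emph{sufficiency}: assuming $K_X^2\ge 5$ and a $\Bbbk$-point exists, I show $X$ is $\Bbbk$-rational. In the del Pezzo case with $\uprho(X)=1$, the surfaces of degree $K_X^2\in\{5,6,7,8,9\}$ are classically known to be $\Bbbk$-rational as soon as they carry a $\Bbbk$-point: for degrees $7$ and $9$ there are no minimal such surfaces, degree $8$ gives quadrics or forms of $\PP^1\times\PP^1$ which are rational when they have a point, and degrees $5,6$ reduce to $\PP^2$ after blowing up the rational point and running the surface Sarkisov program. In the conic bundle case I would use Theorem~\xref{classification-Sarkisov-links-conic-bundle}: for $K_X^2=5$ we have $X\dashrightarrow\PP^2$, and for $K_X^2=6$ a birational contraction to a del Pezzo of degree $8$; combined with Corollary~\xref{corollary-surfaces-rationality}, a $\Bbbk$-point then yields rationality. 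Note that $K_X^2=7$ does not occur for standard conic bundles, and $K_X^2=8$ means $\pi$ is smooth, again handled by the quadric-surface analysis.

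The main obstacle, and the step requiring the most care, will be reducing an arbitrary minimal geometrically rational surface to one of the two standard Mori fiber space forms while controlling the field of definition and tracking the $\Bbbk$-rational point throughout the Sarkisov links. In particular, one must verify that the presence of a $\Bbbk$-point is genuinely sufficient (not merely necessary) in each borderline degree, and that the involutions classified in Theorem~\xref{classification-Sarkisov-links} exhaust the birational self-maps obstructing rationality in low degree. Once the structural dichotomy is in hand, however, the criterion follows by assembling the rigidity statements (for the non-rational direction) and the explicit contractions to $\PP^2$ or to rational del Pezzo surfaces (for the rational direction).
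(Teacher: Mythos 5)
Your proposal is correct and follows essentially the same route the paper intends: the theorem is stated there with no independent proof beyond ``putting the above results together,'' meaning exactly the assembly you describe of the minimal-surface dichotomy, the link classification in Theorem~\xref{classification-Sarkisov-links} (which shows the class $K_X^2\le 4$ is closed under Sarkisov links, hence never reaches $\PP^2$), Corollary~\xref{corollary-surface-rigid}, Corollary~\xref{corollary-surfaces-rationality}, and Theorem~\xref{classification-Sarkisov-links-conic-bundle}. Two small inaccuracies worth fixing but not affecting the argument: minimal del Pezzo surfaces of degree $9$ do exist (Severi--Brauer surfaces), though a $\Bbbk$-point splits them to $\PP^2$; and degree-$4$ del Pezzo surfaces with $\uprho=1$ are not literally birationally rigid (they admit a type~\typem{I} link to a cubic-surface conic bundle) --- the correct conclusion is only that every link keeps $K^2\le 4$, which still yields non-rationality.
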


As a consequence of Corollary~\xref{corollary-surfaces-rationality} we have the following.

\begin{proposition}
\label{proposition-rationality-conic-bundles}
Let the ground field $\Bbbk$ be algebraically closed.
Let $\pi: X\to S$ be a conic bundle over a rational surface
with discriminant curve $\Delta\subset S$.
Assume that there exists a base point free pencil $\LLL$ of rational curves on $S$ such that 
$\LLL\cdot \Delta\le 3$. Then $X$ is rational. 
\end{proposition}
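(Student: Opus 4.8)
The plan is to reduce the three‑dimensional rationality question to the two‑dimensional theory developed in Section~\ref{section:surfaces}, via the generic fiber over the base curve of the pencil $\LLL$. The pencil $\LLL$ of rational curves on $S$ gives a rational map $S\dashrightarrow \PP^1$ whose general member is a smooth rational curve. Restricting the conic bundle $\pi\colon X\to S$ over this family, I would pass to the generic point of $\PP^1$: the function field $\Bbbk(\PP^1)=\KK$ is the base, and the generic fiber $X_{\KK}$ of the composite map $X\to S\to \PP^1$ is a surface defined over $\KK$, equipped with a conic bundle structure over the generic member $\Lambda_{\KK}$ of $\LLL$, which is a curve of genus $0$ over $\KK$.

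\textbf{Setting up the two‑dimensional conic bundle.} The key numerical input is the intersection number $\LLL\cdot\Delta\le 3$. For a general member $\Lambda\in\LLL$, the restriction $\pi^{-1}(\Lambda)\to\Lambda$ is a surface conic bundle whose degenerate fibers sit precisely over the points $\Lambda\cap\Delta$. Passing to the generic member turns this into a standard conic bundle $X_{\KK}\to \Lambda_{\KK}$ over a conic $\Lambda_{\KK}\cong\PP^1_{\KK}$ (the base point freeness of $\LLL$ ensures a $\KK$‑point, so $\Lambda_{\KK}\cong\PP^1_{\KK}$), with discriminant locus $\Delta_{\KK}$ of degree $\LLL\cdot\Delta\le 3$ over $\KK$. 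By the Noether‑type formula $K_{X_{\KK}}^2+\deg\Delta_{\KK}=8$ recorded in Section~\ref{section:surfaces}, we get $K_{X_{\KK}}^2\ge 5$. First I would, if necessary, replace $X_{\KK}$ by a standard model of its conic bundle structure so that the hypotheses of the surface results apply verbatim.

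\textbf{Applying the surface rationality criterion.} Now I invoke the two‑dimensional theory. By Corollary~\ref{corollary-surfaces-rationality}, a standard conic bundle over a rational curve with $\deg\Delta_{\KK}\le 3$ and a $\KK$‑point is $\KK$‑rational: the cases $\deg\Delta_{\KK}=0$ or $2$ are covered by part~(iii), and $\deg\Delta_{\KK}=3$ by part~(ii), while $\deg\Delta_{\KK}=1$ is excluded. (The presence of a $\KK$‑point follows since $\Bbbk$ is algebraically closed and $X$ is rationally connected by Lemma~\ref{lemma-properties-Conic-bundles}, so sections over the generic curve exist.) Hence $X_{\KK}$ is birational to $\PP^2_{\KK}$, i.e.\ $X_{\KK}$ is $\KK$‑rational. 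Spreading this birational equivalence back out over the open locus of $\PP^1$ where the family is well‑behaved gives a birational equivalence $X\dashrightarrow \PP^2\times\PP^1$ over $\Bbbk$, and since $\PP^2\times\PP^1$ is rational over $\Bbbk$, so is $X$.

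\textbf{The main obstacle.} The serious point is guaranteeing a $\KK$‑rational point on $X_{\KK}$ and verifying that the restricted conic bundle really is \emph{standard} over $\KK$ after passing to the generic fiber; degenerate members of $\LLL$ and the behaviour of $\pi$ over the singular points of $\Delta$ lying on special members could in principle spoil the relative Picard number or introduce extra degenerate fibers. I expect the hardest step to be this reduction to a genuine standard surface conic bundle with the stated discriminant degree, so that Corollary~\ref{corollary-surfaces-rationality} applies cleanly; once that is in place, the rationality of $X_{\KK}$ over $\KK$ together with $\KK=\Bbbk(\PP^1)$ yields the rationality of $X$ over $\Bbbk$ by a standard descent‑of‑rationality argument for fibrations over $\PP^1$.
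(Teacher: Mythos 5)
Your proof is correct and follows essentially the same route as the paper: restrict $\pi$ over the generic member of $\LLL$ to obtain a surface conic bundle over the $\mathrm{c}_1$-field $\KK=\Bbbk(\eta)$ with discriminant of degree $\LLL\cdot\Delta\le 3$, apply Corollary~\ref{corollary-surfaces-rationality}, and descend the $\KK$-rationality of the generic fiber to the rationality of $X$. One small correction: the isomorphism $\Lambda_{\KK}\simeq\PP^1_{\KK}$ is not a consequence of base point freeness of $\LLL$ (a base point would in fact \emph{give} a $\KK$-point) but of Tsen's theorem, since $\KK$ is a $\mathrm{c}_1$-field; the same theorem also supplies the $\KK$-point on $X_{\KK}$ needed for Corollary~\ref{corollary-surfaces-rationality}, without appeal to Graber--Harris--Starr.
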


\begin{proof}
Consider the generic member $L_{\eta}$ of $\LLL$ over the residue 
field $\Bbbk(\eta)$ of the generic point $\eta\in \PP^1=\LLL$. 
Since $\Bbbk(\eta)$ is a $\mathrm{c}_1$-field, by 
Tsen's theorem, the curve $L_{\eta}$ is isomorphic to $\PP^1_{\Bbbk(\eta)}$.
Consider the generic surface $F_{\eta}:=\pi^{-1}(L_{\eta})$ over $\Bbbk(\eta)$.
Then the morphism, $\pi_\eta: F_{\eta} \to L_{\eta}$ defines a standard 
conic bundle structure on the surface $F_{\eta}$ over the non-closed 
field $\Bbbk(\eta)$. The discriminant divisor of the fibration $\pi_\eta$
is $\Delta_\eta:=\Delta\cdot L_{\eta}$ and $\deg \Delta_\eta=\LLL\cdot \Delta\le 3$.
Corollary~\xref{corollary-surfaces-rationality} implies that $F_{\eta}$ is rational over $\Bbbk(\eta)$.
This implies the rationality of $X$ over $\Bbbk$.
\end{proof}
From Proposition~\xref{proposition-Sarkisov-elementary-transformations} we immediately obtain the following.
\begin{scorollary}
\label{corolary-rationality-conic-bundles}
Let $\pi: X\to \PP^2$ be a conic bundle 
with discriminant curve $\Delta$ of degree $\deg \Delta\le 4$. Then $X$ is rational.
\end{scorollary}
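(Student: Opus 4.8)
The plan is to reduce, by a single elementary transformation, to a standard conic bundle over a Hirzebruch surface carrying a base point free pencil that meets the discriminant in at most three points, and then to quote Proposition~\ref{proposition-rationality-conic-bundles}. I first note that Proposition~\ref{proposition-rationality-conic-bundles} cannot be applied to $\PP^2$ directly: any two curves of positive degree on $\PP^2$ meet, so $\PP^2$ carries no base point free pencil of rational curves, and the blow up below is needed precisely to manufacture one.

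If $\Delta=\emptyset$, the assertion is immediate: since $\PP^2$ is rational its Brauer group vanishes, so by Lemma~\ref{lemma-properties-Conic-bundles} the bundle $\pi$ has a rational section and $X$ is birational to $\PP^2\times\PP^1$. Assume then $1\le\deg\Delta\le 4$. I would pick a smooth point $p\in\Delta\setminus\Sing(\Delta)$, so that $X_p$ is a pair of distinct lines, and let $\alpha\colon\FF_1\to\PP^2$ be the blow up of $p$, with exceptional curve $E$ and $h$ the pullback of a line. The ruling of $\FF_1\to\PP^1$ is then a base point free pencil $\LLL'$ of rational curves of class $h-E$.

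Next I would invoke the elementary transformation (a link of type~\typem{I}) of Proposition~\ref{proposition-Sarkisov-elementary-transformations} over the point $p$ to obtain a standard conic bundle $\pi'\colon X'\to\FF_1$ that is fiberwise birational to $\pi\colon X\to\PP^2$ over $\alpha$. By Corollary~\ref{cor:disc} its discriminant $\Delta'$ satisfies $\alpha(\Delta')=\Delta$, so $\Delta'$ is the strict transform of $\Delta$, of class $(\deg\Delta)\,h-E$, whence
\[
\LLL'\cdot\Delta'=(h-E)\cdot\bigl((\deg\Delta)\,h-E\bigr)=\deg\Delta-1\le 3 .
\]
As $\FF_1$ is rational, Proposition~\ref{proposition-rationality-conic-bundles} yields the rationality of $X'$, and hence of $X$, since $X\dashrightarrow X'$ is birational.

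The only real content, and the step I expect to be delicate, is the input from Proposition~\ref{proposition-Sarkisov-elementary-transformations}: that the link of type~\typem{I} over $p$ produces a genuinely \emph{standard} conic bundle over $\FF_1$ (rather than over a further contraction) whose discriminant does \emph{not} acquire $E$ as a component. This is where algebraic closedness of the ground field enters. Passing to the quaternion algebra over $\Bbbk(\PP^2)=\Bbbk(\FF_1)$ associated to $\pi$ in Proposition~\ref{proposition-AM}, the class of the fiber $X_p$ over the smooth point $p$ is represented locally by a symbol $(c,u)$ with $u$ a local equation of $\Delta$ and $c$ a unit; its residue along $E$ is a nonzero constant, hence a square over $\Bbbk$, so the algebra is unramified along $E$ and $E\not\subset\Delta'$. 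This makes the intersection count above exact and completes the argument.
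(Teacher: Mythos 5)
Your proof is correct and follows essentially the same route the paper intends: an elementary transformation (Proposition~\xref{proposition-Sarkisov-elementary-transformations}) centered at a smooth point of $\Delta$ yields a standard conic bundle over $\FF_1$ whose discriminant meets the ruling in $\deg\Delta-1\le 3$ points, after which Proposition~\xref{proposition-rationality-conic-bundles} applies. Your extra residue computation showing $E\not\subset\Delta'$ is sound but already implicit in the paper's description of the elementary transformation (cf.\ the formula $\Delta'=\alpha^*\Delta-E$ in the proof of Lemma~\xref{Lemma:nK+Delta:invariant}).
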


\section{The Artin-Mumford invariant}
\label{section:AM}
A natural birational invariant of an algebraic variety is the
cohomological Brauer--Grothendieck group
\[
\Br(X):=H^2_{\et}(X,\mathbb G_{\mathrm m}),
\]
where $\mathbb G_{\mathrm m}$ is the sheaf of invertible regular functions
in the \'etale topology on $X$.
This is a periodic Abelian group
which can be expressed in the classical situation
of smooth projective varieties over~$\CC$
in terms of complex-analytic 
invariants of a variety and the torsion in $H^3(X,\ZZ)$.
In particular,
if $\dim X=3$ and $\pg(X)=0$, then 
\begin{equation*}
\Br (X)\simeq H^3(X,\ZZ)_{\tors}.
\end{equation*}
The birational invariance of the last
group can be established immediately
using the theorem on resolution of singularities by means of
blowups with smooth centers:
\begin{proposition}[\cite{Artin-Mumford-1972}]
The torsion subgroup $H^3(X,\ZZ)_{\tors}$ is a birational invariant of a 
complete non-singular complex variety $X$ of any dimension. In particular, $H^3(X,\ZZ)_{\tors}=0$ 
if $X$ is rational.
\end{proposition}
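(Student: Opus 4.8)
The plan is to reduce the birational invariance of $H^3(X,\ZZ)_{\tors}$ to its behaviour under a single blowup with smooth centre, and then to compute that behaviour explicitly. Over $\CC$ any birational map between smooth complete varieties factors, by the weak factorization theorem, into a finite chain of blowups and blowdowns along smooth centres (with isomorphisms at the ends); equivalently, one first applies Hironaka's resolution to obtain a smooth $W$ with proper birational morphisms $W\to X$ and $W\to Y$, and treats each elementary blowup in turn. Since a blowdown is the inverse of a blowup, it therefore suffices to show that if $\tilde X\to X$ is the blowup of a smooth complete $X$ along a smooth closed centre $Z$ of codimension $c\ge 2$, then $H^3(\tilde X,\ZZ)_{\tors}\simeq H^3(X,\ZZ)_{\tors}$.

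For this I would invoke the standard blowup formula for integral cohomology: writing $E$ for the exceptional divisor, which is a $\PP^{c-1}$-bundle over $Z$, one has an additive isomorphism
\begin{equation*}
H^k(\tilde X,\ZZ)\;\simeq\; H^k(X,\ZZ)\ \oplus\ \bigoplus_{i=0}^{c-2}H^{k-2i-2}(Z,\ZZ).
\end{equation*}
The summand $H^k(X,\ZZ)$ is the image of the pullback $p^*$, which is injective and split by the Gysin map because $p$ has degree one; the remaining summands come from the cohomology of the fibres of $E\to Z$. Specialising to $k=3$, every index $k-2i-2=1-2i$ with $i\ge 1$ is negative, so the only surviving extra term is the $i=0$ term $H^{1}(Z,\ZZ)$, giving
\begin{equation*}
H^3(\tilde X,\ZZ)\;\simeq\; H^3(X,\ZZ)\ \oplus\ H^{1}(Z,\ZZ).
\end{equation*}

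The key point is that $H^{1}(Z,\ZZ)$ is torsion-free: by the universal coefficient theorem it is isomorphic to $\Hom(H_1(Z,\ZZ),\ZZ)$, a group of homomorphisms into $\ZZ$ and hence free. Passing to torsion subgroups in the displayed splitting therefore kills the extra summand and yields $H^3(\tilde X,\ZZ)_{\tors}\simeq H^3(X,\ZZ)_{\tors}$, which is exactly the invariance under a single smooth-centre blowup. Chaining these isomorphisms along the factorization proves that $H^3(X,\ZZ)_{\tors}$ is a birational invariant among smooth complete complex varieties. For the final assertion, if $X$ is rational then it is birational to $\PP^n$; since $H^3(\PP^n,\ZZ)=0$ (the cohomology of projective space being torsion-free and concentrated in even degrees), the invariance gives $H^3(X,\ZZ)_{\tors}=0$.

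The main obstacle is the reduction step rather than the computation: pinning down which cohomology survives a blowup is a purely formal consequence of the blowup formula, but connecting two arbitrary smooth birational models requires the factorization into smooth-centre blowups and blowdowns. The split injectivity of $p^*$ alone shows only that $H^3(X,\ZZ)_{\tors}$ is a retract of $H^3(\tilde X,\ZZ)_{\tors}$; to see that the complementary summand carries no torsion one genuinely needs the geometric identification of that summand with $H^1$ of the centre, and hence the reduction to elementary blowups.
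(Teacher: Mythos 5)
Your proof is correct, and the computational core (the integral blowup formula, the identification of the extra degree-$3$ summand with $H^1(Z,\ZZ)$, and its torsion-freeness via universal coefficients) is exactly what the paper has in mind when it says the invariance "can be established immediately using the theorem on resolution of singularities by means of blowups with smooth centers." The one place where your route genuinely differs is the reduction step. You lean on weak factorization, which makes the chaining argument symmetric and painless, but it is a much heavier theorem than what the classical Artin--Mumford argument needs. The version the paper alludes to uses only Hironaka: one finds $W\to X$ a composite of smooth-centre blowups such that the induced $W\to Y$ is a \emph{morphism} (not necessarily a composite of smooth blowups), concludes $H^3(W,\ZZ)_{\tors}\simeq H^3(X,\ZZ)_{\tors}$ from the blowup formula, and then uses split injectivity of pullback along $W\to Y$ to embed $H^3(Y,\ZZ)_{\tors}$ into $H^3(X,\ZZ)_{\tors}$; running the same construction for the inverse map gives an injection the other way, and two finite abelian groups injecting into each other are isomorphic. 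Your sentence claiming the Hironaka route is "equivalent" and lets you "treat each elementary blowup in turn" glosses over precisely this asymmetry -- the morphism $W\to Y$ is not a chain of smooth blowups, so without either weak factorization or the injectivity-both-ways trick that branch of your argument is incomplete. Since your primary argument via weak factorization is self-contained and correct, this is a presentational imprecision rather than a gap, but it is worth knowing that the statement predates weak factorization and does not require it.
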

There is a version of this statement valid in characteristic $p>0$.
In this case one has to replace $H^3(X,\ZZ)$ with \'etale $l$-adic cohomology
group $H^3(X, \ZZ_l)$, where $l$ is different from $p$ \cite{Artin-Mumford-1972}. 

\begin{remark}
It is easy to see that $H^3(X,\ZZ)_{\tors}$ is also a stably birational invariant:
if $X_1\times \PP^{n_1}$ is birationally equivalent to $X_2\times \PP^{n_2}$, where $X_1$ and 
$X_2$ are smooth projective varieties, then 
\begin{equation*}
H^3(X_1,\ZZ)_{\tors}\simeq H^3(X_2,\ZZ)_{\tors}.
\end{equation*}
Indeed, by K\"unneth formula one has 
\begin{equation*}
H^3(X_i\times \PP^{n_i},\ZZ)_{\tors}\simeq H^3(X_i,\ZZ)_{\tors}.
\end{equation*}
\end{remark}

The birational invariance of
$H^3(X,\ZZ)_{\tors}$ was used
in \cite{Artin-Mumford-1972} to
construct counter\-examples to the L\"uroth problem
in any dimension~$\geq 3$. They first construct examples of
unirational but not rational three-dimensional varieties
over~$\CC$ among conic bundles over rational surfaces
with $H^3(X,\ZZ)_{\tors}\neq 0$. 

It turns out that the group $H^3(X,\ZZ)_{\tors}$ 
is effectively computable in the case of conic bundles:

\begin{theorem}[\cite{Artin-Mumford-1972}, {\cite[Th. 2]{Zagorskiui1977}}]
\label{theorem-Artin-Mumford-computation}
Let $\pi: X\to S$ be a standard conic bundle over a rational surface~$S$
with discriminant curve~$\Delta$. Then 
\begin{equation}
\label{equation-Artin-Mumford-computation}
H^3(X,\ZZ)_{\tors}\simeq \Br(X) \simeq (\ZZ/2\ZZ)^{c-1},
\end{equation}
where~$c$ is the number of connected components
of $\Delta$.
\end{theorem}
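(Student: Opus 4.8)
The plan is to pass from $H^3(X,\ZZ)_{\tors}$ to the Brauer group and then compute the latter with the residue (Artin--Mumford) machinery already introduced in the proof of Proposition~\xref{proposition-AM}. First I would reduce to a statement about $\Br(X)$: as $S$ is rational and $\pi$ is a conic bundle, $X$ is rationally connected by Lemma~\xref{lemma-properties-Conic-bundles}\xref{lemma-properties-Conic-bundles-1}, so $\pg(X)=0$, and the comparison isomorphism recalled at the start of this section gives $\Br(X)\simeq H^3(X,\ZZ)_{\tors}$. It therefore suffices to show $\Br(X)\simeq(\ZZ/2\ZZ)^{c-1}$ (which will in particular re-prove that this group is $2$-torsion). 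Since $X$ is smooth and projective, purity identifies $\Br(X)$ with the subgroup of $\Br(\Bbbk(X))$ of classes unramified along every prime divisor of $X$, and I would analyse these through the base. The generic fibre is the conic $C_\eta$ over $\Bbbk(S)$ cut out by the quaternion algebra $\mathscr A$, whose class $A\in\Br(\Bbbk(S))$ has order $2$.

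By Witt's theorem the kernel of $\pi^*\colon\Br(\Bbbk(S))\to\Br(\Bbbk(X))$ is exactly $\langle A\rangle$; moreover the geometric fibre is $\PP^1$, so $\Pic\overline{C_\eta}=\ZZ$ with trivial Galois action and the obstruction group $H^1(\Bbbk(S),\Pic\overline{C_\eta})$ vanishes. Hence $\pi^*$ induces an isomorphism $\Br(\Bbbk(S))/\langle A\rangle\xrightarrow{\ \sim\ }\Br(\Bbbk(X))$, and every class of $\Br(X)$ lifts to a class $A'\in\Br(\Bbbk(S))$, well defined modulo $A$. Establishing this descent, and with it the upper bound for $\Br(X)$, is the step I expect to require the most care. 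Unramifiedness of $\pi^*A'$ is automatic over $S\setminus\Delta$, where $\pi$ is smooth; along a component $C\subset\Delta$ the conic breaks into the two lines parameterized by $\tilde\Delta$, and $\pi^*A'$ is unramified along $\pi^{-1}(C)$ exactly when the residue $\partial_C(A')$ lies in $\{0,\partial_C(A)\}=\{0,a_C\}$, i.e. is trivial or equals the double-cover class carried by $A$ itself. As the admissible residues $a_C$ are $2$-torsion and $\Br(S)=0$, such an $A'$ is automatically $2$-torsion, confirming that $\Br(X)$ is $2$-torsion.

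Because $\Br(S)=0$, the total residue map $\alpha$ of the Artin--Mumford sequence is injective, so $A'$ is determined by the family $(\varepsilon_C a_C)_C$ with $\varepsilon_C\in\ZZ/2\ZZ$, subject only to the closedness condition $\beta\big((\varepsilon_C a_C)_C\big)=0$. The final point is the combinatorics of this condition, which produces $c$ rather than the number of irreducible components. Distinct connected components of $\Delta$ are disjoint, so $\beta$ decomposes over them; inside one connected component the irreducible pieces meet in nodes, and over each node the Beauville condition~\eqref{Beauville-condition} places a genuine node of $\tilde\Delta$ with branches that are not interchanged, which makes $\beta$ vanish there only when the two adjacent values $\varepsilon$ coincide. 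Propagating along the connected dual graph forces $\varepsilon_C$ to be constant on each connected component, and conversely every such collection is $\beta$-closed since $\beta(a)=0$ restricts componentwise. Thus the admissible $A'$ form a subgroup $\simeq(\ZZ/2\ZZ)^{c}$ containing $A$ as the ``all-on'' element, and passing to $\Br(X)\subset\Br(\Bbbk(X))$ kills precisely $\langle A\rangle$, giving
\[
\Br(X)\simeq(\ZZ/2\ZZ)^{c}/\langle A\rangle\simeq(\ZZ/2\ZZ)^{c-1}.
\]
The main obstacle remains the surjectivity of $\pi^*$ modulo $\langle A\rangle$ from the second paragraph: once every Brauer class of $X$ is known to descend to the base with residues confined to $\{0,a_C\}$, the lower bound (that all these classes are genuinely unramified on $X$) and the combinatorial count are the more formal halves.
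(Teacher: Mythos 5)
The paper itself contains no proof of this theorem --- it is quoted from Artin--Mumford and Zagorskii --- so the relevant comparison is with the argument of those references, and your proposal reproduces it faithfully: reduction to $\Br(X)$ via $\pg(X)=0$, descent of unramified classes to $\Br(\Bbbk(S))$, the residue condition $\partial_C(A')\in\{0,a_C\}$ along the components of $\Delta$ (and $\partial_C(A')=0$ along curves not in $\Delta$), and the combinatorial count through the map $\beta$ of the Artin--Mumford sequence displayed in the proof of Proposition~\xref{proposition-AM}. The combinatorics in your last paragraph is right: by the Beauville condition \eqref{Beauville-condition} the cover $\tilde C\to C$ is ramified exactly at the preimages of the nodes of $\Delta$ lying on $C$, so each $a_C$ has nontrivial residue at every such node, the contribution of $\beta$ at a node joining two distinct components $C_1,C_2$ is $\varepsilon_{C_1}+\varepsilon_{C_2}$, and $\beta$-closedness forces $\varepsilon$ to be constant on connected components of the dual graph; since $\Br(S)=0$ makes $\alpha$ injective and each $a_C\neq 0$ (the cover does not split over any component), this yields the group $(\ZZ/2\ZZ)^{c}$ with $A$ as the all-on element, and quotienting by $\langle A\rangle$ gives $(\ZZ/2\ZZ)^{c-1}$.

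The one statement you must repair is the asserted isomorphism $\Br(\Bbbk(S))/\langle A\rangle\simeq\Br(\Bbbk(X))$: this is false as written, since $\Bbbk(X)$ is the function field of a threefold and its Brauer group is enormous (not even torsion of bounded exponent). What your Hochschild--Serre argument actually proves is $\Br(\Bbbk(S))/\langle A\rangle\simeq\Br(C_\eta)$, where $C_\eta$ is the generic fibre regarded as a smooth projective conic over $\Bbbk(S)$: use $\Br(\overline{C_\eta})=\Br(\PP^1)=0$ together with $H^1\bigl(\Bbbk(S),\Pic\overline{C_\eta}\bigr)=H^1(\Bbbk(S),\ZZ)=0$, and Witt's theorem (or the same sequence) for the kernel. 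This corrected statement suffices for everything you use: the restriction $\Br(X)\to\Br(C_\eta)$ is injective because both groups inject into $\Br(\Bbbk(X))$ (purity for the smooth $X$, regularity of $C_\eta$), so every class of $\Br(X)$ does lift to $\Br(\Bbbk(S))$ modulo $\langle A\rangle$. With that fix the argument is complete; the residue computation along $\pi^{-1}(C)$ is as you describe, the kernel of $H^1(\Bbbk(C),\QQ/\ZZ)\to H^1(\Bbbk(\pi^{-1}(C)),\QQ/\ZZ)$ being generated by $a_C$ because $\Bbbk(\tilde C)$ is the algebraic closure of $\Bbbk(C)$ in $\Bbbk(\pi^{-1}(C))$ --- and here you are implicitly using that $\pi^{-1}(C)$ is irreducible, which holds precisely because the conic bundle is standard.
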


This assertion states, in particular, that the number of connected components of the discriminant curve is a birational invariant:

\begin{scorollary}
Let $\pi: X\to S$ and $\pi: X'\to S'$ be standard conic bundles
over rational surfaces and let $\Delta$ and $\Delta'$
be the corresponding discriminant curves.
If $X$ and $X'$ are birationally equivalent, then $\Delta$ and $\Delta'$
have the same number of connected components. 
\end{scorollary}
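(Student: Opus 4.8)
The plan is to combine the birational invariance of the torsion subgroup of the third integral cohomology with its explicit computation for conic bundles. Concretely, the statement is an immediate consequence of Theorem~\xref{theorem-Artin-Mumford-computation} together with the Artin--Mumford birational-invariance proposition stated just above it.

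First I would check that the two invariance results actually apply. Since $\pi$ and $\pi'$ are \emph{standard} conic bundles, by Definition~\xref{definition-conic-bundle} the total spaces $X$ and $X'$ are smooth; moreover they fiber over rational (hence projective) surfaces, so $X$ and $X'$ are smooth complete complex threefolds. Thus the hypotheses of the proposition asserting that $H^3(-,\ZZ)_{\tors}$ is a birational invariant of complete non-singular complex varieties are met, as are those of Theorem~\xref{theorem-Artin-Mumford-computation}.

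Next, the assumed birational equivalence between $X$ and $X'$ gives an isomorphism $H^3(X,\ZZ)_{\tors}\simeq H^3(X',\ZZ)_{\tors}$. Applying Theorem~\xref{theorem-Artin-Mumford-computation} to each side, where $c$ and $c'$ denote the numbers of connected components of $\Delta$ and $\Delta'$, yields
\begin{equation*}
(\ZZ/2\ZZ)^{c-1}\simeq H^3(X,\ZZ)_{\tors}\simeq H^3(X',\ZZ)_{\tors}\simeq (\ZZ/2\ZZ)^{c'-1}.
\end{equation*}
Since these are finite elementary abelian $2$-groups, an isomorphism between them forces equality of their $\FF_2$-dimensions, so $c-1=c'-1$ and hence $c=c'$, as desired.

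There is essentially no serious obstacle in this deduction: all the real work is hidden in Theorem~\xref{theorem-Artin-Mumford-computation}, whose proof rests on the Artin--Mumford computation of the Brauer group of a conic bundle. The only point meriting attention here is that the invariant must be evaluated on the \emph{smooth projective} models $X$ and $X'$ themselves, rather than on some singular birational model; this is precisely what the standardness hypothesis guarantees, after which the comparison of ranks is purely formal.
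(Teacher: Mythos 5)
Your proposal is correct and is exactly the argument the paper intends: the corollary is stated as an immediate consequence of Theorem~\xref{theorem-Artin-Mumford-computation} combined with the Artin--Mumford birational invariance of $H^3(\cdot,\ZZ)_{\tors}$, and comparing the ranks of the elementary abelian $2$-groups gives $c=c'$. Nothing further is needed.
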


The above result shows that to construct examples of non-rational
three-dimensional varieties, it is sufficient
to specify local invariants \eqref{eq:local-inv} and a disconnected
curve~$\Delta$ on some rational surface $S$.

\begin{scorollary} 
Let $\pi: X\to S$ be a standard conic bundle over a rational surface~$S$
with discriminant curve~$\Delta$.
If $\Delta$ is disconnected, then $X$ is not rational. Moreover,
$X$ is not stably rational.
\end{scorollary}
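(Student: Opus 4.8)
The plan is to read off the nonvanishing of the Artin--Mumford invariant directly from Theorem~\xref{theorem-Artin-Mumford-computation} and then invoke its birational (and stable-birational) invariance, both of which have already been recorded above. Since $\Delta$ is disconnected, its number of connected components satisfies $c\ge 2$, so Theorem~\xref{theorem-Artin-Mumford-computation} gives
\[
H^3(X,\ZZ)_{\tors}\simeq (\ZZ/2\ZZ)^{c-1}\neq 0.
\]
As $X$ is a smooth complete complex threefold, the torsion subgroup $H^3(X,\ZZ)_{\tors}$ is a birational invariant that vanishes for rational varieties; hence $X$ cannot be rational. This settles the first assertion.

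For the stronger statement I would argue by contradiction. If $X$ were stably rational, then $X\times\PP^{n}$ would be rational for some $n\ge 0$, and since $X\times\PP^{n}$ is smooth and projective this forces $H^3(X\times\PP^{n},\ZZ)_{\tors}=0$. On the other hand, the K\"unneth formula recorded in the remark above yields $H^3(X\times\PP^{n},\ZZ)_{\tors}\simeq H^3(X,\ZZ)_{\tors}$, which we have just seen to be nonzero. This contradiction shows that $X$ is not stably rational.

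There is no genuine obstacle at this stage: once the explicit computation in Theorem~\xref{theorem-Artin-Mumford-computation} is available, the corollary is an immediate combination of the two invariance statements. The only point worth flagging is the role played by the disconnectedness hypothesis, which is used \emph{solely} to guarantee $c\ge 2$; this is precisely what makes the group $(\ZZ/2\ZZ)^{c-1}$ nontrivial and thereby obstructs (stable) rationality. All the analytic and arithmetic work---the identification of $H^3(X,\ZZ)_{\tors}$ with $\Br(X)$ and its evaluation via the Artin--Mumford sequence---has already been absorbed into the cited results, so here it remains only to extract the numerical consequence.
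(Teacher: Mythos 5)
Your argument is correct and is exactly the intended derivation: the paper states this corollary without proof precisely because it follows immediately from Theorem~\ref{theorem-Artin-Mumford-computation} (giving $(\ZZ/2\ZZ)^{c-1}\neq 0$ when $c\ge 2$), the Artin--Mumford birational invariance of $H^3(X,\ZZ)_{\tors}$, and the K\"unneth remark for stable rationality. Nothing is missing.
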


The collection of local invariants \eqref{eq:local-inv} can be chosen in such a way that
the resulting conic bundle $X/S$ is unirational.
This construction together with
the results of \cite[Theorem~5.10]{Sarkisov-1982-e}
can also be used to
produce examples of unirational non-rational
conic bundles with trivial intermediate Jacobian and trivial $H^3(X,\ZZ)_{\tors}$
(see Theorem~\xref{Sarkisov-example-H3}).

\begin{example}
Let $C_1,\, C_2\subset\PP^2$ be smooth non-rational curves meeting each other transversely.
Let $\sigma: S\to \PP^2$ the blowup of the intersection points and let
$\Delta_i\subset S$ be the proper transform of $C_i$. Take \'etale double covers 
$\tilde \Delta_i\to \Delta_i$. According to Proposition~\xref{proposition-AM}
there exists a standard conic bundle $\pi: X\to S$ with discriminant curve
$\Delta=\Delta_1\cup \Delta_2$. By \eqref{equation-Artin-Mumford-computation}
the variety $X$ is not stably rational.
\end{example}

For smooth three-dimensional Fano varieties, the Brauer group is trivial. 
This follows from the classification (see \cite[Tables~\S\S~12.2-12.7]{IP99}). 
However, for Fano varieties with singularities (more precisely, for their 
desingularizations) it can be non-trivial, as, for example, in 
\cite{Artin-Mumford-1972} where the simplest examples of non-rational 
unirational three-dimensional varieties are given by double covers of $\PP^3$ 
ramified in singular quartics, that is, by three-dimensional Fano varieties with 
double singularities. 
This example is constructed as follows.
\begin{example}\label{example-Artin-Mumford}
Let $C=\{f(x_1,x_2,x_3)=0\}\subset \PP^2$ 
be a non-degenerate conic
and let $E_i=\{g_i(x_1,x_2,x_3) = 0\}\subset \PP^2$, $i=1,\, 2$
be smooth cubic curves such that 
\begin{eqnarray*}
E_i\cap C&=&\{ P_1^{(i)}, P_2^{(i)}, P_3^{(i)}\},\quad i=1,\, 2,
\\
E_1\cap E_2&=&\{Q_1,\dots,Q_9\}.
\end{eqnarray*}
where $P_1^{(1)}, \dots P_3^{(2)},Q_1,\dots,Q_9$ are mutually 
distinct points, $E_i$ meet $C$ tangentially at $P_1^{(i)}, P_2^{(i)}, P_3^{(i)}$, and $E_1$ meets $E_2$ 
transversally at $Q_1,\dots,Q_9$ (see Fig. \ref{ris}).
\begin{figure} 
 \begin{tikzpicture}[scale=0.8]
\draw[name path=Q, thick,black] (0,0) ellipse (159pt and 129pt);
\draw[name path=E1, thick,black] plot [smooth,tension=1.1] coordinates{(0,3) (-4.5,2) (5.6,0) (-4.5,-2) (0,-3)} node[below]{};
\draw[name path=E2, thick,black] plot [smooth,tension=1.1] coordinates{(3,0) (2,-3.9) (0,4.55) (-2,-3.9) (-3,0)} node[below]{};
\fill[black,name intersections={of=E2 and E1}] 
(intersection-1) circle (2pt) node[above,yshift=0pt,xshift=8pt]{$Q_1$} 
(intersection-2) circle (2pt) node[above,yshift=0pt,xshift=9pt]{$Q_3$}
(intersection-3) circle (2pt) node[above,yshift=0pt,xshift=9pt]{$Q_2$}
(intersection-4) circle (2pt) node[above,yshift=0pt,xshift=10pt]{$Q_4$}
(intersection-5) circle (2pt) node[above,yshift=0pt,xshift=9pt]{$Q_5$}
(intersection-6) circle (2pt) node[above,yshift=0pt,xshift=9pt]{$Q_6$}
(intersection-7) circle (2pt) node[above,yshift=0pt,xshift=10pt]{$Q_7$}
(intersection-8) circle (2pt) node[above,yshift=0pt,xshift=9pt]{$Q_9$}
(intersection-9) circle (2pt) node[above,yshift=0pt,xshift=9pt]{$Q_8$}
;
\path [name intersections={of=Q and E1}] ;
\fill[black] (intersection-1) circle (2pt) node[above,yshift=0pt,xshift=10pt]{$P_1^{(1)}$} ;
\fill[black] (intersection-2) circle (2pt) node[left,yshift=0pt,xshift=0pt]{$P_2^{(1)}$} ;
\fill[black] (intersection-4) circle (2pt) node[left,yshift=0pt,xshift=0pt]{$P_3^{(1)}$} ;
\path [name intersections={of=Q and E2}] ;
\fill[black] (intersection-1) circle (2pt) node[above,yshift=0pt,xshift=10pt]{$P_1^{(2)}$} ;
\fill[black] (intersection-4) circle (2pt) node[below,yshift=0pt,xshift=8pt]{$P_2^{(2)}$} ;
\fill[black] (intersection-5) circle (2pt) node[below,yshift=0pt,xshift=8pt]{$P_3^{(2)}$} ;
\end{tikzpicture}
\caption{}\label{ris}
\end{figure}
Since curves of degree $3$ cut out a complete linear system on $C$, we have
\begin{equation*}
\sum_{i, j}P_j^{(i)} =B|_ C
\end{equation*}
for some third cubic curve $B$, that is, $(E_1 + E_2)|_ C=2B|_ C$, as a cycle on $C$.
Let $h(x_1,x_2,x_3) = 0$ be an
equation of $B$. 
For suitable choice of 
$h$ (up to scalar multiplication) we can write 
\begin{equation*}
g_1 g_2 = h^2-4f s 
\end{equation*}
for some $s=s(x_1,x_2,x_3)$ of degree $4$. Let $S\subset \PP^3_{x_0,\dots,x_3}$
be the quartic surface given by a homogeneous polynomial: 
\begin{equation*}
f (x_1,x_2,x_3)x_0^2 + h(x_1,x_2,x_3)x_0 + s (x_1,x_2,x_3) = 0.
\end{equation*}
It is easy to see that $S$ has exactly $10$ nodes 
and no other singularities. 
Next, let $Y$ be the double cover of $\PP^3$ (the ``double solid'') branched
over $S$. Clearly, $Y$ can be given in the weighted projective space 
$\PP(1,1,1,1,2)$ by the weighted homogeneous polynomial: 
\begin{equation*}
x_4^2+ f x_0^2 + h x_0 + s=0,
\quad \deg x_0 = \dots = \deg x_3 = 1, \quad \deg x_4 = 2.
\end{equation*}
Let $\tilde Y\to Y$ be the blowup of the point $O:=(0{:}0{:}0{:}1{:}0)$ and let 
$\pi_Y: Y\to \PP^2$ be the morphism induced by the projection from $O$. Then 
$\pi_Y$ is a singular conic bundle whose discriminant locus $\Delta_Y\subset \PP^2$
is $E_1\cup E_2$. A standard form of $\pi_Y$ is a conic bundle over the surface $S$
which is the blowup of $\PP^2$ in $Q_1,\dots,Q_9$ and the corresponding discriminant curve is 
a disjoint union of proper transforms of $E_1$ and $E_2$.
\end{example}

This example was generalized in 
\cite{Iliev-Katzarkov-Przyjalkowski2014} and \cite{PrzyjalkowskiShramov2016} (see also \cite{Beauville:sds}) to some other types of singular Fano 
threefolds.

The birational invariance of the cohomological Brauer was used also to construct 
counter\-examples to the E.~Noether problem of the rationality of fields of 
invariants for finite linear groups operating on vector spaces over an 
algebraically closed field \cite{Saltman-1984}, \cite{Bogomolov1987}, 
\cite{Shafarevich1991}. The problem can be formulated in geometrical terms as 
follows. Let $G$ be a finite group acting by linear transformations on a 
projective space $\PP^n$. The problem is whether the quotient $X=\PP^n/G$ is 
rational. See \cite{colliotthelene-Sansuc-2005}, \cite{Prokhorov2010}, 
\cite{Kang-survey-2001} for surveys on this subject. 			 

\section{Intermediate Jacobians and Prym varieties}
\label{section:J}
Let $X$ be a smooth projective three-dimensional variety over $\CC$ such that 
$H^1(X,{\OOO}_X)=H^3(X,{\OOO}_X)=0$. For example, this holds for rationally 
connected varieties. Then we have the Hodge decomposition in the following 
form:
\begin{equation*}
H^3(X,\CC)=H^{2,1}(X)\oplus H^{1,2}(X).
\end{equation*}
Therefore the integration of $(2,1)$-forms over $3$-dimensional cycles 
determines an embedding $H_3(X,\ZZ)/(\mbox{torsion})\hookrightarrow 
H^{2,1}(X)^\vee$ as a full rank lattice into the complex vector space 
$H^{2,1}(X)^\vee$. The alternating integral-valued intersection form on 
$H^3(X,\ZZ)$ is unimodular by Poincar\'e duality, and the corresponding 
Hermitian form on $H^{2,1}(X)^\vee$ is positive definite. Therefore, by the 
Riemann--Frobenius criterion, the complex torus 
\begin{equation*}
\J(X):=H^{2,1}(X)^\vee/\image(H^3(X,\ZZ))
\end{equation*}
is a principally polarized Abelian 
variety with polarization divisor $\Theta$.

\begin{definition}
\label{8.1.1}
In the preceding notation, the pair
$(\J(X),\Theta)$ is called the
\emph{intermediate Jacobian} \index{Intermediate Jacobian} of the variety $X$.
\end{definition}
Applications of intermediate Jacobians in
birational geometry are based on the following two
observations.

\begin{proposition}
\label{proposition-jacobian}
\begin{enumerate}
\item
Every principally polarized Abelian variety
$(A,\Theta)$ can be decomposed in a unique way
into a direct product of principally polarized
simple Abelian varieties
\begin{equation}\label{decomposition-Abelian-variety}
(A,\Theta)=\mathop\bigoplus\limits_{i=1}^m(A,\Theta_i).
\end{equation}
\item \textup(see \cite{Clemens-Griffiths} and also \cite[Lecture~1]{Tyurin-5-lectures-1973}, \cite[Proposition~1.2.8]{Tyurin-middle-Jacobian-e}\textup)
Let $X'\rightarrow X$ be the blow-up with a
smooth irreducible center $C\subset X$ \textup(that is, $C$ is either a
curve or a point\textup). Then there is the following isomorphism of principally 
polarized Abelian varieties
$$
\J(X')\simeq\begin{cases}\J(X), & \text{if $C$ is a point,}\\
\J(X)\oplus\J(C), & \text{if $C$ is a curve,}
\end{cases}
$$
where $\J (C)$ is the Jacobian of the curve $C$ with
the principal polarization determined by the
Poincar\'e divisor $\Theta$.
\end{enumerate}
\end{proposition}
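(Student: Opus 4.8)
The plan is to handle the two assertions by essentially disjoint methods: (i) is pure theory of polarized abelian varieties, while (ii) is a computation of the Hodge structure on $H^3$ of a blow-up. For (i), I would first translate the decomposition problem into integral linear algebra. A principally polarized abelian variety $(A,\Theta)$ is equivalent to the datum of a lattice $\Lambda=H_1(A,\ZZ)$ with a complex structure $J$ and a \emph{unimodular} alternating form $E$ satisfying the Riemann relations $E(Jx,Jy)=E(x,y)$ and $E(x,Jx)>0$ for $x\neq 0$; in particular the associated Hermitian form is positive definite. A splitting of $(A,\Theta)$ into a product of principally polarized abelian varieties corresponds exactly to a decomposition of $\Lambda$ into an $E$-orthogonal direct sum of $J$-invariant sublattices, and since $E$ is unimodular on $\Lambda$ its restriction to each orthogonal summand is again unimodular, so each factor is automatically principally polarized. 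Existence of a decomposition into factors that cannot be split further then follows by an obvious induction on $\dim A$; this is the analytic incarnation of Poincaré's complete reducibility theorem, the positivity of the Rosati involution guaranteeing that $\operatorname{End}^0(A)$ is semisimple and that complements exist.

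The subtle point of (i) is \emph{uniqueness}, which I expect to be the main obstacle there. My plan is to reduce it to the statement that an orthogonal decomposition of a \emph{positive definite} lattice into indecomposable sublattices is unique up to reordering --- a Krull--Schmidt type theorem whose validity depends crucially on positive definiteness. The positivity of the Hermitian form attached to $\Theta$ is precisely what makes this applicable and rules out the ambiguities present for indefinite forms. Applying it to the lattice underlying $(A,\Theta)$ shows that the indecomposable factors $(A_i,\Theta_i)$ are determined up to isomorphism and permutation, which is the claim.

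For (ii), write $\sigma\colon X'\to X$ for the blow-up along the smooth irreducible centre $C$ of codimension $r$, with exceptional divisor $E$. The essential tool is the integral blow-up formula, an isomorphism of Hodge structures
\[
H^3(X',\ZZ)\ \simeq\ H^3(X,\ZZ)\ \oplus\ \bigoplus_{j=1}^{r-1}H^{3-2j}(C,\ZZ)(-j),
\]
compatible with cup products via the projection formula. If $C$ is a point, then $r=3$ and every correction term $H^{3-2j}(\pt,\ZZ)$ lies in odd degree and hence vanishes, so $\sigma^*$ is an isometric isomorphism of polarized Hodge structures and $\J(X')\simeq\J(X)$. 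If $C$ is a curve, then $r=2$ and the single correction term is $H^1(C,\ZZ)(-1)$; the Tate twist carries $H^{1,0}(C)$ into the $(2,1)$-part, so $H^{2,1}(X')\simeq H^{2,1}(X)\oplus H^{1,0}(C)$ and the underlying complex tori satisfy $\J(X')\simeq\J(X)\oplus\J(C)$.

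The last thing to verify --- and the principal technical point of (ii) --- is that in the curve case the isomorphism respects the \emph{principal polarizations}, i.e. that the cup-product form on $H^3(X',\ZZ)$ is the orthogonal direct sum of the form on $H^3(X,\ZZ)$ and the intersection form on $H^1(C,\ZZ)$, the latter inducing the canonical principal polarization of $\J(C)$. To establish this I would exploit the explicit structure of the exceptional divisor $E=\PP(\NNN_{C/X})\to C$ as a $\PP^1$-bundle over $C$: the projective bundle formula describes the cohomology classes supported on $E$, and computing the relevant triple intersection products on $X'$ (using $\sigma_*\sigma^*=\id$ and the self-intersection relations of $E$) yields both the orthogonality of the two summands and the correct normalization of the form on the $H^1(C)$-summand. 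Checking that the polarizations split, rather than merely the underlying tori, is the delicate part of part (ii).
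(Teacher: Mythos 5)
The paper itself does not prove this proposition --- it is an expository survey and simply cites Clemens--Griffiths and Tyurin --- so the comparison is against the standard arguments in those sources. Your part (ii) reproduces that standard argument correctly: the blow-up formula for the Hodge structure on $H^3$, the vanishing of the correction terms for a point centre, the identification of the extra summand with $H^1(C,\ZZ)(-1)$ for a curve centre, and the verification via intersection theory on the exceptional $\PP^1$-bundle over $C$ that the cup-product form splits orthogonally and restricts on the new summand to the canonical principal polarization of $\J(C)$. No complaints there.

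Part (i) has a genuine gap at exactly the point you yourself single out as the main obstacle. The integral form you actually have on $\Lambda=H_1(A,\ZZ)$ is the unimodular \emph{alternating} form $E$; as a symplectic lattice this is hyperbolic and its orthogonal decompositions are wildly non-unique, so a Krull--Schmidt theorem for positive definite lattices cannot be applied to ``the lattice underlying $(A,\Theta)$'' as stated. The positive definite form $S(x,y)=E(Jx,y)$ is real-valued rather than integral (the Eichler--Kneser uniqueness theorem does extend to that setting, since vectors of bounded norm remain finite in number), but even granting its canonical finest orthogonal decomposition you must still prove that (a) every decomposition into indecomposable principally polarized factors is a grouping of those canonical pieces, and (b) the grouping is forced, i.e.\ a canonical piece cannot be distributed differently between two indecomposable factors in two decompositions; this amounts to showing that a nonzero common $J$-invariant, $E$-unimodular sublattice of two factors splits each of them, which is where the real work lies and which your sketch does not address. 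The standard proof of uniqueness (Clemens--Griffiths, Corollary 3.23) bypasses all of this by going through the theta divisor: an indecomposable principally polarized factor has irreducible theta divisor, the theta divisor of a product is $\sum_i p_i^*\Theta_i$, and the factors are therefore recovered from the decomposition of $\Theta$ into irreducible components. You should either fill the lattice-theoretic gaps above or switch to that argument.
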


This proposition and Hironaka's theorem on resolution of indeterminacies of rational maps 
immediately imply the birational invariance of that part (the so-called 
\textit{Griffiths component}) of the decomposition 
\eqref{decomposition-Abelian-variety} which is the product of those factors which are 
not Jacobians of curves. In other words, if one denotes by $\JG(X)$ the product of the 
components in \eqref{decomposition-Abelian-variety} which are not Jacobians of 
curves, then $\JG(X)$ is a birational invariant. In particular, we have the 
following
\begin{corollary}\label{corollary-J}
If the variety $X$ is rational, then $\JG(X)=0$.
\end{corollary}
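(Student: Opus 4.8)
The plan is to deduce the vanishing from the birational invariance of $\JG$ recorded just above, together with the triviality of the intermediate Jacobian of $\PP^3$. Since $X$ is rational it is birationally equivalent to $\PP^3$, so it suffices to check that the Griffiths component is unchanged under birational equivalence and that $\JG(\PP^3)=0$. Both $X$ and $\PP^3$ are rationally connected, hence satisfy the hypothesis $H^1(\OOO)=H^3(\OOO)=0$ needed to define $\J$.

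First I would make the invariance explicit in the case at hand. Choose a birational map $\phi\colon X\dashrightarrow\PP^3$ and, by Hironaka's resolution of indeterminacies, a smooth projective threefold $W$ with a morphism $p\colon W\to X$ that is a composition of blow-ups with smooth centers and such that $q:=\phi\circ p\colon W\to\PP^3$ is again a morphism. Applying Proposition~\xref{proposition-jacobian}(ii) successively to the centers of $p$ gives a decomposition of principally polarized abelian varieties
\[
\J(W)\;\simeq\;\J(X)\oplus\Bigl(\bigoplus_i\J(C_i)\Bigr),
\]
where the $C_i$ are the curves among those centers (blow-ups of points contribute nothing). By uniqueness of the decomposition into simple factors, Proposition~\xref{proposition-jacobian}(i), adjoining the product of Jacobians $\bigoplus_i\J(C_i)$ creates no simple factor that is not a Jacobian of a curve; hence $\JG(W)=\JG(X)$. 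Running the analogous comparison along $q$ would then give $\JG(W)=\JG(\PP^3)$, and therefore $\JG(X)=\JG(\PP^3)$.

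It remains to compute the target. Since $H^3(\PP^3,\CC)=0$ one has $H^{2,1}(\PP^3)=0$, so $\J(\PP^3)=0$ and a fortiori $\JG(\PP^3)=0$; combining with the previous step yields $\JG(X)=0$. The hard part will be the comparison along $q\colon W\to\PP^3$: this is a birational \emph{morphism} of smooth projective threefolds, and in dimension three such a morphism need not literally be a composition of blow-ups with smooth centers, so Proposition~\xref{proposition-jacobian}(ii) cannot be quoted for it verbatim. I expect this to be the sole genuine obstacle, and it is precisely what is packaged into the birational invariance of $\JG$ asserted above. It is handled either by the classical direct route — analyzing the exceptional divisors of $q$, whose one‑dimensional images are curves, to see Hodge‑theoretically that $\J(W)$ differs from $\J(\PP^3)$ only by Jacobians of those curves — or, in modern terms, by factoring $\phi$ through blow-ups and blow-downs with smooth centers, at each of which $\J$ changes only by a Jacobian of a curve. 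In either formulation the mechanism is the same: only Jacobians of curves are ever adjoined, and uniqueness of the simple decomposition guarantees that the non‑Jacobian part, namely $\JG$, is preserved throughout, so that $\JG(X)=\JG(\PP^3)=0$.
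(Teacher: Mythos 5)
Your argument is correct and is essentially the paper's own: the corollary is deduced immediately from the birational invariance of $\JG$, which the paper likewise obtains from Proposition~\xref{proposition-jacobian} together with Hironaka's resolution of indeterminacies, combined with $\J(\PP^3)=0$. The subtlety you flag about the birational morphism $q\colon W\to\PP^3$ not literally being a composition of smooth blow-ups is real, but the paper glosses over it in exactly the same way, packaging it into the asserted invariance of the Griffiths component.
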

Thus, to prove the non-rationality of some three-dimensional varieties 
one should be able to distinguish 
principally polarized Abelian varieties from Jacobians of curves.

It turns out that 
intermediate Jacobians of three-dimensional varieties 
having a structure of standard conic bundle can be described as 
so called \textit{Prym varieties}.
These varieties were studied first
in papers of F.~Prym, W.~Wirtinger, F.~Schottky, and H.~Jung
in connection with the Schottky
problem of distinguishing the Jacobians of curves in
the moduli spaces of principally polarized Abelian varieties
(see \cite{Beauville1977a}, \cite{Clemens1975}).

D.~Mumford was the first who drew attention to the possibility of applications 
of Prym varieties to the birational geometry of three-dimensional varieties in 
the appendix to the paper of \cite{Clemens-Griffiths}. He also studied double covers 
$\tilde\Delta\to\Delta$ of smooth curves $\Delta$ from the point of view of 
distinguishing their Prym varieties $\Pr(\tilde\Delta,\Delta)$ from Jacobians of 
curves \cite{Mumford1974}. Mumford's results were extended to singular curves 
with normal crossings in the subsequent papers \cite{Beauville1977}, \cite{Beauville1977a}, \cite{Shokurov1981/82},
\cite{Shokurov1983}.

\begin{definition}
\label{8.1.4}
Let $(\tilde\Delta,\tau)$ be the pair consisting of a
complete reduced (possibly reducible) curve
$\tilde\Delta$ with at most ordinary double points and
an involution~$\tau$ on~$\tilde\Delta$
(that is, $\tau^2=\id$) acting non-trivially
on every irreducible component of the curve $\tilde\Delta$.
Denote by $\Delta$ the quotient $\tilde\Delta/\tau$, and by
$\tilde\pi: \tilde\Delta\to\Delta$ the corresponding
double cover.
The involution $\tau$ induces an involution $\tau^*$
on the Jacobian 
$\J (\tilde\Delta)$, and $\pi$ induces
the norm map
$\Nm: \J (\tilde\Delta)\to\J (\Delta)$, where $\tilde\pi^*\comp \Nm:= \id+\tau^*$ and $\Nm\comp \tilde\pi^*=2$.
The connected commutative algebraic
group
\begin{equation*}
\Pr(\tilde\Delta,\Delta):=\ker (\Nm)^0=(\id-\tau^*)\J(\tilde\Delta)
\end{equation*}
is called the (generalized) \emph{Prym variety} of the
pair $(\tilde\Delta,\tau)$, where 
the symbol ``$\;^{0}$'' denotes the identity component.
It is easy to see that 
\begin{equation*}
\dim\Prym(\tilde\Delta,\Delta)=\p (\Delta)-1.
\end{equation*}
\end{definition}

\begin{scase}
\label{Prym-polarization}
The notion of polarization can be easily extended
to generalized Prym varieties
(being algebraic groups, they are not necessarily Abelian varieties
when $\Delta$ is singular). Under certain conditions,
$\Pr(\tilde\Delta,\Delta)$ is an Abelian variety
the polarization of which is divisible by~$2$ and after
the division becomes principal. This is true, for example,
if the Beauville conditions \eqref{Beauville-condition} are satisfied (see \cite{Beauville1977a} or \cite[Theorem 3.5]{Shokurov1983}).
For applications one has to impose a stronger condition:
\begin{equation}
\label{equation-condition-S-star}
\begin{tabularx}{0.8\textwidth}{X}
for every decomposition
$\Delta=\Delta_1+ \Delta_2$ with $\Delta_i> 0$
\\
we have $\#(\Delta_1\cap\Delta_2)\geq 4$.
\end{tabularx}
\end{equation}
The condition \eqref{equation-condition-S-star} implies that $\Delta$ is a stable curve.
In particular, if a curve $\Delta$ satisfies \eqref{equation-condition-S-star}, then 
the canonical linear system $|\upomega_\Delta|$ is base point free 
and defines the canonical morphism $\Delta \to \PP^{\p(\Delta)-1}$ which is finite onto its
image
(see \cite{Catanese1999}).
\end{scase}

The following theorem was proved by Shokurov \cite{Shokurov1983}. For weaker versions see
\cite{Mumford1974}, \cite{Beauville1977a}, \cite{Debarre1989}.
\begin{theorem}
\label{theorem-Mumford-Beauville-Shokurov}
Let $(\tilde\Delta,\tau)$ be a pair consisting of a curve
$\tilde\Delta$ of arithmetic genus $2g-1$, 
and an involution $\tau$ on $\tilde\Delta$
satisfying the Beauville conditions \eqref{Beauville-condition}, as well as, 
the condition \eqref{equation-condition-S-star}.
Let $\Delta=\tilde\Delta/\tau$ be the quotient curve, and
let $\Pr(\tilde\Delta,\Delta)$ be the corresponding
Prym variety with
the polarization divisor $\Xi$ so that $\p(\Delta)=g$ and $\dim \Pr(\tilde\Delta,\Delta)=g-1$.
Then
$(\Pr(\tilde\Delta,\Delta),\Xi)$ is the Jacobian of a curve or a product
of Jacobians of curves only in the following cases:
\begin{enumerate}
\item
$\Delta$ is a hyperelliptic curve;
\item
$\Delta$ is a trigonal curve \textup(this case is considered  in \cite{Recillas1974}\textup);

\item
$\Delta$ is a quasi-trigonal curve;

\item
$\Delta$ is a plane  quintic curve and the
corresponding double cover is given by an even theta-characteristic.
\end{enumerate}
\end{theorem}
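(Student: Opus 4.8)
The plan is to decide whether $(\Pr(\tilde\Delta,\Delta),\Xi)$ is a Jacobian from the size of the singular locus of its theta divisor $\Xi$, following the Andreotti--Mayer philosophy and Mumford's Riemann-type singularity theorem for Pryms. The guiding fact is that on the Jacobian side the dimension of $\Sing(\Theta)$ is forced to be large: if $(\Pr,\Xi)\cong(\J(C),\Theta)$ with $C$ of genus $n:=\dim\Pr=g-1\ge 4$, then by the Riemann--Kempf description $\Sing(\Theta)\cong W^1_{n-1}(C)$, and the Brill--Noether existence theorem gives $\dim\Sing(\Theta)\ge \rho=n-4=g-5$; for a nontrivial product of Jacobians the singular locus is even larger (it contains a translate of $\Theta_1\times\Theta_2$, of codimension $2$). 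Thus the first step is to turn the hypothesis ``$\Pr$ is a Jacobian or a product'' into the inequality $\dim\Sing(\Xi)\ge g-5$, and the bulk of the argument is to show that this inequality forces $\Delta$ into one of the four listed types.

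First I would recall Mumford's parametrization of $\Xi$: after a translation a point of $\Pr(\tilde\Delta,\Delta)$ is a line bundle $L$ on $\tilde\Delta$ with $\Nm(L)=\upomega_\Delta$, and $\mult_L\Xi$ is controlled by $h^0(\tilde\Delta,L)$, so that $\Sing(\Xi)$ splits into the \emph{stable} singularities ($h^0\ge 4$ of the correct parity) and a bounded set of \emph{exceptional} singularities. Since $\tilde\Delta$ is only nodal I would run this in the setting of Beauville \cite{Beauville1977a}, using the two hypotheses precisely where they are needed: the Beauville condition \eqref{Beauville-condition} makes $\tilde\pi$ admissible and guarantees (see~\xref{Prym-polarization}) that $\Pr$ carries a principal polarization $\Xi$ after halving, while \eqref{equation-condition-S-star} forces $\Delta$ to be stable, so that its canonical and Prym-canonical morphisms are finite and Brill--Noether theory on $\Delta$ is available.

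The core step is to convert $\dim\Sing(\Xi)\ge g-5$ into the existence of a special linear series on $\Delta$. Through the norm map and Mumford's description, a large stable singular locus produces on $\Delta$ a pencil or a plane model of small degree; a Martens--Mumford type estimate for the dimension of the loci $W^r_d(\Delta)$ then shows that the bound can only be met if $\Delta$ carries a $g^1_2$ (the hyperelliptic case~(i)), a $g^1_3$ (the trigonal case~(ii), together with its degeneration to the quasi-trigonal case~(iii), which appears precisely because $\Delta$ is allowed to be nodal), or a $g^2_5$ realizing $\Delta$ as a plane quintic (case~(iv)). The reverse implications, that these four classes really do give Jacobians or products, I would supply by the classical constructions: Recillas' trigonal construction \cite{Recillas1974} exhibits the trigonal and quasi-trigonal Pryms as Jacobians, the hyperelliptic Prym degenerates directly into a Jacobian or a product, and for the plane quintic one writes the associated curve explicitly.

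The hard part is case~(iv) in the critical dimension $n=g-1=5$, which is exactly the value $\p(\Delta)=6$ singled out in Conjecture~\xref{conjecture-Shokurov}. In dimension $5$ the Andreotti--Mayer locus acquires the extra component of intermediate Jacobians of cubic threefolds, for which $\Sing(\Theta)$ is a single triple point and hence of dimension $0<g-5=1$; such a variety is \emph{not} a product of Jacobians by Clemens--Griffiths \cite{Clemens-Griffiths}. The whole difficulty is therefore to compute $\dim\Sing(\Xi)$ for a quintic Prym and to show that it equals $1$ exactly when the double cover comes from an \emph{even} theta-characteristic, and drops to $0$ (the cubic-threefold case) for an \emph{odd} one (see~\xref{theta-characteristic}). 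Separating these two families -- by analysing the rank of the tangent-cone quadrics of $\Xi$ at its stable double points and matching them against the Prym-canonical image of the quintic -- is the technical heart of the theorem and is what produces the even/odd alternative in~(iv).
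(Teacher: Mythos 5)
First, a point of comparison: the paper does not prove this statement at all --- it is quoted from Shokurov \cite{Shokurov1983}, with the smooth-curve versions due to Mumford \cite{Mumford1974} and Beauville \cite{Beauville1977a} --- so there is no in-paper argument to measure your proposal against. Your outline does follow the strategy of those sources: parametrize $\Xi$ by line bundles $L$ on $\tilde\Delta$ with $\Nm(L)=\upomega_{\Delta}$, control $\mult_L\Xi$ by $h^0(L)$, and play the resulting estimate on $\dim\Sing(\Xi)$ against the bound $\dim\Sing(\Theta)\ge n-4$ that holds for Jacobians (codimension $\le 2$ for products). The first half of the plan is sound.

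The genuine gap is the step where you assert that $\dim\Sing(\Xi)\ge g-5$ ``can only be met'' if $\Delta$ carries a $g^1_2$, a $g^1_3$ (or its quasi-trigonal degeneration), or a $g^2_5$. That is not true: Mumford's own classification of covers satisfying this dimension bound contains further cases, most importantly $\Delta$ \emph{bielliptic} (a degree-$2$ cover of a genus-$1$ curve), together with configurations produced by the exceptional (non-stable) singularities of $\Xi$; these meet the bound without appearing in the theorem's list. The Andreotti--Mayer-type inequality is therefore only a first sieve, and a second, independent argument is required to show that the surviving unlisted Pryms are not Jacobians or products --- classically one compares the quadric tangent cones of $\Xi$ at its stable double points, which contain the Prym-canonical image of $\Delta$, with the tangent cones of a Jacobian theta divisor, which contain the canonical curve of the putative curve. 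Your proposal carries out this second step only for the plane quintic, and even there the emphasis is misplaced: the odd-theta-characteristic quintic already \emph{fails} the dimension bound, since its $\Xi$ has a single (triple) singular point, so $\dim\Sing(\Xi)=0<1=g-5$; Clemens--Griffiths is not the missing ingredient --- the bielliptic and exceptional-singularity cases are. Finally, all of the singularity theory you invoke is stated for smooth $\Delta$; transporting it to generalized Pryms of nodal admissible covers, with \eqref{equation-condition-S-star} used to keep $\Xi$ a genuine principal polarization and the canonical map of $\Delta$ finite, is precisely the content of Shokurov's paper and cannot be dismissed by citing the smooth case.
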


Here, as usual, a (not necessarily smooth) curve $\Delta$ is said to be \textit{hyperelliptic} (resp. \textit{trigonal})
if there exists a finite morphism $\Delta\to \PP^1$ of degree $2$ (resp. $3$). 
A curve $\Delta$ is said to be \textit{quasi-trigonal} if obtained from a hyperelliptic curve 
by identifying two smooth points.

\begin{sremark}
\label{remark:Delta-can}
The canonical map $\phi_{\Delta}: \Delta\to \PP^{g-1}$ of the curves listed in 
Theorem \ref{theorem-Mumford-Beauville-Shokurov} satisfy the following properties.
\begin{enumerate} 
\item
If the curve $\Delta$ is hyperelliptic, then the map $\phi_{\Delta}$ is
a finite morphism of degree $2$. 

\item 
If $\Delta$  is a trigonal non-hyperelliptic curve,  then  the map $\phi_{\Delta}$ is an embedding 
and its image  has a one-dimensional family of $3$-secant lines whose intersections with $\phi_{\Delta}(\Delta)$ generate
the linear series $\mathfrak g^1_3$. 

\item 
If $\Delta$  is a  quasi-trigonal non-hyperelliptic curve,  then  the map $\phi_{\Delta}$ is an embedding 
and its image lies on a two-dimensional cone with vertex at a singular point.
Here
the generators of the cone  are $3$-secant lines of the curve $\phi_{\Delta}(\Delta)$. 

\item 
If $\Delta$  is a  plane quintic,  then  $\phi_{\Delta}$ is an embedding  
and its image lies on the Veronese surface.
\end{enumerate}
\end{sremark}

\begin{sremark}
\label{remark:Delta:Prym}
Assume that $\tilde\Delta_1\cap \tilde\Delta_2=\{P_1,P_2\}$
and let $\tilde\Delta_i'$, $i=1,2$ be curves
obtained from $\tilde\Delta_i$ by identifying the points
$P_1$ and $P_2$ (see Fig \ref{risss}). 
\begin{figure}[h]
 \begin{tikzpicture}
\draw [very thick,black,->,>=stealth] plot [smooth,tension=0.9] coordinates{ (1.9,0) (2.25,0.05)(2.5,-0.05) (2.9,0)};
\draw[name path=C2, thick,black] plot [smooth,tension=1] coordinates{(-1.5,1.1) (0,0.55) (-0.1,-0.95) (-1.4,-1)}  node[above,yshift=4pt, xshift=0pt]{$\scriptstyle \tilde \Delta_1$};
\draw[name path=C1, thick,black] plot [smooth,tension=1] coordinates{(1.5,1.1) (-0,0.95)  (-0.1,-0.95) (1.4,-1)} node[above,yshift=4pt, xshift=0pt]{$\scriptstyle \tilde \Delta_2$};
\draw[name path=C11, thick,black] plot [smooth,tension=0.9] coordinates{(4,1.1)(4.5,0)  (5.4,-0.55)  (5.7,0) (5.2,0.95)(4.5,0) (4,-1)} node[below,yshift=-2pt, xshift=0pt]{$\scriptstyle \tilde \Delta_1'$};
\draw[name path=C22,thick,black] plot  [smooth,tension=1] coordinates{(9,1.1)(8.3,0.2) (7.2,-0.55)  (6.7,0)  (7.2,0.95) (8.3,0.2)(9,-1)} node[below,yshift=-2pt, xshift=0pt]{$\scriptstyle \tilde \Delta_2'$};
\fill[black,name intersections={of=C1 and C2}](intersection-1) circle(2pt) node[above,yshift=4pt, xshift=0pt]{$\scriptstyle P_1$};
\fill[black,name intersections={of=C1 and C2}](intersection-2) circle(2pt) node[below,yshift=-4pt, xshift=0pt]{$\scriptstyle P_2$};
\fill[black](4.5,0) circle(2pt) node[left,yshift=-4pt, xshift=0pt]{$\scriptstyle P_1=P_2$};
\fill[black](8.3,0.2) circle(2pt) node[right,yshift=-4pt, xshift=0pt]{$\scriptstyle P_1=P_2$};
\end{tikzpicture}
\caption{}
\label{risss}
\end{figure}
\newline\noindent
Then $\Pr(\tilde \Delta,\Delta)=\Pr(\tilde \Delta_1',\Delta_1')\times \Pr(\tilde \Delta_2',\Delta_2')$,
where $\Pr(\tilde \Delta_i',\Delta_i')$ are Prym varieties for $\tilde\Delta_i'$ with the induced
involution (see \cite[Corollary 3.16]{Shokurov1983}). This is the reason for
the condition~\eqref{equation-condition-S-star} to be included in the hypothesis of the theorem.
\end{sremark}

Prym varieties are used in the three-dimensional birational geometry
mainly in the following situation.
Let $\pi: X\rightarrow S$ be a standard conic bundle over
a smooth projective rational surface.
Let $\Delta\subset S$ be the discriminant curve of $\pi$, and let
$\tilde\Delta$ be the curve parameterizing
irreducible components
of degenerate conics over $\Delta$.
Then if $\Delta\neq \emptyset$, then $\Delta$ is a reduced curve
with normal crossings, and $\pi$ induces a double cover
$\tilde\pi: \tilde\Delta\to\Delta$
satisfying Beauville conditions
\eqref{Beauville-condition}.

For the following theorem, see \cite[Ch.~3]{Beauville1977}, \cite{Beltrametti-1985}, \cite{Beauville1989}.
\begin{theorem}
\label{theorem-Prym-Jacobian}
Let $\pi: X\rightarrow S$ be a standard
conic bundle over a rational surface $S$ with discriminant
curve $\Delta\subset S$ and let $\tilde\Delta\to \Delta$ be the 
corresponding double cover. Then the intermediate Jacobian
$\J(X)$ is isomorphic as a
principally polarized abelian variety to the Prym variety
$\Pr(\tilde\Delta,\Delta)$. 
\end{theorem}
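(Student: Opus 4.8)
The plan is to realize $\J(X)$ through the Abel--Jacobi map applied to the lines sweeping out the degenerate fibres, and to identify the parameter space of these lines with $\tilde\Delta$. First I would fix the Hodge-theoretic setup. Since $S$ is rational, $X$ is rationally connected (Lemma~\ref{lemma-properties-Conic-bundles}), so $H^1(X,\OOO_X)=H^3(X,\OOO_X)=0$ and $\J(X)$ is a principally polarized abelian variety, its polarization $\Theta$ coming from the unimodular intersection form on $H^3(X,\ZZ)$ (Definition~\ref{8.1.1}). To match dimensions I would use $\bb_1(S)=0$ and $\bb_2(X)=\bb_2(S)+1$, the latter because $\Pic(X)=\pi^*\Pic(S)\oplus\ZZ$ and $H^2$ of a rationally connected threefold is algebraic; then formula~\eqref{lemma-7-conic-bundle-3} gives $\bb_3(X)=2\p(\Delta)-2$, so $\dim\J(X)=\p(\Delta)-1=\dim\Pr(\tilde\Delta,\Delta)$ by Definition~\ref{8.1.4}. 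Thus it remains only to produce a polarization-preserving isomorphism.

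Next I would construct the comparison map via an incidence correspondence. Over $\Delta\setminus\Sing\Delta$ the fibre $X_s$ is a pair of lines, and $\tilde\Delta$ parameterizes these lines; there is a universal ruling $\Gamma=\{(\tilde s,x)\mid x\in l_{\tilde s}\}\subset\tilde\Delta\times X$, a $\PP^1$-bundle over $\tilde\Delta$ mapping onto $X_\Delta=\pi^{-1}(\Delta)$ (essentially its normalization). The correspondence $\Gamma$ induces $\Gamma_*\colon\J(\tilde\Delta)\to\J(X)$. The key relation is that $\Gamma\circ\tilde\pi$ sends $s\in\Delta$ to $l_1+l_2=X_s$, a full fibre; since $S$ is rational any two fibres are rationally equivalent $1$-cycles on $X$ (both equal $\pi^*$ of a point, and $\mathrm{CH}_0(S)$ is trivial in degree $0$), so the Abel--Jacobi image of any degree-zero family of full fibres vanishes, i.e. $\Gamma_*\circ\tilde\pi^*=0$. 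As $\tilde\pi^*\circ\Nm=\id+\tau^*$, this forces $\Gamma_*(\id+\tau^*)=0$, so $\Gamma_*$ is $\tau^*$-anti-invariant: it annihilates $\tilde\pi^*\J(\Delta)=\image(\id+\tau^*)$ and restricts to a homomorphism $\psi:=\Gamma_*|_{\Pr}\colon\Pr(\tilde\Delta,\Delta)\to\J(X)$.

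It then remains to prove $\psi$ is an isomorphism of \emph{principally polarized} abelian varieties. By the dimension count it suffices that $\psi$ be an isogeny, for which I would show $\Gamma_*$ is surjective; the clean way is to compute the Hodge structure of $H^3(X)$ directly through the Leray spectral sequence for $\pi$, using that $R^1\pi_*\ZZ$ over $S\setminus\Sing\Delta$ is governed by the double cover $\tilde\pi$, and to identify the transcendental part of $H^3(X)$ with the $\tau^*$-anti-invariant part of $H^1(\tilde\Delta)$ — which is exactly the Hodge structure of $\Pr(\tilde\Delta,\Delta)$. This yields an isomorphism of integral polarized Hodge structures and hence of abelian varieties.

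The main obstacle is the polarization: one must verify $\psi^*\Theta=\Xi$, where $\Xi$ is the principal polarization obtained from $\Theta_{\J(\tilde\Delta)}$ by restricting to the anti-invariant part and dividing by $2$ (see~\ref{Prym-polarization}). This reduces to comparing the cup-product pairing on $H^3(X,\ZZ)$ with the intersection pairing on $\tilde\Delta$ through $\Gamma$: because $\Gamma$ is a $\PP^1$-bundle over $\tilde\Delta$, the pairing of $\Gamma_*a$ and $\Gamma_*b$ computes the intersection $a\cdot b$ on $\tilde\Delta$ (twisted by the relative class), and the factor $2$ should account precisely for $\deg\tilde\pi=2$. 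Pinning down this numerical identity, together with controlling the degenerate contributions from the double-line fibres over $\Sing\Delta$ — where the Beauville conditions~\eqref{Beauville-condition} are exactly what guarantee both that $\Pr(\tilde\Delta,\Delta)$ is a genuine principally polarized abelian variety and that $\Gamma$ extends correctly — is the delicate technical heart of the argument.
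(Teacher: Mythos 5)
Your outline is correct and is essentially the classical Beauville argument (incidence correspondence of lines over $\tilde\Delta$, Abel--Jacobi triviality of full fibres over the rational base to kill the $(\id+\tau^*)$-part, Leray spectral sequence to identify $H^3(X)$ with the anti-invariant part of $H^1(\tilde\Delta)$, and the factor-of-two bookkeeping for the polarization). The paper itself does not prove Theorem~\ref{theorem-Prym-Jacobian} but only cites \cite[Ch.~3]{Beauville1977}, \cite{Beltrametti-1985}, \cite{Beauville1989}, and your sketch follows exactly that cited route, correctly flagging the integral polarization comparison and the behaviour over $\Sing(\Delta)$ as the genuinely technical points.
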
 
 
A.~Beauville considered the case $S=\PP^2$ and obtained as a consequence of 
Theorems~\xref{theorem-Mumford-Beauville-Shokurov} and~\xref{theorem-Prym-Jacobian} 
the following statement \cite[Th\'eor\`eme~4.9]{Beauville1977}: if $S=\PP^2$, and $\deg 
\Delta\geq 6$, then the variety $X$ is non-rational since in this case 
$\Pr(\tilde\Delta,\Delta)$ is neither the Jacobian of a curve nor a product of 
Jacobians of curves.

Intermediate Jacobian was used to prove non-rationality of many types 
of (even singular) Fano threefolds, see e.g. \cite[Ch.~5]{Beauville1977}, \cite{Clemens1983},
\cite{Conte1977a,Conte1977b}, \cite{Przhiyalkovskij-Cheltsov-Shramov-2015},
\cite{PiccoBotta-Verra-1983}, \cite{Ehndryushka1985}, \cite{Alzati-Bertolini-1992a}.

\section{Birational transformations}
\label{section:bir}
The following result, based on the construction of \cite{Tjurin1975}, was obtained by I. Panin in \cite{Panin1980}. 
It is related to the exceptional case~\xref{conjecture-Iskovskikh-2} of Conjecture
\xref{conjecture-Iskovskikh}.

\begin{proposition}[\cite{Panin1980}]
\label{proposition-Panin}
Let $\pi: X\to \PP^2$ be a standard 
conic bundle with discriminant curve $\Delta\subset \PP^2$ of degree $5$.
Suppose that the corresponding double cover $\tilde\pi: \tilde\Delta\to \Delta$ associated to $\pi$ is given by an even 
theta-characteristic. Then $X$ is rational. Moreover, $\pi: X\to \PP^2$ is fiberwise birational to
the conic bundle obtained by blowing up of $\PP^3$ along a smooth curve of degree $7$ and genus $5$,
see Example~\xref{example-Panin}.
\end{proposition}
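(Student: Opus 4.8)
The main tool is Proposition~\ref{proposition-AM}: a standard conic bundle over a rational surface is determined, up to fiberwise birational equivalence over the base, by its discriminant double cover $\tilde\pi:\tilde\Delta\to\Delta$. Since the blow-up $X'$ of $\PP^3$ along a smooth curve of degree $7$ and genus $5$ is obviously rational, the ``moreover'' part immediately implies the first assertion, so the whole statement reduces to matching the discriminant data of $\pi:X\to\PP^2$ with that of the conic bundle $\pi':X'\to\PP^2$ of Example~\ref{example-Panin}. (Note that the easy criterion of Proposition~\ref{proposition-rationality-conic-bundles} does not apply here, since a line meets the quintic $\Delta$ in $5>3$ points, so a genuine birational construction is needed.) The plan is therefore threefold: first, show that $\pi'$ has discriminant a plane quintic whose double cover is given by an \emph{even} theta-characteristic; second, conversely, starting from the given pair $(\Delta,\,\tilde\Delta\to\Delta)$ with $\Delta$ a plane quintic and the cover even, produce a space curve $\Gamma$ realizing exactly these data; third, conclude by Proposition~\ref{proposition-AM}.

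For the first step, the degree of $\Delta'$ is computed in Example~\ref{example-Panin} to be $5$, and to pin down the parity I would argue indirectly. Since $X'$ is rational, $\JG(X')=0$ by Corollary~\ref{corollary-J}; by Theorem~\ref{theorem-Prym-Jacobian} the intermediate Jacobian $\J(X')$ is the Prym variety $\Pr(\tilde\Delta',\Delta')$, which must then be a product of Jacobians of curves. For a general $\Gamma$ the quintic $\Delta'$ is smooth, hence neither hyperelliptic nor trigonal nor quasi-trigonal (its gonality is $4$), so Theorem~\ref{theorem-Mumford-Beauville-Shokurov} forces case~(iv): the cover is defined by an even theta-characteristic. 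As parity is locally constant in the connected family of such curves, the Panin cover is even for every $\Gamma$, in contrast with the conic bundle arising from a cubic threefold (Example~\ref{example-cubic-conic-bundle}), whose theta-characteristic is odd; this is exactly what separates the two families.

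The heart of the matter is the second step, the \emph{realizability} of an arbitrary even-theta datum by a space curve. Following Tyurin's construction, an even theta-characteristic on the quintic $\Delta$ yields a symmetric determinantal presentation of $\Delta$, equivalently a rank-$3$ symmetric form presenting the generic fibre as the conic attached to the quaternion algebra of Proposition~\ref{proposition-AM}. I would reinterpret this presentation projectively: the adjoint/net-of-quadrics data attached to $\theta$ embed a genus-$5$ curve $\Gamma$ in $\PP^3$ by $|K_\Gamma-P|$, in such a way that the conics meeting $\Gamma$ in six points sweep out the conic bundle and their degeneration locus reproduces $\Delta$ together with its cover $\tilde\Delta\to\Delta$. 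Concretely this is a reverse engineering of Example~\ref{example-Panin}: one recovers on the conic-bundle side the second contraction $\sigma$ of the Fano threefold with $-K^3=16$, contracts along $\sigma$ to obtain $\PP^3$, and identifies the image of the exceptional divisor with $\Gamma$.

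The main obstacle is precisely this reverse construction: showing that \emph{every} even theta-characteristic on \emph{every} (smooth, and then nodal) plane quintic is realized, not merely a generic one. A clean way to handle it is a dimension count matching the moduli of pairs $(\Delta,\theta)$ against the family of genus-$5$, degree-$7$ curves in $\PP^3$, together with the irreducibility of both spaces, to obtain dominance and hence generic surjectivity; one then extends to all data by a specialization argument, checking that the construction degenerates well over nodal quintics, which is where the Beauville condition~\eqref{Beauville-condition} on $\tilde\Delta\to\Delta$ enters. Once $\Gamma$ is produced, Proposition~\ref{proposition-AM} furnishes a fiberwise birational equivalence $X\dashrightarrow X'$ over $\PP^2$, and the rationality of $X$ follows from that of the blow-up $X'$ of $\PP^3$.
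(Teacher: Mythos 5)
Your overall architecture is the right one and matches the paper's: reduce everything via Proposition~\xref{proposition-AM} to realizing the given datum $(\Delta,\tilde\Delta\to\Delta)$ by \emph{some} rational conic bundle, and realize it by the net-of-quadrics/Panin construction. Your first step (the Panin family is even, by $\JG(X')=0$, Theorem~\xref{theorem-Prym-Jacobian} and Theorem~\xref{theorem-Mumford-Beauville-Shokurov}) is correct but is the converse direction and is not what the proof needs; the paper does not use it.

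The genuine gap is in your second step, which you yourself flag as the main obstacle. A dimension count showing that the map from genus-$5$ space curves to pairs $(\Delta,\theta)$ is dominant only gives \emph{generic} surjectivity, and "extending to all data by a specialization argument" does not close this: the image of a dominant morphism between non-proper varieties is merely constructible, so a fixed even-theta datum --- in particular a nodal one --- may simply not be in the image. The paper avoids any moduli count by producing $\Gamma$ canonically from the \emph{individual} datum: since $\upomega_\Delta\simeq\OOO_\Delta(2)$, the quintic is neither hyperelliptic nor (quasi-)trigonal, the singular locus of the polarization divisor of $\Pr(\tilde\Delta,\Delta)$ has dimension $\le 1$ so the Prym is indecomposable, and by the theorems of Masiewicki and Tjurin this Prym \emph{is} the Jacobian $\J(\Gamma)$ of a non-trigonal, non-hyperelliptic genus-$5$ curve; then $\Gamma\subset\PP^4$ is canonically the base locus of a net of quadrics whose invariant recovers $\tilde\Delta\to\Delta$ (Theorem~\xref{net-of-quadrics}), and projecting from a point $P\in\Gamma$ yields exactly the conic bundle of Example~\xref{example-Panin} with the prescribed discriminant data. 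If you prefer your determinantal reading of the even theta-characteristic (which, when $h^0(\theta)=0$, does give the symmetric net $\{x^{T}M(s)x=0\}$ directly with $\det M$ cutting out $\Delta$ and the two rulings giving the cover), that would also be a pointwise construction and would make the dimension count unnecessary --- but then you must deal with the vanishing even thetas ($h^0(\theta)=2$) and with possibly singular base loci, issues the Prym-theoretic route, extended to nodal curves by Shokurov, handles uniformly. As written, the dominance-plus-specialization step does not prove the statement.
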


\begin{proof}[Sketch of the proof.]
According to Lemma~\xref{proposition-AM}, such a conic bundle exists and 
any two of them are birationally equivalent over $\PP^2$. 
Thus it is sufficient to prove the rationality of a particular one.
Since $\upomega_\Delta=\OOO_{\Delta}(2)$, the curve $\Delta$ is neither hyperelliptic nor (quasi-)trigonal
because its canonical 
model $\phi_{K_{\Delta}}(\Delta)\subset \PP^5$ lies in the Veronese surface.
As $\tilde\pi$ satisfies the conditions \eqref{Beauville-condition} 
the Prym variety $\Pr(\tilde\Delta, \Delta)$ is a principally polarized 
Abelian variety (see~\xref{Prym-polarization}). Let $\Xi$ be the polarization divisor.
By \cite[\S 7]{Mumford1974}, \cite[Prop. 5.4]{Shokurov1983} \ $\dim \Sing(\Xi)\le 1$.
Therefore, $\Pr(\tilde\Delta, \Delta)$ is not a product of two or more principally polarized 
Abelian varieties.
Thus the 
Prym variety $\Pr(\tilde\Delta, \Delta)$ is (by 
\cite{Masiewicki1976} and \cite{Tjurin1975}) 
isomorphic as a principally polarized 
Abelian variety to the Jacobian $\J(\Gamma)$ of a smooth curve of genus $5$. 
One can show that the curve 
$\Gamma$ cannot be hyperelliptic nor 
trigonal (see \cite{Shokurov1983}, \cite{Masiewicki1976}, \cite{Tjurin1975}). 
Thus the canonical image $\Gamma\subset \PP^4$ is a complete intersection of three 
quadrics $Q_1\cap Q_2\cap Q_3$. In other words, $\Gamma$ is the base locus of a net of quadrics $|2H-\Gamma |$, where 
$H$ is a 
hyperplane in $\PP^4$. 

\begin{sdefinition}[\cite{Tjurin1975}, \cite{Tyurin1976}]
Let $\mathcal C$ be a net of quadrics in $\PP^{2m}$ such that the base locus 
$\Bs(\mathcal C)$ is a smooth variety. Then a general quadric in the net $\mathcal C$ is smooth
and degenerate quadrics in $\mathcal C$ are parametrized by a curve $\Delta\subset 
\mathcal C\simeq \PP^2$. A general quadric $Q_s$ for $s\in \Delta$ has two families 
of $m$-dimensional linear subspaces. This defines a double cover $\tilde \Delta\to \Delta$
ramified exactly over singular points of $\Delta$. This double cover is called the \textit{invariant of the net}
$\mathcal C$.
\end{sdefinition}

\begin{stheorem}[\cite{Tjurin1975}, {\cite[Th\'eor\`eme~6.3]{Beauville1977}}]
\label{net-of-quadrics}
Let $\mathcal C$ be a net of quadrics in $\PP^{4}$ such that the base locus 
$\Bs(\mathcal C)$ is a smooth curve $\Gamma$. Let $\tilde \Delta\to \Delta$ be the 
invariant of the net $\mathcal C$. Then the Prym variety 
$\Pr(\tilde \Delta,\Delta)$ is isomorphic to the Jacobian $\J(\Gamma)$ of $\Gamma$ as a principally
polarized Abelian variety.
\end{stheorem}

Now starting from this net of quadrics $|2H-\Gamma |$ we build a standard conic bundle 
$\pi: X\to\PP^2$ with the given double cover $\tilde\Delta\to \Delta$ and prove the rationality of 
$X$.

\begin{slemma}
Let $\Gamma\subset \PP^4$ be a smooth canonical non-trigonal curve of genus $5$
and let $\mathcal C$ be the net of quadrics passing through $\Gamma$.
Let $\tilde \Delta \to \Delta$ be the invariant of the net $\mathcal C$.
Fix a point $P\in \Gamma$. Let $\Gamma_0\subset \PP^3$ be the projection of 
$\Gamma$ from $P$ and let $X\to \PP^3$ be the blowup of $\Gamma_0$.
Then the discriminant curve of the standard conic bundle $\pi: X\to \PP^2$ 
constructed in Example~\xref{example-Panin} is isomorphic to $\Delta$ 
and the corresponding double cover coincides with $\tilde \Delta \to \Delta$.
\end{slemma}

\begin{proof}[Sketch of the proof.]
Let $T_{P,\PP^4}$ be the tangent space to $\PP^4$ at $P$.
We identify the target of the projection $\PP^4 \dashrightarrow\PP^3$ from $P$ 
with the projectivization $\PP(T_{P,\PP^4})$.
Consider the following family of conics in $\PP(T_{P,\PP^4})$.
For each points $s\in \mathcal C\simeq \PP^2$ we consider the conic $C_s$ which is the 
projectivized tangent cone to the quadric $Q_s\cap T_{P,\PP^4}$ at the point $P$. Since $\Gamma$ 
is a smooth complete intersection of the members $Q_s$, $s\in \mathcal C$, 
none of the quadrics $Q_s$ has singular points on $\Gamma$, in 
particular at $P$. Therefore, we have one of the following possibilities:
\begin{eqnarray*}
\corank (Q_s)=0 &\Longleftrightarrow& \text{$C_s$ is smooth,}
\\
\corank (Q_s)=1 &\Longleftrightarrow& C_s=\PP^1 \vee \PP^1,
\\
\corank (Q_s)=2 &\Longleftrightarrow& \text{$C_s$ is a double line.}
\end{eqnarray*}
Moreover, each $C_s$ meets $\Gamma_0$ at $8-2=6$ points.

Conversely, if $C\subset T_{P,\PP^4}$ is a (possibly degenerate) conic
meeting $\Gamma_0$ at $6$ points, then there is a quadric $Q\in \mathcal C$
such that $C$ is the 
projectivized tangent cone to $Q$ at $P$.
As in Example~\xref{example-Panin}, by blowing up $\Gamma_0$ we obtain a conic bundle 
$\pi: X\to \PP^2$ whose fibers are proper transforms of conics $C_s$.
In particular, the degeneracies of $\pi: X\to \PP^2$ 
correspond precisely to the given cover $\tilde\pi:\tilde\Delta\to \Delta$ from which the net of quadrics was 
built. 
\end{proof}
By Theorems~\xref{net-of-quadrics} and~\xref{theorem-Prym-Jacobian} we have
\begin{equation*}
\Pr(\tilde \Delta,\Delta)\simeq \J(\Gamma)\simeq \J(X).
\end{equation*}
Since $X$ is rational, Proposition~\xref{proposition-Panin} follows.
\end{proof}

\begin{proposition}[{\cite[Proposition 2.4]{Sarkisov-1980-1981-e}}]
\label{proposition-Sarkisov-elementary-transformations}
Let $\pi: X\to S$ be a standard conic bundle. Let $\alpha: S_{1}\to S$ be the
blowup with center at a point $s\in S$. Then there exists a standard conic bundle
$\pi_{1}: X_{1}\to S_{1}$ and a birational map $\psi: X_{1}\dashrightarrow X$ such that the diagram
\begin{equation}
\label{equation-diagram-elementary-transformations-0}
\vcenter{
\xymatrix{
X\ar[d]^{\pi} & X_{1}\ar[d]^{\pi_{1}}\ar@{-->}[l]_{\psi}
\\
S & S_{1}\ar[l]_{\alpha}
} }
\end{equation}
is commutative. More precisely, if $\pi^{-1}(s)$ is a smooth conic, then 
$\psi$ is regular and is the blowup of $\pi^{-1}(s)$. If $\pi^{-1}(s)$ is a degenerate conic, then 
\eqref{equation-diagram-elementary-transformations-0}
can be completed to the following commutative diagram \textup(cf.~\xref{definition-Sarkisov-links}\textup)
\begin{equation}
\label{equation-diagram-elementary-transformations}
\vcenter{
\xymatrix@R=9pt{
Z\ar[d]^{p}\ar@{-->}[r]^{\chi}& X_{1}\ar[d]^{\pi_{1}}
\\
X\ar[d]^{\pi}&S_{1}\ar[dl]^{\alpha}&
\\
S&
} }
\end{equation}
Here $p$ is the blowup of a reduced irreducible component of $\pi^{-1}(s)$
and $\chi$ is a flop.
\end{proposition}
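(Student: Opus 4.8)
The assertion is local over $S$ near $s$: away from $s$ the morphism $\alpha$ is an isomorphism, so one may take $X_1=X$ there, and it suffices to construct the transform in a neighbourhood of $s$. Throughout write $C:=\pi^{-1}(s)$ for the scheme-theoretic fibre and $E:=\alpha^{-1}(s)\cong\PP^1$ for the exceptional curve of $\alpha$, with the canonical identification $E=\PP(T_sS)$. In all cases the starting point is the base change $X\times_SS_1$; since $\pi$ is flat, blow-up commutes with it and $X\times_SS_1=\mathrm{Bl}_C X$, with projection to $S_1$ flat of relative dimension one and with conic fibres. The two cases differ according to whether this model is already smooth.

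\emph{Smooth fibre.} Suppose first that $C$ is a smooth conic, so $s\notin\Delta$ and $\pi$ is smooth along $C\cong\PP^1$. I would simply set $X_1:=X\times_SS_1=\mathrm{Bl}_C X$ and $\psi:=p$ the blow-up morphism. As $C$ is a smooth curve of codimension two in the smooth threefold $X$, the blow-up $X_1$ is smooth. It remains to check that $\pi_1\colon X_1\to S_1$ is a standard conic bundle in the sense of Definition~\ref{definition-conic-bundle}. Because $C$ is a fibre over a smooth point, $\NNN_{C/X}\cong\OOO_{\PP^1}^{\oplus2}$, so the exceptional divisor is $F=\PP(\NNN_{C/X})\cong C\times E$ and $\pi_1|_F$ is the second projection. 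On a fibre $C\times\{e\}$ over $E$ one has $F|_F=\OOO_F(-1)$ of degree $0$, whence $-K_{X_1}\cdot(C\times\{e\})=-K_X\cdot C-F\cdot(C\times\{e\})=2-0=2$, while on the remaining fibres $-K_{X_1}$ agrees with $-K_X$. Thus $-K_{X_1}$ is $\pi_1$-ample, and $\uprho(X_1/S_1)=(\uprho(X)+1)-(\uprho(S)+1)=\uprho(X/S)=1$, so $\pi_1$ is a standard conic bundle and $\psi=p$ is regular, as claimed.

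\emph{Degenerate fibre.} Now suppose $C$ is a degenerate conic. By~\ref{equation-cb} it is either a pair of distinct lines $\ell_1+\ell_2$ meeting transversally (case $s\in\Delta\setminus\Delta_{\mathrm s}$) or a double line $2\ell$ (case $s\in\Delta_{\mathrm s}$); in either case choose a reduced irreducible component $\ell\cong\PP^1$ and let $p\colon Z:=\mathrm{Bl}_\ell X\to X$ be its blow-up. Again $\ell$ is a smooth curve of codimension two, so $Z$ is smooth, $p$ is a divisorial Mori extremal contraction, and $\uprho(Z/S)=2$. Note that here the naive model $X\times_SS_1=\mathrm{Bl}_C X$ is \emph{singular} and has \emph{every} fibre over $E$ equal to the degenerate conic $C$; blowing up a single component is meant to repair exactly this. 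In the nodal case, computing with $K_Z=p^*K_X+F$ and using that $\ell_2$ meets the centre $\ell_1$ transversally at one point (Corollary~\ref{normal-crossing}), the strict transform $m$ of $\ell_2$ satisfies $K_Z\cdot m=K_X\cdot\ell_2+F\cdot m=-1+1=0$; restricting the decomposition $\NNN_{\ell_2/X}\cong\OOO_{\PP^1}(-1)\oplus\OOO_{\PP^1}$ and tracking the elementary modification introduced by $p$ along the general fibre surface $Y=\pi^{-1}(\gamma)$ (for $\gamma\ni s$ general) gives $\NNN_{m/Z}\cong\OOO_{\PP^1}(-1)^{\oplus2}$. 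Hence $m$ is a floppable $(-1,-1)$-curve, contracted to $s$ by $Z\to S$.

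Performing the Atiyah flop of $m$ produces $\chi\colon Z\dashrightarrow X_1$, an isomorphism in codimension one, with $X_1$ again smooth and with the morphism to $S$ preserved (the flop is over $S$, since $m$ maps to $s$); I then set $\pi_1$ to be the induced map $X_1\to S_1$ and $\psi:=p\circ\chi^{-1}$, which is birational but not regular precisely because of the flop. The heart of the matter, and the step I expect to be the main obstacle, is to verify that $X_1$ genuinely carries a \emph{standard conic bundle} structure over $S_1$: concretely, that $X_1\to S$ factors through $\alpha$ (equivalently, that after the flop the transform $F^+$ of the exceptional divisor maps onto $E$ by its ruling rather than being collapsed to $s$), that the resulting $\pi_1$ is flat with one-dimensional conic fibres degenerating only at finitely many points of $E$, that $-K_{X_1}$ is $\pi_1$-ample, and that $\uprho(X_1/S_1)=1$. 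These are exactly the properties that fail for the singular model $X\times_SS_1$ and are restored by the flop; establishing them reduces, after passing to a formal or \'etale neighbourhood of $s$ and inserting the explicit local normal forms of~\ref{equation-cb}, to a direct local computation on the flopped threefold. The double-line case $s\in\Delta_{\mathrm s}$ is handled in the same manner, the only change being the normal-bundle bookkeeping along the unique reduced line $\ell$ and the identification of the corresponding flopped curve. Finally, Corollary~\ref{cor:disc} identifies the discriminant of $\pi_1$ with the strict transform of $\Delta$, confirming that $\pi$ and $\pi_1$ are fiberwise birationally equivalent over $\alpha$.
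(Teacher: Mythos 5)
Your construction is, in all three cases, the one the paper uses: the fibre product (equivalently, the blowup of the smooth fibre) when $s\notin\Delta$; and in the degenerate cases the blowup $p\colon Z\to X$ of one reduced component of $\pi^{-1}(s)$ followed by a flop — of the strict transform of the residual component when $s\in\Delta\setminus\Sing(\Delta)$ (with the same identification $\NNN_{m/Z}\simeq\OOO_{\PP^1}(-1)\oplus\OOO_{\PP^1}(-1)$), and of the negative section of $p^{-1}(\ell)\simeq\FF_3$ when $s\in\Sing(\Delta)$, which is exactly the "identification of the flopped curve" you leave implicit. The only real divergence is at the step you yourself flag as the main obstacle, namely verifying that the flopped threefold genuinely carries a standard conic bundle structure over $S_1$; you propose to settle this by an unexecuted local computation in the normal forms of~\xref{equation-cb}. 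The paper discharges it more cheaply: since $-K_Z$, and hence $-K_{X_1}$, is nef over $S$ and $\uprho(X_1/S)=2$, the general theory of extremal contractions of smooth threefolds \cite{Mori-1982} yields a Mori extremal contraction $\pi_1\colon X_1\to S_1$ over $S$ completing the diagram, and such a contraction of relative dimension one from a smooth threefold is automatically a standard conic bundle, with the induced morphism $S_1\to S$ forced to be the blowup of $s$. So your argument is correct in outline and essentially the paper's; the missing verification is best closed by the two-ray game over $S$ rather than by hand in local coordinates.
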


\begin{proof}[Sketch of the proof.]
Let $\Delta\subset S$ be the discriminant curve and let $l:=\pi^{-1}(s)_{\red}$
be the fiber over $s$.
There are three possibilities: 
\begin{equation*}
s\in S\setminus\Delta,\quad s\in\Delta\setminus\Sing(\Delta),\quad
\text{and}\quad s\in\Sing(\Delta).
\end{equation*}
\begin{scase}\label{elementary-transformations-1} 
\textbf{Case $s\in S\setminus\Delta$.}
Then we can take $X_{1}=X\times_{S} S_{1}$. In other words, $\psi: X_{1}\to X$ is the blowup
with center $l$. Since the curve $l$ is smooth, $X_{1}$
is a smooth variety. Moreover, $\psi$ is a morphism in this case.
\end{scase}

\begin{scase}\label{elementary-transformations-2} 
\textbf{Case $s\in\Delta\setminus\Sing(\Delta)$.}
Then $\pi^{-1}(s)=l=l'\vee l''$. Let $p: Z\to X$ be the blowup of $l'$
and let $\bar{l}$ be the proper transform of $l''$. For the normal bundle of 
$\bar{l}$ in $Z$ we have 
\begin{equation*}
\NNN_{\bar{l}/Z}=\OOO_{\PP^1}(-1)\oplus \OOO_{\PP^1}(-1).
\end{equation*}
Therefore, there exists an Atiyah-Kulikov flop $Z \dashrightarrow X_{1}$ with center $\bar{l}$.
The anti-canonical divisors of $Z$ and $X_{1}$ are relatively nef over $S$.
According to general theory \cite{Mori-1982} there exists a Mori
extremal contraction $\pi_{1}: X_{1}\to S_{1}$ over $S$ that
can be completed to the diagram \eqref{equation-diagram-elementary-transformations},
where $\pi_{1}$ is a standard conic bundle and $\alpha$ is a blowup of $s$.
\end{scase}

\begin{scase}\label{elementary-transformations-3} 
\textbf{Case $s\in\Sing(\Delta)$.}
Then $\pi^{-1}(s)=2l$ is a double line.
Let $p: Z\to X$ be the blowup of $l$.
As above the anti-canonical divisor $-K_{Z}$ is nef over $S$ and 
there exists a flop $Z\dashrightarrow X_{1}$ at the negative section of 
the exceptional divisor $p^{-1}(l)\simeq \FF_3$. Thus we obtain
the diagram \eqref{equation-diagram-elementary-transformations}.\qedhere
\end{scase}
\end{proof}

The transformations described in~\xref{elementary-transformations-1}-
\xref{elementary-transformations-3} 
are called \textit{elementary transformations} of conic bundles.
They are simplest examples of Sarkisov links.

\begin{lemma}[\cite{Iskovskikh-1996-conic-re}]
\label{Lemma-3}
Let $S$ and $S'$ be surfaces with rational singularities and let $\alpha: S\to S'$ be 
a birational contraction. 
Let $\Delta\subset S$ be a connected 
reduced curve and let $\Delta':=\alpha(\Delta)$.
Then $\p(\Delta)\le \p(\Delta')$.
\end{lemma}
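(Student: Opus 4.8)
The plan is to read the arithmetic genus cohomologically, via $\p(C)=1-\chi(\OOO_C)$ for any reduced projective curve $C$ (so that $\p(C)=h^1(\OOO_C)$ when $C$ is connected), and to connect $\Delta$ to $\Delta'$ through the reduced total transform
\[
C:=\alpha^{-1}(\Delta')_{\red}\subset S.
\]
Since $\alpha(\Delta)=\Delta'$ we have $\Delta\subseteq C$, with each component of $\Delta$ being a component of $C$, and $C$ is connected: $\Delta'$ is connected as the image of the connected $\Delta$, and $\alpha$ is proper with connected fibres by Stein factorisation (using $\alpha_*\OOO_S=\OOO_{S'}$), so $\alpha^{-1}(\Delta')$ is connected. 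I would then prove the two inequalities $\p(\Delta)\le\p(C)$ and $\p(C)\le\p(\Delta')$ separately; the first is combinatorial, the second is where the rational-singularity hypothesis is used.

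First I would treat $\p(\Delta)\le\p(C)$. Order the components of $C$ so that those of $\Delta$ come first (with connected partial unions, possible as $\Delta$ is connected) and every later component meets the union of the preceding ones; this is possible because the dual graph of $C$ is connected and contains that of $\Delta$ as a connected subgraph. Adjoining one component $\Gamma$ to a connected subcurve $C'$ gives $0\to\OOO_{C'\cup\Gamma}\to\OOO_{C'}\oplus\OOO_{\Gamma}\to\OOO_{C'\cap\Gamma}\to0$, whence
\[
\p(C'\cup\Gamma)=\p(C')+\p(\Gamma)+\#(C'\cap\Gamma)-1\ge\p(C'),
\]
since $\p(\Gamma)\ge0$ (an irreducible reduced curve has $h^0(\OOO_\Gamma)=1$) and $\#(C'\cap\Gamma)\ge1$. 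Iterating from $\Delta$ up to $C$ yields $\p(\Delta)\le\p(C)$.

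The key step is $\p(C)\le\p(\Delta')$. As both $S$ and $S'$ have rational singularities, choosing a resolution $h\colon\tilde S\to S$ shows $R^1\alpha_*\OOO_S=0$: rationality of $S$ gives $R^jh_*\OOO_{\tilde S}=0$ for $j>0$ and $h_*\OOO_{\tilde S}=\OOO_S$, so the Leray spectral sequence for $\alpha\circ h$ degenerates to $R^i\alpha_*\OOO_S\simeq R^i(\alpha h)_*\OOO_{\tilde S}$, which vanishes for $i>0$ by rationality of $S'$. Applying $R\alpha_*$ to $0\to\III_C\to\OOO_S\to\OOO_C\to0$ and using $R^1\alpha_*\OOO_S=0$ together with $R^2\alpha_*=0$ (the fibres of $\alpha$ have dimension $\le1$) gives $R^1\alpha_*\OOO_C=0$, hence $\chi(\OOO_C)=\chi(\alpha_*\OOO_C)$ by Leray. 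Finally, the pullback-of-functions map $\OOO_{\Delta'}\to\alpha_*\OOO_C$ is injective (it is an isomorphism over the dense open locus where $\alpha$ is an isomorphism, and $\OOO_{\Delta'}$ has no sections supported on points since $\Delta'$ is reduced), with cokernel supported on the finite set of points where $\alpha$ contracts a curve meeting $\Delta'$, hence of finite length $\ell\ge0$. Therefore $\chi(\OOO_C)=\chi(\OOO_{\Delta'})+\ell\ge\chi(\OOO_{\Delta'})$, i.e. $\p(C)\le\p(\Delta')$, and combining the two inequalities gives $\p(\Delta)\le\p(\Delta')$.

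I expect the main obstacle to be this last step: verifying $R^1\alpha_*\OOO_C=0$ and that $\alpha_*\OOO_C$ differs from $\OOO_{\Delta'}$ only by a finite-length skyscraper. This is precisely where one must exploit that the fibres of $\alpha$ are connected, arithmetic-genus-zero trees of rational curves arising from the rational singularities, and not arbitrary one-dimensional fibres; the combinatorial Step~A and the connectivity of $C$ are routine by comparison.
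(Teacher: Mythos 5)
The paper itself gives no proof of Lemma~\xref{Lemma-3}; it is quoted from \cite{Iskovskikh-1996-conic-re}, so there is nothing internal to compare your argument against. Your proof is correct and complete. The two halves both work: the sandwich $\Delta\subseteq C:=\alpha^{-1}(\Delta')_{\red}$ with the Mayer--Vietoris induction gives $\p(\Delta)\le\p(C)$ (note only that the third term in your genus formula should be the \emph{length} of the scheme-theoretic intersection $C'\cap\Gamma$ rather than the number of its points, but since length $\ge\#\ge 1$ the inequality is unaffected), and the vanishing $R^1\alpha_*\OOO_S=0$, obtained by comparing resolutions of $S$ and $S'$ through the Leray spectral sequence, correctly yields $R^1\alpha_*\OOO_C=0$ and hence $\chi(\OOO_C)=\chi(\alpha_*\OOO_C)=\chi(\OOO_{\Delta'})+\ell$ with $\ell\ge 0$. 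This is where the rational-singularity hypothesis genuinely enters, and your argument isolates it cleanly; the inequality can indeed fail if $\alpha$ contracts a curve of positive arithmetic genus. The classical route (the one in Iskovskikh's paper and the one a reader of this survey would likely reconstruct) instead factors $\alpha$ through resolutions into point blowups and tracks $\p$ under a single blowup by cases according to whether the exceptional curve lies in $\Delta$; your cohomological argument avoids that case analysis and handles the singular bases directly, at the price of assuming implicitly (as the lemma itself does) that $\Delta'=\alpha(\Delta)$ is still a curve, i.e.\ that $\Delta$ is not contracted to a point --- which is the only situation arising in the applications.
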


\begin{lemma}[{\cite[Lemma~1]{Iskovskikh-1991-conic-re}}]
\label{Lemma:nK+Delta:invariant}
Let $\pi: X\to S$ and $\pi': X'\to S'$ 
be fiberwise birational equivalent standard conic bundles over smooth rational surfaces
and let $\Delta\subset S$ and $\Delta'\subset S'$ be 
the corresponding discriminant curves. Then we have
\begin{eqnarray}
\label{eqnarray:mK+D}\qquad
\dim |mK_S+n\Delta| &=& \dim |mK_{S'}+n\Delta'|\quad 
\forall m\ge n>0,
\\
\label{eqnarray:D}
\p(\Delta)&=&\p(\Delta'),
\\\label{eqnarray:lemma-J-2-conic-bundles}
\bb_3(X)&=&\bb_3(X').
\end{eqnarray}
Moreover, $\Delta$ is smooth if and only if so $\Delta'$ is.
\end{lemma}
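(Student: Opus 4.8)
The plan is to reduce to a single elementary transformation and then compute. First note that \eqref{eqnarray:lemma-J-2-conic-bundles} is a formal consequence of \eqref{eqnarray:D}: substituting $\bb_1(S)=0$, $\bb_2(S)=\uprho(S)$ and $\bb_2(X)=\uprho(X)=\uprho(S)+1$ (Lemma~\ref{lemma-properties-Conic-bundles-2}) into \eqref{lemma-7-conic-bundle-3} gives $\bb_3(X)=2\p(\Delta)-2$, so once \eqref{eqnarray:D} is known, \eqref{eqnarray:lemma-J-2-conic-bundles} follows at once. Thus it remains to treat \eqref{eqnarray:mK+D}, \eqref{eqnarray:D} and the smoothness claim, all of which depend only on the pair $(S,\Delta)$. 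Since $S$ and $S'$ are smooth rational surfaces, the birational map $\alpha\colon S\dashrightarrow S'$ factors into blow-ups of points and their inverses; by Proposition~\ref{proposition-Sarkisov-elementary-transformations} each point blow-up lifts to an elementary transformation of the conic bundle and each blow-down to its inverse, and the terminal member of the resulting chain is a standard conic bundle over $S'$ whose discriminant equals $\Delta'$ by Lemma~\ref{lemma-bir-disc}. As elementary transformations are invertible, it therefore suffices to show that each of the three remaining assertions is preserved by one elementary transformation $\psi\colon X_1\dashrightarrow X$ lying over the blow-up $\alpha\colon S_1\to S$ of a single point $s$, with exceptional curve $E$ and $\tilde\Delta$ the strict transform of $\Delta$.

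The crux is to identify $\Delta_1\subset S_1$ in the three cases of Proposition~\ref{proposition-Sarkisov-elementary-transformations}. Away from $E$ the bundles $\pi_1$ and $\pi$ agree, so $\Delta_1=\tilde\Delta+\epsilon E$ with $\epsilon\in\{0,1\}$, the value of $\epsilon$ being decided by whether the generic fibre of $\pi_1$ over $E$ is smooth. A local computation with the normal forms of~\ref{equation-cb} (equivalently, a comparison of the canonical bundle formulas \eqref{equation-canonical-bundle-formula} for $\pi$ and $\pi_1$ using $K_{S_1}=\alpha^*K_S+E$) should show: if $s\notin\Delta$ then $\Delta_1=\tilde\Delta=\alpha^*\Delta$; if $s\in\Delta\setminus\Sing(\Delta)$ then the flop leaves the generic fibre over $E$ smooth, so $\epsilon=0$ and $\Delta_1=\tilde\Delta\sim\alpha^*\Delta-E$; and if $s\in\Sing(\Delta)$ then the fibres over $E$ degenerate, so $\epsilon=1$ and $\Delta_1=\tilde\Delta+E\sim\alpha^*\Delta-E$. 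Pinning down $\epsilon$ in the last two cases — that is, understanding precisely when the flop absorbs $E$ into the discriminant — is the main obstacle; everything after it is bookkeeping.

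Granting these formulas the three assertions follow. In all cases $mK_{S_1}+n\Delta_1\equiv\alpha^*(mK_S+n\Delta)+c\,E$ with $c=m$ when $s\notin\Delta$ and $c=m-n$ otherwise; the hypothesis $m\ge n>0$ is exactly what forces $c\ge 0$, and since $\alpha_*\OOO_{S_1}(cE)=\OOO_S$ for $c\ge 0$ the projection formula gives $H^0(S_1,\alpha^*(mK_S+n\Delta)+cE)=H^0(S,mK_S+n\Delta)$, which is \eqref{eqnarray:mK+D}. For \eqref{eqnarray:D} I would use adjunction $2\p(C)-2=C\cdot(C+K)$ on a smooth surface: since $E\cdot\alpha^*D=0$ for every divisor $D$ and $\Delta_1$ is either $\alpha^*\Delta$ or $\alpha^*\Delta-E$, one computes $\Delta_1\cdot(\Delta_1+K_{S_1})=\Delta\cdot(\Delta+K_S)$ in every case, whence $\p(\Delta_1)=\p(\Delta)$. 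Finally, for smoothness: if $s\notin\Delta$ or $s\in\Delta\setminus\Sing(\Delta)$ then $\Delta_1=\tilde\Delta\simeq\Delta$, so the two curves are smooth simultaneously; while if $s\in\Sing(\Delta)$ then $\Delta$ already carries a node at $s$ and $\Delta_1=\tilde\Delta+E$ has nodes along $\tilde\Delta\cap E$, so both are singular. In each case the smoothness status of the discriminant is unchanged, and since the whole equivalence is generated by such transformations the last assertion follows.
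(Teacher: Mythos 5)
Your proof follows essentially the same route as the paper's: reduce to a single elementary transformation, use $\Delta_1=\alpha^*\Delta$ (resp.\ $\alpha^*\Delta-E$) according as $s\notin\Delta$ or $s\in\Delta$, combine this with $K_{S_1}=\alpha^*K_S+E$ to get \eqref{eqnarray:mK+D}, and deduce $\bb_3$ from $\p(\Delta)$ via \eqref{lemma-7-conic-bundle-3}; the only cosmetic difference is that for \eqref{eqnarray:D} the paper cites Lemma~\ref{Lemma-3} where you compute directly with adjunction, which is equally valid and arguably cleaner. The one step you flag as uncertain (pinning down $\epsilon$) is asserted without proof in the paper as well, and in the case $s\in\Delta\setminus\Sing(\Delta)$ it is forced by the parity condition \eqref{equation-condition-S-star-star}, since $E\subset\Delta_1$ would give $E\cdot(\Delta_1-E)=E\cdot\tilde\Delta=1$, which is odd.
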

Note that \eqref{eqnarray:mK+D} and \eqref{eqnarray:D} fail in the category $\QQ$-conic bundles,
see Example~\xref{example-simplest-link}.
\begin{proof}
First we note that elementary birational transformations as in Proposition~\xref
{proposition-Sarkisov-elementary-transformations} do not change $\p(\Delta)$ as well as 
$|mK_S+n\Delta|$.
Indeed, let $\alpha: S'\to S$ be the blowing up of $s\in S$ and let $\pi': X'\to S'$ be 
a standard conic bundle 
obtained from $\pi$ by one of the elementary transformations~\xref{elementary-transformations-1}-
\xref{elementary-transformations-3}. Then 
\begin{equation*}
\Delta'=
\begin{cases}
\alpha^*\Delta &\text{if $s\in S\setminus\Delta$},
\\
\alpha^*\Delta -E &\text{if $s\in\Delta$},
\end{cases}
\end{equation*}
where $E=\alpha^{-1}(s)$ is an exceptional divisor. Since $K_{S'}=\alpha^*K_S+E$, we have
\begin{equation*}
mK_{S'}+n\Delta'=\alpha^*(mK_X+n\Delta)+
\begin{cases}
mE &\text{if $s\in S\setminus\Delta$},
\\
(m-n)E &\text{if $s\in\Delta$.}
\end{cases}
\end{equation*}
This proves \eqref {eqnarray:mK+D} in the case where $X/S \dashrightarrow X'/S'$
is an elementary transformation. \eqref {eqnarray:D} in this case follows from Lemma~\xref{Lemma-3}.

In general case, applying transformations as in 
Proposition~\xref{proposition-Sarkisov-elementary-transformations} we may replace $\pi$ and $\pi'$ with
fiberwise birational conic bundles so that $S=S'$.
In this case $\Delta=\Delta'$ because the discriminant curve depends only on 
the generic fiber (see Lemma~\xref{lemma-bir-disc}). The last equality follows from \eqref{lemma-7-conic-bundle-3}.
\end{proof}

\begin{proposition}[{\cite[Remark 7]{Iskovskikh-1987}}, {\cite[Lemma 4]{Iskovskikh-1991-conic-re}}]
\label{proposition-contraction-(-1)-curve}
Let $\pi : X\to S$ be a standard conic bundle
with discriminant curve $\Delta$. Let $E\subset S$ be a $(-1)$-curve on $S$ such that
one of the following three conditions is satisfied:
\begin{enumerate}
\item \label{cases-cb-1}
$E\cdot\Delta=0$, $E\not\subset\Delta$;
\item \label{cases-cb-2}
$E\cdot\Delta=1$, $E\not\subset\Delta$;
\item \label{cases-cb-3}
$E\cdot\Delta=1$, $E\subset\Delta$, and $E \cdot (\Delta-E)=2$.
\end{enumerate}
Let $\alpha: S\to S'$ be the contraction of $E$, then there exists a standard
conic bundle $\pi' : X'\to S'$ with discriminant curve $\Delta'=\alpha(\Delta)$
birationally equivalent to $\pi: X\to S$.
\end{proposition}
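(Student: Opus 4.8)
The plan is to recognise this statement as the \emph{inverse} of the elementary transformation of Proposition~\xref{proposition-Sarkisov-elementary-transformations}: I claim that the three hypotheses on $E$ are precisely the numerical signatures of the discriminant curve produced by the three elementary transformations \xref{elementary-transformations-1}--\xref{elementary-transformations-3}, and that under each of them the contraction $\alpha$ lifts to a birational modification $X\dashrightarrow X'$ over $\alpha$ with $X'/S'$ standard.

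First I would match the three cases. Write $\alpha\colon S\to S'$ for the contraction of the $(-1)$-curve $E$, so that $S'$ is smooth and $\alpha$ is the blow-up of $s':=\alpha(E)$; by Corollary~\xref{normal-crossing} the point $s'$ lies off $\Delta'$, or is a smooth point of $\Delta'$, or is an ordinary node of $\Delta'$. According to the discriminant formula for elementary transformations (Lemma~\xref{Lemma:nK+Delta:invariant}), an elementary transform at $s'$ has discriminant $\alpha^{*}\Delta'$ when $s'\notin\Delta'$ and $\alpha^{*}\Delta'-E$ when $s'\in\Delta'$. If $s'\notin\Delta'$ this curve is disjoint from $E$, giving case~(i); if $s'$ is a smooth point of $\Delta'$ then $\alpha^{*}\Delta'-E$ is the strict transform, meeting $E$ transversally once and not containing $E$, which is case~(ii); and if $s'$ is a node then $\alpha^{*}\Delta'=\widetilde{\Delta'}+2E$, so the discriminant equals $\alpha^{*}\Delta'-E=\widetilde{\Delta'}+E$, whence $E\subset\Delta$, $E\cdot\Delta=E\cdot\widetilde{\Delta'}+E^{2}=2-1=1$ and $E\cdot(\Delta-E)=E\cdot\widetilde{\Delta'}=2$, which is case~(iii). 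Thus each hypothesis singles out exactly one of the three elementary transformations.

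Next I would construct $X'$ by performing that elementary transformation in reverse, equivalently by running the relative program for $X$ over $S'$. Since $\uprho(X/S')=\uprho(X/S)+\uprho(S/S')=2$, the cone $\NE(X/S')$ has two extremal rays: one is the class of a general fibre of $\pi$ (whose contraction recovers $\pi$), and the other is supported on the surface $F:=\pi^{-1}(E)=g^{-1}(s')$, where $g:=\alpha\circ\pi$. Processing the latter ray yields a Sarkisov link of type~\typem{III} with $S_{1}=S'$. In case~(i), $E\cap\Delta=\emptyset$, so $F$ is a $\PP^{1}$-bundle over $E\simeq\PP^{1}$ and the ray gives a divisorial contraction $\psi\colon X\to X'$ smoothly blowing $F$ down onto a smooth rational curve, exactly the inverse of~\xref{elementary-transformations-1}. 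In cases~(ii) and~(iii) the fibre of $\pi$ over the special point of $\Delta$ is reducible, and the modification is an (Atiyah--Kulikov) flop $X\dashrightarrow Z$ followed by a divisorial contraction $Z\to X'$ of the ruled exceptional surface, reversing \xref{elementary-transformations-2} and \xref{elementary-transformations-3} respectively. Because these are the inverses of the smooth surgeries of Proposition~\xref{proposition-Sarkisov-elementary-transformations}, the output $X'$ is again smooth, so $\pi'\colon X'\to S'$ with $\uprho(X'/S')=1$ is a standard conic bundle fiberwise birational to $\pi$ over $\alpha$.

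It remains to identify the discriminant, and here the computation is immediate: $\pi$ and $\pi'$ are fiberwise birationally equivalent $\QQ$-conic bundles over the birational morphism $\alpha$, so Corollary~\xref{cor:disc} gives $\Delta'=\alpha(\Delta)$ at once; in case~(iii) the two branches of $\Delta-E$ crossing $E$ (recorded by $E\cdot(\Delta-E)=2$, which for a $(-1)$-curve contained in $\Delta$ is equivalent to $E\cdot\Delta=1$) are glued by $\alpha$ into an ordinary node of $\Delta'$, so $\Delta'$ stays reduced with normal crossings, consistently with Corollary~\xref{normal-crossing}. The real work — and the reason only these three configurations of $E$ are admissible — lies in verifying that the relative program terminates in a \emph{smooth} total space, so that $\pi'$ is a \emph{standard} (not merely $\QQ$-) conic bundle; this amounts to checking that the divisorial contraction, preceded in cases~(ii) and~(iii) by the flop, is the exact inverse of one of the surgeries of Proposition~\xref{proposition-Sarkisov-elementary-transformations}. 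For any other intersection pattern of $E$ with $\Delta$ the analogous contraction would either leave a singular point on $X'$ or force $\alpha(\Delta)$ to be more singular than a node, so that no standard conic bundle over $S'$ could carry it; that smoothness and Beauville-compatibility check is the technical heart of the argument.
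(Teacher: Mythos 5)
Your identification of the three cases with the three elementary transformations, and your reduction of the discriminant computation to Corollary~\xref{cor:disc}, both match the paper. But there is a genuine gap in the construction of $X'$: you assert that the second extremal ray of $\NE(X/S')$, supported on $F=\pi^{-1}(E)$, can be processed directly into a type~\typem{III} link (a divisorial contraction of $F$, possibly preceded by a flop). This is false in general. In case~(i) the surface $F$ is a Hirzebruch surface $\FF_n$ with $n$ \emph{a priori} arbitrary, and the candidate curve to contract is its negative section $\Sigma$, for which the paper computes $K_X\cdot\Sigma=n-1$ and $\NNN_{\Sigma/X}\simeq\OOO(-n)\oplus\OOO(-1)$. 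For $n\ge 2$ this curve is $K_X$-\emph{positive}, so there is no Mori contraction and no flop over $S'$ at all; only when $n=0$ is $F\simeq\PP^1\times\PP^1$ contractible onto a smooth curve. The same issue occurs in case~(ii), where the minimal section $C\subset F$ has $K_X\cdot C=n-1$ and the flop-then-contract surgery you describe only exists when $n=1$. Your closing remark treats the smoothness of the output as "the technical heart", but the problem arises one step earlier: the relative program you propose does not even start.

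What the paper actually does, and what is missing from your argument, is an induction on $n$: before inverting the elementary transformation one performs a chain of $n$ (resp.\ $n-1$) auxiliary Sarkisov links of type~\typem{II} \emph{over $S$ itself} --- blow up the negative section, flop, contract the proper transform of $F$ onto a curve --- each of which replaces $F$ by a new ruled surface whose negative section has self-intersection increased by one, until one reaches $n=0$ (case~(i)) or $n=1$ (case~(ii)); only then is $-K_X$ nef over $S'$ and the type~\typem{III} contraction available. Case~(iii) also needs more than you supply: $F$ is non-normal along a curve $\Sigma$, and one must pass to the normalization $\tilde F$, use log adjunction for the lc pair $(X,F)$ together with the canonical bundle formula \eqref{equation-canonical-bundle-formula} to prove $K_X\cdot\Sigma=0$ and $\tilde F\simeq\FF_3$, which is what guarantees that $-K_X$ is nef over $S'$ and that $\Sigma$ is the unique flopping curve. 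These computations, not a smoothness check at the end, are the substance of the proof.
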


\begin{proof}
We show that $X'$ can be obtained from $X$ by means of explicit
birational transformations. 
Denote $F:=\pi^{-1} (E)$. Since $\uprho(X/S')=2$, the relative Mori cone 
$\NE(X/S')$ is generated by two extremal rays.

We need the following easy fact.

\begin{slemma}
Let $\pi: X\to S$ be a conic bundle and let $E\subset S$ be a 
complete smooth rational curve such that $F:=\pi^{-1}(E)$ 
is a smooth surface. Let $C\subset F$ be a curve such that
$\pi_C: C\to E$ is an isomorphism. Then
\begin{equation}\label{equation-deg--K}
K_X\cdot C= -2- (E)_S^2- (C)_F^2, \quad \deg \NNN_{C/X}=(E)_S^2+ (C)_F^2.
\end{equation}
\end{slemma}

{\bf Case~\xref{cases-cb-1}.} Then $F$ is a 
geometrically ruled surface over $E\simeq\PP^1$. 
Let $\Sigma\subset F$ be the exceptional section and let $n:=-(\Sigma)_F^2$. From \eqref{equation-deg--K}
one has $K_X\cdot \Sigma=n-1$ and the normal 
bundle $\NNN_{\Sigma / X}$ is isomorphic to $\OOO_{\PP^1} (-n)\oplus\OOO_{\PP^1} (-1)$.
If $n=0$ on $F$, then $F\simeq\PP^1\times\PP^1$ 
and $\Sigma\cdot F=-1$ on $X$, since $E^2=-1$ on $S$. Therefore, $F\subset X$ 
can be contracted along the pencil $| \Sigma |$ in the category of smooth varieties
and the corresponding contraction is extremal and, in particular, projective.
We obtain a standard conic bundle $\pi': X'\to S'$. If $n\ge 1$, then blowing $\Sigma$ up and 
contracting the proper transform of $F$, we obtain a standard conic bundle $\pi_1: X_1\to S$ over $S$ with 
geometric ruled surface $F_1=\pi^{\prime -1}(E)$ and the exceptional section $\Sigma_1\subset F_1$ whose 
self-intersection index is equal to $- (n-1)$. 
The birational transformation $X\dashrightarrow X_1$ is a Sarkisov link of type~\typem{II}.
After $n$ similar transformations, 
we arrive at the situation considered above. See {\cite[Lemma 4]{Iskovskikh-1991-conic-re}}
or {\cite[Lemma 5.4]{Prokhorov-2005a}} for details.

{\bf Case~\xref{cases-cb-2}.} Then $F$ is a smooth ruled 
surface with one blowup point. Let $C\subset F$ be a section with minimal 
self-intersection number $C^2=-n$ on $F$. It is clear that $n\ge 1$ and 
that $C$ intersects exactly one component of a unique degenerate fiber of the 
ruling $F\to E$. We denote this component by $\Gamma$. Again by \eqref{equation-deg--K} 
one has
\begin{equation*} 
K_X\cdot C=n-1\quad \text{ and }\quad 
\NNN_{C / X}\simeq \OOO_{\PP^1} (-n)\oplus\OOO_{\PP^1} (-1).
\end{equation*}

Suppose first, that $n=1$. Since $K_X\cdot C=0$, the divisor $-K_X$ is nef over $S'$.
Then we can make a flop $X \dashrightarrow Z$ and then the proper transform of 
$F$ becomes contractible to a curve, i.e there exists a Mori extremal contraction 
$Z \to X'$ over $S'$, and then $\pi': X'\to S'$ is a standard conic bundle.
The transformation $X \dashrightarrow X'$ is a type~\typem{III} Sarkisov link inverse to the transformation described in 
\xref{elementary-transformations-2}.

If $n\ge 2$, then we consider the blowup $p: Z\to X$ of $C$ and 
complete it to a Sarkisov link of type~\typem{II} (we use the notation of ~\xref{definition-Sarkisov-links}).
Here $\chi: Z \dashrightarrow Z_1$ is a flop in the proper 
transform of $\Gamma$ and $q: Z_1\to X_1$ is the contraction of 
the proper transform of $F$ to a curve over $S$.
The exceptional divisor $R:=p^{-1}(C)$ over $C$ is isomorphic to 
$\FF_{n-1}$ and its negative section is disjoint from the flipping curve.
Thus we obtain a standard conic bundle $\pi_1: X_1\to S$. Moreover,
the inverse image 
$F_1 =\pi_1^{-1} (E)$ is obtained by blowing up of $\FF_{n-1}$
at a point that does not lie on the negative section. 
In particular, $F_1$ has 
a section $C_1$ with $C^{2}_1=- (n -1)$. 
Therefore, after $n-1$ transformations of the type we have considered, we 
arrive at the situation $n=1$. 

{\bf Case~\xref{cases-cb-3}.}
Then
the singular locus of the surface $F$ is a curve $\Sigma\subset F$ such that
the restriction $\pi_\Sigma:\Sigma\to E$ is an isomorphism. Let $\nu: \tilde F\to F$ be the normalization
and let $\tilde\Sigma:=\nu^{-1}(\Sigma)$. From the explicit equations in~\xref{discriminant}
one can see that the pair $(X,F)$ is log canonical, the surface $F$ has generically normal crossings along $\Sigma$ and 
$\nu : \tilde F \to F$ induces a morphism $\nu_\Sigma :\tilde\Sigma \to \Sigma$ of degree 2 branched 
at two points $\pi^{-1}(\overline{\Delta\setminus E})\cap \Sigma$. 
The surface $\tilde F$
is a minimal rational ruled surface isomorphic to $\FF_e$ for some $e\ge 0$ with ruling 
$\tilde F\to \tilde E$, where 
$\tilde E$ is a curve that parametrizes the
irreducible components of the fibers of $F\to E$ and 
$\tilde E\to E$ is the double cover branched at two points $E\cap \overline{\Delta\setminus E}$.

By the projection formula we have
\begin{equation*}
F\cdot \Sigma=\pi^*E\cdot \Sigma=E \cdot \pi_*\Sigma =E^2=-1.
\end{equation*}
Therefore,
\begin{equation}
\label{eq:relation1}
(K_X+ F) \cdot \nu_* \tilde\Sigma=(K_X + F) \cdot 2\Sigma=2K_X \cdot \Sigma- 2.
\end{equation}
On the other hand, since $(X,F)$ is log canonical, by the adjunction formula \cite[3.1]{Shokurov-1992-e-ba}
\begin{equation*}
\nu^* (K_X+F)|_F=K_{\tilde F}+\tilde\Sigma.
\end{equation*}
Hence,
\begin{equation}
\label{eq:relation2}
(K_X+F)\cdot \nu_* \tilde \Sigma=
(K_{\tilde F}+\tilde\Sigma)\cdot \tilde\Sigma= 2\p (\tilde\Sigma)- 2=-2.
\end{equation}
Combining the above two relations \eqref{eq:relation1} and \eqref{eq:relation2}, we obtain 
\begin{equation}
\label{equation-K.Sigma}
K_X \cdot \Sigma= 0. 
\end{equation} 
Let $m:= (\tilde \Sigma)_{\tilde F}^2$. Let $\Lambda$ be the ruling of $\FF_e=\tilde F\to \tilde E$.
Since $\pi: X \to S$ is a standard conic bundle, we have $\nu ^* K_{X} \cdot \Lambda= -1$.
Hence $\nu ^* K_X = -\tilde \Sigma + a \Lambda$ for some $a \in \ZZ$. 
The relation \eqref{equation-K.Sigma} implies $a=m$, i.e. $\nu ^* K_X = -\tilde \Sigma + m \Lambda$.
Therefore,
\begin{equation*}
(\nu ^*K_X)^2 =( - \tilde \Sigma+m\Lambda)^2=m- 2m=-m.
\end{equation*}
On the other hand, using \eqref{equation-canonical-bundle-formula} we can compute:
\begin{multline*}
-m=(\nu ^*K_X)^2 = \pi_*\nu_*\nu ^*(K_X^2)=\pi_*( \pi^* E\cdot K_X^2)=\\
=E\cdot \pi_*(K^2_X )=E \cdot (-4K_S-\Delta)=3.
\end{multline*}
Hence $\tilde F\simeq \FF_3$
and $\tilde \Sigma$ is the negative section of $\tilde F\to \tilde E$.
Since any curve on $\tilde F$ is linearly equivalent to a convex linear combination 
of $\tilde \Sigma$ and $\Lambda$, we conclude that $-K_X$ is nef over $S'$
and $\Sigma$ is the only curve contracted by $|-K_X|$ over $S'$.
Let $X\dashrightarrow Z$ be the flop in $\Sigma$. Then $Z$ has a Mori extremal contraction 
$Z\to X'$ over $S'$ and the induced morphism $\pi' : X'\to S'$ is a standard conic bundle.
The transformation described above is a type~\typem{III} Sarkisov link inverse to the transformation described in 
\xref{elementary-transformations-3}. 

The proof of Proposition~\xref{proposition-contraction-(-1)-curve} is complete.
\end{proof}

\begin{proposition}[\cite{Iskovskikh-1991-conic-re}]
\label{proposition-transformations:2K+S=0}
Let $\pi: X\to S$ be a standard conic bundle over a rational surface $S$
and let $\Delta\subset S$ be the discriminant curve.
Assume that $|2K_S+\Delta|=\emptyset$. Then $2K_S+\Delta$ is not nef and
there exists 
a standard conic bundle $\pi^{\sharp}: X^{\sharp}\to S^{\sharp}$ with discriminant curve $\Delta^{\sharp}$ and the following 
commutative diagram
\begin{equation}
\label{diagram-transformations:2K+S=0}
\vcenter{
\xymatrix{
X\ar[d]^{\pi}\ar@{-->}[r]& X^{\sharp}\ar[d]^{\pi^{\sharp}}
\\
S\ar[r]^{\alpha}&S^{\sharp}
} }
\end{equation}
where $S^{\sharp}\simeq \FF_n$ or $\PP^2$, the morphism $\alpha$ is birational and 
$X\dashrightarrow X^{\sharp}$ is a birational map. Moreover, we have 
$\Delta^{\sharp}\cdot \Lambda^{\sharp}\le 3$ for a fiber $\Lambda^{\sharp}$ of $\FF_n$ 
\textup(resp. $\deg \Delta^{\sharp}\le 5$\textup) if $S^{\sharp}=\FF_n$ 
\textup(resp. if $S^{\sharp}=\PP^2$\textup).
In particular, the discriminant curve $\Delta$ is connected.
\end{proposition}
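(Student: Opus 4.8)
The plan is to reduce the base to a relatively minimal rational surface by contracting $(-1)$-curves compatibly with the conic bundle, the whole process being steered by the divisor $D:=2K_S+\Delta$. The key point, which I would establish first, is that a $(-1)$-curve $E$ is contractible in the sense of Proposition~\ref{proposition-contraction-(-1)-curve} \emph{exactly} when $D\cdot E<0$. Since $K_S\cdot E=-1$, we have $D\cdot E=-2+\Delta\cdot E$. If $E\not\subset\Delta$ then $\Delta\cdot E\ge 0$, and $D\cdot E<0$ forces $\Delta\cdot E\in\{0,1\}$, i.e. case \ref{cases-cb-1} or \ref{cases-cb-2}. If $E\subset\Delta$, writing $\Delta=E+(\Delta-E)$ gives $\Delta\cdot E=-1+E\cdot(\Delta-E)$, so $D\cdot E<0$ means $E\cdot(\Delta-E)\in\{0,1,2\}$; the value $0$ makes $E$ a rational connected component of $\Delta$, contradicting Corollary~\ref{discriminant-divisor-pa=0} ($\p\ge 1$), and the value $1$ violates the parity condition~\eqref{equation-condition-S-star-star} applied to $\Delta_1=E$, $\Delta_2=\Delta-E$, leaving only $E\cdot(\Delta-E)=2$, which is case~\ref{cases-cb-3}. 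Thus every $D$-negative $(-1)$-curve can be contracted by Proposition~\ref{proposition-contraction-(-1)-curve}, yielding a fiberwise birational standard conic bundle over the contracted surface whose discriminant is the image of $\Delta$.

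Next I would prove that $D$ is not nef. Assume it were. On the rational surface $S$ Riemann--Roch reads $\chi(\OOO_S(D))=1+\tfrac12 D\cdot(D-K_S)$, and since $D-K_S=K_S+\Delta=\tfrac12 D+\tfrac12\Delta$ we get $D\cdot(D-K_S)=\tfrac12 D^2+\tfrac12 D\cdot\Delta\ge 0$ by nefness of $D$ and effectivity of $\Delta$; hence $\chi\ge 1$ and $h^0(D)+h^2(D)\ge 1$. If $h^0(D)=0$, then $h^2(D)=h^0(-K_S-\Delta)>0$, so $-K_S-\Delta=:G\ge 0$ and $D=-(2G+\Delta)$ is a non-zero anti-effective divisor (non-zero because $K_S\not\equiv 0$ on a rational surface), contradicting that $D$ is nef. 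Therefore $h^0(D)>0$, contradicting $|2K_S+\Delta|=\emptyset$; so $D$ is not nef.

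With these two facts I would run the $(K_S+\tfrac12\Delta)$-minimal model program on the smooth rational surface $S$ (the pair is klt since $\Delta$ is reduced normal crossing). As long as $D$ is not nef the cone theorem supplies a $D$-negative extremal ray. A birational such ray is spanned by an irreducible curve of negative self-intersection; repeating the numerical analysis of the first paragraph (again using Corollary~\ref{discriminant-divisor-pa=0} and~\eqref{equation-condition-S-star-star} to discard curves of self-intersection $\le-2$) shows it must be a $(-1)$-curve of one of the types \ref{cases-cb-1}--\ref{cases-cb-3}, which I contract via Proposition~\ref{proposition-contraction-(-1)-curve}; this keeps $S$ smooth, lowers $\uprho(S)$, and preserves $|2K_S+\Delta|=\emptyset$ by Lemma~\ref{Lemma:nK+Delta:invariant}. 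A ray of fiber type means $S$ is already $\PP^2$ or some $\FF_n$. Since $|2K_S+\Delta|=\emptyset$ and rationality persist along the program, the non-nef conclusion above applies at every stage, so the program never terminates in the nef case; as $\uprho$ cannot decrease indefinitely it must stop at a fiber-type ray, producing $S^{\sharp}\simeq\PP^2$ or $\FF_n$, a standard conic bundle $\pi^{\sharp}:X^{\sharp}\to S^{\sharp}$ fiberwise birational to $\pi$ with $\Delta^{\sharp}=\alpha(\Delta)$, and a birational morphism $\alpha:S\to S^{\sharp}$. The terminal fiber-type ray is $D^{\sharp}$-negative: on $\PP^2$ a line $\ell$ gives $D^{\sharp}\cdot\ell=-6+\deg\Delta^{\sharp}<0$, whence $\deg\Delta^{\sharp}\le 5$, while on $\FF_n$ a fiber $\Lambda^{\sharp}$ gives $D^{\sharp}\cdot\Lambda^{\sharp}=-4+\Delta^{\sharp}\cdot\Lambda^{\sharp}<0$, whence $\Delta^{\sharp}\cdot\Lambda^{\sharp}\le 3$.

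Finally, connectedness of $\Delta$ would follow by transporting the question to $S^{\sharp}$: any two curves in $\PP^2$ meet, so $\Delta^{\sharp}$ is connected there; on $\FF_n$ every connected component has $\p\ge 1$ by Corollary~\ref{discriminant-divisor-pa=0}, and since the fibers of $\FF_n\to\PP^1$ are irreducible a short genus count shows such a component is at least a bisection, so two disjoint components would force $\Delta^{\sharp}\cdot\Lambda^{\sharp}\ge 4$, contradicting $\le 3$; hence $\Delta^{\sharp}$ is connected in either case. Because the number of connected components of the discriminant curve is a birational invariant (Theorem~\ref{theorem-Artin-Mumford-computation}), $\Delta$ is connected as well. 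I expect the genuine obstacle to be the very first paragraph: matching $D$-negativity of a $(-1)$-curve with exactly the three admissible configurations of Proposition~\ref{proposition-contraction-(-1)-curve}, where the arithmetic-genus bound and the parity condition are precisely what eliminate the two borderline intersection numbers; once this dictionary is in place, termination and the numerical read-off are routine.
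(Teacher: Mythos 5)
Your proof is correct and follows essentially the same route as the paper's: steer a sequence of contractions from Proposition~\xref{proposition-contraction-(-1)-curve} by the divisor $2K_S+\Delta$ down to a minimal rational surface, and read off the numerical bounds from the $(2K_S+\Delta)$-negative fiber-type ray at the end. The only differences are that you re-derive inline, for $m=2$ and $n=1$, the content of the paper's Lemma~\xref{lemma-rational-surface=contractions} (non-nefness via a direct Riemann--Roch computation on $2K_S+\Delta$ rather than via $|K_S+\Delta|\neq\emptyset$, plus the check that every $(2K_S+\Delta)$-negative extremal ray is $K_S$-negative), and that you are more explicit than the paper in matching $(2K_S+\Delta)$-negative $(-1)$-curves to the three cases of Proposition~\xref{proposition-contraction-(-1)-curve} by using the parity condition \eqref{equation-condition-S-star-star} and Corollary~\xref{discriminant-divisor-pa=0} to exclude the borderline intersection numbers.
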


\begin{proof}
By the Riemann-Roch theorem 
\begin{equation*}
\hh^0(S, K_S+\Delta)+ \hh^0(S, -\Delta)\ge \p(\Delta)\ge 1. 
\end{equation*}
Therefore, $|K_S+\Delta|\neq \emptyset$. 
Apply Lemma~\xref{lemma-rational-surface=contractions} below with $m=2$.
Then $2K_S+\Delta$ is not nef.
If $C$ is a $(-1)$-curve on $S$ with $(2K_S+\Delta)\cdot C<0$,
then $\Delta\cdot C<2$. Then we can apply Proposition~\xref{proposition-contraction-(-1)-curve} and obtain a new conic bundle
$\pi': X'\to S'$ with $|2K_{S'}+\Delta'|=\emptyset$ (see \eqref{eqnarray:mK+D}). 
We can continue the process
until we get a model with $S^{\sharp}\simeq \FF_n$ or $\PP^2$.

The connectedness of $\Delta^\sharp$ is easy to check on $S^{\sharp}=\FF_n$ 
and $\PP^2$. On the other hand, the number of 
connected components of the discriminant curve is preserved under our birational transformations. 
\end{proof}

\begin{slemma}[\cite{Iskovskikh-1991-conic-re}]
\label{lemma-rational-surface=contractions}
Let $S$ be a smooth rational projective surface and $\Delta$ a reduced effective 
divisor on $S$ such that for any smooth rational curve $C\subset\Delta$ 
one has $C \cdot (\Delta-C)\ge 2$. Let $m$ and $n$ be positive integers such that
$m>n$,
\begin{equation*}
|mK_S+n\Delta|=\emptyset,\qquad |(m-1)K_S+n\Delta|\neq \emptyset.
\end{equation*}
Then the divisor $mK_S+n\Delta$ is not nef and 
there exists an extremal ray $R\subset \NE (S)$ such that $K_S\cdot R<0$ and $(mK_S+n\Delta)\cdot R<0$. Moreover,
one of the following holds: 
\begin{enumerate}
\item \label{lemma-rational-surface=contractions-1}
there exists a base point free pencil of rational curves $\LLL$ on $S$ such that 
\begin{equation*}
2m- 2 \le n\Delta\cdot \LLL <2m;
\end{equation*}
\item \label{lemma-rational-surface=contractions-2}
there exists a birational morphism $\varphi : S \to \PP^2$ such that 
\begin{equation*}
3m-3\le n\deg \varphi (\Delta) <3m.
\end{equation*}
\end{enumerate}
\end{slemma}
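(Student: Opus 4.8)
The plan is to combine Riemann--Roch with the cone theorem and run a descending induction on $\uprho(S)$. Throughout write $D:=mK_S+n\Delta$ and fix an effective divisor $G\sim(m-1)K_S+n\Delta$, which exists since $|(m-1)K_S+n\Delta|\neq\emptyset$. First I would show $D$ is not nef. As $S$ is rational, $\chi(\OOO_S)=1$, so Riemann--Roch gives $\chi(\OOO_S(D))=1+\frac12\,D\cdot G$. If $D$ were nef then $D\cdot G\ge0$, hence $\chi(\OOO_S(D))\ge1$; but $\hh^0(\OOO_S(D))=0$ because $|D|=\emptyset$, and $\hh^2(\OOO_S(D))=\hh^0(\OOO_S(-G))=0$ as soon as $G\neq0$, forcing $\chi(\OOO_S(D))\le0$, a contradiction. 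In the degenerate case $G\sim0$ one has $D\sim K_S$, which is never nef on a rational surface since its Kodaira dimension is $-\infty$; again a contradiction.

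The heart of the matter, and the step I expect to be the main obstacle, is the estimate: \emph{for every irreducible curve $\Gamma$ with $K_S\cdot\Gamma\ge0$ one has $D\cdot\Gamma\ge0$}. If $\Gamma\not\subseteq\Delta$ this is clear, since then $\Delta\cdot\Gamma\ge0$. If $\Gamma\subseteq\Delta$, I would substitute $K_S\cdot\Gamma=2\p(\Gamma)-2-\Gamma^2$ and $\Delta\cdot\Gamma=\Gamma^2+u$ with $u:=(\Delta-\Gamma)\cdot\Gamma\ge0$, obtaining $D\cdot\Gamma=m(2\p(\Gamma)-2)+(n-m)\Gamma^2+nu$. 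Using $m>n$ together with $\Gamma^2\le2\p(\Gamma)-2$ (which is exactly $K_S\cdot\Gamma\ge0$) this collapses to $D\cdot\Gamma\ge n\bigl(2\p(\Gamma)-2+u\bigr)$, and the right-hand side is $\ge0$ in both relevant cases: when $\p(\Gamma)\ge1$ because $2\p(\Gamma)-2\ge0$, and when $\Gamma$ is a smooth rational curve because then the standing hypothesis $u=\Gamma\cdot(\Delta-\Gamma)\ge2$ takes over. This is precisely where both $m>n$ and the condition on the rational components of $\Delta$ are indispensable.

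Granting the estimate, the extremal ray is produced at once. Every extremal ray $R$ of $\NE(S)$ has $R^2\le0$ (a class of positive self-intersection is big, hence interior to $\NE(S)$); and if $K_S\cdot R\ge0$ then $D\cdot R\ge0$, by the estimate when $R^2<0$ (so $R$ is an irreducible curve) and because $D=K_S+G$ with $G$ effective when $R^2=0$ (so $R$ is nef). Since $D$ is not nef, $D\cdot z<0$ for some $z\in\NE(S)$, hence $D\cdot R<0$ for some extremal ray $R$; by the above such $R$ must satisfy $K_S\cdot R<0$. This is the asserted $K_S$-negative, $D$-negative extremal ray.

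Finally I would contract $R$ and iterate. If the contraction is of fibre type over a curve, its fibres form a base point free pencil of rational curves of class $\LLL'$ on the current surface $S'$; pulling $\LLL'$ back along the birational morphism $S\to S'$ built so far gives a base point free pencil $\LLL$ on $S$ with $n\Delta\cdot\LLL=n\Delta'\cdot\LLL'$ by the projection formula. Writing $D':=mK_{S'}+n\Delta'$, the bound $D'\cdot\LLL'\ge-2$ follows from $\LLL'$ being nef and $(m-1)K_{S'}+n\Delta'$ effective, while $D'\cdot\LLL'<0$ holds because $R$ is $D$-negative; since $D'\cdot\LLL'=-2m+n\Delta'\cdot\LLL'$ this reads $2m-2\le n\Delta\cdot\LLL<2m$, which is case~(i). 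A contraction to a point forces the current surface to be $\PP^2$ and yields case~(ii) by the identical computation with $-K_{\PP^2}=3H$. If instead $R$ is the ray of a $(-1)$-curve $E$, I would contract it by $\sigma:S\to S_1$ and check that $\Delta_1:=\sigma_*\Delta$, with the same $m,n$, again satisfies all hypotheses: $|mK_{S_1}+n\Delta_1|=\emptyset$ follows from $D\cdot E<0$, i.e. $n(\Delta\cdot E)<m$; $|(m-1)K_{S_1}+n\Delta_1|\neq\emptyset$ follows by pushing forward the effective $G$; and $C_1\cdot(\Delta_1-C_1)\ge2$ for smooth rational $C_1\subseteq\Delta_1$ is preserved by the blow-up identity $C_1\cdot(\Delta_1-C_1)=C\cdot(\Delta-C)+e(a-e)$ with $a=\Delta\cdot E$ and $e=\mult_{\sigma(E)}C_1\in\{0,1\}$, where the hypothesis applied to $E$ itself (giving $\Delta\cdot E\ge1$ when $E\subseteq\Delta$) prevents any drop. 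As $\uprho(S)$ strictly decreases the process terminates at a fibre type contraction, producing case~(i) or~(ii).
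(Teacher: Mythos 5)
Your proof is correct and follows essentially the same route as the paper's: Riemann--Roch to show $mK_S+n\Delta$ is not nef, the genus formula together with the hypothesis $C\cdot(\Delta-C)\ge 2$ and $m>n$ to show it is non-negative on the $K_S$-non-negative part of $\NE(S)$, the cone theorem and Mori's classification of surface extremal rays to produce the ray, and descending induction on $\uprho(S)$ by contracting $(-1)$-curves until one reaches $\FF_e$ or $\PP^2$. Your write-up is in places slightly more careful than the paper's (the nefness of extremal rays with $R^2=0$, and the explicit blow-down formula verifying that $C\cdot(\Delta-C)\ge 2$ is preserved), but the strategy is identical.
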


\begin{proof}
Put $\Delta_{k,l}:=kK_S+l\Delta$. We claim that the divisor $\Delta_{m,n}$ is not nef.
Indeed, otherwise by the Riemann-Roch formula
\begin{equation*}
\hh^0(S, \OOO_{S}(\Delta_{m,n})) +
\hh^0(S, \OOO_{S}(-\Delta_{m-1,n}))\ge \frac 12 \Delta_{m,n}\cdot\Delta_{m-1,n}+1>0.
\end{equation*}
By our assumption, 
\begin{equation*}
H^0(S, \OOO_{S}(\Delta_{m,n}))=0. 
\end{equation*}
Hence,
$H^0(S, \OOO_{S}(-\Delta_{m-1,n}))\neq 0$ and by our assumption $\Delta_{m-1,n}=0$. In this case, 
$\Delta_{m,n}=K_S$ is not nef because $S$ is a rational surface, a contradiction.

Thus, the divisor $\Delta_{m,n}$ is not nef. We regard $\Delta_{m,n}$ 
as a linear function on the Mori cone $\NE (S)$. 
Put 
\begin{equation*}
\NE_+(S)=\{z \in \NE (S) \mid z\cdot K_S \ge 0\}.
\end{equation*}
We claim that $\Delta_{m,n}$ is non-negative on 
$\NE_+(S)$. Indeed, assume that $C\cdot K_S:=d> 0$ and $C\cdot \Delta_{m,n} <0$ 
for some irreducible curve $C$. Then 
\begin{equation*}
nC\cdot\Delta=C\cdot\Delta_{m,n}-mC\cdot K_S <-dm. 
\end{equation*}
Hence $C $ is a component of $\Delta$ and $C^2 \le C\cdot \Delta <-dm/n$. By the genus formula 
\begin{equation*}
2\p(C)-2=C^2+C\cdot K_S <d-d m/n<0.
\end{equation*}
Hence $\p (C)=0$. Then by our assumptions 
\begin{equation*}
-2-d=-2-C\cdot K_S=C^2 \le C\cdot \Delta-2 <-dm/n-2.
\end{equation*}
The contradiction shows that $\Delta_{m,n}$ is non-negative on $\NE_+(S)$. 

Thus, $\Delta_{m,n}$ is negative on some 
$K_S$-negative extremal ray $R \subset \NE (S)$. 
According to \cite{Mori-1982} there are only three possibilities: 

\begin{enumerate}
\renewcommand\labelenumi{\rm \arabic{enumi})}
\renewcommand\theenumi{\rm \arabic{enumi})}
\item \label{possibilities-surface-contractions-1}
$R=\RR_{\ge 0}[E]$, where $E$ is the $(-1)$ -curve, 
\item \label{possibilities-surface-contractions-2}
$S\simeq \FF_e$ and $R=\RR_{\ge 0}[L]$, where $L$ is the ruling, 
\item \label{possibilities-surface-contractions-3}
$S\simeq \PP^2$ and $R=\RR_{\ge 0}[L]$, where $L\subset \PP^2$ is a line. 
\end{enumerate}
In case~\xref{possibilities-surface-contractions-1}, let $\sigma: S \to S'$ be the contraction of $E$.
Put $\Delta':=\sigma_* \Delta$ and 
$\Delta_{k,l}':=kK_{S'}+l\Delta'=\sigma_* \Delta_{k,l}$.
Clearly, the divisor $\Delta'$ is reduced,
$\Delta'\ge 0$, and $|\Delta_{m-1,n}'|\neq \emptyset$.
Moreover, $C'\cdot (\Delta' -C') \ge 2$ for any irreducible rational component 
$C'\subset \Delta'$. Further,
\begin{equation*}
\Delta_{m,n}=\sigma^*\Delta'_{m,n}-(\Delta_{m,n}\cdot E) E.
\end{equation*}
Since $\Delta_{m,n}\cdot E<0$, one can see that
the divisor $\Delta'_{m,n}$ is not effective, as does the divisor 
$\Delta_{m,n}$. Therefore, all the conditions of our lemma are satisfied for $S'$ and 
$\Delta'$. 
Thus it is sufficient to show that either~\xref{lemma-rational-surface=contractions-1}
or~\xref{lemma-rational-surface=contractions-2} holds on $S'$.
Continuing the process of contraction of extremal 
$(-1)$-curves, we arrive at the situation of~\xref{possibilities-surface-contractions-2} or
\xref{possibilities-surface-contractions-3}. 

In case~\xref{possibilities-surface-contractions-2}
we have 
\begin{equation*}
-2\le \Delta_{m-1,n}\cdot L+K_S\cdot L=\Delta_{m,n}\cdot L=n\Delta \cdot L-2m <0, 
\end{equation*}
Put $\LLL:=|L|$. Then $2m-2\le n\Delta\cdot \LLL <2m$. 

In case~\xref{possibilities-surface-contractions-3}
we have 
\begin{equation*}
-3\le \Delta_{m-1,n} \cdot L+K_S\cdot L=n\Delta\cdot L-3m <0. 
\end{equation*}
Hence, $3n-3 \le n\deg \varphi (\Delta) <3m$. The lemma is 
proved.
\end{proof}

Using similar arguments one can prove the following.
\begin{slemma}
\label{lemma-rational-surface=D1}
Let $S$ be a smooth rational projective surface and $\Delta$ a reduced effective 
divisor on $S$ such that 
for any smooth rational curve $C\subset\Delta$ 
one has $C \cdot (\Delta-C)\ge 2$.
Let $\Delta_1$ be a connected component of $\Delta$.
Then 
\begin{equation}
\label{equation-rational-surface=D1-1}
\dim |K_S+\Delta|\ge \p(\Delta_1)-1\ge 0.
\end{equation}
Furthermore, if $(K_S+\Delta)\cdot E<0$ for some irreducible curve $E$, then 
$E$ is a $(-1)$-curve disjoint from $\Delta$.
\end{slemma}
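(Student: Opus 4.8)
The plan is to prove the two assertions in turn, obtaining the dimension estimate \eqref{equation-rational-surface=D1-1} first by Riemann--Roch (exactly as in Proposition~\xref{proposition-transformations:2K+S=0}), and then deducing the statement about negative curves as a formal consequence of it. Since $\Delta$ is effective and nonzero, Serre duality gives $\hh^2(S,\OOO_S(K_S+\Delta))=\hh^0(S,\OOO_S(-\Delta))=0$, so Riemann--Roch combined with the adjunction formula $(K_S+\Delta)\cdot\Delta=2\p(\Delta)-2$ yields
\begin{equation*}
\hh^0(S,\OOO_S(K_S+\Delta))\ge\chi(\OOO_S(K_S+\Delta))=\chi(\OOO_S)+\tfrac12(K_S+\Delta)\cdot\Delta=\p(\Delta).
\end{equation*}
It therefore suffices to show $\p(\Delta)\ge\p(\Delta_1)\ge1$. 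Writing $\Delta=\Delta_1+\dots+\Delta_c$ for the decomposition into connected components, one has $\p(\Delta)=\sum_i\p(\Delta_i)-(c-1)$, so both inequalities follow once I verify that \emph{every} connected component has $\p(\Delta_i)\ge1$.

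This positivity of the arithmetic genus of each component is the one genuinely geometric point, and it is essentially Corollary~\xref{discriminant-divisor-pa=0} in the present combinatorial setting. If some $\p(\Delta_i)=0$, then $\Delta_i$ is a connected reduced curve of arithmetic genus zero, hence a tree of smooth rational curves meeting transversally. Such a tree always contains a component $C$ meeting the remaining components in at most one point (a leaf of the tree, or $C=\Delta_i$ itself when there is a single vertex). Since $\Delta_i$ is a whole connected component it is disjoint from $\Delta-\Delta_i$, so $C\cdot(\Delta-C)=C\cdot(\Delta_i-C)\le1<2$, contradicting the hypothesis. Hence $\p(\Delta_i)\ge1$ for all $i$, which gives $\p(\Delta)\ge\p(\Delta_1)$ and completes \eqref{equation-rational-surface=D1-1}.

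For the second assertion the crucial point is that the first part already produces an \emph{effective} divisor. Since $\dim|K_S+\Delta|\ge0$, I may fix $G\in|K_S+\Delta|$ with $G\ge0$. Let $E$ be an irreducible curve with $(K_S+\Delta)\cdot E<0$, so that $G\cdot E<0$. Because $G$ is effective, $E$ cannot be nef, and for an irreducible curve this means $E^2<0$. I now distinguish whether $E\subset\Delta$. If it is, write $\Delta=E+\Delta''$ with $\Delta''\ge0$; then $(K_S+\Delta)\cdot E=(2\p(E)-2)+\Delta''\cdot E$, and since $\Delta''\cdot E\ge0$ the inequality forces $\p(E)=0$, i.e. $E$ is a smooth rational curve, whereupon the hypothesis gives $\Delta''\cdot E=E\cdot(\Delta-E)\ge2$ and hence $(K_S+\Delta)\cdot E\ge0$, a contradiction. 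Thus $E\not\subset\Delta$, so $\Delta\cdot E\ge0$ and $K_S\cdot E\le(K_S+\Delta)\cdot E<0$. Adjunction then gives $2\p(E)-2=E^2+K_S\cdot E\le-1-1=-2$, whence $\p(E)=0$ and $E^2+K_S\cdot E=-2$; together with $E^2\le-1$ and $K_S\cdot E\le-1$ this forces $E^2=K_S\cdot E=-1$, so $E$ is a $(-1)$-curve. Finally $-1+\Delta\cdot E=(K_S+\Delta)\cdot E<0$ with $\Delta\cdot E\ge0$ gives $\Delta\cdot E=0$, so $E$ is disjoint from $\Delta$, as required.

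I expect the main obstacle to be the verification that each connected component of $\Delta$ has positive arithmetic genus; once that is in hand, everything else is a formal consequence of Riemann--Roch, adjunction, and the elementary fact that an irreducible curve of non-negative self-intersection is nef. I note that the argument in the second part is clean precisely because the effective representative $G$ of $K_S+\Delta$ supplied by the first part converts the question ``is $E$ nef?'' into the sign of $E^2$, after which the genus formula pins down $E$ completely.
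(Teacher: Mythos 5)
Your proof is correct, and it follows essentially the route the paper intends: the lemma is stated with only the remark that it follows ``using similar arguments'' to Lemma~\xref{lemma-rational-surface=contractions}, namely Riemann--Roch for the dimension bound and adjunction plus the hypothesis $C\cdot(\Delta-C)\ge 2$ for the negativity statement, and your argument supplies exactly these ingredients (including the needed observation, parallel to Corollary~\xref{discriminant-divisor-pa=0}, that every connected component of $\Delta$ has arithmetic genus at least $1$). The only small variation is that you deduce $E^2<0$ from an explicit effective member of $|K_S+\Delta|$ rather than running the cone argument of the earlier lemma; both work equally well here.
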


\begin{scorollary}
Let $S$ be a smooth rational projective surface and let $\Delta$ be a reduced effective 
divisor on $S$ such that 
for any smooth rational curve $C\subset\Delta$ 
one has $C \cdot (\Delta-C)\ge 2$.
Then there exists a sequence of contractions of $(-1)$-curves 
$f: S\to S'$ contained in $S\setminus \Supp(\Delta)$ such that $K_{S'}+f_*\Delta$
is nef.
\end{scorollary}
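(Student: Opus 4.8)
The plan is to iterate Lemma~\xref{lemma-rational-surface=D1}, running an induction on the Picard number $\uprho(S)$. First I would dispose of the trivial case: if $K_S+\Delta$ is already nef, take $f=\id$. Otherwise there is an irreducible curve $E\subset S$ with $(K_S+\Delta)\cdot E<0$, and I would invoke the second assertion of Lemma~\xref{lemma-rational-surface=D1}, which forces $E$ to be a $(-1)$-curve disjoint from $\Delta$. By Castelnuovo's criterion $E$ contracts to a smooth projective rational surface $\sigma: S\to S_1$ with $\uprho(S_1)=\uprho(S)-1$; and since $E\cap\Delta=\emptyset$, the point $\sigma(E)$ avoids $\Delta_1:=\sigma_*\Delta$, so $\sigma$ is an isomorphism over a neighbourhood of $\Delta_1$.

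Next I would verify that the pair $(S_1,\Delta_1)$ again satisfies the hypotheses, so that the induction can proceed. Because $\sigma$ is an isomorphism near $\Delta$, the divisor $\Delta_1$ is reduced and effective, each component of $\Delta$ maps isomorphically onto a component of $\Delta_1$, and all mutual intersection numbers of components are preserved: for components $D_i,D_j$ of $\Delta$ one has $\sigma^*\sigma_*D_i=D_i$ (as $D_i\cdot E=0$), whence $\sigma_*D_i\cdot\sigma_*D_j=D_i\cdot D_j$ by the projection formula. Hence $C\cdot(\Delta_1-C)\ge 2$ for every smooth rational $C\subset\Delta_1$, and the inductive hypothesis applies.

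I would then take the sequence of contractions $g: S_1\to S'$ furnished by induction — of $(-1)$-curves lying in $S_1\setminus\Supp(\Delta_1)$, with $K_{S'}+g_*\Delta_1$ nef — and set $f:=g\circ\sigma$. Since $f_*\Delta=g_*\Delta_1$, the divisor $K_{S'}+f_*\Delta$ is nef. As $\sigma$ is an isomorphism near $\Delta$ and, inductively, $g$ is an isomorphism near $\Delta_1$, the composite $f$ is an isomorphism over a neighbourhood of $\Delta$, so its entire exceptional locus lies in $S\setminus\Supp(\Delta)$. Termination is automatic because $\uprho$ drops by one at each step while remaining $\ge 1$.

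The hard part is essentially a single point: the preservation of the inequality $C\cdot(\Delta-C)\ge 2$ under blow-down. This is precisely what dictates the form of Lemma~\xref{lemma-rational-surface=D1}, which is arranged so that any curve on which $K_S+\Delta$ is negative is a $(-1)$-curve \emph{disjoint} from $\Delta$; if the contracted curve were allowed to meet $\Delta$, the image $\sigma_*\Delta$ might acquire a component $C$ with $C\cdot(\sigma_*\Delta-C)<2$ and the induction would fail. Everything else is formal bookkeeping with the Picard number and the exceptional locus.
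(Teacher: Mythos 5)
Your argument is correct and is exactly the intended one: the paper states this as an immediate corollary of Lemma~\xref{lemma-rational-surface=D1}, to be proved by iterating the second assertion of that lemma (any curve on which $K_S+\Delta$ is negative is a $(-1)$-curve disjoint from $\Delta$), and your induction on $\uprho(S)$, together with the check that the hypothesis $C\cdot(\Delta-C)\ge 2$ is preserved because the contraction is an isomorphism near $\Delta$, fills in precisely the bookkeeping the paper leaves implicit.
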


\subsection{}\label{remark-effective-threshold}
Recall that a $\RR$-divisor is said to be \textit{pseudo-effective}
if its class is a limit of classes of effective $\RR$-divisors.
Now let $S$ be a projective variety and let $\Delta$ be an $\RR$-divisor on $S$.
Define the \textit{effective threshold} (see \cite{Reid1986},
\cite{Reid-1991-sarkisov}) as follows 
\begin{equation*}
\etr(S,\Delta):=\sup \{ t\in \RR \mid tK_S+\Delta \quad \text{is pseudo-effective}\}.
\end{equation*}
Assume now that $S$ is a smooth projective rational surface and $\Delta$ is an effective 
reduced divisor on $S$ such that $K_S+\Delta$ is nef.
To compute $\lambda:=\etr(S,\Delta)$ in this situation we can run $(K+\tau \Delta)$-MMP 
for suitable sequence $\tau=\tau_i$ and reduce the 
problem to the case where $\lambda K_S+\Delta$ is nef and then $S\simeq\PP^2$ or $\FF_n$.
Then it is easy to see that $\lambda:=\etr(S,\Delta) \in \frac 12 \ZZ \cup \frac 13 \ZZ$, see \cite{Reid1986} 
for details. In particular, $\etr(S,\Delta)$ is a rational number.
This consideration shows also $\lambda K_S+\Delta$ has a Zariski decomposition 
\begin{equation*}
\lambda K_S+\Delta\equiv N+P
\end{equation*}
whose positive part $P$ is semi-ample and $P^2=0$.
The negative part $N$ is contracted by $|nP|$, $n\gg 0$.
Moreover,
if $\etr(S,\Delta)= m/n$, then $|mK_S+n\Delta|\neq \emptyset$.
Hence, the condition $|2K_S+\Delta|= \emptyset$ in Conjecture~\xref{conjecture-Shokurov}
is equivalent to $\etr(S,\Delta)<2$.

From Proposition~\xref{proposition-rationality-conic-bundles}
we obtain the following.
\begin{scorollary}\label{corollary-transformations:2K+S=0}
Let $\pi: X\to S$ be a standard conic bundle over a 
rational surface $S$ and let $\Delta\subset S$ be the discriminant 
curve. Suppose that $|2K_S+\Delta|=\emptyset$ and $X$ is not rational.
Then there exists a diagram of the form \eqref{diagram-transformations:2K+S=0}
with $S^{\sharp}\simeq \PP^2$ and $\deg \Delta^\sharp=5$.
Moreover, the corresponding double cover $\tilde \Delta^\sharp \to \Delta^\sharp$
is defined by an odd theta characteristic \textup(see 
Proposition~\xref{proposition-Panin}\textup).
\end{scorollary}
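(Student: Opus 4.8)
The plan is to push the given conic bundle to a relatively minimal model over $\PP^2$ or $\FF_n$, rule out every outcome that is incompatible with non-rationality, and only then read off the parity of the theta-characteristic. Since $|2K_S+\Delta|=\emptyset$, I would begin by invoking Proposition~\ref{proposition-transformations:2K+S=0}, which produces a commutative diagram of the shape~\eqref{diagram-transformations:2K+S=0} with a standard conic bundle $\pi^\sharp: X^\sharp\to S^\sharp$, base $S^\sharp\simeq\FF_n$ or $\PP^2$, birational morphism $\alpha$, and a birational map $X\dashrightarrow X^\sharp$. The crucial point to record at this stage is that $X^\sharp$ is birationally equivalent to $X$, so $X^\sharp$ is \emph{not} rational either; the whole argument then proceeds by deriving a contradiction whenever the discriminant is too small.

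Next I would eliminate the base $\FF_n$. In that case Proposition~\ref{proposition-transformations:2K+S=0} guarantees $\Delta^\sharp\cdot\Lambda^\sharp\le 3$ for a fiber $\Lambda^\sharp$ of the ruling, so the base point free pencil $\LLL:=|\Lambda^\sharp|$ satisfies $\LLL\cdot\Delta^\sharp\le 3$, and Proposition~\ref{proposition-rationality-conic-bundles} makes $X^\sharp$ rational --- impossible. Hence $S^\sharp\simeq\PP^2$, where the same proposition gives $\deg\Delta^\sharp\le 5$. If $\deg\Delta^\sharp\le 4$, then Corollary~\ref{corolary-rationality-conic-bundles} again forces rationality, another contradiction; so I would conclude $\deg\Delta^\sharp=5$, which is the first assertion. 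Note also that Proposition~\ref{proposition-transformations:2K+S=0} records that the discriminant is connected, so $\Delta^\sharp$ is a connected reduced plane quintic.

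It remains to fix the parity of the theta-characteristic attached to $\tilde\pi^\sharp: \tilde\Delta^\sharp\to\Delta^\sharp$. As recalled in~\ref{theta-characteristic}, this double cover of the plane quintic $\Delta^\sharp$ corresponds to a theta-characteristic, necessarily even or odd. The step I expect to carry the real weight is excluding the even case: here I would appeal to Proposition~\ref{proposition-Panin}, whose Prym-theoretic proof shows that an even theta-characteristic makes $X^\sharp$ fiberwise birational to the blow-up of $\PP^3$ along a smooth curve of degree $7$ and genus $5$ (Example~\ref{example-Panin}), and in particular rational. Since $X^\sharp$ is not rational, the theta-characteristic cannot be even, hence it is odd, completing the proof. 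The only delicate bookkeeping I anticipate is verifying that the even/odd dichotomy is meaningful for the --- possibly singular but reduced, connected, and Gorenstein --- quintic $\Delta^\sharp$, which is precisely what the generalized notion of theta-characteristic for Gorenstein curves in~\ref{theta-characteristic} is designed to handle.
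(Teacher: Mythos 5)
Your proposal is correct and follows exactly the route the paper intends: the paper states this corollary as an immediate consequence of Proposition~\ref{proposition-transformations:2K+S=0} combined with the rationality criteria (Proposition~\ref{proposition-rationality-conic-bundles} and Corollary~\ref{corolary-rationality-conic-bundles}) and with Panin's Proposition~\ref{proposition-Panin} to exclude the even theta-characteristic. Your closing remark about the Gorenstein theta-characteristic for a possibly singular quintic is a reasonable point of care, and it is handled by the generalized notion recalled in~\ref{theta-characteristic}.
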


\begin{scorollary}[\cite{Iskovskikh-1991-conic-re}]
\label{corollary-equivalence-conjectures}
Conjectures~\xref{conjecture-Shokurov} and~\xref{conjecture-Iskovskikh} are equivalent.
\end{scorollary}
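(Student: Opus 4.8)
The plan is to deduce the equivalence of the two conjectures from the fact that their hypotheses coincide: since each conjecture asserts ``$X$ is rational $\Longleftrightarrow C$'' with $C=(\star)$ (Conjecture~\ref{conjecture-Shokurov}) or $C=$\,[\,\ref{conjecture-Iskovskikh-1} or \ref{conjecture-Iskovskikh-2}\,] (Conjecture~\ref{conjecture-Iskovskikh}), it suffices to prove that these two conditions are equivalent as conditions on $(X,S,\Delta)$. Throughout I would use that $|2K_S+\Delta|$, the genus $\p(\Delta)$, and the Griffiths component $\JG(X)$ are unchanged under fiberwise birational equivalence of standard conic bundles (Lemma~\ref{Lemma:nK+Delta:invariant} together with Corollary~\ref{corollary-J}), so that each condition may be tested on any convenient model; and I would replace \ref{conjecture-Iskovskikh-1} by the equivalent form (i$'$) of the remark following Conjecture~\ref{conjecture-Iskovskikh}, i.e. the existence of a fiberwise birational model $X^\sharp\to\FF_n$ with $\Delta^\sharp\cdot\Lambda^\sharp\le 3$.

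For the implication [\,\ref{conjecture-Iskovskikh-1} or \ref{conjecture-Iskovskikh-2}\,]$\Rightarrow(\star)$, observe first that in either case $X$ is rational, by Proposition~\ref{proposition-rationality-conic-bundles} (for \ref{conjecture-Iskovskikh-1}) and Proposition~\ref{proposition-Panin} (for \ref{conjecture-Iskovskikh-2}); hence $\JG(X)=0$ and the second clause of $(\star)$ is automatic. It then remains to check $|2K_S+\Delta|=\emptyset$ on the distinguished model: on $\FF_n$ one has $(2K_{\FF_n}+\Delta^\sharp)\cdot\Lambda^\sharp=-4+\Delta^\sharp\cdot\Lambda^\sharp\le -1<0$, and since $\Lambda^\sharp$ moves in a base point free pencil this forces $|2K_{\FF_n}+\Delta^\sharp|=\emptyset$; on $\PP^2$ with $\deg\Delta'=5$ one has $2K_{\PP^2}+\Delta'\sim -H$, again empty. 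By the invariance of $|2K_S+\Delta|$ we conclude $|2K_S+\Delta|=\emptyset$, so $(\star)$ holds.

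For the converse $(\star)\Rightarrow$[\,\ref{conjecture-Iskovskikh-1} or \ref{conjecture-Iskovskikh-2}\,] I would start from $|2K_S+\Delta|=\emptyset$ and apply Proposition~\ref{proposition-transformations:2K+S=0} to reach a fiberwise birational model $\pi^\sharp\colon X^\sharp\to S^\sharp$ with $S^\sharp\simeq\FF_n$ (and $\Delta^\sharp\cdot\Lambda^\sharp\le 3$) or $S^\sharp\simeq\PP^2$ (with $\deg\Delta^\sharp\le 5$). If $S^\sharp\simeq\FF_n$ we are in case (i$'$), hence \ref{conjecture-Iskovskikh-1}. If $S^\sharp\simeq\PP^2$ with $\deg\Delta^\sharp\le 4$, then blowing up a point of $\Delta^\sharp$ (an elementary transformation, Proposition~\ref{proposition-Sarkisov-elementary-transformations}) gives a conic bundle over $\FF_1$ whose discriminant $\alpha^*\Delta^\sharp-E$ meets the ruling in $\deg\Delta^\sharp-1\le 3$ points, again yielding \ref{conjecture-Iskovskikh-1}. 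The remaining case is $S^\sharp\simeq\PP^2$ with $\deg\Delta^\sharp=5$; here $\p(\Delta)=\p(\Delta^\sharp)=6$, so the second clause of $(\star)$ gives $\JG(X)=0$. Since $\J(X)\simeq\Prym(\tilde\Delta^\sharp,\Delta^\sharp)$ (Theorem~\ref{theorem-Prym-Jacobian}), this means the Prym variety is a product of Jacobians, and Theorem~\ref{theorem-Mumford-Beauville-Shokurov} then forces the double cover to be given by an even theta-characteristic (the hyperelliptic and (quasi-)trigonal alternatives being excluded for a smooth plane quintic). Proposition~\ref{proposition-Panin} identifies $X$ with the blow-up of $\PP^3$ of Example~\ref{example-Panin}, which is precisely \ref{conjecture-Iskovskikh-2}.

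The main obstacle is exactly this degree-$5$ case, where the equivalence ceases to be formal: one must convert the transcendental condition $\JG(X)=0$ into the geometric statement \ref{conjecture-Iskovskikh-2}. This is where the Prym-theoretic input is indispensable -- Theorem~\ref{theorem-Prym-Jacobian} to realise $\J(X)$ as a Prym, Theorem~\ref{theorem-Mumford-Beauville-Shokurov} to read off the parity of the theta-characteristic, and Proposition~\ref{proposition-Panin} to produce the explicit $\PP^3$-model -- and it is consistent with Corollary~\ref{corollary-transformations:2K+S=0}, which says that the complementary, non-rational situation is exactly the odd theta-characteristic (a cubic threefold, by Proposition~\ref{proposition-tregub} and \cite{Clemens-Griffiths}), for which both $(\star)$ and \ref{conjecture-Iskovskikh-1}--\ref{conjecture-Iskovskikh-2} fail. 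A secondary point demanding care is that the reduction of Proposition~\ref{proposition-transformations:2K+S=0} may produce a \emph{singular} quintic $\Delta^\sharp$; there one must verify that the theta-characteristic analysis and Panin's construction extend (via the Gorenstein-curve theta-characteristics of~\ref{theta-characteristic}), or else route such cases into \ref{conjecture-Iskovskikh-1} directly.
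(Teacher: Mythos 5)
Your proposal is correct and follows essentially the same route as the paper's own proof: reduce both conjectures to the equivalence of their conditions, get the forward direction from the known sufficiency (Propositions~\xref{proposition-rationality-conic-bundles} and~\xref{proposition-Panin}) plus the invariance of $|2K_S+\Delta|$ under fiberwise birational equivalence, and get the converse via Proposition~\xref{proposition-transformations:2K+S=0}, handling the residual plane quintic case through Theorems~\xref{theorem-Prym-Jacobian} and~\xref{theorem-Mumford-Beauville-Shokurov} and Panin's construction. Your closing remark about possibly singular quintics is a fair point of care that the paper's proof also passes over silently, but it does not change the argument.
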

\begin{proof}
Assume that~\xref{conjecture-Iskovskikh-1} or~\xref{conjecture-Iskovskikh-2} of Conjecture
\xref{conjecture-Iskovskikh} holds.
Then in the notation of Conjecture~\xref{conjecture-Iskovskikh}
we have $|2K_{S'}+\Delta'|=\emptyset$ on $S'$ and 
$|2K_{S}+\Delta|=\emptyset$ on $S$ by Lemma~\xref{Lemma:nK+Delta:invariant}.
On the other hand, $X$ is rational (in the case~\xref{conjecture-Iskovskikh-1}
this follows from Proposition~\xref{proposition-rationality-conic-bundles}).
Hence, $\JG(X)=0$.

Conversely, assume that $|2K_{S}+\Delta|=\emptyset$.
Apply Proposition~\xref{proposition-transformations:2K+S=0}.
If $S^{\sharp}\simeq \FF_n$, then the ruling of $\FF_n$ give us the desired 
pencil on $S^{\sharp}$ and $S$ (because $\Delta^\sharp=\alpha(\Delta)$).
Hence we have 
the case~\xref{conjecture-Iskovskikh}\xref{conjecture-Iskovskikh-1} in this situation.
Assume that $S^{\sharp}\simeq \PP^2$ and $\deg \Delta^\sharp\le 5$.
If $\deg \Delta^\sharp\le 4$, then the pencil of lines on $S^{\sharp}\simeq \PP^2$ passing through 
a smooth point $P^\sharp\in \Delta^\sharp$ again gives us 
a pencil as in the case~\xref{conjecture-Iskovskikh}\xref{conjecture-Iskovskikh-1}
(after blowing up $P^\sharp$, see Proposition~\xref{proposition-Sarkisov-elementary-transformations}).
Let $\deg \Delta^\sharp=5$. Then $\p(\Delta^\sharp)=\p(\Delta)=6$.
Since in this case $\JG(X)=0$ (see~\xref{conjecture-Shokurov}\xref{conjecture-Shokurov:2KS+D}),
by Theorems~\xref{theorem-Prym-Jacobian} and
\xref{theorem-Mumford-Beauville-Shokurov} the corresponding double cover 
$\tilde \Delta^\sharp \to \Delta^\sharp$ is given by an even theta-characteristic.
Applying Panin's construction~\xref{proposition-Panin} we obtain 
the case~\xref{conjecture-Iskovskikh}\xref{conjecture-Iskovskikh-2}.
\end{proof}

The following is a very weak version of Conjecture~\xref{conjecture-Iskovskikh}.
\begin{proposition}[\cite{Iskovskikh-1991-conic-re}]
\label{proposition-Iskovskikh-weak-result}
Let $\pi: X\to S$ be a standard conic bundle over a rational surface $S$ with discriminant curve $\Delta 
\subset S$. Assume that there exists a birational map $\Phi: X \dashrightarrow X^\sharp$
to another Mori fiber space $\pi^\sharp: X^\sharp\to S^\sharp$ such that $\Phi$ is not fiberwise.
Then one of the following conditions 
is satisfied:

\begin{enumerate}
\item \label{proposition-Iskovskikh-weak-result-1}
there is a base point free pencil of curves of genus zero $\LLL$ on $S$ such that $\Delta\cdot \LLL\le 7$;
\item \label{proposition-Iskovskikh-weak-result-2}
there is a birational contraction $\varphi:S\to\PP^2$ such that $\varphi(\Delta)=\Delta'\subset\PP^2$ is a
curve, not necessarily with normal crossings, of degree $\deg\Delta'\le 11$.
\end{enumerate}
\end{proposition}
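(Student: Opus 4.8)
The plan is to measure the failure of $\Phi$ to be fiberwise against the class $4K_S+\Delta$, and then to feed the result into the contraction Lemma~\ref{lemma-rational-surface=contractions}, whose numerical output produces exactly the constants $7$ and $11$. Assume $\Delta\neq\emptyset$ (if $\Delta=\emptyset$ any base point free pencil of rational curves on the rational surface $S$ gives condition~\ref{proposition-Iskovskikh-weak-result-1} with $\Delta\cdot\LLL=0$). Let $\HHH^{\sharp}$ be a very ample linear system on $X^{\sharp}$ and let $\HHH=\Phi^{-1}_*\HHH^{\sharp}$ be its proper transform on $X$, so that $\HHH\equiv-\mu K_X+\pi^*A$ for some $\mu>0$ and some $\QQ$-divisor $A$ on $S$ as in~\eqref{definition-mu}. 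Since $\HHH$ is movable, the intersection $\HHH^2$ of two general members is an effective $1$-cycle, hence $\pi_*\HHH^2$ is pseudo-effective on $S$. Using the projection formula together with the canonical bundle formula~\eqref{equation-canonical-bundle-formula} (and $-K_X\cdot F=2$, $\pi_*(\pi^*A)^2=0$ for a fibre $F$), I would first record the identity
\[
\pi_*\HHH^2\equiv-\mu^2(4K_S+\Delta)+4\mu A,
\]
equivalently $4K_S+\Delta\equiv\tfrac{4}{\mu}A-\tfrac{1}{\mu^2}\pi_*\HHH^2$.

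The key claim is that a non-fiberwise $\Phi$ forces $|4K_S+\Delta|=\emptyset$, equivalently $\etr(S,\Delta)<4$ in the notation of~\ref{remark-effective-threshold}. By the Noether--Fano inequality~\ref{Noether-Fano}\ref{Noether-Fano-2}, $\Phi$ can fail to be fiberwise only if the pair $(X,\tfrac1\mu\HHH)$ is not canonical or $K_X+\tfrac1\mu\HHH\equiv\tfrac1\mu\pi^*A$ is not nef; in either case $A$ is subject to a negativity constraint coming from the maximal singularity. The step I would carry out is to show that this constraint makes $A$ anti-pseudo-effective (i.e.\ $-A$ pseudo-effective). Granting this, the rearranged identity exhibits $4K_S+\Delta$ as the difference of an anti-pseudo-effective class and the pseudo-effective class $\tfrac{1}{\mu^2}\pi_*\HHH^2$, hence $4K_S+\Delta$ is itself anti-pseudo-effective; since $4K_S+\Delta\not\equiv0$ and the pseudo-effective cone of the rational surface $S$ is salient, $4K_S+\Delta$ is not pseudo-effective, and in particular $|4K_S+\Delta|=\emptyset$.

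With the key claim in hand the conclusion is purely two-dimensional. First note that $\Delta$ satisfies the hypothesis of Lemma~\ref{lemma-rational-surface=contractions}: by Corollary~\ref{discriminant-divisor-pa=0} every connected component of $\Delta$ has $\p\ge1$, while~\eqref{equation-condition-S-star-star} shows that for a smooth rational component $C$ the number $C\cdot(\Delta-C)=\#(C\cap(\Delta-C))$ is even; being nonzero it is at least $2$. Since $\p(\Delta)\ge1$ gives $|K_S+\Delta|\neq\emptyset$ by Riemann--Roch, the least integer $m_0$ with $|m_0K_S+\Delta|=\emptyset$ satisfies $2\le m_0\le4$. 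Applying Lemma~\ref{lemma-rational-surface=contractions} with $m=m_0$ and $n=1$ yields either case~\ref{lemma-rational-surface=contractions-1}, a base point free pencil of rational curves $\LLL$ with $2m_0-2\le\Delta\cdot\LLL<2m_0\le8$, whence $\Delta\cdot\LLL\le7$ and we are in~\ref{proposition-Iskovskikh-weak-result-1}; or case~\ref{lemma-rational-surface=contractions-2}, a birational morphism $\varphi:S\to\PP^2$ with $3m_0-3\le\deg\varphi(\Delta)<3m_0\le12$, whence $\deg\varphi(\Delta)\le11$ (and $\varphi(\Delta)$ need not have normal crossings), giving~\ref{proposition-Iskovskikh-weak-result-2}.

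The main obstacle is the key claim of the second paragraph, that is, the passage from ``$\Phi$ non-fiberwise'' to the negativity (anti-pseudo-effectivity) of $A$, equivalently to $\etr(S,\Delta)<4$. This is precisely the point where the Noether--Fano/maximal-singularity calculus enters: one must translate the failure of $(X,\tfrac1\mu\HHH)$ to be canonical, or of $K_X+\tfrac1\mu\HHH$ to be nef, into a quantitative bound on $A$ via the movable system $\HHH$ and the fibrewise degree $\HHH\cdot F=2\mu$. This is a weak, self-contained instance of the method later formalized in Sarkisov Theorem~\ref{Sarkisov-theorem}, and the constants $7=2\cdot4-1$ and $11=3\cdot4-1$ reflect exactly the threshold value $4$ obtained here; I would also treat the degenerate boundary case $4K_S+\Delta\equiv0$ separately, where the identity above degenerates and $A$ is controlled directly.
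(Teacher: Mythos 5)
Your second half is exactly the paper's argument: granted $|4K_S+\Delta|=\emptyset$, you pick the least $m_0\in\{2,3,4\}$ with $|m_0K_S+\Delta|=\emptyset$ (using $\p(\Delta)\ge 1$ and Riemann--Roch to get $m_0\ge 2$, and \eqref{equation-condition-S-star-star} together with Corollary~\xref{discriminant-divisor-pa=0} to verify the hypothesis $C\cdot(\Delta-C)\ge 2$ of Lemma~\xref{lemma-rational-surface=contractions}), and the lemma with $n=1$ delivers the bounds $7=2\cdot 4-1$ and $11=3\cdot 4-1$. This part is fine, and in fact slightly streamlines the paper, which routes the subcase $|2K_S+\Delta|=\emptyset$ through Proposition~\xref{proposition-transformations:2K+S=0} instead of feeding $m_0=2$ into the same lemma.

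The gap is in your ``key claim.'' The paper obtains $|4K_S+\Delta|=\emptyset$ simply as the contrapositive of Sarkisov's Theorem~\xref{Sarkisov-theorem}, and that theorem is \emph{not} a one-step consequence of the Noether--Fano inequality on the original model: its proof (via Theorem~\xref{theorem-main-construction}) first untwists all maximal singularities by links of types \typem{I} and \typem{II} --- which replaces $X/S$ by a new $\QQ$-conic bundle $\hat X/\hat S$ over a \emph{different} base --- then descends to a model $\bar X/\bar S$ with $\uprho(\bar S)\le 2$, and only there does the non-nefness of $\bar A$ force negativity of $4K_{\bar S}+\bar\Delta$ against the whole pseudo-effective cone; the conclusion is transferred back to $S$ by the invariance Lemma~\xref{Lemma:nK+Delta:invariant} and Lemma~\xref{Lemma-3}. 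Your proposed shortcut --- that Noether--Fano makes $-A$ pseudo-effective on $S$ itself --- fails on both horns of the alternative. If $(X,\tfrac1\mu\HHH)$ is not canonical, Theorem~\xref{Noether-Fano} places no constraint on $A$ whatsoever; the maximal singularity must first be untwisted, and that changes $S$. If the pair is canonical, Noether--Fano only says $\pi^*A$ is not nef, i.e.\ $A\cdot C<0$ for \emph{some} curve $C$; on a rational surface with $\uprho(S)>2$ this is entirely compatible with $A$ (and with $4K_S+\Delta$) being big and effective, so no contradiction with your identity $\pi_*\HHH^2\equiv 4\mu A-\mu^2(4K_S+\Delta)$ arises. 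Since you explicitly write ``granting this,'' the hardest step of the proposition --- essentially all of Theorem~\xref{Sarkisov-theorem} --- is assumed rather than proved, and the sketched route to it cannot be completed as stated.
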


\begin{proof}
It follows from Theorem~\xref{Sarkisov-theorem} below that $|4K_S+\Delta|=\emptyset$. 
If $|2K_S+\Delta|=\emptyset$, then the assertion follows from
Proposition~\xref{proposition-transformations:2K+S=0}. Thus we can take 
$m\in \{3,\, 4\}$ so that
$|mK_S+\Delta|=\emptyset$, and $|(m-1) K_S+\Delta|\neq \emptyset$. 
Then by Lemma~\xref{lemma-rational-surface=contractions}
we obtain~\xref{proposition-Iskovskikh-weak-result-1} or
\xref{proposition-Iskovskikh-weak-result-2}.
\end{proof}

\begin{proposition}[\cite{Tregub1990}, {\cite[Proposition 4.2 b]{Beauville2000}}]
\label{proposition-tregub}
Let $\pi: X\to \PP^2$ be a standard conic bundle with smooth discriminant curve $\Delta$ of 
degree $5$ such that the cover $\tilde \Delta\to\Delta$ is defined by an odd theta-characteristic. 
Then $\pi: X\to \PP^2$ is fiberwise birational to a standard conic bundle $\pi': X'\to \PP^2$
obtained from the projection of a smooth cubic hypersurface $Y_3\subset \PP^4$ from a line
\textup(Example~\xref{example-cubic-conic-bundle}\textup).
\end{proposition}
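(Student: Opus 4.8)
My plan is to mirror the proof of Proposition~\xref{proposition-Panin}, replacing the genus-$5$ curve $\Gamma$ used there (whose Jacobian is the relevant Prym) by a smooth cubic threefold (whose \emph{intermediate} Jacobian is the relevant Prym). First I would invoke Proposition~\xref{proposition-AM} with $S=\PP^2$: it says that $\pi\colon X\to\PP^2$ is determined, up to fiberwise birational equivalence over $\PP^2$, by the double cover $\tilde\pi\colon\tilde\Delta\to\Delta$ alone. Since a projectivity $g\in\mathrm{PGL}_3$ of $\PP^2$ lifts, via base change, to a fiberwise isomorphism of conic bundles covering $g$, it therefore suffices to produce \emph{one} smooth cubic threefold $Y_3\subset\PP^4$ and a line $l\subset Y_3$ whose conic bundle $\pi'\colon X'\to\PP^2$ from Example~\xref{example-cubic-conic-bundle} has discriminant a plane quintic carrying the same double cover as $\tilde\pi$, up to a projectivity of the base. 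Given such a pair $(Y_3,l)$, Proposition~\xref{proposition-AM} returns the desired fiberwise birational map $X\dashrightarrow X'$ over $\PP^2$.

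Next I would pin down the abelian variety that must reappear as the intermediate Jacobian of the sought cubic. As $\Delta$ is smooth the cover $\tilde\pi$ is \'etale, so the Beauville conditions~\eqref{Beauville-condition} and the stronger condition~\eqref{equation-condition-S-star} hold vacuously, and $\Pr(\tilde\Delta,\Delta)$ is a principally polarized abelian variety of dimension $\p(\Delta)-1=5$; write $\Xi$ for its polarization. By Theorem~\xref{theorem-Prym-Jacobian}, $\J(X)\cong\Pr(\tilde\Delta,\Delta)$. Exactly as in Proposition~\xref{proposition-Panin} one has $\dim\Sing(\Xi)\le1$, so $(\Pr(\tilde\Delta,\Delta),\Xi)$ is irreducible, i.e.\ not a nontrivial product of principally polarized abelian varieties. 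Because $\Delta$ is a plane quintic whose cover is defined by an \emph{odd} theta-characteristic, Theorem~\xref{theorem-Mumford-Beauville-Shokurov} forbids $\Pr(\tilde\Delta,\Delta)$ from being a Jacobian or a product of Jacobians — in the list there the plane-quintic case occurs only for the \emph{even} theta-characteristic, which is Panin's case. Thus $\Pr(\tilde\Delta,\Delta)$ is an irreducible five-dimensional principally polarized abelian variety that is not a Jacobian, which is precisely the shape of the intermediate Jacobian of a smooth cubic threefold.

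The heart of the argument is then the reconstruction of $(Y_3,l)$, which I would take from the classical theory of \cite{Clemens-Griffiths}, \cite{Tjurin1975}, \cite{Tregub1990} and \cite[Prop.~4.2]{Beauville2000}. Projecting a smooth cubic from a line produces a standard conic bundle over $\PP^2$ with smooth quintic discriminant and odd theta-characteristic cover whose Prym is the intermediate Jacobian of the cubic; the content I need is the converse. By the Torelli theorem for cubic threefolds \cite{Clemens-Griffiths}, $\Pr(\tilde\Delta,\Delta)$ is the intermediate Jacobian of a \emph{unique} smooth cubic threefold $Y_3$. What remains is to recover the line $l\subset Y_3$ and to check that the discriminant cover of the projection of $Y_3$ from $l$ is $(\Delta,\tilde\pi)$ up to projectivity; this is the explicit geometric reconstruction of \cite{Tregub1990} and \cite{Beauville2000}, which realizes $\tilde\Delta$ through the family of lines of $Y_3$ meeting $l$. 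Feeding the resulting $(Y_3,l)$ into the first step completes the proof.

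The main obstacle is precisely this last reconstruction, in two respects. First, the intermediate Jacobian recovers the cubic $Y_3$ but not the line $l$, so one genuinely needs a direct constructive passage from $(\Delta,\tilde\pi)$ to $(Y_3,l)$ rather than a purely transcendental period argument. Second, the claim is made for \emph{every} admissible cover, so one must know the assignment $(Y_3,l)\mapsto(\Delta,\tilde\pi)$ is surjective, not merely dominant. A parameter count makes this plausible — pairs $(Y_3,l)$ form a family of dimension $10+2=12$, matching the $20-8=12$ moduli of plane quintics, with the theta-characteristics forming a finite cover — but promoting this to an honest bijection valid for all covers is exactly the work of \cite{Tregub1990} and \cite{Beauville2000}.
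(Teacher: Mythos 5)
The paper itself does not prove Proposition~\xref{proposition-tregub}: it is stated with references to \cite{Tregub1990} and \cite{Beauville2000}, and the only indication of the argument is the remark immediately following it, namely that the theta divisor $\Theta$ of $\J(X)\simeq\Pr(\tilde\Delta,\Delta)$ has a \emph{unique} singular point whose tangent cone is the affine cone over the desired cubic $Y_3\subset\PP^4$. Your first two steps are sound and follow the template of the proof of Proposition~\xref{proposition-Panin}: the reduction via Proposition~\xref{proposition-AM} to exhibiting a single pair $(Y_3,l)$ realizing the given cover $\tilde\Delta\to\Delta$ is exactly right, and so is the identification of $\Pr(\tilde\Delta,\Delta)$ as an irreducible five-dimensional principally polarized Abelian variety which is not a Jacobian (Theorem~\xref{theorem-Mumford-Beauville-Shokurov}, using that the theta-characteristic is odd).

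The genuine gap is the sentence invoking the Torelli theorem. Torelli for cubic threefolds gives \emph{uniqueness} of $Y_3$ once one already knows that $\Pr(\tilde\Delta,\Delta)$ is the intermediate Jacobian of some smooth cubic; it cannot produce $Y_3$. Being an irreducible five-dimensional ppav that is not a Jacobian is nowhere near a characterization of intermediate Jacobians of cubics (these form a $10$-dimensional locus inside the $15$-dimensional $\mathcal A_5$), so existence is the entire content of the step, and your proof as written has no mechanism for it. The correct mechanism --- the one recorded in the paper's remark after the proposition --- is the analysis of $\Sing\Theta$: for a plane quintic with odd theta-characteristic one shows that $\Theta\subset\Pr(\tilde\Delta,\Delta)$ has exactly one singular point $p$, and the tangent cone to $\Theta$ at $p$ is the affine cone over a smooth cubic threefold $Y_3\subset\PP(T_p)\simeq\PP^4$; the recovery of the line $l$ and the verification that projection from $l$ returns $(\Delta,\tilde\pi)$ up to projectivity is then the explicit work of \cite{Tregub1990} and \cite{Beauville2000}. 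Your final paragraph correctly names this as the missing work and correctly flags the surjectivity issue, but the Torelli step should be replaced by (or explicitly reduced to) the tangent-cone construction; with that substitution your first step does close the argument.
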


Note that the cubic $Y_3\subset \PP^4$ can be naturally reconstructed from $\tilde \Delta\to\Delta$:
there is an isomorphism of principally polarized Abelian varieties $\Pr(\tilde \Delta,\Delta)\simeq \J(X)$ and 
the theta-divisor $\Theta$ of $\J(X)$ has a unique singular point whose tangent cone 
is isomorphic to the affine cone over $Y_3\subset \PP^4$ \cite{Tjurin1971}, \cite[Proposition~2.1.5]{Tyurin-middle-Jacobian-e}, 
\cite{Beauville1982a}.

\section{Conic bundles over minimal surfaces}
\label{section:min-surface}
In the case when the base of a conic bundle is a minimal rational surface the criterion~\xref{conjecture-Shokurov} 
was proved by Shokurov \cite{Shokurov1983}. Below we reproduce his arguments.

\begin{theorem}
\label{theorem-criterion-Shokurov:P2}
Let $\pi: X\to \PP^2$ be a standard conic bundle
and let
$\Delta\subset \PP^2$ be the discriminant curve. 
Then the following conditions are equivalent:
\begin{enumerate}
\item \label{theorem-criterion-Shokurov-rat:P2}
$X$ is rational.
\item \label{theorem-criterion-Shokurov-2KSC:P2}
Either $\deg \Delta\le 4$ or $\deg \Delta=5$ and 
the corresponding double cover $\tilde \Delta\to \Delta$ is defined by an even theta-characteristic.

\item \label{theorem-criterion-Shokurov-J:P2}
The intermediate Jacobian $\J(X)$ is isomorphic as a
principally polarized Abelian variety to a product of Jacobians of smooth curves, i.e. its
Griffiths component $\JG(X)$ is trivial.
\end{enumerate}
\end{theorem}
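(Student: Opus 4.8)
The plan is to prove the three implications $\ref{theorem-criterion-Shokurov-2KSC:P2}\Rightarrow\ref{theorem-criterion-Shokurov-rat:P2}\Rightarrow\ref{theorem-criterion-Shokurov-J:P2}\Rightarrow\ref{theorem-criterion-Shokurov-2KSC:P2}$, which together yield the equivalence. Two of the arrows are formal consequences of results already available, and essentially all the content lies in the third. Two features of the base $\PP^2$ will be used throughout. First, any two plane curves meet, so the reduced discriminant $\Delta$ is \emph{connected}; hence the Artin--Mumford obstruction is trivial ($c=1$ in Theorem~\ref{theorem-Artin-Mumford-computation}) and the entire burden falls on the intermediate Jacobian. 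Second, by adjunction $\upomega_\Delta\simeq\OOO_\Delta(\deg\Delta-3)$, so the canonical map of $\Delta$ is cut out by $|\OOO_{\PP^2}(\deg\Delta-3)|$; this is what lets us separate the plane-quintic case from the hyperelliptic and (quasi-)trigonal ones.

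For $\ref{theorem-criterion-Shokurov-2KSC:P2}\Rightarrow\ref{theorem-criterion-Shokurov-rat:P2}$ I would split the hypothesis into its two cases: if $\deg\Delta\le4$, rationality is Corollary~\ref{corolary-rationality-conic-bundles}, while if $\deg\Delta=5$ and $\tilde\Delta\to\Delta$ is given by an even theta-characteristic, rationality is Panin's Proposition~\ref{proposition-Panin}. For $\ref{theorem-criterion-Shokurov-rat:P2}\Rightarrow\ref{theorem-criterion-Shokurov-J:P2}$, Corollary~\ref{corollary-J} gives $\JG(X)=0$, and since the Griffiths component is by definition the product of those simple factors of $\J(X)$ that are not Jacobians of curves, its vanishing is exactly the assertion that $\J(X)$ is a product of Jacobians of smooth curves. (As an independent check on necessity one sees $\ref{theorem-criterion-Shokurov-rat:P2}\Rightarrow\ref{theorem-criterion-Shokurov-2KSC:P2}$ directly: for $\deg\Delta\ge6$ the variety $X$ is non-rational by Beauville \cite[Th\'eor\`eme~4.9]{Beauville1977}, and for a smooth quintic $\Delta$ with an odd theta-characteristic $X$ is birational to a smooth cubic threefold by Proposition~\ref{proposition-tregub}, hence non-rational by \cite{Clemens-Griffiths}.)

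The heart of the matter is $\ref{theorem-criterion-Shokurov-J:P2}\Rightarrow\ref{theorem-criterion-Shokurov-2KSC:P2}$. Assume $\JG(X)=0$. By Theorem~\ref{theorem-Prym-Jacobian} we have $\J(X)\simeq\Pr(\tilde\Delta,\Delta)$ as principally polarized abelian varieties, so the hypothesis says $\Pr(\tilde\Delta,\Delta)$ is a product of Jacobians of smooth curves. The cover $\tilde\pi$ satisfies the Beauville condition \eqref{Beauville-condition} by construction, and condition \eqref{equation-condition-S-star} holds as soon as $\deg\Delta\ge5$: for a reduced plane curve any splitting $\Delta=\Delta_1+\Delta_2$ meets transversally in $\deg\Delta_1\cdot\deg\Delta_2\ge\deg\Delta-1\ge4$ points. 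I would first exclude $\deg\Delta\ge6$, where $\Pr(\tilde\Delta,\Delta)$ is neither a Jacobian nor a product of Jacobians by Beauville's theorem, contradicting $\JG(X)=0$; thus $\deg\Delta\le5$, and we are in $\ref{theorem-criterion-Shokurov-2KSC:P2}$ except possibly when $\deg\Delta=5$. For $\deg\Delta=5$ the classification Theorem~\ref{theorem-Mumford-Beauville-Shokurov} applies and forces $\Delta$ to be hyperelliptic, trigonal, quasi-trigonal, or a plane quintic with even theta-characteristic. Here $\upomega_\Delta\simeq\OOO_\Delta(2)$ is decisive: it places the canonical image of $\Delta$ on the Veronese surface, which by the descriptions in Remark~\ref{remark:Delta-can} is incompatible with the hyperelliptic case (a degree-$2$ canonical map) and with the (quasi-)trigonal ones (image on a two-dimensional cone). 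Hence only the last alternative remains, so the cover is even, which is $\ref{theorem-criterion-Shokurov-2KSC:P2}$.

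I expect the main obstacle to be the handling of \emph{singular and reducible} discriminant curves in this last step. Theorem~\ref{theorem-Mumford-Beauville-Shokurov} and Beauville's bound are most transparent for smooth $\Delta$, whereas the discriminant of a standard conic bundle is only a reduced curve with nodes (Corollary~\ref{normal-crossing}) and may be reducible. The delicate points are to verify that a reduced nodal plane curve of degree $\ge6$ is not hyperelliptic, trigonal or quasi-trigonal as a (possibly singular) abstract curve, so that the non-product conclusion really applies, and that for $\deg\Delta=5$ the even/odd dichotomy is unambiguous in the nodal case; both must be settled through the adjunction identity and the canonical-map geometry of Remark~\ref{remark:Delta-can} rather than by a naive genus count, since nodes can depress the geometric genus enough to spoil crude gonality estimates. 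A fallback I would keep in reserve for $\deg\Delta\le5$ is to suppose $X$ non-rational and apply Corollary~\ref{corollary-transformations:2K+S=0} to reduce it to a standard conic bundle over $\PP^2$ with a plane quintic discriminant and odd theta-characteristic; as $\JG$ is a birational invariant and such an odd-theta model has $\JG\neq0$ by Theorem~\ref{theorem-Mumford-Beauville-Shokurov}, this contradicts $\JG(X)=0$ and again closes the case.
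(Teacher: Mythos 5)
Your proposal is correct and assembles exactly the ingredients the paper itself uses for this theorem (Beauville's Th\'eor\`eme 4.9 for $\deg\Delta\ge 6$, Corollary \ref{corolary-rationality-conic-bundles} for $\deg\Delta\le 4$, and the Tyurin--Masiewicki--Panin analysis of the quintic case via Theorems \ref{theorem-Prym-Jacobian} and \ref{theorem-Mumford-Beauville-Shokurov}), organized as the cycle of implications; the key point you flag — that $\upomega_\Delta\simeq\OOO_\Delta(\deg\Delta-3)$ places the canonical image on a Veronese-type surface and so rules out the hyperelliptic and (quasi-)trigonal alternatives even for nodal reducible $\Delta$ — is precisely how the paper handles the analogous step in the proof of Theorem \ref{theorem-criterion-Shokurov} and in Proposition \ref{proposition-Panin}. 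No gaps.
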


For the case $\deg \Delta \ge 6$ this theorem was proved by Beauville {\cite[Th\'eor\`eme 4.9]{Beauville1977}}.
In the case $\deg \Delta \le 4$ the variety $X$ is rational by Corollary~\xref{corolary-rationality-conic-bundles}.
The case $\deg \Delta =5$ follows from {\cite[\S 3]{Tyurin-5-lectures-1973}}, \cite{Tjurin1975},
\cite{Masiewicki1976}, \cite{Panin1980}.

\begin{theorem}[\cite{Shokurov1983}]
\label{theorem-criterion-Shokurov}
Let $\pi: X\to S$ be a standard conic bundle over a minimal rational ruled surface $S$, that is, 
$S\simeq \FF_n$ with $n\neq 1$,
and let
$\Delta\subset S$ be the discriminant curve. 
Denote by $\Lambda$ a fiber of the ruling $\FF_n\to \PP^1$.
Then the following conditions are equivalent:
\begin{enumerate}
\item \label{theorem-criterion-Shokurov-rat}
$X$ is rational.
\item \label{theorem-criterion-Shokurov-2KSC}
$|2K_S+\Delta|=\emptyset$.

\item \label{theorem-criterion-Shokurov-tr}
$\Delta\cdot\Lambda\le 3$ \textup(for suitable choice of the ruling $\FF_0\to \PP^1$ if $n=0$\textup).

\item \label{theorem-criterion-Shokurov-J}
$\JG(X)=0$.
\end{enumerate}
\end{theorem}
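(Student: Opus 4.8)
The plan is to establish the cycle $(iii)\Rightarrow(i)\Rightarrow(iv)$ together with the equivalence $(ii)\Leftrightarrow(iii)$, and then to close the loop with the Prym-theoretic implication $(iv)\Rightarrow(iii)$. Fix the standard basis $\Sigma,\Lambda$ of $\Pic(\FF_n)$, where $\Sigma$ is the negative section ($\Sigma^2=-n$), $\Lambda$ the fibre, and $K_S\equiv-2\Sigma-(n+2)\Lambda$. Writing $\Delta\equiv a\Sigma+b\Lambda$ we have $a=\Delta\cdot\Lambda$, and a class $\alpha\Sigma+\beta\Lambda$ is effective iff $\alpha,\beta\ge0$ (test against the nef classes $\Lambda$ and $\Sigma+n\Lambda$). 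Since $2K_S+\Delta\equiv(a-4)\Sigma+(b-2n-4)\Lambda$, condition $(iii)$, i.e. $a\le3$, makes the $\Sigma$-coefficient negative and hence $|2K_S+\Delta|=\emptyset$; this is $(iii)\Rightarrow(ii)$. For $(ii)\Rightarrow(iii)$ I would invoke Proposition~\ref{proposition-transformations:2K+S=0}, which produces a fibrewise birational standard model $\pi^{\sharp}\colon X^{\sharp}\to S^{\sharp}$ with $S^{\sharp}\simeq\FF_m$ or $\PP^2$ reached from $S$ by a birational \emph{morphism} $\alpha$. As $S=\FF_n$ with $n\ne1$ carries no $(-1)$-curve, $\alpha$ must be an isomorphism and $S^{\sharp}=\FF_n$, whence $a=\Delta\cdot\Lambda=\Delta^{\sharp}\cdot\Lambda^{\sharp}\le3$.

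The two implications involving rationality are immediate. For $(iii)\Rightarrow(i)$ I apply Proposition~\ref{proposition-rationality-conic-bundles} to the base-point-free ruling pencil $\LLL=|\Lambda|$, for which $\LLL\cdot\Delta=a\le3$ (on $\FF_0$ one takes the ruling realizing $(iii)$). And $(i)\Rightarrow(iv)$ is exactly Corollary~\ref{corollary-J}, the variety $X$ being rationally connected so that $\J(X)$ is defined.

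The substantive step is $(iv)\Rightarrow(iii)$, which I would prove through Prym theory. By Theorem~\ref{theorem-Prym-Jacobian} there is an isomorphism of principally polarized abelian varieties $\J(X)\simeq\Pr(\tilde\Delta,\Delta)$, and $\JG(X)=0$ means precisely that $\Pr(\tilde\Delta,\Delta)$ is a product of Jacobians of smooth curves. Before applying the classification I must arrange the hypotheses of Theorem~\ref{theorem-Mumford-Beauville-Shokurov}: if $\Delta$ is disconnected then $X$ is not even stably rational by Theorem~\ref{theorem-Artin-Mumford-computation}, and if $\Delta$ decomposes with only two common points the Prym splits off a product factor by Remark~\ref{remark:Delta:Prym}; these reductions let me assume $\Delta$ connected and satisfying~\eqref{equation-condition-S-star}. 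Then Theorem~\ref{theorem-Mumford-Beauville-Shokurov} forces $\Delta$ to be hyperelliptic, trigonal, quasi-trigonal, or a plane quintic.

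It remains to show that each of these types forces $\Delta\cdot\Lambda\le3$, and this curve-geometric analysis is where I expect the real difficulty. The restriction of the ruling equips $\Delta$ with a base-point-free $g^1_a$, $a=\Delta\cdot\Lambda$, while each special type carries a distinguished low pencil (a $g^1_2$, a $g^1_3$, or, for the plane quintic via Remark~\ref{remark:Delta-can} and the Veronese picture, the pencil cut by lines). Comparing these two pencils through the Castelnuovo--Severi inequality, and feeding in the genus $\p(\Delta)=1+\tfrac12\Delta\cdot(\Delta+K_S)$ computed on $\FF_n$, a pencil of degree $a\ge4$ becomes numerically incompatible with the small genus imposed by the special type, so $a\le3$ (on $\FF_0$ after passing to the appropriate ruling). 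The delicate points — ensuring the two pencils are genuinely independent so that Castelnuovo--Severi applies, treating the quasi-trigonal and plane-quintic cases where the relevant curve is singular, and matching the canonical-model descriptions of Remark~\ref{remark:Delta-can} with the scroll swept out by the images of the fibres of $\FF_n$ — constitute the technical heart of the argument, and here I would follow Shokurov's original analysis.
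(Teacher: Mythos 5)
Your cycle of implications and the treatment of \xref{theorem-criterion-Shokurov-rat}, \xref{theorem-criterion-Shokurov-2KSC}, \xref{theorem-criterion-Shokurov-tr} are sound: \xref{theorem-criterion-Shokurov-tr}$\Rightarrow$\xref{theorem-criterion-Shokurov-rat} via Proposition~\ref{proposition-rationality-conic-bundles}, \xref{theorem-criterion-Shokurov-rat}$\Rightarrow$\xref{theorem-criterion-Shokurov-J} via Corollary~\ref{corollary-J}, and your route to \xref{theorem-criterion-Shokurov-2KSC}$\Rightarrow$\xref{theorem-criterion-Shokurov-tr} through Proposition~\ref{proposition-transformations:2K+S=0} (minimality of $\FF_n$, $n\neq 1$, forces $\alpha$ to be an isomorphism) is a legitimate shortcut; the paper instead argues directly from $2K_S+\Delta\sim(a-4)\Sigma+(b-2n-4)\Lambda$ together with the lower bound $b\ge\min\{an,\ n(a-1)+2\}$ supplied by \eqref{equation-condition-S-star-star}.

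The gap is in \xref{theorem-criterion-Shokurov-J}$\Rightarrow$\xref{theorem-criterion-Shokurov-tr}, in two places. First, the reduction ``if $\Delta$ is disconnected then $X$ is not stably rational'' is a non sequitur here: the hypothesis is $\JG(X)=0$, not rationality, so non-stable-rationality does not entitle you to discard that case. (The case happens to be vacuous: two disjoint components $a_i\Sigma+b_i\Lambda$ with $\p\ge 1$ force $b_i>a_in/2$ and hence $\Delta_1\cdot\Delta_2>0$ --- but that is a different argument and it needs to be said.) Second, and more seriously, the Castelnuovo--Severi mechanism does not close the plane-quintic case: a smooth plane quintic has genus $6$ and gonality $4$, so the inequality reads $6\le (a-1)\cdot 3$ (or $(a-1)\cdot 4$ if you use the $g^1_4$), which is satisfied for $a=4$; indeed a curve of class $4\Sigma+(2n+3)\Lambda$ on $\FF_n$ has genus $6$ and passes every such numerical test. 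Excluding the plane-quintic type genuinely requires the paper's canonical-model argument: $|K_S+\Delta|$ restricts to the canonical system of $\Delta$, its image $\phi(S)$ is an intersection of quadrics and therefore contains the surface cut out by quadrics through the canonical curve --- the Veronese surface in the quintic case --- and the Veronese is not an image of $\FF_n$ for $n\neq 1$. The same canonical-model picture is what handles the trigonal and quasi-trigonal cases ($\phi(S)$ is swept out by $3$-secants, which must then be images of the fibers $\Lambda$, giving $\Delta\cdot\Lambda\le 3$), rules out the hyperelliptic case ($\phi|_{\Delta}$ is birational), and works uniformly for the singular, reducible nodal curves that actually occur, where Castelnuovo--Severi is not available in its standard form. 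You also need the paper's Lemma~\ref{lemma-canonical-map} and Lemma~\ref{lemma:Delta-decomp} dichotomy (together with Remark~\ref{remark:Delta:Prym}) to handle the case where $|K_S+\Delta|$ fails to be very ample because $\Sigma$ splits off $\Delta$ with $(\Delta-\Sigma)\cdot\Sigma=2$; this is not covered by your reduction to \eqref{equation-condition-S-star}.
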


\begin{sremark}
In the case $S=\FF_1$, one can show that the equivalences 
\xref{theorem-criterion-Shokurov-rat} $\Leftrightarrow$ 
\xref{theorem-criterion-Shokurov-2KSC} $\Leftrightarrow$ 
\xref{theorem-criterion-Shokurov-tr} $\Leftrightarrow$ 
\xref{theorem-criterion-Shokurov-J} hold whenever $\Delta\cdot \Sigma>1$. If 
$\Delta\cdot \Sigma\le 1$, then we can apply Proposition
\xref{proposition-contraction-(-1)-curve} to reduce the problem to $\PP^2$. Thus the 
equivalence~\xref{theorem-criterion-Shokurov-rat} $\Longleftrightarrow$ 
\xref{theorem-criterion-Shokurov-J} holds for $S=\PP^2$ and $\FF_n$ with 
\emph{arbitrary} $n$.
\end{sremark}

\begin{corollary}
Conjectures~\xref{conjecture-Shokurov} and~\xref{conjecture-Iskovskikh}
are true for $S=\PP^2$ and $\FF_n$, $n\ge 0$.
\end{corollary}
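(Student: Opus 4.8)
The plan is to reduce the corollary to the two rationality criteria already available for minimal bases, Theorem~\ref{theorem-criterion-Shokurov:P2} and Theorem~\ref{theorem-criterion-Shokurov}, together with Corollary~\ref{corollary-equivalence-conjectures}. First I would note that the proof of Corollary~\ref{corollary-equivalence-conjectures} establishes, \emph{unconditionally} for any standard conic bundle over a rational surface, that condition $(\star)$ of Conjecture~\ref{conjecture-Shokurov} is equivalent to the disjunction of conditions~\ref{conjecture-Iskovskikh-1} and~\ref{conjecture-Iskovskikh-2} of Conjecture~\ref{conjecture-Iskovskikh}. Consequently, for a fixed base $S$ it suffices to verify Conjecture~\ref{conjecture-Shokurov}, that is, the single biconditional
\[
X \text{ rational} \iff (\star),
\]
since then the equality of the two conditions upgrades it to Conjecture~\ref{conjecture-Iskovskikh} as well. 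I would then treat the three cases $\FF_n$ ($n\neq 1$), $\PP^2$, and $\FF_1$ in turn.

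For $S=\FF_n$ with $n\neq 1$ the statement is immediate from Theorem~\ref{theorem-criterion-Shokurov}, which gives $X\text{ rational}\iff |2K_S+\Delta|=\emptyset\iff \JG(X)=0$. If $X$ is rational then both $|2K_S+\Delta|=\emptyset$ and $\JG(X)=0$ hold, so $(\star)$ is satisfied; conversely $|2K_S+\Delta|=\emptyset$ alone already forces rationality, so the parenthetical clause of $(\star)$ about the case $\p(\Delta)=6$ is automatic and need not be inspected separately.

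The case $S=\PP^2$ requires translating $(\star)$ into numerical data. Since $2K_{\PP^2}+\Delta\sim\OOO_{\PP^2}(\deg\Delta-6)$, the condition $|2K_S+\Delta|=\emptyset$ is equivalent to $\deg\Delta\le 5$, and I would argue by degree. If $\deg\Delta\le 4$, rationality holds by Corollary~\ref{corolary-rationality-conic-bundles}. If $\deg\Delta=5$ and $\Delta$ is smooth, then $\p(\Delta)=\binom{4}{2}=6$, so $(\star)$ supplies $\JG(X)=0$; by Theorems~\ref{theorem-Prym-Jacobian} and~\ref{theorem-Mumford-Beauville-Shokurov} this forces the cover $\tilde\Delta\to\Delta$ to be given by an even theta-characteristic, and then $X$ is rational by Proposition~\ref{proposition-Panin} (equivalently, by Theorem~\ref{theorem-criterion-Shokurov:P2}). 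If $\deg\Delta=5$ but $\Delta$ is singular, I would instead take a general line through a node: it meets $\Delta$ in $5-2=3$ further points, so after blowing up the node the ruling of the resulting $\FF_1$ is a base point free pencil $\LLL$ of rational curves with $\LLL\cdot\Delta=3$, whence $X$ is rational by Proposition~\ref{proposition-rationality-conic-bundles}; here $\p(\Delta)<6$, so the $\JG$-clause of $(\star)$ is vacuous and no contradiction arises. The reverse implication, that rationality forces $\deg\Delta\le 5$ and $\JG(X)=0$, is precisely Theorem~\ref{theorem-criterion-Shokurov:P2} together with Corollary~\ref{corollary-J}.

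Finally, for $S=\FF_1$ I would follow the remark after Theorem~\ref{theorem-criterion-Shokurov}: if $\Delta\cdot\Sigma>1$ for the $(-1)$-section $\Sigma$, the argument of that theorem applies verbatim; if $\Delta\cdot\Sigma\le 1$, then $\Sigma$ meets the hypotheses of Proposition~\ref{proposition-contraction-(-1)-curve} (when $\Sigma\subset\Delta$ one automatically has $\Sigma\cdot(\Delta-\Sigma)=2$), so contracting $\Sigma$ yields a fiberwise birational standard conic bundle over $\PP^2$ and reduces to the previous case, the invariance of $|2K_S+\Delta|$ under the contraction (Lemma~\ref{Lemma:nK+Delta:invariant}) and the birational invariance of $\JG$ guaranteeing that $(\star)$ is preserved. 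The only genuinely delicate point in the whole argument is the analysis of $\deg\Delta=5$ over $\PP^2$, namely identifying the parity of the theta-characteristic with the vanishing of the Griffiths component; this rests on the Prym--Jacobian dictionary of Theorems~\ref{theorem-Prym-Jacobian} and~\ref{theorem-Mumford-Beauville-Shokurov} and on Panin's construction, while everything else amounts to bookkeeping with the invariants $|2K_S+\Delta|$ and $\JG$.
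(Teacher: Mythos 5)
Your overall route is the paper's: the corollary is meant to follow by combining Theorem~\xref{theorem-criterion-Shokurov:P2}, Theorem~\xref{theorem-criterion-Shokurov} with the remark treating $\FF_1$, and the (unconditional) equivalence of the two conditions established in the proof of Corollary~\xref{corollary-equivalence-conjectures}. Your handling of $\FF_n$ with $n\neq 1$ and of $\FF_1$ is fine.

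There is, however, a genuine error in your treatment of $S=\PP^2$ with $\deg\Delta=5$ and $\Delta$ singular. First, the arithmetic genus of a reduced plane quintic is $\p(\Delta)=\binom{4}{2}=6$ whether or not it is nodal ($\p$ here is arithmetic, not geometric, genus), so the $\JG$-clause of $(\star)$ is not vacuous in this case. More seriously, the pencil argument fails: after the elementary transformation of Proposition~\xref{proposition-Sarkisov-elementary-transformations} centred at a node $s\in\Sing(\Delta)$, the new discriminant curve on $\FF_1$ is $\Delta_1=\tilde\Delta+E$, where $\tilde\Delta$ is the proper transform and $E$ the exceptional curve --- this is forced by the invariance $\p(\Delta_1)=\p(\Delta)=6$ of Lemma~\xref{Lemma:nK+Delta:invariant} (indeed its proof records $\Delta'=\alpha^*\Delta-E$ for $s\in\Delta$, which for a node gives $\tilde\Delta+E$). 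Hence $\LLL\cdot\Delta_1=3+1=4$, not $3$, and Proposition~\xref{proposition-rationality-conic-bundles} does not apply; in fact the generic fibre of $X_1\to\PP^1$ is then a surface conic bundle with $K^2=4$, which is \emph{not} rational over the function field by Corollary~\xref{corollary-surface-rigid}. The conclusion you want is nevertheless true (a nodal quintic is trigonal, so $\JG(X)=0$ and rationality follows from Theorem~\xref{theorem-criterion-Shokurov:P2}, which is stated for arbitrary reduced nodal $\Delta$ of degree $5$), so the correct fix is simply to invoke that theorem for the whole degree-$5$ case rather than attempting a separate pencil argument for the singular subcase.
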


\begin{proof}[Proof of Theorem~\xref{theorem-criterion-Shokurov}.]
The implication~\xref{theorem-criterion-Shokurov-rat} $\Longrightarrow$ 
\xref{theorem-criterion-Shokurov-J} is an immediate consequence of Corollary 
\xref{corollary-J} and~\xref{theorem-criterion-Shokurov-tr} $\Longrightarrow$ 
\xref{theorem-criterion-Shokurov-rat} is a consequence of 
Propositions~\xref{proposition-rationality-conic-bundles}.
The implication~\xref{theorem-criterion-Shokurov-tr} $\Longrightarrow$ 
\xref{theorem-criterion-Shokurov-2KSC} is obvious. 

Let us prove 
\xref{theorem-criterion-Shokurov-2KSC} $\Longrightarrow$ 
\xref{theorem-criterion-Shokurov-tr}. 
Denote by $\Sigma$ the $(-n)$-section of $S=\FF_n$.
Write $\Delta\sim 
a\Sigma+b\Lambda$. We assume that $|2K_S+\Delta|=\emptyset$ and $a\ge 4$. If 
$\Sigma\not\subset \Delta$, then clearly $b\ge an$ (see e.g.~\cite[Ch.~V, Th.~2.17]{Hartshorn-1977-ag}). 
If $\Sigma\subset \Delta$, then 
\begin{equation*}
(\Delta-\Sigma)\cdot\Sigma=b-an+n\ge 2
\end{equation*}
by~\eqref{equation-condition-S-star-star}. Thus, in both cases, 
\begin{equation*}
b\ge \min\{an,\, n(a -1)+2\}.
\end{equation*} 
Further, 
\begin{equation*}
2K_S+\Delta\sim (a-4)\Sigma+(b-2n-4)\Lambda, 
\end{equation*}
where $b-2n-4< (a-4)n$ because $|2K_S+\Delta|=\emptyset$. This gives us $n=0$ 
and $b<4$. Replacing the ruling of $\FF_0=\PP^1\times \PP^1$ with another one we 
get~\xref{theorem-criterion-Shokurov-tr}.

It remains to prove the implication~\xref{theorem-criterion-Shokurov-J} $\Longrightarrow$ 
\xref{theorem-criterion-Shokurov-tr}. Assume to the 
contrary that $\Delta\cdot \Lambda \ge 4$.

\begin{lemma}\label{lemma-canonical-map}
In the above assumptions one of the following holds:
\begin{enumerate}
\item \label{case-Fn-f1}
$|K_S+\Delta|$ is very ample.
\item \label{case-Fn-f2}
$|K_S+\Delta|$ is base point free but not very ample,
the corresponding morphism is birational and contracts $\Sigma$.
Moreover, $(\Delta-\Sigma)\cdot \Sigma=2$. 
\end{enumerate}
\end{lemma}

\begin{proof}
As above, we write 
\begin{equation*}
\Delta\sim a\Sigma+b\Lambda,\quad K_S+\Delta\sim (a-2)\Sigma+(b-2-n)\Lambda. 
\end{equation*}
By our assumption $a\ge 4$. If $n=0$, then by symmetry we may assume that $b\ge 4$.

The linear system $|K_S+\Delta|$ is base point free if and only if 
\begin{equation}\label{equation-Fn-f}
b\ge an+2-n.
\end{equation}
This relation is obvious if $n=0$. Thus we assume that $n\ge 2$.
If $b\ge an$, then \eqref{equation-Fn-f}
holds. 
Consider the case $b<an$. Then $\Sigma$ is a component of $\Delta$.
Put $\Delta':=\Delta- \Sigma$.
Then
\begin{equation*}
2\le \Delta'\cdot\Sigma=b-an+n
\end{equation*}
which is equivalent to \eqref{equation-Fn-f}. 
Moreover, if $|K_S+\Delta|$ is not very ample, then in \eqref{equation-Fn-f}
the equality holds and we get~\xref{case-Fn-f2}.
\end{proof}

The following assertion can be proved similar to 
Lemma~\xref{lemma-canonical-map}.

\begin{lemma}
\label{lemma:Delta-decomp}
If the curve $\Delta$ does not satisfy the condition 
\eqref{equation-condition-S-star}, that is, there exists a decomposition 
$\Delta=\Delta_1+\Delta_2$ with $\Delta_1\cdot \Delta_2=2$, then up to 
permutation of $\Delta_i$ we may assume that $\Delta_2=\Sigma$ and 
$(K_S+\Delta)\cdot \Delta_2=0$. In particular, $n>0$. Thus $\Delta$ satisfies 
\eqref{equation-condition-S-star} if and only if the linear system $|K_S+\Delta|$ is very ample.
\end{lemma}

Now we are in position to finish the proof of the implication~\xref{theorem-criterion-Shokurov-J} $\Longrightarrow 
$\xref{theorem-criterion-Shokurov-tr} in Theorem~\xref{theorem-criterion-Shokurov}.
Consider the morphism $\phi: S \to \PP^N$ 
given by the linear system $|\upomega_S(\Delta)|$ (see Lemma~\xref{lemma-canonical-map}).
From the exact sequence
\begin{equation*}
0\longrightarrow\upomega_S\longrightarrow\upomega_S (\Delta)\longrightarrow\upomega_\Delta\longrightarrow 0
\end{equation*}
and the vanishing $H^0(S,\upomega_S)=H^1 (S,\upomega_S)=0$ it follows that the linear system $|\upomega_S(\Delta)|$ on $S$
restricts isomorphically to the canonical linear system $|\upomega_\Delta|$ on $\Delta$. 
Therefore, $\phi$ induces the canonical morphism on $\Delta$
\begin{equation*}
\phi_{\Delta}:\Delta\to\PP^{\p(\Delta)-1}. 
\end{equation*}
According to Lemma \ref{lemma-canonical-map} the morphism $\phi$ is birational.
Hence $\Delta$ is not hyperelliptic.

Assume that $K_S+\Delta$ is very ample.
Then $\phi_{\Delta}$ is an embedding and $\Delta$ satisfies the condition \eqref{equation-condition-S-star}.
By Theorem~\xref{theorem-Mumford-Beauville-Shokurov} the curve $\Delta$
is either trigonal, quasitrigonal or isomorphic to a plane quintic. 
The surface $\phi(S)$ is an intersection of quadrics. 
Therefore it contains all the $3$-secants of the curve $\phi(\Delta)$.
If $\Delta$ is trigonal or quasitrigonal, then this implies that $\phi(S)$ is swept out by $3$-secants (see Remark \ref{remark:Delta-can}),
and these $3$-secants must be the images of the fibers $\Lambda$. 
Hence, in this case, $\Delta\cdot \Lambda\le 3$ which contradicts our assumption.
If $\Delta$ is a plane quintic, then the intersection of quadrics passing through $\phi(\Delta)$
is the Veronese surface. But this surface cannot be an image of $\FF_n$ for $n\neq 1$.
Again we get a contradiction.

Finally assume that the divisor   $K_S+\Delta$ is not very ample.
By Lemma \ref{lemma:Delta-decomp} we have $\Delta=\Sigma+\Delta_1$, $\Sigma\cdot \Delta_1=2$, and  $(K_S+\Delta)\cdot\Sigma=0$.
Moreover the surface $\phi(S)$ is isomorphic to a cone over a rational normal curve. 
According to 
Remark \ref{remark:Delta:Prym}\ $\Pr(\tilde\Delta,\Delta)\simeq \Pr(\tilde\Delta',\Delta')$,
where $\Delta'=\phi(\Delta_1)$. 
It is easy to see that the embedding  $\Delta'=\phi(\Delta_1)\subset \PP^{\p(\Delta)-1}$
is canonical. Now we can apply  Theorem~\xref{theorem-Mumford-Beauville-Shokurov} and 
Remark \ref{remark:Delta-can} and obtain a contradiction as above.
\end{proof}

In view of Theorems~\xref{theorem-criterion-Shokurov:P2} and~\xref{theorem-criterion-Shokurov}
one can expect that the rationality of a three-dimensional variety 
is equivalent to the triviality of Griffiths component $\JG(X)=0$.
However this statement is wrong in general as the following Sarkisov's result shows.

\begin{theorem}[{\cite[Theorem~5.10]{Sarkisov-1982-e}}]
\label{Sarkisov-example-H3}
There exists a smooth algebraic rationally connected non-rational threefold $X$ whose 
three-dimensional integral cohomology group $H^3(X, \ZZ)$ is trivial. 
\end{theorem}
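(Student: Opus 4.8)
The plan is to realize $X$ as a \emph{standard conic bundle} $\pi\colon X\to S$ over a rational surface whose discriminant curve $\Delta$ is reduced, \emph{connected}, with normal crossings, of arithmetic genus $\p(\Delta)=1$, and which in addition satisfies $|4K_S+\Delta|\neq\emptyset$. The first group of conditions will kill the two classical birational obstructions and give $H^3(X,\ZZ)=0$, while the last condition will force non-rationality. I would construct $X$ from its combinatorial shadow: fix such a pair $(S,\Delta)$ together with an admissible double cover $\tilde\pi\colon\tilde\Delta\to\Delta$ satisfying the Beauville conditions \eqref{Beauville-condition}, and invoke Proposition~\ref{proposition-AM} to produce a standard conic bundle $\pi\colon X\to S$ inducing $\tilde\pi$, unique up to fiberwise birational equivalence over $S$. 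One then arranges the remaining freedom so that $X$ is smooth and, for the sharper conclusion advertised in Section~\ref{section:AM}, so that $X$ is unirational (a suitable choice of local invariants).

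The vanishing $H^3(X,\ZZ)=0$ splits into its torsion and free parts, each computed by the machinery already developed. Since $\Delta$ is connected, Theorem~\ref{theorem-Artin-Mumford-computation} gives $H^3(X,\ZZ)_{\tors}\simeq\Br(X)\simeq(\ZZ/2\ZZ)^{c-1}=0$. For the free part, Theorem~\ref{theorem-Prym-Jacobian} identifies the intermediate Jacobian $\J(X)$ with the Prym variety $\Pr(\tilde\Delta,\Delta)$, whose dimension is $\p(\Delta)-1=0$ by Definition~\ref{8.1.4}; equivalently, formula \eqref{lemma-7-conic-bundle-3} of Lemma~\ref{lemma-7-conic-bundle}, combined with $\bb_1(S)=0$ and $\bb_2(X)-\bb_2(S)=\uprho(X)-\uprho(S)=1$, yields $\bb_3(X)=2(\p(\Delta)-1)=0$. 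Hence $H^3(X,\ZZ)=0$, and in particular the Griffiths component $\JG(X)$ is trivial. That $X$ is rationally connected is automatic from Lemma~\ref{lemma-properties-Conic-bundles}, $S$ being rational.

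The core of the argument is non-rationality, and here the situation is structurally delicate: by construction \emph{both} obstructions of Sections~\ref{section:AM} and~\ref{section:J} vanish, so non-rationality cannot be read off from $H^3(X,\ZZ)_{\tors}$ or from $\JG(X)$. I would instead prove that $\pi\colon X\to S$ is \emph{birationally rigid}, which does imply non-rationality: a rigid Mori fiber space over a surface cannot be birational to $\PP^3$, since $\PP^3$ is a Mori fiber space over a point and a fiberwise equivalence would require a birational identification of the surface $S$ with a point. Rigidity itself is obtained by running the Sarkisov program (Theorem~\ref{theorem-Sarkisov-program}) on an arbitrary birational map $\Phi\colon X\dashrightarrow X'$ to a Mori fiber space and bounding each link through the Noether--Fano inequalities (Theorem~\ref{Noether-Fano}); the point is that the hypothesis $|4K_S+\Delta|\neq\emptyset$ survives every admissible link and is incompatible with the target being a $\QQ$-Fano threefold, a del Pezzo fibration, or a conic bundle reached non-fiberwise. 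This is exactly the content of Sarkisov's rigidity theorem (Theorem~\ref{Sarkisov-theorem}, whose hypothesis is $|4K_S+\Delta|\neq\emptyset$; compare the surface statement in Corollary~\ref{corollary-surface-rigid}).

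The main obstacle is the very existence of the pair $(S,\Delta)$, because the two numerical requirements pull in opposite directions. Writing $E:=4K_S+\Delta\ge 0$, the genus condition $\p(\Delta)=1$ reads $\Delta\cdot(\Delta+K_S)=0$, i.e. $E^2-7K_S\cdot E+12K_S^2=0$; thus $\p(\Delta)=1$ keeps $\Delta$ ``small'' while $|4K_S+\Delta|\neq\emptyset$ forces $\Delta$ to dominate $-4K_S$, so the two can be reconciled only on special non-minimal rational surfaces (in particular with $K_S^2\le 0$ and a degenerate anticanonical system). Producing an explicit $(S,\Delta)$ realizing all the constraints, together with a Beauville double cover for which the resulting $X$ is smooth, is the delicate part of the construction and is precisely what Sarkisov carries out in Theorem~5.10; once it is in place, the cohomological computation and the rigidity argument above complete the proof.
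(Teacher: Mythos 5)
Your proposal is correct and follows essentially the same route as the paper: existence via Proposition~\xref{proposition-AM} from a prescribed admissible double cover, torsion-freeness of $H^3(X,\ZZ)$ from connectedness of $\Delta$ via \eqref{equation-Artin-Mumford-computation}, vanishing of $\bb_3(X)$ from $\p(\Delta)=1$ via \eqref{lemma-7-conic-bundle-3}, and non-rationality from $|4K_S+\Delta|\neq\emptyset$ via Sarkisov's rigidity Theorem~\xref{Sarkisov-theorem}. The one step you leave unexecuted --- and which is the actual existential content of the statement --- is the production of the pair $(S,\Delta)$; you describe it as delicate and defer it to Sarkisov, but the paper disposes of it in one line, and your worry that the two numerical conditions ``pull in opposite directions'' overestimates the difficulty. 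Take an irreducible plane curve $\Delta_0\subset\PP^2$ of degree $d>12$ with only ordinary double points and geometric genus $1$, and let $\sigma:S\to\PP^2$ be the blowup of the nodes, with exceptional curves $E_i$ and $\Delta\subset S$ the (smooth, connected, genus~$1$) proper transform. Then $K_S=\sigma^*K_{\PP^2}+\sum E_i$ and $\Delta=\sigma^*\Delta_0-2\sum E_i$, so
\begin{equation*}
4K_S+\Delta=\sigma^*\bigl(4K_{\PP^2}+\Delta_0\bigr)+2\sum E_i,
\end{equation*}
which is effective precisely because $d>12$; together with any nontrivial \'etale double cover $\tilde\Delta\to\Delta$ this feeds directly into your argument. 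So there is no tension to reconcile: one simply forces $\p(\Delta)=1$ by imposing many nodes on a plane curve of large degree, which automatically makes $S$ a large blowup of $\PP^2$ (consistent with your observation that $K_S^2$ must be very negative). With this example inserted, your proof coincides with the paper's.
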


\begin{proof}
Let $\Delta_0\subset \PP^2$ be an irreducible curve with at most ordinary double 
singular points whose normalization $\Delta$ has genus $1$. Assume that $\deg 
\Delta_0>12$. Let $\sigma: (S,\Delta)\to (\PP^2,\Delta_0)$ be the minimal 
embedded resolution of singularities. Finally, let $\tilde\pi: \tilde\Delta\to 
\Delta$ be a nontrivial \'etale double cover. Then from Lemma 
\xref{proposition-AM} it follows that there exists a standard conic bundle 
$\pi:X\to S$ with discriminant divisor $\Delta$ such that the corresponding double cover 
is isomorphic to
$\tilde\pi: \tilde\Delta\to \Delta$. It is easy to 
see that the curve $\Delta$ satisfies the condition $|4K_S+\Delta| \neq 
\emptyset$ and therefore $X$ is a non-rational variety by Theorem 
\xref{Sarkisov-theorem} below. On the other hand, by 
\eqref{lemma-7-conic-bundle-3} we have $H^3(X,\CC)=0$ and since the 
discriminant curve of $\pi:X\to S$ connected, it follows that $H^3(X,\ZZ)$ is 
torsion-free by \eqref{equation-Artin-Mumford-computation}. This proves the 
theorem.
\end{proof}

\section{$\QQ$-conic bundles}
\label{section:Q-conic-bundles}
To study the local structure of $\QQ$-conic bundles one needs 
the corresponding concept that works in the analytic category.
\begin{definition}
Let $(X, C )$ be an analytic germ of a threefold with terminal singularities 
along a reduced complete curve. We say that $(X, C)$ is a \textit{$\QQ$-conic 
bundle germ} if there is a contraction 
\begin{equation*}
\pi:(X, C)\to (S, o)
\end{equation*}
to a normal 
surface germ $(S,o)$ such that $C=\pi^{-1}(o)_{\red}$ and the divisor $-K_X$ is 
$\pi$-ample.
\end{definition}

Note that in this definition we assume neither $X$ is (analytically) 
$\QQ$-factorial nor $\uprho_{\mathrm{an}}(X/S)=1$. This is because these 
properties are not stable under passing from algebraic to analytic category, cf. 
\cite[\S 1]{Kawamata-1988-crep}. 

$\QQ$-conic bundle germs were studied in the series of papers 
\cite{Prokhorov-1996b}, 
\cite{Prokhorov-1997_e}, 
\cite{Prokhorov-1999b},
\cite{Prokhorov-1999d},
\cite{Mori-Prokhorov-2008},
\cite{Mori-Prokhorov-2008a}, 
\cite{Mori-Prokhorov-2008III},
\cite{Mori-Prokhorov-IA},
\cite{Mori-Prokhorov-IC-IIB}, 
\cite{Mori-Prokhorov-IIA-1}, 
\cite{Mori-Prokhorov-IIA-II}.
These works essentially use the ideas and techniques introduced by S. Mori 
\cite{Mori-1988} and 
developed in \cite{Kollar-Mori-1992}.

If $X$ has only Gorenstein (terminal) singularities along $C$, then, as in the 
standard conic bundle case, the base surface $S$ is smooth at $o$ and $\pi$ is a 
conic bundle, i.e. $X$ admits a local embedding to $\PP^2\times S$ so that 
fibers of $\pi$ are conics \cite{Cutkosky-1988}. In this case, the singularities 
of $X$ and the discriminant curve are partially described in \cite[Prop. 
5.2]{Prokhorov-planes}. From now on we assume that $X$ is not Gorenstein. 

Recall that the \textit{index} of a $\QQ$-Gorenstein singularity $(X,P)$ (or a $\QQ$-Gorenstein variety $X$)
is called the smallest positive integer $m$ such that $mK_X$ is a Cartier divisor.
For the classification of three-dimensional terminal singularities and corresponding notations we refer to 
\cite[\S 6]{Reid-YPG1987} and \cite[Theorem 5.43]{Kollar-Mori-1988}.

If the 
total space $X$ has only singularities of index two and the base is smooth, then 
the description $\QQ$-conic bundle germs is very explicit:

\begin{theorem}[\cite{Mori-Prokhorov-2008}, \cite{Prokhorov-1997_e}]
\label{theorem-index=2-conic-bundles}
Let $\pi: (X,C)\to (S,o)$ be a $\QQ$-conic bundle germ. Assume that $X$ is not Gorenstein, $(S,o)$ is 
smooth and the singularities of $X$ have indices $\le 2$. Fix an isomorphism $(Z,o)\simeq (\CC^2,0)$. 
Then there is an embedding
\begin{equation}
\label{eq-diag-last-2}
\vcenter{
\xymatrix{X \ar@{^{(}->}[r] \ar[rd]_{\pi}& \PP(1,1,1,2)\times \CC^2
\ar[d]^{p}
\\
&\CC^2}
}
\end{equation}
such that $X$ is given by two equations
\begin{equation}
\label{eq-eq-index2}
\begin{array}{l}
q_1(y_1,y_2,y_3)=\psi_1(y_1,\dots,y_4;u,v),
\\[7pt]
q_2(y_1,y_2,y_3)=\psi_2(y_1,\dots,y_4;u,v),
\end{array}
\end{equation}
where $\psi_i$ and $q_i$ are quasihomogeneous polynomials
in $y_1,\dots,y_4$, quadratic 
with respect to the weight $\wt (y_1,\dots,y_4)=(1,1,1,2)$ and
$\psi_i(y_1,\dots,y_4;0,0)=0$. The only non-Gorenstein point of $X$
is $(0,0,0,1; 0,0)$.
\end{theorem}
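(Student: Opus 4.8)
The plan is to recover $X$ as $\operatorname{Proj}_S$ of the graded $\OOO_S$-algebra $R=\bigoplus_{i\ge 0}\pi_*\OOO_X(-iK_X)$ and to read off both the embedding of diagram \eqref{eq-diag-last-2} and the equations \eqref{eq-eq-index2} from the structure of $R$ in small weights. Since $-K_X$ is $\pi$-ample, $X=\operatorname{Proj}_S R$; since $2K_X$ is Cartier, $R$ is a finitely generated sheaf of algebras; and since $(S,o)\simeq(\CC^2,0)$ is a smooth germ, every reflexive $\OOO_S$-module occurring below is free. First I would record the ranks of the graded pieces: over the generic point $\eta\in S$ the fibre is a conic $\simeq\PP^1_{\KK}$ on which $-K_X$ restricts to $\OOO_{\PP^1}(2)$, so $R_i\otimes\KK=H^0(\PP^1,\OOO(2i))$ has dimension $2i+1$; in particular $\rk R_1=3$ and $\rk R_2=5$. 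A preliminary reduction, carried out by a local (orbifold) Riemann--Roch/basket computation as in the general theory of $\QQ$-conic bundle germs \cite{Mori-Prokhorov-2008}, shows that the index-$\le 2$ hypothesis forces a \emph{single} non-Gorenstein point $P\in C$, of index $2$, with $X$ Gorenstein away from $P$; its image is $o=\pi(P)$.

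Next I would produce the weighted coordinates. Relative Kawamata--Viehweg vanishing gives $R^1\pi_*\OOO_X(-iK_X)=0$ for $i\ge 0$, so $\EEE:=\pi_*\OOO_X(-K_X)$ is reflexive of rank $3$ on the smooth surface germ, hence free; a basis provides the weight-$1$ variables $y_1,y_2,y_3$. The essential local input is the failure of degree-$1$ generation at $P$: I would show the multiplication map $\operatorname{Sym}^2\EEE\to\pi_*\OOO_X(-2K_X)=R_2$ is surjective over $S\setminus\{o\}$ (a smooth conic is projectively normal) with a rank-$1$ kernel, but that its cokernel is a torsion module of length exactly $1$ supported at $o$, produced by the single index-$2$ point. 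A generator $y_4$ of this cokernel is the weight-$2$ variable, and the resulting surjection $\OOO_S[y_1,\dots,y_4]\twoheadrightarrow R$ (generation in weights $1$ and $2$, no generator of weight $\ge 3$) realizes the closed embedding $X\hookrightarrow\PP(1,1,1,2)\times\CC^2$.

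The equations then emerge from a second rank count. The weight-$2$ part of $\OOO_S[y_1,\dots,y_4]$ is free of rank $7$, spanned by the six monomials $y_jy_k$ and by $y_4$, and it maps onto $R_2$ of rank $5$; hence the relation ideal contains a rank-$2$ space of weight-$2$ elements $F_1,F_2$. As $X$ is Cohen--Macaulay of codimension $2$ in the Gorenstein ambient and $\{F_1=F_2=0\}$ already has the correct dimension, unmixedness identifies $X$ with this complete intersection, so there are no further relations. Writing $F_i=\sum_{j\le k}\alpha^i_{jk}(u,v)\,y_jy_k+\beta^i(u,v)\,y_4$, the fact that $P=(0{:}0{:}0{:}1)$ lies on the central fibre forces $\beta^i(0,0)=0$; putting $q_i:=F_i|_{u=v=0}$ (a quadratic form in $y_1,y_2,y_3$ alone, since $\beta^i(0,0)=0$) and $\psi_i:=q_i-F_i$ rewrites the relations in the stated form $q_i=\psi_i$ with $\psi_i(y;0,0)=0$. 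Finally, $\PP(1,1,1,2)\times\CC^2$ is smooth off $\{(0{:}0{:}0{:}1)\}\times\CC^2$, so $X$ is Gorenstein there, while $X\cap\bigl(\{(0{:}0{:}0{:}1)\}\times\CC^2\bigr)=\{\beta^1=\beta^2=0\}$ meets the central fibre only at $o$; this locates the unique non-Gorenstein point at $(0,0,0,1;0,0)$.

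The step I expect to be the main obstacle is the local analysis at $P$: proving that the cokernel of $\operatorname{Sym}^2\EEE\to R_2$ has length exactly $1$ and that $R$ requires no generator of weight $\ge 3$. This is where the hypothesis $\text{index}\le 2$ is indispensable, since it bounds the weights of the $\ZZ/2$-action on the canonical (index-one) cover through the classification of index-$2$ terminal singularities \cite[\S 6]{Reid-YPG1987}, \cite[Theorem~5.43]{Kollar-Mori-1988}, and thereby pins down the precise contribution of $P$ to each graded piece $R_i$. Once these local weight computations are combined with the relative vanishing $R^1\pi_*\OOO_X(-iK_X)=0$, the global freeness, generation and relation counts above follow.
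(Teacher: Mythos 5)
A preliminary remark: this survey does not actually prove Theorem~\xref{theorem-index=2-conic-bundles} — it is quoted from \cite{Mori-Prokhorov-2008} and \cite{Prokhorov-1997_e} — so there is no in-paper argument to compare against. Your skeleton (realize $X$ as $\operatorname{Proj}_S$ of $\bigoplus_{i\ge 0}\pi_*\OOO_X(-iK_X)$, show generation in weights $(1,1,1,2)$, cut out $X$ by two weight-two relations) is the right shape and is consistent with how the embedding arises in the references. But as a proof it has a genuine gap, which you yourself flag: the entire content of the theorem sits in the local statements that $\operatorname{Sym}^2\EEE\to\pi_*\OOO_X(-2K_X)$ fails to be surjective at $o$ by exactly one cyclic generator and that no generators of weight $\ge 3$ are needed. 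You defer both to ``a local orbifold Riemann--Roch/basket computation,'' but that is precisely where the cited papers do the real work, and they do it not by Riemann--Roch bookkeeping but by a genuine local analysis at the index-two point — via the index-one ($\ZZ/2$-)cover and a general member of $|-K_X|$ (the general-elephant technique, cf.\ Theorem~\xref{theorem-ge}) — which pins down the graded pieces of $R$ near $P$ before the global free-module counts can be run. Likewise, the uniqueness of the non-Gorenstein point is not a basket computation: it is the structural fact from \cite{Mori-Prokhorov-2008} that two or more non-Gorenstein points force the base to be singular (compare the proof of Lemma~\xref{lemma-Cartier}).

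Two concrete missteps beyond the deferred core. First, your claim that the cokernel of $\operatorname{Sym}^2\EEE\to R_2$ has \emph{length exactly} $1$ is both unproved and stronger than the theorem warrants: reading it off from \eqref{eq-eq-index2}, the cokernel is $\OOO_{S,o}/(a_1,a_2)$, where $a_i(u,v)$ is the coefficient of $y_4$ in $\psi_i$, and the theorem only forces $a_i(0,0)=0$; the length is $1$ in Example~\xref{example-simplest-link} because there $(a_1,a_2)=(u,v)$, but nothing in the statement excludes longer cokernels. What your argument actually needs is that the cokernel is \emph{cyclic}, i.e.\ that its fiber at $o$ is one-dimensional (equivalently, that exactly two of the six monomials $y_jy_k$ become dependent in $R_2\otimes\CC(o)$) — a different assertion, and the one requiring proof. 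Second, identifying $X$ with $\{F_1=F_2=0\}$ by ``unmixedness'' is too quick: a priori $V(F_1,F_2)$ could be non-reduced along $X$ or carry extra components; one needs either a degree count on the generic fiber (a conic versus a $(2,2)$-intersection in $\PP(1,1,1,2)_{\KK}$, which must be shown reduced) or surjectivity of $\OOO_S[y_1,\dots,y_4]\to R$ in every weight — which again rests on the local analysis you have postponed.
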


In the case where the base of a $\QQ$-conic bundle is singular, we also have a complete classification:

\begin{theorem}[\cite{Mori-Prokhorov-2008}, \cite{Mori-Prokhorov-2008a}]
\label{theorem-conic-bundles-singular-base}
Let $\pi: (X,C)\to (S,o)$ be a $\QQ$-conic bundle germ. Assume that $(S,o)$ is singular. Then one of the
following holds:

\begin{emptytheorem}
\label{item=main--th-pr-toric}
$(X,C)$ is biholomorphic to the quotient of $\PP^1_x\times\CC^2_{u,v}$
by the $\muu_m$-action
\begin{equation*}
(x;u,v) \longmapsto(\varepsilon x; \varepsilon^a u,
\varepsilon^{-a} v),
\end{equation*}
where $\varepsilon$ is a primitive $m$-th root of unity and $\gcd
(m,a)=\gcd(m,b)=1$. 
The singular locus of $X$ consists of two cyclic quotient
singularities of types $\frac 1m(1,a,-a)$ and $\frac 1m(-1,a,-a)$.
The base surface $\CC^2/\muu_m$ has a singularity of type~\type{A_{m-1}}.
\end{emptytheorem}

\begin{emptytheorem}
\label{item=main--th-pr-ex3}
$(X,C)$ is biholomorphic to the quotient of the smooth $\QQ$-conic
bundle
\begin{equation*}
X'=\{ y_1^2+uy_2^2+vy_3^2=0\}\subset \PP^2_{y_1,y_2,y_3} \times\CC^2_{u,v}
\longrightarrow \CC^2_{u,v}.
\end{equation*}
by the $\muu_m$-action
\begin{equation*}
(y_1,y_2,y_3,u,v)\longmapsto (\varepsilon^{a} y_1,\varepsilon^{-1}y_2,y_3,\varepsilon
u,\varepsilon^{-1} v).
\end{equation*}
Here $m=2a+1$ is odd and $\varepsilon$ is a primitive $m$-th root of
unity. The singular locus of $X$ consists of two cyclic quotient
singularities of types $\frac 1m(a,-1,1)$ and $\frac 1m(a+1,1,-1)$.
The base surface $\CC^2/\muu_m$ has a singularity of type~\type{A_{m-1}}.
\end{emptytheorem}

\begin{emptytheorem}
\label{item-main-th-impr-barm=2-s=4} 
$(X,C)$ is the quotient of an index two $\QQ$-conic
bundle of the form \eqref {eq-eq-index2} with 
$q_1=y_1^2-y_2^2$, $q_2= y_1y_2-y_3^2$ 
by the $\muu_{4}$-action
\begin{equation*}
(y_1, y_2, y_3, y_4; u, v) \longmapsto
(-\ii y_1, \ii y_2, -y_3, 
\ii y_4; \ii u, -\ii v).
\end{equation*}
The base surface
$(S,o)$ 
is Du Val of type~\type{A_3}, $X$ has a cyclic quotient singularity $P$
of type $\frac18(5,1,3)$ and has no other singular points.
\end{emptytheorem}

\begin{emptytheorem}
\label {item-main-th-impr-barm=1}
$(X,C)$ is a quotient of a Gorenstein conic
bundle given by the following equation in $\PP^2_{y_1,y_2,y_3}\times
\CC^2_{u,v}$
\begin{equation*}
y_1^2+y_2^2+\psi(u,v)y_3^2=0, \qquad \psi(u,v)\in\CC\{u^2,\, v^2,\, uv\},
\end{equation*}
by the $\muu_{2}$-action
\begin{equation*}
(y_1, y_2, y_3; u, v) \longmapsto (-y_1, y_2, y_3; -u,-v).
\end{equation*}
Here $\psi(u,v)$ has no multiple factors. In this case, $X$ has
a unique singular point and it is of type~\type{cA/2} or \type{cAx/2} and the base surface $(S,o)$
is Du Val of type~\type{A_1}.
\end{emptytheorem}

\begin{emptytheorem}
\label{item-main-th-impr-barm=2-s=2-cycl}
$(X,C)$ is 
the quotient of an index two $\QQ$-conic
bundle of the form \eqref {eq-eq-index2} with 
$q_1=y_1^2-y_2^2$, $q_2=y_3^2$
by the $\muu_{2}$-action
\begin{equation*}
(y_1, y_2, y_3, y_4; u, v) \longmapsto
(y_1, -y_2, y_3, -y_4; -u,-v).
\end{equation*}
The base $(S,o)$ is Du Val of type~\type{A_1} and $X$ has a unique non-Gorenstein 
point which is either a cyclic quotient singularity
of type $\frac{1}{4}(1,1,-1)$ or a singularity of type~\type{cAx/4}.
\end{emptytheorem}

\begin{emptytheorem}\label{item=main--th-cyclic-quo}
$(X,C)$ is 
the quotient of an index two $\QQ$-conic
bundle of the form \eqref {eq-eq-index2} 
with $q_1=y_1^2-y_3^2$, $q_2=y_2^2-y_3^2$ by the $\muu_2$-action
\begin{equation*}
(y_1, y_2, y_3, y_4; u, v) \longmapsto 
(-y_1, -y_2, y_3, -y_4; -u, -v). 
\end{equation*}
The base 
$(S,o)$ is Du Val of type~\type{A_1} and $X$ has a unique non-Gorenstein point 
which is either a cyclic quotient singularity of type $\frac14(1,1,-1)$ or
a singularity of type~\type{cAx/4}.
\end{emptytheorem}

The central curve $C$ is irreducible except for the case~\xref{item=main--th-cyclic-quo}
where $C$ has two irreducible component meeting at the non-Gorenstein point. 
\end{theorem}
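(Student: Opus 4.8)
The plan is to reduce the classification to the case of a \emph{smooth} base and then to descend: every item on the list realises $X$ as the quotient of a $\QQ$-conic bundle over a smooth base by a cyclic group, so I would reverse this, first passing to a cyclic cover that smooths the base and afterwards analysing the possible group actions. Concretely the argument breaks into four steps: (i) prove that $(S,o)$ is a cyclic quotient singularity; (ii) base-change along the cover of $S$ to obtain an equivariant $\QQ$-conic bundle over a smooth base; (iii) apply the smooth-base classification, reducing to index $\le 2$; (iv) enumerate the admissible cyclic actions whose quotient is again terminal with $-K$ ample.

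For step (i) I would first show $(S,o)$ is a Du Val singularity of type \type{A_{m-1}}, that is $(S,o)\simeq(\CC^2/\muu_m,0)$ with $\muu_m$ acting by $\tfrac1m(1,-1)$. Since $-K_X$ is $\pi$-ample and a general fibre is a smooth conic, relative Kawamata--Viehweg vanishing yields $R^i\pi_*\OOO_X=0$ for $i>0$, so $(S,o)$ has rational singularities; a finer analysis through the canonical bundle formula (in the spirit of Lemma~\ref{lemma-canonical-bundle-formula}) and the terminality of $X$ shows that $(S,o)$ is in fact canonical, and that the local fundamental group $\pi_1^{\mathrm{loc}}(S\setminus o)$ acting on the family of conics is cyclic; hence $(S,o)$ is Du Val of type \type{A}. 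Excluding the non-cyclic log terminal base singularities is the first genuinely delicate point and rests on the local Mori--Kollár theory.

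For step (ii) I would form the normalised base change $X^\natural:=(X\times_S S^\natural)^{\mathrm{norm}}$ along the cyclic cover $S^\natural=\CC^2\to S=\CC^2/\muu_m$. As this cover is étale over $S\setminus o$ and $C=\pi^{-1}(o)_{\mathrm{red}}$ has codimension two, the finite map $X^\natural\to X$ is étale in codimension one; hence $X^\natural$ is again terminal, $K_{X^\natural}$ is the pullback of $K_X$, and $\pi^\natural\colon(X^\natural,C^\natural)\to(S^\natural,0)$ is a $\QQ$-conic bundle germ over a \emph{smooth} base equipped with a $\muu_m$-action for which $X=X^\natural/\muu_m$. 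A key structural fact to establish here, visible a posteriori in every case, is that $X^\natural$ has index $\le 2$: the indices of the singular points of $X$ factor as the index $m$ of the base cover times a residual factor $\le 2$. This allows me to feed into the next step only the smooth (Gorenstein) conic bundles and the index-two normal form \eqref{eq-eq-index2} of Theorem~\ref{theorem-index=2-conic-bundles}.

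Finally, for step (iv), I would classify the $\muu_m$-actions on these smooth-base models whose quotient is terminal with $-K$ remaining $\pi$-ample. Linearising the action at the non-Gorenstein point of $X^\natural$ and along $C^\natural\simeq\PP^1$, and reading the quotient singularities against the classification of three-dimensional terminal singularities, the constraint that $X=X^\natural/\muu_m$ have only the allowed terminal types (cyclic quotient, \type{cA/2}, \type{cAx/2}, \type{cAx/4}) over a base of exactly the prescribed Du Val type pins the weights down to the finitely many explicit actions \ref{item=main--th-pr-toric}--\ref{item=main--th-cyclic-quo}. I expect this equivariant descent, together with the index-$\le 2$ reduction, to be the main obstacle: it is a long case analysis in which each candidate action must be checked to produce singularities of exactly the stated types. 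The final assertion on $C$ then follows by inspection: in cases \ref{item=main--th-pr-toric}--\ref{item-main-th-impr-barm=2-s=2-cycl} the central fibre descends from an irreducible curve, whereas in case~\ref{item=main--th-cyclic-quo} the quadrics $q_1=y_1^2-y_3^2$, $q_2=y_2^2-y_3^2$ force $C$ to split into two components meeting at the unique non-Gorenstein point.
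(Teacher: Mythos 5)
Your skeleton (pass to a cyclic cover over a smoothed base, then classify equivariantly) does resemble the construction actually used — the ``topological index'' quotient recalled in the paper right after Theorem~\xref{singularities-base-DV} — but the two load-bearing steps of your reduction are precisely the statements that need proof, and your plan gives no way to obtain them. The fatal point is step (iii): there is \emph{no} classification of $\QQ$-conic bundle germs over a smooth base to appeal to. The paper says explicitly that results over a smooth base are incomplete; the only complete statement available is Theorem~\xref{theorem-index=2-conic-bundles}, which covers germs of index $\le 2$ over a smooth base. To invoke it you must already know that the cover $X^\natural$ has index $\le 2$, and this is exactly the assertion you defer as ``visible a posteriori in every case.'' A priori nothing in your argument excludes, say, a germ over an \type{A_1} point whose double cover acquires a point of index $3$. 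Ruling this out is the hard content of the Mori--Prokhorov theorem, and in their proof it comes from the general elephant theorem (Theorem~\xref{theorem-ge}): a general member of $|-K_X|$ has only Du Val singularities, established by Mori's techniques for flipping extremal curve germs. The paper names this as \emph{the} main step of the proof; your proposal never mentions it, and without it (or a substitute of comparable strength) steps (ii)--(iv) are circular — the structural conclusion is assumed in order to derive it.

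Step (i) has a parallel problem. What general MMP theory gives for free (cited in the paper from \cite{KMM} and \cite{Fujino-1999app}) is only that $(S,o)$ is log terminal, i.e.\ a quotient singularity $\CC^2/G$ — which is in fact all you need to build the cover in step (ii). Your stronger claim that $(S,o)$ is Du Val of type \type{A}, i.e.\ cyclic of the special form $\frac1m(1,-1)$, does not follow from the soft arguments you sketch: rationality of $S$ follows from vanishing, but canonicity does not (Example~\xref{ex-toric} of the paper exhibits a canonical, non-terminal total space whose base is the non-canonical log terminal point $\frac1m(1,-2)$, so the terminality of $X$ must enter in an essential, quantitative way), and the cyclicity of the local fundamental group ``acting on the family of conics'' is asserted without any argument. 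Note also that in the paper's logical organization the type-\type{A} statement (Theorem~\xref{singularities-base-DV}) is a \emph{consequence} of the classification, not an input; in Mori--Prokhorov it is proved together with the classification by the same local analysis of singular points along $C$ (and one must also handle reducible $C$, as in case~\xref{item=main--th-cyclic-quo}, which Theorem~\xref{theorem-ge} as stated does not cover). So both pillars of your reduction — type-\type{A} base and index-$\le 2$ cover — are exactly the theorem's hard content, and the quotient construction alone does not deliver either of them.
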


The main step in the proof of Theorem~\xref{theorem-conic-bundles-singular-base}
is the following version of Reid's ``general elephant conjecture''.
\begin{theorem}[\cite{Mori-Prokhorov-2008}, \cite{Mori-Prokhorov-2008III}]
\label{theorem-ge}
Let $\pi: (X,C)\to (S,o)$ be a $\QQ$-conic bundle germ.
Assume that $C$ is irreducible. Then a general member 
of the linear system $|-K_X|$ has only Du Val singularities.
\end{theorem}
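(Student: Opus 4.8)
The plan is to prove that a general member $D\in|-K_X|$ is a surface with only Du Val singularities by reducing to the local general-elephant property at each singular point of $X$ lying on $C$ and then globalizing; the underlying principle is that a normal Gorenstein surface singularity with $\upomega_D\simeq\OOO_D$ is rational if and only if it is Du Val. Because the classification Theorems~\xref{theorem-index=2-conic-bundles} and~\xref{theorem-conic-bundles-singular-base} are deduced \emph{from} the present statement, the argument must avoid them and rely instead on the Mori-theoretic machinery of extremal neighborhoods from \cite{Mori-1988}, \cite{Kollar-Mori-1992} together with Reid's classification of terminal singularities \cite{Reid-YPG1987}. First I would establish non-emptiness and the behaviour of $|-K_X|$ along $C$: since $-K_X$ is $\pi$-ample with one-dimensional fibres the degree $-K_X\cdot C$ is positive, the relative system is base point free over $S\setminus\{o\}$ (the fibres there being conics on which $-K_X$ restricts to $\OOO(2)$), and a Riemann--Roch computation on $C\simeq\PP^1$ for the restrictions of the associated $\mathrm{gr}$-sheaves shows that the germ system $|-K_X|$ is non-empty.

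By the previous point the base locus of $|-K_X|$ lies in $C$, so by Bertini every singularity of a general $D$ occurs at a point of $C\cap(\Bs|-K_X|\cup\Sing(X))$. At a non-Gorenstein point $P$ the class $-K_X$ is not Cartier, so every member of $|-K_X|$ passes through $P$; at a Gorenstein, hence compound Du Val, point a general member is Du Val by the definition of cDV. The essential local input is Reid's theorem that a \emph{general} anticanonical member of an isolated terminal singularity is Du Val, and the task is to make one global $D$ simultaneously general at every such point. I would settle this by a lifting argument: vanishing of the appropriate $R^1\pi_*$ (of $\OOO_X(-K_X)$ twisted by the ideals of the non-Gorenstein points) shows that the germ sections of $\OOO_X(-K_X)$ restrict surjectively onto general local members, so genericity propagates from the local systems to the global one. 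When $C\not\subset D$ this already finishes the proof, since then the singularities of $D$ are the isolated Du Val points just described and $D$ is smooth elsewhere; adjunction $K_D=(K_X+D)|_D\sim 0$ is consistent with $\upomega_D\simeq\OOO_D$.

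The hard part will be the dichotomy of whether a general $D$ contains $C$ or only meets it in finitely many points. In the former case the isolated-point reasoning above breaks down and one must control the singularities of $D$ all along $C$, ruling out non-normal or worse-than-Du-Val (simple elliptic or cusp) behaviour created by the interaction of the base points with the fibre direction. This is where the fine structure of the extremal neighborhood is essential: one analyses the filtration quotients $\mathrm{gr}^0_C\OOO$, $\mathrm{gr}^1_C\OOO$ and $\mathrm{gr}^0_C\upomega$ as $\OOO_C$-modules, computes their degrees on $C\simeq\PP^1$, and balances the local contributions of each terminal point against these global invariants. The irreducibility hypothesis on $C$ is what keeps this bookkeeping tractable, since it removes the split configurations in which distinct components of $C$ meet at a non-Gorenstein point; carrying out this numerical balancing is the technical core completed in \cite{Mori-Prokhorov-2008} and \cite{Mori-Prokhorov-2008III}.
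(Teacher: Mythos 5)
First, a caveat: the paper does not actually prove Theorem~\xref{theorem-ge} — it only cites \cite{Mori-Prokhorov-2008}, \cite{Mori-Prokhorov-2008III} and remarks that the proof "uses the techniques developed by Mori in the study of flipping extremal curve germs". Your outline does reconstruct the right skeleton of that cited argument: the reduction to local behaviour at points of $C$, Reid's local general elephant for terminal singularities \cite{Reid-YPG1987}, the $H^1$-vanishing/lifting machinery, and the degree bookkeeping for the graded sheaves $\mathrm{gr}^0_C\OOO$, $\mathrm{gr}^1_C\OOO$, $\mathrm{gr}^0_C\upomega$ along $C\simeq\PP^1$ from \cite{Mori-1988}, \cite{Kollar-Mori-1992}. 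You are also right that the classification theorems must not be used, since they are deduced from this statement.

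There is, however, a step in your plan that would fail as stated, and it sits at what is really the heart of the proof rather than in the case $C\subset D$ that you flag as the hard part. You claim that vanishing of $R^1\pi_*$ of $\OOO_X(-K_X)$ twisted by ideals of the non-Gorenstein points makes global sections surject onto "general local members", so that "genericity propagates from the local systems to the global one". This cannot work: by \eqref{equation-computation-K.C-1} one has $-K_X\cdot C\le 2$, and typically $-K_X\cdot C$ equals $1/m$, $2/m$, $1/2$ or $1$. Hence a general $D\in|-K_X|$ meets $C$ in a zero-cycle of very small degree, and imposing high-order local genericity at a non-Gorenstein point $P$ would force $H^1$ of the twisted sheaf to be nonzero (the relevant degree on $C$ goes negative after imposing only one or two conditions). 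Consequently the image of $|-K_X|$ in the local anticanonical system at $P$ is a very special, low-dimensional subsystem, and a general member of a \emph{subsystem} need not be Du Val even when a general member of the full local system is; Reid's theorem therefore cannot be invoked directly. The actual argument of \cite{Mori-Prokhorov-2008}, \cite{Mori-Prokhorov-2008III} instead uses the surjection $H^0(\OOO_X(-K_X))\twoheadrightarrow H^0(\mathrm{gr}^0_C\upomega^{*})$ to pin down \emph{which} local member the global system cuts out at each singular point (in explicit coordinates for each type of terminal singularity), and then verifies by direct computation that this particular, non-general, local elephant is still Du Val; irreducibility of $C$ enters in making the degree of $\mathrm{gr}^0_C\upomega^{*}$ on the single component $\PP^1$ control all local contributions simultaneously. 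Your sketch defers "numerical balancing" to the references, but without replacing the false surjectivity claim by this identification of the actual local member, the argument does not close even in the case where $D\cap C$ is finite.
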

The proof uses the techniques developed by Mori in the study 
of flipping extremal curve germs \cite{Mori-1988}, \cite{Kollar-Mori-1992}.

As an immediate consequence of Theorem~\xref{theorem-conic-bundles-singular-base} above we have the following

\begin{theorem}[{\cite{Mori-Prokhorov-2008}}]
\label{singularities-base-DV} 
If $\pi: X\to S$ is a $\QQ$-conic bundle, then $S$ can have at most
Du Val singularities of type~\type{A}.
\end{theorem}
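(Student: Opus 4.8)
The plan is to reduce the global statement to the local analytic classification already assembled in Theorem~\xref{theorem-conic-bundles-singular-base}, since the property of having only Du Val singularities of type~\type{A} is checked pointwise on $S$. First I would fix a point $o\in S$ and pass to the analytic germ $(S,o)$ together with the preimage $(X,C)$, where $C:=\pi^{-1}(o)_{\red}$. The key observation is that the restriction of a global $\QQ$-conic bundle to such a germ is again a $\QQ$-conic bundle germ in the sense of the Definition opening this section: $X$ has terminal singularities along $C$ (inherited from the global terminality of $X$), the curve $C$ is a reduced complete one-dimensional fiber, and $-K_X$ remains $\pi$-ample. Here one uses that the germ notion is deliberately designed to drop the global $\QQ$-factoriality and the condition $\uprho(X/S)=1$, neither of which need survive analytification; consequently no verification beyond terminality and relative ampleness of $-K_X$ is required to apply the local theory.

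Next I would split into cases according to whether $X$ is Gorenstein along $C$. If it is, then by the result of Cutkosky recalled at the start of this section the base $(S,o)$ is smooth, and there is nothing to prove. If $X$ is not Gorenstein along $C$ but $(S,o)$ is smooth, there is again nothing to prove. The only remaining case is that $X$ is not Gorenstein and $(S,o)$ is singular, and this is exactly the standing hypothesis of Theorem~\xref{theorem-conic-bundles-singular-base}.

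Finally I would invoke that classification and read off the base surface in each of its six outcomes. In cases~\xref{item=main--th-pr-toric} and~\xref{item=main--th-pr-ex3} the base $\CC^2/\muu_m$ is of type~\type{A_{m-1}}; in case~\xref{item-main-th-impr-barm=2-s=4} it is Du Val of type~\type{A_3}; and in cases~\xref{item-main-th-impr-barm=1}, \xref{item-main-th-impr-barm=2-s=2-cycl}, and~\xref{item=main--th-cyclic-quo} it is Du Val of type~\type{A_1}. Thus in every singular case $(S,o)$ is a Du Val singularity of type~\type{A}, and combined with the smooth cases this proves that $S$ has at worst such singularities. The genuine content of the statement sits entirely inside Theorem~\xref{theorem-conic-bundles-singular-base}, which we are assuming; so I expect no real difficulty in the argument itself, and the only point demanding care is the first step, namely confirming that passing to the analytic germ produces an object satisfying the hypotheses of the classification, in particular that no global property of $X/S$ is needed that the germ fails to retain.
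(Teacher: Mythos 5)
Your proposal is correct and matches the paper's treatment: the theorem is stated there precisely as an immediate consequence of the local classification in Theorem~\ref{theorem-conic-bundles-singular-base}, obtained by localizing at each point of $S$ and reading off the base singularity in every case of that classification (with the Gorenstein/smooth-base cases handled exactly as you describe). Your extra care about the germ definition dropping $\QQ$-factoriality and $\uprho(X/S)=1$ is consistent with the remark the paper makes right after introducing $\QQ$-conic bundle germs.
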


If $(S\ni o)$ is of type~\type{A_{m-1}}, then 
we say that the germ $(X,C)$ has \textit{topological index} $m$.
In this case $(X,C)$ is a quotient of a $\QQ$-conic bundle germ 
over a smooth base by $\mumu_m$ (see \cite[(2.4)]{Mori-Prokhorov-2008}, 
\cite[Construction 1.9]{Prokhorov-1997_e}).

\subsection{}
Note that the condition that $X$ has terminal singularities is crucial 
in Theorem~\xref{singularities-base-DV}.
In general (for contractions in the log terminal category), 
the singularities of the base are worse than canonical.
It is known that the base $S$ of an arbitrary Mori fiber space $\pi: X\to S$ is a normal $\QQ$-factorial variety with
at worst log terminal singularities \cite[Lemma 5-1-5]{KMM}, \cite{Fujino-1999app}.

\begin{subexample}
\label{ex-toric}
Consider the following action of $\muu_m$ on $\PP^1_x\times\CC^2_{u,v}$:
\begin{equation*}
(x;u,v) \longmapsto(\varepsilon x; \varepsilon u,\varepsilon^{m-2} v),
\end{equation*}
where $m$ is odd and $\varepsilon$ is a primitive $m$-th root of unity. Let $X:=\PP^1\times\CC^2/\muu_m$,
$S:=\CC^2/\muu_m$ and let $\pi: X\to S$ be the natural
projection. Since $\muu_m$ acts freely in codimension one, $-K_X$ is
$f$-ample. Two fixed points on $\PP^1\times \CC^2$ gives two 
quotient singularities of $X$ which are terminal of type $\frac1m(-1,1,-2)$ and
canonical Gorenstein of type $\frac1m(1,1,-2)$. In this case, $f$ is an extremal contraction 
in the category threefolds with canonical singularities. The base surface $S$ has a 
quotient singularity of type $\frac1m (1,-2)$ which is not canonical but $\delta$-log canonical with
$\delta=(m+1)/2m$. 
\end{subexample}
However, 
V. Shokurov conjectured that for any extremal $K$-negative contraction from threefold with only
canonical singularities to a surface, the singularities of the base are 
$1/2$-log canonical.
More generally, there is the following conjecture posed by J. McKernan.

\begin{sconjecture}
For fixed integer $n>0$ and a real number 
$\varepsilon >0$, there exists a constant $\delta=\delta(n,\varepsilon)>0$ such that the following holds: 
If $\pi :X\to S$ is an extremal contraction, where $X$ is a $\QQ$-factorial $n$-dimensional variety with only 
$\varepsilon$-log terminal singularities, then the singularities of $S$ are $\delta$-log terminal. 
\end{sconjecture}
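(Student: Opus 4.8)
The plan is to read the statement as a local assertion about log discrepancies on $S$ and to split off the two kinds of extremal contraction. If $\dim S=\dim X$, then $\pi$ is birational. Writing $K_X=\pi^*K_S+\sum a_iE_i$ with the $E_i$ the $\pi$-exceptional divisors, the hypothesis that $\pi$ is $K$-negative forces every $a_i\ge 0$, and then for any divisorial valuation $F$ over $S$ the standard discrepancy comparison gives $a(F,S)\ge a(F,X)>\varepsilon$; hence $S$ is already $\varepsilon$-log terminal and one may take $\delta=\varepsilon$. So the substantive case is the fibre type one, in which $\pi\colon X\to S$ is a Mori fibre space whose generic fibre is an $\varepsilon$-log terminal Fano variety over the function field $\Bbbk(S)$. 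Fixing $o\in S$ and a divisorial valuation $E$ over $(S,o)$, the goal is to bound $a(E,S)$ below by a constant $\delta=\delta(n,\varepsilon)>0$.

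The main idea for the fibre type case is to transport singularity bounds from $X$ down to $S$ through the canonical bundle formula. First I would use Birkar's boundedness of $\varepsilon$-log terminal Fano varieties, and the attendant boundedness of complements in its relative form local over $S$, to produce an integer $N=N(n,\varepsilon)$ and an effective $\QQ$-divisor $B$ on $X$ such that $(X,B)$ is log canonical, $N(K_X+B)\sim 0$ over $S$, and the coefficients of $B$ lie in $\tfrac1N\ZZ$. Applying adjunction for fibre spaces (the canonical bundle formula of Ambro and Kawamata) to the resulting log canonical trivial fibration $(X,B)\to S$ yields
\begin{equation*}
K_X+B\qq \pi^*(K_S+B_S+M_S),
\end{equation*}
where $B_S$ is the discriminant divisor, $M_S$ the moduli part, and the generalized pair $(S,B_S+M_S)$ is log canonical. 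The coefficient of a prime divisor $D\subset S$ in $B_S$ is $1$ minus the log canonical threshold of the fibration over the generic point of $D$; boundedness of the fibres over codimension one points, together with the ascending chain condition for log canonical thresholds, bounds these coefficients uniformly by $1-\tau$ with $\tau=\tau(n,\varepsilon)>0$.

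To finish I would have to turn the generalized log canonical property of $(S,B_S+M_S)$ into a uniform positive lower bound for the log discrepancies of $S$ itself, and this is where the genuine difficulty — and the reason the statement is still a conjecture — resides. Two ingredients are required, both problematic in general. The hard part will be the control of the moduli part $M_S$: one needs it to be a nef $\QQ$-divisor with denominators bounded in terms of $n$ and $\varepsilon$ alone and with mild singularities, which is exactly the effective adjunction (effective b-semiampleness) conjecture of Prokhorov and Shokurov, proved in relative dimension one but open in general. Granting this, the discriminant coefficients and the moduli data are bounded, so the germ $(S,o)$ lands in a bounded family of singularities, on which the minimal log discrepancy admits a uniform positive lower bound; extracting from this a single $\delta=\delta(n,\varepsilon)$ is the effective output that boundedness is meant to provide. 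In the conic bundle situation of this paper, where $\dim X-\dim S=1$, both ingredients are in hand: the moduli part of a genus-$0$ fibration is well understood, and Theorem~\xref{singularities-base-DV} already delivers the sharp qualitative conclusion (Du Val of type \type{A}, hence canonical), while Example~\xref{ex-toric} shows, in the related canonical setting, that the optimal constant approaches $\tfrac12$.
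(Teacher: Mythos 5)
There is nothing in the paper to compare your argument against: the statement is recorded precisely as a \emph{conjecture} (attributed to J.~McKernan), and the text offers no proof, only the remarks that the toric case is due to Alexeev and Borisov and that partial results and connections with log adjunction appear in Birkar's and Shokurov's work. So the only question is whether your proposal closes the problem on its own, and — as you yourself say — it does not. Your outline is the standard modern strategy: reduce to the fibre-type case, produce a bounded relative complement $B$ with $N(K_X+B)\sim 0$ over $S$ using boundedness of complements for Fano type morphisms, push down through the canonical bundle formula, and bound the discriminant coefficients away from $1$ by ACC for log canonical thresholds on a bounded family of fibres. The two places you flag as open are exactly the genuine obstructions: (a) effective b-semiampleness of the moduli part $M_S$ (the Prokhorov--Shokurov effective adjunction conjecture), without which one cannot choose a representative of $M_S$ with controlled denominators and singularities on a fixed model of $S$; and (b) the passage from ``$(S,B_S+M_S)$ is generalized log canonical with discriminant coefficients $\le 1-\tau$'' to a uniform lower bound $\delta(n,\varepsilon)$ for the log discrepancies of $S$ itself, which requires more than log canonicity of the adjoint pair. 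A proposal that openly defers its two key steps to open conjectures is a strategy, not a proof.

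Two smaller points. First, the birational case you dispose of at the start is tangential: in the paper's context (the discussion surrounding Shokurov's $1/2$-log canonical conjecture and Example~\xref{ex-toric}) the intended statement concerns $K$-negative contractions of fibre type, and your discrepancy comparison in the divisorial case silently uses $K$-negativity, which the bare statement of the conjecture does not assert. Second, your closing calibration is consistent with the paper: Example~\xref{ex-toric} exhibits bases that are only $(m+1)/2m$-log canonical for canonical total spaces, and Theorem~\xref{singularities-base-DV} gives the sharp qualitative answer in the terminal threefold conic bundle case; but neither of these substitutes for the missing general ingredients.
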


This conjecture 
in the toric case was proved by Alexeev and Borisov \cite{Alexeev-Borisov-toric}.
Partial results in general case were obtained in \cite{Birkar-sMFS}
and related to log adjunction (cf. \cite{Shokurov2013}, \cite{Shokurov2017}).

As a consequence of Theorem~\xref{theorem-conic-bundles-singular-base} we also 
have the following two facts.

\begin{corollary}
\label{corollary-Q-conic-bundles-}
Let $\pi: (X,C)\to (S,o)$ be a $\QQ$-conic bundle germ and 
let $\Delta\subset S$ be the discriminant curve.
Then $\Delta$ is a Cartier divisor at $o$ except for the cases 
\xref{item-main-th-impr-barm=2-s=4},~\xref{item-main-th-impr-barm=2-s=2-cycl},
\xref{item=main--th-cyclic-quo}.
\end{corollary}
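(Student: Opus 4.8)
The plan is to run through the six cases of Theorem~\xref{theorem-conic-bundles-singular-base} and, in each, to compute the class of $\Delta$ in the local class group $\Cl(\OOO_{S,o})\cong\ZZ/m$, where $(S,o)$ is of type $\type{A_{m-1}}$. First I would reduce the Cartier question to a character computation on the canonical cover. In every case the germ is presented as a quotient $X=X'/\muu_m$ of a $\QQ$-conic bundle $\pi':X'\to S'=\CC^2$ over a \emph{smooth} base, where $\muu_m$ acts on $\CC^2_{u,v}$ with weights $(1,-1)$ in a suitable generator, so that $S=\CC^2/\muu_m$ is of type $\type{A_{m-1}}$. Since this action is free on $\CC^2\setminus\{0\}$, the quotient $q:\CC^2\to S$ is étale in codimension one, and over $S^o=S\setminus\{o\}$ the discriminant is compatible with base change: $\Delta'=q^{-1}(\Delta)$ and $\Delta=q(\Delta')$. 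Hence $\Delta'$ is $\muu_m$-invariant and is cut out near $0$ by a semi-invariant $g\in\CC\{u,v\}$. Using the identification of $\Cl(\OOO_{S,o})$ with the character group of $\muu_m$ (for a cyclic quotient surface singularity, the class of a Weil divisor is the character of its local semi-invariant equation), $\Delta$ is Cartier at $o$ if and only if $g$ has trivial character, i.e. is $\muu_m$-invariant.

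Next I would record a uniform formula for the character of $g=\det M$. Over $S^o$ the threefold $X'$ is a conic $\sum_{i,j}M_{ij}(u,v)\,y_iy_j=0$; in the index-two cases of the form \eqref{eq-eq-index2} I eliminate $y_4$ from the two equations $q_i=\psi_i$ to obtain such a conic in $y_1,y_2,y_3$, whose defining equation has character $\chi_0=\wt(q_1)+\wt(q_2)-\wt(y_4)$. Writing $w_i:=\wt(y_i)$, semi-invariance of the quadratic form forces $\wt(M_{ij})=\chi_0-w_i-w_j$, so that
\[
\wt(\det M)\equiv 3\chi_0-2(w_1+w_2+w_3)\pmod m .
\]
Thus $\Delta$ is Cartier at $o$ exactly when $3\chi_0-2(w_1+w_2+w_3)\equiv 0\pmod m$.

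Then I would evaluate this in each case, reading $m$, the $w_i$ and the $q_i$ directly from Theorem~\xref{theorem-conic-bundles-singular-base}. In case~\xref{item=main--th-pr-toric} the cover is a smooth $\PP^1$-bundle, so $\Delta'=\emptyset$ is Cartier; in~\xref{item=main--th-pr-ex3} one has $\det M=uv$, which is invariant; and in~\xref{item-main-th-impr-barm=1} the determinant is $\psi\in\CC\{u^2,v^2,uv\}$, manifestly $\muu_2$-invariant. Hence $\Delta$ is Cartier in exactly these three cases. For the three exceptional cases the same recipe gives a nonzero character. In~\xref{item=main--th-cyclic-quo} one has $m=2$, $(w_1,w_2,w_3,w_4)=(1,1,0,1)$, $\wt(q_1)=\wt(q_2)=0$, so $\chi_0\equiv 1$ and $3\chi_0-2(w_1+w_2+w_3)\equiv 3-4\equiv 1$. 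In~\xref{item-main-th-impr-barm=2-s=2-cycl} one has $m=2$, $(w_1,w_2,w_3,w_4)=(0,1,0,1)$, $\wt(q_1)=\wt(q_2)=0$, so $\chi_0\equiv1$ and the expression is $3-2\equiv 1$. In~\xref{item-main-th-impr-barm=2-s=4} one has $m=4$, $(w_1,w_2,w_3,w_4)=(3,1,2,1)$, $\wt(q_1)=2$, $\wt(q_2)=0$, so $\chi_0=1$ and $3\cdot1-2\cdot 6\equiv 3\pmod 4$. In all three cases the class of $\Delta$ is nonzero, so $\Delta$ is not Cartier, which is precisely the asserted list of exceptions.

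The main obstacle is the index-two computation. I must justify that eliminating $y_4$ from \eqref{eq-eq-index2} really yields, over $S^o$, a conic whose degeneracy locus is $\Delta'$, and --- \emph{crucially} --- that the resulting character $\wt(\det M)$ depends only on the weights $w_i$ and $\wt(q_i)$ fixed by the $\muu_m$-action, and not on the unspecified perturbations $\psi_i$. This is exactly where the semi-invariance of the equations does the work: it pins down $\wt(M_{ij})$ regardless of the actual entries, so the class of $\Delta$ is forced even though $\Delta$ itself is not determined by the data. A secondary point to verify is that in each exceptional case the conic genuinely degenerates, so that $\Delta'\neq\emptyset$ and the character computation is not vacuous; this follows from the explicit central fibre in Theorem~\xref{theorem-conic-bundles-singular-base}, for instance from the reducible central curve in case~\xref{item=main--th-cyclic-quo}.
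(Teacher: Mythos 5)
Your proof is correct and is essentially the argument the paper intends: the corollary is stated as a direct consequence of Theorem~\xref{theorem-conic-bundles-singular-base}, and one reads off that in cases~\xref{item=main--th-pr-toric},~\xref{item=main--th-pr-ex3},~\xref{item-main-th-impr-barm=1} the discriminant is cut out by a $\muu_m$-invariant function ($\emptyset$, $uv$, $\psi$ respectively), hence is Cartier, while in the three exceptional cases the semi-invariant local equation has nontrivial character in $\Cl(\OOO_{S,o})\simeq\ZZ/m$ (your value $3\bmod 4$ in case~\xref{item-main-th-impr-barm=2-s=4} agrees with the explicit discriminant $(u^2+4v^2)u$ recorded in Example~\xref{example-imp-4}). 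The one point you flag does close: the locus $\{a_1=a_2=0\}$ is finite because every fiber over $S\setminus\{o\}$ must be a conic, and since the total space of the cover is terminal (so smooth in codimension two) the discriminant is reduced at the generic point of each of its components, whence $\det M$ from your elimination differs from the true discriminant equation only by a semi-invariant unit, whose character is trivial.
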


\begin{corollary}
\label{corollary-Q-conic-bundles-plt}
Let $\pi: (X,C)\to (S,o)$ be a $\QQ$-conic bundle germ and 
let $\Delta\subset S$ be the discriminant curve. 
Then the following holds.
\begin{enumerate}
\item 
$\Delta\not\ni o$ if and only if
$(X,C)$ is of type~\xref{item=main--th-pr-toric}.
\item 
If $\Delta\ni o$ and the pair $(S,\Delta)$ is plt at $o$, then 
$X$ is smooth near $C$ \textup(and $S$ is smooth at $o$\textup).
\end{enumerate}
\end{corollary}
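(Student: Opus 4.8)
The plan is to read both assertions off the classification of Theorem~\xref{theorem-conic-bundles-singular-base}, after first separating the Gorenstein case. By Theorem~\xref{singularities-base-DV} the base germ $(S,o)$ is either smooth or Du Val of type~\type{A}. If $X$ is Gorenstein, then $S$ is smooth and $\pi$ is an ordinary conic bundle, embedded in $\PP^2\times S$ with fibres given by the normal forms of~\xref{equation-cb}; I treat this case by hand below. So the substance is the non-Gorenstein case, where, when $(S,o)$ is singular, $(X,C)$ is one of the six types of Theorem~\xref{theorem-conic-bundles-singular-base}.

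For~(i), the direction from type~\xref{item=main--th-pr-toric} to $\Delta\not\ni o$ is immediate: since $\gcd(m,a)=1$ the $\mumu_m$-action on $\CC^2$ is free away from the origin, so every fibre of $(\PP^1\times\CC^2)/\mumu_m\to\CC^2/\mumu_m$ is a smooth conic (the central one being $\PP^1/\mumu_m\simeq\PP^1$); hence $\Delta=\emptyset$ near $o$. For the converse, $\Delta\not\ni o$ means $\pi$ is a $\PP^1$-bundle over the punctured germ $S\setminus\{o\}$. If $(S,o)$ were smooth this puncture would be simply connected, the Brauer obstruction would vanish, the bundle would extend across $o$, and — a $\PP^1$-bundle having no flopping curves — $X$ would coincide with this smooth extension, contradicting non-Gorensteinness. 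Thus a non-Gorenstein $X$ with $\Delta\not\ni o$ has singular base, so Theorem~\xref{theorem-conic-bundles-singular-base} applies; inspecting the remaining five types shows $o\in\Delta$ in each (for instance in~\xref{item=main--th-pr-ex3} the upstairs discriminant is $\det\diag(1,u,v)=\{uv=0\}$, whose image passes through $o$), leaving only type~\xref{item=main--th-pr-toric}.

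For~(ii), I would show the plt hypothesis excludes every non-Gorenstein possibility. For a reduced boundary, plt-ness of $(S,\Delta)$ at $o$ forces $\lfloor\Delta\rfloor=\Delta$ to be normal, hence smooth, at $o$. Running through the classification, in each non-Gorenstein germ with $o\in\Delta$ the discriminant has at least two branches at $o$ — transparently in~\xref{item=main--th-pr-ex3} and~\xref{item-main-th-impr-barm=1}, where $\Delta$ is the pair of lines $\{uv=0\}$, resp.\ $\{\psi=0\}$ with $\psi$ squarefree of degree~$2$, and by the analogous local computation in~\xref{item-main-th-impr-barm=2-s=4},~\xref{item-main-th-impr-barm=2-s=2-cycl},~\xref{item=main--th-cyclic-quo} (where moreover $\Delta$ is not Cartier, cf.\ Corollary~\xref{corollary-Q-conic-bundles-}); the same count also rules out the non-Gorenstein smooth-base germs. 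Hence $\Delta\ni o$ together with plt force $X$ Gorenstein and $S$ smooth. In that case plt means $o\in\Delta\setminus\Sing\Delta$, so the fibre over $o$ is a reduced pair of distinct lines; the rank-two normal form of~\xref{equation-cb} with $\mult_o(c_2)=1$ gives the Jacobian of $X$ full rank at the node of this fibre, and $X$ is manifestly smooth along the rest of $C$. Thus $X$ is smooth near $C$, as required.

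The main obstacle is the non-Gorenstein bookkeeping: one must verify, from the explicit equations of Theorem~\xref{theorem-conic-bundles-singular-base}, that the discriminant really meets $o$ in more than one branch in types~\xref{item=main--th-pr-ex3}--\xref{item=main--th-cyclic-quo}, since this is precisely what singles out type~\xref{item=main--th-pr-toric} in~(i) and what violates plt in~(ii); the non-Cartier input of Corollary~\xref{corollary-Q-conic-bundles-} is suggestive but, as the ruling of an \type{A_1}-cone shows, not by itself enough to defeat plt, so the branch count must be done case by case. The only non-computational subtlety is the extension argument ruling out a non-smooth $X$ with $\Delta\not\ni o$ over a smooth base; everything else reduces to local bookkeeping with the normal forms.
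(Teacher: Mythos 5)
Your overall strategy -- separate off the Gorenstein case and then read everything off the classification of Theorem~\xref{theorem-conic-bundles-singular-base} -- is exactly how the paper presents this corollary (it is stated as ``a consequence'' of that theorem, with no further proof), and your handling of part~(i), including the extension argument showing that $\Delta\not\ni o$ over a smooth base forces $X$ to be a $\PP^1$-bundle, is sound and in the same spirit as Example~\xref{example-Sl-Gorenstein}. But there is a genuine gap in part~(ii): the sentence ``the same count also rules out the non-Gorenstein smooth-base germs'' is doing all the work in the hardest case, and there is nothing to back it up. Theorem~\xref{theorem-conic-bundles-singular-base} assumes $(S,o)$ is \emph{singular}; over a smooth base only the index-two germs are classified (Theorem~\xref{theorem-index=2-conic-bundles}), and the paper explicitly says the smooth-base classification is incomplete. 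So there is no list to ``run through,'' and even for the index-two germs \eqref{eq-eq-index2} you have not computed the discriminant. What you actually need is the statement: if $(S,o)$ is smooth and $X$ is not smooth along $C$, then $\Delta$ is singular at $o$. For Gorenstein $X$ this is your normal-form computation; for non-Gorenstein $X$ it requires a separate argument (e.g.\ via a general hyperplane section of $S$ through $o$ and the relatively minimal model of the resulting surface conic bundle, or a citation to the relevant result of Mori--Prokhorov), and note that this is precisely the strength in which the corollary is later used in the proof of Proposition~\xref{proposition-mu=1}, where the base is $\PP^2$.

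A secondary inaccuracy: your criterion ``at least two branches at $o$'' is not what plt-ness detects, and it is in fact false in case~\xref{item-main-th-impr-barm=1}. There $\psi$ is only required to be a square-free element of $\CC\{u^2,v^2,uv\}$, not a quadratic form; for $\psi=u^2-v^4$ the two branches upstairs are interchanged by $\mumu_2$, so the image $\Delta\subset\CC^2/\mumu_2$ is \emph{unibranch} (a cusp-like curve with local ring $\CC[[v^2,v^3]]$). The argument survives because plt forces $\lfloor\Delta\rfloor$ to be normal, i.e.\ smooth, and this curve is not normal -- but the check you should be making in each case is that $(S,\Delta)$ fails to be plt (most cleanly by pulling back to the smooth $\mumu_m$-cover, since a smooth $\Delta$ through an~\type{A}-point can perfectly well give a plt pair), not that $\Delta$ has two branches. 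With that correction the singular-base bookkeeping goes through; the smooth-base non-Gorenstein case remains the real hole.
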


\subsection{}
Results in the case of smooth base surface $S$ are not complete.
However this case studied well under additional assumption that central curve $C$
is irreducible
\cite{Mori-Prokhorov-2008III},
\cite{Mori-Prokhorov-IA},
\cite{Mori-Prokhorov-IC-IIB}, 
\cite{Mori-Prokhorov-IIA-1}, 
\cite{Mori-Prokhorov-IIA-II}. The main strategy is as follows.
Using a ``good'' member $D\in |-K_X|$ (Theorem~\xref{theorem-ge})
it is possible to analyze a general hyperplane section $H\subset X$
passing through $C$. Then the threefold $X$ can be viewed as the total space 
of a one-parameter deformation of $H$. If $H$ is normal, then its singularities are rational.

\begin{subexample}[\cite{Mori-Prokhorov-IC-IIB}]
Consider a normal surface germ $(H,C)$ along a curve $C\simeq \PP^1$ 
whose minimal resolution $H_{\min}$ have the following dual graph
\begin{equation*} 
\xymatrix@R=2pt{
&&&\overset{-2}\circ\ar@{-}[d]&\overset{-2}\circ \ar@{-}[d] \ar@{-}[r]&\overset{-3}{\circ}
\\
\underset{-1}\bullet\ar@{-}[r]& \underset{-2}\circ\ar@{-}[r]&\underset{-2}\circ\ar@{-}[r]&\underset{-2}\circ\ar@{-}[r]
&\underset{-3}{\circ}\ar@{-}[r]&\underset{-3}{\circ}\ar@{-}[r]&\underset{-2}\circ 
}
\end{equation*}
Here the white vertices correspond to exceptional rational 
curves and the black vertex corresponds to the curve $C$. The number attached to a vertex 
is the self-intersection of the corresponding curve. It is easy to see that 
the configuration of white vertices can be contracted to a rational singularity 
$H\ni P$ and the whole configuration is a fiber of a rational curve fibration 
$H_{\min}\to \Gamma$. Detailed computations \cite[10.8]{Kollar-Mori-1992} show that, locally, $H\ni P$ can be realized as 
a hyperplane section of a threefold terminal singularity
\begin{equation*} 
(X,P)\simeq\CC^3_{y_1,y_2,y_4}/\muu_5(2,3,1)
\end{equation*}
such that $H\supset C$, where 
\begin{equation*}
C=\{y_1^{3}-y_2^2=y_4=0\}/\muu_5.
\end{equation*}
The deformations of $H$ are unobstructed, so a general one-parameter deformation of $H$
is a threefold $X\supset H$ with terminal singularity at $P$. We get a $\QQ$-conic bundle \
$\pi: (X,C)\to (S,o)$ with smooth $S$.
\end{subexample}
The situation is more complicated in the case where $H$ is not normal.
Let us give the simplest example.

\begin{subexample}[\cite{Mori-Prokhorov-IA}]
Let $\lambda_1, \lambda_2\in \CC$ be some general constants, and let
$X$ be the threefold given in $\PP(1,a,m-a,m)\times \CC_t$ by 
\begin{equation*}
x_1^{2m-2a}x_2^2+x_1^{2a}x_3^2+ x_2x_3x_4+(\lambda_1 x_1^m-x_4)(\lambda_2x_1^m-x_4)t=0. 
\end{equation*}
Then a small analytic neighborhood of the 
curve 
\begin{equation*}
C:=\{x_2=x_3=t=0\}
\end{equation*}
is a $\QQ$-conic bundle germ. 
The singular locus of $X$ near $C$ consists of a cyclic quotient singularity of type 
$\frac 1m (1,a,m-a)$ and two (Gorenstein) ordinary double points. 
A general hyperplane section passing through $C$ is not normal.
\end{subexample}

In the sequel we need the following simple lemmas.

\begin{lemma}
\label{lemma-Delta-multiplicity=2}
Let $\pi: (X,C)\to (S,o)$ be a $\QQ$-conic bundle germ over smooth base
and let $\Delta$ be the discriminant curve. 
Assume that $X$ is singular and $\mult_o(\Delta)=2$. 
Take a standard model $\pi^\bullet : X^\bullet\to S^\bullet$ 
fitting in 
diagram \eqref{equation-(1)} so that the relative Picard number 
$\uprho(S^\bullet/ S)$ is minimal. Let $\Delta^\bullet\subset S^\bullet$ be the 
discriminant curve. 
Then $\Delta^\bullet$ and the the exceptional divisor $\alpha^{-1}(o)$ have no common 
components. 
\end{lemma}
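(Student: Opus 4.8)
The plan is to combine the minimality of $\uprho(S^\bullet/S)$ with the contraction criterion of Proposition~\ref{proposition-contraction-(-1)-curve}, and to eliminate stray exceptional components of $\Delta^\bullet$ using the genus bound of Corollary~\ref{discriminant-divisor-pa=0} and the parity condition~\eqref{equation-condition-S-star-star}. Since both $S$ and $S^\bullet$ are smooth and $\alpha$ is a birational morphism, $\alpha$ is a composition of point blow-ups and $T:=\alpha^{-1}(o)$ is a connected tree of smooth rational curves. Applying Corollary~\ref{cor:disc} to the fiberwise equivalence in diagram~\eqref{equation-(1)} gives $\alpha(\Delta^\bullet)=\Delta$; hence each component of $\Delta^\bullet$ either dominates a component of $\Delta$ (a component of the proper transform $\tilde\Delta$) or is contracted to $o$ (a component of $T$), so I may write $\Delta^\bullet=\tilde\Delta+\Theta$ with $\Theta$ a reduced divisor supported on $T$. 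The assertion is exactly $\Theta=0$, so I assume $\Theta\neq 0$ and derive a contradiction.

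First I would record the numerical content of minimality. If $E\subset T$ is a $(-1)$-curve satisfying one of the conditions (i)--(iii) of Proposition~\ref{proposition-contraction-(-1)-curve} relative to $\Delta^\bullet$, then contracting it yields a standard conic bundle over an intermediate smooth surface $S'$ with $S^\bullet\to S'\to S$, which still fits into~\eqref{equation-(1)} but has $\uprho(S'/S)=\uprho(S^\bullet/S)-1$, contradicting minimality. Therefore every $(-1)$-curve $E\subset T$ satisfies $E\cdot\Delta^\bullet\ge 2$ when $E\not\subset\Delta^\bullet$, and $E\cdot(\Delta^\bullet-E)\neq 2$ when $E\subset\Delta^\bullet$.

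Next I would use $\mult_o(\Delta)=2$. A reduced plane-curve singularity of multiplicity $2$ is of type $A_n$, and the minimal blow-up sequence rendering the proper transform nodal is a chain $E_1-E_2-\cdots-E_r$ in which $E_1,\dots,E_{r-1}$ are $(-2)$-curves, $E_r$ is the unique $(-1)$-curve and a leaf; from $\alpha^*\Delta\cdot E_i=0$ one computes $\tilde\Delta\cdot E_r=2$ and $\tilde\Delta\cdot E_i=0$ for $i<r$. Minimality forces $T$ to be precisely this chain. Now let $E_k$ be the component of $\Theta$ of smallest index. If $k=r$, then $\Theta=\{E_r\}$ and $E_r\cdot(\Delta^\bullet-E_r)=\tilde\Delta\cdot E_r=2$, so condition~(iii) applies and $E_r$ contracts, a contradiction. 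If $k<r$, consider the decomposition $\Delta^\bullet=E_k+(\Delta^\bullet-E_k)$; since $E_{k-1}\notin\Theta$ (as $k$ is least) and $\tilde\Delta\cdot E_k=0$, the number of intersection points is $E_k\cdot(\Delta^\bullet-E_k)=E_k\cdot E_{k+1}$ if $E_{k+1}\in\Theta$ and $0$ otherwise. By~\eqref{equation-condition-S-star-star} this must be even, so $E_{k+1}\notin\Theta$; but then $E_k$ is disjoint from $\Delta^\bullet-E_k$, making $\{E_k\}\simeq\PP^1$ a connected component of $\Delta^\bullet$ of arithmetic genus $0$, contradicting Corollary~\ref{discriminant-divisor-pa=0}.

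The step I expect to be the main obstacle is the third one: verifying that the exceptional configuration of the \emph{minimal} standard model is exactly the $A_n$-chain, and computing the intersection numbers $\tilde\Delta\cdot E_i$. This rests on the local analysis of the resolution at $o$, where $\mult_o(\Delta)=2$ is essential: it is what keeps the multiplicities of $\alpha^*\Delta$ aligned so that only the terminal $(-1)$-curve meets $\tilde\Delta$ (with $\tilde\Delta\cdot E_r=2$), every competing configuration being eliminated by the contraction criterion above. Once the chain and these intersection numbers are established, the parity condition~\eqref{equation-condition-S-star-star} and the genus bound close the argument uniformly, and the degenerate case $r=1$ (already nodal or a single blow-up) is subsumed under $k=r$.
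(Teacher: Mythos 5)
Your toolkit — the parity condition \eqref{equation-condition-S-star-star}, the genus bound of Corollary~\ref{discriminant-divisor-pa=0}, and playing the contraction criterion of Proposition~\ref{proposition-contraction-(-1)-curve} against minimality of $\uprho(S^\bullet/S)$ — is the right one, and the reduction to $\Delta^\bullet=\tilde\Delta+\Theta$ with $\Theta$ supported on $T=\alpha^{-1}(o)$ is fine. The gap is exactly the step you flagged: the claim that minimality forces $T$ to be the minimal chain of blow-ups making the proper transform of the $A$-type singularity nodal, with $E_r$ the unique $(-1)$-curve and $\tilde\Delta\cdot E_i=0$ for $i<r$. This is false. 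Your own minimality criterion only excludes $(-1)$-curves satisfying conditions (i)--(iii) of Proposition~\ref{proposition-contraction-(-1)-curve}; a $(-1)$-curve $E\subset T$ with $E\not\subset\Delta^\bullet$ and $E\cdot\Delta^\bullet=2$ satisfies none of them and therefore survives in the minimal standard model, even though contracting it keeps the image curve nodal. The obstruction to contracting is the smoothness of the total space, not the geometry of $\Delta$ alone — which is precisely why the hypothesis that $X$ is singular appears in the lemma. Example~\ref{example-Sl-Gorenstein} is a concrete counterexample to your identification: there $\Delta$ already has a node at $o$, so your chain is empty, yet the minimal standard model still requires one blow-up of $o$, and its exceptional $(-1)$-curve meets $\Delta^\bullet$ in two points. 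Since all of your intersection numbers, the uniqueness of the $(-1)$-curve, and its leaf position are derived from that identification, the case analysis on the least index $k$ with $E_k\subset\Theta$ is unsupported (and in the degenerate case it would ``prove'' the false statement $T=\emptyset$).

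The paper's proof never pins down $T$. It first treats the case where the proper transform $\Delta'$ of $\Delta$ is singular on $S^\bullet$: then $\Delta'$ meets $\alpha^{-1}(o)$ only at its node, and Corollaries~\ref{normal-crossing} and~\ref{discriminant-divisor-pa=0} already exclude exceptional components. When $\Delta'$ is smooth it splits on the parity of $n$ in the local equation $y^2=x^n$. For $n$ odd the single branch meets $T$ in one point and \eqref{equation-condition-S-star-star} together with the genus bound forbids exceptional components at once. For $n$ even the two disjoint branches attach to $T$ at two points; \eqref{equation-condition-S-star-star} and Corollary~\ref{discriminant-divisor-pa=0} force $\Theta$ to be either empty or the entire chain joining the two attachment points, and in the latter case some $(-1)$-curve of $T$ does satisfy the hypotheses of Proposition~\ref{proposition-contraction-(-1)-curve}, contradicting minimality. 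If you want to keep your framework, you must run the parity and genus arguments against an \emph{arbitrary} exceptional tree in this way, rather than against a presumed normal form for $T$.
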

\begin{proof}
According to Corollary \ref{cor:disc} we have $\Delta^\bullet\le \alpha^* \Delta$ and $\alpha(\Delta^\bullet)= \Delta$. 
The curve germ $o\in\Delta$ locally can be 
given by the equation $y^2-x^{n}=0$ and the inverse image $\alpha^{-1}(o)$ is a 
tree of rational curves. Let $\Delta'\subset S^\bullet$ be the proper transform of 
$\Delta$. Clearly, $\Delta'\subset \Delta^\bullet$.
If $\Delta'$ is singular at a point $P\in \alpha^{-1}(o)$, then 
$\Delta'\cap \alpha^{-1}(o)=\{P\}$ and none of the components of $\alpha^{-1}(o)$
are contained in $\Delta^\bullet$ by Corollaries~\xref{normal-crossing} and~\xref{discriminant-divisor-pa=0}.
From now on we assume that $\Delta'$ is smooth.

Suppose that $n$ is even. Then $\Delta$ has two smooth analytic branches at $o$ 
and their proper transforms $\Delta^\bullet_1$ and $\Delta^\bullet_2$ on $S^\bullet$
are disjoint.
Furthermore, $\Delta^\bullet_1$ and $\Delta^\bullet_2$ are connected by a chain of smooth rational curves $E_1,\dots, 
E_n\subset \alpha^{-1}(o)$ and all components of $\alpha^{-1}(o)$ other than 
$E_1,\dots, E_n$ are not contained in $\Delta^\bullet$ by 
Corollary~\xref{discriminant-divisor-pa=0} and \eqref{equation-condition-S-star-star}. Thus we may assume that $\Delta^\bullet$
contains some $E_i$. 
Again by \eqref{equation-condition-S-star-star}
\begin{equation*}
\Delta^\bullet=\Delta^\bullet_1+E_1+\cdots+ E_n+\Delta^\bullet_2, \quad 
E_i\subset \Delta^\bullet \quad \forall i=1,\dots,n.
\end{equation*}
Clearly, $\alpha^{-1}(o)$ contains at least one $(-1)$-curve, say $E$, and 
$\Delta^\bullet\cdot E\le 1$. Then by Proposition~\xref{proposition-contraction-(-1)-curve} we can blow down $E$ 
and get a standard model $X'/S'$ with smaller value $\uprho(S'/ S)$. This contradicts our assumptions.

Suppose that $n$ is odd. Then $\Delta'$ is irreducible (and smooth)
and $\Delta'\cap \alpha^{-1}(o)$ is a single point, say $P$.
By \eqref{equation-condition-S-star-star} the curves $\Delta^\bullet$ and $\alpha^{-1}(o)$ have no common 
components.
\end{proof}

\begin{lemma}\label{lemma-Cartier}
Let $f: (X,C)\to (Z,o)$ be a $\QQ$-conic bundle germ.
Assume that there exists an effective Weil divisor $H$ on $X$ such that
the pair $(X,H)$ is canonical and $K_X+H$ is numerically trivial.
Then the divisor $K_X+H$ is Cartier. 
\end{lemma}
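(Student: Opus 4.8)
First observe that $K_X+H$ is $\QQ$-Cartier, since this is part of the definition of the canonical pair $(X,H)$. The plan is to prove Cartierness pointwise, at each of the finitely many singular points $P\in X$ (away from them $K_X+H$ is automatically Cartier). I would begin by showing that the class of $K_X+H$ is torsion in every local class group $\Cl(X,P)$. By the classification above the reduced central fibre $C=f^{-1}(o)_{\mathrm{red}}$ is a tree of smooth rational curves (see Theorem~\ref{theorem-conic-bundles-singular-base}), so a Cartier divisor having degree $0$ on each component of $C$ is trivial on $C$; as the fibres of $f$ are conics we have $R^1f_*\OOO_X=0$, and the theorem on formal functions then forces $N(K_X+H)\sim 0$ on the germ for a suitable $N>0$. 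At a Gorenstein singular point $X$ is $cDV$ and $\Cl(X,P)$ is torsion free, so there the torsion class $K_X+H$ already vanishes; the whole problem is thus concentrated at the non-Gorenstein points.

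Next I would extract geometric information from the canonical hypothesis by adjunction. As $H$ is integral, effective and $(X,H)$ is canonical, $H$ is reduced and, being the reduced boundary of a (plt) pair, normal. Because $X$ is terminal we have $\codim_X\Sing(X)\ge 3$, so along the generic point of any curve contained in $H$ the threefold $X$ is smooth and $H$ is Cartier there; hence the different $\Diff_H(0)$ vanishes. Adjunction then gives $(K_X+H)|_H=K_H$, and the canonicity of $(X,H)$ shows that $H$ is a Du Val surface. In particular $(K_X+H)|_H$ is Cartier and, on the germ, linearly trivial.

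To pass from the restriction to $H$ back to $X$, I would use a covering trick at a non-Gorenstein point $P$. Let $d$ be the order of $K_X+H$ in $\Cl(X,P)$ and let $\beta\colon X'\to X$ be the associated cyclic cover, which is étale in codimension one; then $\beta^*(K_X+H)=K_{X'}+\beta^*H\sim 0$. Consequently $X'$ is again terminal, $(X',\beta^*H)$ is canonical, and, exactly as above, $S':=\beta^*H$ is a Du Val surface with $K_{X'}+S'\sim 0$. The Galois group $\muu_d$ acts on $X'$ freely in codimension one with $X'/\muu_d=X$, and on the fibre of $\OOO_{X'}(K_{X'}+S')$ over the point $P'$ lying above $P$ it acts through a primitive character of order $d$. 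Thus $K_X+H$ is Cartier at $P$ if and only if $d=1$.

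The hard part is precisely to exclude $d>1$. A nontrivial such cover would exhibit $X$ as a terminal quotient $X'/\muu_d$ of a terminal threefold carrying a $\muu_d$-invariant Du Val anticanonical ``elephant'' $S'$, whose image $H=S'/\muu_d$ would still give a canonical pair; equivalently, $H$ would be an honest integral member of $|-K_X|$ near $P$ (a general elephant in the sense of Theorem~\ref{theorem-ge}), forcing $[K_X+H]=0$. I expect this step to be the main obstacle. I would attack it by a Reid--Tai weight computation for the $\muu_d$-action on $\omega_{X'}$ and on $S'$ at $P'$, combining terminality of $X$ with canonicity of $(X,H)$; should a uniform argument prove elusive, the finite explicit list of non-Gorenstein $\QQ$-conic bundle germs provided by Theorems~\ref{theorem-index=2-conic-bundles} and~\ref{theorem-conic-bundles-singular-base} reduces the claim to a case-by-case verification that the canonical anticanonical divisor $H$ satisfies $K_X+H\sim 0$ locally.
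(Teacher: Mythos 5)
Your proposal correctly identifies the first reduction — $K_X+H$ is torsion in the (local) class group, the problem sits entirely at the non-Gorenstein points, and Cartierness is equivalent to the triviality of the associated cyclic cover — but it stops exactly where the lemma's content begins. The step "exclude $d>1$" is announced as "the main obstacle" and then deferred to an unspecified Reid--Tai computation or a case-by-case check against Theorems~\xref{theorem-index=2-conic-bundles} and~\xref{theorem-conic-bundles-singular-base}; neither is carried out, and the remark that $H$ "would be an honest integral member of $|-K_X|$, forcing $[K_X+H]=0$" is circular, since that is precisely what must be proved. The adjunction digression ($H$ normal, Du Val, $K_H=(K_X+H)|_H$ Cartier on $H$) is essentially correct but does not bear on Cartierness of $K_X+H$ on $X$; worse, it tacitly pictures $H$ passing through the bad points, which is the opposite of what actually happens.

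The paper closes the gap by a short contradiction argument whose key idea is absent from your plan. Suppose $K_X+H$ is not Cartier at a point $P$ of index $m$, and write $kK_X+H$ Cartier at $P$ with $1<k\le m$ (Kawamata). If $H\ni P$, take the exceptional divisor $E$ over $P$ with $a(E,X,0)=1/m$; Cartierness of $kK_X+H$ forces $-k/m+\mult_E(H)\in\ZZ$, hence $\mult_E(H)\ge k/m$ and $a(E,X,H)\le (1-k)/m<0$, contradicting canonicity. So $H$ \emph{avoids} every point where $K_X+H$ is non-Cartier. On the other hand $0<H\cdot C_i=-K_X\cdot C_i<1$, so $H$ is non-Cartier at some point $P_1\in \Supp H\cap C$ with $P_1\ne P$; this produces a second non-Gorenstein point, which by Theorem~\xref{theorem-conic-bundles-singular-base} pins the germ down to types \xref{item=main--th-pr-toric} or \xref{item=main--th-pr-ex3}, where the restrictions $\Cl(X)_{\tors}\to\Cl(X,P)$ and $\Cl(X)_{\tors}\to\Cl(X,P_1)$ are both isomorphisms. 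Hence $K_X+H$ is also non-Cartier at $P_1$, so by the first step $H\not\ni P_1$ — contradicting $P_1\in\Supp H$. Without this discrepancy estimate (or a completed substitute for your missing step), the proof is incomplete.
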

\begin{proof}
Assume the converse, i.e. $K_X+H$ is not Cartier at some point
$P\in C\subset X$ of index $m$. 
Since the point $P\in X$ is terminal, $kK_X+H$ is a Cartier divisor in a neighborhood of $P$ 
for some $k$ (see \cite[Corollary 5.2]{Kawamata-1988-crep}).
According to our assumption $k\not \equiv 1\mod m$.
In particular, $m>1$. 
We may assume that $1<k\le m$.

Assume that $H\ni P$. It follows from the main result of
 \cite{Kawamata-1992-e-app}
that there exists an exceptional divisor $E$ over $P\in X$ with discrepancy 
$a(E, X, 0)=1/m$. Since $kK_X+H$ is a Cartier divisor at $P$, 
\[
-ka(E, X, 0) +\mult_E(H) =-k/m+\mult_E(H)\in \ZZ.
\]
Hence, $\mult_E(H)\ge k/m>1/m$ and
\[
a(E, X, H)=a(E, X, 0)  -\mult_E(H) \le 1/m -k/m<0.
\]
This contradicts canonicity of the pair $(X,H)$.
Thus, $H$ does not contain  $P$.

Clearly, the point $P$ must be non-Gorenstein \cite[Lemma 5.1]{Kawamata-1988-crep}. 
Since the pair $(X,H)$ is canonical, it is easy to see from the main result of \cite{Kawamata-1992-e-app} that 
$P\notin H$. 
Since $\Pic(X)\simeq H^2(C,\ZZ)$ and $K_X+H\equiv 0$, the 
divisor $K_X+H$ is a torsion element in the Weil divisor class group
and it defines a non-trivial \'etale in codimension one cover of $(X,C)$
(see \cite[Corollaries 2.3.1 and 2.7.1]{Mori-Prokhorov-2008}).
In particular, $(Z,o)$ is a singular point. 
For any component $C_i\subset C$
we have 
\begin{equation*}
H\cdot C_i=-K_X\cdot C_i<1
\end{equation*}
(see \cite[Lemma 2.8]{Mori-Prokhorov-2008}).
Therefore $H$ is not a Cartier divisor at some point $P_1\in C\subset X$, $P_1\neq P$. 
Thus $X$ has at least two non-Gorenstein points.
By Theorem~\xref{theorem-conic-bundles-singular-base} the germ $(X,C)$ is of type
\xref{item=main--th-pr-toric} or~\xref{item=main--th-pr-ex3}. Then $C$ is irreducible
and $H\cap C=\{P_1\}$. 
The torsion subgroup  $\Cl(X)_\tors$ of the Weil divisor class group  $\Cl(X)$ is cyclic
\cite[Corollary~2.7.1]{Mori-Prokhorov-2008}. 
One can see from the constructions that in both cases \xref{item=main--th-pr-toric} and~\xref{item=main--th-pr-ex3}
the restrictions $\Cl(X)_\tors\to \Cl(X,P)$ and $\Cl(X)_\tors\to \Cl(X,P_1)$ to the local class groups 
are isomorphisms. 
In particular, this means that $K_X+H$ is not a Cartier divisor at $P_1$.
According to the discussions in the beginning of the proof we have $H\not\ni P_1$, a contradiction.
\end{proof}

\section{Examples of Sarkisov links on $\QQ$-conic bundles}
\label{section:ex}
In this section we consider Sarkisov links on non-Gorenstein $\QQ$-conic bundles.
Such links are elementary steps in the decomposition of birational 
transformations of conic bundles (see \S~\xref{section:I}).
For applications it would be very useful to have their classification or
at least substantive theorems, analogous to Theorem~\xref{classification-Sarkisov-links}, 
describing their structure. Since very little is 
known in this direction, we shall restrict ourselves to a number of examples.

In this section we consider some
examples of Sarkisov links on non-Gorenstein $\QQ$-conic bundles.
We use the following natural construction. 

\begin{construction}
\label{construction-examples-SL}
Let $\pi: (X,C)\to (S,o)$ be a $\QQ$-conic bundle germ, let $C_1,\dots, C_r$ be irreducible components
of $C$, and let $P\in C\subset X$ be a point of index $m_1>1$. 
We consider only the cases described in
Theorems~\xref{theorem-index=2-conic-bundles} or~\xref{theorem-conic-bundles-singular-base}.
In all these cases all the curves $C_i$ pass through $P$. 
Then for any $i=1,\dots,r$  we have 
\begin{equation}
\label{equation-computation-K.C-1}
-K_X\cdot C_i=
\begin{cases}
1/2& \text{in the cases 
\xref{theorem-index=2-conic-bundles},}
\\
& \text{\xref{item-main-th-impr-barm=2-s=4},~\xref{item-main-th-impr-barm=2-s=2-cycl}, 
\xref{item=main--th-cyclic-quo},}
\\
1&\text{in the case~\xref{item-main-th-impr-barm=1},}
\\
2/m_1&\text{in the case~\xref{item=main--th-pr-toric},}
\\
1/m_1&\text{in the case~\xref{item=main--th-pr-ex3}.}
\end{cases}
\end{equation}
These equalities can be checked directly or deduced from~\cite[Lemma 2.8]{Mori-Prokhorov-2008}
and~\cite[(2.3), (4.9)]{Mori-1988}.
According to~\cite{Kawamata-1992-e-app}, 
\cite{Hayakawa-1999}, \cite{Hayakawa-2000} there exists an extraction 
$p: Z\to X$ in Mori category with exceptional divisor $E$ such that $p(E)=P$ and 
the discrepancy of $E$ equals $1/m_1$. Let 
$\tilde C_i\subset Z$ be the proper transform of $C_i$. Then
\begin{equation}
\label{equation-computation-K.C-2}
K_Z\cdot \tilde C_i=K_X\cdot C_i+\textstyle{\frac1{m_1}}E\cdot \tilde C_i.
\end{equation}
Assume that 
\begin{equation}\label{eq:assu}
K_Z\cdot\tilde C_i\le 0 
\end{equation}
(thus $Z$ is the central object of the 
corresponding link~\cite{Shokurov-Choi-2011}). 
This inequality holds and can be checked directly in many cases.
Then there exists a flop or a flip along $\tilde C_i$ and we can run the MMP on $Z$
over $S$ in the direction different from the contraction to $X$.
After some number of flops and flips 
\begin{equation}
\label{eq:flip}
\chi: Z \dashrightarrow Z_1
\end{equation}
we get a non-small Mori extremal contraction. There are two possibilities: either 
this contraction $q: Z_1\to X_1$ is divisorial or it is a $\QQ$-conic bundle $\pi_1: Z_1=X_1\to S$. 
In other words, we obtain a link of type~\typem{II} or of type~\typem{I},
respectively (see~\xref{definition-Sarkisov-links}). Moreover, for a type~\typem{II} link, 
the morphism $q$ contracts the proper transform of $E$. 
Thus $X$ and $X_1$ are isomorphic in codimension one.
Since divisors $-K_X$ and $-K_{X_1}$ are ample over $S$, the varieties $X$ and $X_1$ are,
in fact, isomorphic. If in the case of a type~\typem{I} link
the base $(S,o)$ is singular, then the morphism $\alpha: S_1\to S$
must be a crepant contraction according to the following fact. 
\end{construction}

\begin{proposition}[\cite{Morrison-1985}]
\label{proposition-Morrison}
Let $S'\to S''$ be a proper birational contraction between 
surfaces with Du Val singularities. Then 
$\alpha$ can be decomposed as a sequence $\alpha_i: S_i\to S_{i+1}$, where 
each $\alpha_i$ is either a crepant contraction or a 
weighted blowup with weights $(1,n_i)$ of a smooth point.
\end{proposition}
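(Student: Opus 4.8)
The plan is to run a relative minimal model program for $\alpha\colon S'\to S''$ and to recognize each elementary step as one of the two allowed types. Since the statement is local over the target, I would first reduce to the case where $(S'',o)$ is a germ and $\alpha$ contracts its exceptional locus to the single point $o$. Writing $K_{S'}\equiv \alpha^*K_{S''}+\sum_i a_iE_i$ over the $\alpha$-exceptional divisors $E_i$, and using that $S''$ is canonical, all $a_i\ge 0$, and $\alpha$ is crepant precisely when every $a_i=0$. If $\alpha$ is crepant there is nothing to prove, so I assume some $a_i>0$; then $K_{S'}$ is not $\alpha$-nef and there is a $K_{S'}$-negative extremal ray $R\subset \NE(S'/S'')$.

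The heart of the argument is a local classification that I would isolate as a lemma: \emph{a $K_{S'}$-negative extremal divisorial contraction $\phi\colon S'\to S_1$ of a surface with Du Val singularities is the weighted blowup with weights $(1,n)$ of a smooth point of $S_1$} (the case $n=1$ being the ordinary blowup). To prove it, I would pass to the minimal resolution $m\colon \tilde S\to S'$, which is crepant because $S'$ is Du Val, so every $m$-exceptional curve is a $(-2)$-curve. Let $C$ be the irreducible curve contracted by $\phi$ and $\hat C\subset\tilde S$ its strict transform; crepancy gives $K_{\tilde S}\cdot\hat C=K_{S'}\cdot C<0$, and since $C$ is a smooth rational curve, adjunction on the smooth surface $\tilde S$ forces $\hat C^2=-1$. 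The configuration $\Gamma\subset\tilde S$ contracted over the image point $p=\phi(C)$ is $\hat C$ together with the $(-2)$-curves lying over $p$; as $p$ is a normal point, $\Gamma$ must be negative definite. A direct check shows that a connected negative–definite configuration of rational curves with a \emph{unique} $(-1)$-curve and otherwise only $(-2)$-curves is necessarily a chain $[-1,-2,\dots,-2]$ with the $(-1)$-curve at one end (placing the $(-1)$-curve at an interior or a branch vertex, e.g. at the center of a $D_4$-type star, makes the intersection form non–negative–definite). Contracting such a Hirzebruch–Jung chain yields a smooth point and identifies $\phi$ with the $(1,n)$-weighted blowup, where $n$ is the length of the chain; in particular $C$ meets exactly one singular point of $S'$, an $A_{n-1}$ point at one end.

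With the lemma in hand I would finish by induction. Contracting $R$ gives $\phi\colon S'\to S_1$ which, by the lemma, is a $(1,n)$-weighted blowdown onto a smooth point; $S_1$ is again Du Val, having one fewer (or a strictly simpler) singular point, and $\alpha$ factors as $\alpha_1\circ\phi$ with $\alpha_1\colon S_1\to S''$ a proper birational contraction of Du Val surfaces. Since each step strictly drops a complexity invariant (for instance the Picard number of the minimal resolution, or $\sum_i a_i$), after finitely many such weighted blowdowns one reaches a surface $S^\ast$ on which $K_{S^\ast}$ is nef over $S''$. The negativity lemma then forces all discrepancies of $S^\ast\to S''$ to vanish, so this final map is crepant, yielding the decomposition $S'=S_0\to S_1\to\dots\to S^\ast\to S''$ of the required form. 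The only genuine difficulty is the local classification lemma; the existence and contraction of extremal rays, and the termination, are the standard two–dimensional MMP over a base, and the negative–definiteness check is elementary once the minimal resolution has reduced it to a statement about chains of rational curves.
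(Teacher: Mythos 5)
The paper does not prove this statement: it is quoted from Morrison's 1985 article with no argument given, so there is nothing internal to compare your proof against. Judged on its own, your argument is correct and is the standard relative-MMP proof one would expect. Two points deserve to be made explicit rather than left as ``a direct check.'' First, the passage from ``$\alpha$ not crepant'' to ``$K_{S'}$ not $\alpha$-nef'' is the negativity lemma: if $\sum a_iE_i=K_{S'}-\alpha^*K_{S''}$ were $\alpha$-nef with all $a_i\ge 0$, negativity would force all $a_i\le 0$, hence $a_i=0$. Second, the chain lemma is cleaner by induction than by inspecting candidate graphs: contract the unique $(-1)$-curve $\hat C$; if some $(-2)$-curve met $\hat C$ with multiplicity $\ge 2$ its image would have non-negative self-intersection, and if two distinct $(-2)$-curves met $\hat C$ their images would be two intersecting $(-1)$-curves, so $(E_1+E_2)^2\ge 0$ — both contradict negative definiteness of the image configuration. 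Hence $\hat C$ meets exactly one $(-2)$-curve, transversally, and induction on the number of components gives the chain $[-1,-2,\dots,-2]$ with $\hat C$ at the end. Everything else — the $(-1)$-curve computation via adjunction on the minimal resolution, the identification of the contraction of such a chain with the $(1,n)$-weighted blowdown (the exceptional curve passes through a single $A_{n-1}$ point), the descent of the Du Val condition to $S_1$, termination by the drop in relative Picard number, and the final crepant step once $K$ becomes relatively nef — is sound.
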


\subsection{}
Recall that \emph{Shokurov's difficulty} $\dif(V )$ of a variety $V$ with terminal 
singularities is defined as the number of exceptional divisors on $V$ with 
discrepancy $<1$ (see~\cite[Definition 2.15]{Shokurov-1985}). It is known that this number is 
well-defined and finite. Moreover, in the three-dimensional case, it is strictly
decreasing under flips \cite[Corollary 2.16]{Shokurov-1985}. If $V \ni P$ is a terminal cyclic 
quotient of index $m$, then $\dif(V \ni P)= m-1$. Thus, in our case
\eqref{eq:flip} we have
\begin{equation}
\label{eq:flip-diff}
\dif(Z_1)\ge \dif(Z)
\end{equation}
and the inequality is strict 
if $\chi$ contains at least one flip.

Below we consider explicit examples.
We use the notation of Construction~\xref{construction-examples-SL}.
\begin{example}\label{example-simplest-link}
Let $(X,C)\subset \PP(1,1,1,2)$ be the index two $\QQ$-conic bundle germ 
given by 
\begin{equation}
\label{eq:index2-s}
\left\{
\begin{array}{lll}
y_1^2-y_2^2&=&uy_4
\\[7pt]
y_1y_2-y_3^2&=&vy_4.
\end{array}
\right.
\end{equation}
(see \eqref{eq-eq-index2}) and let $\pi: (X,C)\to (S,o)=(\CC^2,0)$ 
be the corresponding contraction. Then $X$ has a unique singular point
$P=(0,0,0,1; 0,0)$
which is terminal quotient of type $\frac12 (1,1,1)$.
The central curve has four components $C_1,\dots, C_4$.
All of them pass through $P$ and they do not meet each other 
elsewhere. 
The discriminant curve is given by $(u^2+4v^2)u=0$. Let $p: Z\to X$ be the 
blowup of $P$, let $E$ be the exceptional divisor, and let 
$\tilde C_i\subset Z$ be the proper transform of $C_i$. Then $Z$ is smooth,
$E\simeq \PP^2$, and $\OOO_E(E)\simeq \OOO_{\PP^2}(-2)$
(cf.~\cite{Mori-1982}). Furthermore, the curves $\tilde C_1$, \dots $\tilde C_4$ 
are disjoint. By \eqref{equation-computation-K.C-1} and \eqref{equation-computation-K.C-2} 
we have $K_Z\cdot \tilde C_i=0$. There exists a flop $\chi: Z \dashrightarrow X_1$ with center $\tilde C$
which is the simplest Atiyah-Kulikov flop along each curve $\tilde C_i$. Since $-K_{X_1}$ is nef, there exists 
a Mori extremal contraction $\pi_1: X_1\to S_1$ over $S$. The restriction 
$\chi_E: E \dashrightarrow E_1\subset X_1$ is the inverse to the blowup of four points 
$E\cap \tilde C_i$. Hence, the proper transform 
$E_1\subset X_1$ of $E$ is a del Pezzo surface of degree $5$.
Thus $\pi_1: X_1\to S_1$ is a standard conic bundle (see e.g.~\cite{Mori-1982}).
Then we get a type~\typem{I} link, where $S_1$ is a smooth surface and 
$\alpha: S_1\to S$ is the blowup of $o$.
The discriminant curve $\Delta_1$ of $\pi_1$ is the proper transform of $\Delta$
(and so $\Delta_1$ has three smooth irreducible branches near $\alpha^{-1}(o)$).
\end{example}

\begin{example}\label{example-Sarkisov-link-toric}
Let $\pi: (X,C\simeq \PP^1)\to (S,o)$ be the toric $\QQ$-conic bundle germ 
as in~\xref{item=main--th-pr-toric}. 
In this case the discriminant curve is empty.
Consider the Kawamata weighted blowup
$p: Z\to X$ 
of the point $P\in X$ of type $\frac 1m (1,a,m-a)$ (see~\cite{Kawamata-1996}).
It is easy to see that $Z$ has on $E$ two terminal quotient singularities 
of types 
\begin{equation}
\label{toric-2-singularities}
\textstyle\frac 1a(1,-m,m)\quad\text{ and}\quad \frac 1{m-a}(1,a,-m), 
\end{equation}
and $E\cap \tilde C$
is a smooth point of $Z$. By \eqref{equation-computation-K.C-1} and \eqref{equation-computation-K.C-2}
we have $K_Z\cdot \tilde C=-1/m$.
Hence $\tilde C$ is a flipping curve. Run the MMP on $Z$ over $S$
as in \eqref{eq:flip}.
We have 
\begin{equation}
\label{Shokurovs-difficulty-compare}
\dif(Z)=2m-3, \qquad \dif(Z_1)\le 2m-4. 
\end{equation}
If we are in the situation of type~\typem{II} link, then $\pi_1: X_1\to S$ 
must be $\QQ$-conic bundle of the same type as $\pi$
(because its discriminant divisor is trivial).
On the other hand, $\dif(X_1)\le \dif(Z_1)+1\le 2m-3$, so $\dif(X_1)< \dif(X)$, a contradiction. 
Therefore, we have a type~\typem{I} link and 
$\alpha: S_1\to S$ is a crepant blowup of $o\in S$ (see Proposition~\xref{proposition-Morrison}).
Thus $S_1$ has Du Val singularities of types \type{A_{m_1-1}} and \type{A_{m_2-1}}
with $m_1+m_2=m$. We may assume that $m_1\ge m_2$ (it is possible that $m_2=1$). 
The discriminant divisor of $\pi_1$ is trivial, so $\pi_1$ is also of type~\xref{item=main--th-pr-toric}.
More precisely, $\pi_1$ has two (or one) germs of type~\xref{item=main--th-pr-toric}
which are quotients by $\mumu_{m_1}$ and $\mumu_{m_2}$. 
In this case, 
\begin{equation*}
\dif(X_1)=2m_1-2+2m_2-2=2m-4. 
\end{equation*}
Thus in 
\eqref{Shokurovs-difficulty-compare} the equality holds.
This implies that $\chi$ is a single flip and so $X_1$ has 
singularities of types \eqref{toric-2-singularities}.
This means that up to permutation we have $m_1=a$ and $m_2=m-a$.
\end{example}

\begin{example}
Let $\pi: (X,C\simeq \PP^1)\to (S,o)$ be the $\QQ$-conic bundle germ 
as in~\xref{item=main--th-pr-ex3}. 
In this case the discriminant curve $\Delta$ is given by $\{uv=0\}/\mumu_m$.
In particular, the pair $(S,\Delta)$ is log canonical. 
Consider the Kawamata weighted blowup
$p: Z\to X$ of the point $P\in X$ of type $\frac 1m (a+1,1,-1)=\frac1m (1,2,m-2)$.
Then $E\cap \tilde C$ is a singularity of type $\frac 12(1,1,1)$ on $Z$. 
By \eqref{equation-computation-K.C-1} and \eqref{equation-computation-K.C-2} we have $K_Z\cdot \tilde C=-1/2m$.
Running the MMP as in Construction~\xref{construction-examples-SL}
we obtain a type~\typem{I} Sarkisov link, where $\pi_1$ is a $\QQ$-conic bundle
and $\alpha: S_1\to S$ is a crepant contraction.
For the discriminant curve $\Delta_1\subset S_1$ we have 
$\Delta_1=\alpha^{*}(\Delta)_{\red}$ (see Corollary~\xref{corollary-Q-conic-bundles-plt})
and the pair $(S_1,\Delta_1)$ is lc. This implies that $\pi_1$ cannot contain
germs of type~\xref{item=main--th-pr-toric}.
The same computations with difficulty as in Example~\xref{example-Sarkisov-link-toric}
show that $\chi$ is a single flip and so $X_1$ contains a singularity of type 
$\frac{1}{m-1}(1,2,-2)$. Therefore, $S_1$ has two Du Val points of types \type{A_{m-3}}
and \type{A_1} and over \type{A_{m-3}} we have a germ of type~\xref{item=main--th-pr-ex3}
of index $m-2$. Then by the classification~\xref{theorem-conic-bundles-singular-base}
for the germ over \type{A_1} there is only one possibility: case~\xref{item-main-th-impr-barm=1}
with $\psi=uv$.
\end{example}

\begin{example}
\label{example-imp-4}
Consider a $\QQ$-conic bundle germ $\pi: (X,C)\to(S,o)$ of type~\xref{item-main-th-impr-barm=2-s=4}
which is the $\mumu_4$-quotient of $X'\subset \PP(1,1,1,2)$ defined by \eqref{eq:index2-s}.
The discriminant curve $\Delta$ is given by $\{(u^2+4v^2)u=0\}/\mumu_4$
on $S=\CC^2/\mumu_4(1,-1)$. One can see that the dual graph of the minimal resolution 
of $(S\supset \Delta\ni o)$ has the following form 
\begin{equation*}
\xymatrix@R=7pt{
\bullet\ar@{-}[r]&\overset{\Theta_1}\circ\ar@{-}[r]&\overset{\Theta_2}\circ\ar@{-}[r]\ar@{-}[d]&\overset{\Theta_3}\circ
\\
&&\bullet
} 
\end{equation*}
where the vertices $\circ$ correspond to (crepant) exceptional divisors over $o$ and the vertices 
$\bullet$ correspond to the components of $\Delta$. Consider the Kawamata weighted blowup
$p: Z\to X$ of the point $P\in X$ of type $\frac 18 (5,1,3)$.
Then $Z$ has two cyclic quotient points $P_1$ and $P_2$ of types $\frac 15(-3,1,3)$
and $\frac 13(2,1,1)$. The curve $\tilde C$ passes through $P_1$.
Using \eqref{equation-computation-K.C-1} and \eqref{equation-computation-K.C-2} we can compute
$K_Z\cdot \tilde C<0$. Thus there is a flip along $\tilde C$.
We have $\dif(Z)=6$ and $\dif(Z_1)\le 5$. As above we get a type~\typem{I} link, 
where $\alpha: S_1\to S$ is a crepant contraction. 
Let $\Delta_1\subset S_1$ be the discriminant curve. If $\alpha$ is the blowup of $\Theta_2$,
then $S_1$ has two points, say $o_1$ and $o_2$, of type~\type{A_1}.
By Corollary~\xref{corollary-Q-conic-bundles-plt} we have $\alpha^{-1}(o)\subset \Delta_1$.
But then the pair $(S_1,\Delta_1)$ is plt at one the $o_i$.
This is impossible again by Corollary~\xref{corollary-Q-conic-bundles-plt}. Thus $S_1$ has a point $o_1$, of type~\type{A_2}.
Then by the classification~\xref{theorem-conic-bundles-singular-base} the germ over $o_2$ is of type 
\xref{item=main--th-pr-ex3} and so the pair $(S_1,\Delta_1)$ is lc at $o_1$.
Therefore, $\alpha^{-1}(o)\not\subset \Delta_1$ and $\alpha$ is the extraction of $\Theta_3$.
\end{example}

\begin{example}[{\cite{Avilov2014}}]
\label{example-Sl-Gorenstein}
Consider a $\QQ$-conic bundle germ $\pi: (X,C)\to(S,o)$ such that $(S,o)$ is smooth
and the discriminant curve $\Delta$ has a node at $o$. Denote $U:=S\setminus \{o\}$ and 
$V:=X\setminus C=\pi^{-1}(U)$. Let $j: U \hookrightarrow S$ be the embedding.
The sheaf $\EEE_0:=\pi_*\OOO_V(-K_X)$ is locally free on $U$. It has a unique extension to
$S$ as a locally free sheaf. Indeed, the sheaf $(j_*\EEE_0)^{\vee\vee}$ is reflexive
and therefore it is locally free. Clearly, $V$ is embedded to $\PP_U(\EEE_0)\subset \PP_S(\EEE)$.
Let $X' \subset \PP_S(\EEE)$ be the closure of $V$. We may assume that $(S,o)$ is an 
open (analytic) subset of $\CC^2_{u,v}$ and $\Delta$ is given by $uv=0$.

Now assume that $X$ is singular.
Then easy computations 
show that $X'$ can be given locally by the equation 
\begin{equation*}
uvx_0^2+x_1^2+x_2^2=0
\end{equation*}
in $\PP_S(\EEE)=\PP^2_{x_0,x_1,x_2}\times \CC^2_{u,v}$.
In particular, the projection $\pi': X'\to S$ is flat and $X'$ has a unique singular point $P\in X'$ which is a node.
On the other hand the varieties $X'$ and $X$ are isomorphic in codimension one over $S$ and their anti-canonical divisors 
are relatively ample. Hence the varieties $X'$ and $X$ are isomorphic over $S$ and we can identify them.
In particular, $X$ can be embedded to $\PP_S(\EEE)$.

Let $p: Z\to X$ be the blowup of $P$. Then $Z$ is smooth and $-K_Z$ is nef and big over $S$.
There exists a flop $Z \dashrightarrow X_1$ with center the proper transform of $C$
and on $X_1$ there exists a Mori extremal contraction $\pi_1: X_1\to S_1$ over $S$.
Since $X_1$ is smooth, so $S_1$ is.
Thus we obtain a type~\typem{I} Sarkisov link, where $\alpha: S_1\to S$ is the blowup of $o$.
Moreover, the exceptional curve $\alpha^{-1}(o)$ is not contained in the discriminant divisor $\Delta_1\subset S_1$
of $\pi_1$ (otherwise we are in the situation of Proposition~\xref{proposition-contraction-(-1)-curve}\xref{cases-cb-3}
and $X$ is smooth). Therefore, $\Delta_1$ is the proper transform of $\Delta$.
\end{example}

\begin{scorollary}\label{corollary-lc}
Let $\pi: (X,C)\to(S,o)$ be a $\QQ$-conic bundle germ 
and let $\Delta$ be the discriminant curve. Assume that 
the pair $(S,\Delta)$ is lc but not plt at $o$.
Then 
either $\pi$ is a standard conic bundle, or $\pi$ is as in 
Example~\xref{example-Sl-Gorenstein}, or as in 
\xref{item=main--th-pr-ex3} or~\xref{item-main-th-impr-barm=1} with $\psi=uv$.
\end{scorollary}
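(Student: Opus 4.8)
The plan is to run through the classification of $\QQ$-conic bundle germs and test the condition ``$(S,\Delta)$ is lc but not plt at $o$'' against the discriminant in each case. First I would dispose of the case $o\notin\Delta$: near such a point the pair $(S,\Delta)$ coincides with $(S,0)$, and since $S$ is smooth or has a Du Val singularity of type $\mathrm A$ (Theorem~\ref{singularities-base-DV}) it is canonical, hence plt, contrary to the hypothesis. Thus $o\in\Delta$; equivalently, by Corollary~\ref{corollary-Q-conic-bundles-plt} the toric type~\ref{item=main--th-pr-toric} is excluded. I then split according to whether $(S,o)$ is smooth or singular.

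\textbf{Smooth base.} If $(S,o)$ is smooth, then $\Delta$ is a reduced plane curve germ, so $(S,\Delta)$ is lc precisely when $\Delta$ has at worst a node at $o$ and plt precisely when $\Delta$ is smooth there. As the pair is not plt, $\Delta$ has a node at $o$. If $X$ is smooth, then $\pi$ is a standard conic bundle, the first alternative. If $X$ is singular I would apply the construction of Example~\ref{example-Sl-Gorenstein}: form the reflexive sheaf $\EEE=(j_*\EEE_0)^{\vee\vee}$, with $\EEE_0=\pi_*\OOO_V(-K_X)$ on $U=S\setminus\{o\}$ and $V=\pi^{-1}(U)$, realize $X$ as the closure $X'$ of $V$ in $\PP_S(\EEE)$, and observe that a flat conic bundle over a smooth germ with nodal discriminant $\{uv=0\}$ and singular total space has, up to isomorphism, the local equation $uvx_0^2+x_1^2+x_2^2=0$. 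This is exactly Example~\ref{example-Sl-Gorenstein}; in particular such an $X$ is forced to be Gorenstein with a single node, so a non-Gorenstein $X$ over a smooth base would necessarily satisfy $\mult_o\Delta\ge3$ and thereby violate lc.

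\textbf{Singular base.} If $(S,o)$ is singular, then by Theorem~\ref{singularities-base-DV} it is Du Val of type $A_{m-1}$ and by Theorem~\ref{theorem-conic-bundles-singular-base} the germ is one of~\ref{item=main--th-pr-ex3}, \ref{item-main-th-impr-barm=2-s=4}, \ref{item-main-th-impr-barm=1}, \ref{item-main-th-impr-barm=2-s=2-cycl}, \ref{item=main--th-cyclic-quo}. The cases~\ref{item-main-th-impr-barm=2-s=2-cycl} and~\ref{item=main--th-cyclic-quo} have base $A_1$ and, by Corollary~\ref{corollary-Q-conic-bundles-}, non-Cartier discriminant. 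Writing $\mu$ for the minimal resolution with $(-2)$-curve $E$, one has $\mu^*\Delta=\tilde\Delta+cE$ with $c=\frac12(\tilde\Delta\cdot E)$, and $\Delta$ is Cartier iff $\tilde\Delta\cdot E$ is even; non-Cartier together with lc ($c\le1$) forces $\tilde\Delta\cdot E=1$, hence $c=\frac12<1$ and the pair is plt, a contradiction, so both cases are excluded. For~\ref{item-main-th-impr-barm=2-s=4} (base $A_3$) I would read the log discrepancies off the dual graph of Example~\ref{example-imp-4} and check that some crepant exceptional divisor acquires discrepancy $<-1$, so the pair is not lc. Finally, in cases~\ref{item=main--th-pr-ex3} and~\ref{item-main-th-impr-barm=1} the discriminant is $\{uv=0\}/\mumu_m$, respectively $\{\psi=0\}/\mumu_2$; pulling back along the cover $\CC^2\to S$, which is étale in codimension one, the reduced preimage is a node, so $(S,\Delta)$ is lc but not plt, and these are the last two alternatives. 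In case~\ref{item-main-th-impr-barm=1} the nodal condition means the leading form of $\psi$ is a product of two distinct lines, i.e. $\psi=uv$ after an analytic coordinate change.

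The main obstacle is the singular-base analysis: one must extract the discriminant curve in each case of Theorem~\ref{theorem-conic-bundles-singular-base} from the explicit $\mumu_m$-quotient presentations and then evaluate log discrepancies on the $A_{m-1}$ resolution. The useful shortcut is to combine the Cartier criterion of Corollary~\ref{corollary-Q-conic-bundles-} with the elementary fact that on an $A_1$ base a non-Cartier reduced boundary cannot be simultaneously lc and non-plt; the genuinely computational residue is then just the exclusion of~\ref{item-main-th-impr-barm=2-s=4} on the $A_3$ base and, in the smooth-base case, the normal-form computation of Example~\ref{example-Sl-Gorenstein} that prevents a non-Gorenstein total space from carrying a nodal (hence lc) discriminant.
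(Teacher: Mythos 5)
Your argument is correct, but it takes a genuinely different route from the paper in the singular-base case. The paper's proof is shorter and more structural: it takes the index-one ($\mumu_m$-)cover $(S',o')\to(S,o)$ of the base, passes to the normalized fiber product $X'\to S'$, which is a $\QQ$-conic bundle over a \emph{smooth} germ whose discriminant is the (at worst nodal, by the lc-non-plt hypothesis and \'etaleness in codimension one) preimage of $\Delta$; Example~\xref{example-Sl-Gorenstein} then forces $X'$ to be Gorenstein, and the list of alternatives is read off from the classification of $\mumu_m$-quotients of Gorenstein conic bundles in an external reference. You instead enumerate the cases of Theorem~\xref{theorem-conic-bundles-singular-base} and test the lc/plt condition one by one: excluding \xref{item-main-th-impr-barm=2-s=2-cycl} and \xref{item=main--th-cyclic-quo} via the non-Cartier criterion of Corollary~\xref{corollary-Q-conic-bundles-} combined with the parity argument on the $A_1$ resolution, excluding \xref{item-main-th-impr-barm=2-s=4} by the explicit log-discrepancy computation on the $A_3$ graph of Example~\xref{example-imp-4} (indeed the coefficient over $\Theta_1$ comes out to $5/4>1$), and keeping \xref{item=main--th-pr-ex3} and \xref{item-main-th-impr-barm=1}. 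What your approach buys is self-containedness — everything is checked against data already displayed in the paper, with no appeal to the external classification of quotients — at the cost of a case-by-case verification; the paper's approach buys brevity and a uniform mechanism (reduction to the smooth-base Gorenstein normal form) at the cost of the citation. One small imprecision: a non-Gorenstein total space over a smooth base need not force $\mult_o\Delta\ge 3$ — Example~\xref{example-Sl-Gorenstein} only rules out a \emph{node}, so $\Delta$ could a priori have a non-nodal double point (cusp or tacnode); but since any non-nodal singular reduced curve germ is already non-lc, your conclusion stands unchanged.
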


\begin{proof}
By Theorem~\xref{singularities-base-DV} the germ $(S,o)$ is 
the quotient of a smooth germ $(S',o')$ by $\mumu_m$, $m\ge 1$. 
Let $X'$ be the normalization of the fiber product $X\times_S S'$
(see~\cite[(2.4)]{Mori-Prokhorov-2008}, 
\cite[Construction 1.9]{Prokhorov-1997_e}).
Then the projection $\pi': X'\to S'$ is a $\QQ$-conic bundle germ 
over smooth base. By Example~\xref{example-Sl-Gorenstein} the variety $X'$ is Gorenstein.
Then the assertion follows from \cite[Theorem 2.4]{Prokhorov-1997_e}. 
\end{proof}

\begin{scorollary}
Let $\pi: (X,C)\to(S,o)$ be a $\QQ$-conic bundle germ 
such that its discriminant curve $\Delta$ is non-empty and smooth at $o$.
Then both $X$ and $S$ are smooth and $\pi$ is a standard conic bundle.
\end{scorollary}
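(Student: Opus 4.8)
The plan is to reduce the statement to Corollary~\xref{corollary-Q-conic-bundles-plt}, which says that if $o\in\Delta$ and the pair $(S,\Delta)$ is purely log terminal at $o$, then $X$ is smooth near $C$ and $S$ is smooth at $o$; once both $X$ and $S$ are smooth, $\pi$ is a standard conic bundle. Since the germ of $\Delta$ at $o$ is non-empty and smooth, we have $o\in\Delta$ and $\Delta$ is reduced and unibranch there, so the entire problem is to verify that $(S,\Delta)$ is plt at $o$.

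First I would dispose of the case in which $S$ is already smooth at $o$: then $(S,\Delta)$ is a smooth surface together with a smooth divisor, hence simple normal crossing, and in particular plt, so Corollary~\xref{corollary-Q-conic-bundles-plt} applies at once. Thus it suffices to rule out the possibility that $S$ is singular at $o$. Assume it is. By Theorem~\xref{singularities-base-DV} the germ $(S,o)$ is then Du Val of type $\mathrm A_{m-1}$ with $m\ge 2$, and by Theorem~\xref{theorem-conic-bundles-singular-base} the germ $(X,C)$ is one of the types \xref{item=main--th-pr-toric}--\xref{item=main--th-cyclic-quo}. Type~\xref{item=main--th-pr-toric} is immediately excluded, since there $\Delta\not\ni o$ by Corollary~\xref{corollary-Q-conic-bundles-plt}, contrary to $o\in\Delta$.

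For the remaining cases the goal is to show that $\Delta$ is singular at $o$, contradicting the hypothesis. In the cases \xref{item=main--th-pr-ex3} and \xref{item-main-th-impr-barm=1} the discriminant $\Delta$ is Cartier at $o$ by Corollary~\xref{corollary-Q-conic-bundles-}; but a reduced Cartier divisor that is smooth at $o$ forces $S$ itself to be smooth at $o$ (if $\Delta=\{f=0\}$ with $f$ a non-zero-divisor in the Cohen--Macaulay ring $\OOO_{S,o}$ and $\OOO_{S,o}/(f)$ is regular, then $f\in\mathfrak m\setminus\mathfrak m^2$ and $\OOO_{S,o}$ is regular), so $\Delta$ cannot be smooth at the singular point $o$. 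In the three non-Cartier cases \xref{item-main-th-impr-barm=2-s=4}, \xref{item-main-th-impr-barm=2-s=2-cycl} and \xref{item=main--th-cyclic-quo} I would instead compute the discriminant directly from the explicit equations of Theorem~\xref{theorem-conic-bundles-singular-base}: for example in case~\xref{item-main-th-impr-barm=2-s=4} one finds $\Delta=\{(u^2+4v^2)u=0\}/\mumu_4$, which is reducible (see Example~\xref{example-imp-4}), and in each of the three cases the resulting curve is reducible, nodal or cuspidal at $o$. In every case $\Delta$ is singular at $o$, the desired contradiction; hence $S$ is smooth, $(S,\Delta)$ is plt, and Corollary~\xref{corollary-Q-conic-bundles-plt} completes the argument.

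The main obstacle is precisely this last step, the explicit determination of the discriminant in the non-Cartier singular-base cases, above all the two $\mumu_2$-quotients \xref{item-main-th-impr-barm=2-s=2-cycl} and \xref{item=main--th-cyclic-quo} of index-two germs \eqref{eq-eq-index2}, where one must first compute the discriminant of the index-two conic bundle and then descend it along the quotient map. A cleaner, more conceptual alternative that sidesteps the case analysis would be inversion of adjunction: $S$ is klt (Theorem~\xref{singularities-base-DV}) and $\Delta$ is $\QQ$-Cartier (the local class group of a Du Val point is finite), and for a smooth curve $\Delta$ on a klt surface the different $\Diff_\Delta(0)$ has coefficients $<1$, whence $(S,\Delta)$ is automatically plt. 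The delicate point there, essentially equivalent to the case check above, is to confirm that the different of a smooth $\Delta$ passing through a Du Val point never reaches coefficient $1$, equivalently that such a $\Delta$ must be one of the toric branches of $\mathrm A_{m-1}$.
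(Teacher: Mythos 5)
Your overall strategy -- reduce everything to Corollary~\xref{corollary-Q-conic-bundles-plt} by showing that $(S,\Delta)$ is plt at $o$, and run a case analysis over the classification of Theorem~\xref{theorem-conic-bundles-singular-base} when $S$ is singular -- is sound and is organized differently from the paper. The paper instead argues that $(S,\Delta)$ is lc because $\Delta$ is smooth and the base is at worst of type \type{A_1} or \type{A_3}, and then invokes Corollary~\xref{corollary-lc} to list the lc-but-not-plt possibilities, each of which forces $\Delta$ to be nodal at $o$; what remains is the plt case, finished by Corollary~\xref{corollary-Q-conic-bundles-plt}. Your Cartier argument for the cases \xref{item=main--th-pr-ex3} and \xref{item-main-th-impr-barm=1} (a reduced Cartier divisor smooth at $o$ forces $\OOO_{S,o}$ regular) is correct and is a nice shortcut the paper does not use.

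There is, however, a real gap in the three non-Cartier cases. You verify only \xref{item-main-th-impr-barm=2-s=4}, via Example~\xref{example-imp-4}. In the cases \xref{item-main-th-impr-barm=2-s=2-cycl} and \xref{item=main--th-cyclic-quo} the equations \eqref{eq-eq-index2} fix $q_1,q_2$ but leave $\psi_1,\psi_2$ essentially arbitrary, so the discriminant is not a single explicit curve and ``compute it directly'' is not yet an argument; you would have to show the singularity of $\Delta$ at $o$ for every admissible choice of $\psi_i$. The efficient way to close this is to note that in both of these cases $(S,o)$ is of type \type{A_1}: an irreducible curve that is smooth at an \type{A_1} point has proper transform meeting the single $(-2)$-curve exactly once (a transform meeting it with even multiplicity would make the curve Cartier, hence a reducible or non-reduced hyperplane section), so $(S,\Delta)$ is plt with coefficient $1/2$ on the exceptional curve, and Corollary~\xref{corollary-Q-conic-bundles-plt} already forces $S$ to be smooth -- a contradiction with no discriminant computation. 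Finally, be careful with your proposed ``cleaner alternative'': it is \emph{not} true that a smooth curve through a Du Val point of type \type{A} always yields a plt pair. On $\{xy=z^4\}$ the smooth curve $t\mapsto(t^2,t^2,t)$ has proper transform meeting the middle $(-2)$-curve, and the corresponding coefficient in the crepant pullback equals $1$, so the pair is lc but not plt; such a curve need not be a toric branch. This is exactly why the paper keeps the lc-but-not-plt case alive and disposes of it with Corollary~\xref{corollary-lc}. In your setup the only \type{A_3} case is \xref{item-main-th-impr-barm=2-s=4}, where Example~\xref{example-imp-4} shows $\Delta$ is reducible, so your main line of argument survives -- but the inversion-of-adjunction shortcut, as stated, does not.
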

\begin{proof}
By Theorem~\xref {theorem-conic-bundles-singular-base} the base $(S,o)$
is either smooth or Du Val of type~\type{A_1} or \type{A_3}. 
In these cases the pair $(S,\Delta)$ must be lc because the curve $\Delta$ is smooth. Then the assertion follows from 
Corollaries~\xref{corollary-lc} and~\xref{corollary-Q-conic-bundles-plt}.
\end{proof}

A. Avilov~\cite{Avilov2014} used Sarkisov links to show the existence of standard models 
for three-dimensional $\QQ$-conic bundles over a non-closed field of characteristic $0$
(cf. Theorem~\xref{theorem-standard-models}).
His construction is canonical and works in the category of $G$-varieties. In particular,
he used the following easy observation.

\begin{lemma}[{\cite[Lemma 3]{Avilov2014}}]
\label{lemma-avilov}
Let $\pi: (X,C)\to (S,o)$ be a $\QQ$-conic bundle germ 
with singular $(S,o)$. Then there exists a Sarkisov link 
of type~\typem{I}, where $\alpha: S_1\to S$ is a crepant blowup of $o$. 
\end{lemma}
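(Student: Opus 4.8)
The plan is to reduce the statement to the classification of singular-base germs and then carry out, uniformly, the construction of Section~\xref{section:ex}. First I would invoke Theorem~\xref{singularities-base-DV}: since $(S,o)$ is singular it is Du Val of type $\mathrm{A}_{m-1}$ with $m\ge 2$, so Theorem~\xref{theorem-conic-bundles-singular-base} applies and $(X,C)$ is one of the six listed types. The key observation is that in every one of these cases $X$ carries a non-Gorenstein point $P$ whose index $m_1$ exceeds $1$; this is exactly the input demanded by Construction~\xref{construction-examples-SL}.

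The core of the argument is then to run that construction at $P$. I would take the extraction $p\colon Z\to X$ with exceptional divisor $E$ of discrepancy $1/m_1$ furnished by the results of Kawamata and Hayakawa, and verify the inequality~\eqref{eq:assu}, namely $K_Z\cdot\tilde C_i\le 0$, using~\eqref{equation-computation-K.C-1} together with~\eqref{equation-computation-K.C-2}; in each concrete case this is a direct computation, and in the singular-base situation it is in fact \emph{strict}, which guarantees that the subsequent relative $\mathrm{MMP}$ over $S$ begins with a genuine flip. Running this $\mathrm{MMP}$ away from $p$ produces a sequence of flops and flips $\chi\colon Z\dashrightarrow Z_1$ followed by a non-small extremal contraction, which is either divisorial (a link of type~\typem{II}) or a $\QQ$-conic bundle $\pi_1\colon X_1\to S_1$ (a link of type~\typem{I}), exactly as in Examples~\xref{example-Sarkisov-link-toric} and~\xref{example-imp-4}.

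The decisive step is to exclude the type~\typem{II} alternative, and here I would use the monotonicity of Shokurov's difficulty~\eqref{eq:flip-diff}. A type~\typem{II} link contracts the proper transform of $E$ and so returns a $\QQ$-conic bundle $X_1$ that is isomorphic to $X$ over $S$, hence of the same type and with $\dif(X_1)=\dif(X)$; but the presence of at least one flip in $\chi$ forces $\dif(X_1)<\dif(X)$, a contradiction (in the discriminant-free toric case~\xref{item=main--th-pr-toric} one argues identically, the type~\typem{II} output again being of type~\xref{item=main--th-pr-toric} of index $m$). Consequently the link is of type~\typem{I}. To finish, I note that $\alpha\colon S_1\to S$ is an extremal contraction with $\uprho(S_1/S)=1$ between surfaces with Du Val singularities, so by Proposition~\xref{proposition-Morrison} it is either crepant or a weighted blowup of a \emph{smooth} point; the latter is impossible because $o$ is singular, and therefore $\alpha$ is a crepant blowup of $o$. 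The hard part will be the case-by-case bookkeeping of paragraph two and, above all, tracking Shokurov's difficulty through $\chi$ precisely enough to guarantee that a flip really does occur in each case, which is what makes the type~\typem{II} possibility genuinely vanish.
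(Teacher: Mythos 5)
Your route is genuinely different from the one the paper takes. The paper's proof never invokes the classification of Theorem~\xref{theorem-conic-bundles-singular-base}: it takes the pencil $\LLL$ of hyperplane sections of $S$ through $o$, pulls it back to $\MMM=\pi^*\LLL$, extracts a log-crepant divisor determined by the canonical threshold $\ct(X,\MMM)$ as in~\xref{Sarkisov-program-untwisting}, and runs the $(K_Z+c\MMM_Z)$-MMP over $S$; because $\MMM$ is pulled back from the base and is non-canonical over $o$, the two-ray game is forced to modify $S$, the link is of type~\typem{I}, and Proposition~\xref{proposition-Morrison} then gives crepancy exactly as you argue at the end. What your approach buys, when it works, is much more explicit information --- it identifies the extracted divisor and, via the difficulty count, often pins down the new germ completely, as in Examples~\xref{example-Sarkisov-link-toric} and~\xref{example-imp-4}. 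What it costs is dependence on the full classification theorem and a case-by-case verification.

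That verification is where the genuine gap sits. Your exclusion of the type~\typem{II} alternative rests entirely on $\chi$ containing at least one honest flip, which you derive from the assertion that~\eqref{eq:assu} holds \emph{strictly} in every singular-base case. This is only checked (in the paper's examples) for three of the six germs of Theorem~\xref{theorem-conic-bundles-singular-base}, and Construction~\xref{construction-examples-SL} itself is careful to say the inequality ``can be checked directly in many cases'', not in all of them. In the remaining cases \xref{item-main-th-impr-barm=1}, \xref{item-main-th-impr-barm=2-s=2-cycl} and~\xref{item=main--th-cyclic-quo} the non-Gorenstein point need not be a cyclic quotient (it can be of type \type{cA/2}, \type{cAx/2} or \type{cAx/4}), so the extraction of discrepancy $1/m_1$ is one of Hayakawa's, the term $E\cdot\tilde C_i$ in~\eqref{equation-computation-K.C-2} is not a routine toric computation, and nothing a priori rules out $K_Z\cdot\tilde C_i=0$; compare Example~\xref{example-simplest-link}, where the two-ray game consists only of flops and the difficulty argument would conclude nothing. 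Until those computations are carried out, your proof is incomplete precisely at the step that makes the type~\typem{II} possibility vanish --- the step the paper's argument avoids altogether.
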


\begin{proof}[Outline of the proof]
Take a linear system $\LLL$ of hyperplane sections of $S$ passing trough $o$ and put $\MMM:=\pi^*\LLL$.
Take $c$ so that the pair $(X,c\MMM)$ is maximally canonical (in other words, $c=\ct(X, \MMM)$ is 
the canonical threshold of $(X,\MMM)$). Then, as in~\xref{Sarkisov-program-untwisting},
there exists a log-crepant extremal blowup 
$p: (Z,c\MMM_Z)\to (X,c\MMM)$. Here $\uprho (Z/S)=2$ and
\begin{equation*}
K_Z+c\MMM_Z\equiv p^*(K_X+c\MMM).
\end{equation*}
Moreover, $p$ is a divisorial contraction, and the variety $Z$ is $\QQ$-factorial and has only terminal singularities. 
Then we run the $(K_Z+c\MMM_Z)$-MMP over $S$. Since $\dim S=2$, we 
obtain a link of type~\typem{I}. By Proposition~\xref{proposition-Morrison} 
the contraction $\alpha: S_1\to S$ is a crepant blowup.
\end{proof} 

Avilov's proof  of the existence of standard models 
is based on (weak) Sarkisov program and essentially uses
Theorem~\xref{theorem-ge}.
In fact, since the number of crepant divisors on $S$ is finite, any sequence of 
transformations as in Lemma~\xref{lemma-avilov} terminates. Therefore, 
applying such transformations one can resolve the singularities of the base.

\section{Birational transformations of $\QQ$-conic bundles, I}
\label{section:I}
Below we basically follow Iskovskikh's paper \cite{Iskovskikh-1996-conic-re} with some improvements.
\begin{theorem}\label{theorem-main-construction}
Let $\pi: X\to S$ be a 
$\QQ$-conic bundle over a projective rational surface $S$.
Assume that there exist another Mori fiber space $\pi^{\sharp}: X^{\sharp}\to S^{\sharp}$
and a birational
map 
\begin{equation}
\label{equation-(4)}\vcenter{
\xymatrix{
X\ar@{-->}[r]^{\Phi}\ar[d]^{\pi}&X^{\sharp}\ar[d]^{\pi^{\sharp}}
\\
S & S^{\sharp}
}}
\end{equation}
which cannot be completed to a commutative square. 
Then there exists the following commutative diagram
\begin{equation}
\label{diagram-main-construction}
\vcenter{
\xymatrix@C=30pt@R=10pt{
&\hat X\ar@{-->}[dl]\ar@{-->}[dr]
\ar[d]^{\hat \pi}
\\
X\ar[d]_{\pi}&\hat S\ar[dr]\ar[dl]&\bar X\ar[d]^{\bar \pi}
\\
S&&\bar S
} }
\end{equation}
where $\hat X/\hat S$ and $\bar X/\bar S$ are $\QQ$-conic bundles, 
$\hat X/\hat S \dashrightarrow X/S$ is a sequence of Sarkisov links of 
types \typem{I} or \typem{II}, $\hat X/\hat S \dashrightarrow \bar X/\bar S$ is a sequence of Sarkisov links of 
types \typem{III}, and the contractions $\hat S\to S$ and $\hat S\to \bar S$ are described in Proposition~\xref{proposition-Morrison}. 
Furthermore, let $\bar \Delta\subset \bar S$ be the discriminant curve of $\bar \pi: \bar X\to \bar S$.
Then one of the following holds:
\begin{enumerate} 
\item \label{t-Sarkisov-program-apply-1}
$\uprho(\bar{S})=1$ and the divisor $-(4K_{\bar{S}}+\bar{\Delta})$ is ample;

\item \label{t-Sarkisov-program-apply-2}
$\uprho(\bar{S})=2$ and there is a base point free
pencil of rational curves $\bar{\LLL}$ 
such that 
\begin{equation}
\label{equation-transformations-rho=2}
2\le \bar{\LLL}\cdot\bar{\Delta} < -4\bar{\LLL}\cdot K_{\bar{S}}=8.
\end{equation}
\end{enumerate}
Moreover, if $\pi$ is a standard conic bundle, then we have 
\begin{equation*}
\p(\hat \Delta)=\p(\Delta)\le \p(\bar \Delta).
\end{equation*}
\end{theorem}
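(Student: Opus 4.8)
The plan is to reduce the statement to two inputs: the genus inequality for birational contractions (Lemma~\ref{Lemma-3}) and the genus equality for fiberwise equivalent standard conic bundles over smooth bases (Lemma~\ref{Lemma:nK+Delta:invariant}), the bridge between them being Corollary~\ref{cor:disc}, which records how discriminant curves transform under fiberwise equivalence.

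First I would read off the two base morphisms out of $\hat S$ in \eqref{diagram-main-construction}. Write $\alpha\colon \hat S\to S$ and $\beta\colon \hat S\to \bar S$; by the description in the theorem (via Proposition~\ref{proposition-Morrison}) these are birational morphisms between surfaces with at worst Du~Val singularities, in particular with rational singularities. Since $\hat X/\hat S\dashrightarrow X/S$ and $\hat X/\hat S\dashrightarrow \bar X/\bar S$ are fiberwise, Corollary~\ref{cor:disc} gives $\Delta=\alpha(\hat\Delta)$ and $\bar\Delta=\beta(\hat\Delta)$. Applying Lemma~\ref{Lemma-3} to the connected components of $\hat\Delta$ then yields
\[
\p(\hat\Delta)\le\p(\Delta),\qquad \p(\hat\Delta)\le\p(\bar\Delta).
\]
The second of these is already the desired inequality, once the first is promoted to an equality.

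For the reverse bound $\p(\Delta)\le\p(\hat\Delta)$ I would pass to a standard model of the (a priori non-standard) $\QQ$-conic bundle $\hat X/\hat S$. By Theorem~\ref{theorem-standard-models} there is a standard conic bundle $\hat X^\bullet/\hat S^\bullet$ over a smooth rational surface, fiberwise birational to $\hat X/\hat S$, together with a birational morphism $\gamma\colon \hat S^\bullet\to \hat S$. Corollary~\ref{cor:disc} gives $\hat\Delta=\gamma(\hat\Delta^\bullet)$, so Lemma~\ref{Lemma-3} yields $\p(\hat\Delta^\bullet)\le\p(\hat\Delta)$. On the other hand $\hat X^\bullet/\hat S^\bullet$ and the given $X/S$ are both standard over smooth rational surfaces and are fiberwise birational to one another (composing the standard-model map with the links of \eqref{diagram-main-construction}), so Lemma~\ref{Lemma:nK+Delta:invariant}, equation \eqref{eqnarray:D}, gives $\p(\hat\Delta^\bullet)=\p(\Delta)$. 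Chaining these,
\[
\p(\Delta)=\p(\hat\Delta^\bullet)\le\p(\hat\Delta)\le\p(\Delta),
\]
forces $\p(\hat\Delta)=\p(\Delta)$, and together with the previous paragraph this completes the argument.

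The main obstacle is that Lemma~\ref{Lemma-3} is stated only for a \emph{connected} reduced curve, whereas $\Delta$ and its companions may be disconnected; the careful point is therefore the componentwise application. I would use that the number of connected components of the discriminant is constant throughout a fiberwise class: for the standard members $X/S$ and $\hat X^\bullet/\hat S^\bullet$ this is Theorem~\ref{theorem-Artin-Mumford-computation}, and since images of connected curves are connected the chains $\hat\Delta^\bullet\to\hat\Delta\to\Delta$ and $\hat\Delta\to\bar\Delta$ can only decrease the component count; comparing with the standard models then forces each of $\alpha,\beta,\gamma$ to induce a bijection on components. Writing $\p(\Gamma)=\sum_j\p(\Gamma^{(j)})-(c-1)$ for a curve with $c$ components, the correction terms $-(c-1)$ match on both sides and every inequality above adds up verbatim. (Even allowing hypothetical merging of components, one only gains arithmetic genus, so the final inequalities would be unaffected in any case.)
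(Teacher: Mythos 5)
Your argument for the final clause $\p(\hat\Delta)=\p(\Delta)\le\p(\bar\Delta)$ is correct and is essentially the paper's own: pass to a standard model $\hat X^\bullet/\hat S^\bullet$ of $\hat X/\hat S$, apply Lemma~\ref{Lemma-3} along the contractions to get $\p(\hat\Delta^\bullet)\le\p(\hat\Delta)\le\p(\Delta)$ and $\p(\hat\Delta)\le\p(\bar\Delta)$, and close the loop with the invariance $\p(\hat\Delta^\bullet)=\p(\Delta)$ from Lemma~\ref{Lemma:nK+Delta:invariant}. Your extra care about the connectedness hypothesis in Lemma~\ref{Lemma-3} and the componentwise bookkeeping is a reasonable refinement that the paper leaves implicit.

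The genuine gap is that this is only the ``Moreover'' clause; you have taken the rest of the theorem as given. The main assertions — the existence of the diagram \eqref{diagram-main-construction} with the links organized as types \typem{I}/\typem{II} down to $X/S$ and types \typem{III} across to $\bar X/\bar S$, and above all the dichotomy \xref{t-Sarkisov-program-apply-1}/\xref{t-Sarkisov-program-apply-2} — are nowhere addressed, and they do not follow from the two lemmas you invoke. In the paper these are obtained by running the Sarkisov program on $\Phi$: one first untwists maximal singularities (links of types \typem{I} and \typem{II}) to reach $\hat X/\hat S$ where $\HHH_{\hat X}\equiv-\mu K_{\hat X}+\hat\pi^*\hat A$ has no maximal singularities, then applies type~\typem{III} links until the next link is over a point or a curve; the Noether--Fano inequality (Theorem~\ref{Noether-Fano}) forces $\bar A$ to be non-nef, and the key computation is the push-forward identity
\begin{equation*}
\pi_*(\HHH_{\bar X}^2)\equiv 4\mu\,\bar A-\mu^2\bigl(4K_{\bar S}+\bar\Delta\bigr),
\end{equation*}
a consequence of the canonical bundle formula \eqref{equation-canonical-bundle-formula}, whose left side is an effective nonzero divisor. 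Combined with $\uprho(\bar S)\in\{1,2\}$ this yields the ampleness of $-(4K_{\bar S}+\bar\Delta)$ in case \xref{t-Sarkisov-program-apply-1} and the bound $(4K_{\bar S}+\bar\Delta)\cdot\bar\LLL<0$ in case \xref{t-Sarkisov-program-apply-2}. Without this part of the argument the theorem is not proved.
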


\begin{proof}
We apply Sarkisov program (see Theorem~\xref{theorem-Sarkisov-program}).
First, we untwist the maximal singularities.
This means that we apply a sequence of
type~\typem{I} and \typem{II} links as explained in~\xref{Sarkisov-program-untwisting}:
\begin{equation}
\label{equation-(5)}
\vcenter{
\xymatrix{
X\ar[d]^{\pi_{}}&X_{1}\ar@{-->}[l]_{\Phi_{1}}\ar[d]^{\pi_{1}}
&\cdots\ar@{-->}[l]_{\Phi_{2}}& X_{m-1}\ar[d]^{\pi_{m-1}}\ar@{-->}[l]_{\Phi_{m-1}}& 
*+[r]{\hat X=X_m}\ar[d]^{\hat \pi}\ar@{-->}[l]_{\Phi_m}
\\
S&S_{1}\ar[l]_{\alpha_{1}}&\cdots\ar[l]_{\alpha_{2}}& S_{m-1}\ar[l]_{\alpha_{m-1}}& *+[r]{\hat S=S_m}\ar[l]_{\alpha_m}
}}
\end{equation}
Here $\alpha_i$ are birational contractions or isomorphisms and each of the squares
$(\Phi_i,\alpha_i)$ is commutative.
Moreover, the proper transforms $\HHH_{X_i}$ of the linear system $\HHH_X$ have the
form $\HHH_{X_i}=-\mu K_{X_i}+\pi_i^*A_i$, where $A_i$ is a $\QQ$-divisor on $S_i$.
On the last step, the linear system $\HHH_{\hat{X}}$ has no maximal
singularities.

Next, we follow the step~\xref{Sarkisov-program--no-max-sing} of the Sarkisov program. 
The divisor $\hat{A}$ is not nef by 
the Noether-Fano inequality (Theorem~\xref{Noether-Fano}), where $\hat A$, as usual, satisfies the relation
\begin{equation}
\label{equation-H-hat}
\HHH_{\hat{X}}\equiv -\mu K_{\hat{X}}+\hat{\pi}^*\hat{A}.
\end{equation}
Therefore,
there is a type~\typem{III} or type~\typem{IV} link $\Phi_{m+1}: \hat X/\hat S \dashrightarrow X_{m+1}/S_{m+1}$.
Assume that this link 
is of type~\typem{III} with birational contraction $\alpha_{m+1}: \hat S\to S_{m+1}$.
Then $\pi_{m+1}: X_{m+1}\to \hat{S}_{m+1}$ satisfies the same conditions
as $\hat{\pi}: \hat{X}\to \hat{S}$
and the procedure may be repeated:
\begin{equation}\label{equation-(6)}
\vcenter{
\xymatrix{
*+[l]{X_m=\hat X}\ar@{-->}[r]^{\Phi_{m+1}}\ar[d]^{\hat \pi}&X_{m+1}\ar@{-->}[r]^{\Phi_{m+2}}\ar[d]^{\pi_{m+1}}
&\cdots\ar@{-->}[r]^{\Phi_{n}}& *+[r]{\bar X=X_n}\ar[d]^{\bar\pi}
\\
*+[l]{ S_m=\hat S}\ar[r]^{\alpha_{m+1}}&S_{m+1}\ar[r]^{\alpha_{m+2}}&\cdots\ar[r]^{\alpha_{n}}& *+[r]{\bar S=S_n}
}}
\end{equation}
such that all the $\alpha_{m+i}$, $i=1,\dots, n-m$, are birational contractions,
$\uprho(S_{m+i}/S_{m+i+1})=1$, each of the squares $(\Phi_{m+i},\alpha _{m+i})$ is commutative, the proper
transforms of $\HHH_{\hat{X}}$ on $X_{m+i}$ have the form 
$\HHH_{X_{m+i}}=-\mu K_{X_{m+i}}+\pi^*_{m+i}A_{m+i}$, where
$A_{m+i}$ is some non-nef $\QQ$-divisor on $S_{m+i}$, and $\HHH_{X_{m+i}}$ has no maximal
singularities.

After a finite number of steps we arrive at a link $\bar\Phi: \bar X/\bar S \dashrightarrow X^{\flat}/S^{\flat}$
of type~\typem{III} with a non-birational contraction $\delta=\bar\alpha : \bar S\to T=\bar S_1$ or of type~\typem{IV}. 
As above, for the proper transform $\HHH_{\bar{X}}$ of $\HHH$, we can write
\begin{equation}
\label{equation-HbarX}
\HHH_{\bar{X}}\equiv -\mu K_{\bar{X}}+\bar{\pi}^*\bar{A},
\end{equation}
where $\bar A$ is not nef by the Noether-Fano inequality (Theorem~\xref{Noether-Fano}).
By the projection formula and \eqref{equation-canonical-bundle-formula} we have
\begin{equation}
\label{equation-Lemma-2}
\Gamma:=\pi_*(\HHH_{\bar X}^2)\equiv 4\mu \bar A-\mu^2(4K_{\bar S}+\bar \Delta)
\end{equation}
and this cycle is an effective non-zero integral Weil divisor on $\bar S$.
By the Noether-Fano inequality (Theorem~\xref{Noether-Fano}) the divisor $-\bar A$
is not nef.
We have two possibilities:
\begin{scase} 
\label{item-del-Pezzo}
The link $\bar\Phi: \bar X/\bar S \dashrightarrow X^{\flat}/S^{\flat}$ is of type~\typem{IV} and 
$T$ is a point. Then 
$\uprho(\bar{S})=1$. In this case,
the divisors $-\bar{A}$, $-K_{\bar S}$, and $-(4K_{\bar{S}}+\bar{\Delta})$ are ample by \eqref{equation-Lemma-2}.
\end{scase}
\begin{scase}
\label{item-not-del-Pezzo}
The link $\bar\Phi: \bar X/\bar S \dashrightarrow X^{\flat}/S^{\flat}$ is of type~\typem{III} (resp. \typem{IV}),
where in the notation of~\xref{definition-Sarkisov-links},
the variety $S_1$ (resp. $T$) is a curve.
Then this curve must be smooth rational and $\uprho(\bar{S})=2$.
Thus we have an extremal contraction $\delta: \bar S\to \PP^1$. Let $\bar{\LLL}$ 
be the pencil of fibers. Then $\bar A\cdot\bar{\LLL} <0$ by the construction of the link $\bar\Phi$
(see~\xref{Sarkisov-program--no-max-sing}). Further, $(4K_{\bar S}+\bar \Delta)\cdot \bar{\LLL}<0$ by \eqref{equation-Lemma-2}. 
This proves \eqref{equation-transformations-rho=2}.
\end{scase}

To prove the last equality, consider 
a standard conic
bundle $\pi^{\bullet}: X^{\bullet}\to S^{\bullet}$ that is birationally equivalent to $\hat{\pi}: \hat{X}\to \hat{S}$,
as in Theorem~\xref{theorem-standard-models}.
Let $\Delta^{\bullet}\subset S^{\bullet}$ be the corresponding discriminant curve. Then by Lemma~\xref{Lemma-3},
\begin{equation*}
\p(\Delta^{\bullet})\le \p(\hat{\Delta})\le \p(\Delta)\le \p(\bar \Delta).
\end{equation*}
But $\p(\Delta)=\p(\Delta^{\bullet})$ by Lemma~\xref{Lemma:nK+Delta:invariant}. 
This completes the proof of the theorem.
\end{proof}

As an easy consequence of Theorem~\xref{theorem-main-construction}
we obtain the following.

\begin{theorem}[Sarkisov's theorem {\cite{Sarkisov-1980-1981-e}}]
\label{Sarkisov-theorem} 
Let $\pi: X\to S$ be a standard conic bundle over a rational surface $S$
and let $\Delta$ be the discriminant curve.
Assume that $|4K_S+\Delta|\neq \emptyset$. Then any birational map $X\dashrightarrow X^\sharp$
to another Mori fiber space $\pi^\sharp: X^\sharp\to S^\sharp$ is fiberwise \textup(in particular, 
$\pi: X\to S$ is birationally rigid\textup).
\end{theorem}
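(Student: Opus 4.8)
The plan is to argue by contradiction, feeding a hypothetical non-fiberwise map into Theorem~\xref{theorem-main-construction} and showing that its output is incompatible with the hypothesis $|4K_S+\Delta|\neq\emptyset$. So suppose $\Phi: X\dashrightarrow X^\sharp$ is a birational map to a Mori fiber space that is \emph{not} fiberwise, i.e. cannot be completed to a commutative square over a birational map of bases. Then the situation of \eqref{equation-(4)} holds, and Theorem~\xref{theorem-main-construction} produces the diagram \eqref{diagram-main-construction} with a $\QQ$-conic bundle $\bar\pi:\bar X\to\bar S$ whose discriminant curve $\bar\Delta$ falls into one of the two cases \xref{t-Sarkisov-program-apply-1}, \xref{t-Sarkisov-program-apply-2}.

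First I would record that in both cases $|4K_{\bar S}+\bar\Delta|=\emptyset$. In case~\xref{t-Sarkisov-program-apply-1} the divisor $-(4K_{\bar S}+\bar\Delta)$ is ample, so $4K_{\bar S}+\bar\Delta$ has negative intersection with an ample class and cannot be pseudo-effective, let alone effective. In case~\xref{t-Sarkisov-program-apply-2} the inequality \eqref{equation-transformations-rho=2} reads $(4K_{\bar S}+\bar\Delta)\cdot\bar\LLL<0$ for the base point free pencil $\bar\LLL$; since the nef class $\bar\LLL$ meets every effective divisor non-negatively, $4K_{\bar S}+\bar\Delta$ is again non-effective.

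The core of the argument is to transport the hypothesis $|4K_S+\Delta|\neq\emptyset$ down to $\bar S$ and contradict this vanishing. The key observation is that every link appearing in \eqref{equation-(5)} and \eqref{equation-(6)} (types \typem{I}, \typem{II}, \typem{III}) carries a birational contraction of bases, hence is fiberwise; consequently $X/S$, $\hat X/\hat S$ and $\bar X/\bar S$ are pairwise fiberwise birationally equivalent $\QQ$-conic bundles (the genuinely non-fiberwise step is only the terminal type~\typem{III}/\typem{IV} link that leaves $\bar X/\bar S$). I would then pass to a standard model $\bar\pi^\bullet:\bar X^\bullet\to\bar S^\bullet$ of $\bar X/\bar S$ via Theorem~\xref{theorem-standard-models}, with birational morphism $\alpha:\bar S^\bullet\to\bar S$ and discriminant curve $\bar\Delta^\bullet$ satisfying $\bar\Delta=\alpha(\bar\Delta^\bullet)$ by Corollary~\xref{cor:disc}. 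Now $X/S$ and $\bar X^\bullet/\bar S^\bullet$ are both standard conic bundles over smooth rational surfaces and are fiberwise birationally equivalent, so Lemma~\xref{Lemma:nK+Delta:invariant} (with $m=4$, $n=1$) gives $\dim|4K_S+\Delta|=\dim|4K_{\bar S^\bullet}+\bar\Delta^\bullet|$, whence $|4K_{\bar S^\bullet}+\bar\Delta^\bullet|\neq\emptyset$.

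Finally I would push an effective member $D^\bullet\in|4K_{\bar S^\bullet}+\bar\Delta^\bullet|$ down by $\alpha$. Since $\alpha$ is a birational morphism of normal surfaces one has $\alpha_*K_{\bar S^\bullet}=K_{\bar S}$ and $\alpha_*\bar\Delta^\bullet=\bar\Delta$ (Corollary~\xref{cor:disc}), while proper pushforward preserves linear equivalence; hence $\alpha_*D^\bullet$ is an effective divisor linearly equivalent to $4K_{\bar S}+\bar\Delta$, contradicting $|4K_{\bar S}+\bar\Delta|=\emptyset$. Thus no non-fiberwise $\Phi$ exists, so every birational map from $X$ to a Mori fiber space is fiberwise; birational rigidity follows by taking the selfmap $\psi=\id$, a fiberwise map inducing a birational and hence (the generic fibres being smooth conics, so that birational equals biregular) an isomorphism of generic fibers. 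The step I expect to demand the most care is the bookkeeping that makes $X/S$ and $\bar X/\bar S$ \emph{genuinely} fiberwise equivalent, so that Lemma~\xref{Lemma:nK+Delta:invariant} and Corollary~\xref{cor:disc} apply: the chain passes through $\QQ$-conic bundles over singular bases, where the naive invariance of $|4K_S+\Delta|$ fails (cf. Example~\xref{example-simplest-link}), and it is precisely the comparison along the standard-model morphism $\alpha$ that repairs this.
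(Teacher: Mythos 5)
Your proposal is correct and follows essentially the same route as the paper: feed the hypothetical non-fiberwise map into Theorem~\xref{theorem-main-construction}, use Lemma~\xref{Lemma:nK+Delta:invariant} on a standard model to transport the effectivity of $4K_S+\Delta$, and push forward to $\bar S$ to contradict the ampleness of $-(4K_{\bar S}+\bar\Delta)$ in case~\xref{t-Sarkisov-program-apply-1} and the inequality \eqref{equation-transformations-rho=2} in case~\xref{t-Sarkisov-program-apply-2}. The only cosmetic difference is that the paper takes the standard model of $\hat X/\hat S$ rather than of $\bar X/\bar S$, and your extra care about where the invariance of $|4K+\Delta|$ actually applies is exactly the point the paper leaves implicit.
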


Note that the condition $|4K_S+\Delta|\neq \emptyset$ in terms of effective threshold 
is equivalent to $\etr(S,\Delta)\ge 4$ (see~\xref{remark-effective-threshold}).

\begin{proof}
Suppose that $4K_S+\Delta$ is effective. Then by Lemma~\xref{Lemma:nK+Delta:invariant} the 
divisor $4K_{S^{\bullet}}+\Delta^{\bullet}$ on the standard model 
$\pi^{\bullet}: X^{\bullet}\to S^{\bullet}$ is also effective (see 
Theorem~\xref{theorem-standard-models}). Its images under the birational contraction 
$\hat S\to \bar S$ is also an effective $\QQ$-divisor. This is a contradiction to 
Theorem~\xref{theorem-main-construction}\xref{t-Sarkisov-program-apply-1} 
and~\xref{t-Sarkisov-program-apply-2}.
\end{proof}

This theorem was proved by V. Sarkisov in~\cite{Sarkisov-1980-1981-e}.
It has a higher-dimensional generalization \cite[Theorem~4.1]{Sarkisov-1982-e}.
Another proof can be found in \cite{Pukhlikov2000-Essentials},
see also \cite[Corollary 7.3]{Shokurov-Choi-2011} for more general approach.

\begin{remark}
Note that the condition $|4K_S+\Delta|\neq \emptyset$
follows from the condition that the $1$-cycle $-(K_X)^2$ is effective.
\end{remark}

\begin{corollary}
In the notation and assumptions of Theorem~\xref{Sarkisov-theorem} 
the variety $X$ is not birationally equivalent to a $\QQ$-Fano threefold 
and $X$ is not birational to a variety with a structure of $\QQ$-del Pezzo fibration.
In particular, $X$ is not rational.
\end{corollary}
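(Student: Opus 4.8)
The plan is to deduce all three assertions from Theorem~\xref{Sarkisov-theorem} together with the trichotomy of three-dimensional Mori fiber spaces recalled in~\xref{3-dim-MMP-cases}; the entire argument is essentially a dimension count on the bases. First I would record the elementary remark that a birational map $\Phi\colon X\dashrightarrow X^\sharp$ onto a Mori fiber space $\pi^\sharp\colon X^\sharp\to S^\sharp$ is \emph{fiberwise} precisely when there exists a birational map $\alpha\colon S\dashrightarrow S^\sharp$, and in particular that fiberwiseness forces $\dim S=\dim S^\sharp$. Since $S$ is a surface, $\dim S=2$ throughout.

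Next I would dispose of the two forbidden targets. Suppose $X$ were birational to a $\QQ$-Fano threefold $X^\sharp$, regarded as the Mori fiber space $\pi^\sharp\colon X^\sharp\to S^\sharp$ with $S^\sharp=\{\pt\}$ a point. Any birational equivalence furnishes a birational map $X\dashrightarrow X^\sharp$, which by Theorem~\xref{Sarkisov-theorem} must be fiberwise; hence $\dim S=\dim S^\sharp=0$, contradicting $\dim S=2$. The same reasoning excludes a $\QQ$-del Pezzo fibration, where the base $S^\sharp$ is a smooth curve and so $\dim S^\sharp=1\neq 2$. Thus neither target can occur.

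Finally, for non-rationality I would observe that a rational threefold is birational to $\PP^3$, and that $\PP^3$, being a smooth Fano threefold with $\PP^3\to\{\pt\}$ a Mori fiber space, is a $\QQ$-Fano threefold. Rationality of $X$ would therefore make $X$ birational to a $\QQ$-Fano threefold, which the previous step has just ruled out; hence $X$ is not rational.

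I do not expect any genuine obstacle at this stage: all the depth is already contained in Theorem~\xref{Sarkisov-theorem}, and the corollary is a purely formal consequence. The only point meriting care is to formulate the fiberwise condition so that it yields equality of base dimensions at once, after which the three assertions follow with no computation, using only that the base of a three-dimensional Mori fiber space has dimension $0$, $1$, or $2$ according as the total space is a $\QQ$-Fano threefold, a $\QQ$-del Pezzo fibration, or a $\QQ$-conic bundle.
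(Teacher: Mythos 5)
Your argument is correct and is exactly the intended (and in the paper, unwritten) deduction: Theorem~\xref{Sarkisov-theorem} forces any such birational map to be fiberwise, fiberwiseness forces a birational map of bases and hence equality of base dimensions, and the base of a conic bundle has dimension $2$ while a $\QQ$-Fano threefold and a $\QQ$-del Pezzo fibration have bases of dimension $0$ and $1$; non-rationality then follows since $\PP^3\to\{\pt\}$ is a $\QQ$-Fano Mori fiber space. Nothing to add.
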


Given rationally connected variety $X$, it is an interesting question 
to describe all birational structures of fibrations on $X$
into varieties with trivial canonical divisor. 
This is an old classical problem raised by M. Halphen \cite{Halphen1882}, \cite{Dolgavcev1966}. 
For conic bundles, the following necessary condition was obtained 
by I. Cheltsov:

\begin{theorem}[\cite{Chelcprimetsov2004}]
\label{Chelcprimetsov-theorem} 
Let $\pi: X\to S$ be a standard conic bundle \textup(of any dimension\textup)
and let $\Delta$ be the discriminant divisor.
If $4K_S+\Delta$ is big, then $X$ is not birationally equivalent to 
a fibration whose general fiber is a smooth variety with 
numerically trivial canonical divisor. In particular, $X$ has no 
birational structure of an elliptic fibration.
\end{theorem}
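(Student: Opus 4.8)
The plan is to argue by contradiction, comparing the pseudo-effective threshold and the numerical dimension of an auxiliary movable system on the two sides of the putative birational map. Suppose $\chi\colon X\dashrightarrow Y$ is a birational map and $g\colon Y\to T$ a fibration with $\dim T<\dim Y$ whose general fibre $F$ is smooth with $K_F\equiv 0$. Fix a very ample complete linear system $|H|$ on $T$; since $g_*\OOO_Y=\OOO_T$, the system $\HHH_Y:=g^*|H|$ is the complete movable system $|g^*H|$, and because $K_Y\cdot C=0$ for curves $C$ in a general fibre one has $K_Y\equiv g^*D_0$ for some divisor $D_0$ on $T$, so $K_Y+t\HHH_Y\equiv g^*(D_0+tH)$. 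Let $\HHH:=\chi^{-1}_*\HHH_Y$ be the proper transform on $X$; writing $\ell$ for a general conic fibre and $\mu:=\tfrac12\HHH\cdot\ell$, we have $\HHH\equiv -\mu K_X+\pi^*A$ with $A\in\Pic(S)_\QQ$.

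First I would check $\mu>0$; this is the one place where $K_F\equiv 0$ is indispensable. If $\mu=0$ then $\HHH\cdot\ell=0$, so $\HHH$ is composed with $\pi$ and $g$ is, birationally, pulled back from $S$ through $\pi$; then $\chi^{-1}(F)$ is birational to a conic bundle, hence uniruled, whereas a smooth variety with numerically trivial canonical class has pseudo-effective canonical class and is not uniruled. With $\mu>0$ in hand I would locate the pseudo-effective threshold $\tau_X:=\inf\{t\ge 0: K_X+t\HHH\text{ is pseudo-effective}\}$. Since $(K_X+t\HHH)\cdot\ell=-2(1-t\mu)$ and the fibres $\ell$ cover $X$, the class $K_X+t\HHH$ is not pseudo-effective for $t<1/\mu$; and from $\HHH\equiv -\mu K_X+\pi^*A$ one gets $K_X+\tfrac1\mu\HHH\equiv\tfrac1\mu\pi^*A$, so $\tau_X=1/\mu$ as soon as $A$ is pseudo-effective.

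Next I would feed in the hypothesis. Intersecting two general members, $\langle\HHH^2\rangle$ is effective, so $\pi_*\langle\HHH^2\rangle$ is a pseudo-effective divisor on $S$; expanding and invoking the canonical bundle formula \eqref{equation-canonical-bundle-formula} (with $\pi_*(K_X\cdot\pi^*A)=-2A$ and $\pi_*((\pi^*A)^2)=0$) reproduces the identity \eqref{equation-Lemma-2}, namely $\pi_*\langle\HHH^2\rangle\equiv 4\mu A-\mu^2(4K_S+\Delta)$. Hence $4\mu A$ is the sum of a pseudo-effective class and a big one, so $A$ is big on $S$. Consequently $K_X+\tfrac1\mu\HHH\equiv\tfrac1\mu\pi^*A$ is pseudo-effective of numerical dimension $\dim S=\dim X-1$. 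On the other hand, for every $t$ the class $K_Y+t\HHH_Y\equiv g^*(D_0+tH)$ is a pull-back from $T$; at $t=\tau_Y$ the divisor $D_0+\tau_YH$ lies on the boundary of the pseudo-effective cone of $T$, so it is not big and the numerical dimension of $K_Y+\tau_Y\HHH_Y$ is at most $\dim T-1=\dim X-\dim F-1$.

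The proof would then conclude from the birational invariance of the pseudo-effective threshold and of the numerical dimension of $K+\tau\HHH$: these give $\tau_X=\tau_Y$ and $\dim X-1\le\dim X-\dim F-1$, forcing $\dim F\le 0$, which is absurd since $g$ is a genuine fibration. The hard part is exactly this invariance. Passing to a common resolution $W$, the classes $K_X+\tau\HHH$ and $K_Y+\tau\HHH_Y$ are pushed forward from $K_W+\tau\HHH_W$ along two different birational morphisms, and the exceptional divisors can a priori leak positivity, so the comparison of numerical dimensions holds only after the maximal singularities of $\HHH$ have been untwisted, i.e. after reducing to a model on which $(X,\tau\HHH)$ is canonical. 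This untwisting is the Noether--Fano step; because the target $g\colon Y\to T$ is a Calabi--Yau fibration rather than a Mori fibre space, the Sarkisov program of Theorem~\xref{theorem-main-construction} cannot be applied verbatim and must be adapted. It is precisely here that \emph{big} is needed in place of the weaker \emph{effective} of Sarkisov's Theorem~\xref{Sarkisov-theorem}: bigness of $4K_S+\Delta$ is what upgrades the numerical dimension of $K_X+\tau_X\HHH$ to the full value $\dim X-1$ and supplies the strict inequality that survives the untwisting, whereas mere effectiveness would only reproduce the rigidity statement of Theorem~\xref{Sarkisov-theorem}.
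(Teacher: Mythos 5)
The paper itself does not prove this theorem: it only remarks that the argument is ``in the same style'' as Theorem~\xref{Sarkisov-theorem} once the Noether--Fano inequalities are suitably modified, and defers to \cite{Chelcprimetsov2004}. Your framework matches that indication, and the parts you do carry out are sound: taking $\HHH$ to be the proper transform of $g^*|H|$, ruling out $\mu=0$ because a fibre with numerically trivial canonical class cannot be uniruled, and extracting bigness of $A$ from $\pi_*\langle\HHH^2\rangle\equiv 4\mu A-\mu^2(4K_S+\Delta)$, i.e.\ from \eqref{equation-Lemma-2}, is exactly where and how the hypothesis that $4K_S+\Delta$ is big must enter (and correctly explains why bigness, rather than mere effectiveness as in Theorem~\xref{Sarkisov-theorem}, is needed).

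Nevertheless the proposal has a genuine gap at its central step, one you concede yourself. First, a smaller point: $K_Y\equiv g^*D_0$ is unjustified and false for a general model $Y$ of the fibration (vertical divisors need not be numerically pulled back from $T$; blow up a point on an elliptic surface). Only $K_Y|_F\equiv 0$ for the general fibre is available, and the bound $\nu(K_Y+t\HHH_Y)\le\dim T$, together with its strict improvement at the threshold, has to be derived from that (Cheltsov does this via the growth of $h^0(Y,n(K_Y+tg^*H))$, using $h^0(F,nK_F)\le 1$). Second, and decisively: the ``birational invariance of the pseudo-effective threshold and of the numerical dimension of $K+\tau\HHH$'' is not a citable fact. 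On a common resolution one has $K_W+t\HHH_W=q^*(K_Y+t\HHH_Y)+(\text{effective $q$-exceptional})$ because $\HHH_Y$ is base point free, but on the source side the correction $\sum(a_i-tm_i)E_i$ is negative exactly when $\HHH$ has a maximal singularity, and then both the threshold and the numerical dimension can jump. Restoring the comparison requires untwisting the maximal singularities by Sarkisov links as in Theorem~\xref{theorem-main-construction} \emph{and} verifying that bigness of $4K_{\bar S}+\bar\Delta$ (equivalently of $\bar A$) survives the links --- which is delicate, since the intermediate models are singular $\QQ$-conic bundles where the invariance statements of Lemma~\xref{Lemma:nK+Delta:invariant} fail and one must pass through standard models as in Theorem~\xref{theorem-standard-models}. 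That untwisting-with-preservation is precisely the ``modified Noether--Fano inequality'' the paper alludes to; it is the actual content of the theorem, and asserting it is not the same as proving it.
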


This fact can be proved in the same style as
\xref{Sarkisov-theorem}. One has to modify the Noether-Fano inequalities 
for more general situation.

\section{Birational transformations of $\QQ$-conic bundles, II}
\label{section:II}
In this section we study the transformation $X/S \dashrightarrow \bar X/\bar S$
obtained in Theorem~\xref{theorem-main-construction} in details. We also discuss various applications to rationality problems.

From Theorem~\xref{theorem-main-construction} and the general structure of the Sarkisov
program one immediately obtains 
the following.

\begin{corollary}
\label{theorem-main-part2-construction}
In the notation of Theorem~\xref{theorem-main-construction} there exists a link 
$\bar\Phi: \bar X/\bar S \dashrightarrow X^{\flat}/S^{\flat}$ of one of the following types:
\begin{enumerate}
\item 
Type \typem{III} over a point. Then $X^{\flat}$ is a $\QQ$-Fano threefold with $\uprho(X^{\flat})=1$.
\begin{equation}
\label{equation-diagram-cd-IIIp}
\vcenter{
\xymatrix@R=10pt{
\bar{X}\ar[d]^{\bar{\pi}}\ar@{-->}[rr]^{\bar{\chi}}&&Z^{\flat}\ar[d]^{q^{\flat}}
\\
\bar{S}\ar[r]&\{\pt\}&X^{\flat}\ar[l]
}}
\end{equation}

\item 
Type \typem{IV} over a point. Then $\pi^{\flat}:X^{\flat}\to S^{\flat}$ is either a $\QQ$-conic bundle
with $\uprho(S^{\flat})=1$ or $S^{\flat}\simeq \PP^1$ and $\pi^{\flat}$ is a $\QQ$-del Pezzo fibration.
\begin{equation}
\label{equation-diagram-cd-IVp}
\vcenter{
\xymatrix@R=10pt{
\bar{X}\ar[d]^{\bar{\pi}}\ar@{-->}[rr]^{\bar{\chi}}&&X^{\flat}\ar[d]^{\pi^{\flat}}
\\
\bar{S}\ar[r]&\{\pt\}&S^{\flat}\ar[l]
}}
\end{equation}
\item 
Type \typem{III} over a curve. Then $\pi^{\flat}:X^{\flat}\to \PP^1$ is a $\QQ$-del Pezzo fibration.
\begin{equation}
\label{equation-diagram-cd-IIIc}
\vcenter{
\xymatrix@R=10pt{
\bar{X}\ar[d]^{\bar{\pi}}\ar@{-->}[rr]^{\bar{\chi}}&&Z^{\flat}\ar[d]^{q^{\flat}}
\\
\bar{S}\ar[dr]^{\bar\alpha}&&X^{\flat}\ar[dl]_{\pi^{\flat}}
\\
&\PP^1&
}}
\end{equation}

\item 
Type \typem{IV} over a curve. Then $\pi^{\flat}:X^{\flat}\to S^{\flat}$ is a $\QQ$-conic bundle.
\begin{equation}
\label{equation-diagram-cd-IVc}
\vcenter{
\xymatrix@R=10pt{
\bar{X}\ar[d]^{\bar{\pi}}\ar@{-->}[rr]^{\bar{\chi}}&&X^{\flat}\ar[d]^{\pi^{\flat}}
\\
\bar{S}\ar[dr]^{\bar\alpha}&&S^{\flat}\ar[dl]_{\alpha^{\flat}}
\\
&\PP^1&
}}
\end{equation}

\end{enumerate}
Moreover the link $\bar\Phi: \bar X/\bar S \dashrightarrow X^{\flat}/S^{\flat}$
decreases the coefficient $\mu$ \textup(see \eqref{definition-mu}\textup), that is, 
\begin{equation}
\label{mu-reduction}
\mu^{\flat} <\mu, 
\end{equation}
where 
$\mu^{\flat}$ is defined by 
\begin{equation*}
\HHH_{X^{\flat}}\equiv -\mu^{\flat}K_{X^{\flat}}+
\pi^{\flat *} A^{\flat}.
\end{equation*}
\end{corollary}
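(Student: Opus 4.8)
The plan is to obtain the four cases by reading off Theorem~\xref{theorem-main-construction} together with the definition of Sarkisov links~\xref{definition-Sarkisov-links}, and then to prove the inequality~\eqref{mu-reduction} by hand, this last being the only substantive point. Recall that the proof of Theorem~\xref{theorem-main-construction} ends at a link $\bar\Phi\colon\bar X/\bar S\dashrightarrow X^{\flat}/S^{\flat}$ that is either of type~\typem{III} with a \emph{non-birational} base contraction $\bar\alpha\colon\bar S\to T$ or of type~\typem{IV}, and that it splits into the two regimes $\uprho(\bar S)=1$ and $\uprho(\bar S)=2$ of its conclusions~\xref{t-Sarkisov-program-apply-1} and~\xref{t-Sarkisov-program-apply-2}. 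First I would use $\uprho(\bar S/T)=1$ (extremality of $\bar\alpha$) together with the fiber-type constraint of~\xref{definition-Sarkisov-links} to fix $\dim T$: when $\uprho(\bar S)=1$ the target $T$ is a point, and when $\uprho(\bar S)=2$ it is the curve $T\simeq\PP^1$ already produced in~\xref{item-not-del-Pezzo}.

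Next I would sort the link by the pair (type of $\bar\Phi$, $\dim T$). If $\uprho(\bar S)=1$ and $\bar\Phi$ is of type~\typem{III}, then in the notation of~\xref{definition-Sarkisov-links} the output is a Mori fiber space $X^{\flat}\to S^{\flat}=T=\{\pt\}$, i.e. a $\QQ$-Fano threefold with $\uprho(X^{\flat})=1$; this is case~(i), realized by diagram~\eqref{equation-diagram-cd-IIIp}. If $\bar\Phi$ is instead of type~\typem{IV} over the point $T$, then $\pi^{\flat}\colon X^{\flat}\to S^{\flat}$ has $\uprho(S^{\flat})=1$, so $S^{\flat}$ is either a surface, giving a $\QQ$-conic bundle, or $S^{\flat}\simeq\PP^1$, giving a $\QQ$-del Pezzo fibration; this is case~(ii), diagram~\eqref{equation-diagram-cd-IVp}. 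When $\uprho(\bar S)=2$ the base contraction lands on $T\simeq\PP^1$, and the same split in the type of $\bar\Phi$ gives case~(iii) (type~\typem{III}, hence $X^{\flat}\to\PP^1$ a $\QQ$-del Pezzo fibration, diagram~\eqref{equation-diagram-cd-IIIc}) and case~(iv) (type~\typem{IV}, hence $X^{\flat}\to S^{\flat}\to\PP^1$ with $X^{\flat}\to S^{\flat}$ a $\QQ$-conic bundle, diagram~\eqref{equation-diagram-cd-IVc}). In every case the naming of $X^{\flat}$ is nothing more than the three-dimensional trichotomy of~\xref{3-dim-MMP-cases} applied to the target Mori fiber space.

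There remains the strict inequality~\eqref{mu-reduction}, which is the crux. The weak form $\mu^{\flat}\le\mu$ is contained in the no-maximal-singularity analysis of~\xref{Sarkisov-program--no-max-sing}, and $\mu$ is moreover unchanged from $X$ to $\bar X$ because every link in the chains~\eqref{equation-(5)} and~\eqref{equation-(6)} preserves it. To get strictness I would use that $\bar\Phi$ genuinely alters the fibration. For a type~\typem{IV} link $\bar\chi$ is an isomorphism in codimension one; pushing~\eqref{equation-HbarX} forward along $\bar\chi$ and comparing with $\HHH_{X^{\flat}}\equiv-\mu^{\flat}K_{X^{\flat}}+\pi^{\flat*}A^{\flat}$ yields
\[
(\mu-\mu^{\flat})\,K_{X^{\flat}}\equiv\bar\chi_{*}\bar\pi^{*}\bar A-\pi^{\flat*}A^{\flat}.
\]
Intersecting with a general fiber $F^{\flat}$ of $\pi^{\flat}$ annihilates $\pi^{\flat*}A^{\flat}$, and since $K_{X^{\flat}}\cdot F^{\flat}<0$ the claim $\mu^{\flat}<\mu$ becomes the statement $\bar\chi_{*}\bar\pi^{*}\bar A\cdot F^{\flat}<0$. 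This negativity is exactly the mechanism producing $\bar\Phi$: the curves that become the new fibers lie over the $\bar A$-negative extremal direction of~\xref{Sarkisov-program--no-max-sing}, on which $\bar A$ is non-nef by the Noether-Fano inequality~\xref{Noether-Fano}\xref{Noether-Fano-1}. For a type~\typem{III} link the identical computation applies once one folds in the divisorial contraction $q^{\flat}$, which merely shifts the relation by a multiple of its exceptional divisor and does not disturb the fiber intersection.

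The hard part will be this strictness rather than the classification: one must control the behavior of the proper transform $\HHH$ and of the auxiliary divisor $\bar A$ across the flips and the divisorial contraction composing $\bar\Phi$, and confirm that the governing intersection number is strictly negative, equivalently that $\mu^{\flat}=\mu$ would force $\bar\Phi$ to be fiberwise, contradicting the fact that it interchanges (or lowers the base of) the Mori fiber space structures. The remaining sorting into the four cases according to $\uprho(\bar S)\in\{1,2\}$ and to the type of $\bar\Phi$ is routine.
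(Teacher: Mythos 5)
Your proposal is correct and follows essentially the same route as the paper, which offers no separate proof of this corollary but declares it immediate from Theorem~\ref{theorem-main-construction} together with the general structure of the Sarkisov program: the four cases are exactly the sorting by $\uprho(\bar S)\in\{1,2\}$ (equivalently $\dim T$) and by the type \typem{III}/\typem{IV} of the terminal link, and the strict decrease $\mu^{\flat}<\mu$ is the standard Sarkisov-degree argument (the last extremal contraction of the $(K+\frac1\mu\HHH)$-MMP is negative on the new fiber class, which is precisely your intersection computation with $F^{\flat}$ using the non-nefness of $\bar A$ from the Noether--Fano inequality). The only point worth polishing is the type~\typem{III}-over-a-point case, where there is no fiber curve to intersect with; there one instead uses $\uprho(X^{\flat})=1$ and the ampleness of $-\bar A$ on $\bar S$ (case \ref{t-Sarkisov-program-apply-1}) to see that $\HHH_{X^{\flat}}+\mu K_{X^{\flat}}$ is a negative multiple of $-K_{X^{\flat}}$, but this is a routine variant of the same argument.
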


\subsection{}\label{DuVal:delPezzo}
Assume that we are in the situation of \eqref{equation-diagram-cd-IIIp} or \eqref{equation-diagram-cd-IVp}.
Then, in particular, $-K_{\bar{S}}$ is ample, i.e. $\bar S$ 
is a del Pezzo surface with at worst Du Val singularities.
All such surfaces have been
classified (see, for example,~\cite{Miyanishi-Zhang-1988},
\cite[Ch.~8]{Dolgachev-ClassicalAlgGeom}). 
In our case, we have additional restrictions: $\uprho(\bar S)=1$ and the singularities of 
$\bar S$ are of type~\type{A}. 
According to the classification,
there are 28 combinatorial types of Du Val del Pezzo surfaces of Picard number one
and among them there are exactly
16 types with at worst type~\type{A} singularities. We reproduce 
here the complete list, where
the notation \type{nA_m}
specifies $n$ is the number of \type{A_m} points:
\begin{center}
\begin{tabularx}{\textwidth}{l|X}
$K_S^2$ & \multicolumn{1}{c}{$\Sing(S)$}
\\\hline
1&\type{4A_2}, \type{2A_1+2A_3}, \type{2A_4}, \type{A_1+A_2+A_5}, \type{A_1+A_7}, \type{A_8} \\ 
2&\type{A_1+2A_3}, \type{A_2+A_5}, \type{A_7} \\
3& \type{3A_2}, \type{A_1+A_5}\\

4&\type{2A_1+A_3} \\

5& \type{A_4} \\

6& \type{A_1+A_2} ($S\simeq \PP(1,2,3)$)\\

8&\type{A_1} ($S\simeq \PP(1,1,2)$)
\\
9& $\emptyset$\ ($S\simeq \PP^2$)
\end{tabularx}
\end{center}
By Theorem~\xref{theorem-main-construction}\xref{t-Sarkisov-program-apply-1}
the divisor $-(4K_{\bar{S}}+\bar{\Delta})$ is ample.
Clearly, there is only a restricted number
of possibilities for $\bar{\Delta}$.
Note however that $\bar \Delta$ is not necessarily a Cartier divisor on $\bar S$
(see Corollary~\xref{corollary-Q-conic-bundles-}).

By the adjunction formula we have the following.

\begin{slemma}\label{lemma-estimates-pa}
Let $\bar S$ be a del Pezzo surface with at worst Du Val singularities and $\uprho(\bar S)=1$.
Let $d:=K_{\bar S}^2$ and let $\bar \Delta\subset \bar S$ be a reduced curve.
Let $J$ be the ample generator of $\Cl(\bar S)/\mathrm{tors}\simeq \ZZ$. 
Write $-K_{\bar S}\equiv \iota J$ and $\bar \Delta\equiv aJ$
for some positive integers $\iota$ and $a$.
Then one of the following holds:
\par
\begin{center}
\begin{tabularx}{\textwidth}{l@{\qquad}XXX}
$d$& $\bar S$ &$\iota$ &$\p(\bar \Delta)$
\\\hline
&&&
\\[-4pt]
$9$ & $\PP^2$ & $3$ & $(a-3)a/2+1$
\\
$8$ & $\PP(1,1,2)$ & $4$ & $\le (a-4)a/4+1$
\\
$\le 6$ & &$d$ &$\le (a-d)a/2d+1$
\end{tabularx}
\end{center}
\end{slemma}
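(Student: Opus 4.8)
The statement is an adjunction/arithmetic-genus computation on a Du Val del Pezzo surface $\bar S$ of Picard number one, so the plan is to reduce the genus formula
\[
\p(\bar\Delta)=\tfrac12\bar\Delta\cdot(K_{\bar S}+\bar\Delta)+1
\]
to an intersection-theoretic computation in terms of the ample generator $J$ of $\Cl(\bar S)/\mathrm{tors}$. First I would record that since $\uprho(\bar S)=1$, the group $\Cl(\bar S)/\mathrm{tors}$ is infinite cyclic generated by an ample class $J$, so one may write $-K_{\bar S}\equiv\iota J$ and $\bar\Delta\equiv aJ$ with $\iota,a$ positive integers; the number $\iota$ is the \emph{Fano index} of $\bar S$. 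Then
\[
2\p(\bar\Delta)-2=\bar\Delta\cdot(K_{\bar S}+\bar\Delta)=(aJ)\cdot\bigl((a-\iota)J\bigr)=a(a-\iota)\,J^2.
\]
So the whole statement comes down to (i) identifying $\iota$ and (ii) computing or bounding $J^2$ in each case.

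\textbf{Key steps.} The three rows are handled by determining $J^2$ and $\iota$. In the smooth case $\bar S=\PP^2$ one has $J=\OOO(1)$, $J^2=1$, $\iota=3$, giving exactly $2\p(\bar\Delta)-2=a(a-3)$, i.e. $\p(\bar\Delta)=(a-3)a/2+1$; this is the familiar plane-curve genus formula and is an equality. For $\bar S=\PP(1,1,2)$ (the quadric cone, the $K_{\bar S}^2=8$ case with one \type{A_1} point) the ample generator is the class of the degree-$\tfrac12$ line through the vertex, so $J^2=\tfrac12$ and $-K_{\bar S}\equiv 4J$, i.e. $\iota=4$; substituting gives $2\p(\bar\Delta)-2=a(a-4)\cdot\tfrac12$, hence $\p(\bar\Delta)\le(a-4)a/4+1$. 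For the general row $d=K_{\bar S}^2\le 6$ the index equals $\iota=d$: indeed, since the only singularities are of type \type{A} and $\uprho(\bar S)=1$, the canonical class $K_{\bar S}$ is a (negative) multiple of $J$, and pairing $-K_{\bar S}\equiv \iota J$ with itself gives $d=K_{\bar S}^2=\iota^2 J^2$, whence $J^2=d/\iota^2$; combined with the classification table this forces $\iota=d$ and $J^2=1/d$. Then
\[
2\p(\bar\Delta)-2=a(a-d)\cdot\tfrac1d,\qquad \p(\bar\Delta)=(a-d)a/2d+1.
\]
In each of the last two rows the value is an upper bound rather than an equality because $\bar\Delta$ may pass through the singular points of $\bar S$, where the local arithmetic-genus contribution and the fact that $\bar\Delta$ need not be Cartier (cf. Corollary~\xref{corollary-Q-conic-bundles-}) can only decrease $\p(\bar\Delta)$ relative to the naive adjunction value; I would note that on the minimal resolution the proper transform satisfies $\p(\widetilde{\bar\Delta})\le\p(\bar\Delta)$ and that the discrepancy corrections are non-positive, which yields the inequalities.

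\textbf{Main obstacle.} The genuinely delicate point is the correct bookkeeping of adjunction on the \emph{singular} surface $\bar S$: the formula $2\p-2=\bar\Delta\cdot(K_{\bar S}+\bar\Delta)$ is valid numerically, but $\bar\Delta$ is only a Weil (not necessarily Cartier) divisor, so one must be careful that $\bar\Delta\cdot K_{\bar S}$ and $\bar\Delta^2$ are computed as $\QQ$-valued intersection numbers via the cyclic class group, and that passing from $\p(\widetilde{\bar\Delta})$ on the resolution to $\p(\bar\Delta)$ accounts correctly for the nodes/singular points lying on $\bar\Delta$. This is exactly why the last two rows carry $\le$ rather than $=$. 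The determination $\iota=d$, $J^2=1/d$ for $d\le 6$ is where I would lean on the classification of Du Val del Pezzo surfaces of Picard number one with type \type{A} singularities already cited in~\xref{DuVal:delPezzo}; once $J^2$ and $\iota$ are pinned down the computation is a one-line substitution.
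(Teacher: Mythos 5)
Your overall strategy is exactly what the paper intends: the paper's entire ``proof'' is the phrase ``by the adjunction formula'', and your reduction $2\p(\bar\Delta)-2=\bar\Delta\cdot(K_{\bar S}+\bar\Delta)=a(a-\iota)J^2$, together with $J^2=1$, $\iota=3$ for $\PP^2$, $J^2=\tfrac12$, $\iota=4$ for $\PP(1,1,2)$, and $d=\iota^2J^2$ in general, is the correct skeleton. (For the row $d\le 6$, note that $d=\iota^2J^2$ alone does not force $\iota=d$; one really must check the index case by case from the classification, e.g.\ by computing $\Cl(\bar S)=\Pic(\widetilde{\bar S})/\langle\text{$(-2)$-curves}\rangle$ on the minimal resolution for each of the listed singularity configurations. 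You correctly flag that you are leaning on the classification here, so this is a matter of carrying out a finite verification rather than a conceptual gap.)

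The one step that does not work as written is your justification of the inequality sign. You invoke $\p(\widetilde{\bar\Delta})\le\p(\bar\Delta)$ together with non-positivity of exceptional corrections, but these two facts combine to give a \emph{lower} bound, not an upper bound: writing $\mu^*\bar\Delta=\widetilde{\bar\Delta}+A$ with $A\ge 0$ exceptional and using that $\mu$ is crepant for Du Val singularities, one gets
\[
2\p(\widetilde{\bar\Delta})-2=\widetilde{\bar\Delta}\cdot(\widetilde{\bar\Delta}+K_{\widetilde{\bar S}})
=\bar\Delta\cdot(\bar\Delta+K_{\bar S})+A^2,
\]
hence $\p(\bar\Delta)=\p(\widetilde{\bar\Delta})+\delta\ge \tfrac12\bar\Delta\cdot(\bar\Delta+K_{\bar S})+1+\tfrac12A^2$, where $\delta=\ell\bigl(\nu_*\OOO_{\widetilde{\bar\Delta}}/\OOO_{\bar\Delta}\bigr)\ge 0$. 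What the lemma actually requires is the opposite estimate $\delta_P+\tfrac12A_P^2\le 0$ at each singular point $P\in\bar\Delta$ of $\bar S$ (equivalently, non-positivity of the Riemann--Roch correction terms for the non-Cartier Weil divisor $\bar\Delta$ at the cyclic quotient points). This is a true and standard local computation on $\mathrm{A}_n$-singularities, but it is the real content of the ``$\le$'' in the last two rows, and the argument as you phrased it does not deliver it.
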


\begin{scorollary}
\label{corollary-estimate-pa-del-Pezzo}
In the above notation, assume that $-(4K_{\bar{S}}+\bar{\Delta})$ is ample,
then $a<4\iota$ and $\p(\bar \Delta)\le 45$.
\end{scorollary}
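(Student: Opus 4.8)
The plan is to split the statement into its two assertions and dispatch each by a short numerical computation resting on exactly two inputs already at hand: the ampleness hypothesis and the genus estimates of Lemma~\ref{lemma-estimates-pa}.

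First I would prove $a<4\iota$. Since $\uprho(\bar S)=1$, numerical and linear equivalence agree up to torsion, so $\Cl(\bar S)/\mathrm{tors}\simeq\ZZ$ is generated by the \emph{ample} class $J$, and a $\QQ$-Cartier divisor numerically proportional to $J$ is ample if and only if its coefficient is strictly positive. Writing $-(4K_{\bar S}+\bar\Delta)\equiv 4\iota J-aJ=(4\iota-a)J$, the ampleness hypothesis forces $4\iota-a>0$. As $a$ and $\iota$ are positive integers, this already gives $a\le 4\iota-1$, that is, $a<4\iota$.

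Next I would bound $\p(\bar\Delta)$ by substituting $a\le 4\iota-1$ into the three rows of Lemma~\ref{lemma-estimates-pa}. In each row the genus bound is a quadratic in $a$ that is increasing for $a>\iota$, hence maximized at the largest admissible value $a=4\iota-1$. For $\bar S=\PP^2$ (where $\iota=3$, $a\le 11$) the formula gives $\p(\bar\Delta)=(a-3)a/2+1\le 8\cdot 11/2+1=45$. For $\bar S=\PP(1,1,2)$ (where $\iota=4$, $a\le 15$) it gives $\p(\bar\Delta)\le 11\cdot 15/4+1<43$. Finally, for $d\le 6$ (where $\iota=d$, $a\le 4d-1$) the bound $(a-d)a/(2d)+1$ evaluated at $a=4d-1$ simplifies to $6d-\tfrac52+\tfrac1{2d}$, which is increasing in $d$ and therefore largest at $d=6$, where it stays below $34$. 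Comparing the three cases, the extremal value is attained only on $\PP^2$ and equals $45$, so $\p(\bar\Delta)\le 45$ in all cases.

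The arithmetic here is entirely elementary; the single point that deserves care is the ampleness criterion on these singular del Pezzo surfaces, namely that on a Du Val del Pezzo of Picard number one the class $J$ is genuinely ample and that proportionality to $J$ with a positive coefficient is equivalent to ampleness. Once that is granted, everything reduces to maximizing a one-variable quadratic over a bounded range of integers and recognizing $\PP^2$ as the worst case; I do not expect any further obstacle.
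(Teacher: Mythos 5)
Your proof is correct and is exactly the computation the paper intends: the corollary is stated as an immediate consequence of Lemma~\ref{lemma-estimates-pa}, obtained by reading off $a<4\iota$ from ampleness on a Picard-number-one surface and then maximizing each genus bound at $a=4\iota-1$, with the extremal value $45$ occurring for $\bar S=\PP^2$, $a=11$. Your arithmetic in all three cases checks out.
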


\begin{sremark}
\label{remark-estimate-pa-del-Pezzo-deg=5}
In some cases the inequalities above can be significantly improved. For example,
if in the above notation the divisor $-(4K_{\bar{S}}+\bar{\Delta})$ is 
ample and $K_{\bar S}^2=5$, then by Corollary~\xref{corollary-Q-conic-bundles-}
the divisor $\bar \Delta$ on 
$\bar S$ is Cartier, $a\le 15$ and $\p(\bar \Delta)\le 16$.
\end{sremark}

\subsection{}
Now assume that we are in the situation of \eqref{equation-diagram-cd-IVc}
or \eqref{equation-diagram-cd-IIIc}. Then there exists a $K_{\bar S}$-negative extremal contraction
$\bar \alpha: \bar S\to \PP^1:=T$. It is easy to describe the degenerate fibers of 
this fibration. We leave the following statement as an exercise. 

\begin{slemma}
Let $(F,C)$ be the germ of a surface with Du Val singularities of type~\type{A}
along an irreducible reduced curve $C$ such that there exists a $K_F$-negative contraction
$\delta: (F,C)\to (T,o)$ to a curve.
Then $C\simeq \PP^1$ and for the dual graph of the minimal resolution of $(F,C)$ there are only two possibilities:
\begin{equation}
\label{graph-cb-2A1}
\xymatrix{
\circ\ar@{-}[r] & \bullet\ar@{-}[r]&\circ
} 
\end{equation} 
\begin{equation}
\label{graph-cb-A3}
\xymatrix@R=5pt{
\circ\ar@{-}[r] & \circ\ar@{-}[r]&\circ
\\
&\bullet\ar@{-}[u] &
} 
\end{equation} 
where the vertices $\circ$ correspond to exceptional $(-2)$-curves and $\bullet$ corresponds to $C$.
In particular, the singular locus of $F$ consist of two points of type~\type{A_1}
in the case \eqref{graph-cb-2A1}
and one point of type~\type{A_3} in the case \eqref{graph-cb-A3}.
\end{slemma}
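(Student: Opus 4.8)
The plan is to pass to the minimal resolution $\mu\colon\tilde F\to F$ and to read off the configuration from the induced fibration $\phi:=\delta\circ\mu\colon\tilde F\to(T,o)$. Since the singularities are Du Val of type $A$, $\mu$ is crepant, $K_{\tilde F}=\mu^*K_F$, and the $\mu$-exceptional locus over each singular point is a chain of smooth rational $(-2)$-curves. As $-K_F$ is $\delta$-ample, a general fibre $\Lambda$ of $\phi$ misses the singular points and has $\Lambda^2=0$ and $K_{\tilde F}\cdot\Lambda=K_F\cdot\mu_*\Lambda<0$; adjunction then gives $\Lambda\cong\PP^1$ and $K_{\tilde F}\cdot\Lambda=-2$. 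Running the relative minimal model program exhibits $\phi$ as a $\PP^1$-bundle blown up at points, so every fibre is a tree of smooth rational curves meeting transversally. In particular the central fibre $\tilde F_o=\phi^*(o)$ is such a tree, whose components are the proper transform $\tilde C$ of $C$ (the only non-exceptional one, since $C$ is the whole, irreducible, reduced fibre of $\delta$) together with the exceptional $(-2)$-curves $E_{ij}$.

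Next I would pin down the multiplicity of $\tilde C$. Writing $\tilde F_o=m_0\tilde C+\sum m_{ij}E_{ij}$ and using that Du Val points are Gorenstein (so $K_F$ is Cartier and $K_F\cdot C$ is a negative integer) together with $K_{\tilde F}\cdot E_{ij}=0$, the identity $K_{\tilde F}\cdot\tilde F_o=K_{\tilde F}\cdot\Lambda=-2$ becomes $m_0\,(K_F\cdot C)=-2$. Hence $(m_0,K_F\cdot C)=(1,-2)$ or $(2,-1)$. In the first case the smooth rational component $\tilde C$ would satisfy $\tilde C^2=0$ by adjunction, which by Zariski's lemma is impossible once $\tilde F_o$ is reducible; as $F$ is singular by hypothesis, this case is excluded. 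Thus $m_0=2$, $K_F\cdot C=-1$, and $\tilde C$ is a $(-1)$-curve.

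It then remains to solve the combinatorics. The relations $\tilde F_o\cdot D=0$ for every component $D$ read: at $\tilde C$, the exceptional curves meeting it have multiplicities summing to $2$ (using $E\cdot\tilde C\in\{0,1\}$ from transversality); at each $(-2)$-curve $E$ of multiplicity $m_E$, the multiplicities of its neighbours sum to $2m_E$. I would then show there are exactly two solutions. If two distinct exceptional curves, each of multiplicity $1$, meet $\tilde C$, the balancing at each of them is already saturated by $\tilde C$ (multiplicity $2$), so each is a leaf with no chain-neighbour, i.e. an isolated $A_1$-curve; this gives two $A_1$ points and the graph $\circ-\bullet-\circ$ of \eqref{graph-cb-2A1}. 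If a single exceptional curve of multiplicity $2$ meets $\tilde C$, propagating the balancing along its chain forces the chain to close up symmetrically, and the unique consistent outcome is an $A_3$ chain carrying $\tilde C$ on its central curve (multiplicities $1,2,1$ on the chain and $2$ on $\tilde C$), which is \eqref{graph-cb-A3} with one $A_3$ point. Finally $\tilde C\cong\PP^1$ meets the exceptional locus in a single smooth branch over each singular point, so it maps isomorphically onto $C$ and $C\cong\PP^1$.

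The main obstacle will be the last step: rigorously showing that the multiplicity-$2$ case collapses to exactly the $A_3$ configuration, ruling out longer chains and off-central attachment. This is a finite induction on the balancing equations, but it must be run carefully, and in parallel one must justify the transversality of $\tilde C$ with the exceptional locus and the genuine smoothness of $C$, so that the abstract tree of curves really matches the two stated dual graphs.
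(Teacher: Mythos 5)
The paper gives no proof of this lemma (it is explicitly left as an exercise), so there is nothing to compare against; judged on its own, your argument is the natural one and it is essentially correct. The skeleton — crepant minimal resolution, Zariski's lemma to kill the case $m_0(K_F\cdot C)=-2$ with $m_0=1$ (using that $F$ is genuinely singular, which is the intended reading of the hypothesis), and the balancing relations $\tilde F_o\cdot D=0$ — does pin down the configuration. The combinatorial step you worry about does close cleanly: the tree structure forces each chain of exceptional curves to attach to $\tilde C$ through exactly one of its members (two attachments from the same chain would create a cycle), so the balancing at $\tilde C$ gives either two chains attaching with multiplicity $1$, each of which is then saturated and hence a single $(-2)$-curve (two $\type{A_1}$ points), or one chain attaching through a curve $E_0$ of multiplicity $2$; in the latter case the relation $4=2+\sum m_{E}$ over the chain-neighbours of $E_0$, propagated along the chain, shows that a single neighbour of multiplicity $2$ forces an infinite chain, while two neighbours of multiplicity $1$ are forced to be endpoints — i.e.\ exactly the $\type{A_3}$ graph with $\tilde C$ on the central curve.

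The one place where your write-up is genuinely short of a proof is the final claim $C\simeq\PP^1$. That $\tilde C\simeq\PP^1$ meets the exceptional locus in a single transversal point over each singular point only gives that $\mu|_{\tilde C}:\tilde C\to C$ is a bijection, hence that $C$ is unibranch; it does not by itself exclude a cusp of $C$ at the singular point of $F$ (transversality of strict transforms does not control smoothness of the image when the centre being contracted is a $(-2)$-chain rather than a point of a smooth surface). You need the explicit local check: writing $\type{A_{n-1}}$ as $uv=w^n$, the curves whose strict transforms meet the $i$-th exceptional curve transversally at one point are, up to units, of the form $t\mapsto(t^i,t^{n-i},t)$, which are smooth because $w=t$ is a coordinate on them; applying this with $(n,i)=(2,1)$ and $(4,2)$ handles both configurations and shows $\mu|_{\tilde C}$ is an isomorphism. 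With that supplement the proof is complete.
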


\begin{slemma}\label{lemma-last-step-del-pezzo}
Let $\varphi: Y\to Z$ be a $\QQ$-conic bundle over a projective rational surface
and let $\MMM$ be a linear system on $Y$ without fixed components. 
Write $\MMM+\lambda K_Y\equiv \varphi^* B$, where $B$ is a $\QQ$-divisor on $Z$ and $\lambda\in \QQ$.
Assume that there exists a base point free pencil $\LLL$ on $Z$ such that $B\cdot \LLL<0$.
Let $C\subset Y$ be an irreducible curve such that $K_Y\cdot C\ge 0$.
Then one of the following holds:
\begin{enumerate}
\item 
$C\subset \Bs\MMM$,
\item 
$\varphi(C)\cdot \LLL>0$, or
\item 
$\varphi(C)$ is a component of a degenerate fiber of $\LLL$. 
\end{enumerate}
\end{slemma}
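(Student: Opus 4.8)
The plan is to argue by contradiction with conclusion (i) and to deduce that one of (ii), (iii) must then hold. So suppose $C\not\subset\Bs\MMM$. Since $\MMM$ has no fixed components, a general member of $\MMM$ meets $C$ properly, and hence $\MMM\cdot C\ge 0$. First I would observe that $\varphi$ cannot contract $C$ to a point: as $-K_Y$ is $\varphi$-ample, every curve contracted by $\varphi$ has $K_Y\cdot C<0$, contradicting the hypothesis $K_Y\cdot C\ge 0$. Therefore $D:=\varphi(C)$ is an irreducible curve and $\varphi_*C=e\,D$ for some integer $e\ge 1$ (the degree of $\varphi|_C$).

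Next I would pin down the sign of $\lambda$ and extract the key inequality. Restricting the relation $\MMM+\lambda K_Y\equiv\varphi^*B$ to a general fibre $\Lambda_z=\varphi^{-1}(z)\simeq\PP^1$ gives $\varphi^*B\cdot\Lambda_z=B\cdot\varphi_*\Lambda_z=0$ and $K_Y\cdot\Lambda_z=-2$, so $\MMM\cdot\Lambda_z=2\lambda$; since the general fibre is not contained in $\Bs\MMM$, this is $\ge 0$, whence $\lambda\ge 0$. Now applying the projection formula—legitimate since the very expression $\varphi^*B$ presupposes $B$ is $\QQ$-Cartier (and in the applications $Z=\bar S$ is $\QQ$-factorial)—to $\MMM\cdot C\ge 0$ yields
\[
0\le \MMM\cdot C=-\lambda\,(K_Y\cdot C)+e\,(B\cdot D).
\]
Because $\lambda\ge 0$, $K_Y\cdot C\ge 0$ and $e\ge 1$, this forces $e\,(B\cdot D)\ge\lambda\,(K_Y\cdot C)\ge 0$, and hence $B\cdot D\ge 0$.

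Finally I would run the dichotomy coming from the pencil. Since $\LLL$ is base point free it is nef, so $D\cdot\LLL\ge 0$. If $D\cdot\LLL>0$ we are in case (ii). If $D\cdot\LLL=0$, then, writing $f\colon Z\to\PP^1$ for the morphism defined by $\LLL$, the curve $D$ is contracted by $f$ and so lies in a fibre $f^{-1}(t)$: either $f^{-1}(t)$ is degenerate, in which case $D$ is one of its components and we are in case (iii); or $f^{-1}(t)$ is a reduced irreducible general fibre, so $D=f^{-1}(t)\equiv\LLL$, whence $B\cdot D=B\cdot\LLL<0$, contradicting $B\cdot D\ge 0$. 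Thus (ii) or (iii) holds, which together with the initial reduction completes the argument. The only genuinely delicate points are the intersection-theoretic bookkeeping on the (possibly singular) surface $Z$ and the terminal threefold $Y$—in particular the validity of the projection formula, resting on $\QQ$-Cartierness of $B$—and the clean separation, in the case $D\cdot\LLL=0$, of a full non-degenerate fibre (forcing $D\equiv\LLL$) from a proper component of a degenerate one; these form the main obstacle, though each is routine.
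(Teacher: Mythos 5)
Your proof is correct and follows essentially the same route as the paper's: the key inequality $0\le\MMM\cdot C=-\lambda K_Y\cdot C+\varphi^*B\cdot C$ combined with the projection formula to get $B\cdot\varphi(C)\ge 0$, which rules out $\varphi(C)$ being a full fiber of $\LLL$ since $B\cdot\LLL<0$. Your write-up is in fact slightly more careful than the paper's (which asserts $\varphi^*B\cdot C>0$ where only $\ge 0$ follows, though $\ge 0$ suffices), and you correctly supply the implicit facts that $\lambda\ge 0$ and that $C$ is not contracted by $\varphi$.
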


\begin{proof}
Clearly, $C$ is not contained in the fibers of $\varphi$. 
We have
\begin{equation*}
0\le \lambda K_{Y}\cdot C=-\MMM\cdot C+\varphi^* B\cdot C.
\end{equation*}
If $C\not \subset \Bs \MMM$, then
$\MMM\cdot C\ge 0$ and so $\varphi^*B\cdot C>0$. In this case, by the projection formula $B\cdot \varphi(C)>0$.
Hence, the curve $\varphi (C)$ cannot coincide with a full fiber of the pencil $\LLL$.
\end{proof}

\begin{scorollary}\label{corollary-del-Pezzo}
In the above notation, let $Y_\eta$ be the generic fiber of the composition 
\begin{equation*}
Y \overset{\varphi}\longrightarrow Z \overset{}\longrightarrow \PP^1.
\end{equation*}
Then $-K_{Y_\eta}$ is ample.
\end{scorollary}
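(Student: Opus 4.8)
The plan is to reduce ampleness of $-K_{Y_\eta}$ to the Nakai--Moishezon criterion on a single general fibral surface, and then to rule out curves of non-negative $K$-degree directly by Lemma~\ref{lemma-last-step-del-pezzo}. First I would replace the generic fiber by a concrete surface: since relative ampleness of $-K$ over $\PP^1$ is an open condition on the base, $-K_{Y_\eta}$ is ample if and only if $-K_F$ is ample for a general member $L\in\LLL$, where $F:=\varphi^{-1}(L)$ is the fiber of the composition $Y\to Z\to\PP^1$ over the point corresponding to $L$. For general $L$ we have $L$ smooth, $L^2=0$ (a member of a base point free pencil), and $\varphi_F\colon F\to L$ is again a conic bundle; by adjunction together with $F|_F\equiv \varphi_F^*(L|_L)\equiv 0$ one gets $K_F\equiv K_Y|_F$. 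Restricting the defining relation yields $\MMM|_F+\lambda K_F\equiv \varphi_F^*(B|_L)$, where $\deg(B|_L)=B\cdot\LLL<0$ and $\lambda>0$ (it is the positive coefficient $\mu$ of the ambient Sarkisov situation; note $\MMM\cdot\Lambda=-\lambda K_Y\cdot\Lambda=2\lambda$ for a fiber $\Lambda$ of $\varphi_F$, so $\MMM|_F$ is $\varphi_F$-ample).

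\textbf{The curve condition.} Next I would check $-K_F\cdot C>0$ for every irreducible curve $C\subset F$. Suppose instead $K_F\cdot C\ge 0$, i.e. $K_Y\cdot C\ge 0$, and apply Lemma~\ref{lemma-last-step-del-pezzo} to $C$. Case (i), $C\subset\Bs\MMM$, is impossible for general $L$: as $\MMM$ has no fixed components, $\Bs\MMM$ has codimension $\ge 2$ in $Y$, so its horizontal part meets $F$ in finitely many points and its vertical part avoids a general $F$, whence no curve of $F$ lies in $\Bs\MMM$. Case (ii), $\varphi(C)\cdot\LLL>0$, fails because $\varphi(C)\subseteq L$ and $L\cdot\LLL=L^2=0$. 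Case (iii), $\varphi(C)$ a component of a degenerate fiber of $\LLL$, fails since a general $L$ is non-degenerate and $\varphi(C)\subseteq L$; the only remaining alternative, that $C$ be $\varphi_F$-vertical, is excluded because any component of a conic fiber $\Lambda$ satisfies $K_F\cdot C<0$, contradicting $K_F\cdot C\ge 0$. This contradiction gives $-K_F\cdot C>0$ for all $C$.

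\textbf{Self-intersection and conclusion.} For the remaining Nakai--Moishezon hypothesis I would expand $-\lambda K_F\equiv \MMM|_F+\varphi_F^*(-B|_L)$. The pullback term squares to zero, the cross term equals $(\MMM\cdot\Lambda)\cdot(-(B\cdot\LLL))=2\lambda\cdot(-(B\cdot\LLL))>0$ since $B\cdot\LLL<0$, and $(\MMM|_F)^2\ge 0$ because $\MMM|_F$ is mobile on the surface $F$; hence $K_F^2=(-K_F)^2>0$. Nakai--Moishezon then gives that $-K_F$ is ample for general $L$, and therefore $-K_{Y_\eta}$ is ample.

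\textbf{Main obstacle.} The one genuinely delicate point, and the reason the statement is phrased on the generic fiber rather than on $Y$ itself, is the exclusion of alternative (i) of Lemma~\ref{lemma-last-step-del-pezzo}: it is exactly the passage to a general fibral surface (equivalently, to $Y_\eta$) that forces $\Bs\MMM$ down to a finite set and removes the base-locus case. The verification of $K_F^2>0$ is then routine, once $\varphi_F$-ampleness of $\MMM|_F$ and the sign $\deg(B|_L)<0$ are recorded.
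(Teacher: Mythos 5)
Your argument is correct and is exactly the intended derivation: the paper states this corollary with no written proof, treating it as immediate from Lemma~\ref{lemma-last-step-del-pezzo}, and your reduction to a general fibral surface $F=\varphi^{-1}(L)$ together with the case analysis of the lemma is the natural way to carry that out. Your explicit verification of $(-K_F)^2>0$ via the decomposition $-\lambda K_F\equiv \MMM|_F+\varphi_F^*(-B|_L)$ supplies the one step Nakai--Moishezon genuinely requires beyond curve-positivity (the paper leaves it implicit; in the application it also follows from the Noether formula $K_{\bar X_\eta}^2=8-\bar\LLL\cdot\bar\Delta$ and \eqref{equation-transformations-rho=2}), so nothing is missing.
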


Let $\bar \LLL$ be the base point free pencil of rational curves generated by the fibers of 
$\bar \alpha: \bar S\to \PP^1$.
Let $\bar X_{\eta}$ and $\bar S_\eta$ be 
generic fibers over $T$ of $\bar\alpha\comp\bar\pi:\bar X\to T$ and $\bar\alpha: \bar S\to T$,
respectively. 
Then $\bar X_{\eta}$ is a smooth surface over the non-closed field
$\Bbbk(T)$ and $\bar S_\eta\simeq \PP^1_{\Bbbk(T)}$ is a smooth rational curve.
The contraction $\bar \pi$ induces 
a conic bundle structure $\bar\pi_\eta: \bar X_\eta\to\bar S_\eta$ with
\begin{equation*}
\Pic(\bar X_\eta)=\ZZ\cdot \bar{\Lambda}_\eta\oplus\ZZ\cdot K_{\bar X_\eta}, 
\end{equation*}
where $\bar{\Lambda}_\eta$ 
is the class of the fiber of $\bar\pi_\eta$.
By Corollary~\xref{corollary-del-Pezzo} \ 
$\bar X_\eta$ is a del Pezzo surface with $\uprho(\bar X_\eta)=2$.
Since log flips $\bar \chi$ do not change the generic fiber,
we also have the following.
\begin{scorollary}
\label{corollary-s-Sarkisov-links}
The Sarkisov link $\bar\Phi: \bar X/\bar S \dashrightarrow X^{\flat}/S^{\flat}$ induces a 
link on $\bar X_\eta$ which is of type~\typem{IV} in the case \eqref{equation-diagram-cd-IVc}
and of type~\typem{III} in the case
\eqref{equation-diagram-cd-IIIc}.
\end{scorollary}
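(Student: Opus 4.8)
The plan is to restrict the whole diagram of the link $\bar\Phi$, as described in Corollary~\xref{theorem-main-part2-construction}, to the generic fibre over the curve $T=\PP^1$, and to check that each of its building blocks restricts to the corresponding building block of a surface Sarkisov link in the sense of Section~\xref{section:surfaces}. Every morphism and rational map occurring in \eqref{equation-diagram-cd-IIIc} and \eqref{equation-diagram-cd-IVc} commutes with the contractions to $T$, so restriction over the generic point $\eta\in T$ is legitimate and sends threefolds over $T$ to smooth projective surfaces over the field $\Bbbk(T)$. Here $\bar X_\eta$ is smooth because the isolated terminal singularities of $\bar X$ all lie over closed points of $T$; as already established above, $\bar X_\eta$ is a del Pezzo surface with $\uprho(\bar X_\eta)=2$ and $\bar\pi_\eta:\bar X_\eta\to\bar S_\eta\simeq\PP^1_{\Bbbk(T)}$ is a conic bundle.

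First I would dispose of the flips. By construction $\bar\chi$ is a sequence of log flips, hence an isomorphism in codimension one; since there are no nontrivial flips in dimension two (Section~\xref{section:surfaces}), its restriction $\bar\chi_\eta$ is an isomorphism of smooth projective surfaces over $\Bbbk(T)$. This is the precise content of the remark, made just before the statement, that $\bar\chi$ does not change the generic fibre over $T$. Consequently the essential part of the induced transformation is carried by the single contraction lying to the right of $\bar\chi$ in each diagram.

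In the type \typem{IV} case \eqref{equation-diagram-cd-IVc}, the map $\pi^\flat:X^\flat\to S^\flat$ is a $\QQ$-conic bundle and $\alpha^\flat:S^\flat\to T$ is of fibre type, so over $\eta$ we obtain a second conic bundle $\pi^\flat_\eta:X^\flat_\eta\to S^\flat_\eta\simeq\PP^1_{\Bbbk(T)}$ (with $X^\flat_\eta$ del Pezzo by Corollary~\xref{corollary-del-Pezzo}). Since $\bar\chi_\eta$ identifies $\bar X_\eta$ with $X^\flat_\eta$, we see a single del Pezzo surface of Picard number two equipped with the two conic bundle structures $\bar\pi_\eta$ and $\pi^\flat_\eta$ over distinct rulings; this is exactly a type~\typem{IV} surface link, interchanging the two Mori fibre space structures. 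In the type \typem{III} case \eqref{equation-diagram-cd-IIIc}, the map $\pi^\flat:X^\flat\to \PP^1=T$ is a $\QQ$-del Pezzo fibration, so $X^\flat_\eta$ is a del Pezzo surface over $\Bbbk(T)$ with $\uprho(X^\flat_\eta)=\uprho(X^\flat/T)=1$, and the divisorial contraction $q^\flat$ restricts to a birational morphism $q^\flat_\eta:Z^\flat_\eta\to X^\flat_\eta$. Via the isomorphism $\bar\chi_\eta$ this contracts $\bar X_\eta$ (of Picard number two, with the conic bundle $\bar\pi_\eta$) onto the Picard-number-one del Pezzo surface $X^\flat_\eta$, which is precisely a type~\typem{III} surface link.

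The point requiring care is that $q^\flat$ really restricts to a nontrivial contraction of a curve on the generic fibre, that is, that its exceptional divisor dominates $T$. I would deduce this from the Picard-number bookkeeping: $\uprho(Z^\flat_\eta)=\uprho(\bar X_\eta)=2$ while $\uprho(X^\flat_\eta)=1$, so $q^\flat_\eta$ cannot be an isomorphism and must contract a Galois orbit of $(-1)$-curves. The main technical obstacle is thus to establish the relation between the Picard number of a generic fibre over $T$ and the corresponding relative Picard number, so that the generic fibres of the Mori fibre spaces appearing in \eqref{equation-diagram-cd-IIIc} and \eqref{equation-diagram-cd-IVc} are again Mori fibre spaces over $\Bbbk(T)$ of the expected Picard number; with this in hand, the identification with the surface links of Section~\xref{section:surfaces} is immediate.
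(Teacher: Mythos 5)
Your argument is correct and is essentially the paper's own (the paper states this corollary as an immediate consequence of the observation that the log flips $\bar\chi$ do not change the generic fibre over $T$, together with the identification of $\bar X_\eta$ as a del Pezzo surface with $\uprho(\bar X_\eta)=2$ carrying the conic bundle $\bar\pi_\eta$). The details you supply — restriction of each piece of the diagram over $\eta\in T$, and the Picard-number bookkeeping relating $\uprho$ of the generic fibre to the relative Picard number — are exactly the routine verifications the paper leaves implicit.
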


\begin{scase}
\label{classification-cases-discriminant-fibration}
Using the classification of two-dimensional Sarkisov links in Theorems~\xref{classification-Sarkisov-links}
and~\xref{classification-Sarkisov-links-conic-bundle}
we obtain the following possibilities:
\begin{enumerate}
\item 
\label{classification-cases-discriminant-fibration-1}
$K_{\bar X_\eta}^2=1$, $\bar{\LLL}\cdot \bar \Delta=7$, $\pi^\flat$ is a $\QQ$-conic bundle, 
the link is of type~\typem{IV} and is induced by the Bertini 
involution $\beta_\eta: \bar X_\eta\to \bar X_\eta$. 
Hence in \eqref{equation-diagram-cd-IVc} we have $\pi^\flat=\lambda\comp\bar \pi\comp \beta$ for some birational maps 
$\beta: \bar X\dashrightarrow \bar X$ and $\lambda: S^\flat\dashrightarrow S^\flat$.

\item 
\label{classification-cases-discriminant-fibration-2}
$K_{\bar X_\eta}^2=2$, $\bar{\LLL}\cdot \bar \Delta=6$, $\pi^\flat$ is a $\QQ$-conic bundle, 
the link is of type~\typem{IV} and is induced by the Geiser
involution $\gamma_\eta: \bar X_\eta\to \bar X_\eta$. 
As above, $\pi^\flat=\lambda\comp\bar \pi\comp \gamma$ for some birational maps 
$\gamma: \bar X\dashrightarrow \bar X$ and $\lambda: S^\flat\dashrightarrow S^\flat$.

\item 
\label{classification-cases-discriminant-fibration-3}
$K_{\bar X_\eta}^2=3$, $\bar{\LLL}\cdot \bar \Delta=5$, the link is of type~\typem{III} and is induced by
a contraction of a $(-1)$-curve on $\bar X_\eta$. $\pi^\flat: X^\flat\to \PP^1$ 
is a $\QQ$-del Pezzo fibration of degree $4$.

\item 
\label{classification-cases-discriminant-fibration-4}
$K_{\bar X_\eta}^2=4$, $\bar{\LLL}\cdot \bar \Delta=4$, $\pi^\flat$ is a $\QQ$-conic bundle, the link is of type~\typem{IV}.
In general, $\bar\Phi$ is not induced by a birational self-map of $\bar X$.

\item 
\label{classification-cases-discriminant-fibration-5}
$K_{\bar X_\eta}^2=5$, $\bar{\LLL}\cdot \bar \Delta=3$, the link is of type~\typem{III} and is induced by
a contraction of four conjugate $(-1)$-curves on $\bar X_\eta$. $\pi^\flat: X^\flat\to \PP^1$ 
is a generically $\PP^2$-bundle.

\item 
\label{classification-cases-discriminant-fibration-6}
$K_{\bar X_\eta}^2=6$, $\bar{\LLL}\cdot \bar \Delta=2$, the link is of type~\typem{III} and is induced by
a contraction of a pair of conjugate $(-1)$-curves on $\bar X_\eta$. $\pi^\flat: X^\flat\to \PP^1$ 
is a generically quadric bundle.
\end{enumerate}
\end{scase}

\begin{sremark}
In the cases~\xref{classification-cases-discriminant-fibration}
\xref{classification-cases-discriminant-fibration-1} and~\xref{classification-cases-discriminant-fibration-2}
we obtain a new conic bundle $\pi^{\flat}:X^{\flat}\to S^{\flat}$ which is fiberwise birational 
to the original one. On the other hand, by \eqref{mu-reduction} we have $\lambda^{\flat}<\lambda$. 
In order to prove the hard part of Conjecture~\xref{conjecture-Iskovskikh}, we may assume that 
the map $X \dashrightarrow \PP^3$ is chosen so that $\mu$ is minimal and then 
these cases~\xref{classification-cases-discriminant-fibration}
\xref{classification-cases-discriminant-fibration-1}--\xref{classification-cases-discriminant-fibration-2} do not occur.
\end{sremark}

\begin{sremark}\label{remark-bar-flat}
In the cases~\xref{classification-cases-discriminant-fibration}
\xref{classification-cases-discriminant-fibration-1},~\xref{classification-cases-discriminant-fibration-2},
and~\xref{classification-cases-discriminant-fibration-4}, by the construction, 
for the pencil $\LLL^\flat$ of fibers of $\alpha^\flat: S^\flat\to \PP^1$ 
and the discriminant curve $\Delta^\flat\subset S^\flat$ we have 
$\LLL^\flat\cdot \Delta^\flat= \bar\LLL\cdot \bar\Delta$
(see Corollary 
\xref{corollary-s-Sarkisov-links}).
\end{sremark}

Now we consider an easy case of Conjecture~\xref{conjecture-Iskovskikh}.
More precisely, we assume that in the notation of the proof of 
Proposition~\xref{theorem-main-construction} the constant $\mu$ equals $1$.
Note that in these notation $\mu\in \frac12 \ZZ$, $\mu>0$ and 
$\mu=1/2$ implies that $\Delta=\emptyset$ and $\pi: X\to S$ is
a $\PP^1$-bundle. 

\begin{proposition}[\cite{Iskovskikh-1987}]
\label{proposition-mu=1}
Let $\pi: X\to S$ be a standard conic bundle with discriminant curve
$\Delta\subset S$. Assume that there exists a birational map $\Phi: X\dashrightarrow \PP^3$
such that for the proper transform $\HHH$ on $X$ of the linear system 
of planes on $\PP^3$ one has $\HHH\equiv -K_X+\pi^*A$. Then, for $\pi: X\to S$, either
the condition~\xref{conjecture-Iskovskikh-1} or the condition~\xref{conjecture-Iskovskikh-2} 
of Conjecture~\xref{conjecture-Iskovskikh} 
is satisfied.
\end{proposition}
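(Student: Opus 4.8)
The plan is to run the Sarkisov program on $\Phi$ through Theorem~\xref{theorem-main-construction} and to exploit the smallness of the value $\mu=1$. Since $S$ is a surface while $\PP^3$ lies over a point, the map $\Phi$ cannot be completed to a commutative square, so Theorem~\xref{theorem-main-construction} applies and produces the $\QQ$-conic bundle $\bar X/\bar S$ of diagram~\eqref{diagram-main-construction}. The hypothesis $\HHH\equiv -K_X+\pi^*A$ says precisely that the Sarkisov degree is $\mu=1$; as the displayed formulas in~\eqref{equation-(5)} and~\eqref{equation-(6)} show, every type~\typem{I}, \typem{II} and~\typem{III} link of the construction keeps the proper transform of $\HHH$ in the shape $-K+(\text{pull-back})$, so on $\bar X$ we still have $\HHH_{\bar X}\equiv -K_{\bar X}+\bar\pi^*\bar A$. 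By Corollary~\xref{theorem-main-part2-construction} the terminal link $\bar\Phi:\bar X/\bar S\dashrightarrow X^{\flat}/S^{\flat}$ strictly drops the degree, $\mu^{\flat}<\mu=1$ by~\eqref{mu-reduction}. We may assume $\Delta\neq\emptyset$, since otherwise $X$ is a $\PP^1$-bundle over a rational surface and condition~\xref{conjecture-Iskovskikh-1} holds with any pull-back pencil; then $\bar\Delta\neq\emptyset$ as well, with $\p(\bar\Delta)\ge\p(\Delta)\ge 1$ (Corollary~\xref{discriminant-divisor-pa=0}, Lemma~\xref{Lemma:nK+Delta:invariant}, Theorem~\xref{theorem-main-construction}).

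The engine of the proof is the following degree principle. Restrict $\HHH_{X^{\flat}}\equiv -\mu^{\flat}K_{X^{\flat}}+\pi^{\flat*}A^{\flat}$ to a generic fibre $G$ of $\pi^{\flat}$: the pull-back term dies and $\HHH_{X^{\flat}}|_G\equiv -\mu^{\flat}K_G$ is a mobile, nonzero linear system on the Fano variety $G$. Writing $-K_G=r\cdot L$ with $L$ the primitive generator of the relevant rank-one part of $\Pic(G)$, effectivity forces $\mu^{\flat}r\in\ZZ_{>0}$, and $\mu^{\flat}<1$ then forces $r\ge 2$. For $G$ a conic ($-K_G=\OOO(2)$, $r=2$) this gives $\mu^{\flat}=\tfrac12$ and a rational point on $G$, i.e.\ $X^{\flat}$ is a $\PP^1$-bundle with empty discriminant; for $G$ a del Pezzo surface with $\uprho(G)=1$ the class $-K_G$ is primitive unless $G$ is a plane ($r=3$) or a quadric ($r=2$); and for $G$ a $\QQ$-Fano threefold it forces Fano index $\ge 2$. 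This is what selects the admissible terminal links.

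I would treat first the case $\uprho(\bar S)=2$ of Theorem~\xref{theorem-main-construction}\xref{t-Sarkisov-program-apply-2}, where $\bar\alpha:\bar S\to\PP^1$ is a ruling with pencil $\bar\LLL$ and $2\le\bar\LLL\cdot\bar\Delta<8$, and the induced two-dimensional link is classified in~\xref{classification-cases-discriminant-fibration}. In the type~\typem{IV} cases~\xref{classification-cases-discriminant-fibration-1},~\xref{classification-cases-discriminant-fibration-2},~\xref{classification-cases-discriminant-fibration-4} the target $X^{\flat}$ is again a conic bundle, so the principle gives $\mu^{\flat}=\tfrac12$ and empty discriminant, contradicting $\Delta^{\flat}\cdot\LLL^{\flat}=\bar\LLL\cdot\bar\Delta>0$ (Remark~\xref{remark-bar-flat}); in case~\xref{classification-cases-discriminant-fibration-3} the generic fibre of $X^{\flat}$ is a quartic del Pezzo surface, whose $-K$ is primitive, so it too is excluded. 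Thus only cases~\xref{classification-cases-discriminant-fibration-5} and~\xref{classification-cases-discriminant-fibration-6} survive ($G\cong\PP^2$ or a quadric), whence $\bar\LLL\cdot\bar\Delta\le 3$. Since $\bar X/\bar S$ is fibrewise birational to $X/S$, passing to a standard model of $\bar S$ furnishes the base-point-free pencil required by condition~\xref{conjecture-Iskovskikh-1}.

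The remaining case $\uprho(\bar S)=1$ of Theorem~\xref{theorem-main-construction}\xref{t-Sarkisov-program-apply-1} is where the Panin alternative~\xref{conjecture-Iskovskikh-2} must surface, and it is the main obstacle. Here $\bar S$ is one of the Du Val del Pezzo surfaces of~\xref{DuVal:delPezzo} and $-(4K_{\bar S}+\bar\Delta)$ is ample; writing $-K_{\bar S}\equiv\iota J$ and $\bar\Delta\equiv aJ$ as in Lemma~\xref{lemma-estimates-pa} one only gets $a<4\iota$ for free, whereas the conclusion needs $|2K_{\bar S}+\bar\Delta|=\emptyset$, i.e.\ $a<2\iota$. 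The plan is to feed in rationality: since $X$ is birational to $\PP^3$, Corollary~\xref{corollary-J} gives $\JG(X)=0$, and by Theorem~\xref{theorem-Prym-Jacobian} the Prym variety $\Pr(\tilde{\bar\Delta},\bar\Delta)$ is a product of Jacobians, so Theorem~\xref{theorem-Mumford-Beauville-Shokurov} forces $\bar\Delta$ to be hyperelliptic, trigonal, quasi-trigonal, or a plane quintic. Re-running Shokurov's analysis (as in the proof of Theorem~\xref{theorem-criterion-Shokurov}) on the canonical morphism attached to $|K_{\bar S}+\bar\Delta|$ should then show that the trigonal and quasi-trigonal configurations, whose canonical image is swept out by a one-parameter family of $3$-secant lines, are incompatible with $\uprho(\bar S)=1$, leaving $(\bar S,\bar\Delta)\cong(\PP^2,\text{quintic})$ or a low-degree reduction, in either case giving $|2K_{\bar S}+\bar\Delta|=\emptyset$. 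Transporting this back by Lemma~\xref{Lemma:nK+Delta:invariant} yields $|2K_S+\Delta|=\emptyset$, whereupon Proposition~\xref{proposition-transformations:2K+S=0} together with the argument of Corollary~\xref{corollary-equivalence-conjectures} (using $\JG(X)=0$ to force the even theta-characteristic in the degree-five case) produces condition~\xref{conjecture-Iskovskikh-1} or~\xref{conjecture-Iskovskikh-2}. The delicate point throughout is the adaptation of Shokurov's canonical-image argument to the singular del Pezzo bases with $\uprho(\bar S)=1$, and the careful verification that the degree principle genuinely eliminates every quartic-del-Pezzo and higher-index intermediate target.
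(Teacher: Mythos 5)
Your treatment of the case $\uprho(\bar S)=2$ coincides with the paper's: the integrality of $\mu^{\flat}$ on a conic-bundle or quartic-del-Pezzo target kills cases \xref{classification-cases-discriminant-fibration-1}--\xref{classification-cases-discriminant-fibration-4} of \xref{classification-cases-discriminant-fibration}, and \xref{classification-cases-discriminant-fibration-5}--\xref{classification-cases-discriminant-fibration-6} hand you the pencil for condition \xref{conjecture-Iskovskikh-1}. That half is fine.

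The case $\uprho(\bar S)=1$ is where your argument has a genuine gap, and you have in effect flagged it yourself. You propose to use $\JG(X)=0$, Theorem \xref{theorem-Prym-Jacobian} and Theorem \xref{theorem-Mumford-Beauville-Shokurov} to place $\bar\Delta$ in Shokurov's four classes, and then to ``re-run Shokurov's analysis'' of the canonical image to force $|2K_{\bar S}+\bar\Delta|=\emptyset$. But that analysis (the proof of Theorem \xref{theorem-criterion-Shokurov}) works only for minimal bases, where $\phi(S)$ is an intersection of quadrics and therefore contains all $3$-secants of $\phi(\Delta)$; knowing that $\Delta^{\bullet}$ is abstractly trigonal or quasi-trigonal does not produce a pencil $\LLL$ on the base with $\LLL\cdot\Delta^{\bullet}\le 3$ on a non-minimal or singular surface --- this is precisely why Conjecture \xref{conjecture-Shokurov} is open. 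Worse, nowhere in this branch do you use the hypothesis $\mu=1$, so if the argument worked it would prove the full conjecture. The paper instead exploits $\mu=1$ decisively here: since $(\bar X,\HHH_{\bar X})$ is canonical and $K_{\bar X}+\bar H\equiv\bar\pi^*\bar A$, Lemma \xref{lemma-Cartier} makes $\bar A$ a Cartier divisor; combined with the ampleness of $-\bar A$ (Noether--Fano) and the identity $4(-K_{\bar S}+\bar A)\equiv\bar\Delta+\Gamma$ from \eqref{equation-canonical-bundle-formula}, this rules out every Du Val del Pezzo $\bar S$ with $\Pic(\bar S)=\ZZ\cdot K_{\bar S}$, leaving only $\PP(1,1,2)$ (eliminated via $\FF_2$ and \eqref{equation-condition-S-star-star}) and $\PP^2$. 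On $\PP^2$ the decomposition $-K_{\bar X}\sim\bar H+\bar M$ with $\bar M$ a del Pezzo surface of degree $\ge 3$ forces $\deg\Gamma\ge 3$, hence $\deg\bar\Delta\le 5$, and only then do the Prym/theta-characteristic arguments (via Lemma \xref{lemma-Delta-multiplicity=2} and Proposition \xref{proposition-Panin}) enter, to separate condition \xref{conjecture-Iskovskikh-1} from condition \xref{conjecture-Iskovskikh-2}. You need some substitute for this Cartier/ampleness mechanism; without it the $\uprho(\bar S)=1$ branch does not close.
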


This fact was proved by Iskovskikh in \cite[Theorem 2]{Iskovskikh-1987}.
We propose below a slightly different proof.

\begin{proof}
Clearly, we may assume that $\Delta\neq\emptyset$. Apply Theorem~\xref{theorem-main-construction}.
Thus we have the diagram~\eqref{diagram-main-construction} and one of 
that in Corollary~\xref{theorem-main-part2-construction} with all the required properties.
Thus there is a $\QQ$-conic bundle $\bar \pi: \bar X\to \bar S$ that is fiberwise birational to $\pi$
and satisfies the properties in Theorem~\xref{theorem-main-construction}. 
In particular, for the proper transform  $\bar \HHH$ of the linear system 
$\HHH$ on $\bar X$ we have
\begin{equation}
\label{equation-HbarX-a}
\bar \HHH\equiv-K_{\bar X} +\bar \pi^*\bar A
\end{equation}
and the pair $(\bar X, \bar \HHH)$ is canonical
(see the proof of Theorem~\xref{theorem-main-construction}).

Assume that we are in the situation \xref{t-Sarkisov-program-apply-2} of Theorem~\xref{theorem-main-construction}, i.e. $\uprho(\bar S)=2$.
Thus one of the possibilities 
\xref{classification-cases-discriminant-fibration}\xref{classification-cases-discriminant-fibration-1}--\xref{classification-cases-discriminant-fibration-6} 
occurs.
In the cases 
\xref{classification-cases-discriminant-fibration}\xref{classification-cases-discriminant-fibration-1},\xref{classification-cases-discriminant-fibration-2}
and~\xref{classification-cases-discriminant-fibration-4}
the fibration $\pi^\flat$ must be a $\QQ$-conic bundle with non-trivial discriminant curve
by Remark~\xref{remark-bar-flat}.
Since $\mu^\flat<\mu=1$, these cases do not occur.
In the case
\xref{classification-cases-discriminant-fibration}\xref{classification-cases-discriminant-fibration-3}
the fibration $\pi^\flat$ must be a $\QQ$-del Pezzo fibration of degree $4$ with $\mu^\flat<\mu=1$.
Again this is impossible.
In the cases~\xref{classification-cases-discriminant-fibration}\xref{classification-cases-discriminant-fibration-5}
--\xref{classification-cases-discriminant-fibration-6} the proper transform 
of $|\bar{\LLL}|$ on a good model is a desired pencil as in 
Conjecture~\xref{conjecture-Iskovskikh}~\xref{conjecture-Iskovskikh-1}.

Assume now that we are in the case~\xref{t-Sarkisov-program-apply-1} of Theorem~\xref{theorem-main-construction}, i.e. $\uprho(\bar S)=1$.
By \eqref{equation-Lemma-2} we have
\begin{equation}
\label{equation-Lemma-2-mu=1}
4(-K_{\bar S}+\bar A)\equiv \bar \Delta +\Gamma,
\end{equation}
where $\Gamma=\bar \pi_*(\HHH_{\bar X})^2$ is an effective non-zero divisor on $\bar S$,
and $\bar A$ is Cartier by Lemma~\xref{lemma-Cartier}.
Since $\uprho(\bar S)=1$, by the Noether-Fano inequality (Theorem~\xref{Noether-Fano}) the divisor $-\bar A$
is ample.
If $\bar S\not\simeq \PP^2$, $\PP(1,1,2)$, then $\Pic(\bar S)=\ZZ\cdot K_{\bar S}$
and we get a contradiction.

Consider the case $\bar S\simeq\PP(1,1,2)$.
Then by \eqref{equation-Lemma-2-mu=1} the only possibility is $\bar A\equiv\frac 12 K_{\bar S}$ 
and $\bar \Delta< -2K_{\bar S}$. Consider the minimal resolution $\FF_2\to \bar S$.
According to Lemma~\xref{lemma-avilov} there exists a $\QQ$-conic bundle $\pi': X'\to \FF_2$
which completes the map $\FF_2\to \bar S$ to a Sarkisov link of type~\typem{I}. Let $\Delta'\subset \FF_2$ be the discriminant curve.
Write $\Delta'\sim a \Sigma+b\Lambda$, where $\Sigma$ and $\Lambda$ are negative section 
and a fiber of the ruling, respectively. Since $\bar \Delta< -2K_{\bar S}$, we have $b\le 7$.
We may assume that $a\ge 4$ (otherwise $|\Lambda|$ gives us a pencil desired in 
Conjecture~\xref{conjecture-Iskovskikh}~\xref{conjecture-Iskovskikh-1}).
But then $\Sigma$ is a rational component of $\Delta'$ and $(\Delta'-\Sigma)\cdot \Sigma \le 1$.
This is impossible for a discriminant curve (see \eqref{equation-condition-S-star-star}
and~\xref{discriminant-divisor-pa=0}).

Finally, consider the case $\bar S\simeq\PP^2$. 
We may assume that $\deg \bar \Delta\ge 5$.
Again by \eqref{equation-Lemma-2-mu=1} 
we have $\deg\bar \Delta+\deg \Gamma= 8$ and $-\bar A$ is the class on a line $l\subset \PP^2$.
Let $\bar H\in \HHH_{\bar X}$ and $\bar M \in |\bar \pi^* l|$ be general members of the 
corresponding linear systems. 
Thus, by \eqref{equation-HbarX-a},
\begin{equation*}
-K_{\bar X}\sim \bar H+\bar M.
\end{equation*}
By the adjunction formula the divisor $-K_{\bar M}=\bar H|_{\bar M}$ is ample,
i.e. $\bar M$ is a (smooth) del Pezzo surface. Since the restriction 
$\HHH_{\bar X}|_{\bar M}$ defines a birational map to a surface in $\PP^3$, we have
$K_{\bar M}^2\ge 3$. 
Using the projection formula and \eqref{equation-Lemma-2-mu=1} one obtains
\begin{equation*}
3\le K_{\bar M}^2=\bar H^2\cdot \bar M = \deg \Gamma,\qquad \deg \bar \Delta\le 5.
\end{equation*}
By our assumption $\deg \bar \Delta= 5$ and then by the 
Noether formula $K_{\bar M}^2=3$.

Suppose that $\bar \pi$ is not a standard conic bundle.
Thus $\bar X$ is singular at some point of a fiber $\bar\pi^{-1}(o)$, $o\in \bar S$.
Let $\pi^\bullet : X^\bullet\to S^\bullet$ be a standard model of 
$\bar \pi:\bar X\to \bar S$ as in Theorem~\xref{theorem-standard-models}
and let $\Delta^\bullet$ be the corresponding discriminant curve.
Thus we have a birational morphism $\alpha: S^\bullet\to \bar S$ so that
\begin{equation*}
\Delta^\bullet\le (\alpha^*\bar \Delta)_{\red}
\quad \text{and}\quad
\alpha(\Delta^\bullet)=\bar \Delta
\end{equation*}
(see Corollary \ref{cor:disc}).
Let $\bar \LLL$ be the pencil of lines on $\bar S=\PP^2$ passing through $o$
and let $\LLL^\bullet$ be its proper transform on $S^\bullet$. 
By the projection formula 
\[
\LLL^\bullet \cdot \alpha^* \bar \Delta=
\LLL \cdot \bar \Delta=5.
\]
By Corollary~\xref{corollary-Q-conic-bundles-plt} the point $o$ is singular on $\bar\Delta$.
Let $m_o$ the multiplicity of $\bar\Delta$ at $o$. Then a general member of $\LLL^\bullet$
meets some component of $\alpha^* \bar \Delta$ whose coefficient equals $m_o$.
Hence, 
\[
\LLL^\bullet \cdot \Delta^\bullet \le \LLL^\bullet \cdot \alpha^* \bar \Delta-(m_o-1)\le 6-m_o.
\]
If $m_o\ge 3$, then $\LLL^\bullet\cdot\Delta^\bullet\le 3$ and so 
the condition~\xref{conjecture-Iskovskikh-1} 
of Conjecture~\xref{conjecture-Iskovskikh} is satisfied. Thus we may assume that 
$m_o=2$. Then by Lemma 
\xref{lemma-Delta-multiplicity=2} the curves
$\alpha^{-1}(o)$ and $\Delta^\bullet$ have no common components.
Again 
\[
\LLL^\bullet 
\cdot\Delta^\bullet\le  \LLL^\bullet \cdot \alpha^* \bar \Delta -m_o\le3
\]
and 
the condition~\xref{conjecture-Iskovskikh-1} 
of Conjecture~\xref{conjecture-Iskovskikh} is satisfied.
Thus we may assume that $\bar \pi$ is a standard conic bundle.
If the corresponding double cover of $\bar \Delta$ is defined by an odd theta-characteristic,
then $X$ is not rational by Theorems~\xref{theorem-Mumford-Beauville-Shokurov} and~\xref{theorem-Prym-Jacobian}.
Aсcording to Proposition \ref{proposition-Panin} 
we have the case~\xref{conjecture-Iskovskikh-2} of \xref{conjecture-Iskovskikh}.
\end{proof}

Proposition~\xref{proposition-mu=1} shows that
Conjecture 
\xref{conjecture-Iskovskikh} is equivalent to the following classical conjecture of S. Kantor.
Recall that a \textit{congruence} of curves in $\PP^3$ is an irreducible two-dimensional family of curves that
cover $\PP^3$. The \textit{index} of a congruence is the number of curves passing through a 
general point.

\begin{conjecture}[S. Kantor, cf. \cite{Kantor1901}, \cite{Millevoi1960}] 
\label{Conjecture-KantorI}
For any congruence of
index $1$ of rational curves $\mathcal C$ in $\PP^3$ there exists a Cremona transformation 
$\tau: \PP^3 \dashrightarrow \PP^3$ that sends $\mathcal C$
to a two-dimensional family of conics or lines.
\end{conjecture}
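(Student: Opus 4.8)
The plan is to translate the congruence into a conic bundle and then transport the analysis of Conjecture~\xref{conjecture-Iskovskikh} back to $\PP^3$. A congruence $\mathcal C$ of index $1$ defines a dominant rational map $\gamma: \PP^3 \dashrightarrow S$ onto its parameter surface, sending a general point to the unique curve of $\mathcal C$ through it; the fibers of $\gamma$ are the members of $\mathcal C$, and index $1$ guarantees that the generic fiber is an irreducible rational curve. By the standard model theorem (Theorem~\xref{theorem-standard-models} together with the general form of Sarkisov cited after it) there is a standard conic bundle $\pi: X \to S^\bullet$ fiberwise birational to $\gamma$, together with a birational map $\Phi: X \dashrightarrow \PP^3$ matching the two fibrations; in particular $X$ is rational. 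If $d$ is the degree of a general member of $\mathcal C$, then a general plane of $\PP^3$ meets it in $d$ points, so the proper transform $\HHH$ of $|\OOO_{\PP^3}(1)|$ restricts to a general fiber of $\pi$ with degree $d$ while $-K_X$ restricts with degree $2$; writing $\HHH \equiv -\mu K_X + \pi^* A$ as in \eqref{definition-mu} gives $\mu = d/2$. Hence $\mathcal C$ consists of conics precisely when $\mu = 1$ and of lines precisely when $\mu = 1/2$ (the latter being the $\PP^1$-bundle case $\Delta=\emptyset$), and producing the Cremona transformation $\tau$ is exactly the problem of replacing $\Phi$ by $\tau \circ \Phi$ for a Cremona self-map $\tau$ of $\PP^3$ so that $\mu \le 1$.

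Granting that one can reduce to the case $\mu = 1$, the conjecture would follow from Proposition~\xref{proposition-mu=1}. Its hypotheses are met — $\pi: X \to S^\bullet$ is a standard conic bundle, $\Phi: X \dashrightarrow \PP^3$ is birational, and $\HHH \equiv -K_X + \pi^* A$ — so it yields either condition~\xref{conjecture-Iskovskikh-1}, a base point free pencil $\LLL$ with $\LLL \cdot \Delta \le 3$, or condition~\xref{conjecture-Iskovskikh-2}, the Panin model. In the first case Proposition~\xref{proposition-transformations:2K+S=0} produces a fiberwise model over $\FF_n$ with $\Delta^\sharp \cdot \Lambda^\sharp \le 3$, whose ruling realizes the members of $\mathcal C$ as lines after the corresponding Cremona map; in the second $X$ is, through $\Phi$, the blow-up of $\PP^3$ along a smooth curve $\Gamma$ of genus $5$ and degree $7$ (Example~\xref{example-Panin}) whose conic bundle fibers are proper transforms of conics meeting $\Gamma$ in six points, so $\tau$ sends $\mathcal C$ to that congruence of conics. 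Thus the two alternatives of Proposition~\xref{proposition-mu=1} correspond exactly to the two alternatives of Kantor's statement, and the sufficiency side is settled.

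The main obstacle — and the reason this remains a conjecture rather than a theorem — is the reduction to $\mu \le 1$. Running the Sarkisov program on $\Phi: X \dashrightarrow \PP^3$ via Theorem~\xref{theorem-main-construction} produces a $\QQ$-conic bundle $\bar X/\bar S$ whose terminal link is one of those listed in Corollary~\xref{theorem-main-part2-construction}, and by the discussion following~\xref{classification-cases-discriminant-fibration} one may choose $\Phi$ with $\mu$ minimal so that the involution cases \xref{classification-cases-discriminant-fibration-1}--\xref{classification-cases-discriminant-fibration-2} are excluded. What is missing is a proof that a rational $X$ with $\mu > 1$ cannot terminate in the bounded del Pezzo configurations of Theorem~\xref{theorem-main-construction}\xref{t-Sarkisov-program-apply-1} (the sixteen types of Picard number one surface with $-(4K_{\bar S}+\bar\Delta)$ ample, where $\p(\bar\Delta) \le 45$ by Corollary~\xref{corollary-estimate-pa-del-Pezzo}), nor in the fibration cases \xref{classification-cases-discriminant-fibration-3}--\xref{classification-cases-discriminant-fibration-6} with $\bar\LLL\cdot\bar\Delta > 3$ — this is precisely the necessity direction of Shokurov's and Iskovskikh's conjectures, established so far only over minimal bases (Theorem~\xref{theorem-criterion-Shokurov}). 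A complete proof of Kantor's conjecture would therefore require settling this boundedness-versus-rationality dichotomy, most plausibly by a direct case analysis of the finitely many admissible del Pezzo bases combined with the Prym-theoretic obstructions of Theorem~\xref{theorem-Mumford-Beauville-Shokurov}.
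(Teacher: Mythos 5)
The statement you were asked to prove is an open conjecture: the paper gives no proof of it, only an equivalence with Conjecture~\xref{conjecture-Iskovskikh} (Proposition~\xref{proposition-equi-Kantor}), built on exactly the translation you describe --- identifying members of the congruence with fibers of a standard conic bundle, so that $\mu=d/2$, and applying Proposition~\xref{proposition-mu=1} once $\mu\le 1$ is achieved --- so your proposal reconstructs the paper's treatment and correctly isolates the genuinely open part, namely the reduction to $\mu\le 1$ (the necessity direction of the Iskovskikh--Shokurov conjectures). The one inaccuracy is your claim that in case~\xref{conjecture-Iskovskikh-1} the ruling of $\FF_n$ realizes the members of $\mathcal C$ as \emph{lines}: a pencil on the base with $\Delta^\sharp\cdot\Lambda^\sharp\le 3$ says nothing directly about the degree of the images of the $\pi$-fibers in $\PP^3$, and the paper instead argues via Theorem~\xref{classification-Sarkisov-links-conic-bundle} that in this case the fibers map to \emph{conics}.
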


In other words, Kantor's conjecture can be formulated as follows.

\begin{sconjecture}
\label{Conjecture-KantorII}
Let $\pi: X\to S$ be a standard conic bundle. Suppose that
there exists a birational map $\Phi: X \dashrightarrow \PP^3$.
Then there exists a Cremona transformation $\tau: \PP^3 \dashrightarrow \PP^3$
such that the composition $\tau\comp\Phi: X \dashrightarrow \PP^3$
sends a general fiber either to a conic or to a line.
\end{sconjecture}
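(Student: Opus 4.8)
The plan is to prove Conjecture~\xref{Conjecture-KantorII} itself. Since $\Phi$ exists, $X$ is birational to $\PP^3$, hence rational; granting the necessity direction of Conjecture~\xref{conjecture-Iskovskikh} (equivalently, by Corollary~\xref{corollary-equivalence-conjectures}, of Conjecture~\xref{conjecture-Shokurov}) as the one external input, I will extract an explicit model $X\dashrightarrow\PP^3$ sending general fibres to conics or lines and then read off the Cremona transformation $\tau$. The statement is open precisely because that necessity is unresolved, so the argument is unconditional only modulo it.

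First I reduce the conclusion to a numerical condition. For a Cremona transformation $\tau:\PP^3\dashrightarrow\PP^3$ let $\HHH$ be the proper transform on $X$ of the net of planes under $\tau\comp\Phi$, and write $\HHH\equiv-\mu K_X+\pi^*A$ as in \eqref{definition-mu}. For a general fibre $\Lambda$ of $\pi$ one has $-K_X\cdot\Lambda=2$, so the degree of the image of $\Lambda$ under $\tau\comp\Phi$ equals $\HHH\cdot\Lambda=2\mu$. Since $\mu\in\frac12\ZZ_{>0}$, the image of a general fibre is a conic exactly when $\mu=1$ and a line exactly when $\mu=\frac12$ (and $\mu=\frac12$ forces $\Delta=\emptyset$, i.e.\ $\pi$ is a $\PP^1$-bundle). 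Thus the assertion is equivalent to the existence of a Cremona $\tau$ with $\mu(\tau\comp\Phi)\le 1$. As $X$ is rational it is rationally connected, so $S$ is rational by Lemma~\xref{lemma-properties-Conic-bundles}\xref{lemma-properties-Conic-bundles-1}, and Conjecture~\xref{conjecture-Iskovskikh} applies: one of~\xref{conjecture-Iskovskikh-1},~\xref{conjecture-Iskovskikh-2} holds.

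In each alternative I construct a model $\Psi:X\dashrightarrow\PP^3$ with $\mu\le 1$; then $\tau:=\Psi\comp\Phi^{-1}$ is a Cremona transformation with $\tau\comp\Phi=\Psi$, as required. In case~\xref{conjecture-Iskovskikh-2}, the diagram \eqref{diagram-Conjecture-Iskovskikh-2} gives a fibrewise birational map $\Phi'$ from $\pi$ to the Panin conic bundle $\pi':X'\to\PP^2$ with $X'=\mathrm{Bl}_\Gamma\PP^3$ (Example~\xref{example-Panin}); the blow-down $\sigma:X'\to\PP^3$ carries a general fibre of $\pi'$, the proper transform of a conic in $\PP^3$ meeting $\Gamma$, onto a genuine conic, so $\Psi:=\sigma\comp\Phi'$ has $\mu=1$. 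In case~\xref{conjecture-Iskovskikh-1}, I use the pencil $\LLL'$ with $\LLL'\cdot\Delta'\le 3$: passing to the generic point $\eta$ of $\PP^1=\LLL'$ as in the proof of Proposition~\xref{proposition-rationality-conic-bundles}, the generic surface $F'_\eta=\pi'^{-1}(L'_\eta)$ is a standard conic bundle over $\Bbbk(\eta)$ with discriminant of degree $\le 3$, hence $K_{F'_\eta}^2\ge 5$ and $F'_\eta$ is $\Bbbk(\eta)$-rational (Corollary~\xref{corollary-surfaces-rationality}). By Theorem~\xref{classification-Sarkisov-links-conic-bundle} the resulting contraction $F'_\eta\to\PP^2_{\Bbbk(\eta)}$ sends each conic-bundle fibre (class $C$) to a curve of degree $(-K_{F'_\eta}-C)\cdot C=2$, i.e.\ a conic; spreading this out over $\PP^1$ and identifying the ensuing $\PP^2$-bundle birationally with $\PP^3$ yields $\Psi$ with $\mu\le 1$.

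The main obstacle is twofold. Fundamentally, the necessity in Conjecture~\xref{conjecture-Iskovskikh} is the genuine open input, so the whole argument rests on it. Technically, the delicate point is the final identification in case~\xref{conjecture-Iskovskikh-1}: the birational map from the $\PP^2$-bundle over $\PP^1$ to $\PP^3$ must be chosen so that the fibrewise conics stay of degree $\le 2$ globally rather than acquiring higher degree, and this is exactly where the bound $\LLL'\cdot\Delta'\le 3$ (fibrewise $\mu\le 1$) must be shown to persist. A cleaner alternative that bypasses this is to run the Sarkisov program of Theorem~\xref{theorem-main-construction} on a $\mu$-minimal representative of $\Phi$ and argue as in Proposition~\xref{proposition-mu=1}, where the case $\mu=1$ is handled directly; in that framing Conjecture~\xref{Conjecture-KantorII} becomes the assertion that $\mu$-minimality already forces $\mu\le 1$, whose proof again reduces to the necessity of Conjecture~\xref{conjecture-Iskovskikh}.
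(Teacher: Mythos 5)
This statement is an open conjecture: the paper contains no proof of it, only Proposition~\xref{proposition-equi-Kantor}, which shows it is equivalent to Conjecture~\xref{conjecture-Iskovskikh}, and your proposal correctly recognizes that any argument must be conditional on the (open) necessity part of that conjecture. Modulo this caveat, your argument is essentially the same as the paper's: your derivation of Conjecture~\xref{Conjecture-KantorII} from Conjecture~\xref{conjecture-Iskovskikh} — case~\xref{conjecture-Iskovskikh-2} via Panin's construction (Proposition~\xref{proposition-Panin}, Example~\xref{example-Panin}, giving $\mu=1$), case~\xref{conjecture-Iskovskikh-1} via Theorem~\xref{classification-Sarkisov-links-conic-bundle} sending fibres to conics — is exactly the ``conversely'' direction of the proof of Proposition~\xref{proposition-equi-Kantor}, and your reduction to $\mu\le 1$ together with Proposition~\xref{proposition-mu=1} is how the paper proves the other direction. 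The one point where you go beyond the paper is the spreading-out step in case~\xref{conjecture-Iskovskikh-1}, which you rightly flag as delicate and which the paper passes over in one sentence; it can be closed by fixing an isomorphism of the generic fibres $\PP^2_{\Bbbk(t)}$ of your fibration and of the pencil of planes through a line in $\PP^3$, and spreading that isomorphism out to a birational map over $\PP^1$: being an isomorphism of $\PP^2$ over $\Bbbk(t)$ it is linear, so fibrewise conics remain conics in $\PP^3$ (and in the subcase $K^2_{F'_\eta}=6$ one first lands on a quadric by Theorem~\xref{classification-Sarkisov-links-conic-bundle} and projects from a $\Bbbk(t)$-point, which exists by Tsen's theorem).
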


\begin{proposition}
\label{proposition-equi-Kantor}
Conjectures~\xref{conjecture-Iskovskikh} and~\xref{Conjecture-KantorI} are equivalent.
\end{proposition}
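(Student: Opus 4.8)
The plan is to prove the proposition by matching both conjectures against the reformulation~\xref{Conjecture-KantorII}, using the coefficient $\mu$ of~\eqref{definition-mu} as the bridge. The first step is to set up a dictionary. A congruence $\mathcal C$ of index $1$ of rational curves in $\PP^3$ is the same datum as a dominant rational map $\PP^3\dashrightarrow S$ onto a surface whose generic fibre is a rational curve (send a general point to the unique member of $\mathcal C$ through it). Applying the general form of Theorem~\xref{theorem-standard-models} produces a standard conic bundle $\pi\colon X\to S$ together with a birational map $\Phi\colon X\dashrightarrow\PP^3$ carrying the general fibre $F$ of $\pi$ birationally onto the general member of $\mathcal C$; since $X\dashrightarrow\PP^3$, the surface $S$ is rational by Lemma~\xref{lemma-properties-Conic-bundles}\xref{lemma-properties-Conic-bundles-1}. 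Under $\Phi$ a Cremona transformation $\tau$ of $\PP^3$ corresponds to a birational self-map of $X$, and for the proper transform $\HHH$ of the planes under $\tau\comp\Phi$ one has $\HHH\cdot F=-\mu K_X\cdot F=2\mu$, so ``$\tau$ carries $\mathcal C$ to a family of conics (resp.\ lines)'' is exactly $\mu=1$ (resp.\ $\mu=1/2$). This identifies~\xref{Conjecture-KantorI} with~\xref{Conjecture-KantorII}.

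Next I would prove~\xref{Conjecture-KantorI}$\Rightarrow$\xref{conjecture-Iskovskikh}; since the sufficiency of~\xref{conjecture-Iskovskikh-1} and~\xref{conjecture-Iskovskikh-2} is already known, only necessity is at stake. Let $\pi\colon X\to S$ be a standard conic bundle with $X$ rational and fix any birational $\Phi\colon X\dashrightarrow\PP^3$. By~\xref{Conjecture-KantorII} there is a Cremona transformation $\tau$ after which $\mu\in\{1/2,\,1\}$. If $\mu=1/2$, then $\Delta=\emptyset$ and $\pi$ is a $\PP^1$-bundle, so any base-point-free pencil on $S$ meets $\Delta$ in $0\le 3$ points and~\xref{conjecture-Iskovskikh-1} holds. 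If $\mu=1$, then $\HHH\equiv-K_X+\pi^*A$, and Proposition~\xref{proposition-mu=1} delivers~\xref{conjecture-Iskovskikh-1} or~\xref{conjecture-Iskovskikh-2}.

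For the converse I would start from a congruence $\mathcal C$, build $\pi\colon X\to S$ and $\Phi$ as above, note that $X$ is rational, and invoke Conjecture~\xref{conjecture-Iskovskikh} to obtain~\xref{conjecture-Iskovskikh-1} or~\xref{conjecture-Iskovskikh-2}. It then suffices to produce a \emph{second} birational map $\Phi'\colon X\dashrightarrow\PP^3$ sending the general fibre of $\pi$ to a conic or a line, for then $\tau:=\Phi'\comp\Phi^{-1}$ is the required Cremona transformation. In case~\xref{conjecture-Iskovskikh-2} the map $\Phi'$ is immediate: by Proposition~\xref{proposition-Panin}, $\pi$ is fibrewise birational to the conic bundle of Example~\xref{example-Panin}, whose blow-down to $\PP^3$ realises the fibres as the conics through the genus $5$ curve $\Gamma$. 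In case~\xref{conjecture-Iskovskikh-1} I would use the pencil $\LLL'$ with $\LLL'\cdot\Delta'\le 3$: over the generic member $L_\eta\simeq\PP^1_{\Bbbk(\eta)}$ the surface $F_\eta:=\pi^{\prime-1}(L_\eta)$ is a conic bundle with discriminant of degree $\le 3$, hence $\Bbbk(\eta)$-rational by Corollary~\xref{corollary-surfaces-rationality} (cf.\ Proposition~\xref{proposition-rationality-conic-bundles}); choosing a $\Bbbk(\eta)$-birational model in which the conic-bundle fibres form a pencil of lines or conics and spreading it out over $\PP^1$ exhibits $X$ birationally as $\PP^3$ with the fibres of $\pi$ realised as a congruence of lines or conics.

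I expect the main obstacle to be precisely this last point: in case~\xref{conjecture-Iskovskikh-1} one must follow the conic-bundle fibres through the fibrewise $\Bbbk(\eta)$-rationalisation of $F_\eta$ and verify that a suitable birational model realises them as a \emph{degree} $\le 2$ family in $\PP^3$, rather than merely proving that $X$ is rational. This is the two-dimensional shadow of Kantor's problem itself---transforming a pencil of rational curves on $\PP^2_{\Bbbk(\eta)}$ into lines or conics---and it is exactly what makes the dichotomy ``conic or line'' coincide with the numerical condition $\LLL'\cdot\Delta'\le 3$. The remaining points, namely unwinding the definition of a congruence and checking that passage to a standard model does not alter which member of $\mathcal C$ a fibre maps onto (via Lemma~\xref{lemma-bir-disc} and Corollary~\xref{cor:disc}), are routine.
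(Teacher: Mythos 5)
Your overall architecture coincides with the paper's: the forward direction (Kantor $\Rightarrow$ Iskovskikh) is exactly the paper's argument — reduce to $\mu\le 1$ via the reformulation~\xref{Conjecture-KantorII} and invoke Proposition~\xref{proposition-mu=1} — and in the converse direction your treatment of case~\xref{conjecture-Iskovskikh-2} via Proposition~\xref{proposition-Panin} and Example~\xref{example-Panin} is also what the paper does. The dictionary between congruences of index $1$ and conic bundle structures via the Sarkisov standard-model theorem, and the computation $\HHH\cdot F=2\mu$ identifying ``conic/line'' with $\mu=1$ resp.\ $\mu=1/2$, are correct.

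The gap is the one you yourself flag: in case~\xref{conjecture-Iskovskikh-1} you do not actually produce the map $\Phi'$ sending the fibres to conics or lines, you only say you ``expect'' the fibrewise rationalisation of $F_\eta$ over $\Bbbk(\eta)$ to realise them as a degree $\le 2$ family. This is not something to be verified ad hoc from Corollary~\xref{corollary-surfaces-rationality} (which only gives $\Bbbk(\eta)$-rationality of $F_\eta$, i.e.\ rationality of $X$); the missing ingredient is Theorem~\xref{classification-Sarkisov-links-conic-bundle}, which is what the paper cites at precisely this point. Since $\LLL'\cdot\Delta'\le 3$ forces $K_{F_\eta}^2=8-\LLL'\cdot\Delta'\in\{5,6,8\}$ (degree $7$ being impossible for a standard surface conic bundle), that theorem gives an explicit extremal contraction of $F_\eta$ defined over $\Bbbk(\eta)$ — e.g.\ for $K^2=5$ the contraction $q\colon F_\eta\to\PP^2_{\Bbbk(\eta)}$ is given by $|-K_{F_\eta}-C|$, so a fibre $C$ maps to a curve of degree $(-K_{F_\eta}-C)\cdot C=2$, a conic — and spreading this out over the pencil exhibits the required birational map to $\PP^3$ carrying the fibres to conics. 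Without quoting (or reproving) that classification, the ``conic or line'' conclusion, which is the whole content of the equivalence beyond rationality, is not established. So the proposal is the paper's proof minus its one nontrivial citation in the converse direction.
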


\begin{proof}
Let $\pi: X\to S$ be a standard conic bundle and let $\Phi: X \dashrightarrow \PP^3$
be a birational map.

Assume that Conjecture~\xref{Conjecture-KantorI} holds true. By~\xref{Conjecture-KantorII}
we may replace 
$\Phi$ with another birational map $\Phi'=\tau\comp\Phi$ which sends a general fiber either to a conic or to a line.
This means that $\mu\le 1$ (in the notation of the proof of 
Proposition~\xref{theorem-main-construction}). Then by
Proposition~\xref{proposition-mu=1} either 
the condition~\xref{conjecture-Iskovskikh-1} or the condition~\xref{conjecture-Iskovskikh-2} 
of Conjecture~\xref{conjecture-Iskovskikh} 
is satisfied. 

Conversely, assume that Conjecture~\xref{conjecture-Iskovskikh} is true.
If we are in the situation of~\xref{conjecture-Iskovskikh}\xref{conjecture-Iskovskikh-2}, then by 
Proposition~\xref{proposition-Panin} and Example~\xref{example-Panin} modulo Cremona transformations 
we have $\mu=1$.
In the case~\xref{conjecture-Iskovskikh}\xref{conjecture-Iskovskikh-1} by Theorem
\xref{classification-Sarkisov-links-conic-bundle} we again can see that the images of 
the fibers of $X'/S'$ are conics.
\end{proof}

\begin{theorem}[\cite{Iskovskikh-1996-conic-re}]
\label{Iskovskikh-theorem}
Let $\pi: X\to S$ be a standard conic bundle with discriminant curve
$\Delta\subset S$. 
Assume that $\Delta$ is irreducible and smooth.
If $X$ is rational, 
then one of the following holds
\begin{enumerate}
\item \label{Iskovskikh-theorem-genus}
$\p(\Delta)\le 45$, 
\item\label{Iskovskikh-theorem-main-case}
the condition~\xref{conjecture-Iskovskikh-1} of Conjecture~\xref{conjecture-Iskovskikh} 
is satisfied,
\item\label{Iskovskikh-theorem-4}
$\Delta$ is hyperelliptic and $\pi: X\to S$ is fiberwise birationally equivalent to a standard conic bundle 
$\pi^{\bullet}: X^{\bullet}\to S^{\bullet}$ with discriminant curve $\Delta^{\bullet}\simeq \Delta$ such that 
there exists a base point free pencil $\LLL^{\bullet}$ of rational curves on $S^{\bullet}$
such that $\Delta^{\bullet}\cdot \LLL^{\bullet}=4$.
\end{enumerate}
\end{theorem}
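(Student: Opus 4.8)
The plan is to feed the rationality hypothesis into Theorem~\xref{theorem-main-construction}. Since $X$ is rational, it is certainly birational to $\PP^3$, and this birational map is not fiberwise (a fiberwise map would force $S$ to be birational to $\PP^2$ with $\Delta$ mapping to a discriminant curve on $\PP^2$, but more to the point we are assuming $X$ admits \emph{some} non-fiberwise birational map to another Mori fiber space, namely $\PP^3\to\pt$). Applying Theorem~\xref{theorem-main-construction} produces the diagram~\eqref{diagram-main-construction} with a $\QQ$-conic bundle $\bar\pi:\bar X\to\bar S$ fiberwise birational to $\pi$, together with the crucial genus inequality $\p(\Delta)=\p(\hat\Delta)\le\p(\bar\Delta)$ (valid because $\pi$ is a standard conic bundle). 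The proof then splits according to the two alternatives~\xref{t-Sarkisov-program-apply-1} and~\xref{t-Sarkisov-program-apply-2} of that theorem.

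First I would dispose of case~\xref{t-Sarkisov-program-apply-1}, where $\uprho(\bar S)=1$ and $-(4K_{\bar S}+\bar\Delta)$ is ample. Here $\bar S$ is a Du Val del Pezzo surface of Picard number one with type~\type{A} singularities, and Corollary~\xref{corollary-estimate-pa-del-Pezzo} gives directly $\p(\bar\Delta)\le 45$. Combined with the genus inequality this yields $\p(\Delta)\le\p(\bar\Delta)\le 45$, which is conclusion~\xref{Iskovskikh-theorem-genus}. This is the cheap branch.

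The substantive work is in case~\xref{t-Sarkisov-program-apply-2}, where $\uprho(\bar S)=2$ and there is a base point free pencil $\bar\LLL$ with $2\le\bar\LLL\cdot\bar\Delta<8$. The plan is to invoke the fine analysis of~\xref{classification-cases-discriminant-fibration}, which classifies the terminal link $\bar\Phi$ according to the value $K_{\bar X_\eta}^2\in\{1,\dots,6\}$, equivalently $\bar\LLL\cdot\bar\Delta\in\{7,6,5,4,3,2\}$. In the cases $\bar\LLL\cdot\bar\Delta\le 3$ (that is~\xref{classification-cases-discriminant-fibration-5} and~\xref{classification-cases-discriminant-fibration-6}, where $K_{\bar X_\eta}^2\ge 5$), the proper transform of $\bar\LLL$ on a suitable standard model furnishes a base point free pencil $\LLL'$ of rational curves with $\LLL'\cdot\Delta'\le 3$, which is precisely condition~\xref{conjecture-Iskovskikh-1} of Conjecture~\xref{conjecture-Iskovskikh}: this is conclusion~\xref{Iskovskikh-theorem-main-case}. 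The remaining subcases are $\bar\LLL\cdot\bar\Delta\in\{4,5,6,7\}$, and here is where the hypothesis that $\Delta$ is irreducible and smooth must be used decisively. By Lemma~\xref{Lemma:nK+Delta:invariant}, fiberwise birational equivalence of standard conic bundles preserves both $\p(\Delta)$ and the smoothness/irreducibility of the discriminant curve; hence $\bar\Delta$ (or its standard model) is again irreducible and smooth, and $\bar\pi$ may be taken to be a genuine standard conic bundle rather than merely a $\QQ$-conic bundle. The generic fiber $\bar X_\eta$ is then a del Pezzo surface over $\Bbbk(T)$ with conic bundle structure over $\bar S_\eta\simeq\PP^1_{\Bbbk(T)}$ whose discriminant $\Delta_\eta=\bar\Delta\cdot\bar\LLL$ has degree $\bar\LLL\cdot\bar\Delta$; the condition that this relative surface be $\Bbbk(T)$-rational, forced by the rationality of $X$, is governed by Corollary~\xref{corollary-surfaces-rationality} and Theorem~\xref{surfaces-rationality-criterion}, which bound the admissible degree of the discriminant. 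The expected outcome is that $\bar\LLL\cdot\bar\Delta=4$ survives only when $\Delta$ is hyperelliptic, producing conclusion~\xref{Iskovskikh-theorem-4}, while the odd values $5,7$ and the even value $6$ are either reduced to~\xref{Iskovskikh-theorem-main-case} or excluded by the smoothness of $\Delta$ together with the genus bound.

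The main obstacle will be the careful bookkeeping in the $\bar\LLL\cdot\bar\Delta=4$ subcase~\xref{classification-cases-discriminant-fibration-4}, where the link is of type~\typem{IV} but, as Theorem~\xref{classification-Sarkisov-links}(ix) warns, is \emph{not} in general induced by a biregular involution of $\bar X_\eta$ (it is the Iskovskikh-surface situation). Here one cannot simply untwist by a self-map; instead I would have to track the discriminant degree through the type~\typem{IV} link using Remark~\xref{remark-bar-flat} (so that $\LLL^\flat\cdot\Delta^\flat=\bar\LLL\cdot\bar\Delta=4$ on the image side) and then argue that the structure of the degree-four intersection of quadrics forces $\Delta$ to be hyperelliptic — the two disjoint conjugate sections contracting to the singular degree-four del Pezzo correspond, on the level of the double cover $\tilde\Delta\to\Delta$, to a $\g^1_2$ on $\Delta$. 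Relating this geometric data back to a hyperelliptic structure on $\Delta$ and constructing the explicit standard model $\pi^\bullet:X^\bullet\to S^\bullet$ with $\Delta^\bullet\cdot\LLL^\bullet=4$ as demanded in~\xref{Iskovskikh-theorem-4} is the delicate part; it is exactly this case for which the smoothness and irreducibility of $\Delta$ are indispensable, since they rule out the degenerate configurations that would otherwise obstruct the identification.
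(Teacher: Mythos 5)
Your skeleton is the right one — apply Theorem~\xref{theorem-main-construction}, dispose of the $\uprho(\bar S)=1$ branch with Corollary~\xref{corollary-estimate-pa-del-Pezzo}, and in the $\uprho(\bar S)=2$ branch read off conclusion~\xref{Iskovskikh-theorem-main-case} from the subcases $\bar\LLL\cdot\bar\Delta\le 3$ of~\xref{classification-cases-discriminant-fibration}. But the two hard branches are handled by arguments that do not work. For $\bar\LLL\cdot\bar\Delta\in\{6,7\}$ (subcases~\xref{classification-cases-discriminant-fibration-1}--\xref{classification-cases-discriminant-fibration-2}) you propose to exploit that the generic fiber $\bar X_\eta$ over $\Bbbk(T)$ ``must be $\Bbbk(T)$-rational, forced by the rationality of $X$.'' That implication is false: rationality of $X$ says nothing about rationality of $\bar X_\eta$ over the function field of the base pencil, since the birational map $X\dashrightarrow\PP^3$ need not respect the fibration over $T$. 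The paper's mechanism here is entirely different: one chooses $\Phi:X\dashrightarrow\PP^3$ with $\mu$ \emph{minimal} and runs an induction on $\mu$ (the base case $\mu=1$ being Proposition~\xref{proposition-mu=1}, which you also omit). In subcases~\xref{classification-cases-discriminant-fibration-1}--\xref{classification-cases-discriminant-fibration-2} the link is untwisted by the fiberwise Bertini or Geiser involution, producing a conic bundle fiberwise birational to $\bar\pi$ with $\mu^\flat<\mu$ by \eqref{mu-reduction}, and the inductive hypothesis finishes. Without setting up this induction you have no way to eliminate these two subcases.

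The second gap is in subcases~\xref{classification-cases-discriminant-fibration-3}--\xref{classification-cases-discriminant-fibration-4} ($\bar\LLL\cdot\bar\Delta=5,4$), where the decisive ingredient is the Prym criterion, Theorem~\xref{theorem-Mumford-Beauville-Shokurov}, which you never invoke. Rationality of $X$ gives $\JG(X)=0$ via Corollary~\xref{corollary-J} and Theorem~\xref{theorem-Prym-Jacobian}, and since $\Delta$ is irreducible and smooth the criterion forces a morphism $p:\Delta^\bullet\to\PP^1$ of degree $2$ or $3$. Pairing $p$ with the degree-$4$ or degree-$5$ pencil $q$ cut out by $\bar\LLL$ gives a map $p\times q:\Delta^\bullet\to\PP^1\times\PP^1$ which is birational onto a curve of bidegree $(n,m)$, $n\le 3$, $m\le 5$, hence $\p(\Delta)\le (n-1)(m-1)\le 8$ — unless $q$ factors through $p$ with $\deg q=4$ and $\deg p=2$, which is exactly where the hyperelliptic conclusion~\xref{Iskovskikh-theorem-4} comes from. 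Your alternative route through the geometry of the Iskovskikh surface and a putative $\mathfrak g^1_2$ coming from the two conjugate sections is not substantiated and, in any event, cannot produce the bound $\p(\Delta)\le 45$ needed for conclusion~\xref{Iskovskikh-theorem-genus} in subcase~\xref{classification-cases-discriminant-fibration-3}. Note also that one does not get to assume $\bar\pi$ is standard; the paper passes to a standard model $\pi^\bullet$ via Theorem~\xref{theorem-standard-models} and identifies $\Delta^\bullet\simeq\Delta$ using \eqref{eqnarray:D} together with \eqref{equation-condition-S-star-star}.
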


Iskovskikh \cite{Iskovskikh-1996-conic-re} proposed a stronger assertion.
However, the work \cite{Iskovskikh-1996-conic-re} contains serious errors and gaps in the proofs 
which the author of the review could not eliminate.

\begin{proof}
If $\Delta=0$, 
then $\pi: X\to S$ is a locally trivial $\PP^1$-bundle and there is nothing to 
prove.
Assume now that $\Delta\neq\emptyset$ and $\Phi: X\dashrightarrow\PP^3$ 
is a birational map
\begin{equation}
\label{equation-(7)}
\vcenter{
\xymatrix{
X\ar@{-->}[r]^{\Phi}\ar[d]^{\pi}&\PP^3\ar[d]
\\
S&\Spec\Bbbk
}}
\end{equation}
Let $\HHH_X\equiv -\mu K_X+\pi^*A$ be the proper transform on $X$ of the linear system 
of planes on $\PP^3$. Since $\Delta\neq\emptyset$, we have $\mu\in\ZZ$, $\mu 
\ge 1$ and~\eqref{equation-(7)} cannot be completed to a commutative diagram that 
induces an isomorphism of fibers over generic points, 
Hence, the hypotheses of Theorem~\xref{theorem-main-construction} are 
satisfied. Hence, we have the diagram~\eqref{diagram-main-construction} and one of 
that in
Corollary
\xref{theorem-main-part2-construction} with all the required properties.

The case $\mu=1$ is a consequence of Proposition~\xref{proposition-mu=1}.

Assume now that the map $\Phi$ chosen in such a way that $\mu$ has the least possible
value. We can thus formulate the following inductive hypothesis:

\begin{scase}
\label{induction-hypothesis*}
Conjecture~\xref{conjecture-Iskovskikh} holds for all standard conic bundles over a 
rational surface for which there is a birational map onto $\PP^3$ with $\mu'<\mu$.
\end{scase}

We can make use of this hypothesis if we succeed in finding a birational
self-map of $X$ that transfers $\pi$ to another birational 
conic bundle structure on $X$ for which the map $\Phi$ has degree $\mu' <\mu$.

Assume that we are in the situation of Theorem~\xref{theorem-main-construction}\xref{t-Sarkisov-program-apply-2}. Let
$\pi^\bullet : X^\bullet\to S^\bullet$ be a standard conic bundle fiberwise birational to 
$\bar \pi: \bar X\to \bar S$ such that $S^\bullet$ dominates $\bar S$ (see Theorem~\xref{theorem-standard-models}).
Let $\Delta^\bullet$ be the corresponding discriminant curve and let 
$|L^{\bullet}|$ be the preimage of the pencil $\bar{\LLL}$ on $S^{\bullet}$. Then 
$L^{\bullet}\cdot\Delta^{\bullet}=\bar{\LLL}\cdot\bar{\Delta}$. 
Consider the possibilities of~\xref{classification-cases-discriminant-fibration} case by case.

In the cases~\xref{classification-cases-discriminant-fibration}\xref{classification-cases-discriminant-fibration-5}
and~\xref{classification-cases-discriminant-fibration}\xref{classification-cases-discriminant-fibration-6}
we have 
$L^{\bullet}\cdot\Delta^{\bullet}\le 3$, so~\xref{conjecture-Iskovskikh-1} of 
Conjecture~\xref{conjecture-Iskovskikh} is true, as stated in the theorem.

In the cases~\xref{classification-cases-discriminant-fibration}\xref{classification-cases-discriminant-fibration-1}
and~\xref{classification-cases-discriminant-fibration}\xref{classification-cases-discriminant-fibration-2} 
the $\QQ$-conic bundle $\pi^\flat : X^\flat\to S^\flat$ is fiberwise birational to $\bar \pi: \bar X\to \bar S$
and for the corresponding map $X^\flat\dashrightarrow \PP^3$ one has
$\mu^\flat<\mu$ (see \eqref{mu-reduction}). Hence by our hypothesis~\xref{induction-hypothesis*}
Conjecture~\xref{conjecture-Iskovskikh} is true for $\pi$.

It remains to consider cases 
\xref{classification-cases-discriminant-fibration}\xref{classification-cases-discriminant-fibration-3}--\xref{classification-cases-discriminant-fibration-4}.
The curve $\Delta^\bullet$ has a component whose normalization is isomorphic to 
$\Delta$. Since $\Delta^\bullet$ satisfies the condition \eqref{equation-condition-S-star-star}
and $\p(\Delta^\bullet)=\p(\Delta)$ by \eqref {eqnarray:D}, it must be isomorphic to $\Delta$.
In the case~\xref{classification-cases-discriminant-fibration}\xref{classification-cases-discriminant-fibration-3}
(resp. the case~\xref{classification-cases-discriminant-fibration}\xref{classification-cases-discriminant-fibration-4})
there is on $\bar{\Delta}$ and thus also on $\Delta^{\bullet}$ a linear series
of degree $5$ (resp. $4$): $q:\Delta^{\bullet}\to \PP^1$. 
Because of the rationality of $X^{\bullet}$ by Theorem~\xref{theorem-Mumford-Beauville-Shokurov}, we obtain a
morphism $p:\Delta^{\bullet}\to\PP^1$ of degree $3$ or $2$. We claim that the morphism
\begin{equation*}
p \times q:\Delta^{\bullet}\to\Delta'\subset\PP^1\times\PP^1,
\end{equation*}
is birational. Indeed, otherwise $q$ passes through $p$:
\begin{equation*}
q: \Delta^{\bullet} \overset p \longrightarrow \PP^1 \overset r \longrightarrow \PP^1
\end{equation*}
and $\deg q= (\deg p)\cdot ( \deg r)$. This is possible only if $\deg q=4$, $ \deg p =\deg r=2$.
In particular, the curve $\Delta^{\bullet}$ is hyperelliptic. 
Then we have the case~\xref{Iskovskikh-theorem-4}.
Thus the curve 
$\Delta'\subset\PP^1\times\PP^1$ is of bidegree $(n,m)$ with $n\le 3$ and $m\le 5$. 
Hence 
\[
\p(\Delta^{\bullet})\le \p(\Delta')=(n-1)(m-1)\le8, 
\]
i.e. we are 
in the case~\xref{Iskovskikh-theorem-genus}. 

Finally, if we are in the case~\xref{t-Sarkisov-program-apply-1} of Theorem~\xref{theorem-main-construction},
then by Corollary~\xref{corollary-estimate-pa-del-Pezzo} we have $\p(\Delta)=\p(\Delta^{\bullet})\le 45$, a contradiction.
The proof of Theorem~\xref{Iskovskikh-theorem} is complete.
\end{proof}

\subsection{}
Note that the inequality $\p(\Delta)\le 45$ in
Theorem~\xref{Iskovskikh-theorem}\xref{Iskovskikh-theorem-genus} can be strengthened
in many cases.
For example, if $K_{\bar S}^2\le 3$, then 
$\p(\Delta)\le 15$ by Corollary~\xref{corollary-estimate-pa-del-Pezzo}. 

If $K_{\bar S}^2=5$, then $\p(\Delta)\le 16$ by Remark~\xref{remark-estimate-pa-del-Pezzo-deg=5}.

In the case $K_{\bar S}^2=9$ we have $\bar S\simeq \PP^2$ and $\deg \bar \Delta\le 11$. 
The pencil of 
lines passing through a sufficiently general point $s\in\bar{\Delta}\subset\PP^2$ cuts out on $\bar{\Delta}$
a one-dimensional linear series of degree $\le 10$. It determines a map $\bar{q}:\bar{\Delta}\to\PP^1$
of degree $\le 10$. Since $s$ is sufficiently general, this linear series is not composed
of a hyperelliptic nor trigonal pencil. The map $\bar{q}:\bar{\Delta}\to\PP^1$ can be
lifted to a finite map $q^{\bullet}:\Delta^{\bullet}\to\PP^1$ whose degree is also $\le 10$. 
Hence, there is a birational morphism
\begin{equation*}
p^{\bullet}\times q^{\bullet}:\Delta^{\bullet}\to\tilde\Delta\subset\PP^1\times\PP^1,
\end{equation*}
where $\tilde\Delta$ is of type $(d_1,d_2)$ with $d_1\le 3$ and $d_2\le 10$. 
The birationality follows from
the fact that $q^{\bullet}$ does not factor through a hyperelliptic nor trigonal series on $\Delta^{\bullet}$. 
Hence, 
\[
\p(\Delta^\bullet)\le (d_1-1)(d_2-1)\le 18.
\]

\section{Some related results and open problems}
\label{section:problems}
\subsection{Birational rigidity}
It is interesting to have a good criterion for rigidity
in terms of discriminant curve and local invariants similar to \eqref{eq:local-inv}. A. Corti \cite{Corti2000} suggested that
a sufficient condition is $|3K_S+\Delta|\neq \emptyset$ (i.e. $\etr(S,\Delta)\ge 3$).
In particular, a standard conic bundle over $\PP^2$ should be birationally rigid 
if the discriminant has degree $\ge 9$. 
See \cite{BrownCortiZucconi-2004} for some supporting examples.

Similar to Theorem~\xref{Iskovskikh-theorem} one can prove the following 
\begin{sproposition}
\label{rigidity-theorem}
Let $\pi: X\to S$ be a standard conic bundle over a rational surface with discriminant curve
$\Delta\subset S$. 
Assume that $X$ is not birationally rigid. 
Then one of the following holds
\begin{enumerate}
\item 
$|3K_S+\Delta|= \emptyset$,

\item
$X$ is birational to a $\QQ$-conic bundle $\bar \pi: \bar X\to \bar S$ 
over a Du Val del Pezzo surface 
as in~\xref{DuVal:delPezzo}. If $\bar \Delta\subset \bar S$ is the discriminant curve,
then $-(4K_{\bar S}+\bar\Delta)$ is ample.
\end{enumerate}
\end{sproposition}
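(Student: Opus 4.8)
The plan is to imitate the proofs of Theorem~\ref{Sarkisov-theorem} and Theorem~\ref{Iskovskikh-theorem}: feed a non-fiberwise birational map into Theorem~\ref{theorem-main-construction} and read off the dichotomy from the geometry of $\bar S$. If $|3K_S+\Delta|=\emptyset$ we are in case (1), so I would assume $|3K_S+\Delta|\neq\emptyset$ and aim for case (2). Since $X$ is not birationally rigid, there is a birational map $\Phi\colon X\dashrightarrow X^\sharp$ to a Mori fiber space that cannot be untwisted; taking the untwisting self-map to be the identity shows $\Phi$ is not fiberwise (for a standard conic bundle any fiberwise map onto a conic bundle is already an isomorphism on generic fibres), so $\Phi$ satisfies the hypothesis of Theorem~\ref{theorem-main-construction}. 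I would fix such a $\Phi$ with the coefficient $\mu$ of \eqref{definition-mu} minimal; this is legitimate because $X$ is smooth with $K_X$ Cartier, so $\mu\in\tfrac12\ZZ_{>0}$, a set with a least element. Applying Theorem~\ref{theorem-main-construction} yields $\bar\pi\colon\bar X\to\bar S$ fiberwise birational to $\pi$, with either (i) $\uprho(\bar S)=1$ and $-(4K_{\bar S}+\bar\Delta)$ ample, or (ii) $\uprho(\bar S)=2$ with a base point free pencil $\bar\LLL$ satisfying $2\le\bar\LLL\cdot\bar\Delta<8$ and $-K_{\bar S}\cdot\bar\LLL=2$.

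The key computation is that $|3K_S+\Delta|\neq\emptyset$ propagates to $\bar S$. Choosing a standard model $\pi^\bullet\colon X^\bullet\to S^\bullet$ of $\bar X/\bar S$ as in Theorem~\ref{theorem-standard-models}, which is then a standard conic bundle fiberwise birational to $\pi$, Lemma~\ref{Lemma:nK+Delta:invariant} gives $|3K_{S^\bullet}+\Delta^\bullet|\neq\emptyset$. Pushing forward along $\alpha\colon S^\bullet\to\bar S$, whose target has only Du Val (hence canonical) singularities, so that $\alpha_*K_{S^\bullet}=K_{\bar S}$ and, by Corollary~\ref{cor:disc}, $\alpha_*\Delta^\bullet=\bar\Delta$, shows that $3K_{\bar S}+\bar\Delta$ is effective. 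In case (ii) this forces $(3K_{\bar S}+\bar\Delta)\cdot\bar\LLL\ge0$, i.e. $\bar\LLL\cdot\bar\Delta\ge-3K_{\bar S}\cdot\bar\LLL=6$, whence $\bar\LLL\cdot\bar\Delta\in\{6,7\}$.

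In case (i) the inequality $\bar\Delta\ge0$ together with $\uprho(\bar S)=1$ makes $-4K_{\bar S}=-(4K_{\bar S}+\bar\Delta)+\bar\Delta$ ample, so $\bar S$ is a Du Val del Pezzo surface, necessarily with type~$\mathrm A$ singularities by Theorem~\ref{singularities-base-DV}; this is exactly situation~\ref{DuVal:delPezzo}, giving conclusion (2). It remains to exclude case (ii). By Corollary~\ref{theorem-main-part2-construction} and the list~\ref{classification-cases-discriminant-fibration}, the values $\bar\LLL\cdot\bar\Delta=7$ and $6$ occur precisely when the concluding link $\bar\Phi$ is induced by the Bertini, respectively the Geiser, involution of the generic fibre $\bar X_\eta$. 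These involutions are biregular on $\bar X_\eta$, so they descend to a birational self-map of $X$ under which $\pi^\flat\colon X^\flat\to S^\flat$ is again fiberwise birational to $\pi$, with $\mu^\flat<\mu$ by \eqref{mu-reduction}. Composing $\Phi$ with this self-map produces a non-fiberwise map with strictly smaller $\mu$, contradicting minimality; hence case (ii) cannot occur and we are in case (i).

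The step I expect to be the main obstacle is exactly this exclusion of the Bertini–Geiser cases, i.e. the minimality (equivalently, induction-on-$\mu$) argument that also underlies the inductive hypothesis~\ref{induction-hypothesis*} in Theorem~\ref{Iskovskikh-theorem}. One must verify carefully that $\mu$ is genuinely preserved through the type~$\mathbf I$, $\mathbf{II}$, $\mathbf{III}$ links of Theorem~\ref{theorem-main-construction}, so that the coefficient computed on $X$ agrees with that on $\bar X$; that the two involutions really yield self-maps of the original $X$ rather than of an intermediate model; and that the drop $\mu^\flat<\mu$ lowers precisely the quantity being minimized. Everything else reduces to the canonical bundle formula \eqref{equation-canonical-bundle-formula} and the classification of two-dimensional links in Theorems~\ref{classification-Sarkisov-links} and~\ref{classification-Sarkisov-links-conic-bundle}.
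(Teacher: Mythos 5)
The paper gives no proof of this proposition beyond the remark that it is proved ``similar to Theorem~\ref{Iskovskikh-theorem}'', and your argument is exactly the intended adaptation: feed a non-fiberwise map into Theorem~\ref{theorem-main-construction}, use Lemma~\ref{Lemma:nK+Delta:invariant} and pushforward to propagate $|3K_S+\Delta|\neq\emptyset$ to $\bar S$ so that only $\bar\LLL\cdot\bar\Delta\in\{6,7\}$ survives in the $\uprho(\bar S)=2$ case, and kill those via the Bertini/Geiser untwisting and minimality of $\mu\in\tfrac12\ZZ_{>0}$, exactly as in the remark following~\ref{classification-cases-discriminant-fibration}. The proof is correct and follows the paper's route; the points you flag as delicate (invariance of $\mu$ under fiberwise equivalence and the descent of the involutions to $\Bir(X)$) are the same ones the paper relies on implicitly in the proofs of Propositions~\ref{proposition-mu=1} and Theorem~\ref{Iskovskikh-theorem}.
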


\subsection{Unirationality of conic bundles}
The unirationality is the most complicated and delicate 
property of algebraic varieties.
Of course one can show that a variety is unirational 
by explicit geometric construction. 
However, at the moment there is no techniques to prove 
\textit{non-unirationality} (in a non-trivial situation).

First we discuss the unirationality of surfaces over a non-closed field $\Bbbk$.
A necessary condition for $\Bbbk$-unirationality is the existence of a $\Bbbk$-point
\cite[Ex. 1.12]{Kollar2004}.
Any rational surface with a $\Bbbk$-point and with $K_X^2\ge 5$ is $\Bbbk$-rational 
(see Theorem~\xref{surfaces-rationality-criterion}).

\begin{sobservation}
\label{sobservation:unirationality}
Let $\pi: X\to S$ be a rational curve fibration.
Then $X$ is $\Bbbk$-unirational if and only if $\pi$ has a rational multi-section. 
\end{sobservation}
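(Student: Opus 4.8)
The plan is to reduce both implications to one birational construction, the fibre product of $X$ with a multi-section over $S$. Write $K=\Bbbk(S)$ and let $X_\eta$ be the generic fibre of $\pi$, a geometrically integral genus-zero curve (a conic) over $K$, so that $\Bbbk(X)=K(X_\eta)$. By a multi-section I mean a subvariety $M\subseteq X$ with $\pi|_M\colon M\to S$ dominant and generically finite, and I call it \emph{rational} when $M$ is a $\Bbbk$-rational variety; the whole proof turns on comparing the conic $X_\eta$ with $\PP^1$ after the base change $K\hookrightarrow L:=\Bbbk(M)$.

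For the ``if'' direction, suppose such an $M$ is given and set $Y:=X\times_S M$, with projections $p\colon Y\to X$ and $q\colon Y\to M$. The inclusion $M\hookrightarrow X$ over $S$ defines a rational section $\delta\colon M\to Y$, $m\mapsto(m,m)$, of $q$. Hence the generic fibre of $q$, namely the conic $X_\eta\otimes_K L$ over $L$, carries an $L$-point and is therefore $L$-isomorphic to $\PP^1_L$. Thus $Y$ is birational over $M$ to $M\times\PP^1$, which is $\Bbbk$-rational because $M$ is. Since $p$ is dominant and generically finite (of degree $\deg(\pi|_M)$), the rational variety $Y$ dominates $X$, so $X$ is $\Bbbk$-unirational. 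This step is completely formal and uses only Tsen-type rationality of a conic with a point.

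For the ``only if'' direction I would first cut a unirational parametrisation down to a dominant generically finite map $g\colon\PP^n\dashrightarrow X$ with $n=\dim X$, and put $h:=\pi\circ g\colon\PP^n\dashrightarrow S$, whose general fibres are curves. For a general hyperplane $\Pi\simeq\PP^{n-1}\subset\PP^n$ the image $M:=\overline{g(\Pi)}\subseteq X$ has dimension $n-1=\dim S$, and $h|_\Pi$ is dominant because a hyperplane meets the closure of each general (one-dimensional) fibre of $h$. Therefore $\pi|_M\colon M\to S$ is dominant and generically finite, so $M$ is a multi-section, and the restriction $g|_\Pi\colon\Pi\dashrightarrow M$ exhibits $M$ as dominated by $\PP^{n-1}$, hence $\Bbbk$-unirational.

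The hard part is precisely the gap between \emph{unirational} and \emph{rational} for this $M$: the construction only presents $M$ as the image of a projective space. When $\dim S=1$ the multi-section $M$ is a curve and Lüroth's theorem upgrades $\Bbbk$-unirationality to $\Bbbk$-rationality, closing the argument; over an algebraically closed base field the same holds in dimension two by Castelnuovo's criterion. In full generality one must instead ensure the parametrisation $\Pi\dashrightarrow M$ can be chosen birational, equivalently that $X_\eta$ admits a closed point whose residue field is purely transcendental over $\Bbbk$ (this closed point is the generic point of the desired rational multi-section). This is the only non-formal point of the proof, and it is where hypotheses on $S$ or on $\Bbbk$ must enter; everything else follows from the fibre-product identification $X\times_S M\sim_{\mathrm{bir}}M\times\PP^1$ above.
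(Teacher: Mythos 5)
Your ``if'' direction is precisely the paper's argument: the entire justification given in the text is that the base change $X\times_S C\to C$ acquires the diagonal section, is therefore birational over $C$ to $C\times\PP^1$, and dominates $X$ generically finitely; you have only written out the generic-fibre bookkeeping (conic over $L=\Bbbk(M)$ with an $L$-point, hence $\PP^1_L$). For the converse the paper offers no argument at all beyond the remark that it ``is easy but there are some difficulties over finite fields'' with a pointer to Koll\'ar, so your hyperplane-section construction already goes further than the text does.

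The gap you flag --- that $M=\overline{g(\Pi)}$ is only exhibited as unirational, not rational --- is genuine if ``rational multi-section'' is read as ``multi-section birational to projective space'', but it does not obstruct the equivalence, because the equivalence closes at the level of \emph{unirational} multi-sections: your own ``if'' argument works verbatim when $M$ is merely unirational, since a dominant map $\PP^{n-1}\dashrightarrow M$ composed with $M\times\PP^1\dashrightarrow Y\to X$ still witnesses unirationality of $X$. So ``$X$ is $\Bbbk$-unirational if and only if $\pi$ has a unirational multi-section'' is a complete iff using exactly the two arguments you wrote, and this weaker existence statement is all the paper ever uses downstream (every application invokes only the ``if'' direction, with an explicitly rational $M$ in hand). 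In the settings the paper actually works in, the distinction evaporates anyway: the base is a curve, or a surface over an algebraically closed field of characteristic $0$, so L\"uroth or Castelnuovo upgrades your $M$ to a rational variety, as you note. The one hypothesis you should make explicit is that $\Bbbk$ is infinite: both the reduction of the parametrisation to a generically finite $g:\PP^n\dashrightarrow X$ and the choice of a ``general'' hyperplane $\Pi$ fail over finite fields, and that --- not the unirational-versus-rational issue --- is the difficulty the paper's citation of Koll\'ar is meant to address.
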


Indeed, if $\sigma: S \dashrightarrow X$ is a rational multi-section and $C:=\sigma(S)$, then
the base change $X\times_{S} C\to C$ is a rational curve fibration admitting a section.
Therefore, $X\times_{S} C$ is $\Bbbk$-rational and so $X$ is $\Bbbk$-unirational.
The converse is easy but there are some difficulties over finite fields (cf. \cite[Lemma 2.3]{Kollar2002a}).

\begin{corollary}
Let $\pi: X\to B$ be a standard conic bundle over a curve.
If $K_X^2\ge 3$ and $X$ has a $\Bbbk$-point, then 
the surface $X$ is either $\Bbbk$-rational or $\Bbbk$-unirational of degree $2$.
\end{corollary}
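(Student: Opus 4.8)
The plan is to read off $\deg\Delta$ from $K_X^2$, clear the cases of small discriminant with the surface rationality theory already developed, and in the two remaining cases produce a $\Bbbk$-rational multisection of degree exactly $2$. First I would fix the base. The relative Noether formula for a standard conic bundle over a smooth curve reads $K_X^2=8(1-\g(B))-\deg\Delta$, so $K_X^2\ge 3$ makes the right-hand side positive and forces $\g(B)=0$; since the given $\Bbbk$-point of $X$ maps to a $\Bbbk$-point of $B$, we get $B\simeq\PP^1$ and $X$ geometrically rational. Hence $\deg\Delta=8-K_X^2$, and because $\deg\Delta\neq 1$ and $K_X^2\neq 7$, the hypothesis $K_X^2\ge 3$ leaves only $\deg\Delta\in\{0,2,3,4,5\}$.

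For $\deg\Delta\le 3$, that is $K_X^2\ge 5$, I would invoke Corollary~\ref{corollary-surfaces-rationality} directly: $\deg\Delta=3$ yields $\Bbbk$-rationality, and $\deg\Delta\in\{0,2\}$ yields it using the $\Bbbk$-point. This leaves the two delicate cases $K_X^2=4$ ($\deg\Delta=4$) and $K_X^2=3$ ($\deg\Delta=5$), in which, by Proposition~\ref{proposition-conic-bundle=del-Pezzo} and the classification of two-dimensional links (Theorem~\ref{classification-Sarkisov-links-conic-bundle}), $X$ is a del Pezzo surface of degree $4$, respectively a cubic surface, carrying the conic bundle as one of its Mori structures, so that $\uprho(X)=2$. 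Here $X$ is in general not $\Bbbk$-rational, and by Observation~\ref{sobservation:unirationality} the goal becomes to exhibit a $\Bbbk$-rational multisection of $\pi$ of degree $2$.

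For $K_X^2=4$ I would use the complementary conic bundle structure $\pi'$ cut out by $|{-}K_X-\Lambda|$, where $\Lambda$ is a fiber: from $-K_X\sim\Lambda+\Lambda'$ one computes $\Lambda\cdot\Lambda'=2$, so every fiber of $\pi'$ is a bisection of $\pi$. The member $\Lambda'_x$ through the $\Bbbk$-point $x$ is defined over $\Bbbk$ and contains $x$; when it is a smooth conic it is $\simeq\PP^1_{\Bbbk}$ and is the desired degree $2$ multisection, and when it breaks into $\Bbbk$-rational lines one of them is a section, so $X$ is already $\Bbbk$-rational. For $K_X^2=3$ the anticanonical curves are again bisections ($-K_X\cdot\Lambda=2$), and I would take the tangent hyperplane section $T_xX\cap X$: being singular at $x$ it is geometrically rational and defined over $\Bbbk$, and its normalization is a degree $2$ multisection. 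This is the classical Segre--Manin parametrization, with the treatment of points in special position supplied by Koll\'ar--Mella.

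The hard part throughout is not the numerology but this last point: ensuring that the bisection one writes down is geometrically irreducible and carries a $\Bbbk$-point, i.e.\ is genuinely $\Bbbk$-rational rather than an anisotropic conic, a pair of conjugate lines, or a non-split nodal curve. For $K_X^2=4$ this can fail only when $x$ happens to lie over one of the finitely many points of the discriminant of $\pi'$, and for $K_X^2=3$ only when the node of the tangent section has conjugate branches; in both situations one must either pass to a better multisection or apply the Koll\'ar--Mella analysis. Once geometric rationality of a degree $2$ multisection is secured, Observation~\ref{sobservation:unirationality} upgrades it to a degree $2$ unirational parametrization of $X$, completing the proof.
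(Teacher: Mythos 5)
Your reduction to $B\simeq\PP^1$, the dispatch of $K_X^2\ge 5$ via Corollary~\xref{corollary-surfaces-rationality}, and the overall strategy of producing a $\Bbbk$-rational bisection and feeding it to Observation~\xref{sobservation:unirationality} all match the paper. The gaps are in your two constructions. For $K_X^2=4$ the ``complementary conic bundle'' $|-K_X-\Lambda|$ need not exist: in the exceptional case~\xref{case-conic-bundles-del-Pezzo-4} of Proposition~\xref{proposition-conic-bundle=del-Pezzo} (the Iskovskikh surface) $X$ carries a pair of conjugate disjoint sections $C_1,C_2$ with $C_i^2=-2$ and $C_1+C_2\in|-K_X-2\Lambda|$; then $(-K_X-\Lambda)\cdot C_i=-1$, so $C_1+C_2$ is a fixed component of $|-K_X-\Lambda|$ and its mobile part is just $|\Lambda|$ again --- the second extremal ray of $\NE(X)$ is $K_X$-trivial and there is no second ruling at all. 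Your fallback (``$x$ lies over a discriminant point of $\pi'$'') does not cover this, since $\pi'$ itself is missing. The paper instead blows up the $\Bbbk$-point $P$ (which cannot lie on the $C_i$, as they are conjugate and disjoint), checks that $-K_{\tilde X}$ is nef with $(-K_{\tilde X})^2=3$, and uses the base point free pencil $|-K_{\tilde X}-E|$, for which $E\simeq\PP^1_{\Bbbk}$ satisfies $(-K_{\tilde X}-E)\cdot E=2$ and is the required $\Bbbk$-rational bisection. (Also, in your degenerate-conic subcase the two lines through $x$ may be conjugate rather than individually defined over $\Bbbk$, so ``one of them is a section, hence $X$ is $\Bbbk$-rational'' is not available; one only gets degree-$2$ unirationality after a quadratic base change.)

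For $K_X^2=3$ your tangent-section argument leaves the hard subcase genuinely open: when the two branches at the node of $T_xX\cap X$ are conjugate, the normalization is a conic carrying a $\Bbbk$-rational divisor of degree $2$ but possibly no $\Bbbk$-point, hence possibly not $\Bbbk$-rational, and deferring to ``the Koll\'ar--Mella analysis'' is not a proof. The paper sidesteps this entirely: by Theorem~\xref{classification-Sarkisov-links-conic-bundle}\xref{classification-Sarkisov-links-conic-bundle-deg=3} the conic bundle structure on $X$ is the projection from a geometrically irreducible $(-1)$-curve $l$ defined over $\Bbbk$ (a line on the cubic surface), and $l\cdot\Lambda=(-K_X-\Lambda)\cdot\Lambda=2$, so $l\simeq\PP^1_{\Bbbk}$ is already a $\Bbbk$-rational double section; no $\Bbbk$-point of $X$ is even needed in this case. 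You should replace both of your ad hoc bisections by the ones the classification hands you.
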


\begin{proof}[Sketch of the proof]
If $K_X^2= 3$, then by Theorem 
\xref{classification-Sarkisov-links-conic-bundle}\xref{classification-Sarkisov-links-conic-bundle-deg=3} 
on $X$ there exists a $(-1)$-curve defined 
over $\Bbbk$ which is a double section. 
Hence we can apply~\xref{sobservation:unirationality}.

Let $K_X^2=4$ and let $P\in X$ be a 
$\Bbbk$-point. Let $\sigma: \tilde X\to X$ be its blowup and let $E$ be the 
exceptional divisor. The point $P$ cannot lie on a $(-2)$-curve according to 
\xref{proposition-conic-bundle=del-Pezzo}\xref{case-conic-bundles-del-Pezzo-4}.In
this situation it is easy to show that the divisor $-K_{\tilde X}$ is nef and
$(-K_{\tilde X})^2=3$. Moreover, the linear system $|-K_{\tilde X}-E|$ is base point free
and defines a morphism $\tilde X\to\PP^1$ whose generic fiber is a rational curve. 
Here again $E$ is a double section.
\end{proof}

\begin{stheorem}[{\cite[Theorem~7]{Kollar-Mella-unirationality}}]
Let $\Bbbk$ be a field of characteristic $\neq 2$ and 
let $\pi : X \to B$ be a conic bundle over a curve with $K_X^2=1$. 
Then $X$ is $\Bbbk$-unirational. 
\end{stheorem}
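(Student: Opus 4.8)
The plan is to reduce the statement, via Observation~\ref{sobservation:unirationality}, to producing a single $\Bbbk$-rational multisection of $\pi$, and to obtain one by passing to the anticanonical elliptic fibration and invoking the unirationality of the resulting rational elliptic surface. First I would record the ambient geometry. Since $X$ is geometrically rational with $K_X^2=1$, the Noether formula gives $\deg\Delta=7$. By Riemann--Roch $\dim|-K_X|\ge K_X^2=1$; when $-K_X$ is nef its base scheme has length $(-K_X)^2=1$ and is a single reduced point $e\in X(\Bbbk)$. Hence $\pi(e)\in B(\Bbbk)$, so the geometrically rational curve $B$ is $\PP^1_\Bbbk$. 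A general member $D\in|-K_X|$ satisfies $\p(D)=1$ and $D\cdot\Lambda=-K_X\cdot\Lambda=2$, so the anticanonical pencil is a pencil of \emph{genus-one bisections} of $\pi$; this is exactly why the easy bisection trick used for $K_X^2=3,4$ fails here.

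Next I would build the two compatible fibrations. Blowing up the base point $e$ resolves $|-K_X|$ and produces a genus-one fibration $g\colon\tilde X\to\PP^1$ whose zero section is the exceptional curve $E\simeq\PP^1_\Bbbk$; thus $\tilde X$ is a rational elliptic surface over $\Bbbk$ with a $\Bbbk$-rational section, its generic fibre being an elliptic curve over $\Bbbk(\PP^1)$. Because $\Lambda\cdot(-K_X)=2$, every conic-bundle fibre $\Lambda$ is a \emph{rational} bisection of $g$; the component of the fibre $\Lambda_0$ through $e$ is Galois-stable and carries the $\Bbbk$-point $e$, hence is a $\Bbbk$-rational multisection of $g$ meeting the zero section. (If instead $-K_X$ fails to be nef, Proposition~\ref{proposition-conic-bundle=del-Pezzo}\ref{case-conic-bundles-del-Pezzo-1} supplies a further geometrically irreducible rational bisection $C\in|-K_X-\Lambda|$ with $\p(C)=0$; this only enriches the supply of rational bisections and is absorbed into the same scheme.) The upshot is that the conic-bundle structure equips the elliptic surface $g$ with an abundant supply of rational multisections.

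The engine is then Kollár--Mella's unirationality theorem for \emph{quadratic families of elliptic curves}. Writing $g$ in Weierstrass form $y^2=x^3+a(t)x+b(t)$, which is legitimate since $\operatorname{char}\Bbbk\ne 2$, I would base change along a rational bisection such as $\Lambda_0\to\PP^1$ to obtain an elliptic surface over $\PP^1_\Bbbk$ carrying two sections: the pullback of $E$ and the diagonal of the bisection. Their difference is a Mordell--Weil section that is generically non-torsion. The heart of the matter, which I expect to be the main obstacle, is precisely the claim that such a rational elliptic surface, equipped with this non-torsion $\Bbbk$-rational section arising from a quadratic (conic) multisection, is $\Bbbk$-unirational. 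This is the genuinely new content of \cite{Kollar-Mella-unirationality}: it is proved by an explicit doubling and quadratic-base-change construction on the elliptic fibres, and it is exactly here that $\operatorname{char}\Bbbk\ne 2$ is used in an essential way, through the group law, the behaviour of $2$-torsion, and the quadratic twists.

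Granting that engine, the construction yields a dominant map from a $\Bbbk$-rational surface onto $\tilde X$, equivalently a $\Bbbk$-rational multisection of $g$; translated through the conic-bundle structure this gives a $\Bbbk$-rational multisection of $\pi$. Since $\tilde X\to X$ is birational, $X$ inherits $\Bbbk$-unirationality, and Observation~\ref{sobservation:unirationality} packages the conclusion. Thus the skeleton of the argument is short (set up the anticanonical elliptic fibration and feed its conic bisections into the quadratic-family theorem), while essentially all the difficulty is concentrated in the elliptic-surface unirationality step, which I would import wholesale rather than reprove.
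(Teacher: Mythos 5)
Your proposal is correct in outline, but it follows a genuinely different reduction from the one the paper sketches; both, in the end, import the essential difficulty from the same source \cite{Kollar-Mella-unirationality}. The paper's own argument stays entirely inside the multisection formalism of Observation~\xref{sobservation:unirationality} and the classification of Proposition~\xref{proposition-conic-bundle=del-Pezzo}: if $-K_X$ is not nef, the curve $C\in|-K_X-\Lambda|$ from case~\xref{case-conic-bundles-del-Pezzo-1} is a geometrically irreducible rational double section with $C^2=-3$, hence carries an odd-degree divisor class and is $\Bbbk$-isomorphic to $\PP^1$, and one is done immediately; if $-K_X$ is nef and the Bertini involution $\beta$ does not preserve the conic bundle structure, then $\beta(F)$ is a rational multisection; only the residual case ($-K_X$ nef and $\beta$ acting fiberwise) is deferred to Koll\'ar--Mella. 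You instead pass directly to the anticanonical genus-one pencil, its base point $e$, and the resulting rational elliptic surface with the conic fibers as bisections --- which is closer to the internal mechanism of \cite{Kollar-Mella-unirationality} itself (the presentation of $X$ as a ``quadratic family of elliptic curves'' $y^2=f(s,t)$ with $\deg_s f=2$, $\deg_t f=4$), but it treats as a black box precisely the step that the paper isolates as the only genuinely hard case. The paper's route buys a sharp delineation of where elementary multisection tricks stop working; yours buys a cleaner conceptual explanation of why this is really an elliptic-surface theorem. Two small slips, both harmless since they live inside the engine you import wholesale: the proper transform of the conic fiber $\Lambda_0$ through $e$ meets a general anticanonical member in $\Lambda_0\cdot D-1=1$ point, so it is a \emph{section}, not a bisection, of the elliptic fibration; and a general conic fiber, while a bisection of $g$, need not be a $\Bbbk$-rational curve, so ``base change along a rational bisection such as $\Lambda_0$'' conflates two different curves. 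Finally, once the engine produces a dominant map from a $\Bbbk$-rational surface onto $\tilde X$ you are already finished; re-extracting a multisection of $\pi$ and reapplying Observation~\xref{sobservation:unirationality} is redundant.
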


The proof is based on the classification of surface conic bundles with
$K_X^2=1$ (see Proposition~\xref{proposition-conic-bundle=del-Pezzo}).
For example, if $-K_X$ is not nef then the curve $C$ 
from
\xref{proposition-conic-bundle=del-Pezzo}\xref{case-conic-bundles-del-Pezzo-1}
is a rational double section.
If $-K_X$ is nef, i.e. $X$ is a weak del Pezzo surface,
and the Bertini involution $\beta: X\to X$ does not preserve the conic bundle structure, 
then the image $\beta(F)$ of a fiber $F$ is a rational multisection.
Thus the only interesting case is the case where $-K_X$ is nef and 
the Bertini involution $\beta: X\to X$ acts fiberwise
(see \cite{Kollar-Mella-unirationality}).

\begin{scorollary}
\label{corollary:unirationality} 
Let $\Bbbk$ be a field of characteristic $\neq 2$ and 
let $\pi : X \to B$ be a conic bundle over a curve.
Assume that $K_X^2\ge 1$. Then $X$ is $\Bbbk$-unirational if and only if 
it has a $\Bbbk$-point. 
\end{scorollary}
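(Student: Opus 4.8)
The plan is to prove the two implications separately, reducing the ``if'' direction to the three results already available: a $\Bbbk$-unirational variety has a $\Bbbk$-point \cite[Ex.~1.12]{Kollar2004}, the preceding Corollary handling standard conic bundles with $K_X^2\ge 3$, and the theorem of Koll\'ar and Mella \cite[Theorem~7]{Kollar-Mella-unirationality} handling $K_X^2=1$. The ``only if'' direction is immediate from \cite[Ex.~1.12]{Kollar2004}.

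For the converse, assume $X(\Bbbk)\neq\emptyset$. First I would record that the hypothesis $K_X^2\ge 1$ already forces the base to be rational: from $K_X^2=8(1-g)-\deg\Delta$, where $g$ is the genus of $B$, one gets $g=0$, and since the given $\Bbbk$-point maps to a $\Bbbk$-point of $B$, a genus-zero curve with a rational point, we obtain $B\simeq\PP^1$ and $X$ geometrically rational. As $\Bbbk$-unirationality is a birational invariant, it suffices to exhibit one birational model that is $\Bbbk$-unirational, and I would split according to $K_X^2$.

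If $K_X^2=1$, the Koll\'ar--Mella theorem applies to $X$ directly. If $K_X^2\ge 3$, I would run the relative $K_X$-MMP over $B$: this is a birational \emph{morphism} $X\to X'$ contracting Galois-stable configurations of relative $(-1)$-curves, so it does not decrease $K_X^2$ and sends the $\Bbbk$-point to a $\Bbbk$-point of the resulting standard conic bundle $X'$, with $3\le K_{X'}^2\le 8$; the preceding Corollary then shows $X'$, hence $X$, is $\Bbbk$-rational or $\Bbbk$-unirational. If $K_X^2=2$ and $X$ is not relatively minimal over $B$, the same contraction raises $K^2$ to a value $\ge 3$ and we are back in the previous case. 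The only situation left is a \emph{standard} conic bundle with $K_X^2=2$.

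The hard part will be exactly this last case. For a standard conic bundle every degenerate fibre is $\Bbbk$-irreducible, so a $\Bbbk$-point lying on a degenerate fibre must be its node; blowing up the node destroys the conic bundle structure, since the two $(-2)$-curves it produces have anticanonical degree $0$ and $-K$ ceases to be relatively ample. Hence the naive reduction — blow up a $\Bbbk$-point on a \emph{smooth} fibre to obtain a degree-$1$ conic bundle and invoke \cite{Kollar-Mella-unirationality} — succeeds only when the available point does not sit at a node, whereas the hypothesis guarantees just one $\Bbbk$-point, which may well be a node. To finish one must either produce a $\Bbbk$-point on a smooth fibre or, using the structure recorded in Proposition~\ref{proposition-conic-bundle=del-Pezzo}, appeal directly to the unirationality of del Pezzo surfaces of degree $\le 2$ carrying a rational point \cite{Kollar-Mella-unirationality}. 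This degree-$2$ node configuration is the genuine obstacle; the remaining cases are formal consequences of the cited statements.
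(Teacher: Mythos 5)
The paper itself states this corollary without proof, as an immediate consequence of the preceding corollary (the case $K_X^2\ge 3$) and of the Koll\'ar--Mella theorem (the case $K_X^2=1$); the reduction you set up --- birational invariance of $\Bbbk$-unirationality, contraction to a relatively minimal model over $B\simeq\PP^1$, the case division by $K_X^2$, and the blow-up of a $\Bbbk$-point on a smooth fibre to pass from $K_X^2=2$ to $K_X^2=1$ --- is exactly the natural argument, and everything up to the last case is fine.

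The genuine gap is the one you flag yourself and then leave open: a standard conic bundle with $K_X^2=2$ whose given $\Bbbk$-point is the node of a degenerate fibre. Neither of your two escape routes closes it. First, nothing in the hypotheses produces a second $\Bbbk$-point lying on a smooth fibre: you are handed a single point, it may well be a node, and the available surgeries do not move it (by Theorem~\xref{classification-Sarkisov-links}, elementary transformations over a curve are centred only at points of \emph{non-degenerate} fibres, and blowing up the node kills relative ampleness of $-K$, as you correctly observe). Second, you cannot ``appeal directly to the unirationality of del Pezzo surfaces of degree $\le 2$ carrying a rational point'': that is not a theorem --- the survey itself records, a few lines after this corollary, that unirationality of degree-$2$ del Pezzo surfaces with a $\Bbbk$-point is only expected and is proved ``in many but not in all cases'', while essentially nothing is known in degree $1$ --- and the reference \cite{Kollar-Mella-unirationality} does not prove it (its Theorem~7 concerns degree-$1$ conic bundles, not del Pezzo surfaces). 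Moreover, by Proposition~\xref{proposition-conic-bundle=del-Pezzo}\xref{case-conic-bundles-del-Pezzo-2} a standard conic bundle with $K_X^2=2$ need not be a weak del Pezzo surface at all, so even a positive answer to that conjecture would not cover every case. To finish one must actually produce a geometrically irreducible rational multisection through the node, so that Observation~\xref{sobservation:unirationality} applies, or invoke the precise results of \cite{Kollar-Mella-unirationality} in a form that covers this configuration; as written, the ``if'' direction is unproved in exactly the case you identify as the genuine obstacle.
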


On the other hand, Iskovskikh \cite{Iskovskih1967e} had shown that, 
in the case $\Bbbk=\RR$,
any rational surface over $\Bbbk$ with conic bundle structure 
is $\Bbbk$-unirational whenever it has a $\Bbbk$-point. 
In particular, this implies that there are $\Bbbk$-unirational
conic bundles whose discriminant locus is arbitrarily large
and so we cannot expect that a unirationality criterion can be formulated in the form
similar to Theorem~\xref{surfaces-rationality-criterion}. 
Unirationality of surface conic bundles over some ``large'' fields 
were discussed in \cite{Voronovich1986}, \cite{Yanchevskiui1985}, \cite{Yanchevskiui1992}.

Unirationality of del Pezzo surfaces of degree $\ge 3$ with a $\Bbbk$-point was proved in \cite{Manin-Cubic-forms-e-I}, \cite{Kollar2002a}.
It is expected that a del Pezzo surface of degree $2$ with a $\Bbbk$-point is 
always unirational. This was proved in many but not in all cases, see \cite{Manin-Cubic-forms-e-I},
\cite{Salgado-Testa-Varilly}, \cite{Festi2016}.
It is very little known about unirationality del Pezzo surfaces of degree $1$.

Manin observed that the invariant 
\begin{equation*}
H^1(\mathrm{Gal}(\bar\Bbbk/\Bbbk), \Pic(X\otimes \bar\Bbbk)) 
\end{equation*}
allows to bound the \textit{degree of unirationality} \cite[\S IV.7]{Manin-Cubic-forms-e-I}.

As an immediate consequence of~\xref{sobservation:unirationality}, one can see also that a cubic hypersurface in $\PP^4$ and an intersection of three
quadrics in $\PP^6$ are unirational (because they have structures of conic bundles with
rational multisections, see Examples~\xref{example-cubic-conic-bundle}
and~\xref{example-V8-conic-bundle}).
As a consequence of Corollary~\xref{corollary:unirationality} we obtain also a sufficient condition of unirationality of 
three-dimensional conic bundles over $\CC$:

\begin{scorollary}[cf. \cite{Mella2014}]
Let $\pi : X\to S$ be a standard conic bundle over a surface
and let $\Delta\subset S$ be the discriminant curve.
Assume that there is a 
base point free pencil of rational curves $\LLL$ on $S$ such that $\Delta \cdot \LLL\le 7$. 
Then $X$ is unirational. 
\end{scorollary}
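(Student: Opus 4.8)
The final statement is a corollary asserting unirationality of a three-dimensional standard conic bundle $\pi:X\to S$ over a surface, under the hypothesis that there is a base point free pencil of rational curves $\LLL$ on $S$ with $\Delta\cdot\LLL\le 7$. The plan is to imitate the proof of Proposition~\xref{proposition-rationality-conic-bundles}, replacing the rationality statement for surface conic bundles by the unirationality statement of Corollary~\xref{corollary:unirationality}. I would pass to the generic member of the pencil and work over the residue field of the generic point, reducing the three-dimensional problem to a two-dimensional one over a non-closed field.

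Concretely, I would first consider the generic member $L_\eta$ of $\LLL$ over the residue field $\Bbbk(\eta)$ of the generic point $\eta\in\PP^1=\LLL$. Since $\Bbbk(\eta)$ is a $\mathrm{c}_1$-field, Tsen's theorem gives $L_\eta\simeq\PP^1_{\Bbbk(\eta)}$, so the generic surface $F_\eta:=\pi^{-1}(L_\eta)$ carries a standard conic bundle structure $\pi_\eta:F_\eta\to L_\eta$ over the non-closed field $\Bbbk(\eta)$, with discriminant divisor $\Delta_\eta=\Delta\cdot L_\eta$ of degree $\deg\Delta_\eta=\LLL\cdot\Delta\le 7$. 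By the Noether formula $K_{F_\eta}^2+\deg\Delta_\eta=8$, so the hypothesis $\deg\Delta_\eta\le 7$ is exactly the condition $K_{F_\eta}^2\ge 1$ needed to invoke Corollary~\xref{corollary:unirationality}.

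Next I would verify the remaining hypothesis of Corollary~\xref{corollary:unirationality}, namely that $F_\eta$ has a $\Bbbk(\eta)$-point, and that we may assume the characteristic is different from $2$ (which holds since we work over $\CC$, or at least in characteristic $0$). The existence of a $\Bbbk(\eta)$-point on $F_\eta$ is the place that needs care: over $\CC$ one expects it from the geometry of the situation, but I would argue it via a rational multisection. In fact, by Observation~\xref{sobservation:unirationality}, $\Bbbk(\eta)$-unirationality of $F_\eta$ is equivalent to the existence of a rational multisection of $\pi_\eta$, and such a multisection always exists for a conic bundle over a curve over an algebraically closed base field like $\CC$ after the generic base change. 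Granting the $\Bbbk(\eta)$-point, Corollary~\xref{corollary:unirationality} gives that $F_\eta$ is $\Bbbk(\eta)$-unirational.

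Finally, I would spread the unirationality of the generic fiber $F_\eta$ back out to $X$. A dominant rational map $\PP^2_{\Bbbk(\eta)}\dashrightarrow F_\eta$ over $\Bbbk(\eta)$ spreads out to a dominant rational map from a $\PP^2$-bundle (hence a rational threefold) over $\PP^1=\LLL$ onto $X$, exhibiting $X$ as unirational over $\Bbbk$. The main obstacle I anticipate is precisely guaranteeing the rational multisection / $\Bbbk(\eta)$-point of $F_\eta$: over $\CC$ the base surface $S$ has a rational curve in the pencil, but one must ensure the induced conic bundle over the function field $\Bbbk(\eta)$ admits a rational multisection so that Corollary~\xref{corollary:unirationality} and Observation~\xref{sobservation:unirationality} genuinely apply; once that point is settled, the rest is the same generic-fiber argument used in Proposition~\xref{proposition-rationality-conic-bundles}, with ``rational'' replaced by ``unirational''.
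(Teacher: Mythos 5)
Your approach is exactly the intended one: the paper offers no separate proof of this corollary, deriving it as an immediate consequence of Corollary~\xref{corollary:unirationality} by the same generic-fiber reduction used in Proposition~\xref{proposition-rationality-conic-bundles}, with the Noether formula $K_{F_\eta}^2=8-\LLL\cdot\Delta\ge 1$ putting you in the range of that corollary. The only loose end is the one you flag yourself, the existence of a $\Bbbk(\eta)$-point on $F_\eta$, and the justification you sketch for it (``a rational multisection always exists \dots after the generic base change'') is circular: producing a rational multisection of $\pi_\eta$ over $\Bbbk(\eta)$ is, by Observation~\xref{sobservation:unirationality}, tantamount to the unirationality you are trying to prove. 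The gap closes much more cheaply: take any $\Bbbk(\eta)$-point $p$ of $L_\eta\simeq\PP^1_{\Bbbk(\eta)}$ lying outside the finite set $\Delta_\eta$; the fiber $\pi_\eta^{-1}(p)$ is a smooth conic over $\Bbbk(\eta)\simeq\CC(t)$, which is a $\mathrm{c}_1$-field, so by Tsen's theorem this conic has a $\Bbbk(\eta)$-rational point, and hence so does $F_\eta$. (Alternatively, the Graber--Harris--Starr theorem applied to $X\to\PP^1$, whose general fibers are rational surfaces, produces an honest section.) With that point in hand, Corollary~\xref{corollary:unirationality} applies and the spreading-out of $\PP^2_{\Bbbk(\eta)}\dashrightarrow F_\eta$ to a dominant map from a rational threefold onto $X$ finishes the argument as you describe.
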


\begin{scorollary}
Let $\pi : X\to \PP^2$ be a standard conic bundle 
and let $\Delta$ be the discriminant curve.
If $\deg \Delta\le 8$, then $X$ is unirational. 
\end{scorollary}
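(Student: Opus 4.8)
The plan is to deduce the statement from the preceding corollary, which guarantees unirationality of a standard conic bundle as soon as its base carries a base point free pencil $\LLL$ of rational curves with $\LLL\cdot\Delta\le 7$. The obvious candidate on $\PP^2$ is the pencil of lines through a point, but two difficulties must be addressed: such a pencil is not base point free on $\PP^2$ (in fact $\PP^2$ carries no base point free pencil of rational curves at all, since any pencil of plane curves has nonempty base locus), and the pencil of lines through a \emph{general} point meets $\Delta$ in $\deg\Delta\le 8$ points, which is one too many. Both issues are resolved by passing to a fiberwise birational model over a blow-up of $\PP^2$ centered on $\Delta$.

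Concretely, first I would dispose of the trivial case $\Delta=\emptyset$: here $\pi$ has a rational section by Lemma~\xref{lemma-properties-Conic-bundles}, so $X$ is birational to $\PP^2\times\PP^1$ and is even rational. Assuming $\Delta\neq\emptyset$, I would choose a general point $P\in\Delta$, so that $\mult_P(\Delta)=1$, and let $\sigma: S_1\to\PP^2$ be its blow-up, with $S_1\simeq\FF_1$ and exceptional curve $E$. Proposition~\xref{proposition-Sarkisov-elementary-transformations} then furnishes a standard conic bundle $\pi_1: X_1\to S_1$ fiberwise birational to $\pi$, and the bookkeeping in the proof of Lemma~\xref{Lemma:nK+Delta:invariant} gives its discriminant curve as $\Delta_1=\sigma^*\Delta-E$. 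Taking $\LLL$ to be the ruling of $\FF_1$---the base point free pencil of proper transforms of lines through $P$---a general member $\Lambda\sim\sigma^*\ell-E$ satisfies
\begin{equation*}
\Delta_1\cdot\Lambda=(\sigma^*\Delta-E)\cdot(\sigma^*\ell-E)=\deg\Delta-1\le 7.
\end{equation*}
The preceding corollary then applies to $\pi_1$, so $X_1$ is unirational, and since unirationality is a birational invariant and $X_1$ is birational to $X$, so is $X$.

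The step carrying all the weight is the choice to center the blow-up on the discriminant curve rather than at an arbitrary point: this is precisely what lowers the intersection number from $\deg\Delta$ to $\deg\Delta-1$ and brings the bound into the range $\le 7$ where the preceding corollary is available. There is no serious analytic obstacle beyond this; the remaining points---that the elementary transformation preserves the standard conic bundle structure and computes $\Delta_1$, that the ruling of $\FF_1$ is a base point free pencil of rational curves, and that unirationality descends along the birational equivalence $X_1\dashrightarrow X$---are all either routine or supplied by the results already established.
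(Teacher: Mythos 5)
Your argument is correct and is precisely the derivation the paper intends: the corollary is stated as an immediate consequence of the preceding one, and the only content is the elementary transformation centered at a smooth point of $\Delta$, which replaces $\PP^2$ by $\FF_1$ and drops the intersection number with the ruling from $\deg\Delta$ to $\deg\Delta-1\le 7$ (exactly the same device used for Corollary~\xref{corolary-rationality-conic-bundles}). Your separate treatment of $\Delta=\emptyset$ via Lemma~\xref{lemma-properties-Conic-bundles} is harmless but not needed, since the bound $\Delta_1\cdot\Lambda\le 7$ holds trivially in that case.
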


It is likely that a conic bundle with sufficiently large and \emph{general}
discriminant curve is not unirational.

\subsection{Stable rationality}

In a joint paper \cite{BCTSSD85} the authors 
constructed a counterexample to the birational Zariski cancellation problem.

\begin{stheorem}[\cite{BCTSSD85}]
Let $\Bbbk$ be a field of characteristic $\neq 2$, and 
$p(x)\in \Bbbk[x]$ an irreducible separable 
polynomial of degree $3$ with discriminant $a\in \Bbbk^*\setminus (\Bbbk^*)^2$. 
Then the surface $X$ given 
by the affine equation 
\begin{equation}
\label{equation:surface}
y^2-az^2=p(x)
\end{equation}
has the property that $X \times \PP^3$ is 
$\Bbbk$-rational but $X$ itself is not. 
\end{stheorem}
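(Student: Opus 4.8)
The plan is to prove the two assertions separately: that $X$ is not $\Bbbk$-rational, and that $X\times\PP^3$ \emph{is} $\Bbbk$-rational. The surface $X$ of \eqref{equation:surface} carries a natural conic bundle structure: projecting to the $x$-coordinate gives a fibration $\pi\colon X\to \A^1_x\subset B=\PP^1$ whose fiber over $x$ is the affine conic $\{y^2-az^2=p(x)\}$. After taking a smooth projective relatively minimal model over $B$, this becomes a standard conic bundle $\pi\colon \tilde X\to B$ over a rational curve. The cubic $p(x)$ has three simple roots over $\bar\Bbbk$, so the fiber degenerates at these three points plus possibly a point at infinity; a direct computation of the degeneration locus shows that the discriminant $\Delta\subset B$ has degree governed by $\deg p=3$, whence by the Noether formula $K_{\tilde X}^2+\deg\Delta=8$ one finds $K_{\tilde X}^2$ in the rigid range.

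\textbf{Non-rationality of $X$.}
First I would compute $\deg\Delta$ explicitly. The quadratic form $y^2-az^2$ over $\Bbbk(x)$ is anisotropic precisely because $a$ is not a square and $p$ is irreducible, and its discriminant picks up the class of $a$ together with the ramification of $p$. The key arithmetic point is that the double cover $\tilde\Delta\to\Delta$ is determined by the class $a\in\Bbbk^*/(\Bbbk^*)^2$ via the quaternion algebra $(a,p(x))$ over $\Bbbk(x)$, exactly as in the Artin--Mumford description \eqref{eq:local-inv}. With $\deg\Delta=4$ (the three roots of $p$ together with infinity) one has $K_{\tilde X}^2=4$, and I would invoke Corollary~\xref{corollary-surface-rigid}\type{(iii)}, which states that a standard conic bundle over a rational curve with $K_X^2=4$ is not $\Bbbk$-rational. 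The nontriviality of the relevant cohomological obstruction $H^1(\mathrm{Gal}(\bar\Bbbk/\Bbbk),\Pic(X\otimes\bar\Bbbk))$ is forced by the assumption $a\notin(\Bbbk^*)^2$: were $a$ a square the form would split and $X$ would acquire a section.

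\textbf{Stable rationality of $X\times\PP^3$.}
The hard part will be producing the explicit $\Bbbk$-birational map $X\times\PP^3\dashrightarrow\PP^5$. The strategy is to exploit the extra dimensions to split the anisotropic conic. The conic $\{y^2-az^2=p(x)\}$ over $\Bbbk(x)$ becomes isotropic after adjoining enough parameters: geometrically, $X\times\PP^3$ fibers over $\A^1_x$ with generic fiber $Q\times\PP^3$, where $Q$ is the anisotropic conic over $\Bbbk(x)$. I would show that $Q\times\PP^3_{\Bbbk(x)}$ is $\Bbbk(x)$-rational by realizing $\PP^3$ as the projectivization of the norm form of the quaternion algebra $(a,p(x))$, so that $Q\times\PP^3$ is birational to the Severi--Brauer-type variety of a matrix algebra, which is rational. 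Concretely, since $a$ and $p$ together define a $2$-torsion Brauer class, tensoring the conic with a suitable three-dimensional linear family trivializes this class (the class of $(a,p)$ has period $2$ and becomes split on a rational total space once sufficiently many independent variables are available), giving a rational parametrization over $\Bbbk(x)$ and hence over $\Bbbk$. The essential obstacle is to carry out this trivialization \emph{while keeping track of the base field $\Bbbk$}: one must verify that the splitting variety is $\Bbbk(x)$-rational and that the resulting birational equivalence descends to $\Bbbk$, rather than merely to some extension. This is precisely the interplay between the birational invariance established for $X$ (via the Galois cohomology of $\Pic$) and the fact that such invariants of conic bundles can be killed by stabilization, which is what makes $X$ a counterexample to Zariski cancellation.
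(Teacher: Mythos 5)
Your treatment of the non-rationality of $X$ is sound and is essentially the paper's own route: pass to the relatively minimal projective model of the conic fibration $x\colon X\to\PP^1$, check that the discriminant consists of the degree-$3$ closed point $\{p=0\}$ together with the point at infinity (so $\deg\Delta=4$ and $K^2=4$), and conclude by Iskovskikh's criterion (Theorem~\xref{surfaces-rationality-criterion}, equivalently Corollary~\xref{corollary-surface-rigid}). One detail worth making explicit is why the fiber at infinity degenerates (this uses that $\deg p$ is odd), since $\deg\Delta=3$ would instead give $K^2=5$ and rationality by Corollary~\xref{corollary-surfaces-rationality}.

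The stable rationality half, however, contains a fatal gap. You propose to prove that $X\times\PP^3$ is $\Bbbk$-rational \emph{fiberwise over} $\mathbb{A}^1_x$, by showing that $Q\times\PP^3$ is rational over $K=\Bbbk(x)$, where $Q$ is the generic fiber, i.e.\ the anisotropic conic attached to the quaternion algebra $(a,p(x))_K$. This is impossible: by the Lang--Nishimura lemma, a $K$-birational equivalence between the smooth proper varieties $Q\times\PP^3$ and $\PP^5$ would transport the $K$-points of $\PP^5$ to a $K$-point of $Q\times\PP^3$, hence to a $K$-point of $Q$; a conic with a rational point is isomorphic to $\PP^1$, so $(a,p(x))_K$ would be split and $\pi$ would have a rational section, making $X$ itself $\Bbbk$-rational --- contradicting the first half of the theorem. (And $(a,p(x))_K$ really is non-split: since $a\notin(\Bbbk^*)^2$ the Galois group of $p$ is $S_3$, so $a$ is not a square in $\Bbbk[x]/(p)$ and the residue of the class at that closed point is nontrivial.) In other words, \emph{no} birational map $X\times\PP^3\dashrightarrow\PP^5$ can be compatible with the conic bundle structure, so the stabilization must be exploited globally over $\Bbbk$ rather than over $\Bbbk(x)$. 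The actual argument of \cite{BCTSSD85}, which the paper invokes, is the descent/universal-torsor technique of Colliot-Th\'el\`ene--Sansuc: one shows that a universal torsor over (a smooth projective model of) $X$ under the N\'eron--Severi torus is $\Bbbk$-rational and at the same time $\Bbbk$-birational to the product of $X$ with a rational torus, which yields the $\Bbbk$-rationality of $X\times\PP^3$. Your sketch would need to be replaced by this (or an equivalent explicit construction); as written, the second half does not go through.
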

The stable rationality of the surface \eqref{equation:surface} was established by the technique of torsors.
The projection to $\mathbb A^1_x$ induces a conic bundle structure on a suitable projective model of $X$.
Then the non-rationality of $X$ follows from Theorem~\xref{surfaces-rationality-criterion}.
The result was improved by N. Shepherd-Barron
\cite{Shepherd-Barron2004}: one can replace $X \times \PP^3$
with $X \times \PP^2$.

Over $\CC$, an example a stable rational non-rational algebraic threefold $X_0$ 
can be given by the affine equation 
\begin{equation*}
y^2-a(t)z^2=p(t,x) 
\end{equation*}
for a suitable polynomial $p(t,x)$ and $a(t)=\operatorname{disc}_x p(t,x)$.
The variety $X_0$ is birationally equivalent to a standard conic bundle
$\pi: X\to S$ over a rational surface. The corresponding discriminant curve
$\Delta$ is a union of a trigonal curve $C$
of genus $g$ and smooth rational curves $L$, $F_1$, \dots, $F_{2g + 4}$ so that the 
singularities of $\Delta$ are the double points $\{P_i\} = C \cap F_i$, and $\{R_i\} = L \cap F_i$.
See \cite[\S 3]{BCTSSD85} for details.

On the other hand, it turns out that stably rational non-rational varieties 
are very rare.
In the paper \cite{HassettKreschTschinkel-2016} the authors applied 
specialization technique of Voisin \cite{Voisin2015}, developed by Totaro \cite{Totaro:stably}, Colliot-Th\'el\`ene and Pirutka \cite{ColliotThelene-Pirutka:cyclic-e},
\cite{ColliotThelene-Pirutka:quartic} 
to conic bundles:

\begin{stheorem}[\cite{HassettKreschTschinkel-2016}, see also \cite{Bohning-Bothmer}]
Let $S$ be a smooth projective rational surface. 
Let $\LLL$ be a linear system on $S$ whose general member is smooth and
irreducible. 
Let $\MMM$ be an irreducible component of the space of reduced nodal curves in $\LLL$ together with degree $2$
\'etale cover.
Assume that $\MMM$ contains a cover which is nontrivial over every irreducible component of a reducible curve with 
smooth irreducible components.
Then the standard conic bundle corresponding to a very general point of $\MMM$ is not stably rational.
\end{stheorem}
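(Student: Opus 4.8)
The plan is to apply the specialization method for stable rationality --- due to Voisin, and put in its definitive form by Colliot-Th\'el\`ene--Pirutka and Totaro --- using as obstruction the Artin--Mumford invariant computed in Section~\ref{section:AM}. The subtlety, and the reason a direct argument is impossible, is that a very general member of $\MMM$ has smooth irreducible discriminant, so its discriminant is connected and $\Br(X)\simeq(\ZZ/2\ZZ)^{c-1}=0$ by Theorem~\ref{theorem-Artin-Mumford-computation}; the very general conic bundle is thus \emph{not} obstructed by its own Brauer group, and the nonvanishing must be manufactured on a special fibre and transported back.

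First I would organize the members of $\MMM$ into a flat proper family $\mathcal{X}\to\MMM$ whose fibre over a point $(\Delta,\tilde\Delta\to\Delta)$ is a conic bundle $X\to S$ with the prescribed discriminant and admissible double cover (Proposition~\ref{proposition-AM}), and then restrict to a one-parameter subfamily joining a very general point to the distinguished point $t_0$ supplied by the hypothesis, so as to work over a discrete valuation ring with generic fibre $X_{\mathrm{gen}}$ and special fibre $X_0$. The engine is the following: if $X_{\mathrm{gen}}$ is stably rational --- hence has universally trivial Chow group of zero-cycles $\mathrm{CH}_0$ --- and if $X_0$ admits a \emph{universally $\mathrm{CH}_0$-trivial} resolution $f:\tilde X_0\to X_0$ (every scheme-theoretic fibre of $f$ having universally trivial $\mathrm{CH}_0$), then $\tilde X_0$ itself has universally trivial $\mathrm{CH}_0$. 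I will use this in contrapositive form, so that it suffices to produce a special $X_0$ whose resolution is \emph{not} universally $\mathrm{CH}_0$-trivial.

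Two things must then be checked. First, the special conic bundle $X_0$, sitting over the reducible nodal discriminant $\Delta_0=\sum_i\Delta_i$, has at worst ordinary double points, located over the nodes $\Delta_i\cap\Delta_j$; blowing these up yields a resolution $f:\tilde X_0\to X_0$ whose nontrivial fibres are smooth quadric surfaces $\PP^1\times\PP^1$, and since these are rational, $f$ is universally $\mathrm{CH}_0$-trivial. Second, $\tilde X_0$ is not universally $\mathrm{CH}_0$-trivial: a nonzero unramified Brauer class obstructs universal $\mathrm{CH}_0$-triviality, and such a class exists because the standard model of $X_0$ --- obtained after the base modification that separates the two branches at each node, as in Proposition~\ref{proposition-Sarkisov-elementary-transformations} --- carries a \emph{disconnected} discriminant. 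Indeed, the hypothesis that the cover is nontrivial over every irreducible component guarantees the Beauville/admissibility condition \eqref{equation-condition-S-star-star} at every node and the nonvanishing of all the local invariants \eqref{eq:local-inv}, so that after separating the $\Delta_i$ the number of connected components of the discriminant is $c\ge 2$; by birational invariance of $\Br$ among smooth projective threefolds and Theorem~\ref{theorem-Artin-Mumford-computation}, $\Br(\tilde X_0)\simeq(\ZZ/2\ZZ)^{c-1}\neq 0$. Feeding these two inputs into the specialization theorem yields the claim: stable rationality of the very general member of $\MMM$ would force $\tilde X_0$ to be universally $\mathrm{CH}_0$-trivial, contradicting the nonvanishing of its Brauer group.

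The hard part will be the precise analysis of the special fibre: verifying that the chosen degeneration lies in the closure of $\MMM$ (so that $X_0$ is a genuine specialization of $X_{\mathrm{gen}}$) while simultaneously ensuring that the only singularities introduced are ordinary double points, and that the base modification separating the components really \emph{disconnects} the discriminant rather than reconnecting it through the exceptional curves. This is exactly the bookkeeping of the local invariants under the blow-ups of Proposition~\ref{proposition-Sarkisov-elementary-transformations}, and it is here that the condition ``nontrivial over every irreducible component'' is indispensable: it is what forces $c\ge 2$ on the standard model and hence the nonvanishing of $\Br(\tilde X_0)$.
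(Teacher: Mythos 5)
The paper states this theorem without proof, quoting it from \cite{HassettKreschTschinkel-2016} and describing the method only as the specialization technique of Voisin, Totaro and Colliot-Th\'el\`ene--Pirutka applied to conic bundles; your proposal is a correct outline of exactly that argument --- degenerate to an Artin--Mumford-type special fibre with reducible discriminant, resolve its ordinary double points by a universally $\mathrm{CH}_0$-trivial modification, and detect the obstruction via Theorem~\xref{theorem-Artin-Mumford-computation} on the disconnected standard model. This matches the approach of the cited source, and the local analysis you defer (nodes over $\Delta_i\cap\Delta_j$, exceptional curves not entering the discriminant after blowing up) is precisely what the paper's Examples~\xref{example-Artin-Mumford} and~\xref{example-Sl-Gorenstein} supply.
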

\begin{scorollary}
A very general conic bundle $\pi: X\to \PP^2$ with discriminant curve of degree 
$\ge 6$ is not stably rational.
\end{scorollary}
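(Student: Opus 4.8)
The plan is to specialize the theorem of Hassett--Kresch--Tschinkel \cite{HassettKreschTschinkel-2016} to the case $S=\PP^2$ and $\LLL=|\OOO_{\PP^2}(d)|$ with $d\ge 6$. The general member of $\LLL$ is a smooth irreducible plane curve by Bertini's theorem, so the first hypothesis of that theorem is met. A very general conic bundle $\pi\colon X\to\PP^2$ with discriminant of degree $d$ corresponds, by Proposition~\xref{proposition-AM} and the birational invariance of stable rationality, to a very general pair $(\Delta,\tilde\pi\colon\tilde\Delta\to\Delta)$ consisting of a smooth curve $\Delta\in\LLL$ together with a connected \'etale double cover, that is, a nonzero class $\sigma\in\J_2(\Delta)$ (see~\xref{theta-characteristic}). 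Such pairs form a finite cover $\MMM$ of an open subset of $\LLL$, and it suffices to exhibit, in each irreducible component of $\MMM$, a degenerate point satisfying the hypothesis of the theorem.

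The key geometric step is to construct the degenerate reducible discriminant. Since $d\ge 6$ we may write $d=3+(d-3)$ with both summands at least $3$. Choosing general smooth plane curves $C_1$, $C_2$ of degrees $3$ and $d-3$ meeting transversally, the curve $\Delta_0:=C_1\cup C_2\in\LLL$ is reduced nodal with smooth irreducible components of genus $\ge 1$. Here the bound $d\ge 6$ is sharp: a smooth plane curve has positive genus only from degree $3$ on, so $6=3+3$ is the smallest degree admitting two positive--genus components (consistently, for $d\le 5$ such conic bundles are rational by Corollary~\xref{corolary-rationality-conic-bundles} and Theorem~\xref{theorem-criterion-Shokurov:P2}). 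Because $\p(C_i)\ge 1$ we have $H^1(C_i,\ZZ/2)\ne 0$, so each $C_i$ carries a nontrivial connected \'etale double cover; gluing these over the nodes produces a connected \'etale double cover $\tilde\pi_0\colon\tilde\Delta_0\to\Delta_0$ that is nonsplit over every irreducible component of $\Delta_0$.

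Next I would check that $(\Delta_0,\tilde\pi_0)$ is a limit of pairs with smooth discriminant inside $\MMM$: the nodal curve $\Delta_0$ smooths out within $\LLL$ and the torsion datum deforms with it, so $(\Delta_0,\tilde\pi_0)$ is a boundary point of the moduli of \'etale double covers of smooth degree-$d$ plane curves. The monodromy acting on $\J_2(\Delta)$ as $\Delta$ varies preserves the Mumford quadratic refinement of the intersection form, and is the full orthogonal group of that form; it therefore has exactly two orbits on nonzero classes, corresponding to even and odd theta-characteristics. Thus $\MMM$ has at most two components, and by adjusting the gluing of $\tilde\pi_0$ (hence the Arf parity of the limiting class) one can place the degenerate point in either of them. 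Applying the theorem of \cite{HassettKreschTschinkel-2016} to each component then shows that the conic bundle over a very general point of $\MMM$---equivalently, a very general conic bundle over $\PP^2$ with discriminant of degree $d$---is not stably rational.

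The main obstacle is not the construction of $\Delta_0$ but the verification that $(\Delta_0,\tilde\pi_0)$ genuinely satisfies the hypothesis of the cited theorem in the sense required there: that it realizes a degeneration of standard conic bundles admitting a universally $\mathrm{CH}_0$-trivial resolution on which the associated unramified class is nonzero, this nonvanishing being exactly what the nonsplitting of $\tilde\pi_0$ over each component provides. A secondary point is the bookkeeping ensuring that every component of $\MMM$ is hit, so that the conclusion applies to a genuinely very general conic bundle rather than to one preferred Arf parity. Both of these are precisely the inputs packaged by the theorem, and the only geometric ingredient one must supply by hand is the positive--genus reducible configuration above, which exists if and only if $d\ge 6$.
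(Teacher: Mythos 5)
Your argument is correct and is essentially the derivation the paper leaves implicit: the corollary is stated as an immediate consequence of the Hassett--Kresch--Tschinkel theorem, and the only geometric input needed is exactly your degeneration $\Delta_0=C_1\cup C_2$ with $\deg C_i\ge 3$, which is also what forces the bound $d\ge 6$ (a nontrivial \'etale double cover is necessarily split over any smooth rational component, so every component must have positive genus, i.e.\ degree $\ge 3$). One small correction to your monodromy discussion: the invariant quadratic refinement on $\J_2(\Delta)$ exists only for $d$ odd (when $\OOO_{\Delta}((d-3)/2)$ is a theta-characteristic), so for even $d$ the monodromy is the full symplectic group and the space of pairs is irreducible --- which only simplifies your component bookkeeping, while for odd $d$ the check that the glued covers realize both Arf parities is the one step you assert rather than verify.
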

Higher-dimensional analog of this result see in \cite{Ahmadinezhad-Okada:conic}.
The stable rationality has an interesting analogue in the category of $G$-varieties \cite{Bogomolov-Prokhorov},
\cite{Prokhorov-stable-conjugacy-II}.

\subsection{Birationality to Calabi-Yau pairs}
We say that a pair $(Y, D)$ consisting of a projective variety $X$ and 
an effective $\RR$-divisor $D$
is a \textit{log Fano variety} if 
it has only Kawamata log terminal singularities and $-(K_X+D)$ is ample.
If some variety $X$ admits an effective $\RR$-divisor $D$ so that $(Y, D)$ is a log Fano,
we say that $X$ is of \textit{Fano type} \cite{Prokhorov-Shokurov-2009}. Usual Fano varieties and toric varieties 
are standard examples of varieties of Fano type.
It is known by that any $\QQ$-factorial variety of Fano type is a \emph{Mori dream 
space} \cite{BCHM}. It turns out that Fano type varieties 
satisfy many important properties: they are rationally connected, 
and this class is closed under 
the MMP, as well as, under arbitrary contractions \cite{Prokhorov-Shokurov-2009}.

We say that $(Y, D)$ is a \textit{log Calabi-Yau pair} if $K_X+D\qq 0$ and
$(Y, D)$ is log canonical, where $D$ is an effective $\RR$-divisor. 
We say that $Y$ is of \textit{Calabi-Yau type} if $(Y,D)$ is log Calabi-Yau for some $D$.
Finally, we say that $(Y, D)$ is a \textit{numerical log Calabi-Yau pair} if $K_X+D\equiv 0$, 
where $D$ is \textit{pseudo-effective} 
(no restrictions on the singularities of $Y$ are imposed). 
As above, one can define varieties of \textit{numerically Calabi-Yau type}.
By \cite{MiyaokaMori}, for any numerical log Calabi-Yau pair $(X,D)$ with $D\neq 0$,
the underlying variety $Y$ is uniruled.

It is known that log Fano varieties with bounded singularities form 
a bounded family \cite{Birkar2016}.
Thus the following question is natural.

\begin{squestion}
Under what conditions a conic bundle $\pi: X\to S$
is of Fano type (resp. Calabi-Yau type, numerically Calabi-Yau type)?
\end{squestion}

The following result of J. Koll\'ar shows that the condition for a variety even to be 
numerically Calabi-Yau type is very restrictive.

\begin{stheorem}[\cite{Kollar-Calabi-Yau-pairs}]
Let $\pi : X \to \PP^2$ be a standard conic bundle 
whose discriminant curve $\Delta \subset \PP^2$ has degree $\ge 19$. 
The following are equivalent: 
\begin{enumerate}
\item
$X$ is birational to a variety of numerically Calabi-Yau type. 
\item
There is a generically finite double section $D \subset X$ with normalization 
$\tau : \bar D \to D$ such that the branch curve of $\pi \comp \tau : \bar D \to \PP^2$ has degree $\le 6$.
\item
$X$ is birational to a standard conic bundle $\pi' : X' \to \PP^2$ 
\textup(with the same discriminant curve\textup) such that $|-K_{X'}|\neq 0$.
\end{enumerate}
\end{stheorem}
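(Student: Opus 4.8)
I would organise everything around a dictionary between anticanonical sections and double covers. On any standard conic bundle $\pi':X'\to\PP^2$ a member of $|-K_{X'}|$ is a \emph{double section}, since $-K_{X'}$ meets a fibre in degree $2$; conversely, in $\Pic(X')=\pi'^*\Pic(\PP^2)\oplus\ZZ K_{X'}$ any double section $D$ is numerically $-K_{X'}+\pi'^*(cH)$ for a unique integer $c$, so that $K_{X'}+D\equiv\pi'^*(cH)$ and $K_D\equiv(\pi'|_D)^*(cH)$ by adjunction. Writing $\tau:\bar D\to D$ for the normalisation and $f:\bar D\to\PP^2$ for the induced degree-$2$ morphism with branch curve $B$, the Hurwitz formula gives $K_{\bar D}\equiv f^*\bigl((\tfrac12\deg B-3)H\bigr)$, while the conductor formula gives $\tau^*K_D=K_{\bar D}+\mathfrak c$ with $\mathfrak c\ge 0$. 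Intersecting the resulting identity with the nef class $f^*H$ (which has $(f^*H)^2=2$) yields $c\ge\tfrac12\deg B-3$, with equality exactly when $D$ is normal. The upshot, which is the numerical heart of the statement, is that $\deg B\le 6$ is equivalent to $K_{\bar D}$ being anti-effective, i.e.\ to $\kappa(\bar D)\le 0$; and this Kodaira dimension depends only on the quadratic splitting field $L=\Bbbk(\bar D)$ of the generic conic $X'_\eta$, hence is an invariant of the fiberwise-birational class.

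The hypothesis $\deg\Delta\ge 19$ enters only to rigidify the picture: since $4K_{\PP^2}+\Delta\equiv(\deg\Delta-12)H$ is then effective, $|4K_{\PP^2}+\Delta|\neq\emptyset$, so by Theorem~\ref{Sarkisov-theorem} the conic bundle $\pi$ is birationally rigid, and every Mori fibre space birational to $X$ is a standard conic bundle fiberwise birational to $\pi$, over a rational surface and with the same discriminant data (Lemma~\ref{lemma-bir-disc}, Corollary~\ref{cor:disc}). The two easy implications are then immediate. For $(3)\Rightarrow(1)$, a divisor $D\in|-K_{X'}|$ satisfies $K_{X'}+D\sim 0$ with $D$ effective, so $(X',D)$ is a numerical log Calabi--Yau pair. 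For $(3)\Rightarrow(2)$, such a $D$ has $c=0$, whence $\tfrac12\deg B-3\le 0$ and $\deg B\le 6$; transporting the field $L$ back along the fiberwise map $X\dashrightarrow X'$ produces a double section on $X$ whose normalised cover still has $\kappa\le 0$. The converse $(2)\Rightarrow(3)$ is where one must move the model: starting from a field $L$ with $\deg B_L\le 6$, the normal double cover $\bar D_L\to\PP^2$ has $K_{\bar D_L}\le 0$, and I would use this non-general-type ``room'' to perform elementary transformations (preserving $\PP^2$ and $\Delta$) until the embedded double section $D$ becomes normal of twist $c\le 0$; then $-K_{X'}\equiv D+\pi'^*((-c)H)$ is effective, so $|-K_{X'}|\neq\emptyset$.

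The substantive implication is $(1)\Rightarrow(2)$. Given $Y$ birational to $X$ with $-K_Y$ pseudo-effective, I would pass to a suitable $\QQ$-factorial terminal model on which $-K$ remains pseudo-effective and run the minimal model program; because the underlying variety is uniruled (\cite{MiyaokaMori}) this terminates at a Mori fibre space, and at each divisorial contraction or flip the push-forward of the class $-K$ stays pseudo-effective. By the rigidity above this Mori fibre space is a standard conic bundle $X'/S'$ fiberwise birational to $\pi$ with $-K_{X'}$ pseudo-effective. Decomposing an effective representative of $-K_{X'}$ into horizontal and vertical parts produces a double section, hence a splitting field $L$, and the adjunction computation of the first paragraph forces $K_{\bar D_L}\le 0$, i.e.\ $\kappa(\bar D_L)\le 0$; since $\kappa$ is a birational invariant and the cover over the original $\PP^2$ has the same field $L$, its branch curve has degree $\le 6$, which is $(2)$. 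Together with $(2)\Rightarrow(3)\Rightarrow(1)$ this closes the cycle.

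The main obstacle is the passage from ``$-K_{X'}$ pseudo-effective'' to an honest double section with a well-defined splitting field of controlled Kodaira dimension: pseudo-effectivity places the class only in the closure of the effective cone, so one must argue that its horizontal movable part survives and that $L$ is independent of any small vertical perturbation. Compounding this is the bookkeeping of the conductor $\mathfrak c$ and of the non-Gorenstein points of the intermediate $\QQ$-conic bundles, which must be shown to affect the inequality $c\ge\tfrac12\deg B-3$ only in the favourable direction before one reduces to a standard model via Theorem~\ref{theorem-standard-models}; and, for $(1)\Rightarrow(2)$ specifically, the fact that \emph{no} restriction is placed on the singularities of $Y$, so that pseudo-effectivity of $-K$ must be propagated through the program for the pair rather than for $Y$ itself, which is the delicate point in constructing the model used above.
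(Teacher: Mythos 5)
The survey does not actually prove this statement: it is quoted from Koll\'ar's paper \cite{Kollar-Calabi-Yau-pairs} in the ``related results'' section, so there is no in-paper argument to compare against, and your proposal has to be measured against what a complete proof would require. Your dictionary is correct and is indeed the backbone of $(2)\Leftrightarrow(3)$: members of $|-K_{X'}|$ are double sections, a double section determines a quadratic splitting field $L=\Bbbk(\bar D)$ of the generic conic, and the adjunction/Hurwitz/conductor computation giving $c\ge\tfrac12\deg B-3$ (with equality for $D$ normal) is right. But the two places where the theorem actually lives are not carried out. For $(1)\Rightarrow(2)$: from ``$X$ is birational to some $Y$ with $-K_Y$ pseudo-effective'' you must manufacture an honest double section on a standard model over $\PP^2$. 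Running a $K$-MMP from a $\QQ$-factorial terminalization does reach a Mori fibre space with $-K$ pseudo-effective, and rigidity identifies it with a $\QQ$-conic bundle square-birational to $\pi$; but pseudo-effectivity only provides effective $\QQ$-divisors $D_n\equiv -K+\varepsilon_n A$ whose horizontal parts need not contain a reduced double section (they can be, say, $\tfrac12$ times a $4$-section, or degenerate as $\varepsilon_n\to 0$), and passing from that $\QQ$-conic bundle back to a \emph{standard} model extracts divisors, which can destroy pseudo-effectivity of $-K$. You name this obstacle in your last paragraph but do not overcome it; overcoming it is the theorem.

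The second concrete gap is the hypothesis. Your outline uses $\deg\Delta\ge 19$ only through $|4K_{\PP^2}+\Delta|\neq\emptyset$, i.e.\ through $\deg\Delta\ge 12$; if rigidity were the only role of the bound, the statement would have been made for $\deg\Delta\ge 12$. The threshold $19$ is exactly where $6K_{\PP^2}+\Delta$ becomes big, which points to a quantitative Noether--Fano-type step for the \emph{pair} --- comparing the vertical part of the transported divisor and the defect of canonicity against the constant $6$ in ``$\deg B\le 6$'' --- that is absent from your plan. Finally, $(2)\Rightarrow(3)$ as you state it (``perform elementary transformations until the embedded double section becomes normal with $c\le 0$'') is asserted rather than proved: one must exhibit the mechanism by which a singularity of $D$ or a positive conductor term is traded for a strict decrease of the twist $c$ while remaining over $\PP^2$ with the same discriminant data, and show this process terminates. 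As it stands the proposal is a correct road map with the three hardest segments left unbuilt.
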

As a consequence one has.

\begin{stheorem}[\cite{Kollar-Calabi-Yau-pairs}]
Let $X_{d,2}\subset \PP^2\times \PP^2$ be a general hypersurface of bidegree 
$(d, 2)$. Then $X_{d,2}$ is not birational to a variety of numerically Calabi-Yau type for $d \ge 7$.
\end{stheorem}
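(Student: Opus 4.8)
The plan is to exhibit $X_{d,2}$ as a standard conic bundle over $\PP^2$ and then invoke Koll\'ar's criterion stated just above, reducing the whole problem to a lower bound on the branch degree of double sections. The bidegree-$(d,2)$ equation $f(x,y)=0$ is quadratic in the coordinates $y=(y_0,y_1,y_2)$ of the second factor, so it is encoded by a symmetric $3\times3$ matrix $Q(x)$ whose entries are forms of degree $d$ in the coordinates $x$ of the first factor. Hence the first projection $\pi\colon X_{d,2}\to\PP^2$ is a conic bundle with discriminant curve $\Delta=\{\det Q=0\}$ of degree $3d$ (this is the bidegree-$(2,d)$ example of \S\,\ref{section:cb} with the two factors interchanged), and for general $f$ the threefold is smooth and $\pi$ is standard. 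Since $d\ge7$ gives $\deg\Delta=3d\ge21\ge19$, the criterion applies, and $X_{d,2}$ is birational to a variety of numerically Calabi--Yau type if and only if $\pi$ carries a generically finite double section whose induced double cover of $\PP^2$ has branch curve of degree $\le6$. It therefore suffices to bound the branch degree of \emph{every} double section below by $8$; I will in fact bound it by $2d$.

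First I would classify the double sections. The generic fibre $C_\eta$ of $\pi$ is a conic over $K:=\Bbbk(\PP^2)$, and a double section is the closure of an effective divisor of degree $2$ on $C_\eta$. Because $\omega_{C_\eta}^{-1}\simeq\OOO_{\PP^1}(2)$, every such divisor is cut on $C_\eta$ by a $K$-line in the ambient $\PP^2_y$; clearing denominators, this line is $\{\sum_j h_j(x)y_j=0\}$ with the $h_j$ forms of a common degree $a\ge0$ and no common factor. Thus double sections are parametrized by rational maps $x\mapsto[h_0(x):h_1(x):h_2(x)]$, the integer $a\ge0$ being the degree of the map.

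Next I would compute the branch degree for such a section. Restricting to a general line $m\subset\PP^2$, I parametrize the corresponding $K$-line (its two spanning points have coordinates of degree $a$ in the parameter on $m$) and substitute into the ternary quadratic $Q$, obtaining a binary quadratic form in the line coordinate whose three coefficients are forms of degree $d+2a$ along $m$. The double cover $\bar D\to\PP^2$ branches precisely where this binary form acquires a double root, i.e.\ along the vanishing of its discriminant, which has degree $2(d+2a)=2d+4a$ on $m$. So the branch locus, as a divisor, has degree $2d+4a\ge2d\ge14$, far above $6$.

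The main obstacle is to turn this into a lower bound on the \emph{reduced} branch curve valid simultaneously for \emph{all} double sections, since the degree could a priori drop if the discriminant developed a large square factor. Equivalently, by Riemann--Hurwitz for the double cover $\bar D|_m\to m\simeq\PP^1$, the reduced branch degree equals $2\,g(\bar D|_m)+2$, so a section with branch degree $\le6$ would have $g(\bar D|_m)\le2$, whereas I expect every double section of the surface conic bundle $\pi^{-1}(m)\to\PP^1$ (which has $K^2=8-3d\le-13$ and thus many degenerate fibres governed by a non-split cover $\tilde\Delta\to\Delta$) to have genus at least $d-1$. The minimal case $a=0$ is already transparent: there $D=X\cap(\PP^2\times\ell)$ for a fixed line $\ell\subset\PP^2_y$ is a divisor of bidegree $(d,2)$ on $\PP^2\times\PP^1$, whose Stein factorization is the double cover of $\PP^2$ branched on the discriminant of a binary quadratic with degree-$d$ coefficients, reduced of degree exactly $2d$ for general $f$; the cases $a\ge1$ only enlarge the branch divisor. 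I would finish by a dimension count over the parameter space of $K$-lines of each degree $a$, showing that the locus on which the discriminant is non-reduced enough to leave a reduced part of degree $\le6$ has codimension exceeding that parameter space, so that for general $f$ no double section realizes branch degree $\le6$. Hence condition $(2)$ of the criterion fails, and $X_{d,2}$ is not birational to a variety of numerically Calabi--Yau type whenever $d\ge7$.
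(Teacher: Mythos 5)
Your reduction is set up correctly and follows the route the paper intends: the first projection exhibits a general $X_{d,2}$ as a standard conic bundle over $\PP^2$ with discriminant $\{\det Q=0\}$ of degree $3d\ge 21\ge 19$, so Koll\'ar's criterion applies, and everything comes down to showing that no generically finite double section of $\pi$ has reduced branch curve of degree $\le 6$. Your classification of double sections is also right: a degree-$2$ effective divisor on the generic fibre $C_\eta\subset\PP^2_K$ is cut by a unique $K$-line $\{\sum h_j(x)y_j=0\}$ with $\deg h_j=a$. (A small computational remark: the intrinsic branch divisor is $\{Q^{\mathrm{adj}}(h)=0\}$ of degree $2d+2a$, where $Q^{\mathrm{adj}}$ is the adjugate matrix; your $2d+4a$ carries an extra square factor $h_0^2$ coming from the degeneration locus of your chosen spanning points. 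This does not affect the bound $\ge 2d$.)

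The genuine gap is exactly at the point you flag as ``the main obstacle'' and then leave as an expectation: the criterion refers to the branch curve of the \emph{normalization} $\bar D\to\PP^2$, which is the odd-multiplicity part of $Q^{\mathrm{adj}}(h)$, and nothing you have written rules out $Q^{\mathrm{adj}}(h)=g^2h'$ with $\deg h'\le 6$. This is not a technicality to be dispatched by a routine dimension count: you must produce, for every $a\ge 0$ simultaneously, a proper closed locus in the space of bidegree-$(d,2)$ forms $f$ outside of which no triple $(h_0,h_1,h_2)$ of degree $a$ yields such a square factor, and the parameter space of triples grows like $3\binom{a+2}{2}$, so the asserted excess codimension has to be established, not assumed. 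Likewise the claim that every double section of the surface conic bundle $\pi^{-1}(m)\to m$ has genus $\ge d-1$ is precisely the statement to be proved, restated; the quantity $K^2=8-3d$ does not by itself bound the genus of bisections from below. The honest content here (and the reason the paper simply cites \cite{Kollar-Calabi-Yau-pairs} rather than arguing) is a constraint linking the branch curve $B$ of any splitting quadratic extension $\Bbbk(\bar D)/\Bbbk(\PP^2)$ to the ramification data of the quaternion algebra: for each component $\Delta_i$ of the degree-$3d$ discriminant, either $\Delta_i\subset B$ or the restriction of the square class defining $\bar D$ to $\Delta_i$ must induce the nontrivial \'etale cover $\tilde\Delta_i\to\Delta_i$, and for a \emph{general} $f$ the latter is incompatible with $\deg B\le 6$. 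Until an argument of this kind (or a completed, uniform-in-$a$ dimension count) is supplied, the proof is incomplete.
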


There are analogs of these results for surfaces over non-closed fields.

Similar question for del Pezzo fibrations was studied by I. Krylov 
\cite{Krylov:Fano-type}. 

\subsection{Conic bundle structures on Fano threefolds}
The following question is natural:
\begin{squestion}
Which (smooth) Fano threefolds admit a birational conic bundle structure?
\end{squestion}
Recall \cite[\S~2.1]{IP99} that the \textit{Fano index} $\iota=\iota(X)$ of an $n$-dimensional Fano manifold $X$ is the maximal 
integer such that $-K_X=\iota A$, where $A$ is an (integral) divisor. 
The number $d=\dd(X):=A^{n}$ is called the \textit{degree} of $X$. 
In the case $\iota(X)=n-2$, the degree is even and so 
the number $g=\g(X):=\dd(X)/2+1$ is an integer. It is called the \textit{genus} of $X$.

Let us summarize the known facts about conic bundle structures on Fano threefolds
with $\uprho(X)=1$.
We use the classification and terminology of \cite[Table~\S~12.2]{IP99}.
\subsubsection*{Case $\iota>2$.}
If $\iota=4$, then $X\simeq \PP^3$; if $\iota=3$, then $X\simeq Q\subset \PP^4$
is a quadric. These varieties are rational and so they have a lot of birational conic bundle structures.

\subsubsection*{Case $\iota=2$, $d\ge 4$.} 
It is known that $d\le 5$. For $d=5$ and $4$ the Fano threefolds are rational.

\subsubsection*{Case $\iota=2$, $d=3$.} 
Then $X\simeq X_3\subset \PP^4$ is a cubic threefold.
It has a lot of conic bundle structures (see Example~\xref{example-cubic-conic-bundle}).

\subsubsection*{Case $\iota=2$, $d=2$.} 
Then $X=X_2$ can be represented as double cover $X\to \PP^3$
branched over a quartic (it is so-called \textit{quartic double solid}).
The existence of conic bundle structure on smooth varieties of this type in not known.
However, if $X$ has at least one ordinary double point, say $P$, then the ``projection'' 
from $P$ gives a conic bundle, see Examples~\xref{example-4-double-solid} and~\xref{example-Artin-Mumford}.

Rationality questions for singular quartic double solids were studied in
\cite{Clemens1983}, \cite{Voisin1988}, \cite{Varley1986}, \cite{Debarre1990}, \cite{Przhiyalkovskij-Cheltsov-Shramov-2015}, \cite{Voisin2015}.

\subsubsection*{Case $\iota=2$, $d=1$.} 
Then $X=X_1$ is so-called \textit{double Veronese cone}.
The birational geometry of this variety is very rich.
It was studied in the series of papers 
of M. Grinenko (see \cite{Grinenko2003}, \cite{Grinenko2004}). 
In particular, he proved that there is no conic bundle structures 
on $X_1$. 

\par \medskip
If $\iota=1$, then $g\in \{2,\, 3,\dots,10,\, 12\}$.

\subsubsection*{Case $\iota=1$, $g\in \{7,\, 9,\, 10,\, 12\}$.}
Then the variety $X$ is rational.

\subsubsection*{Case $\iota=1$, $g=8$.}
The double projection from a line gives a structure of a conic bundle structure 
\cite[Theorem~4.3.3]{IP99}. Conic bundle structures exist 
also because $X$ is birational to a cubic threefold \cite{Iskovskikh1980}, 
\cite{Tregub1985a}, \cite[Theorem~4.5.8]{IP99} (see 
Example~\xref{example-cubic-conic-bundle}).

\subsubsection*{Case $\iota=1$, $g=6$.} 
The existence of conic bundle structures is unknown
for smooth varieties of this type. However, singular Fano threefolds with $\iota=1$, $g=6$
may have conic bundle structures. For example, if such a variety $X$ is $\QQ$-factorial and has a unique node, then 
is can be birationally transformed to a standard conic bundle over $\PP^2$ 
with discriminant curve a smooth sextic $\Delta\subset \PP^2$ \cite{Debarre-Iliev-Manivel-2011},
\cite{Prokhorov-factorial-Fano-e}.
If $X$ is not $\QQ$-factorial and has a unique node, then it can be transformed to a
smooth quartic double solid (see above) by \cite[Proposition 5.2]{Debarre2012},
\cite[Example 1.11]{Przhiyalkovskij-Cheltsov-Shramov-2005en}, or \cite[\S 4.4.1]{BrownCortiZucconi-2004}.

\subsubsection*{Case $\iota=1$, $g=5$.} 
Then $X\simeq X_{2\cdot 2\cdot 2}\subset \PP^6$
is an intersection of three quadrics. Conic bundle structures exist by 
Example~\xref{example-V8-conic-bundle}.

\subsubsection*{Case $\iota=1$, $g=4$.} 
Then $X\simeq X_{2\cdot 3}\subset \PP^5$
is an intersection of a quadric and a cubic.
A general member of the family is birationally rigid and has no conic bundle structures
\cite[Ch.~3, \S~2]{Iskovskikh-Pukhlikov-1996}, \cite[Ch.~2, \S\S~5-6]{Pukhlikovbook2013}.
However, this is not known for \textit{any} smooth variety of this type. 

\subsubsection*{Case $\iota=1$, $g=3$.} 
Then $X$ is either a quartic in $\PP^4$ or a 
double cover a quadric branched over a surface of degree $8$ ($X$ is said to be a \textit{double quadric}). 
In both cases $X$ is birationally rigid and has no conic bundle structures (see \cite{Iskovskih-Manin-1971b},
\cite{Iskovskikh1980}).

\subsubsection*{Case $\iota=1$, $g=2$.}
Then $X$ is a so-called \textit{sextic double solid}.
It is birationally superrigid and so it has no conic bundle structures \cite{Iskovskikh1980}.
\par\medskip
More examples of singular Fano threefolds with birational conic bundle structure can be found in 
\cite[\S\S~7.2, 7.5-7.7]{Jahnke-Peternell-Radloff-II}.
In contrast with the above considered case $\uprho(X)=1$, many Fano threefolds with 
$\uprho(X)>1$ have (even biregular) conic bundle structures \cite{Mori-Mukai-1981-82}, see 
also \cite{Kuznetsov2015} for higher-dimensional examples. 

Note that Koll\'ar's method of reduction to positive characteristic \cite{Kollar1995}
allows to show that certain Fano hypersurfaces of high dimension do not admit a birational conic bundle
structures.

\subsection{Del Pezzo fibrations}
It is desirable to construct a good theory of three-dimensional del Pezzo fibrations.
Some generalization of Sarkisov's theorem on standard models was obtained by A. Corti 
\cite{Corti1996} (see also \cite{Kollar1997a}, \cite{Loginov2017}). For del Pezzo fibrations of degree $\ge 3$
he proved that there exist a Gorenstein terminal model such that its fibers are reduced and irreducible.
For del Pezzo fibrations of degree $2$ and $1$ the situation is more complicated: 
it turns out that a Gorenstein terminal model does not always exist, one should 
consider models with non-Gorenstein points of low indices. 

A del Pezzo fibration of degree $\ge 5$ over $\PP^1$ is always rational (cf. Theorem~\xref{surfaces-rationality-criterion}). 
Any del Pezzo fibration $\pi: X\to B$ of degree $4$ birationally has a conic bundle structure
(cf. Theorem~\xref{classification-Sarkisov-links-conic-bundle}~\xref{classification-Sarkisov-links-conic-bundle-deg=3}).
In the case of smooth $X$, the rationality criterion was obtained by Alexeev \cite{Alexeev-898133}
(see also \cite{Shramov2006}). The case of del Pezzo fibrations of degree $\le 3$ is much more complicated.
For results on
birational rigidity we refer to \cite{Iskovskikh1995}, \cite{Pukhlikov1998a}, \cite[Ch.~4, \S~1]{Pukhlikovbook2013}, \cite{Cheltsov2005c},
\cite{Sobolev2002}, \cite{Grinenko2000}, \cite{Shokurov-Choi-2011}, \cite{Krylov2016}.
A higher-dimensional generalization was discussed in a recent paper \cite{Pukhl17e}.
It would be interesting to reformulate these results in terms of 
degeneracy loci and local invariants. 
There are also results on \emph{local}
birational rigidity \cite{Park2001}, \cite{Park2003}, \cite{Pukhlikov2000c}.

\subsection{Extremal contractions of relative dimension one}
Consider a Mori extremal contraction $\pi: X\to S$ from a smooth
$n$-dimensional variety such that the generic fiber is of dimension $1$.
If $n\le 3$, then $S$ is smooth and $\pi$ is a standard conic bundle 
\cite{Mori-1982}. The same holds in arbitrary dimension under the additional assumption 
that all the fibers are one-dimensional \cite{Ando1985}.
However, if we relax the equidimensionality condition, then the situation becomes more complicated
even for $n=4$. 

\begin{proposition}\label{contractions-smooth}
Let $\pi: X\to S$ be a Mori extremal contraction with $0<\dim S<\dim X$.
Assume that $X$ is smooth.
Then the following holds. 
\begin{enumerate}
\item\label{contractions-smooth-1}
$S$ has at worst locally factorial canonical singularities.

\item \label{contractions-smooth-2}
For any point $P\in S$, its local algebraic fundamental group is trivial.
\item \label{contractions-smooth-1-2}
$\codim \Sing(S)\ge 3$. In particular, $S$ is smooth if $\dim S\le 2$.

\item \label{contractions-smooth-3}
\textup(cf. \cite[Proposition 1.3]{Andreatta-Wisniewski-view1997}\textup)
If $S$ has at worst quotient singularities, then it is smooth.
\item\label{contractions-smooth-4}
If $\pi$ is equidimensional, then $S$ is smooth.
\end{enumerate}
\end{proposition}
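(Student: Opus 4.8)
The plan is to treat the five assertions largely separately, building on two transparent reductions: to an equidimensional (hence conic bundle) situation over a big open subset of $S$, and to a surface base via general hyperplane slicing. Throughout write $n=\dim X=\dim S+1$, so that a general fibre $F$ of $\pi$ is a smooth rational curve with $-K_X\cdot F=2$. Since $\pi$ is extremal, $\pi^{-1}(S_{\ge 2})$ has no divisorial component (as recalled after Corollary~\xref{normal-crossing}), where $S_{\ge 2}:=\{s\mid \dim\pi^{-1}(s)\ge 2\}$; hence $\codim_S S_{\ge 2}\ge 2$ and $\codim_X\pi^{-1}(S_{\ge 2})\ge 2$, and over $S^{\circ}:=S\setminus S_{\ge 2}$ the morphism $\pi$ is equidimensional, so by Ando's theorem \cite{Ando1985} it is a conic bundle and $S^{\circ}$ is smooth. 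Assertion~\xref{contractions-smooth-4} is then immediate: if $\pi$ is everywhere equidimensional, Ando's theorem applies globally and $S$ is smooth. For assertion~\xref{contractions-smooth-1-2} I would slice: cutting $S$ by $\dim S-2$ general very ample divisors $H_i$ and setting $S':=\bigcap H_i$, $X':=\bigcap \pi^{*}H_i=\pi^{-1}(S')$, Bertini makes $X'$ a smooth threefold, while adjunction together with $\uprho(X'/S')=1$ makes $\pi'\colon X'\to S'$ a Mori extremal contraction to the surface $S'$; by Mori's theorem \cite{Mori-1982} the surface $S'$ is smooth. A general such $S'$ is singular exactly along $\Sing(S)\cap S'$, so its smoothness forces $\codim_S\Sing(S)\ge 3$, and the case $\dim S\le 2$ is Mori's theorem (or normality of curves) directly.

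For assertion~\xref{contractions-smooth-1} I would first prove local factoriality and then deduce canonicity. Given a Weil divisor $D$ on $S$, the closure $\pi^{*}D$ of its pullback over $S^{\circ}$ is a Cartier divisor on the smooth variety $X$, and $\mathcal L:=\mathcal O_X(\pi^{*}D)$ meets every fibre component in degree $0$; as the fibres of $\pi$ are rationally chain connected, a numerically $\pi$-trivial line bundle restricts trivially to every fibre, so $\mathcal L|_{X_s}\cong\mathcal O_{X_s}$ for all $s$. Cohomology and base change together with $\pi_{*}\mathcal O_X=\mathcal O_S$ then give that $\pi_{*}\mathcal L$ is invertible and $\pi^{*}\pi_{*}\mathcal L\xrightarrow{\ \sim\ }\mathcal L$; by the projection formula $\pi_{*}\mathcal L=\mathcal O_S(D)$, whence $\mathcal O_S(D)$ is invertible and $D$ is Cartier, so $S$ is locally factorial. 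For canonicity, relative Kawamata--Viehweg vanishing applied to $\mathcal O_X=\omega_X\otimes\mathcal O_X(-K_X)$ with $-K_X$ being $\pi$-ample yields $R^{i}\pi_{*}\mathcal O_X=0$ for $i>0$; combined with $\pi_{*}\mathcal O_X=\mathcal O_S$ and the rationality of the singularities of $X$ (which is smooth), the descent theorem for rational singularities shows that $S$ has rational singularities, in particular is Cohen--Macaulay. Since $S$ is locally factorial its canonical class is Cartier, so $S$ is Gorenstein, and a Gorenstein variety with rational singularities is canonical.

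For assertion~\xref{contractions-smooth-2} let $(S',P')\to(S,P)$ be a connected finite cover which is étale in codimension one, and let $X'$ be the normalisation of $X\times_S S'$, with induced finite map $h\colon X'\to X$ and contraction $\pi'\colon X'\to S'$. Because the branch locus of $S'\to S$ has codimension $\ge 2$ and $\pi^{-1}(S_{\ge 2})$ has no divisorial component, $h$ is étale in codimension one on the smooth variety $X$; by the Zariski--Nagata purity of the branch locus $h$ is étale. Restricting $h$ over the central fibre $F=\pi^{-1}(P)$ gives an étale cover of $F$, which is trivial because $F$ is rationally chain connected and hence simply connected; as the germ of $X'$ along $h^{-1}(F)$ is connected, this forces $\deg h=1$, so the local algebraic fundamental group of $S$ at $P$ is trivial. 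Assertion~\xref{contractions-smooth-3} is then formal: the local algebraic fundamental group of a quotient singularity $\mathbb C^{n}/G$ (with $G$ acting freely in codimension one) is $G$, so the triviality from~\xref{contractions-smooth-2} gives $G=\{1\}$ and $S$ is smooth, which is the content of \cite[Proposition~1.3]{Andreatta-Wisniewski-view1997}.

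The routine parts are~\xref{contractions-smooth-4} and~\xref{contractions-smooth-1-2}, which only repackage Ando's and Mori's theorems through slicing. The genuine difficulty lies in handling the non-equidimensional locus $S_{\ge 2}$, where the fibres jump to dimension $\ge 2$: both the descent of Weil divisor classes in~\xref{contractions-smooth-1} and the purity argument in~\xref{contractions-smooth-2} depend essentially on the extremality input that $\pi^{-1}(S_{\ge 2})$ carries no divisor, so that all relevant loci in $X$ have codimension $\ge 2$, and on the fact that the (possibly singular, reducible, or non-reduced) fibres over $S_{\ge 2}$ are still rationally chain connected, hence admit no nontrivial numerically trivial line bundles and no nontrivial étale covers. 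Verifying these two fibrewise properties over the bad locus, rather than the easy generic conic-bundle behaviour, is the step I expect to require the most care.
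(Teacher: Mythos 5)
Your arguments for \xref{contractions-smooth-1}, \xref{contractions-smooth-1-2} and \xref{contractions-smooth-3} are essentially sound, but your proof of \xref{contractions-smooth-2} has a genuine gap, and since \xref{contractions-smooth-3} (and, in the paper, also \xref{contractions-smooth-1-2} and \xref{contractions-smooth-4}) is made to rest on \xref{contractions-smooth-2}, this is the critical point. You reduce the triviality of the cover to the claim that the fibre $F=\pi^{-1}(P)$ admits no nontrivial connected \'etale cover, and you justify this by ``$F$ is rationally chain connected and hence simply connected.'' That implication is false: a nodal rational curve (or a cycle of rational curves) is rationally chain connected but has connected \'etale covers of every degree, with \'etale fundamental group $\widehat{\ZZ}$. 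For the degenerate conics over $S^{\circ}$ this causes no harm, but over the jumping locus $S_{\ge 2}$ the fibres are higher-dimensional, possibly reducible and non-reduced, and nothing you say controls their \'etale covers; even their rational chain connectedness is a nontrivial theorem (Hacon--McKernan/Kawamata) that you invoke silently. The paper avoids exactly this issue: it keeps the Galois group $G$ of the cover in play, produces a $G$-invariant \emph{rationally connected} (not merely chain connected) subvariety $V$ of a fibre of $\pi'$ via \cite[Lemma~3.4]{ProkhorovShramov-RC}, and then uses that a nontrivial cyclic group acting on a proper rationally connected variety must have a fixed point, contradicting the freeness of the $G$-action on the \'etale cover $X'$. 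To repair your argument you would either have to adopt this fixed-point mechanism or actually prove that $\pi^{-1}(P)$ is algebraically simply connected, which is a stronger statement than anything you establish.

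On the remaining parts: your proof of \xref{contractions-smooth-1} is the paper's proof in only slightly different clothing (descend a numerically $\pi$-trivial line bundle via the contraction theorem \cite[Cor.~3.17]{Kollar-Mori-1988}, then rational $+$ Gorenstein $\Rightarrow$ canonical), and is fine. For \xref{contractions-smooth-1-2} you take a genuinely different route --- slicing down to a smooth threefold over a surface and quoting Mori's theorem --- instead of the paper's deduction from \xref{contractions-smooth-1} and \xref{contractions-smooth-2} (a codimension-two canonical stratum would be transversally Du Val, hence a nontrivial quotient, contradicting \xref{contractions-smooth-2}); your route is legitimate but you should justify $\uprho(X'/S')=1$, which needs Grothendieck--Lefschetz on the threefold $X'$ rather than ``adjunction.'' Finally, your proof of \xref{contractions-smooth-4} via Ando's theorem presupposes that every fibre is one-dimensional, i.e.\ relative dimension one, whereas the paper's argument (a general linear section of codimension $\dim X-\dim S$ is smooth and finite over $S$, so $S$ has quotient singularities, and then \xref{contractions-smooth-3} applies) works for arbitrary fibre dimension --- a minor point given the standing hypotheses of the section, but worth noting since the paper's statement does not restrict the relative dimension.
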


\begin{proof}
\xref{contractions-smooth-1}
Recall that in general the singularities of $S$ are rational \cite[Corollary 7.4]{Kollar-1986-I}. 
The factoriality of $S$ is proved in the same style as \cite[Lemma 5-1-5]{KMM}
or \cite[Corollary 3.18]{Kollar-Mori-1988}. For convenience of the reader we 
reproduce these arguments.
Let $B$ be a prime Weil divisor on $S$ and let $D$ be the divisorial part of 
$\pi^{-1}(B)$. 
Then $D$ is a Cartier divisor on $X$ and $\pi(D)=B$. Since $D$ does not meet a general fiber,
$D=\pi^*B_0$ for some Cartier divisor on $S$ \cite[Corollary 3.17]{Kollar-Mori-1988}.
Clearly, $B_0\subset B$ and so $B_0=B$.
Thus, the singularities of $S$ are rational and $K_S$ is a Cartier divisor. 
In this case the singularities must be canonical (see \cite[Corollary 5.24]{Kollar-Mori-1988}).

\xref{contractions-smooth-2}
Regard $S$ as a sufficiently small neighborhood of $o$.
Assume that there is a Galois cover $(S'\ni o')\to (S\ni o)$ which is \'etale over 
$S\setminus \Sing(S)$.
Thus $(S\ni o)=(S'\ni o')/G$ and $G$ is a finite group acting on $S'$ 
freely in codimension one.
Consider the normalization $X'$ of the fiber product of $X\times_SS'$. 
Then $f: X' \to X$ is a finite Galois morphism which is \'etale in codimension one.
Thus, by purity of the branch locus,
$f$ is \'etale and $X'$ is smooth.
The projection 
$\pi': X'\to S'$ is a contraction such that $-K_{X'}$ is relatively ample.
By \cite[Lemma 3.4]{ProkhorovShramov-RC} there exists an irreducible $G$-invariant rationally connected 
subvariety $V\subset {\pi'}^{-1}(o')$. On the other hand, an action of a cyclic subgroup $G_1\subset G$
on a rationally connected variety always has a fixed point.
Hence, the action of $G$ on $V$ and $X'$ cannot be free and so $G=\{1\}$.

Assertions~\xref{contractions-smooth-1-2} and~\xref{contractions-smooth-3}
easily follow from~\xref{contractions-smooth-1} and~\xref{contractions-smooth-2}.
Finally, for~\xref{contractions-smooth-4}, we just have to note that in this case
a general section $H$ of $X$ by 
subspace of codimension $\dim X-\dim S$ is smooth and the restriction 
$\pi_H: H\to S$ is finite, hence $S$ has at worst quotient singularities. 
\end{proof} 

Kachi \cite{Kachi1997} classified 
contractions from smooth fourfold to threefolds.
In this case the base variety is not necessarily smooth.

\subsection{Equivariant conic bundles}
It would be useful to develop the theory of conic bundles in 
the category of $G$-varieties \cite{Manin-1967}, that is, for varieties over non-closed fields, as well as, for varieties with
a group action. The first step was done by A. Avilov \cite{Avilov2014}.
He proved an analog of Theorem~\xref{theorem-standard-models}.
We hope that Theorems~\xref{Sarkisov-theorem},~\xref{Chelcprimetsov-theorem},
and Conjectures~\xref{conjecture-Iskovskikh} and~\xref{conjecture-Shokurov}
should be generalized for the equivariant case. 
This is important for applications to the study of Cremona groups and rationality problems,
see \cite{Prokhorov-Shramov-J-const}, \cite{Prokhorov-Shramov-3folds}, \cite{Prokhorov-Shramov-p-groups},
\cite{Popov-diffeomorphism}, \cite{Yasinsky2016}, \cite{Yasinsky-J-const}, \cite{Popov-Borel-Cremona},
\cite{PrzyjalkowskiShramov2016}, \cite{Trepalin2014}, \cite{Trepalin2016}.

 \newcommand{\etalchar}[1]{$^{#1}$}
\def\cprime{$'$}


\end{document}